\newcommand{\arxiv}[1]{\href{http://arxiv.org/abs/#1}{\texttt{arXiv:\linebreak[2]#1}}}
\renewcommand*{\backref}[1]{}
\renewcommand*{\backrefalt}[4]{%
     \ifcase #1 (Not cited.)%
     \or        (Cited on p.~#2.)%
     \else      (Cited on pp.~#2.)%
     \fi}
\newcommand{\G}{\Gamma}
\newcommand{\gauss}[2]{\genfrac[]{0pt}{}{#1}{#2}}
\renewcommand{\binom}[2]{\genfrac(){0pt}{}{#1}{#2}}
\theoremstyle{plain}
\newtheorem{theorem}{Theorem}[section]
\newtheorem{corollary}[theorem]{Corollary}
\newtheorem{conj}[theorem]{Conjecture}
\newtheorem{lemma}[theorem]{Lemma}
\newtheorem{prop}[theorem]{Proposition}
\theoremstyle{remark}
\newtheorem{problem}{Problem}
\newtheorem*{biconj}{Bannai-Ito conjecture}
\newcommand{\R}{{\mathbb R}}
\newcommand{\A}{\mathcal{A}}
\newcommand{\B}{\mathcal{B}}
\newcommand{\C}{\mathcal{C}}
\newcommand{\D}{\mathcal{D}}
\newcommand{\AL}{\mathbb{A}}
\newcommand{\TT}{\mathbb{T}}
\def\tr{\mathop{\rm tr }\nolimits}
\renewcommand{\cal}{\mathcal}
\newcommand{\ster}{\star}
\newlength{\BiblioSpacing}
\renewenvironment{thebibliography}[1]{%
\begin{oldthebibliography}{#1}%
\setlength{\parskip}{\BiblioSpacing}
\setlength{\itemsep}{\BiblioSpacing}
}%
{%
\end{oldthebibliography}%
}
\title{\bf Distance-regular graphs\footnote{This version is published in the Electronic Journal of Combinatorics (2016), \#DS22.}}
\author{Edwin R. van Dam\\
\small Department of Econometrics and O.R.\\[-0.8ex]
\small Tilburg University\\[-0.8ex]
\small The Netherlands\\
\small\tt Edwin.vanDam@uvt.nl\\
\and
Jack H. Koolen\\
\small School of Mathematical Sciences\\[-0.8ex]
\small University of Science and Technology of China\\[-0.8ex]
\small and\\[-0.8ex]
\small Wu Wen-Tsun Key Laboratory of\\[-0.8ex]
\small Mathematics of CAS\\[-0.8ex]
\small Hefei, Anhui, 230026, China\\
\small\tt koolen@ustc.edu.cn\\
\and
Hajime Tanaka\\
\small Research Center for\\[-0.8ex]
\small Pure and Applied Mathematics\\[-0.8ex]
\small Graduate School of Information Sciences\\[-0.8ex]
\small Tohoku University\\[-0.8ex]
\small Sendai 980-8579, Japan\\
\small\tt htanaka@tohoku.ac.jp\\
}
\date{
\small Mathematics Subject Classifications: 05E30, 05Cxx, 05Exx}
\begin{document}
\maketitle


\begin{abstract}
\noindent This is a survey of distance-regular graphs. We present an
introduction to distance-regular graphs for the reader who is unfamiliar with
the subject, and then give an overview of some developments in the area of
distance-regular graphs since the monograph `BCN' [Brouwer, A.E., Cohen, A.M.,
Neumaier, A.,
    {\sl Distance-Regular Graphs}, Springer-Verlag, Berlin, 1989] was written.

\bigskip\noindent \textbf{Keywords:} Distance-regular graph; survey; association scheme; $P$-polynomial; $Q$-polynomial; geometric
\end{abstract}


\newpage

\begin{tableofcontents}
\end{tableofcontents}


\section{Introduction}\label{sec1:introduction}


Distance-regular graphs are graphs with a lot of combinatorial
symmetry, in the sense that given an arbitrary ordered pair of
vertices at distance $h$, the number of vertices that are at
distance $i$ from the first vertex and distance $j$ from the second
is a constant (i.e., does not depend on the chosen pair) that only
depends on $h,i$, and $j$. Biggs introduced distance-regular
graphs, by observing that several combinatorial and linear
algebraic properties of distance-transitive graphs were holding for
this wider class of graphs, see Biggs' monograph \cite{biggs} from
1974. Well-known examples are the Hamming graphs and the Johnson
graphs, as these graphs link the subject of distance-regular graphs
to coding theory and design theory, respectively. But there are
many more interesting links to other subjects, such as finite group
theory (and distance-transitive graphs), representation theory,
finite geometry, association schemes, and orthogonal polynomials.
Moreover, distance-regular graphs are frequently used as test
instances for problems on general graphs and other combinatorial
structures, such as problems related to random walks and from
combinatorial optimization. An example is Hoffman's (unpublished;
see \cite[Thm.~3.5.2]{BrHa}) coclique bound, which was first proved
by Delsarte \cite[p.~31]{del} for distance-regular graphs with
diameter two (also known as strongly regular graphs), as an example
of his linear programming method. Distance-regular graphs have
applications in several fields besides the already mentioned
classical coding and design theory, such as (quantum) information
theory, diffusion models, (parallel) networks, and even finance.

In this survey of distance-regular graphs, we give an overview of some
developments in the area of distance-regular graphs since the monograph `BCN'
by Brouwer, Cohen, and Neumaier \cite{bcn} from 1989 was written. This
influential monograph, which is almost like an encyclopedia of distance-regular
graphs, inspired many researchers to work on distance-regular graphs, such as
the authors of this survey. Since then, many papers have been written, many
more than the ones we will discuss in this overview. We intend to discuss the
most relevant developments of the past twenty-seven years, realizing that `most
relevant' is quite subjective. Perhaps we should say that we give our personal
view on the past twenty-seven years. The same is true when we discuss the major
open problems in the area. A recent major breakthrough is the proof of one of
the Bannai-Ito conjectures made in the influential monograph by Bannai and Ito \cite{bi}
from 1984, i.e., the one that states that there are finitely many
distance-regular graphs with given valency (at least three). Just as important
is the theorem stating that there are finitely many non-geometric
distance-regular graphs with both valency and diameter at least three and
smallest eigenvalue at least a given number; a generalization of a well-known
result about strongly regular graphs. The classification of tridiagonal pairs
is an example of an important recent breakthrough in algebraic combinatorics
that is completely inspired by the major (still) open problem of classifying
the $Q$-polynomial distance-regular graphs. The construction of the twisted
Grassmann graphs, that is, of this family of strange examples that were not
expected to be in the picture, gave a better perspective on how difficult this
classification problem really is. It seems to suggest that the problem cannot
be solved just by algebraic methods. In addition, we need to better understand
geometric distance-regular graphs.

This survey is organized as follows. After this brief introduction, we present
an introduction to distance-regular graphs for the reader that is unfamiliar
with the subject. We then present the classical examples of distance-regular
graphs, and an overview of the most important constructions since `BCN' \cite{bcn}. In
Section \ref{sec:morebackground}, we give more necessary and advanced
background for the remaining part of the paper. We then treat several subjects
in Sections \ref{sec:Qpol}-\ref{sec:mult}, for example $Q$-polynomial
distance-regular graphs, the Terwilliger algebra, the Bannai-Ito conjecture,
geometric distance-regular graphs, and spectral characterizations. In Section
\ref{sec:applications}, we discuss important applications of distance-regular
graphs, namely in combinatorial optimization and in the area of
random (classical and quantum) walks (which model diffusion models, dynamic
stock portfolios, and the abelian sandpile, for example). In Section
\ref{sec:misc}, we then discuss some miscellaneous topics, and in Section
\ref{sec:tables} we report progress on the `feasibility' and `uniqueness' of
the intersection arrays that were listed in the tables of parameter sets of
distance-regular graphs in `BCN' \cite{bcn}. We conclude with a section on open problems
and some directions for future research.

Note that we will focus our attention on distance-regular graphs with diameter at least three. We do not completely
exclude strongly regular graphs (the diameter two case), but we are of the opinion that they form a subject of their
own. A separate survey of strongly regular graphs would therefore be warmly welcomed. For some information we refer to
the recent book by Brouwer and Haemers on spectra of graphs \cite[Ch.~9]{BrHa} and the paper by Cohen and Pasechnik \cite{CPsrg}.
Also bipartite distance-regular graphs
with diameter three form a separate subject. These graphs are equivalent to symmetric designs, for which we refer to
the monograph by Ionin and Shrikhande \cite{symmetricdesigns}.


\section{An introduction to distance-regular graphs}\label{sec2:basics}


In this section we intend to introduce some basics about
distance-regular graphs to the reader that is unfamiliar with
the topic. This includes some basic proofs and questions to
give some (first) flavors of the area of distance-regular
graphs.

\subsection{Definition}\label{sec2:definition}

Let $\G$ denote a simple, undirected, connected graph, with vertex set $V=V_{\G}$ of size $v=|V|$. Whenever there is an
edge between two vertices $x$ and $y$, we say that $x$ is {\em adjacent} to $y$, or that $x$ and $y$ are {\em
neighbors}, use the notation $x \sim y$, and denote the edge by $xy$. The {\em distance} in the graph between two
vertices $x$ and $y$ is denoted by $d(x,y)=d_{\G}(x,y)$, and is given by the length of the shortest path between
$x$ and $y$. The {\em diameter} of the graph is $D=D_{\G}=\max_{x,y\in V} d(x,y)$. The set of vertices at distance
$i$ from a given vertex $z\in V$ is denoted by $\G_i(z)$, for $i=0,1,\dots,D$. The {\em distance-$i$ graph} $\G_i$
is the graph with vertex set $V$, where two vertices $x$ and $y$ are adjacent if and only if $d_{\G}(x,y)=i$. A
graph is called {\em bipartite} if the vertex set can be partitioned into two parts such that every edge has one end
(vertex) in each part.

A connected graph $\G$ with diameter $D$ is called {\em
distance-regular} if there are constants $c_i, a_i, b_i$ ---
the so-called {\em intersection numbers} --- such that for all
$i=0,1,\dots, D$, and all vertices $x$ and $y$ at distance
$i=d(x,y)$, among the neighbors of $y$, there are $c_i$ at
distance $i-1$ from $x$, $a_i$ at distance $i$, and $b_i$ at
distance $i+1$. It follows that $\G$ is a regular graph with
valency $k=b_0$, and that $c_i+a_i+b_i=k$ for all $i=0,1,\dots,
D$. By these equations, the intersection numbers $a_i$ can be
expressed in terms of the others, and it is standard to put
these others in the so-called {\em intersection array}
$$\{b_0,b_1,\dots,b_{D-1};c_1,c_2,\dots,c_D\}.$$
Note that $b_D=0$ and $c_0=0$ are not included in this array, whereas $c_1=1$
is included (note that all numbers in the intersection array are positive
integers). Also the number of vertices can be obtained from the intersection
array. In fact, every vertex has a constant number of vertices $k_i$ at given
distance $i$, that is, $k_i=|\G_i(z)|$ for all $z \in V$. Indeed, this follows
by induction and counting the number of edges between $\G_i(z)$ and
$\G_{i+1}(z)$ in two ways. In particular, it follows that $k_0=1$ and
$k_{i+1}=b_ik_i/c_{i+1}$ for all $i=0,1,\dots,D-1$. The number of vertices now
follows as $v=k_0+k_1+ \cdots + k_D$. In combinatorial arguments such as the
above, it helps to draw pictures; in particular, of the so-called {\em
distance-distribution diagram}, as depicted in Figure \ref{pic:dddiagram}.
\begin{figure}[h!]
\centering
\includegraphics[viewport=0 20 1280 170,width=120mm]{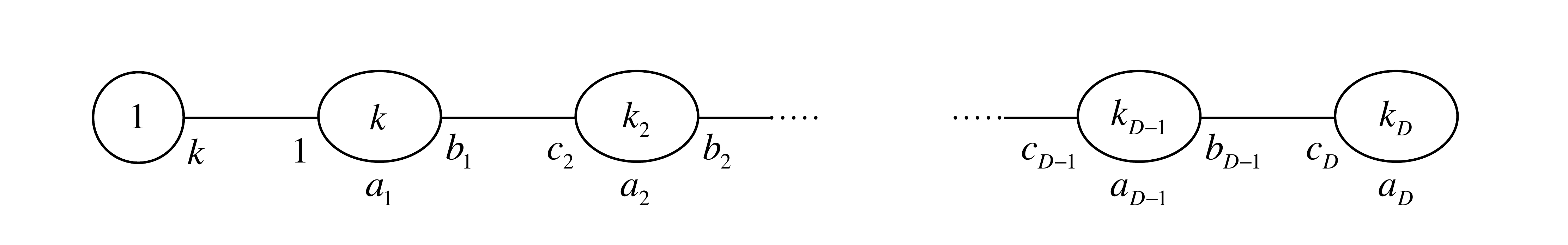}
\caption{Distance-distribution diagram} \label{pic:dddiagram}
\end{figure}

\subsection{A few examples}

\subsubsection{The complete graph} The complete graphs $K_v$
(i.e., the graphs where all vertices are adjacent to each other)
are the distance-regular graphs with diameter $1$, and have
intersection array $\{v-1;1\}$ if $v>1$.

\subsubsection{The polygons}\label{sec:polygons} The polygons (cycles) $C_v$ are the
distance-regular graphs with valency $2$, and have intersection
array $\{2,1,\dots,1;1,1,\dots,1\}$ if $v$ is odd, and
$\{2,1,\dots,1;1,\dots,1,2\}$ if $v$ is even.

\subsubsection{The Petersen graph and other Odd
graphs}\label{sec:oddgraphs} The well-known Petersen graph is a
distance-regular graph with diameter $2$, and has intersection
array $\{3,2;1,1\}$. The distance-regular graphs with diameter $2$
are very special, and form a subject of their own. They are exactly
the connected strongly regular graphs (for more on such graphs, see
\cite[Ch.~9]{BrHa}).

The Petersen graph is the same as the Odd graph $O_3$. For an integer $k \ge
2$, the vertices of the Odd graph $O_k$ are the $(k-1)$-subsets of a set of
size $2k-1$, and two vertices are adjacent if the corresponding subsets are
disjoint. The Odd graph $O_k$ is distance-regular with diameter $k-1$. For odd
$k=2l-1$, its intersection array is
$\{k,k-1,k-1,\dots,l+1,l+1,l;1,1,2,2,\dots,l-1,l-1\}$. For even $k=2l$, the
intersection array is $\{k,k-1,k-1,\dots,l+1,l+1;1,1,2,2,\dots,l-1,l-1,l\}$.
Consequently, the numbers $a_i$ are zero for all $i=0,1,\ldots,D-1$, but
$a_D=l>0$.

\subsection{Which graphs are determined by their intersection
array?}\label{sec:determinedbyarray}

All graphs in the above examples have the property that they are the only ones
that are distance-regular with the given intersection array. In other words,
given the particular intersection array, it is possible to reconstruct the
graph uniquely (up to isomorphism). A typical combinatorial argument can be
used to show this for the Petersen graph.

\begin{prop} The Petersen graph is determined as distance-regular
graph by its intersection array.
\end{prop}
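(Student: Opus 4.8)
The plan is to reconstruct the Petersen graph from its intersection array $\{3,2;1,1\}$ by a direct local analysis. First I would record the basic parameters that the array forces: the graph is $3$-regular with diameter $2$, and the standard count gives $k_0=1$, $k_1=b_0=3$, $k_2=b_0b_1/c_2=6$, so $v=10$. Moreover $a_1=k-c_1-b_1=3-1-2=0$, which says the neighborhood of any vertex is an independent set (the graph is triangle-free), and $a_2=k-c_2-b_2=3-1-0=2$, which says that any two vertices at distance $2$ have exactly one common neighbor (so in fact any two non-adjacent vertices have a unique common neighbor, and any two adjacent vertices have no common neighbor — the graph has girth $5$).

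Next I would build the graph outward from an arbitrary vertex $x$. Label its three neighbors $y_1,y_2,y_3$; since $a_1=0$ these are mutually non-adjacent. Each $y_i$ has valency $3$, one edge going back to $x$, so two further neighbors lying in $\G_2(x)$; since girth is $5$, the resulting $6$ vertices $z_{i,1},z_{i,2}$ (two per $y_i$) are all distinct, accounting for all of $\G_2(x)$. Now I must determine the adjacencies among these six vertices. Each $z$ has valency $3$: one edge back to its $y_i$, and since $c_2=1$ it has exactly one neighbor among $\{y_1,y_2,y_3\}$, namely $y_i$ itself — wait, more carefully, $c_2=1$ means a vertex at distance $2$ from $x$ has exactly one neighbor in $\G_1(x)$; since $z_{i,j}\sim y_i$ already, $z_{i,j}$ has no other neighbor among the $y$'s. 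Hence $z_{i,j}$'s remaining two edges go to other vertices of $\G_2(x)$, and $a_2=2$ together with triangle-freeness forces that $z_{i,j}$ is adjacent to exactly one of $\{z_{i',1},z_{i',2}\}$ for each $i'\neq i$, and not to its sibling $z_{i,3-j}$ (a common neighbor $y_i$ would create a triangle or violate $a_1=0$). This gives a $2$-regular bipartite-like matching structure between the three pairs, and a short case check (using that the configuration must be consistent and that swapping the labels $z_{i,1}\leftrightarrow z_{i,2}$ is the only freedom) shows the adjacency pattern is unique up to isomorphism.

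The main obstacle — though it is a mild one — is the final counting step: verifying that the constraints genuinely pin down a unique graph rather than leaving a binary choice that yields two non-isomorphic completions. Concretely, one sets up the bipartite "which $z_{i',\cdot}$ does $z_{i,j}$ hit" incidence and checks that all consistent solutions are related by relabelling the $z$'s and permuting the $y$'s; equivalently one recognizes the resulting graph as the Kneser graph $K(5,2)=O_3$, or notes it is the unique $(3,5)$-cage. I would present this as: "a routine case analysis now shows that the graph is uniquely determined, and one checks that it is the Petersen graph." The proof is essentially the observation that girth $5$ plus $3$-regularity plus $10$ vertices already forces the Moore graph, so the argument could alternatively be phrased via the Moore bound, but the intersection-array bookkeeping above is the more elementary and self-contained route.
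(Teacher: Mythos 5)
Your proposal is correct and follows essentially the same route as the paper: fix a vertex, use $a_1=0$ and $c_2=1$ to rule out triangles and quadrangles, account for the $1+3+6$ vertices, and then pin down the adjacencies inside $\G_2(x)$ (the paper phrases this as the induced $2$-regular triangle-free graph being a $6$-cycle, which is the same structure as your pair-matching analysis) before checking that the attachment is unique up to isomorphism. No substantive difference or gap.
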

\begin{proof}
Consider a distance-regular graph with intersection array $\{3,2;1,1\}$.
Take a vertex $z$; it has $b_0=3$ neighbors, each of which has $b_1=2$
neighbors at distance 2 from $z$ (and hence there are no triangles in the
graph; $a_1=0$). Each of the vertices at distance 2 from $z$ has precisely
$c_2=1$ common neighbors with $z$ (hence there are no 4-cycles in the graph
either). This already determines the $10=1+3+6$ vertices and all edges except
those having both ends in $\G_2(z)$. The graph induced on $\G_2(z)$ is regular
with valency $a_2=2$, and because the graph has no triangles, this must be a
$6$-cycle. Now there is (up to isomorphism) only one way to make this $6$-cycle
if one takes into account that the entire graph has no triangles and
$4$-cycles; we obtain the Petersen graph as the only graph with intersection
array $\{3,2;1,1\}$; see Figure \ref{pic:petersen}.
\begin{figure}[ht!]
\centering
\includegraphics[angle=90,width=30mm]{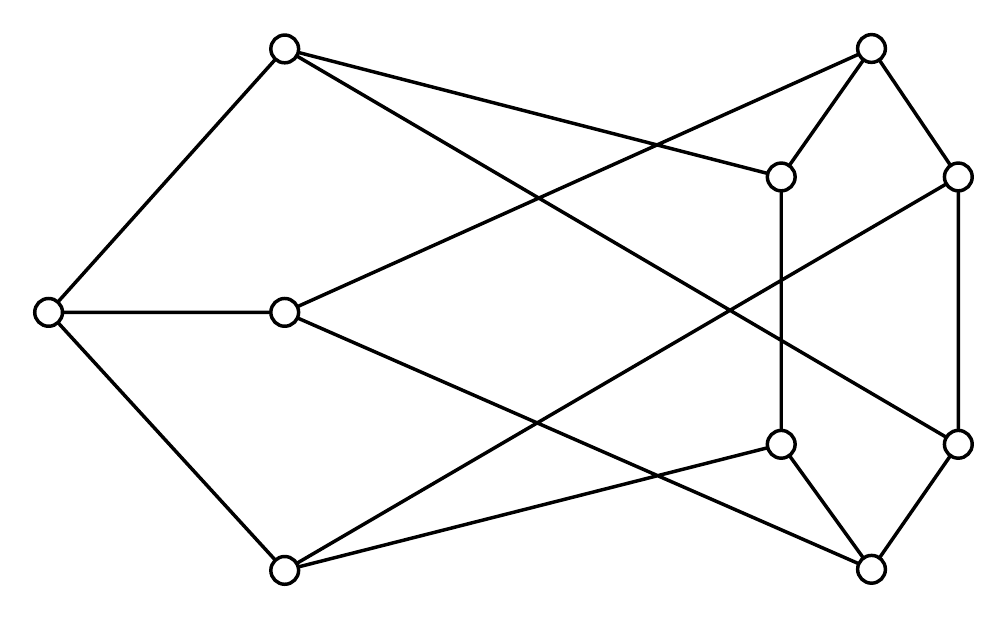}
\caption{The Petersen graph} \label{pic:petersen}
\end{figure}
\end{proof}

\noindent This is clearly a very interesting property; however
it does not hold for all intersection arrays. The smallest
intersection array (smallest in terms of the number of
vertices) that corresponds to more than one graph is
$\{6,3;1,2\}$; it corresponds to the Hamming graph $H(2,4)$
(also known as the lattice graph $L_2(4)$) and the Shrikhande
graph.

One of the problems in the field of distance-regular graphs is
therefore to determine which graphs are determined by their
intersection array, and more generally, to determine all graphs
that have the same intersection array as a given graph. While
for many graphs this problem is still open, for the Odd graphs
the problem was settled already long ago by Moon \cite{Moon}. Her result
was later generalized by Koolen \cite{Ko93} as follows.

\begin{prop}\label{prop:odd} Let $\G$ be a non-bipartite distance-regular graph with diameter
$D \ge 4$, and intersection numbers $a_1=a_2=a_3=0$,
$c_2=1$, and $c_3=c_4=2$. Then $\G$ is an Odd graph.
\end{prop}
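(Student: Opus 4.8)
The plan is to show that the local structure forced by the hypotheses $a_1=a_2=a_3=0$, $c_2=1$, $c_3=c_4=2$ propagates through all distances, pinning down the full intersection array to be that of an Odd graph $O_k$, and then to invoke Moon's uniqueness theorem for Odd graphs \cite{Moon} to identify $\G$ itself. So the real work is a parameter computation: I want to prove that $c_i=\lfloor (i+1)/2\rfloor$ and $a_i=0$ for $i<D$, with the single exceptional value $a_D>0$ appearing at the end (the non-bipartite hypothesis is what rules out the even-polygon-like behaviour and forces this last $a_D$ to be nonzero rather than having a ``clean'' bipartite tail).

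First I would set up the standard machinery: the intersection array determines the tridiagonal intersection matrix $L_1$, and hence all the $k_i$ via $k_{i+1}=b_ik_i/c_{i+1}$, and these must all be positive integers. The key combinatorial input is a ``no short odd cycles / no short even cycles'' analysis coming from the small intersection numbers. From $a_1=0$ there are no triangles; from $c_2=1$ there are no quadrangles; from $a_2=0$ and $c_3=2$ one controls pentagons and hexagons; in general small $c_i$ forces the graph to look locally tree-like up to a controlled radius, which is exactly the ``girth'' phenomenon underlying Odd graphs (girth $\ge 5$, or $6$ in the bipartite case). The heart of the argument is then an inductive claim: assuming the Odd-graph pattern of intersection numbers holds up to index $i$, use the feasibility conditions --- integrality of the $k_i$, the handshake-type identities $k_ib_i=k_{i+1}c_{i+1}$, and the constraint $c_i+a_i+b_i=k$ together with $a_i\ge 0$ and the known $a_1=a_2=a_3=0$ --- to force $c_{i+1}$ and $a_{i+1}$ to take the Odd-graph values. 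One expects a dichotomy at each step: either the pattern continues, or the sequence $b_i$ has hit its terminal behaviour, in which case $D$ is reached and one checks that the terminal parameters coincide with those of some $O_k$ with $D=k-1$. Monotonicity facts for intersection numbers (the $c_i$ are non-decreasing, the $b_i$ non-increasing, and $b_i\ge c_i$ for $i\le D/2$, etc., all standard for distance-regular graphs and presumably available from the background sections) are what keep this induction under control.

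The main obstacle I anticipate is precisely this inductive parameter argument --- making the jump from the finitely many prescribed values $a_1=a_2=a_3=0,\ c_2=1,\ c_3=c_4=2$ to \emph{all} $i$ without extra hypotheses. In particular one must rule out ``hybrid'' arrays that agree with an Odd graph up to distance $4$ but then diverge; this is where one needs to exploit the rigidity of the full eigenvalue/multiplicity structure (the multiplicities $m_i$ computed from the array must be positive integers), not just the integrality of the $k_i$. Non-bipartiteness enters here too: a distance-regular graph with $a_1=a_2=a_3=0$ that is bipartite would be ruled out or would force a different (even-diameter, $a_D=0$) conclusion, so the hypothesis is used to select the Odd-graph branch of the dichotomy and to guarantee $a_D=l>0$ as in the examples above. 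Once the intersection array is shown to equal that of $O_k$ for the appropriate $k$ (so $D=k-1\ge 4$, i.e.\ $k\ge 5$), the proof concludes by citing \cite{Moon}: the Odd graph is the unique distance-regular graph with its intersection array, hence $\G\cong O_k$. A cleaner alternative, if available, is to quote Koolen's own characterization \cite{Ko93} of which ``near-Odd'' local conditions force the global Odd structure and combine it directly with Moon's theorem; but the self-contained route through the feasibility conditions is the one I would write out.
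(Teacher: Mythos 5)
There is a genuine gap, and it sits exactly where you locate it yourself: the ``inductive parameter argument'' that is supposed to carry $a_1=a_2=a_3=0$, $c_2=1$, $c_3=c_4=2$ to the full Odd-graph array is never carried out, and the mechanism you propose for it (integrality of the $k_i$, the identities $k_ib_i=k_{i+1}c_{i+1}$, integrality of the multiplicities) is not capable of doing the job. Already the very first step beyond the hypotheses fails: nothing in the arithmetic feasibility conditions forces $a_4=0$, let alone $a_i=0$ for all $i<D$ (the standard fact in this direction runs the other way: $a_1\neq 0$ implies $a_i\neq 0$ for $i<D$, which gives nothing here), nor does it force $c_5=c_6=3$, nor the crucial relation $k=D+1$ tying the valency to the diameter. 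Ruling out ``hybrid'' arrays that agree with $O_k$ up to distance $4$ and then diverge is precisely the content of the theorem, and it is a structural statement about the graph, not a statement about which tridiagonal matrices pass Biggs' integrality test; no general parameter-theoretic argument of the kind you sketch is known, and your appeal to ``the rigidity of the full eigenvalue/multiplicity structure'' is a placeholder rather than a proof. So the reduction to Moon's theorem \cite{Moon} is not established, and the plan as written does not close.

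For comparison, the paper does not prove Proposition \ref{prop:odd} either; it is Koolen's theorem \cite{Ko93}, generalizing Moon \cite{Moon}, and the paper records the method in one sentence: both Moon and Koolen work through a correspondence between the graph and a certain Johnson graph, and Moon characterizes that Johnson graph by just a few of its intersection numbers. In other words, the actual proof is structural from the start --- one builds the associated Johnson-graph object directly from the local conditions $a_1=a_2=a_3=0$, $c_2=1$, $c_3=c_4=2$ (this is where the smallness of these parameters and the absence of short cycles is genuinely used), identifies it, and only then reads off that $\G$ is an Odd graph --- rather than first pinning down the entire intersection array by counting and then invoking uniqueness. If you want to write a self-contained argument, you would need to supply combinatorial substructure arguments of that kind (geodetically closed subgraphs, the bipartite double and its halved graph, or a direct construction of the Johnson-graph incidence); the feasibility-conditions route is not a viable substitute.
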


\noindent This result shows that we do not always need all
intersection numbers to determine a graph. This is very typical in
the characterization results that we know. We will see more examples
of this later on, for example in the characterizations in Section
\ref{sec:Metsch}. Note that the condition that the graph is
non-bipartite is also a condition on the intersection numbers; it is
not hard to see that a distance-regular graph is bipartite (i.e.,
has no odd cycles) if and only if $a_i=0$ for all $i=0,1,\dots,D$.
To obtain their results, both Moon and Koolen used the
correspondence to a certain Johnson graph; and Moon characterized
this Johnson graph by just a few intersection numbers. Hiraki
\cite{Hi07} also strengthened the result by Moon; he showed ---
among others --- that $a_1=a_2$, $a_4=0$, $c_2=1$, and $c_3=c_4=2$
suffices to determine the Odd graphs among the non-bipartite
distance-regular graphs with diameter $D \ge 5$. These results also
`eliminate' intersection arrays that match the intersection arrays
of the Odd graphs partially.

This brings us to the following question: which intersection
arrays should we look at? Do we need a distance-regular graph
first, before we consider its intersection array? Perhaps
there are beautiful distance-regular graphs that we do not know
of yet. How can we find these? One way is to first try to
classify possible intersection arrays.

In order to find putative intersection arrays of distance-regular
graphs, we should in principle find as many conditions on such
arrays as possible. In this introduction, we will however only
mention some elementary standard conditions. There are many other
--- more technical --- conditions known (some of which we will see
in later sections) that eliminate certain intersection arrays, but
these are beyond the scope of this introduction. We begin with some
combinatorial conditions, and then bring linear algebra into the
game to obtain algebraic conditions.

\subsection{Some combinatorial conditions for the intersection array}

The first trivial conditions that should hold for the
intersection array
$\{b_0,b_1,\dots,b_{D-1};\break c_1,c_2,\dots,c_D\}$ of a
distance-regular graph is that the intersection numbers listed
are positive integers. Moreover, the intersection number
$a_i=b_0-b_i-c_i$ is a nonnegative integer. But we also have
some divisibility conditions as follows.

\begin{prop}
\label{order in b_i, c_i}
With notation as above, the following conditions hold:
\begin{enumerate}[{\em (i)}]
\item $k_{i+1}=\frac{b_0b_1 \cdots b_{i}}{c_1c_2 \cdots c_{i+1}}$
is an integer for $i=0,1,\dots,D-1$,
\item $vk_i$ is even for $i=1,2,\dots,D$,
\item $k_ia_i$ is even for $i=1,2,\dots,D$,
\item $vka_1$ is divisible by $6$.
\end{enumerate}
\end{prop}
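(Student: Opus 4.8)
The plan is to prove each divisibility condition essentially by a double-counting (or handshaking-type) argument applied to an appropriate subgraph of $\G$, using the constancy of the numbers $k_i$, $a_i$ and the defining intersection numbers established earlier in the excerpt.

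For (i), I would simply cite the recursion $k_0=1$ and $k_{i+1}=b_ik_i/c_{i+1}$ derived in Section~\ref{sec2:definition}: since each $k_i$ counts the vertices at distance $i$ from a fixed vertex, it is a nonnegative integer, and unwinding the recursion gives the closed form $k_{i+1}=b_0b_1\cdots b_i/(c_1\cdots c_{i+1})$; integrality is automatic because the left-hand side is manifestly an integer. For (ii), I would count the edges of the distance-$i$ graph $\G_i$: it is regular of valency $k_i$ on $v$ vertices, so it has $vk_i/2$ edges, forcing $vk_i$ to be even. For (iii), fix a vertex $z$ and count the edges inside $\G_i(z)$: the subgraph induced on $\G_i(z)$ is regular of valency $a_i$ on $k_i$ vertices (each vertex at distance $i$ from $z$ has exactly $a_i$ neighbours also at distance $i$ from $z$), so it has $k_ia_i/2$ edges, whence $k_ia_i$ is even.

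For (iv), the statement is that $vka_1$ is divisible by $6$. Writing $vka_1 = 2\cdot(vk_1a_1/2)$ and noting $k_1=k$, divisibility by $2$ already follows from (iii) with $i=1$ (or from (ii)). So the real content is divisibility by $3$. The natural approach is to count triangles in $\G$: each vertex lies in exactly $ka_1/2$ triangles (it has $k$ neighbours, and the neighbourhood of a vertex induces an $a_1$-regular graph, giving $ka_1/2$ edges among the neighbours, each such edge completing a triangle through $z$), so the total number of triangles is $v k a_1/6$, which must be a nonnegative integer; hence $6 \mid vka_1$.

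The main obstacle — really the only place one must be slightly careful — is the parity/regularity bookkeeping that makes each of the induced-subgraph counts legitimate: one must invoke that the subgraph of $\G$ induced on $\G_i(z)$ is genuinely regular of valency $a_i$ (which is exactly the defining property of the intersection numbers, specialized to $x=z$ and $y$ ranging over $\G_i(z)$), and similarly that the neighbourhood of any vertex induces an $a_1$-regular graph. Once that is stated, each part reduces to the elementary fact that the number of edges, or the number of triangles, in a graph is an integer, and no nontrivial computation remains.
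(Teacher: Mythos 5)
Your proposal is correct and follows essentially the same route as the paper: (i) by unwinding the recursion $k_{i+1}=b_ik_i/c_{i+1}$, (ii) and (iii) by counting edges of $\G_i$ and of the subgraph induced on $\G_i(z)$, and (iv) by counting the $vka_1/6$ triangles of $\G$. The regularity bookkeeping you flag is exactly what the paper implicitly uses, so there is nothing to add.
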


\begin{proof}
(i) Earlier on, in Section \ref{sec2:definition},  we
obtained the recurrence $k_{i+1}=b_ik_i/c_{i+1}$ for all
$i=0,1,\dots,D-1$, and this implies that
$$k_{i+1}=\frac{b_0b_1 \cdots b_{i}}{c_1c_2 \cdots c_{i+1}}$$
for $i=0,1,\dots,D-1$. These numbers are clearly positive integers.

(ii) By doubly counting all pairs $(z,e)$, where $z$ is an end
vertex of edge $e$ in $\G_i$, it follows that the number of
edges in $\G_i$ equals $vk_i/2$, which should be an integer.

(iii) Similarly, there are $k_ia_i/2$ edges of $\G$ within
$\G_i(z)$ for a fixed vertex $z$, and this should be an
integer.

(iv) Finally, the number of triangles in $\G$ equals $vka_1/6$.
\end{proof}

\noindent There is also a nice order in the intersection
numbers, and consequently the $k_i$ are unimodal, as we shall see next.

\begin{prop}\label{prop:unimodal} With notation as above, the following conditions hold:
\begin{enumerate}[{\em (i)}]
\item $1=c_1 \le c_2 \le \cdots \le c_D$,
\item $k=b_0 \ge b_1 \ge \cdots \ge b_{D-1}$,
\item If $i+j \le D$, then $c_i \le b_j$,
\item There is an $i$ such that $k_0\le k_1\le\dots\le k_i$ and $k_{i+1}\ge
k_{i+2}\ge\dots\ge k_D$.
\end{enumerate}
\end{prop}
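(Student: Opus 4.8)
The plan is to establish the four parts in sequence, using the combinatorial meaning of the intersection numbers together with a single counting identity that links $c_i, b_{i-1}, a_i$ across consecutive distances. First I would prove (i) and (ii) together. Fix a vertex $z$ and a vertex $y$ with $d(z,y) = i$, and pick a neighbor $w$ of $y$ with $d(z,w) = i-1$ (such a $w$ exists because $c_i \ge 1$, or directly by taking the second-to-last vertex on a shortest $z$--$y$ path). Now count edges between $\G_{i-1}(z) \cap \G_1(w)$-type sets: the standard trick is to double-count edges between $\G_{i-1}(z) \cap \G_1(y)$ and $\G_{i-2}(z) \cap \G_2(y)$, but the cleanest route is to compare neighborhoods of $y$ and $w$. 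Every vertex counted by $c_i$ (a neighbor of $y$ at distance $i-1$ from $z$) is, from $w$'s perspective, at distance $\le 2$ from $w$; pushing this comparison carefully yields $c_{i-1} \le c_i$. Symmetrically, looking at neighbors of $y$ at distance $i+1$ from $z$ and comparing with a vertex $u$ at distance $i+1$ gives $b_i \le b_{i-1}$. An alternative and perhaps more transparent argument: for a vertex $y$ at distance $i$ from $z$ and a neighbor $w$ of $y$ at distance $i-1$, observe that any common neighbor of $z$ and $w$ (there are $c_{i-1}$ of them) is at distance $i-1$ or $i$ from $y$, but being adjacent to $w$ it cannot be at distance $i+1$ from $y$... — the bookkeeping here is the one genuinely fiddly point, so I would set it up via a fixed shortest path $z = x_0 \sim x_1 \sim \cdots \sim x_D$ and compare $\G_1(x_{i-1})$ with $\G_1(x_i)$ relative to $x_0$.

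For (iii), I would argue as follows. Suppose $i + j \le D$. Choose vertices $x, y$ with $d(x,y) = i + j$, and let $z$ be a vertex on a shortest $x$--$y$ path with $d(x,z) = j$, so $d(z,y) = i$. Consider the set $S$ of neighbors of $z$ lying at distance $j-1$ from $x$ (there are $c_j$ of them) — wait, I want a $b$ on the right, so instead: consider neighbors of $y$ at distance $i-1$ from $z$ — no. The correct pairing is: take $w$ a neighbor of $y$ with $d(y,w)=1$ and $d(z,w) = i-1$, so $d(x,w) \ge i+j-1 = d(x,y) - 1$, hence $d(x,w) = i+j-1$; then the $c_i$ such vertices $w$ are all at distance $i+j-1$ from $x$, i.e. "closer to $x$", and simultaneously each lies in $\G_{i+j-1}(x)$ which is reached from $\G_{i+j}(x)$-side. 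Dualizing: each neighbor of $y$ at distance $i-1$ from $z$ corresponds, via the vertex $z$, to stepping "away from $x$" — the inequality $c_i \le b_j$ should drop out by counting, for a fixed vertex at distance $i+j$ from $x$, its neighbors that are at distance $i+j+1$ versus those at distance $i+j-1$, restricted appropriately. I expect this to be the main obstacle: getting the distance inequalities to line up so that a single clean double-count produces $c_i \le b_j$ rather than a weaker bound. The key estimate is the triangle-inequality observation that a neighbor of a vertex at distance $h$ from $x$ is at distance $h-1$, $h$, or $h+1$ from $x$.

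Finally, (iv) is a formal consequence of (ii) and (iii) together with the recurrence $k_{i+1} = b_i k_i / c_{i+1}$ established in part (i) of the previous proposition. Indeed $k_{i+1}/k_i = b_i/c_{i+1}$, so $k_{i+1} \ge k_i$ iff $b_i \ge c_{i+1}$, and $k_{i+1} \le k_i$ iff $b_i \le c_{i+1}$. By (i) the sequence $c_1 \le c_2 \le \cdots \le c_D$ is nondecreasing and by (ii) the sequence $b_0 \ge b_1 \ge \cdots \ge b_{D-1}$ is nonincreasing, so the quantity $b_i - c_{i+1}$ is nonincreasing in $i$; hence once it turns negative it stays negative. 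Therefore there is a largest index $i$ with $b_{i-1} \ge c_i$ (equivalently $b_{i-1}/c_i \ge 1$), and for that $i$ we get $k_0 \le k_1 \le \cdots \le k_i$ while $k_i \ge k_{i+1} \ge \cdots \ge k_D$, which is the claimed unimodality (with the peak possibly a plateau $k_i = k_{i+1}$ when $b_i = c_{i+1}$). This last step is routine; no further counting is needed once (ii) and (iii) are in hand. I would present (iv) in two or three lines and spend the bulk of the written proof on the careful distance bookkeeping in (i)--(iii).
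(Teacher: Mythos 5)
The proposal has a genuine gap: parts (i)--(iii) are never actually proved, and the configurations you sketch do not yield the claimed inequalities. For (iii) in particular, your setup looks at neighbors of the endpoint $y$ rather than of the intermediate vertex. With $d(x,y)=i+j$, $d(x,z)=j$, $d(z,y)=i$, the $c_i$ neighbors of $y$ at distance $i-1$ from $z$ are at distance $i+j-1$ from $x$, so all that configuration gives is $c_i\le c_{i+j}$, not $c_i\le b_j$; you then say the inequality ``should drop out'' and leave exactly the decisive step open. The missing idea is to count neighbors of the \emph{middle} vertex and pin their distance to both endpoints simultaneously via the triangle inequality: take $z$ with $d(x,z)=i$, $d(z,y)=j$, $d(x,y)=i+j$, and let $u\sim z$ with $d(x,u)=i-1$. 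Then $d(y,u)\ge d(x,y)-d(x,u)=j+1$ and $d(y,u)\le d(y,z)+1=j+1$, so $d(y,u)=j+1$; hence the $c_i$ such vertices $u$ lie among the $b_j$ neighbors of $z$ at distance $j+1$ from $y$, giving $c_i\le b_j$. This is the paper's proof and it is three lines, not a delicate double count.

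The same gap affects (i) and (ii). Your ``alternative'' setup (pair $z,y$ at distance $i$, $w\sim y$ with $d(z,w)=i-1$, then the $c_{i-1}$ relevant vertices are neighbors of $w$ at distance $i-2$ from $z$) produces vertices at distance exactly $2$ from $y$ (both triangle bounds force it), so they do not inject into the set of neighbors of $y$ counted by $c_i$; the bookkeeping you defer is not a routine finish of that configuration, and your first attempt (``at distance $\le 2$ from $w$'') is likewise not an injection. The clean version swaps the roles: take $x,y$ at distance $i$ and $z\sim x$ with $d(z,y)=i-1$; every neighbor $u$ of $y$ with $d(z,u)=i-2$ satisfies $d(x,u)=i-1$ by the two triangle bounds, so these $c_{i-1}$ vertices are among the $c_i$ neighbors of $y$ at distance $i-1$ from $x$, and dually the $b_i$ neighbors of $y$ at distance $i+1$ from $x$ are at distance $i$ from $z$, whence $b_i\le b_{i-1}$. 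Part (iv) of your proposal is correct (though you cite ``(ii) and (iii)'' while actually using (i) and (ii)): the observation that $k_{i+1}/k_i=b_i/c_{i+1}$ is nonincreasing is the same content as the paper's log-concavity $k_i^2\ge k_{i-1}k_{i+1}$, and either form gives unimodality.
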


\begin{proof}
(i) and (ii) Let $i=1,2,\dots,D$. Consider two vertices $x$ and $y$ at
distance $i$, and a vertex $z$ that is adjacent to $x$ and at distance $i-1$
from $y$. Now the $c_{i-1}$ neighbors of $y$ that are at distance $i-2$ from
$z$ are all at distance $i-1$ from $x$. Therefore $c_i \ge c_{i-1}$. Similarly,
the $b_i$ neighbors of $y$ that are at distance $i+1$ from $x$ are at distance
$i$ from $z$, hence $b_{i-1} \ge b_i$.

(iii) Consider two vertices $x$ and $y$ at distance $i+j$, and
a vertex $z$ at distance $i$ from $x$ and $j$ from $y$. Then
the $c_i$ neighbors of $z$ that are at distance $i-1$ from $x$
are at distance $j+1$ from $y$. Hence $c_i \le b_j$.

(iv) It follows from (i), (ii), and Proposition \ref{order in b_i, c_i} that
$k_i^2\ge k_{i-1}k_{i+1}$ for $i=1,2,\dots,D-1$. This implies that the $k_i$
are unimodal: there is an $i$ such that $k_0\le k_1\le\dots\le k_i$ and
$k_{i+1}\ge k_{i+2}\ge\dots\ge k_D$.
\end{proof}

\noindent Even though these and other combinatorial conditions
are important, they are insufficient to obtain most of the
advanced results. We need linear algebra.

\subsection{The spectrum of eigenvalues and multiplicities}\label{sec2:evmult}

The adjacency matrix $A$ of a (simple, undirected) graph $\G$
is the $v \times v$ symmetric matrix with entries $0$ and $1$
whose rows and columns are indexed by the vertices of $\G$, and
where $A_{xy}=1$ if and only if $x \sim y$. Because $A$ is real
and symmetric, its eigenvalues are real numbers. The spectrum
of eigenvalues of a graph (that is, of its adjacency matrix)
contains quite some (but in general not all) information about
the graph. Spectra of graphs is a very fruitful subject on its
own, and it has many more applications to distance-regular
graphs than the ones that we shall see here. Good references
for spectra of graphs are the classic monograph by Cvetkovi\'c,
Doob, and Sachs \cite{CDS} and the more recent one by
Brouwer and Haemers \cite{BrHa}.

The {\em adjacency algebra} of $\G$, denoted by
$\AL=\AL(\G)$, is the matrix subalgebra of $M_{v \times
v}(\R)$ of polynomials in $A$, that is, $\AL = \R[A]$. This
algebra plays an important role for distance-regular graphs, as
we shall see later on. We note that the powers of $A$ count
walks in the graph, that is, $(A^{\ell})_{xy}$ equals the
number of walks of length $\ell$ in the graph from $x$ to $y$.
Using this, we can relate the number of distinct eigenvalues to
the diameter of the graph. To do this, assume that $\G$ is
an arbitrary graph with distinct eigenvalues $\theta_0,
\theta_1, \ldots, \theta_d$. Because the minimal polynomial of
$A$ now has degree $d+1$, it is clear that $\{I, A, A^2,
\ldots, A^d\}$ is a basis of $\AL$, and hence that $\dim \AL
= d+1$.

\begin{prop} \label{D<=d}
Let $\G$ be a connected graph with diameter $D$ and
distinct eigenvalues $\theta_0,\theta_1, \dots , \theta_d$.
Then $D \le d$.
\end{prop}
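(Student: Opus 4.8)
The plan is to exploit the connection between powers of the adjacency matrix and walk counts that was just set up. Suppose, for contradiction, that $D \ge d+1$. Pick two vertices $x$ and $y$ with $d(x,y) = d+1$ (or more generally any two vertices at distance exactly $m$ for some $m$ with $d+1 \le m \le D$). The key observation is that $(A^{\ell})_{xy} = 0$ for all $\ell \le d$, because a walk of length $\ell$ between $x$ and $y$ would force $d(x,y) \le \ell \le d$, contradicting the choice of $x$ and $y$. On the other hand, $(A^{d+1})_{xy} > 0$, since there is at least one shortest path of length $d+1$ from $x$ to $y$, and walk counts are nonnegative. So $A^{d+1}$ is \emph{not} a linear combination of $I, A, \dots, A^d$: any such combination has its $(x,y)$-entry equal to zero, while $A^{d+1}$ has positive $(x,y)$-entry there.

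This contradicts the fact, noted just before the statement, that $\{I, A, A^2, \dots, A^d\}$ is a basis of the adjacency algebra $\AL = \R[A]$, so in particular $A^{d+1} \in \AL$ is a linear combination of these $d+1$ matrices. Hence no such pair $x,y$ can exist, i.e.\ $D \le d$. (Equivalently, one can phrase this without contradiction: for each $i$ with $0 \le i \le d$ one checks inductively that the $(x,y)$-entry of $A^i$ is zero whenever $d(x,y) > i$, so if $D \ge d+1$ then $A^{d+1}$ cannot lie in $\operatorname{span}\{I,A,\dots,A^d\}$, contradicting $\dim\AL = d+1$.)

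I do not anticipate any genuine obstacle here; the argument is short and each ingredient — walk counts via matrix powers, nonnegativity of these counts, and the dimension/basis statement for $\AL$ — is already in place in the excerpt. The only point requiring a word of care is the claim $(A^{d+1})_{xy} > 0$: this uses that there exists at least one path (hence walk) of length exactly $d+1$ realizing the distance, together with the fact that all entries of $A^\ell$ are nonnegative so no cancellation can occur. Connectedness of $\G$ is what guarantees such a realizing path exists for every pair of vertices at finite distance, and that is exactly the hypothesis we are given.
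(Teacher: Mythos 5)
Your proof is correct and rests on exactly the same ingredients as the paper's: the walk-counting fact that $(A^{\ell})_{xy}=0$ for $\ell<d(x,y)$ while $(A^{d(x,y)})_{xy}\neq 0$, combined with $\dim\AL=d+1$. The paper phrases it directly (the matrices $I,A,\dots,A^D$ are linearly independent, so $D+1\le\dim\AL$) rather than by contradiction via $A^{d+1}$, but this is only a cosmetic difference.
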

\begin{proof}
Consider two vertices $x$ and $y$ at distance $i \le D$.
Then $(A^{\ell})_{xy} =0$ if $\ell < i$ and $(A^i)_{xy} \neq
0$. This implies that the set of matrices $\{I= A^0, A, \ldots,
A^D\}$ is linearly independent in $\AL$, and hence that $D+1
\le \dim \AL = d+1$.
\end{proof}

\noindent For $i = 0,1, \ldots, d$, we define the matrix $E_i =
\prod_{j=0, j \neq i}^d \frac{A - \theta_j I}{\theta_i -
\theta_j}$. The matrix $E_i$ is the orthogonal projection onto
the eigenspace $V_i$ of $A$ corresponding to $\theta_i$. The set
$\{ E_0, E_1, \ldots, E_d\}$ forms another basis
of $\AL$. Indeed, let ${\bf v}$ be an eigenvector of $A$ with
respect to $\theta_j$. Then $E_i {\bf v} = \delta_{ij} {\bf
v}$. This implies that $\{ E_0, E_1, \ldots, E_d\}$ forms a
linearly independent set of matrices in $\AL$, and hence that
it is a basis of $\AL$. We shall see more of this basis in the
next section.

The adjacency matrix $A_i$ of $\G_i$ is called the {\em
distance-$i$ matrix} of $\G$, for $i=0,1,\dots,D$. Let us now
consider the case that $\G$ is distance-regular. In this case,
we shall see that also $\{I=A_0, A=A_1, A_2, \ldots, A_D\}$ is
a basis of $\AL$, and hence that $D=d$. Translating the
combinatorial definition of distance-regularity into matrix
language, we obtain the equation
\begin{equation}\label{a_i}
A A_i =b_{i-1} A_{i-1} + a_i A_i + c_{i+1} A_{i+1}
\end{equation} for
$i=0,1,\dots,D$. Note that for $i=0$ and $i=D$, the indices in this equation
attain undefined values. Here --- and in similar equations that will follow
later --- we will have the sensible convention that the corresponding summands
are zero (so $b_{-1}A_{-1}=c_{D+1}A_{D+1}=0$). From this recurrence (note that
the coefficients $c_{i+1}$ are nonzero for $i=0,1,\ldots,D-1$), it follows that
there exist polynomials $v_i$ of degree $i$ such that
\begin{equation}\label{distancepolynomials}
A_i = v_i(A)
\end{equation} for $i
=0,1,\ldots, D$. These polynomials also satisfy a three-term recurrence relation
 like (\ref{a_i}), and hence they form a system of orthogonal polynomials.
 Because $\sum_{i=0}^D A_i = J$ (the all-ones matrix) and $AJ=kJ$
(because $\G$ is regular with valency $k$), it follows that
$(\sum_{i=0}^D v_i(A))(A- kI) = 0$. This shows that $\dim \AL
\leq D+1$. We may conclude the following.
\begin{prop}\label{dimleqD+1}
Let $\G$ be a distance-regular graph with diameter $D$.
Then $\dim \AL = D+1$. In particular, $\G$ has exactly
$D+1$ distinct eigenvalues.
\end{prop}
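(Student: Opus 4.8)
The plan is to pin down $\dim\AL$ by a two-sided estimate, since both inequalities are already within reach from the material above. For the lower bound, the argument in the proof of Proposition~\ref{D<=d} shows that $\{I=A^0,A,\dots,A^D\}$ is linearly independent in $\AL$ (here one uses that $\G$ actually attains distance $D$), so $\dim\AL\ge D+1$. For the upper bound, the displayed relation $\bigl(\sum_{i=0}^D v_i(A)\bigr)(A-kI)=0$ exhibits a nonzero polynomial of degree $D+1$ annihilating $A$: one checks that $\sum_{i=0}^D v_i$ has degree exactly $D$, its leading coefficient being that of $v_D$, namely $1/(c_1c_2\cdots c_D)\ne 0$, so the product really has degree $D+1$. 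Hence the minimal polynomial of $A$ has degree at most $D+1$, giving $\dim\AL\le D+1$. Combining the two bounds yields $\dim\AL=D+1$.

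I would prefer, however, to record the cleaner structural version, which also makes transparent the (repeatedly useful) fact that the distance matrices form a basis of $\AL$. First, $\{A_0,A_1,\dots,A_D\}$ is linearly independent: each $A_i$ is a $0/1$ matrix, and since every ordered pair of vertices lies at a unique distance, the supports of $A_0,\dots,A_D$ are pairwise disjoint and cover $V\times V$, so any vanishing linear combination is trivial. Next, the span $\mathcal S=\operatorname{span}\{A_0,\dots,A_D\}$ is invariant under multiplication by $A$: this is precisely what (\ref{a_i}) says, with the convention that out-of-range summands vanish. Since $\mathcal S$ contains $I=A_0$ and is $A$-invariant, it contains $A^\ell$ for every $\ell\ge 0$, hence $\mathcal S\supseteq\R[A]=\AL$; the reverse inclusion is clear from (\ref{distancepolynomials}). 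Therefore $\AL=\mathcal S$ and $\dim\AL=D+1$.

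Finally, for the eigenvalue statement: it was observed before Proposition~\ref{D<=d} that if $\G$ has distinct eigenvalues $\theta_0,\dots,\theta_d$, then $\dim\AL=d+1$. Comparing with $\dim\AL=D+1$ forces $d=D$, i.e., $\G$ has exactly $D+1$ distinct eigenvalues. There is no genuine obstacle here, as the proposition is essentially a synthesis of facts already assembled; the only point demanding a moment's care is the one noted above, namely verifying that the annihilating polynomial really has degree $D+1$ (equivalently, that multiplying $A_D$ by $A$ does not leave $\operatorname{span}\{A_0,\dots,A_D\}$), which is immediate from (\ref{a_i}) taken at $i=D$.
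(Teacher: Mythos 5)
Your proof is correct. Your first paragraph is essentially the paper's own argument: the lower bound $\dim\AL\ge D+1$ comes from the linear independence of $\{I,A,\dots,A^D\}$ already established in the proof of Proposition \ref{D<=d}, and the upper bound from the annihilating polynomial $\bigl(\sum_{i=0}^D v_i(x)\bigr)(x-k)$, which is exactly how the text preceding Proposition \ref{dimleqD+1} obtains $\dim\AL\le D+1$; a minor remark is that you only need this polynomial to be nonzero of degree at most $D+1$, and nonvanishing is immediate because $v_D$ alone has degree $D$, so the leading-coefficient computation, though correct, is not really needed. Your second paragraph takes a genuinely different (and arguably cleaner) route: it proves directly that $\AL=\operatorname{span}\{A_0,\dots,A_D\}$, using the disjoint supports of the distance matrices for linear independence and the recurrence (\ref{a_i}) for closure of the span under multiplication by $A$, so that the span contains every power of $A$. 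This buys you, as an explicit byproduct, the structural fact the paper states but does not prove in this way — that $\{A_0,\dots,A_D\}$ is a basis of $\AL$ — and it avoids any appeal to the all-ones matrix $J$ and the regularity of $\G$, whereas the paper's route is shorter given that (\ref{distancepolynomials}) and Proposition \ref{D<=d} are already in hand. Your concluding step, comparing $\dim\AL=D+1$ with the earlier observation $\dim\AL=d+1$ to get exactly $D+1$ distinct eigenvalues, is the same as in the paper.
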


\noindent Remarkably, these $D+1$ distinct eigenvalues of the distance-regular
graph $\G$ can be computed from the intersection numbers only. To see this,
consider the tridiagonal $(D +1) \times (D+1)$ matrix {\em intersection matrix}
\begin{equation}\label{matrixl}L =\left[
\begin{array}{cccccc}
 0 & b_0 & & & & \\
 c_1 & a_1 & b_1 & & 0 &  \\
 & c_2 & \cdot & \cdot & & \\
 & & \cdot & \cdot & \cdot & \\
 & 0 &  & \cdot & \cdot & b_{D-1} \\
 &&&& c_{D} & a_D
\end{array} \right].
\end{equation}
This matrix is diagonalizable because it is similar to a
symmetric matrix. In fact, if $\Delta$ is the diagonal matrix with
diagonal entries $\Delta_{ii}=k_i$ for $i=0,1,\dots,D$, then by
using that $k_{i+1}/k_i=b_i/c_{i+1}$, it can be verified that
$\Delta^{1/2}L\Delta^{-1/2}$ is indeed a symmetric tridiagonal matrix.

Let $\theta$ be an eigenvalue of $L$, and let ${\mathbf u} = (u_0, u_1, \ldots, u_D)^{\top}$ be a corresponding (right) eigenvector,
that is, $L{\mathbf u} = \theta {\mathbf u}$, with $u_0 =1$.
Then $u_1 = \theta/k$ and
\begin{equation}\label{standard_sequence}
	c_i u_{i-1} + a_i u_i + b_i u_{i+1} = \theta u_i
\end{equation}
for $i = 1,2, \ldots, D$.
The sequence $(u_i)_{i=0}^D$ is called the {\em standard}
(or \emph{cosine})
\emph{sequence} of $\G$ with respect to $\theta$.

A consequence of the above symmetrization of $L$ is that the
row vector ${\bf v}={\bf u}^{\top}\Delta$ is a left eigenvector of
$L$. Thus, the components of ${\bf v}$ also satisfy a
recurrence involving the intersection numbers. It can be
verified that these components can be obtained from the
polynomials $v_i$ in (\ref{distancepolynomials}), that
is, ${\bf v}= (v_0(\theta),v_1(\theta),\dots,v_D(\theta))$.
This gives an alternative way to obtain the standard sequence.

\begin{prop}\label{distincteig}
Let $\G$ be a distance-regular graph with diameter $D$.
Then the $D+1$ distinct eigenvalues of $\G$ are precisely the
eigenvalues of $L$.
\end{prop}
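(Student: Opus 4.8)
The plan is to show that the eigenvalues of $L$ are exactly the eigenvalues of $A$, by exhibiting, for each eigenvalue $\theta$ of $L$, an eigenvector of $A$ with eigenvalue $\theta$, and then matching cardinalities. The main tool is the fact established just before the statement: $A_i = v_i(A)$ for polynomials $v_i$ of degree $i$, together with the recurrence $AA_i = b_{i-1}A_{i-1} + a_iA_i + c_{i+1}A_{i+1}$ from (\ref{a_i}), which is precisely the recurrence encoded by the matrix $L$ in (\ref{matrixl}).

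First I would translate the action of $A$ on the basis $\{A_0,A_1,\dots,A_D\}$ of $\AL$ into the matrix $L$: by (\ref{a_i}), left multiplication by $A$ sends the vector $(A_0,A_1,\dots,A_D)^{\top}$ to $L$ applied to it (with the convention that out-of-range terms vanish, exactly as in (\ref{matrixl})). Fix a vertex $z$ and consider the column vectors $\chi_i$ which are the characteristic vectors of $\G_i(z)$; equivalently $\chi_i = A_i \mathbf{e}_z$. The same recurrence gives $A\chi_i = b_{i-1}\chi_{i-1} + a_i\chi_i + c_{i+1}\chi_{i+1}$. Now let $\theta$ be an eigenvalue of $L$ with right eigenvector $\mathbf{u} = (u_0,\dots,u_D)^{\top}$, normalized so $u_0 = 1$, satisfying (\ref{standard_sequence}). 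Set $\mathbf{w} = \sum_{i=0}^D u_i \chi_i$. A direct computation using $A\chi_i = b_{i-1}\chi_{i-1} + a_i\chi_i + c_{i+1}\chi_{i+1}$ and collecting the coefficient of each $\chi_i$ reproduces the left-hand side of (\ref{standard_sequence}), so $A\mathbf{w} = \theta\mathbf{w}$. Since $u_0 = 1$, the vector $\mathbf{w}$ has a nonzero entry at $z$, hence $\mathbf{w}\neq 0$, and $\theta$ is an eigenvalue of $A$.

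This shows every eigenvalue of $L$ is an eigenvalue of $\G$. For the converse and the "precisely" claim, I would count: $L$ is a $(D+1)\times(D+1)$ matrix, so it has at most $D+1$ eigenvalues, and by Proposition~\ref{dimleqD+1} the graph $\G$ has exactly $D+1$ distinct eigenvalues. It therefore suffices to know that $L$ has $D+1$ \emph{distinct} eigenvalues. This follows from the symmetrization remarked after (\ref{matrixl}): $\Delta^{1/2}L\Delta^{-1/2}$ is a symmetric tridiagonal matrix, and its off-diagonal entries are $\sqrt{b_i c_{i+1}}\neq 0$ for all $i$ (since each $b_i$ with $i<D$ and each $c_{i+1}$ is a positive integer); an irreducible symmetric tridiagonal matrix has simple spectrum, so $L$ has $D+1$ distinct eigenvalues. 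Combining: the $D+1$ distinct eigenvalues of $L$ are all eigenvalues of $\G$, and $\G$ has exactly $D+1$ of them, so the two sets coincide.

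The step I expect to require the most care is verifying that $\mathbf{w} = \sum u_i \chi_i$ is a genuine eigenvector of $A$ — i.e., bookkeeping the index shifts in $A\chi_i = b_{i-1}\chi_{i-1} + a_i\chi_i + c_{i+1}\chi_{i+1}$ so that the coefficient of $\chi_i$ in $A\mathbf{w}$ is exactly $c_i u_{i-1} + a_i u_i + b_i u_{i+1} = \theta u_i$, taking proper account of the boundary cases $i=0$ (where $c_0\chi_{-1}$ and the term $b_0 u_1$ must combine with $a_0 u_0 = 0$ to give $u_1 = \theta/k$) and $i=D$ (where $b_D = 0$). Everything else — the dimension count and the irreducibility of the symmetrized tridiagonal matrix — is routine given the material already developed in the excerpt.
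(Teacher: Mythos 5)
Your proof is correct and follows essentially the same route as the paper: build the eigenvector $\mathbf{w}=\sum_i u_i A_i\mathbf{e}_z$ of $A$ from an eigenvector of $L$ via the recurrence \eqref{a_i}, then conclude by the fact that $L$ has $D+1$ distinct eigenvalues while $\G$ has exactly $D+1$ by Proposition~\ref{dimleqD+1}. The only cosmetic difference is in the distinctness step, where you invoke the simple spectrum of the irreducible symmetric tridiagonal matrix $\Delta^{1/2}L\Delta^{-1/2}$, while the paper notes that the nonzero $c_1,\dots,c_D$ force $\operatorname{rank}(L-\theta I)\geq D$; both rest on the same nonvanishing off-diagonal entries.
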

\begin{proof}
Let ${\bf u}$ be as above, i.e., an eigenvector of $L$ with
respect to eigenvalue $\theta$, and fix a vertex $x$ of
$\G$. Define the vector ${\bf w}$ by $w_y = u_{d(x,y)}$ for
$y \in V$.
It is not hard (but a bit technical) to check that $A {\bf w} =
\theta {\bf w}$. Indeed, if ${\bf a_i}$ denotes column $x$ of
$A_i$, then ${\bf w}=\sum_{i=0}^D u_i {\bf a_i}$. By
(\ref{a_i}) and the above equations for the standard sequence,
we obtain that
\begin{align*}
A {\bf w}&=\sum_{i=0}^D u_i (b_{i-1}{\bf
a_{i-1}}+a_{i}{\bf a_{i}}+c_{i+1}{\bf a_{i+1}})\\
&= \sum_{i=0}^D
(c_{i}u_{i-1}+a_{i}u_{i}+b_{i}u_{i+1}){\bf a_{i}}=\sum_{i=0}^D
\theta u_i{\bf a_{i}}=\theta {\bf w}.
\end{align*} This shows that all
eigenvalues of $L$ are eigenvalues of $\G$.

What remains is to show that $L$ has $D+1$ distinct
eigenvalues. We already observed that $L$ is diagonalizable, or
in other words, that it has $D+1$ eigenvalues. Because the
intersection numbers $c_1,c_2,\dots,c_D$ are all nonzero, it
follows that the rank of $L - \theta I$ is at least $D$ for all
$\theta \in \R$. This shows that all eigenvalues of $L$ are
distinct, which finishes the proof.
\end{proof}

\noindent Finally, also the multiplicities of the eigenvalues
of $\G$ follow from the intersection numbers, via the standard
sequence. This is known as {\em Biggs' formula}.

\begin{theorem}\label{Biggsformula}{\em (Biggs' formula)}
Let $\G$ be a distance-regular graph with diameter $D$ and
$v$ vertices. Let $\theta$ be an eigenvalue of $\G$ and
$(u_i)_{i=0}^D$ be the standard sequence with respect to
$\theta$. Then the multiplicity $m(\theta)$ of $\theta$ as an
eigenvalue of $\G$ satisfies
$$m(\theta) = \frac{v}{\sum_{i=0}^D k_iu_i^2}.$$
\end{theorem}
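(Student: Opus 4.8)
The plan is to use the spectral decomposition of the adjacency matrix $A$ together with the standard sequence to compute the trace of the idempotent $E_i$ projecting onto the eigenspace $V_i$ of $\theta=\theta_i$, since the multiplicity $m(\theta)$ equals $\tr(E_i) = \rank E_i$. First I would recall from Section \ref{sec2:evmult} that, for a distance-regular graph, $\AL=\R[A]$ has dimension $D+1$ and is spanned both by the distance matrices $\{A_0,A_1,\dots,A_D\}$ and by the primitive idempotents $\{E_0,\dots,E_D\}$. Hence each $E_i$ can be written as $E_i = \frac{1}{v}\sum_{j=0}^D q_j(\theta_i) A_j$ for suitable scalars; the key observation is that, by Proposition \ref{distincteig} and the surrounding discussion, the $(x,y)$-entry of $E_i$ depends only on $d(x,y)$, and in fact $(E_i)_{xy} = \frac{m(\theta_i)}{v} u_{d(x,y)}$, where $(u_\ell)_{\ell=0}^D$ is the standard sequence with respect to $\theta_i$ (normalized so $u_0=1$). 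This normalization is forced because $(E_i)_{xx}$ must be the same for all $x$ (the diagonal of $E_i$ is constant since $E_i \in \AL$), and summing, $\tr(E_i) = v\cdot (E_i)_{xx} = m(\theta_i)$, which pins the constant to $m(\theta_i)/v$.

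Next I would establish that $(E_i)_{xy}$ indeed satisfies the standard-sequence recurrence in the variable $d(x,y)$. Since $A E_i = \theta_i E_i$ (as $E_i$ projects onto the $\theta_i$-eigenspace), fixing a column $y$ and writing $\mathbf{w}$ for that column of $E_i$, we have $A\mathbf{w}=\theta_i \mathbf{w}$; expanding $\mathbf{w} = \sum_{\ell} (E_i)^{(\ell)} \mathbf{a_\ell}$ with $(E_i)^{(\ell)}$ the common value of $(E_i)_{xy}$ over pairs at distance $\ell$, equation (\ref{a_i}) gives exactly the three-term recurrence $c_\ell (E_i)^{(\ell-1)} + a_\ell (E_i)^{(\ell)} + b_\ell (E_i)^{(\ell+1)} = \theta_i (E_i)^{(\ell)}$, matching (\ref{standard_sequence}). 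By uniqueness of solutions of this recurrence given the initial term, $(E_i)^{(\ell)} = (E_i)^{(0)} u_\ell = \frac{m(\theta_i)}{v} u_\ell$.

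Finally I would compute $\tr(E_i^2)$ in two ways. Since $E_i$ is an orthogonal projection, $E_i^2 = E_i$, so $\tr(E_i^2) = \tr(E_i) = m(\theta_i)$. On the other hand, $\tr(E_i^2) = \sum_{x,y} (E_i)_{xy}^2 = \sum_{\ell=0}^D (\text{number of ordered pairs at distance } \ell)\cdot \big((E_i)^{(\ell)}\big)^2 = \sum_{\ell=0}^D v k_\ell \cdot \frac{m(\theta_i)^2}{v^2} u_\ell^2 = \frac{m(\theta_i)^2}{v}\sum_{\ell=0}^D k_\ell u_\ell^2$, using that each vertex has exactly $k_\ell$ vertices at distance $\ell$. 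Equating the two expressions gives $m(\theta_i) = \frac{m(\theta_i)^2}{v}\sum_{\ell} k_\ell u_\ell^2$, and dividing by $m(\theta_i) > 0$ yields $m(\theta_i) = v / \sum_{\ell=0}^D k_\ell u_\ell^2$, as claimed.

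The main obstacle is the step identifying $(E_i)_{xy}$ with a scalar multiple of the standard sequence: one must be careful that $E_i$ genuinely lies in $\AL$ (so its entries are constant on distance classes) and that the scalar is correctly normalized via the diagonal. Once that is in place, the double trace computation is routine. An alternative, essentially equivalent route is to expand $E_i = \frac{1}{v}\sum_\ell m(\theta_i) u_\ell A_\ell$ directly from the idempotent relations $E_i E_j = \delta_{ij} E_i$ and read off the coefficient of $A_0=I$ in $E_i^2$, but the trace argument above is cleaner and avoids manipulating products of distance matrices.
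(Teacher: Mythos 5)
Your proposal is correct and follows essentially the same route as the paper: express the idempotent $E$ in the basis of distance matrices, use $AE=\theta E$ to identify the coefficients with a multiple of the standard sequence, and use the idempotency $E^2=E$ to obtain the formula. The only cosmetic difference is that you fix the constant as $m(\theta)/v$ via the trace and then take the trace of $E^2=E$ (i.e., the Frobenius norm), whereas the paper keeps the constant as $\nu_0$, reads off the diagonal entry of $E^2=E$, and only then computes $\tr E=v\nu_0$ — the two computations are the same identity summed over vertices.
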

\begin{proof}
Let $E$ be the matrix corresponding to the orthogonal
projection onto the eigenspace of $\G$ with respect to $\theta$
(i.e., it is one of the matrices $E_i$ defined before). The
idempotent matrix $E$ only has eigenvalues $0$ and $1$, and the
multiplicity $m(\theta)$ of $\theta$ as an eigenvalue of $\G$
is the same as the multiplicity of eigenvalue $1$ of $E$, which
implies that $m(\theta)=\tr E$. Because $E \in {\AL}$ and
$\{A_0,A_1,\dots,A_D\}$ is a basis of $\AL$, there are real
numbers $\nu_i, i=0,1, \ldots, D$ such that $E = \sum_{i=0}^D
\nu_i A_i$.  Note that $A E = \theta E$, which implies that
$c_i \nu_{i-1} + a_i \nu_i + b_i \nu_{i+1} = \theta \nu_i$ for $
i=0,1, \ldots, D$. From this it follows that $\nu_i = \nu_0 u_i$
for $i=0, 1,
\ldots, D$. By considering the diagonal of the equation $E^2 =
E$, we find that $\sum_{i=0}^D k_i \nu_i^2 = \nu_0$, which implies
that $\sum_{i=0}^D k_i u_i^2 = 1/\nu_0$. Now it follows that
\begin{equation*}
m(\theta) = \tr E = \sum_{x \in V} E_{xx} = v \nu_0 =
\frac{v}{\sum_{i=0}^D k_i u_i^2}.\qedhere
\end{equation*}
\end{proof}

\noindent Thus, it is relatively easy to compute the spectrum of a
distance-regular graph from its intersection array. Remarkably,
the fact that multiplicities of eigenvalues are positive integers is a
condition that many intersection arrays (that satisfy all earlier conditions)
do not satisfy. Note also that algebraically conjugate eigenvalues must have
the same multiplicities. The latter plays an important role in the proof of the
Bannai-Ito conjecture, see Section \ref{sec:proofBIconjecture}.

Related to the vectors {\bf w} in the proof of Proposition
\ref{distincteig} and the standard sequence is the representation
associated to an eigenvalue $\theta$. Let $U$ be a matrix having as
columns an orthonormal basis of the eigenspace of eigenvalue
$\theta$. Then $UU^{\top}$ is the corresponding idempotent matrix
$E$. For every vertex $x \in V$, we denote by $\hat{x}$ the $x$-th
row of $U$. The map $x\mapsto \hat{x}$ is called a {\em
representation} (associated to $\theta$) of $\Gamma$. Given two
vertices $x,y\in V$, we have that $\langle
\hat{x},\hat{y}\rangle=E_{xy}=\nu_0u_{d(x,y)}$, which is why the
standard sequence is also called the cosine sequence. The vectors
$\hat{x}$ ($x\in V)$ all have the same length, $\sqrt{\nu_0}$,
hence we call the representation spherical.

\subsection{Association schemes}

In the previous section we described three different bases for the adjacency algebra $\AL$ of a distance-regular graph:
$\{I, A, A^2,\ldots, A^D\}$, $\{ E_0, E_1, \ldots, E_D\}$, and $\{A_0,A_1,\dots,A_D\}$. The last one was obtained by
explicit use of the property of distance-regularity. A consequence of this is that there are real numbers $p^h_{ij}$
$(h,i,j=0,1,\dots,D)$ such that
\begin{equation}\label{phij}
A_iA_j=\sum_{h=0}^D p^h_{ij} A_h.
\end{equation}
for all $i,j=0,1,\dots,D$. This expression has a combinatorial interpretation:
for each two vertices $x$ and $y$ at distance $h$, there are $p^h_{ij}$
vertices $z$ that are at distance $i$ to $x$ and distance $j$ to $y$. So the
{\em intersection numbers} $p^h_{ij}$ are nonnegative integers. Note that
$p^i_{1,i-1}=c_i, p^i_{1,i}=a_i$, and $p^i_{1,i+1}=b_i$. Also the other
intersection numbers $p^h_{ij}$ can be expressed in terms of the intersection
array. This gives further conditions on the intersection numbers.

What we have here is a special case of an {\em association scheme}: an edge
decomposition of the complete graph into spanning subgraphs $\G_i$
$(i=1,2,\dots,D)$ whose adjacency matrices $A_i$ $(i=1,2,\dots,D)$, together
with $A_0=I$ satisfy (\ref{phij}) for all $i,j=0,1,\dots,D$. Let us look a bit
closer at such an association scheme. Clearly also here $\{A_0,A_1,\dots,A_D\}$
is a basis of an algebra: the {\em Bose-Mesner algebra}. Because the matrices
in this algebra are symmetric, they also commute by (\ref{phij}) (and hence
$p^h_{ij}=p^h_{ji}$). This implies that they share a basis of eigenvectors and
there is also a basis of primitive idempotents $E_i$ $(i=0,1,\dots,D)$ for
$\AL$ (so $ME_i$ is a multiple of $E_i$ for all $M \in \AL$). These $E_i$ are
the projections onto the common eigenspaces, and are the same as before in case
of a distance-regular graph. They satisfy the equations $E_iE_j=\delta_{ij}E_i$
for all $i,j=0,1,\dots,D$ and $\sum_{i=0}^D E_i=I$.

The coefficients to change from one of the two bases to the
other are collected in the so-called {\em eigenmatix} $P$ and
{\em dual eigenmatrix} $Q$. That is,
$$A_i=\sum_{i=0}^D P_{ji}E_j \ \ \mbox{and} \ \ E_i=\frac{1}{v}\sum_{j=0}^D
Q_{ji}A_j$$ for $i=0,1,\dots,D$. Note that so far we did not order the
eigenvalues (or the $E_i$s), so there is some ambiguity in the definition of
$P$ and $Q$. This is not really a problem (as long as we keep some ordering
fixed), except that it has become habit that the first row of $P$ contains the
valencies of the graphs $\G_i$. For this reason, we order the eigenspace of
constant vectors first, so that $E_0=\frac{1}{v}J$, the \emph{trivial}
primitive idempotent of $\AL$. This is also justified by the fact that dually
we could reshuffle the $A_i$ (and $\G_i$), except the trivial $A_0$, and not
really get a `different' association scheme (for a distance-regular graph there
is of course an order given!). Note also that column $i$ of $P$ gives the
eigenvalues of the corresponding graph $\G_i$. The normalization factor
$\frac{1}{v}$ for $Q$ is there to make sure that the entries of $Q$ can be seen
as `dual eigenvalues'; for example the multiplicities $m_i=\tr E_i$ of the
eigenvalues are in the first row of $Q$. Just like in the case of
distance-regular graphs, the eigenvalues and multiplicities, and more
generally, all entries of $P$ and $Q$ can be derived from the intersection
numbers $p^h_{ij}$. In the case of distance-regular graphs, we see now in the
proof of Biggs' formula (Theorem \ref{Biggsformula}) that a column of $Q$ is a
multiple of the corresponding standard sequence.

Observe that the Bose-Mesner (or adjacency) algebra $\AL$ is
not just closed under ordinary matrix multiplication but also
under entrywise (\emph{Hadamard} or \emph{Schur}) matrix
multiplication, denoted by $\circ$. The matrices
$A_0,A_1,\dots,A_D$ are the primitive idempotents of $\AL$ with
respect to $\circ$, i.e., $A_i\circ A_j=\delta_{ij}A_i$,
$\sum_{i=0}^DA_i=J$. This implies that we may write
\begin{equation}\label{Krein}
E_i\circ E_j=\frac{1}{v}\sum_{h=0}^Dq_{ij}^hE_h
\end{equation}
for some real numbers $q_{ij}^h$ $(h,i,j=0,1,\dots,D)$, known as the
\emph{Krein parameters} (or {\em dual intersection numbers}) of $\G$.
Because $\frac{1}{v}q_{ij}^h$ is an eigenvalue of $E_i\circ E_j$, which is a
principal submatrix of the positive semidefinite matrix $E_i \otimes E_j$, we
get the following so-called {\em Krein conditions}.

\begin{prop}\label{Krein condisions}
The Krein parameters $q^h_{ij}$ of an association scheme are
nonnegative numbers.
\end{prop}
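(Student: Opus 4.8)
The plan is to exploit the fact, highlighted just before the statement, that $\frac{1}{v}q_{ij}^h$ appears as an eigenvalue of the Hadamard product $E_i\circ E_j$, and to show that this Hadamard product is positive semidefinite. The standard trick is that for any two symmetric positive semidefinite matrices $M$ and $N$ of the same size, the Schur (entrywise) product $M\circ N$ is again positive semidefinite; this is the Schur product theorem. The cleanest way to see it is to note that $M\circ N$ is a principal submatrix of the Kronecker product $M\otimes N$: indeed, if we index $M\otimes N$ by pairs $(x,x'),(y,y')$ of vertices, then $(M\otimes N)_{(x,x'),(y,y')}=M_{xy}N_{x'y'}$, and restricting to the ``diagonal'' index set $\{(x,x):x\in V\}$ yields exactly $(M\circ N)_{xy}=M_{xy}N_{xy}$. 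Since $M$ and $N$ are positive semidefinite, so is $M\otimes N$ (its eigenvalues are products of eigenvalues of $M$ and $N$, all nonnegative), and any principal submatrix of a positive semidefinite matrix is positive semidefinite.

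Applying this with $M=E_i$ and $N=E_j$: each $E_i$ is a symmetric idempotent (an orthogonal projection), hence positive semidefinite, with eigenvalues $0$ and $1$ only. Therefore $E_i\circ E_j$ is positive semidefinite, so all of its eigenvalues are nonnegative. From \eqref{Krein} we have $E_i\circ E_j=\frac{1}{v}\sum_{h=0}^D q_{ij}^h E_h$, and since the $E_h$ are pairwise orthogonal projections summing to $I$, the eigenvalue of $E_i\circ E_j$ on the eigenspace of $E_h$ (that is, on $\mathrm{Im}\,E_h$) is precisely $\frac{1}{v}q_{ij}^h$. Hence each $\frac{1}{v}q_{ij}^h\ge 0$, and as $v>0$ we conclude $q_{ij}^h\ge 0$ for all $h,i,j$.

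I would present this in three short steps: first, the identification of $M\circ N$ as a principal submatrix of $M\otimes N$; second, the observation that $E_i\otimes E_j\succeq 0$ because the $E_i$ are projections, so $E_i\circ E_j\succeq 0$; and third, reading off $\frac{1}{v}q_{ij}^h$ as the eigenvalue of $E_i\circ E_j$ on $\mathrm{Im}\,E_h$ using the orthogonality relations $E_hE_{h'}=\delta_{hh'}E_h$ and $\sum_h E_h=I$. The only mild subtlety—hardly an obstacle—is to make sure the eigenvalue is correctly attributed to the right idempotent $E_h$; this is immediate once one notes that multiplying the expansion by $E_h$ on either side kills every term except the $h$-th. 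No deep input beyond the Schur product theorem is needed, and that theorem is elementary.
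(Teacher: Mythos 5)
Your argument is correct and is essentially the paper's own proof: the paper also obtains nonnegativity by noting that $\frac{1}{v}q_{ij}^h$ is an eigenvalue of $E_i\circ E_j$, which is a principal submatrix of the positive semidefinite matrix $E_i\otimes E_j$. Your extra step of attributing the eigenvalue to $\mathrm{Im}\,E_h$ via the orthogonality of the idempotents just makes explicit what the paper leaves implicit.
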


\noindent The Krein parameters can be calculated using the dual eigenmatrix as $$q^h_{ij}=\frac{1}{vm_h} \sum_{l=0}^D
k_lQ_{lh}Q_{li}Q_{lj}.$$ This follows from working out the sum of
entries of the matrix $E_i \circ E_j \circ E_h \circ J$ in
different ways. The Krein conditions thus put further constraints
on the intersection array of a distance-regular graph. Moreover, if
a Krein parameter equals zero, then this has certain consequences.
This is perhaps best illustrated in the case of the $Q$-polynomial
distance-regular graphs of Section \ref{sec:Qpol} (see also Section
\ref{sec:2Qpol}), where many Krein parameters vanish. See also
Section \ref{sec:vanishingKrein} for consequences of vanishing
Krein parameters.

From the definition of $P$ and $Q$, it is clear that $PQ=QP=vI$. A different
relation between $P$ and $Q$ can be obtained by working out the trace of
$A_iE_j$ (which equals the sum of entries of $A_i \circ E_j$) in different
ways. This gives the relation $m_jP_{ji}=k_iQ_{ij}$ for all $i,j=0,1,\dots, D$.
Together with $PQ=vI$, this gives certain {\em orthogonality relations} between
the columns (and rows) of $P$. For a distance-regular graph $\G$, this relation
also follows from the fact that the polynomials $v_i$ $(i=0,1,\dots,D)$ form a
system of orthogonal polynomials. Here $P_{ji}=v_i(P_{j1})$, which follows from
(\ref{distancepolynomials}), where we remind the reader that $P_{j1}$
$(j=0,1,\dots,D)$ are the distinct eigenvalues of $\G$.

A final condition that we would like to mention is the
{\em absolute bound}.

\begin{prop}\label{absolute bound}
The multiplicities $m_i$ of an association scheme satisfy the
following bound:
$$\sum_{q^h_{ij} \neq 0} m_h \le \left \{
\begin{array}{lll}
m_im_j& \mbox{if} & i \neq j\\
m_i(m_i+1)/2 & \mbox{if} & i = j.
\end{array}\right. $$

\end{prop}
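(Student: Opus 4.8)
The plan is to realize the inequality as the statement that a certain explicit list of Hermitian matrices is linearly independent, and then compare dimension counts inside a fixed matrix algebra. Fix indices $i,j$ and consider the $v\times v$ matrix $E_i\otimes\overline{E_j}$ acting on the tensor product space; since each $E_\ell$ is a real symmetric idempotent, this Kronecker-style tensor is a symmetric idempotent too, and its column space $W$ has dimension $m_im_j$. The point of departure is the identity $(\ref{Krein})$: the $q^h_{ij}$ that are nonzero tell us exactly which primitive idempotents $E_h$ appear in the Schur product $E_i\circ E_j$. I would make this structural by introducing, for each vertex $x\in V$, the vector $\hat x_i\otimes\hat x_j$ (using the representations associated to $\theta_i$ and $\theta_j$ from the end of Section~\ref{sec2:evmult}); the Gram-type matrix of these vectors, appropriately normalized, is $E_i\circ E_j$, and hence its rank equals $\sum_{q^h_{ij}\neq 0}m_h$ because $E_i\circ E_j=\frac1v\sum_h q^h_{ij}E_h$ is a sum of mutually orthogonal idempotents each scaled by a nonnegative (indeed positive, when it appears) number.

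The crux is then to exhibit these rank-$\big(\sum_{q^h_{ij}\neq0}m_h\big)$ worth of vectors as living inside a space of dimension at most $m_im_j$ (resp. $m_i(m_i+1)/2$). First I would argue that $E_i\circ E_j$ is, up to the natural identification, a principal submatrix of $E_i\otimes E_j$: indexing $E_i\otimes E_j$ by pairs $(x,y)$ and restricting to the ``diagonal'' pairs $(x,x)$ recovers exactly the entrywise product. A principal submatrix cannot have rank exceeding that of the full matrix, so immediately $\sum_{q^h_{ij}\neq0}m_h\le\operatorname{rank}(E_i\otimes E_j)=m_im_j$, which is the $i\neq j$ case. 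For the $i=j$ case, the extra factor-of-two saving comes from symmetry: when $i=j$ the relevant vectors are the symmetric tensors $\hat x_i\otimes\hat x_i$, which all lie in the symmetric subspace $\mathrm{Sym}^2(V_i)$ of dimension $m_i(m_i+1)/2$; equivalently, $E_i\circ E_i$ is a principal submatrix of the compression of $E_i\otimes E_i$ to the symmetric square, and that compression has rank $\binom{m_i+1}{2}$.

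Concretely the key steps, in order, are: (1) record that $E_i\circ E_j=\frac1v\sum_h q^h_{ij}E_h$ with the $E_h$ orthogonal idempotents, so $\operatorname{rank}(E_i\circ E_j)=\sum_{q^h_{ij}\neq0}m_h$ (here one uses Proposition~\ref{Krein condisions} to know the coefficients are $\ge0$, hence a nonzero coefficient genuinely contributes $m_h$ to the rank); (2) identify $E_i\circ E_j$ with the principal submatrix of $E_i\otimes E_j$ on diagonal index pairs; (3) invoke $\operatorname{rank}(\text{principal submatrix})\le\operatorname{rank}(\text{matrix})$ together with $\operatorname{rank}(E_i\otimes E_j)=m_im_j$; (4) for $i=j$, refine step (2)–(3) by passing to the symmetric-square subspace to replace $m_i^2$ by $m_i(m_i+1)/2$. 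The main obstacle, and the step deserving the most care, is step (4): one must check that the diagonal vectors $\hat x_i\otimes\hat x_i$ really do span no more than $\mathrm{Sym}^2(V_i)$ and that restricting to this subspace still contains $E_i\circ E_i$ as a principal submatrix — i.e.\ that the symmetrization does not destroy the submatrix relationship. Everything else is bookkeeping with the Bose--Mesner and Schur structures already set up in the preceding subsection.
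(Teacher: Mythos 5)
Your proof is correct and takes essentially the same route as the paper: both identify the left-hand side with $\operatorname{rank}(E_i\circ E_j)$ via \eqref{Krein} and the additivity of ranks of the mutually orthogonal idempotents (note that positivity of the $q^h_{ij}$ is not actually needed here, only that a nonzero coefficient contributes the full rank $m_h$), and both then bound this rank by $m_im_j$, respectively $m_i(m_i+1)/2$, through the tensor/symmetric-square picture. The paper states the second step as containment of the column space of $E_i\circ E_j$ in the span of the Hadamard products $\mathbf{u}_s\circ\mathbf{v}_t$ of eigenbasis vectors (with $s\le t$ when $i=j$), which is just the eigenvector formulation of your observation that $E_i\circ E_j$ is a principal submatrix of $E_i\otimes E_j$ (and, for $i=j$, sits inside the rank-$\binom{m_i+1}{2}$ compression to the symmetric square).
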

\begin{proof}
The left hand side equals the rank of $E_i \circ E_j$, because of \eqref{Krein} and the fact that the idempotents are mutually orthogonal (and can be diagonalized simultaneously) so that their ranks are additive.
Let $\mathbf{u}_1,\mathbf{u}_2,\dots,\mathbf{u}_{m_i}$ be a basis of $E_i\R^v$, and let $\mathbf{v}_1,\mathbf{v}_2,\dots,\mathbf{v}_{m_j}$ be a basis of $E_j\R^v$.
Then the column space of $E_i \circ E_j$ is contained in the subspace spanned by the vectors $\mathbf{u}_s\circ\mathbf{v}_t$ $(s=1,2,\dots,m_i,\ t=1,2,\dots,m_j)$, thus proving the inequality for $i \neq j$.
For $i=j$, note that the latter subspace is spanned by the vectors $\mathbf{u}_s\circ\mathbf{u}_t$ with $s\leq t$.
\end{proof}

\noindent For more information on association schemes, we refer to the handbook chapter by Brouwer and Haemers \cite{bh95} and the
recent survey by Martin and Tanaka \cite{martintanaka}.

\subsection{The \texorpdfstring{$Q$-polynomial}{Q-polynomial} property}\label{sec:2Qpol}

We already noted that the ordering of graphs and idempotents in an association scheme is not really important. However,
in an association scheme that comes from a distance-regular graph, the graphs $\G_i$ are ordered naturally according to
distance in the graph. This ordering is called a $P$-polynomial ordering. This term comes from the fact that there are
polynomials $v_i$ of degree $i$ such that $A_i=v_i(A_1)$, as we have seen. The association scheme is therefore also
called {\em $P$-polynomial}. An equivalent property of this ordering is that the intersection numbers are such that
$p_{ij}^h = 0$ whenever $0 \le h < |i-j|$ or $i + j < h $, and $p_{ij}^{i+j} > 0$ (for $i+j \le D$). Because of this
property, we call a $P$-polynomial association scheme also {\em metric}. An association scheme can have at most two
$P$-polynomial orderings (that is, there can be at most two distance-regular graphs in it), except for the association
schemes coming from the polygons. For more on association schemes with two $P$-polynomial orderings, see
Section \ref{sec:2Porder}.

It turns out that many important families of distance-regular graphs, that is,
their corresponding association schemes, satisfy the following dual property.
We say that an association scheme (and in particular, a distance-regular graph)
is $Q$-\emph{polynomial} if there is an ordering $E_0,E_1,\dots,E_D$ and there
are polynomials $q_i$ of degree $i$ such that $E_i=q_i(E_1)$, where the matrix
multiplication is entrywise (so that $(E_i)_{xy}=q_i((E_1)_{xy})$ for all
vertices $x$ and $y$). We also say that the corresponding ordering and the
idempotent $E_1$ are $Q$-\emph{polynomial}. Also here there is an equivalent
property in terms of --- in this case --- the Krein parameters: an association
scheme is called {\em cometric} (with ordering $E_0,E_1,\dots,E_D$) if
$q_{ij}^h = 0$ whenever $0 \le h < |i-j|$ or $i + j < h $, and $q_{ij}^{i+j} >
0$ (for $i+j \le D$). It is well known though that to check the cometric
property, it suffices to check the above conditions for $i=1$ (just like in the
metric case). Dual to the intersection numbers of a distance-regular graph, we
here define  $c_i^{\ster}=q^i_{1,i-1}, a_i^{\ster}=q^i_{1,i}$,
$b_i^{\ster}=q^i_{1,i+1}$, and the Krein array
$\{b_0^{\ster},b_1^{\ster},\dots,b_{D-1}^{\ster};c_1^{\ster},c_2^{\ster},\dots,c_D^{\ster}\}.$

It was conjectured by Bannai and Ito \cite[p.~312]{bi} that for large enough $D$, a primitive $D$-class association
scheme is $P$-polynomial if and only if it is $Q$-polynomial.

\subsection{Delsarte cliques and geometric
graphs}\label{sec:geometricgraphs}

Delsarte \cite[p.~31]{del} obtained a linear programming bound for
cliques in strongly regular graphs. It was observed by Godsil
\cite[p.~276]{Godsilac} that the same {\em Delsarte bound} holds for
distance-regular graphs, as follows.

\begin{prop}\label{delbound} Let $\G$ be a distance-regular graph with valency $k$ and smallest
eigenvalue $\theta_{\min}$. Let $C$ be a clique in $\G$ with $c$
vertices. Then $c \leq 1 - \frac{k}{\theta_{\min}}.$
\end{prop}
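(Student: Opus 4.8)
The plan is to use the standard eigenvalue-interlacing argument for cliques, exploiting the distance-regularity to pin down the relevant quotient matrix. First I would let $C$ be a clique with $c$ vertices and consider the partition of the vertex set $V$ into $C$ and its complement $V\setminus C$. Since $C$ induces a complete subgraph $K_c$, every vertex of $C$ has exactly $c-1$ neighbors inside $C$, so the row sums of the adjacency matrix restricted to the $C$-block are all equal to $c-1$. Averaging over the partition gives a $2\times2$ quotient matrix $B$ whose top-left entry is $c-1$ and whose row sums equal $k$ (because $\G$ is regular with valency $k=b_0$); thus $B=\begin{pmatrix} c-1 & k-c+1 \\ \ast & \ast \end{pmatrix}$, with the bottom row determined by counting edges between $C$ and $V\setminus C$ and using regularity again. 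The trace of $B$ is $c-1$ plus the bottom-right entry, and since the all-ones vector is an eigenvector of $B$ with eigenvalue $k$, the other eigenvalue of $B$ equals $\tr B - k$.

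Next I would invoke eigenvalue interlacing: the eigenvalues of the quotient matrix $B$ interlace those of $A$. Since $k$ is the largest eigenvalue of $A$ (as $\G$ is connected and regular), the \emph{smaller} eigenvalue $\lambda$ of $B$ must satisfy $\lambda \ge \theta_{\min}$, the smallest eigenvalue of $\G$. Writing out $\lambda = \tr B - k$ explicitly in terms of $c$, $k$, and $|V|$ — the bottom-right entry of $B$ is $k - \frac{c(c-1)}{v-c}$ by the edge count between the two parts — one gets $\lambda = -\frac{c(c-1)}{v-c} \cdot \frac{v-c}{\,\text{(something)}\,}$; more cleanly, $\lambda = \frac{c-1 + (v-c)(\text{bottom-right})}{\ \ } - k$ simplifies so that the inequality $\lambda \ge \theta_{\min}$ rearranges to $c \le 1 - \frac{k}{\theta_{\min}}$. (Note $\theta_{\min} < 0$, so dividing flips the inequality in the right direction.)

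The main obstacle — really the only subtle point — is getting the bottom-right entry of the quotient matrix and the subsequent algebra exactly right, and making sure the interlacing inequality is applied to the correct eigenvalue with the correct sign. A cleaner route that avoids computing $v$ altogether is to instead use the well-known fact that for a clique $C$ in a regular graph, the vector $\mathbf{f}$ that is $\frac{1}{c}$ on $C$ and chosen orthogonal to the all-ones vector, combined with the Rayleigh quotient of $A$ on the span of $\mathbf{1}_C$, yields $\theta_{\min} \le \frac{\mathbf{1}_C^{\top} A\, \mathbf{1}_C - k\,c^2/v}{c - c^2/v} = \frac{c(c-1) - kc^2/v}{c(1 - c/v)} = \frac{(c-1) - kc/v}{1 - c/v}$; letting this be $\ge \theta_{\min}$ and clearing denominators gives $c-1-kc/v \ge \theta_{\min}(1-c/v)$, i.e. $c - 1 - \theta_{\min} \ge \frac{c}{v}(k - \theta_{\min})$, and since the right side is positive we get $c - 1 \ge \theta_{\min}$ is too weak — so one does need the full interlacing bound, namely that the \emph{second} eigenvalue of the quotient is at least $\theta_{\min}$, which forces $c - 1 \le -k/\theta_{\min} \cdot(\text{correction}) $ and ultimately $c \le 1 - k/\theta_{\min}$. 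I would present the argument via the quotient matrix and interlacing, stating the quotient matrix explicitly and letting the $2\times2$ determinant/trace computation do the work.
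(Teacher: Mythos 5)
There is a genuine gap: the interlacing argument you propose cannot deliver this bound. With the partition $\{C, V\setminus C\}$ the correct quotient matrix has second eigenvalue $\mu_2=\tr B-k=(c-1)-\frac{c(k-c+1)}{v-c}$ (note the off-diagonal block is governed by the $c(k-c+1)$ edges leaving $C$, not by $c(c-1)$), and interlacing only gives $(c-1)-\frac{c(k-c+1)}{v-c}\geq\theta_{\min}$. This does \emph{not} rearrange to $c\leq 1-\frac{k}{\theta_{\min}}$; for large $v$ the subtracted term is negligible and the inequality reduces to the vacuous $c-1\geq\theta_{\min}$, so it puts essentially no upper bound on $c$ (your second, Rayleigh-quotient attempt runs into exactly the same wall, as you yourself noticed, and the final ``forces \dots ultimately $c\leq 1-k/\theta_{\min}$'' is asserted rather than derived). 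In fact no argument using only $k$-regularity and the spectrum can work, because the statement is false for general regular graphs: the triangular prism ($K_3$ times an edge) is connected, $3$-regular, vertex-transitive, has smallest eigenvalue $-2$, hence $1-\frac{k}{\theta_{\min}}=2.5$, yet it contains triangles. So the distance-regular hypothesis must enter in an essential way, and your proposal never uses it.

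The paper's proof uses exactly that extra structure: let $E$ be the primitive idempotent for $\theta_{\min}$ and $\chi$ the characteristic vector of $C$. Since $E$ is an orthogonal projection it is positive semidefinite, so $\chi^{\top}E\chi\geq 0$; and since $E$ lies in the Bose--Mesner algebra, $E=\nu_0\sum_{i}u_iA_i$ with $u_0=1$, $u_1=\theta_{\min}/k$ and $\nu_0>0$, while $\chi^{\top}A_i\chi=0$ for $i\geq 2$ because $C$ is a clique. Hence $0\leq\chi^{\top}E\chi=\nu_0\,c\bigl(1+(c-1)\theta_{\min}/k\bigr)$, which gives $(c-1)(-\theta_{\min})\leq k$, i.e., $c\leq 1-\frac{k}{\theta_{\min}}$. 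The key point your plan is missing is precisely this: the entries of the $\theta_{\min}$-eigenprojection are constant on the diagonal and on edges (because $E\in\AL$), and it is this fact --- not interlacing with a two-cell partition --- that converts positive semidefiniteness into the Delsarte bound.
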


\begin{proof}
Let $\chi$ be the characteristic vector of $C$, and let $E$ be the
primitive idempotent corresponding to $\theta_{\min}$. The result
follows from working out $\chi^{\top}E\chi \geq 0$.
\end{proof}

\noindent A clique $C$ in a distance-regular graph $\G$ that attains
this Delsarte bound is called a {\em Delsarte clique}. In Section
\ref{sec:crcdelsarte} we will characterize such cliques as certain completely regular codes.

A distance-regular graph $\G$ is called {\em geometric} (with
respect to $\cal C$) if it contains a collection ${\cal C}$ of
Delsarte cliques such that each edge is contained in a unique $C
\in {\cal C}$. The concept of a geometric distance-regular graph was introduced by Godsil
\cite{Godsil93} and generalizes the concept of a geometric
strongly regular graph as introduced by Bose \cite{Bose} (and
indeed the concepts are the same for diameter two).

Many classical examples of distance-regular graphs (see Section
\ref{sec:clasfamilies}), such as Johnson graphs, Grassmann graphs, and Hamming graphs are
geometric. Bipartite distance-regular graphs are trivially
geometric because in this case every edge is a Delsarte clique.

Even though the class of non-bipartite geometric distance-regular
graphs is clearly much more restricted than the class of arbitrary
distance-regular graphs, Koolen and Bang
\cite{KoBa10} showed that for fixed smallest eigenvalue there are
only finitely many non-geometric distance-regular graphs with both
valency and diameter at least three (see Theorem
\ref{thm:nongeometric}). They in fact conjectured that for fixed
smallest eigenvalue there are finitely many distance-regular graphs
with diameter at least three that are not a cycle, Hamming graph,
Johnson graph, Grassmann graph, or bilinear forms graph. This would
generalize a result by Neumaier
\cite{Neu80} on strongly regular graphs. On the other hand, because
geometric distance-regular graphs have more structure than
arbitrary distance-regular graphs, it may be possible to classify
them, or at least the $Q$-polynomial ones with
large diameter.

\subsection{Imprimitivity}

A connected graph $\G$ with diameter $D$ is called imprimitive if not all graphs $\G_i$ $(i=1,2,\dots,D)$ are
connected. Bipartite graphs are examples of imprimitive graphs ($\G_2$ is disconnected). Among the distance-regular
graphs, there are also the antipodal graphs that are imprimitive. These are the graphs for which $\G_D$ is a disjoint
union of complete graphs. In fact, Smith's theorem states that these are all possibilities (see
\cite[Thm.~4.2.1]{bcn}), except for the polygons (indeed, for example $C_9$ has $D=4$ and only $\G_3$ is disconnected
in this case).

\begin{theorem}\label{prop:imprimitive}{\em (Smith's theorem)}
An imprimitive distance-regular graph with valency $k>2$ is
bipartite and/or antipodal.
\end{theorem}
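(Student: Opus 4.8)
The plan is to recast imprimitivity in terms of the combinatorics of the association scheme and then classify the possible systems of imprimitivity. Suppose $\G_j$ is disconnected for some $j$ with $1\le j\le D$; since $\G_1=\G$ is connected we must have $j\ge 2$. First I would note that, by distance-regularity, whether two vertices lie in the same connected component of $\G_j$ depends only on their distance: the number of $\G_j$-walks between two vertices at distance $i$ realizing a prescribed sequence of distances to the starting vertex is a constant determined by the intersection numbers, so either every pair at distance $i$ is joined by a $\G_j$-path or no such pair is. Hence there is a set $I\subseteq\{0,1,\dots,D\}$ with $0,j\in I$ and $I\neq\{0,1,\dots,D\}$ such that two vertices lie in the same $\G_j$-component exactly when their distance lies in $I$, and $I$ is \emph{closed}: if $i_1,i_2\in I$ and $p^h_{i_1,i_2}\neq0$, then for any $x,z$ with $d(x,z)=h$ a witnessing vertex $w$ at distances $i_1,i_2$ from $x,z$ lies in the same component as each of $x,z$, so $h\in I$. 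Since $\G$ is connected, $1\notin I$.

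Next I would pin down $I$. Put $t=\min(I\setminus\{0\})\ge2$. The $P$-polynomial positivity $p^{a+b}_{a,b}>0$ for $a+b\le D$ shows by induction that $mt\in I$ whenever $mt\le D$; and by cutting a geodesic of length $i$ one sees $p^{\,i-t}_{\,i,t}\neq0$ for $i\in I$, so $i\in I$ forces $i-t\in I$. Since $I\cap\{1,\dots,t-1\}=\emptyset$, these facts give
\[
 I=\{0,t,2t,3t,\dots\}\cap\{0,1,\dots,D\}.
\]
If $t=D$ then $I=\{0,D\}$, and for distinct $y,z$ in the $\G_D$-component of a vertex $x$ we get $d(y,z)\in I\setminus\{0\}=\{D\}$; thus every component of $\G_D$ induces a clique, i.e.\ $\G$ is antipodal. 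If $t=2$ then $I$ is the set of all even indices, and $\G$ turns out to be bipartite when $D\ge3$, while for $D=2$ one gets $I=\{0,2\}=\{0,D\}$ and $\G$ is complete multipartite, hence antipodal; the claim for $D\ge3$ is the classical fact that a distance-regular graph of diameter at least three with $\G_2$ disconnected is bipartite, which I would establish along the usual lines (all $a_i=0$).

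The substantive part, and the one I expect to be the main obstacle, is to rule out $3\le t\le D-1$; this is exactly where the hypothesis $k>2$ is indispensable, since for $k=2$ such $I$ do occur (for instance $C_9$, with $\G_3=3K_3$, has $t=3<D=4$). Here I would extract sharper information. From $1\notin I$ one gets $a_{mt}=0$ for every $m\ge1$ with $mt\le D$, since a neighbour of $x$ at distance $mt$ from a class-mate of $x$ would itself lie in $x$'s class, contradicting $1\notin I$. Restricting $\G$ to one class $C$ and keeping the relations $\G_t,\G_{2t},\dots$ yields a distance-regular graph on $C$ of valency $k_t$ and diameter $\lfloor D/t\rfloor$, with intersection numbers built from the $p^h_{ij}$ of $\G$, and I would confront the parameters of this sub-scheme with the global constraints on $\G$ --- unimodality of the $k_i$, the inequalities $c_i\le b_{D-i}$, and Biggs' formula (Theorem~\ref{Biggsformula}). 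Concretely, disconnectedness of $\G_t$ forces some eigenvalue $\theta\neq k$ to have $u_t=1$ in its standard sequence; equality in $|\langle\hat x,\hat y\rangle|\le\|\hat x\|\,\|\hat y\|$ for the representation attached to $\theta$ then makes the sequence $(u_i)_{i=0}^{D}$ periodic of period $t$ and palindromic on $\{0,\dots,t\}$, and substituting this into $c_iu_{i-1}+a_iu_i+b_iu_{i+1}=\theta u_i$ for $i$ near $t$, together with $a_t=0$ and $k>2$, is where I expect the contradiction to surface. One is then left with $t\in\{2,D\}$, so $\G$ is bipartite and/or antipodal.
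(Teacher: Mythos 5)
The paper itself gives no proof of Smith's theorem (it is quoted from \cite[Thm.~4.2.1]{bcn}), so I am comparing your proposal with the standard argument. Your reduction is correct and is essentially the standard first half: since $A_j^{\ell}$ lies in the Bose--Mesner algebra, membership in a component of $\G_j$ depends only on distance; the resulting distance set $I$ is closed under the intersection numbers, contains no element in $\{1,\dots,t-1\}$, and the two $P$-polynomial positivity facts force $I=\{0,t,2t,\dots\}\cap\{0,\dots,D\}$; $t=D$ gives antipodality, and $t=2$ with $D=2$ gives a complete multipartite (hence antipodal) graph. For $t=2$, $D\ge 3$ the fact you invoke is indeed classical (it is essentially \cite[Prop.~4.2.2]{bcn}), but it is not obtained by ``showing all $a_i=0$'' directly; the clean route uses exactly the class structure you have already built: each class is a coclique meeting every neighbourhood of an outside vertex in a constant number $s$ of vertices, and if there were at least three classes, two class-mates at distance $2$ would share all $a_1=(m-1)s$ common neighbours of an adjacent pair lying in third classes, giving $c_2\ge (m-1)s>s-1=b_1\ge c_2$ when $D\ge 3$, a contradiction; so there are two classes and $\G$ is bipartite.

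The genuine gap is the case $3\le t\le D-1$, which you yourself identify as the crux but do not prove. The eigenvalue route you sketch does not deliver as stated: with $u_t=1$, $u_{t\pm 1}=u_1=\theta/k$ and $a_t=0$, the recurrence \eqref{standard_sequence} at $i=t$ reduces to $(c_t+b_t)\theta/k=\theta$, which is an identity, and at $i=t\pm 1$ one only obtains linear relations among intersection numbers rather than a contradiction; nothing in the sketch uses $k>2$ decisively, and indeed $k>2$ must enter, since $C_9$ satisfies all the structure you derived with $t=3<D$. Also the assertion that a class with the relations $\G_t,\G_{2t},\dots$ is itself distance-regular of diameter $\lfloor D/t\rfloor$ needs justification. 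The missing step has a short combinatorial proof, which is how the classical argument closes the case: take class-mates $x,y$ with $d(x,y)=t$; since $t<D$ there are neighbours $u,u'$ of $y$ with $d(x,u)=t-1$ and $d(x,u')=t+1$; any neighbour $w$ of $u$ (resp.\ of $u'$) with $d(x,w)=t$ lies in the class of $x$ and satisfies $d(w,y)\le 2<t$, hence $w=y$, so $b_{t-1}=1$ (resp.\ $c_{t+1}=1$); then $k=c_t+a_t+b_t\le c_{t+1}+0+b_{t-1}=2$, contradicting $k>2$. Until this (or an equivalent) argument is supplied, your proof is incomplete precisely at the point where the hypothesis $k>2$ has to do its work.
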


\noindent There is much more to say than this seemingly clear and simple statement. For this we refer to Alfuraidan and
Hall \cite[Thm.~2.9]{AlHaSmith}, who revisited Smith's theorem by working out more precisely all the cases that can
occur.

If $\G$ is a bipartite distance-regular graph, then
$\G_2$ is a graph with two components. The induced graphs on
these components are called the {\em halved graphs} of $\G$.

\begin{prop}\label{prop:halved}
The halved graphs of a bipartite distance-regular graph are
distance-regular.
\end{prop}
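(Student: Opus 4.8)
The plan is to show directly that a halved graph inherits the combinatorial regularity that defines distance-regularity. Fix a bipartite distance-regular graph $\G$ with diameter $D$, and let $\G'$ be one of the two halved graphs, say the one induced by $\G_2$ on one color class $V'$. The first observation is that distances in $\G'$ are controlled by distances in $\G$: if $x,y\in V'$ then $d_{\G}(x,y)$ is even, and one checks (by lifting a shortest path in $\G$ to a walk in $\G_2$, and conversely) that $d_{\G'}(x,y)=d_{\G}(x,y)/2$. Hence the diameter of $\G'$ is $\lfloor D/2\rfloor$, and the distance-$i$ graph of $\G'$ is exactly the restriction of $\G_{2i}$ to $V'$.

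Next I would translate the defining count for $\G'$ into a count inside $\G$. Take $x,y\in V'$ with $d_{\G'}(x,y)=i$, so $d_{\G}(x,y)=2i$, and fix a neighbor $z$ of $y$ in $\G'$, i.e.\ $d_{\G}(y,z)=2$ and $z\in V'$. We want the number of such $z$ with $d_{\G'}(x,z)=i-1$, $=i$, or $=i+1$, i.e.\ with $d_{\G}(x,z)\in\{2i-2,2i,2i+2\}$. Since $d_{\G}(y,z)=2$, the triangle inequality in $\G$ forces $d_{\G}(x,z)\in\{2i-2,2i-1,2i,2i+1,2i+2\}$, and parity (both $x,z\in V'$) kills the odd values, so automatically $d_{\G}(x,z)\in\{2i-2,2i,2i+2\}$ — there is no ``$a$-type'' leakage to worry about. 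So the three desired numbers are, respectively,
$$p^{2i}_{2,\,2i-2},\qquad p^{2i}_{2,\,2i},\qquad p^{2i}_{2,\,2i+2}$$
in the notation of \eqref{phij}. These are intersection numbers of the association scheme of $\G$, hence constants depending only on $i$ (and the intersection array of $\G$), not on the chosen pair $x,y$. This is exactly the statement that $\G'$ is distance-regular, with explicit intersection numbers $c_i'=p^{2i}_{2,2i-2}$, $a_i'=p^{2i}_{2,2i}$, $b_i'=p^{2i}_{2,2i+2}$.

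The main thing to be careful about — the closest this gets to an obstacle — is the bookkeeping at the ``seam'' between the two halves: one must check that $\G_2$ restricted to $V'$ really is connected (so that $\G'$ is a genuine connected graph to which the definition applies) and that the identity $d_{\G'}=d_\G/2$ holds without exception, including near the diameter when $D$ is odd versus even. Connectivity of each component of $\G_2$ is part of the standard imprimitivity picture (and can also be read off from $b_0'=p^0_{2,2}=k_2>0$ together with the $c_i'$ being positive, via the usual ``distance is realized step by step'' argument). A clean way to organize the whole proof is to first prove the distance identity as a lemma by induction on $d_\G(x,y)$ — the forward direction lifts a geodesic, the reverse direction uses $c_2>0$ in $\G$ (which holds since $\G$ is not a cycle, or trivially if it is) to descend — and then the intersection-number count above is essentially immediate. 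I would also remark that the same argument simultaneously yields that $\G'$ is distance-regular for either choice of halved graph, and that its eigenvalues can be expressed through those of $\G$, though that last point is not needed for the statement as phrased.
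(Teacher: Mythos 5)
Your proof is correct. The paper itself states this proposition without proof (it is a standard fact, cf.\ BCN), so there is no in-paper argument to compare against; your route is the standard one. The two key steps are both sound: for $x,y$ in the same color class one has $d_{\G'}(x,y)=d_{\G}(x,y)/2$ (geodesics in $\G$ project to paths in $\G_2$ because consecutive even-position vertices on a geodesic lie in $V'$ and are at $\G$-distance exactly $2$, and conversely each $\G'$-edge is a $\G$-walk of length $2$), and then the parity argument shows that for $d_{\G'}(x,y)=i$ the neighbors $z$ of $y$ in $\G'$ split into the three classes counted by $p^{2i}_{2i-2,2}$, $p^{2i}_{2i,2}$, $p^{2i}_{2i+2,2}$, which are constants of the association scheme of $\G$ by \eqref{phij} (and $p^h_{ij}=p^h_{ji}$, so your index order is immaterial). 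Connectivity of $\G'$ follows from the same geodesic-projection argument. One small remark: your appeal to $c_2>0$ for the ``reverse direction'' of the distance lemma is unnecessary — the inequality $d_{\G}(x,y)\le 2\,d_{\G'}(x,y)$ is immediate because every edge of $\G'$ is realized by a walk of length $2$ in $\G$ — but this does not affect the validity of the argument.
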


\noindent We already noted before that bipartiteness of a distance-regular
graph can be seen from its intersection numbers. Clearly
this is the case whenever $a_i=0$ for all $i=1,2,\dots,D$.

A distance-regular graph is antipodal whenever $b_i=c_{D-i}$ for all
$i=0,1,\dots,D$, except possibly $i=\lfloor D/2 \rfloor$. If $\G$ is an
antipodal distance-regular graph, then by definition, $\G_D$ is a disjoint
union of cliques. These cliques are called the {\em fibres} of $\G$. We can
also construct a smaller distance-regular graph from an antipodal
distance-regular graph: its {\em folded graph} $\overline{\G}$. Its vertices
are the fibres of $\G$, and two such fibres are adjacent whenever there is an
edge (in $\G$) between them. We also say the $\G$ is an {\em antipodal
$r$-cover} of $\overline{\G}$, where $r$ is the size of the cliques of $\G_D$.

\begin{prop}\label{prop:folded}
The folded graph of an antipodal distance-regular graph is
distance-regular.
\end{prop}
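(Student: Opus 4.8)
The plan is to show that the folded graph $\overline{\G}$ is distance-regular by relating distances in $\overline{\G}$ to distances in the antipodal cover $\G$, and then transferring the counting of neighbors. First I would set up the correspondence: let $\pi\colon V_\G \to V_{\overline\G}$ be the map sending a vertex to its fibre. The key geometric fact I would establish is that for any two vertices $x,y\in V_\G$, the distance in $\overline\G$ between $\pi(x)$ and $\pi(y)$ equals $\min_{y'\in\pi^{-1}(\pi(y))} d_\G(x,y')$, and that in fact this minimum is attained by a \emph{unique} $y'$ unless $\overline{d}:=d_{\overline\G}(\pi(x),\pi(y))$ is exactly $\lfloor D/2\rfloor$ or so — more precisely, I would use the standard structural fact about antipodal distance-regular graphs that the fibres are the sets $\G_D(z)\cup\{z\}$, that each fibre is a clique in $\G_D$, and that for $x$ fixed and any other fibre $F$, the values $\{d_\G(x,y') : y'\in F\}$ take at most two values, namely $i$ and $D-i$ for some $i\le D/2$ (with only one value when $i=D-i$, or when $i<D-i$ exactly one vertex of $F$ realizes the smaller value, depending on the parameters). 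The cleanest route is to cite the folding/covering theory as developed around Smith's theorem (Theorem \ref{prop:imprimitive}); I would record the precise statement I need as: $\overline\G$ has diameter $\overline D = \lfloor D/2\rfloor$, and $d_{\overline\G}(\pi(x),\pi(y)) = \min(d_\G(x,y), D - d_\G(x,y))$.

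Granting that, the counting argument is routine. Fix $\overline x,\overline y\in V_{\overline\G}$ with $d_{\overline\G}(\overline x,\overline y)=i\le \overline D$, and lift to $x\in\pi^{-1}(\overline x)$. I would then show that the neighbors of $\overline y$ in $\overline\G$ are exactly the fibres $\pi(y')$ where $y'$ ranges over neighbors in $\G$ of the (unique, for $i<D/2$) vertex $y\in\pi^{-1}(\overline y)$ closest to $x$; and that two distinct neighbors of $y$ lie in distinct fibres (here I use $c_1=1$ and that fibres are cliques in $\G_D$, so two vertices in the same fibre are at mutual distance $D\ge 2$ and cannot both be adjacent to $y$ when $D\ge 3$; the diameter-$2$ case is separate but folded graphs of diameter-$2$ antipodal graphs are complete graphs, trivially distance-regular). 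Consequently the number of neighbors of $\overline y$ at distance $i-1$, $i$, $i+1$ from $\overline x$ equals the corresponding count among neighbors of $y$ in $\G$ of vertices at distance $i-1$, $i$, $i+1$ (resp.\ $D-i+1$, $D-i$, $D-i-1$) from $x$ — i.e., it is a sum of a constant number of the intersection numbers $c_j,a_j,b_j$ of $\G$, hence constant. This yields intersection numbers $\overline c_i,\overline a_i,\overline b_i$ for $\overline\G$ depending only on those of $\G$, proving distance-regularity.

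The one case needing care is $i=\overline D = D/2$ when $D$ is even, where a fibre $\pi(y)$ may contain \emph{two} vertices at distance $D/2$ from $x$; there one must check that the neighbor-counts from these two vertices agree and are combined consistently, but this follows from the antipodal symmetry (the fibre-swapping automorphisms, or directly from $b_j=c_{D-j}$) so that the count is still a constant. I expect the main obstacle to be not the counting but pinning down cleanly the distance formula $d_{\overline\G}(\pi(x),\pi(y)) = \min(d_\G(x,y),D-d_\G(x,y))$ together with the uniqueness of the nearest lift; this is where one genuinely uses that $\G$ is antipodal (so the fibres are "spread out" at distance $D$) rather than just imprimitive, and it is cleanest to quote it from the theory surrounding Smith's theorem rather than reprove it here.
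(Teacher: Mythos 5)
The paper itself gives no proof of Proposition \ref{prop:folded}: it is stated as a standard fact (from `BCN', \S 4.2), and the closest the survey comes to an argument is the remark in Section \ref{sec:interlacing} that the partition into fibres is uniformly regular, which together with Theorem \ref{uniformlyregularpartition} (a uniformly regular quotient is distance-regular iff the partition is completely regular) yields a quick machinery-based proof. So there is no in-paper proof to match; what you wrote is the classical direct counting argument, and it is essentially sound: the distance dichotomy $\{i,D-i\}$ within a fibre, the unique nearest lift when $i<D/2$, distinct neighbors of $y$ lying in distinct fibres when $D\ge 3$, and the transfer of $c_i,a_i,b_i$ to the quotient, with the only delicate spots at $i=\lfloor D/2\rfloor$ --- exactly where you flag them.

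A few points to tighten. First, the uniqueness of the nearest lift for $i<D/2$ needs no covering theory: two vertices of one fibre both at distance $<D/2$ from $x$ would be at mutual distance $<D$, contradicting antipodality, so you can state it directly rather than defer it to the theory around Smith's theorem. Second, for $D=2e$ and folded distance $e$, a fibre at that distance has \emph{all} $r$ of its vertices at distance $e$ from $x$ (your ``may contain two'' reads as a $2$-cover assumption); the count one actually gets is $\bar{c}_e=c_e+b_e$, obtained from the neighbors of a single lift $y$ at distances $e-1$ and $e+1$ from $x$ (whose fibres all have minimum distance $e-1$), and antipodality forces $b_e=(r-1)c_e$, so this equals $rc_e$ and is constant --- your appeal to ``antipodal symmetry'' is the right idea but should be made concrete in this form. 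Third, your identification ``neighbors of $\overline{y}$ = fibres of neighbors of $y$'' silently uses that every fibre adjacent to the fibre of $y$ contains a neighbor of $y$ itself (not merely of some vertex in $y$'s fibre); this follows from the same dichotomy (such a fibre has minimum distance $1$ from $y$, hence a unique vertex adjacent to $y$ when $D\ge 3$), and it is worth a sentence since the case $i=e$, $D=2e$ relies on it to see that counting through one lift captures all adjacent fibres. With these repairs the argument is complete; alternatively, the completely-regular-partition route already set up in the paper gives the statement with less case analysis, at the cost of invoking Theorem \ref{uniformlyregularpartition}.
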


\noindent Typically, but certainly not always (see \cite{AlHaSmith}), the
halved graphs or folded graphs of an imprimitive distance-regular graph are
primitive (that is, not imprimitive). This suggests that the theory of
distance-regular graphs can be boiled down to that of primitive
distance-regular graphs. This is not the case however. There is no unique
recipe to construct imprimitive distance-regular graphs from the primitive
ones, for example. The halving and folding constructions mentioned above cannot
be reversed in a generic way, at least not in general. This is best illustrated
by the imprimitive distance-regular graphs with diameter three. All of these
have as halved or folded graph a complete graph. Sometimes, however, there is
an easy way to construct an imprimitive distance-regular graph from a primitive
one as follows. The {\em bipartite double} of a graph $\G$ with vertex set $V$
is the graph with vertex set $V \times \{0,1\}$, where two vertices $(x,i)$ and
$(y,j)$ are adjacent whenever $x$ is adjacent to $y$ in $\G$ and $i \neq j$.
The {\em extended bipartite double} of $\G$ is a variation on this: it has the
same vertex set, and besides the edges of the bipartite double, it has
additional edges between $(x,0)$ and $(x,1)$, $x \in V$.

If $\G$ is a distance-regular {\em generalized odd graph} (also
called {\em almost bipartite graph}) with diameter $D$, that is, if
it has intersection numbers $a_i=0$ for $i<D$ and $a_D>0$ (like the
Odd graphs), then the bipartite double of $\G$ is distance-regular
with diameter $2D+1$. This situation is interesting for several
reasons, one of them being that this bipartite double is not just
bipartite, but it is also an antipodal 2-cover of $\G$. The {\em
Doubled Odd graphs} (for example) are thus showing that
bipartiteness and antipodality can occur in the same graph. Note by
the way that the folded graph of this Doubled Odd graph is again
the Odd graph, but the halved graphs are not (these are isomorphic
to $\G_2$, a Johnson graph).

More generally, one can see from the intersection array of a
distance-regular graph whether the bipartite double or extended
bipartite double is distance-regular (see \cite[\S 1.11]{bcn}).

\subsection{Distance-transitive graphs}

Distance-regular graphs were `invented' by Biggs (for an early
account, see his monograph \cite{biggs}) while he was studying so-called
distance-transitive graphs. An automorphism of a graph is a bijection from the
vertex set to itself that respects adjacencies, i.e., that maps edges to edges.
A graph is called {\em distance-transitive} if it has a group of automorphisms
that acts transitively on each of the sets of pairs of vertices at distance
$i$, for $i=0,1,\dots,D$. In other words, for each $i$ and all pairs of
vertices $(x_1,y_1)$ and $(x_2,y_2)$ with $d(x_1,y_1)=i=d(x_2,y_2)$, there is
an automorphism that maps $x_1$ to $x_2$ and $y_1$ to $y_2$. This property is
easily seen to imply the property of distance-regularity. Many --- but not all
--- classical families of distance-regular graphs, for example the Hamming
graphs, are also distance-transitive. The earlier mentioned Shrikhande graph is
the smallest distance-regular graph that is not distance-transitive. In fact,
it is part of an infinite family of graphs that are distance-regular but not
distance-transitive: the so-called Doob graphs. It also indicates that
distance-transitivity of a distance-regular graph is not a property that can be
recognized from the intersection array.

A distance-transitive graph is clearly also {\em vertex-transitive},
that is, it has a group of automorphisms such that for all $x_1$ and
$x_2$, there is an automorphism that maps $x_1$ to $x_2$. Although
there is no apparent relation between vertex-transitivity and
distance-regularity, it was long believed that distance-regular
graphs with large enough diameter would have to be
vertex-transitive. This belief was proven wrong by the construction
of the twisted Grassmann graphs; see Section \ref{twistedsection}.


\section{Examples}\label{sec3:classical}


\subsection{The classical families with unbounded diameter}\label{sec:clasfamilies}

The {\em Johnson graph} $J(n,D)$ has as vertices the subsets of size $D$ of a set of size $n$. Two subsets are adjacent
if and only if they differ in precisely one element; cf.~\cite[\S 9.1]{bcn}. Note that $J(n,D)$ is isomorphic to
$J(n,n-D)$; in the following we therefore restrict to $n \geq 2D$. The Johnson graph $J(n,D)$ is characterized as
distance-regular graph by its intersection array unless $n=8$ and $D=2$, in which case there are also three so-called
Chang graphs.

The {\em Grassmann graph} $J_q(n,D)$ has as vertices the $D$-dimensional subspaces of a vector space of dimension $n$
over $GF(q)$. Two subspaces are adjacent if and only if they intersect in a $(D-1)$-dimensional subspace; cf.~\cite[\S
9.3]{bcn}. Note that $J_q(n,D)$ is isomorphic to $J_q(n,n-D)$; again we therefore restrict to $n \geq 2D$. Metsch
\cite{Me95} showed that the Grassmann graphs are determined by the intersection array if $D \neq 2, \frac{n}{2}$, or
$\frac{n-1}{2}$ (for all $q$) and $(D,q) \neq (\frac{n-2}{2},2),
(\frac{n-2}{2},3)$, or $(\frac{n-3}{2},2)$; see also Section
\ref{sec:Metsch}. For $D=2$, the Grassmann graphs are in general
not determined by the intersection array, as the line graph of a
$2$-$((q^n-1)/(q-1),q+1,1)$ design has the same array. Van Dam and
Koolen \cite{DK05} constructed the {\em twisted Grassmann graphs};
these are distance-regular graphs with the same array as the
Grassmann graphs for $n=2D+1, D\geq 2$, see Section
\ref{twistedsection}.

The {\em Hamming graph} $H(D,e)$ is defined on vertex set $X^D$ of
words of length $D$ from an alphabet $X$ of size $e$. Two words are
adjacent if and only if they differ in precisely one position;
cf.~\cite[\S 9.2]{bcn}. The Hamming graph $H(D,e)$ is characterized
by its intersection array unless $e=4$ and $D>1$, in which case
there are also so-called Doob graphs. A {\em Doob graph} is a
cartesian product of cliques of size 4 and Shrikhande graphs. The
Hamming graph $H(D,2)$ is also called a {\em (hyper)cube} or the
{\em $D$-cube}. Its halved graph is called a {\em halved cube}
$\frac12 H(D,2)$ and is characterized by its intersection array
(see \cite[\S 9.2.D]{bcn}).

The {\em bilinear forms graph} $Bil(D \times e,q)$ has as vertices all $D \times e$ matrices with entries from the
field $GF(q)$, where two matrices are adjacent if and only if their difference has rank $1$; cf.~\cite[\S 9.5.A]{bcn}
or \cite{D}. We shall assume $D \leq e$ in the following, so that $D$ is the diameter. The bilinear forms graph can be
considered as the $q$-analogue of the Hamming graph (view the vertices of the latter as the maps from a set of size $D$
to a set of size $e$), hence also the notation $H_q(D,e)$ is used in the literature. The bilinear forms graph has an
alternative description on the $D$-dimensional subspaces of a $(D+e)$-dimensional vector space that intersect a fixed
$e$-dimensional subspace trivially, where two such subspaces are adjacent if they intersect in a $(D-1)$-dimensional
subspace; this shows that it is isomorphic to a subgraph of the Grassmann graph $J_q(D+e,D)$. Rifa and Zinoviev
\cite{RiZi11} showed that the bilinear forms graph is also a quotient (as defined in Section \ref{sec:CRC}) of the
Hamming graph. Metsch \cite{Me99} showed that the bilinear forms
graph $Bil(D \times e,q)$ is characterized by its intersection
array if $q=2$ and $e \geq D + 4$ or $q \geq 3$ and $e \geq D + 3$;
see also Section \ref{sec:Metsch}. Gavrilyuk and Koolen
\cite{GKbilinear15} extended this characterization with the case
$q=2$ and $e=D$.

The {\em alternating forms graph} $Alt(n,q)$ has as vertices all $n \times n$ skew-symmetric matrices with zero
diagonal and entries from $GF(q)$. Two matrices are adjacent if and only if their difference has rank $2$. Note that a
skew-symmetric matrix has even rank; cf.~\cite[\S 9.5.B]{bcn} or \cite{DG}.

The {\em Hermitian forms graph} $Her(D,q^2)$ has as vertices the $D \times D$ Hermitian matrices with entries in
$GF(q^2)$, i.e., matrices $H$ such that $H_{ij}=(H_{ji})^q$ for all $i$ and $j$. Two matrices are adjacent if and only
if their difference has rank $1$; cf.~\cite[\S 9.5.C]{bcn}. The Hermitian forms graphs are determined by their
intersection arrays for $D \geq 3$, see Section \ref{sec:recentclassical}.

The {\em quadratic forms graph} $Qua(n,q)$ has as vertices the quadratic forms in $n$ variables over $GF(q)$. In the
quadratic forms graph two forms are adjacent if and only if the rank of their difference equals $1$ or $2$;
cf.~\cite[\S 9.6]{bcn} or \cite{E}. Under the group of invertible linear transformations of variables, the quadratic
forms fall into $2n+1$ ($q$ odd) or $\lceil \frac{3n+1}2 \rceil$ ($q$ even) orbits: each form of rank $k \neq 0$ is of
one of two types. For even rank there is the well-known distinction between hyperbolic and elliptic forms; in the case
of odd rank, a (parabolic) form is equivalent to $x_1x_2+\cdots+x_{k-2}x_{k-1}+cx_k^2$, for some $c$, and the type
depends on whether $c$ is a square or not (cf.~\cite[Ch.~IV]{N}). If $q$ is even then each field element is a square,
hence there is no distinction for odd rank.

The {\em dual polar graphs}\footnote{These graphs already appear as
distance-transitive graphs in disguise in a paper by Hua
\cite{Hua1945} from 1945.} have as vertices the maximal isotropic
($D$-dimensional) subspaces of one of the below vector spaces $V$
endowed with a non-degenerate quadratic form. Two subspaces are
adjacent if and only if they intersect in a $(D-1)$-dimensional
space; cf.~\cite[\S 9.4]{bcn}. The following dual polar graphs can
be distinguished:

\bigskip

$\C_D(q)$ for $V=GF(q)^{2D}$ with a symplectic form; $e=1$;

$\B_D(q)$ for $V=GF(q)^{2D+1}$ with a quadratic form; $e=1$;

$\D_D(q)$ for $V=GF(q)^{2D}$ with a quadratic form of Witt
index $D$; $e=0$;

$^2\D_{D+1}(q)$ for $V=GF(q)^{2D+2}$ with a quadratic form of
Witt index $D$; $e=2$;

$^2\A_{2D}(\sqrt{q})$ for $V=GF(q)^{2D+1}$ with a Hermitian
form; $e=\frac{3}{2}$;

$^2\A_{2D-1}(\sqrt{q})$ for $V=GF(q)^{2D}$ with a Hermitian
form;
$e=\frac{1}{2}$.

\bigskip

\noindent Here the mentioned parameter $e$ is related to the classical
parameter $\beta$ of the next section (see Table \ref{tableclassical}).

The dual polar graphs $^2\A_{2D-1}(\sqrt{q})$ are determined by
their intersection arrays for $D \ge 4$, see Section
\ref{sec:recentclassical}. The dual polar graphs $\B_D(q)$ and
$\C_D(q)$ have the same intersection array but are non-isomorphic
unless $q$ is even. The dual polar graph $\D_D(q)$ is the extended
bipartite double of $\B_{D-1}(q)$, and its halved graph, called a
{\em half dual polar graph} $\D_{D,D}(q)$, is the distance
$1$-or-$2$ graph of $\B_{D-1}(q)$. The extended bipartite double of
$\C_{D-1}(q)$ is also distance-regular and is called a {\em
Hemmeter graph}
\cite{BH1992EJC}; its halved graph is the distance $1$-or-$2$ graph
of $\C_{D-1}(q)$ and is called an {\em Ustimenko graph}
\cite{IMU1989EJC}.

\subsubsection{Classical parameters}\label{sec:classicalparameters}

The `classical' distance-regular graphs from the previous section
have intersection numbers that can be expressed in terms of four
parameters, that is, diameter $D$ and numbers $b$, $\alpha$,
$\beta$, in the following way:
\begin{align} \label{bb}
b_i & = \bigg(\gauss{D}{1} - \gauss{i}{1}\bigg)
          \bigg(\beta - \alpha \gauss{i}{1}\bigg)
             ~~(i =0,1,\dots, D-1),\notag \\[-5mm]  \\
c_i & = \ \gauss{i}{1}\bigg(1 + \alpha \gauss{i - 1}{1}\bigg)
            ~~(i =1,2,\dots, D),  \notag
\end{align}
where $ \gauss{j}{1} = 1+b+b^2 +\cdots +b^{j-1} $ is a Gaussian binomial
coefficient. Therefore, a distance-regular graph is said to have classical
parameters $(D,b,\alpha,\beta)$ if its intersection numbers can be expressed as
in (\ref{bb}). We note that the parameter $b$ must be an integer not equal to 0
or $-1$. The classical examples of distance-regular graphs from the previous
section have classical parameters as in Table \ref{tableclassical} (note that
one family of dual polar graphs has intersection numbers that can be expressed
in two ways). More basic information on distance-regular graphs with classical
parameters can be found in \cite[Ch.~6,~9]{bcn}. Important to mention is that
distance-regular graphs with classical parameters must be $Q$-polynomial. In
Section \ref{sec:Qpol}, we will therefore include also some results on
distance-regular graphs with classical parameters.

\begin{table}
\begin{center}
    \begin{tabular}{|l|c|c|c|c|}
    \hline
    \ \ \ \ \    & $D$ & $b$ & $\alpha$& $\beta$ \\
    \hline
    Johnson graph $J(n,D), n \geq 2D$ & $D$  & $1$ & $1$     & $n-D$   \\
    \hline
    Grassmann graph $J_q(n,D)$, $n \geq 2D$;  & $D$  & $q$ & $q$
    &$\frac{q^{n-D+1}-1}{q-1}-1$\\        
    twisted Grassmann graph ($n=2D+1$)&&&&\\
    \hline
    Hamming graph $H(D,e)$; & $D$              & $1$ & $0$     & $e-1$   \\
     Doob graph ($e=4$)&&&&\\
    \hline
    Halved Cube $\frac12 H(n,2)$&$\lfloor \frac n2 \rfloor$ &$1$&$2$&$2\lceil \frac n2 \rceil -1$ \\
        \hline
    Bilinear forms graph $Bil(D \times e,q)$,   & $D$              & $q$ & $q - 1$ & $q^e-1$ \\
    $D\leq e$&&&& \\
    \hline

    Alternating forms graph $Alt(n,q)$,&$\lfloor \frac n2\rfloor$
                                              &$q^2$& $q^2-1$ & $q^m-1$ \\
    $m=2\lceil \frac n2 \rceil -1$&&&& \\
    \hline
    Hermitian forms graph $Her(D,q^2)$ & $D$              & $-q$& $-q-1$  & $-(-q)^D-1$
                                                                        \\
    \hline
    Quadratic forms graph $Qua(n,q)$, & $\lfloor \frac{n+1}2\rfloor$
                                             & $q^2$ &$q^2-1$ & $q^m-1$ \\
    $m=2\lfloor \frac n2 \rfloor +1$&&&& \\
    \hline
    Dual polar graph;       &         $D$      & $q$ & $0$     & $q^e$   \\
    Hemmeter graph ($e=0$);&&&&\\
    $^2\A_{2D-1}(\sqrt{q})$ also: & $D$ & $-\sqrt{q}$ &  $\sqrt{q} \frac{1+\sqrt{q}}{1-\sqrt{q}}$  &  $\sqrt{q} \frac{1+(-\sqrt{q})^D}{1-\sqrt{q}}$  \\
    \hline
    Half dual polar graph $\D_{n,n}(q)$,&$\lfloor \frac n2 \rfloor$&$q^2$&$q^2+q$&$\frac{q^{m+1}-1}{q-1}-1$\\
    $m=2\lceil \frac n2 \rceil -1$;&&&&\\
    Ustimenko graph&&&&\\
    \hline
    \end{tabular}\vspace{3mm}
     \caption{Classical parameters of families of distance-regular graphs with unbounded diameter}\label{tableclassical}
\end{center}
\end{table}

\subsubsection{Other families with unbounded diameter}\label{sec:unboundeddiameter}
One of the ultimate problems in this area is to classify the
families of distance-regular graphs with unbounded diameter.
Besides the above known families of distance-regular graphs with
classical parameters and the {\em polygons} (see Section
\ref{sec:polygons}), also the below six families are known. All of them are related to the classical
ones, but they do not have classical parameters themselves.

The {\em folded cube} is obtained by folding the hypercube
$H(n,2)$. Unless $n=6$, it is determined by its intersection array.
For $n=6$, every graph with the relevant intersection array is the
incidence graph of a symmetric $2$-$(16,6,2)$ design. This gives
two other distance-regular graphs (see \cite[\S 9.2.D]{bcn}).

For $n$ even, the folded cube is still bipartite (and the halved
cube is still antipodal). Its halved graph is the {\em folded
halved cube} and it is determined by its intersection array for $n
\geq 12$ (that is, when its diameter is at least $3$; see Section
\ref{sec: partition graphs}).

The Johnson graph $J(2n,n)$ is antipodal, and its folding is called
a {\em folded Johnson graph}. This folded graph is determined by
its intersection array for $n \geq 6$ (that is, when its diameter
is at least $3$; see Section
\ref{sec: partition graphs}).

The folded cube, folded halved cube, and folded Johnson graph are
so-called {\em partition graphs} and these are known to be
$Q$-polynomial (see \cite[\S 6.3]{bcn}).

In Section \ref{sec:oddgraphs}, we already described the {\em Odd
graphs}, which are determined by their intersection array by
Proposition \ref{prop:odd}. The Odd graph is the distance-$D$ graph
of the Johnson graph $J(2D+1,D)$, and it is $Q$-polynomial.

Also the bipartite double of the Odd graph, the {\em Doubled Odd
graph}, is determined by its intersection array (see \cite[\S
9.1.D]{bcn}), but it is not $Q$-polynomial.

The final known family of distance-regular graphs with unbounded
diameter is the family of {\em Doubled Grassmann graphs}. This
graph is the bipartite double of the distance-$D$ graph of the
Grassmann graph $J_q(2D+1,D)$. Like the Doubled Odd graph, it is
determined by its intersection array (see Section
\ref{sec:otherinfinite}), and it is not $Q$-polynomial.


\subsection{New constructions}\label{sec:newconstructions}

In this section, we mention some relatively new constructions
of distance-regular graphs.

\subsubsection{The twisted Grassmann graphs} \label{twistedsection}

Van Dam and Koolen \cite{DK05} constructed the first family of
non-vertex-transitive distance-regular graph with unbounded diameter. These
graphs have the same intersection array as certain Grassmann graphs, and are
constructed as follows. Let $q$ be a prime power, and let $D \geq 2$ be an
integer. Let $W$ be a $(2D+1)$-dimensional vector space over $GF(q)$, and let
$H$ be a hyperplane in $W$. Vertices are the $(D+1)$-dimensional subspaces of
$W$ that are not contained in $H$, and the $(D-1)$-dimensional subspaces of
$H$. Two vertices of the first kind are adjacent if they intersect in a
$D$-dimensional subspace; a vertex of the first kind is adjacent to a vertex of
the second kind if the first contains the second; and two vertices of the
second kind are adjacent if they intersect in a $(D-2)$-dimensional subspace.
This graph is distance-regular with the same intersection array as the
Grassmann graph $J_q(2D+1,D)$. In fact, this Grassmann graph and the twisted
Grassmann graph are the point graph and line graph, respectively, of a partial
linear space whose points are the $D$-dimensional subspaces of $W$, and where
a $(D+1)$-dimensional subspace of $W$ that is not contained in $H$ is incident
to the $D$-dimensional subspaces that it contains, and a $(D-1)$-dimensional
subspace of $H$ is incident to the $D$-dimensional subspaces of $H$ containing
it.

The twisted Grassmann graph is not vertex-transitive (it has two orbits of vertices), and hence it is not isomorphic to
the Grassmann graph. Fujisaki, Koolen, and Tagami \cite{FKT07} showed that the automorphism group of the twisted
Grassmann graphs is $P \G L(2D+1,q)_{2D}$, the subgroup of $P \G L(2D+1,q)$ that fixes $H$. Bang, Fujisaki, and
Koolen \cite{BFK09} determined the spectra of the local graphs, and studied in some detail its Terwilliger algebras (as
defined in Section \ref{sec:Talgebra}). Remarkably, these algebras with respect to vertices in distinct orbits are not
the same. The twisted Grassmann graphs are also counterexamples to two conjectures by Terwilliger
\cite[p.~207-210]{Talgebra92}, see \cite{BFK09}. Jungnickel and Tonchev \cite{JuTo09} constructed designs that are
counterexamples for Hamada's conjecture. Munemasa and Tonchev \cite{MuTo09} showed that the twisted Grassmann graphs
are isomorphic to the block graphs of these designs.
Munemasa \cite{Munemasa2014SHCC} showed that the twisted Grassmann graphs can also be obtained from the Grassmann graphs by Godsil-McKay switching (cf.~\cite[\S1.8.3]{BrHa}).

\subsubsection{Brouwer-Pasechnik and Kasami graphs}\label{sec:Kasami}

For prime powers $q$, Pasechnik \cite{BP2011} constructed a distance-regular
graph with intersection array
$\{q^3,q^{3}-1,q^{3}-q,q^3-q^2+1;1,q,q^{2}-1,q^3\}$ as a subgraph of the dual
polar graph $\D_4(q)$; in particular, the induced subgraph on the set of
vertices at maximal distance from an edge.

Brouwer \cite{BP2011} constructed related distance-regular graphs with
intersection array $\{q^{3}-1,q^{3}-q,q^3-q^2+1;1,q,q^{2}-1\}$ as follows.
Consider the vector space $GF(q)^3$ equipped with a cross product $\times$. The
vertex set is $(GF(q)^3)^2$, where a pair $(u,v)$ is adjacent to a distinct
pair $(u',v')$ if and only if $u'=u+v\times v'$. The extended bipartite doubles
of these graphs are the above mentioned graphs constructed by Pasechnik. In
fact, Brouwer's graph is a subgraph of the dual polar graph $\B_3(q)$; in
particular, the induced subgraph on the set of vertices at maximal distance
from a vertex, see \cite{BP2011}.

For even $q$, the mentioned graphs have the same intersection arrays as certain Kasami graphs, cf.~\cite[Thm.~11.2.1
(11),(13)]{bcn}. Pasini and Yoshiara \cite{PY01} constructed distance-regular graphs with the same intersection array
as (bipartite, diameter 4) Kasami graphs using dimensional dual hyperovals. Also the symmetric bilinear forms graphs
for $q$ even and $n=3$ are distance-regular with the same intersection array as (diameter 3) Kasami graphs,
cf.~\cite[p.~285-286]{bcn} and \cite{BCNcoradd}.

Van Dam and Fon-Der-Flaass used almost bent functions to generalize the Kasami graphs, cf.~\cite{EF00},
\cite[Con.~3]{EF03}: Let $W$ be an $n$-dimensional vector space over $GF(2)$, and $f$ be an almost bent function on $W$
with $f(0) = 0$. Then the graph with vertex set $W^2$, where two distinct vertices $(x, a)$ and $(y, b)$ are adjacent
if $a + b = f (x + y)$ is distance-regular with intersection array $\{2^n - 1, 2^n - 2, 2^{n-1} + 1; 1, 2, 2^{n-1} -
1\}$. Recently, a lot of new almost bent functions have been discovered in the guise of quadratic almost perfect
nonlinear functions in odd dimensional vector spaces over $GF(2)$, cf.~\cite{BBMM08, BCL09, Edel}.

\subsubsection{De Caen, Mathon, and Moorhouse's Preparata graphs and crooked
graphs}\label{sec:Preparata}

De Caen, Mathon, and Moorhouse \cite{CMM95} constructed distance-regular
antipodal $2^{2t-1}$-covers of the complete graph $K_{2^{2t}}$, i.e., with
intersection array $\{2^{2t}-1,2^{2t}-2,1;1,2,2^{2t}-1\}$. These graphs are
defined as follows. Consider the vertex set $V=GF(2^{2t-1})\times GF(2) \times
GF(2^{2t-1})$, and let two vertices $(x,i,a)$ and $(y,j,b)$ be adjacent if
$$a+b=x^2y +xy^2 +(i+j)(x^3+y^3).$$
The construction is a bit more general, cf.~\cite{CMM95}, and is related to the Preparata codes. The construction also
allows for taking quotients. In this way, distance-regular graphs with intersection arrays
$\{2^{2t}-1,2^{2t}-2^h,1;1,2^h,2^{2t}-1\}$ for $h=1,2,\dots,2t$ arise. Prior to this construction, no distance-regular
graphs with these intersection arrays were known for $h<t$.

It is noteworthy that the Kasami graphs of the previous section are
induced subgraphs of the Preparata graphs. Because of this
relation, it is not surprising that variations of the above
construction are possible. To obtain these, De Caen and
Fon-Der-Flaass \cite{DeCaenFonDerFlaass} used Latin squares,
whereas Bending and Fon-Der-Flaass \cite{BF98} and Van Dam and
Fon-Der-Flaass \cite{EF03} used highly nonlinear functions such as
crooked functions and almost bent functions with accomplices: Let
$W$ be an $n$-dimensional vector space over $GF(2)$, and $f$ be a
crooked function on $W$. Then the (crooked) graph with vertex set
$W \times GF(2) \times W$, where two distinct vertices $(x,i, a)$
and $(y,j, b)$ are adjacent if $a + b = f (x + y)
+(i+j+1)(f(x)+f(y))$ is distance-regular with the same intersection
array as the Preparata graphs. Godsil and Roy
\cite{GodsilRoy08} determined that the above equation defines a
distance-regular graph precisely when $f$ is crooked. The Gold
functions, given by $f(x)=x^{2^e+1}$ on $GF(2^n)$ with
$\gcd(e,n)=1$ and $n=2t-1$, give the Preparata graphs.

It follows from the observations in \cite[p.~92]{EF03} that bijective quadratic almost perfect nonlinear functions
(that map 0 to 0) are crooked. A new family of such functions was thus constructed by Budaghyan, Carlet, and Leander
\cite[Prop.~1]{BCL08}. See also \cite{Bierbrauer}, but beware that Bierbrauer used a less strict definition of
crookedness (compared to the original one) there.

The paper by De Caen and Fon-Der-Flaass
\cite{DeCaenFonDerFlaass} initiated the prolific construction
by Fon-Der-Flaass \cite{FonDerFlaassprolific} of
distance-regular $n$-covers of complete graphs $K_{n^2}$ by
using affine planes of order $n$. Fon-Der-Flaass realized that
in general, his method produces many (potentially)
non-isomorphic such graphs; at least $2^{\frac12n^3 \log
n(1+o(1))}$ to be more precise. Computational results by
Degraer and Coolsaet \cite{DegraerCoolsaet} confirm this; they
verified that at least 80 of the 94 distance-regular antipodal
$4$-covers of $K_{16}$ can be constructed by Fon-Der-Flaass'
prolific construction. Also the (three) distance-regular
antipodal $4$-covers of $K_{10}$ \cite{DegraerCoolsaet}, the
(two) distance-regular antipodal 3-covers of $K_{14}$
\cite{Deg07}, and the (four) distance-regular antipodal
3-covers of $K_{17}$ \cite{Deg07} were classified by computer
by Degraer and Coolsaet. We also remark that Muzychuk
\cite{Muzy07} extended Fon-Der-Flaass' ideas further.

Godsil and Hensel \cite{GodsilHensel92} (see also \cite{CMM95}) described a
relation between regular antipodal covers of complete graphs and generalized
Hadamard matrices. By constructing skew generalized Hadamard matrices, Klin and
Pech \cite{KlinPech} thus constructed new infinite families of distance-regular
antipodal covers of complete graphs. Their paper contains a good overview of
the state of the art concerning such covers, and has many interesting ideas and
connections. For more background on antipodal covers of complete graphs, we
also refer to Godsil and Hensel \cite{GodsilHensel92} and Godsil
\cite{Godsil96}. For the classification of distance-transitive antipodal covers
of complete graphs, we refer to the paper by Godsil, Liebler, and Praeger
\cite{GLP98}.

\subsubsection{Soicher graphs and Meixner graphs}
\label{sec: Soicher and Meixner}

Soicher \cite{So93} obtained three distance-regular graphs of
diameter four, each being a triple cover of a strongly regular
graph. The first has intersection array $\{416,315,64,1;\linebreak
1,32,315,416\}$, and is a triple cover of the Suzuki graph. The
second has intersection array $\{56,45,16,1;1,8,45,56\}$, and is a
triple cover of the second subconstituent of the McLaughlin graph.
In an unpublished manuscript, Brouwer \cite{AEBSoicher} (see also
\cite{BCNcoradd}) showed that this cover is the only cover of
the second subconstituent of the McLaughlin graph, hence it is the
only graph with the given intersection array. The third cover
constructed by Soicher is the second subconstituent of the second
one, it has intersection array $\{32,27,8,1;1,4,27,32\}$, and is a
triple cover of the Goethals-Seidel graph (the second
subconstituent of the second subconstituent of the McLaughlin
graph). Soicher \cite{Soicher15} also showed that this graph is the
only graph with the given intersection array.

Meixner \cite{Meixner91} implicitly constructed two distance-transitive antipodal covers with intersection arrays
$\{176,135,36,1;1,12,135,176\}$ and $\{176,135,24,1; 1,24,135,176\}$, as the collinearity graphs of the
geometries in \cite[Prop.~4.3]{Meixner91}, see also \cite{BCNcoradd}. Juri\v{s}i\'{c} and Koolen \cite{JuKopre} showed
that the antipodal Meixner $4$-cover is uniquely determined by its intersection array.

Munemasa observed that the Meixner $2$-cover is the extended $Q$-bipartite double of the Moscow-Soicher graph of the
next section, cf.~\cite[Ex.~3.4]{MMW07}.

\subsubsection{The Koolen-Riebeek graph and the Moscow-Soicher graph}\label{sec: KoolenRiebeek}

Brouwer, Koolen, and Riebeek \cite{BKR98} gave a construction of a bipartite
distance-regular graph with intersection array $\{45,44,36,5;1,9,40,45\}$ from
the ternary Golay code. Each of its halved graphs is the complement of the
Berlekamp-van Lint-Seidel graph.

Soicher \cite{So95} constructed another distance-regular graph related to one of the Golay codes, in this case the
binary. It has intersection array $\{110,81,12;1,18,90\}$. Farad\v{z}ev, Ivanov, Klin, and Muzychuk
\cite[p.~119]{FKM94} already mentioned the underlying association scheme of this graph without realizing it was metric.


\section{More background}\label{sec:morebackground}


\subsection{Miscellaneous definitions}\label{sec:miscdefs}

A non-complete $k$-regular graph $\G$ on $v$ vertices is called
{\em strongly regular} with parameters $(v,k,\lambda,\mu)$ if each
two adjacent vertices have $\lambda$ common neighbors, and each two
nonadjacent vertices have $\mu$ common neighbors. Thus, the
connected strongly regular graphs are precisely the
distance-regular graphs with diameter two. The definition of an
{\em amply regular graph} with parameters $(v,k,\lambda,\mu)$ is
obtained by replacing the condition on the nonadjacent vertices by
the condition that each two vertices at distance $2$ have $\mu$
common neighbors.

For a graph $\G$ and $x \in V$, the graph induced on the set $\G_i(x)$
is called an $i$-th {\em subconstituent} of $\G$. The first subconstituent
in consideration is also called a {\em local graph} of $\G$, and is denoted
by $\Upsilon(x)$. We say that $\G$ is {\em locally} $\Delta$ if all local
graphs are isomorphic to $\Delta$. More generally, we let $\Upsilon(x,y)$ be
the induced subgraph on the set of common neighbors of $x$ and $y$ (so it is a
local graph of a local graph if $x$ and $y$ are adjacent), etc.. A {\em
Terwilliger graph} is a non-complete graph such that $\Upsilon(x,y)$ is a
clique of size $\mu$ for each two vertices $x$ and $y$ at distance two, for
some $\mu$. Thus, a Terwilliger graph has no induced quadrangles.

Let $\G$ be a distance-regular graph with valency $k$ and diameter $D$. Let
$\ell(c, a, b) = |\{i= 1,2, \dots, D-1 : (c_i, a_i, b_i) = (c,a,b)\}|$. In
particular, let $h = h(\G)$ and $t= t(\G)$ be defined by $h(\G) =
\ell(c_1, a_1, b_1)$ and $t(\G) = \ell(b_1,a_1, c_1) $. The parameter
$h(\G)$ is called the {\em head} of $\G$ and $t(\G)$ is called the
{\em tail} of $\G$.

The {\em girth} of $\G$ is the length of its shortest cycle. The
{\em numerical girth} of $\G$ is $2h+3$ if $c_{h+1} = 1$, else it
is $2h+2$. If $a_1 = 0$, then the girth is equal to the numerical
girth. If $\G$ is locally a disjoint union of cliques, then the
{\em geometric girth} of $\G$ is the minimal length of a cycle for
which the induced subgraph on each triple of its vertices is not a
triangle; this equals the numerical girth. If $\G$ has a local
graph that is not a disjoint union of cliques, then the geometric
girth is defined as $3$. For geometric graphs, the geometric girth
is half the girth of the incidence graph of the corresponding
partial linear space (see Section
\ref{sec:generalizations+geometric}). Note that the girth and the
numerical girth are determined by the intersection array, but in
general the geometric girth is not (for example, the Doob graphs
have geometric girth $3$, whereas the Hamming graphs (with the same
intersection array) have geometric girth $4$).

A quadruple $(x,y,z,u)$ of vertices is called a {\em parallelogram of length
$i$} if $d(x,y) = 1 = d(z,u)$, $d(x,z) = d(y,u) = d(y, z) = i-1$, and $d(x,u) =
i$. The graph $\G$ is called {\em $m$-parallelogram-free} for some
$m=2,3,\ldots, D$ if $\G$ does not contain any parallelogram of length at
most $m$. We say $\G$ is parallelogram-free if it does not contain any
parallelogram. Related conditions called $\mathrm{(CR)}_m$ and $\mathrm{(SS)}_m$ are given by Hiraki
\cite{Hi198, Hi499}.

A quadruple $(x,y,z,u)$ of vertices of $\G$ is called a \emph{kite of
length} $i$ if $d(x,y)=d(x,z)=d(y,z)=1$, $d(x,u)=i$, and $d(y,u)=d(z,u)=i-1$.

A subgraph $\Delta$ of $\G$ is called \emph{geodetically closed}, or \emph{closed} for short,  if $z \in
V_{\Delta}$ for all $x,y \in V_{\Delta} $ and $z$ on a geodetic between $x$ and $y$.  (A closed subgraph is also called
{\em convex} by some authors.) The subgraph $\Delta$ is called {\em strongly closed} if $z \in V_{\Delta}$ for all
vertices $x, y \in V_{\Delta}$ and $z \in V_{\G}$ such that $d_{\G}(x,z) + d_{\G}(z,y) \leq d_{\G}(x,y)
+ 1$. (The term \emph{weak-geodetically closed} is also used for strongly closed.) It is known that if $c_2>1$ then all
strongly closed subgraphs are regular; cf.~\cite[Lemma~5.2]{W98} or \cite{Su295}. A distance-regular graph $\G$
with diameter $D$ is said to be $m$-\emph{bounded} for some $m=1,2, \ldots, D$ if for all $i=1,2, \ldots, m$ and all
vertices $x$ and $y$ at distance $i$ there exists a strongly-closed subgraph $\Delta(x,y)$ with diameter $i$,
containing $x$ and $y$ as vertices. (Note that Weng \cite{W97,We99} also required that $\Delta(x,y)$ is regular.)

\subsection{A few comments on the eigenspaces}

Consider an association scheme with primitive idempotents $E_0,E_1,\dots,E_D$.
By computing the squared norm, it follows that
\begin{equation}\label{3-tensor}
	\sum_{x\in V}E_i\mathbf{e}_x\otimes E_j\mathbf{e}_x\otimes E_h\mathbf{e}_x=0 \quad \text{if and only if}\ q_{ij}^h=0 \quad (h,i,j=0,1,\ldots, D),
\end{equation}
where $\mathbf{e}_x\in\R^v$ denotes the characteristic vector of $\{x\}$.
In fact, this computation also gives an alternative proof of the Krein conditions; cf.~Proposition \ref{Krein condisions}.
Recall that the absolute bound (cf.~Proposition \ref{absolute bound}) was an immediate consequence of the obvious observation that $(E_i \circ E_j) \R^v \subseteq \mathrm{span} (E_i \R^v \circ E_j \R^v)$.
We remark here that these two subspaces indeed coincide:
\begin{equation}\label{colsp Hadamard}
	\mathrm{span} (E_i \R^v \circ E_j \R^v) = (E_i \circ E_j) \R^v = \sum_{q_{ij}^h \ne
0} E_h \R^v  \quad (i,j=0,1,\dots,D).
\end{equation}
To see this, note that $\langle \mathbf{u}\circ\mathbf{v},\mathbf{w}\rangle=\langle \mathbf{u}\otimes\mathbf{v}\otimes\mathbf{w},\sum_{x\in V}E_i\mathbf{e}_x\otimes E_j\mathbf{e}_x\otimes E_h\mathbf{e}_x\rangle$ for all $\mathbf{u}\in E_i\R^v$, $\mathbf{v}\in E_j\R^v$, and $\mathbf{w}\in E_h\R^v$, where $\langle\, ,\, \rangle$ denotes the standard inner product.
Therefore, it follows from \eqref{3-tensor} that $\mathrm{span} (E_i \R^v \circ E_j \R^v)$ is orthogonal to $E_h\R^v$ whenever $q_{ij}^h=0$.
These results are due to Cameron, Goethals, and Seidel \cite{CGS1978NAWIM} (cf.~\cite[\S II.8]{bi}, \cite{Tanaka2009LAAa}), and are quite fundamental in the theory of distance-regular graphs and association schemes; see, e.g., Sections \ref{sec:Talgebra} and \ref{sec:triple intersection numbers}.
We note that, in view of \eqref{colsp Hadamard}, the ordering $E_0,E_1,\dots,E_D$ is $Q$-polynomial if and only if $\sum_{\ell=0}^i (E_1\R^v)^{\circ\ell}=\sum_{\ell=0}^i E_{\ell}\R^v$ for all $i=0,1,\dots,D$, where $(E_1\R^v)^{\circ\ell}=E_1\R^v\circ E_1\R^v\circ\dots\circ E_1\R^v$ ($\ell$ times).

\subsection{The Terwilliger algebra}\label{sec:Talgebra}

The \emph{Terwilliger} (or \emph{subconstituent}) \emph{algebra} of an association scheme was introduced in
\cite{Talgebra92}. Though it should be stressed that this algebra also plays an important role in the theory of general
distance-regular graphs (cf.~Section \ref{sec:Talg6}), it is particularly well-suited for $Q$-polynomial
distance-regular graphs. In fact, this algebra has (part of) its roots in the study of \emph{balanced sets}
(cf.~\eqref{balanced set condition}); see, e.g., \cite[p.~93,~Note 1]{Terwilliger1988GC}.

In the context of the Terwilliger algebra, the Bose-Mesner algebra of an association scheme is always assumed to be
\emph{over} $\mathbb{C}$, that is,
\begin{equation*}
	\AL=\mathrm{span}_{\mathbb{C}}\{A_0,A_1,\dots,A_D\}\subset M_{v\times v}(\mathbb{C}).
\end{equation*}
Fix a `base vertex' $x\in V$. For each $i=0,1,\dots,D$, let $E_i^{\ster}=E_i^{\ster}(x)$, $A_i^{\ster}=A_i^{\ster}(x)$
be the diagonal matrices\footnote{We use $\ster$-notation instead of the usual $\ast$-notation in order to
avoid confusion with the conjugate transpose.} in $M_{v\times v}(\mathbb{C})$ with diagonal entries $(E_i^{\ster})_{yy}=(A_i)_{xy}$,
$(A_i^{\ster})_{yy}=v(E_i)_{xy}$.
Note that $E_i^{\ster}E_j^{\ster}=\delta_{ij}E_i^{\ster}$,
$\sum_{i=0}^DE_i^{\ster}=I$, and moreover $A_i^{\ster}A_j^{\ster}=\sum_{h=0}^Dq_{ij}^hA_h^{\ster}$. These matrices span
the \emph{dual Bose-Mesner algebra} $\AL^{\ster}=\AL^{\ster}(x)$ \emph{with respect to} $x$:
\begin{equation*}
	 \AL^{\ster}=\mathrm{span}_{\mathbb{C}}\{E_0^{\ster},E_1^{\ster},\dots,E_D^{\ster}\}=\mathrm{span}_{\mathbb{C}}\{A_0^{\ster},A_1^{\ster},\dots,A_D^{\ster}\}\subset M_{v\times v}(\mathbb{C}).
\end{equation*}
Note that if the association scheme is $Q$-polynomial with respect to the ordering
$(E_i)_{i=0}^D$ then $A_1^{\ster}$ generates $\AL^{\ster}$. The \emph{Terwilliger
algebra} $\TT=\TT(x)$ \emph{with respect to} $x$ is the
subalgebra of $M_{v\times v}(\mathbb{C})$ generated by $\AL$ and $\AL^{\ster}$
\cite{Talgebra92}. The following are relations in $\TT$:
\begin{equation}\label{relations}
	E_i^{\ster}A_jE_h^{\ster}=0 \ \Leftrightarrow p_{ij}^h=0, \quad E_iA_j^{\ster}E_h=0 \ \Leftrightarrow q_{ij}^h=0 \quad (h,i,j=0,1,\ldots, D).
\end{equation}
We note that the latter is a variation of \eqref{3-tensor}. Because $\TT$ is closed under conjugate-transposition, it
is semisimple and every two non-isomorphic irreducible $\TT$-modules in $\mathbb{C}^v$ are orthogonal.
Let $G$ be the full automorphism group of the association scheme.
Then $\TT$ is a subalgebra of the centralizer algebra\footnote{Dunkl \cite{Dunkl1976IUMJ,Dunkl1978SIAM,Dunkl1978MM,Dunkl1979P} and Stanton \cite{Stanton1981GD} studied this latter algebra in detail in the context of addition theorems for orthogonal polynomials associated with some classical families of distance-regular graphs.} of the action
of the stabilizer $G_x$ of $x$ on $\mathbb{C}^v$.
The two algebras are known to be equal, e.g., for Hamming graphs;
cf.~\cite[Prop.~3]{GST2006JCTA}. We also note that the structure of $\TT$ may depend on the choice of $x$ if $G$ is not
transitive on $V$; cf.~Section \ref{twistedsection}.

Let $W$ be an irreducible $\TT$-module. When the association scheme is $P$-polynomial (resp.~$Q$-polynomial) with
respect to the ordering $(A_i)_{i=0}^D$ (resp.~$(E_i)_{i=0}^D$), we define the \emph{endpoint} (resp.~\emph{dual
endpoint}) of $W$ by $\min\{i\, :\, E_i^{\ster}W\ne 0\}$ (resp.~$\min\{i\, :\, E_iW\ne 0\}$). We call $W$ \emph{thin}
(resp.~\emph{dual thin}) if $\dim E_i^{\ster}W\leq 1$ (resp.~$\dim E_iW\leq 1$) for $i=0,1,\dots,D$. We also define the
\emph{diameter} and the \emph{dual diameter} of $W$ by $|\{i\, :\, E_i^{\ster}W\ne 0\}|-1$ and $|\{i\, :\, E_iW\ne
0\}|-1$, respectively. If the association scheme is $P$-polynomial
(resp.~$Q$-polynomial), then thin (resp.~dual thin) implies dual
thin (resp.~thin) \cite{Talgebra92}. There is a unique irreducible
$\TT$-module with $E_0^{\ster}W\ne 0$ and $E_0W\ne 0$, called the
\emph{primary} (or \emph{trivial}) $\TT$-module; it is thin, dual
thin, and given by
$\mathrm{span}_{\mathbb{C}}\{A_0\mathbf{e}_x,A_1\mathbf{e}_x,\dots,A_D\mathbf{e}_x\}$,
where $\mathbf{e}_x\in\mathbb{C}^v$ denotes the characteristic
vector of $\{x\}$. We say the association scheme is $i$-\emph{thin
with respect to} $x$ if every irreducible $\TT(x)$-module $W$ with
$E_i^{\ster}W\ne 0$ is thin.\footnote{The definition of the
$i$-thin condition here is taken from
\cite{Dickie1995D,DT1998JAC}. This is slightly different from the
standard one for the case when the association scheme is
$P$-polynomial, where it is called $i$-\emph{thin with respect to}
$x$ if every irreducible $\TT(x)$-module with endpoint at most $i$
is thin. On the other hand, the present definition of course has
the advantage that it makes sense for general association schemes.
We shall be careful below not to cause any confusion when we
discuss results involving this concept.} It is said to be
\emph{thin with respect to} $x$ if it is $i$-thin with respect to
$x$ for all $i=0,1,\dots,D$. Finally, we say the association scheme
is \emph{thin} (resp.~$i$-\emph{thin}) if it is thin
(resp.~$i$-thin) with respect to $x$ for all $x\in V$.

In the study of the Terwilliger algebra, it is often quite important to consider the following three matrices:
\begin{equation}\label{quantum decomposition}
	L=\sum_{i=1}^DE_{i-1}^{\ster}AE_i^{\ster}, \quad F=\sum_{i=0}^DE_i^{\ster}AE_i^{\ster}, \quad R=\sum_{i=0}^{D-1}E_{i+1}^{\ster}AE_i^{\ster},
\end{equation}
called the \emph{lowering}, \emph{flat}, and \emph{raising matrices},
respectively. Note that $A=L+F+R$. As an illustrative example, suppose $\G$
is the $D$-cube $H(D,2)$, and let
$A^{\ster}=A_1^{\ster}=\sum_{i=0}^D(D-2i)E_i^{\ster}$ correspond to the
$Q$-polynomial idempotent $E_1$ associated with the second largest eigenvalue
$\theta_1=D-2$. Then $F=0$ because $\G$ is bipartite, and it follows that
$L,R$, and $A^{\ster}$ generate $\TT$. Moreover, we can easily verify that
$LR-RL=A^{\ster}$, $RA^{\ster}-A^{\ster}R=2R$, and $LA^{\ster}-A^{\ster}L=-2L$,
so that the Terwilliger algebra $\TT$ is a homomorphic image of the universal
enveloping algebra of the Lie algebra $\mathfrak{sl}_2(\mathbb{C})$. Therefore,
every irreducible $\TT$-module $W$ has the structure of an irreducible
$\mathfrak{sl}_2(\mathbb{C})$-module, and $\bigoplus_{i=e}^{D-e}E_i^{\ster}W$
gives the weight space decomposition of $W$, where $e$ denotes the endpoint of
$W$. In particular, $H(D,2)$ is thin. We refer the reader to Terwilliger
\cite{Terwilliger1993N} and Go \cite{Go2002EJC} for more details.

\subsection{Equitable partitions and completely regular codes}\label{sec:CRC}

\subsubsection{Interlacing, the quotient matrix, and the quotient
graph}\label{sec:interlacing}

Eigenvalue interlacing is a useful tool in studying distance-regular graphs,
and more generally, in spectral graph theory; see the survey by Haemers
\cite{HaeInterlacing}. A sequence of numbers $\mu_1 \geq \mu_2 \geq \cdots \geq
\mu_m$ is said to {\em interlace} a sequence $\lambda_1 \geq \lambda_2 \geq
\cdots \geq \lambda_n$, with $n>m$, if $\lambda_i \geq \mu_i \geq
\lambda_{n-m+i}$ for all $i=1,2,\dots,m$. The interlacing is called {\em tight}
if for some $k \in \{0,1,\dots,m\}$ the equalities $\lambda_i=\mu_i,
i=1,\dots,k$ and $\lambda_{n-m+i}=\mu_i, i=k+1,\dots,m$ hold.

An elementary interlacing result states that the eigenvalues of a principal
submatrix $B$ of a symmetric matrix $A$ interlace the eigenvalues of $A$
itself. When applied to graphs: the eigenvalues of an induced subgraph of a
graph $\G$ interlace the eigenvalues of $\G$.

A somewhat more complicated --- but very useful --- result concerns the
so-called quotient matrix. Let $\Pi = \{P_1, P_2, \ldots, P_t\}$ be a partition
of the vertex set of a graph $\G$. Let $f_{ij}$ be the average number of
neighbors in $P_j$ of a vertex in $P_i$, for $i,j =1,2, \ldots, t$. The matrix
$F = (f_{ij})$ is called the {\em quotient matrix} of $\Pi$. The partition
$\Pi$ is called {\em equitable} if every vertex in $P_i$ has exactly $f_{ij}$
neighbors in $P_j$. Also the eigenvalues of $F$ interlace the eigenvalues of
$\G$. Moreover, if the interlacing is tight, then the partition is
equitable. In this case, it can easily be seen that an eigenvector ${\bf u}$ of
$F$ can be `blown up' to an eigenvector ${\bf v}$ of $\G$ (with the same
eigenvalue) by setting $v_x = u_i$ if $x \in P_i$. An example of an equitable
partition in a distance-regular graph $\G$ is the distance partition $\Pi
=\{\G_0(z),\G_1(z),\dots,\G_D(z)\}$ of a vertex $z$, and its
quotient matrix is the intersection matrix $L$ as in (\ref{matrixl}).

Given a partition $\Pi= \{ P_1, P_2, \ldots, P_t\}$ of the vertex set of a
graph $\G$, we define the {\em quotient graph} $\G/ \Pi$ with vertex
set $\Pi$, where $P_i \sim P_j$ if $i \neq j$ and there  exist $x \in P_i$ and
$y \in P_j$ such that $x \sim y$ in $\G$.

We call an equitable partition $\Pi$ {\em uniformly regular} if its quotient
matrix $F$ and the adjacency matrix $B$ of $\G/ \Pi$ are related as
$F=fI+\tilde{f}B$, for some numbers $f$ and $\tilde{f} \neq 0$. It is clear
that in this case, the eigenvalues of the quotient $\G/ \Pi$ follow in a
straightforward way from the eigenvalues of $F$, and the latter are eigenvalues
of $\G$, as we just observed. An example of a uniformly regular partition
is given by the partition into fibres of an antipodal distance-regular graph.
In this case, the corresponding quotient graph is the folded graph.

\subsubsection{Completely regular codes}\label{sec:crcdelsarte}

Let $\G$ be a connected graph, say with diameter $D$, and let $C$ be a subset of $V=V_{\G}$. For $i \geq 0$,
let $C_i = \{ x \in V : d(x, C) = i\}$, where $d(x, C) = \min\{ d(x,c) : c \in C\}$. The {\em covering radius} of $C$,
denoted by $\rho=\rho(C)$, is the maximum $i$ such that $C_i \neq \emptyset.$ The subset (or code) $C$ is called {\em
completely regular} if the distance partition $\Pi = \{C_i : i=0,1, \ldots, \rho\}$ is equitable. Note that the
corresponding quotient matrix is tridiagonal; it is therefore common to denote $f_{i, i-1}, f_{i,i}$ and $f_{i,i+1}$ by
$\gamma_i, \alpha_i$, and $\beta_i$, respectively. These numbers are called the {\em intersection numbers} of $C$. This
definition of a completely regular subset (or code) was introduced by Neumaier \cite{Neu92} and he showed that for
distance-regular graphs it is equivalent to Delsarte's definition \cite[p.~67]{del} in terms of the so-called outer
distribution. It is clear that if $C$ is completely regular then so is $C_{\rho}$. Note that for a distance-regular
graph, every singleton $\{z\}$ is a completely regular code with $ \gamma_i= c_i$, $\alpha_i= a_i$, and $\beta_i =
b_i$. In general, the behavior of the intersection numbers of a completely regular code can however be quite different
from that of the intersection numbers of a distance-regular graph. For example, it is not true in general that the
$\gamma_i$ are non-decreasing; see \cite{Ko95}. For more background information on completely regular codes, we refer
to the work of Martin \cite{Martinthesis, MartinCRC}.

A partition $\Pi = \{ P_1, P_2, \ldots, P_t\}$ of $V$ is called a {\em
completely regular partition} if it is equitable and all of the $P_i$ are
completely regular with the same intersection numbers. It is known that a
completely regular partition is uniformly regular. A typical (and motivating)
example of a completely regular partition is the partition into cosets of a
linear code $C$ of length $n$ over $GF(q)$ that is completely regular in the
Hamming graph $H(n,q)$.
(More generally, we can consider a completely regular additive code in a translation distance-regular graph.\footnote{A \emph{translation} distance-regular graph is a distance-regular Cayley graph on an abelian group. An \emph{additive} code in such a graph is just a subgroup of the abelian group ($=$ vertex set).})
In this case we call $\G/ \Pi$ the coset graph of
$C$. This coset graph is distance-regular by the following result.

\begin{theorem}\label{uniformlyregularpartition}
{\em \cite[Thm.~11.1.5,~11.1.8]{bcn}} Let $\G$ be a distance-regular graph and $\Pi$ a uniformly regular partition
of $\G$ with quotient matrix $F$. Then $\G/ \Pi$ is distance-regular if and only if $\Pi$ is completely
regular. Moreover, if so, then the intersection numbers of $\G/ \Pi$ can be explicitly calculated from the
intersection numbers of $\G$ and $F$.
\end{theorem}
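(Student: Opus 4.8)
The plan is to show that the quotient $\bar\G:=\G/\Pi$ inherits its metric structure from the completely regular codes constituting $\Pi$ (and conversely), uniform regularity being exactly what lets one translate between counting neighbouring blocks and counting neighbours divided by $\tilde f$. Let $B$ be the adjacency matrix of $\bar\G$, for a vertex $x$ let $[x]$ denote the block of $\Pi$ containing it, and let $S$ be the $v\times t$ characteristic matrix of $\Pi$. Since $\Pi$ is equitable, $AS=SF$, hence $A_\ell S=v_\ell(A)S=Sv_\ell(F)$ with the distance polynomials $v_\ell$ of $\G$ (see \eqref{distancepolynomials}); reading off entry $([x],j)$, the quantity $|\{y\in P_j:d_\G(x,y)=\ell\}|$ equals $(v_\ell(F))_{[x],j}$ and in particular depends only on $[x]$. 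The first step is then to prove that $d_\G(x,P_i)=d_{\bar\G}(P_{[x]},P_i)$ for every $x$ and every block $P_i$. Here ``$\le$'' holds because $P_a\sim P_b$ in $\bar\G$ forces $F_{ab}\ne0$, so \emph{every} vertex of $P_a$ has a neighbour in $P_b$, and hence a shortest walk in $\bar\G$ from $P_{[x]}$ to $P_i$ lifts to a path in $\G$ of the same length starting at $x$; ``$\ge$'' holds by projecting a shortest $x$-to-$P_i$ path onto $\bar\G$. Consequently each set $(P_i)_\ell=\{x:d_\G(x,P_i)=\ell\}$ is a union of blocks, namely $(P_i)_\ell=\bigcup\{P_j:d_{\bar\G}(P_i,P_j)=\ell\}$, and the covering radius of the code $P_i$ equals the eccentricity of $P_i$ in $\bar\G$.

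Next, use uniform regularity in the form: a vertex $y\in P_j$ has exactly $f\delta_{jk}+\tilde fB_{jk}$ neighbours in a block $P_k$. Fix blocks $P_i,P_j$ with $d_{\bar\G}(P_i,P_j)=m$ and a vertex $y\in P_j$. Summing this count over the blocks contained in $(P_i)_{m-1}$, in $(P_i)_m$, and in $(P_i)_{m+1}$ (and noting $P_j\subseteq(P_i)_m$ contributes $f$), the numbers of neighbours of $y$ in these three sets equal $\tilde f\,n_-$, $f+\tilde f\,n_0$, $\tilde f\,n_+$ respectively, where $n_-,n_0,n_+$ count the blocks $P_k\ne P_j$ adjacent to $P_j$ at $\bar\G$-distance $m-1,m,m+1$ from $P_i$. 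If $\bar\G$ is distance-regular these equal its intersection numbers $\bar c_m,\bar a_m,\bar b_m$, so the three neighbour counts are constants independent of $i,j$ and of $y\in(P_i)_m$; hence every $P_i$ is completely regular with common intersection numbers $\gamma_m=\tilde f\bar c_m$, $\alpha_m=f+\tilde f\bar a_m$, $\beta_m=\tilde f\bar b_m$, i.e.\ $\Pi$ is a completely regular partition. Conversely, if $\Pi$ is a completely regular partition with common code intersection numbers $\gamma_m,\alpha_m,\beta_m$, then (using $\tilde f\ne0$) the same identities force $n_-=\gamma_m/\tilde f$, $n_0=(\alpha_m-f)/\tilde f$, $n_+=\beta_m/\tilde f$ to be well-defined constants; since $\bar\G$ is moreover connected, $\big((k-f)/\tilde f\big)$-regular, and (by the first step) of diameter equal to the common covering radius $\rho$ of the blocks, it is distance-regular with $\bar c_m=\gamma_m/\tilde f$, $\bar a_m=(\alpha_m-f)/\tilde f$, $\bar b_m=\beta_m/\tilde f$. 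These formulas yield the ``moreover'' part: $f,\tilde f,B$ are read off from $F$, and the $\gamma_m,\alpha_m,\beta_m$ and $\rho$ are computed from $F$ together with the polynomials $v_\ell$ (via $|\{y\in P_j:d_\G(x,y)=\ell\}|=(v_\ell(F))_{[x],j}$), the latter being determined by the intersection numbers of $\G$.

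The step I expect to be the main obstacle is the first one: verifying that $(P_i)_\ell$ is $\Pi$-saturated and equals the union of blocks at $\bar\G$-distance $\ell$ from $P_i$ (hence that covering radius $=$ eccentricity), together with the careful bookkeeping of the ``diagonal'' contribution $f$ from a vertex's own block throughout. In addition, in the direction ``completely regular $\Rightarrow$ distance-regular'' one must still check that $\bar\G$ is genuinely a distance-regular graph --- connectedness, regularity, and that its diameter is exactly $\rho$ --- rather than merely write down candidate parameters. None of this is deep, but it must be done with some care.
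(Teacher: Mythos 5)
The survey does not prove this statement itself --- it is quoted from BCN (Thm.~11.1.5, 11.1.8) --- so there is no in-paper proof to compare against; judged on its own, your argument is correct and follows the standard route: the identity $d_{\G}(x,P_i)=d_{\G/\Pi}(P_{[x]},P_i)$ (valid because equitability plus $\tilde f\neq 0$ lets quotient walks be lifted and $\G$-paths be projected) shows each $(P_i)_\ell$ is a union of blocks, and then uniform regularity converts block counts into neighbour counts, giving $\gamma_m=\tilde f\,\bar c_m$, $\alpha_m=f+\tilde f\,\bar a_m$, $\beta_m=\tilde f\,\bar b_m$ and hence both implications together with the parameter formulas. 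Two cosmetic remarks: the lift of a shortest quotient walk is a walk in $\G$, not necessarily a path, which is all you need; and for the ``moreover'' part it is worth noting that $f$, $\tilde f$ and $B$ are all read off from $F$, so once distance-regularity of $\G/\Pi$ is established its intersection numbers are determined by $F$ (your detour through $A_\ell S=Sv_\ell(F)$ is valid but not needed for that conclusion).
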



%
%
%

Delsarte cliques are examples of completely regular codes. Indeed,
the following result characterizes such cliques.

\begin{prop} {\em \cite[Lemmas~13.7.2,~13.7.4]{Godsilac}}
Let $\G$ be a distance-regular graph with valency $k$, diameter $D$ and
smallest eigenvalue $\theta_{\min}$. Let $C$ be a clique in $\G$ with $c$
vertices. Then $C$ is a Delsarte clique if and only if $C$ is a completely
regular code with covering radius $D-1$. Moreover, if so, then $\phi_i u_i +
(c-\phi_i)u_{i+1} = 0$, where $(u_i)_{i=0}^D$ is the standard sequence for
$\theta_{\min}$ and $\phi_i= |\G_i(x) \cap C| $ for a vertex $x$ at
distance $i$ from $C$.
\end{prop}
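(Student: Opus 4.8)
The plan is to reduce the whole statement to one computation with the characteristic vector $\chi$ of $C$. For an eigenvalue $\theta$ of $\G$ with multiplicity $m(\theta)$ and standard sequence $(u_i)_{i=0}^D$, the proof of Biggs' formula (Theorem~\ref{Biggsformula}) gives $E_\theta=\frac{m(\theta)}{v}\sum_{i=0}^D u_iA_i$ for the corresponding primitive idempotent. Since $C$ is a clique we have $\chi^{\top}A_0\chi=c$, $\chi^{\top}A_1\chi=c(c-1)$ and $\chi^{\top}A_i\chi=0$ for $i\ge 2$, so, using $u_1=\theta/k$,
\[
\chi^{\top}E_\theta\chi=\frac{m(\theta)\,c}{v}\Bigl(1+\frac{\theta}{k}(c-1)\Bigr)=\|E_\theta\chi\|^{2}\ge 0 .
\]
This re-proves the Delsarte bound (Proposition~\ref{delbound}), and, assuming $c\ge 2$ (for $c=1$ both sides of the proposition fail), it shows $E_\theta\chi=0\iff\theta=-k/(c-1)$; in particular $E_\theta\chi=0$ for at most one eigenvalue $\theta$. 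Rearranging $c\le 1-k/\theta_{\min}$ as $\theta_{\min}\ge -k/(c-1)$, and noting that every eigenvalue of $\G$ is $\ge\theta_{\min}\ge -k/(c-1)$, we get: $-k/(c-1)$ is an eigenvalue of $\G$ iff $-k/(c-1)=\theta_{\min}$ iff $C$ is a Delsarte clique; and by the displayed identity this is equivalent to $E_{\min}\chi=0$, where $E_{\min}$ denotes the idempotent of $\theta_{\min}$.

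For the forward implication, assume $C$ is a Delsarte clique, so $E_{\min}\chi=0$. If $x$ is any vertex with $d(x,C)=i$, then every vertex of $C$ is at distance $i$ or $i+1$ from $x$; writing $\phi_i(x)=|\G_i(x)\cap C|$, so that $|\G_{i+1}(x)\cap C|=c-\phi_i(x)$, the $x$-entry of $E_{\min}\chi$ equals $\frac{m(\theta_{\min})}{v}\bigl(\phi_i(x)u_i+(c-\phi_i(x))u_{i+1}\bigr)$, which must vanish. Since the standard sequence of $\theta_{\min}$ strictly alternates in sign (a standard fact, so in particular $u_i\ne u_{i+1}$), this forces $\phi_i(x)$ to depend only on $i$; denote it $\phi_i$, and observe that $\phi_iu_i+(c-\phi_i)u_{i+1}=0$ is exactly the asserted relation. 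Thus the outer distribution of $C$ is constant on each distance class, which by Neumaier's equivalence (cited above) means the distance partition of $C$ is equitable, i.e.\ $C$ is completely regular. For the covering radius, note that $\chi,A\chi,\dots,A^{\rho}\chi$ are linearly independent for any code, since $A^{j}\chi$ is supported in $C_0\cup\dots\cup C_j$ and is nonzero on $C_j$; hence $\rho+1\le\dim\R[A]\chi$, while $E_{\min}\chi=0$ gives $\dim\R[A]\chi\le D$, so $\rho\le D-1$ and $C_D=\emptyset$. Picking $c_0\in C$ and $x\in\G_D(c_0)$ gives $d(x,C)\in\{D-1,D\}$, hence $d(x,C)=D-1$, so $\rho=D-1$.

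For the converse, assume $C$ is completely regular with covering radius $D-1$. Then the distance partition $\{C_0,\dots,C_{D-1}\}$ is equitable, so $W:=\mathrm{span}\{\mathbf{1}_{C_0},\dots,\mathbf{1}_{C_{D-1}}\}$ is $A$-invariant of dimension $D$; since $\chi=\mathbf{1}_{C_0}\in W$ we have $\R[A]\chi\subseteq W$, and combining with $\dim\R[A]\chi\ge\rho+1=D$ gives $\R[A]\chi=W$. Hence $\chi$ lies in exactly $D$ eigenspaces of $\G$, i.e.\ exactly one idempotent $E_j$ kills $\chi$; by the first paragraph this forces $\theta_j=-k/(c-1)$, which is therefore an eigenvalue of $\G$, so $\theta_j=\theta_{\min}$ and $C$ is a Delsarte clique. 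The final formula was already obtained in the forward direction. The only nontrivial external inputs are the alternating-sign property of the standard sequence of $\theta_{\min}$ and Neumaier's equivalence between the outer-distribution and equitable-partition descriptions of complete regularity; the main point to get right is the direction of the inequality relating $\theta_{\min}$ and $-k/(c-1)$, since that is what pins down \emph{which} eigenvalue is missing from the support of $\chi$.
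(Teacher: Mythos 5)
Your proof is correct, but it does considerably more than the paper does, and by a different route. The paper does not prove the equivalence at all: it cites Godsil's Lemmas 13.7.2 and 13.7.4 for the ``Delsarte clique $\Leftrightarrow$ completely regular with covering radius $D-1$'' statement, and only sketches the final formula, deriving $\phi_i u_i+(c-\phi_i)u_{i+1}=0$ from $E\chi=0$ via the spherical representation: $\sum_{z\in C}\hat{z}=0$ (equation \eqref{eq:repdelsarte}) paired with $\langle\hat{x},\hat{z}\rangle=\nu_0u_{d(x,z)}$. Your derivation of that formula is essentially the same computation in matrix form (the $x$-entry of $E\chi$ with $E=\frac{m(\theta)}{v}\sum_i u_iA_i$), but on top of it you give a self-contained proof of the equivalence itself: the identity $\chi^{\top}E_\theta\chi=\frac{m(\theta)c}{v}\bigl(1+\tfrac{\theta}{k}(c-1)\bigr)=\lVert E_\theta\chi\rVert^2$ pins down that $E_\theta\chi=0$ can happen only for $\theta=-k/(c-1)$, hence only for $\theta_{\min}$ and only when $C$ is Delsarte; the forward direction then uses the vanishing of $(E_{\min}\chi)_x$ together with $u_i\ne u_{i+1}$ and Neumaier's outer-distribution characterization to get complete regularity, and a clean dimension count ($\rho+1\le\dim\R[A]\chi\le D$, plus a vertex at distance $D$ from a point of $C$) fixes the covering radius; the converse reverses the count using the $A$-invariant span of the distance cells. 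Your two external inputs are legitimate: Neumaier's equivalence is quoted in the paper's Section \ref{sec:crcdelsarte}, and the sign-alternation of the cosine sequence of $\theta_{\min}$ is standard (and in fact you only need $u_i\ne u_{i+1}$, which also follows directly: $u_i=u_{i+1}$ together with $\phi_iu_i+(c-\phi_i)u_{i+1}=0$ would force $u_i=u_{i+1}=0$, contradicting the three-term recurrence). The handling of $c=1$ (both sides fail) is also fine. In short: where the paper defers to Godsil and gives only the representation-theoretic remark, you supply a complete quadratic-form/dimension argument that proves the whole proposition.
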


\noindent Note that the equation $\phi_i u_i + (c-\phi_i)u_{i+1} = 0$ follows from the fact that $E\chi = 0$ (with $E$
and $\chi$ as in the proof of Proposition \ref{delbound}).
Indeed, if $E=UU^{\top}$, then $U^{\top}\chi=0$, and hence for
the corresponding representation associated to $\theta_{\min}$ (see
Section
\ref{sec2:evmult}) we have that
\begin{equation}\label{eq:repdelsarte}
\sum_{z\in C}\hat{z}=0.
\end{equation}
Taking the inner product with $\hat{x}$, where $x$ is a vertex at
distance $i$ from $C$ gives the required equation.
 This implies (by using
\cite[Thm.~4.1]{Neu92}) that the intersection numbers of a Delsarte
clique can be explicitly calculated from the intersection numbers
of $\G$.

For a subset of the vertex set of an association scheme, with characteristic vector $\chi$, the {\em degree} and {\em
dual degree} are defined by $|\{i \neq 0:\chi^{\mathsf{T}}A_i\chi\ne 0\}|$ and $|\{i \neq 0:\chi^{\textsf{T}}E_i\chi\ne
0\}|$, respectively.

\subsection{Distance-biregular graphs and weakly geometric graphs}\label{sec:generalizations+geometric}

For an arbitrary graph with vertices $x$ and $y$ at distance $i$, we define
$c_i(x,y)$, $a_i(x,y)$, and $b_i(x,y)$ as the numbers of neighbors of $y$ that
are at distance $i-1$, $i$, and $i+1$, respectively. Thus, a connected graph
with diameter $D$ is distance-regular if these numbers do not depend on $x$ and
$y$ (but only on their distance $i$). If in an arbitrary graph the numbers
$c_i(x,y)$, $a_i(x,y)$, or $b_i(x,y)$ do not depend on $x$ and $y$, for some
$i$, then we will write $c_i$, $a_i$, or $b_i$, respectively (as in
distance-regular graphs). For example, in an arbitrary bipartite graph, one has
$a_i=0$ for all $i$.

For ease of notation and formulation, we will call the two biparts of a
bipartite graph its color classes $R$ and $B$, and say that a vertex in $R$ is
red, and a vertex in $B$ is blue.

Now a connected bipartite graph is called {\em distance-biregular} if the
numbers $c_i(x,y)$ and $b_i(x,y)$ depend only on $i$ and the color of $x$. We
denote these numbers by $c_i^R$, $b_i^R$, $c_i^B$, and $b_i^B$. Straightforward
examples are the complete bipartite graphs.

We say that a graph $\G$ is distance-regular around a vertex $x$ if the
singleton $\{x\}$ is a completely regular code in $\G$. A well-known result
by Godsil and Shawe-Taylor \cite{GSbiregular} states that if $\G$ is a
connected graph that is distance-regular around every vertex, then $\G$ is
distance-regular or distance-biregular.

A bipartite graph is called {\em semiregular} (or biregular) if the valency of
a vertex only depends on its color. We denote these valencies by $k_R$ and
$k_B$.

Powers \cite{Powerssemiregular} used the term semiregular for a concept that he
introduced, and what we now call distance-semiregular (following Suzuki
\cite{Su95, Su99}). A connected bipartite graph is called {\em
distance-semiregular} with respect to one of its color classes, $R$ say, if it
is distance-regular around all red vertices, with the same parameters (i.e,
there are $b_i^R$ and $c_i^R$ such that $b_i(x,y) = b_i^R$ and $c_i(x,y) =
c_i^R$ if $x \in R$ and $d(x,y) = i$). Note that every distance-biregular graph
is distance-semiregular, and in turn, each distance-semiregular graph is
semiregular, with valencies $k_R=b_0^R$ and $k_B=1+b_1^R$. The Hoffman graph
\cite{hoffman63} (the unique graph cospectral but not isomorphic to $H(4,2)$)
is an example of a (regular!) distance-semiregular graph that is not
distance-biregular.

Let $\G$ be distance-semiregular with respect to $R$, then its halved graph
$\G^R_2$ (i.e., the distance-2 graph of $\G$, induced on $R$) is
distance-regular. Let $C = \G(x)$ for some blue vertex $x$. Then $C$ is a
clique in $\G^R_2$, that is also a completely regular code in $\G^R_2$.
This leads to the following definition.

A distance-regular graph $\Delta$ is called {\em weakly geometric}
(with respect to ${\cal C}$) if it contains a collection ${\cal C}$
of cliques such that each edge is contained in a unique $C \in
{\cal C}$ and all $C \in {\cal C}$ are completely regular codes
with the same parameters. Thus, a geometric distance-regular graph
(see Section \ref{sec:geometricgraphs}) is weakly geometric.
Because of the property that each edge is contained in a unique
clique, there is a naturally associated partial linear space, whose
points are the vertices of $\Delta$ and whose lines are the cliques
of ${\cal C}$, and incidence is defined by containment. The point
(or collinearity) graph of this partial linear space is $\Delta$.
The bipartite (point-line) incidence graph of the partial linear
space is a distance-semiregular graph with girth at least 6; in
fact, this gives a one-to-one correspondence between the latter
type of graphs and weakly geometric distance-regular graphs. The
partial linear space has also been studied by De Clerck, De Winter,
Kuijken, and Tonesi
\cite{DeDeKuTo06, KuiTon05} under the name {\em distance-regular
geometry}.

Using the same correspondence, certain distance-semiregular graphs
with girth 4 correspond to Delsarte graphs and Delsarte clique
graphs as introduced by Bang, Hiraki, and Koolen \cite{BaHiKo07}
(see also \cite{BaHiKo10}). Delsarte graphs and Delsarte clique
graphs are closely related to the geometric distance-regular graphs
of Section \ref{sec:geometricgraphs}.

 We remark that the Johnson graphs
$J(n,D)$ and Grassmann graphs $J_q(n,D)$ are not just (weakly)
geometric with respect to a set of Delsarte cliques (the
$(D-1)$-sets or $(D-1)$-dimensional subspaces; that is, the set of
vertices containing a fixed $(D-1)$-set or $(D-1)$-space is a
Delsarte clique), but also weakly geometric with respect to another
set of cliques, namely the $(D+1)$-sets or $(D+1)$-dimensional
subspaces (i.e., the sets of vertices contained in these),
respectively. The corresponding incidence graphs are
distance-biregular; for $n=2D+1$, we obtain the distance-regular
Doubled Odd graph and Doubled Grassmann graph, respectively. In
Section
\ref{sec:Geometricdrg} we will discuss geometric distance-regular
graphs in more detail.

Following Suzuki \cite{Su99}, we say a distance-regular graph $\G$
is {\em of order $(s, t)$} (for some integers $s$ and $t$) if it is
locally the disjoint union of $t+1$ cliques of size $s$. This is
equivalent to the property that $\G$ contains no induced complete
tripartite graph $K_{2,1,1}$ (a kite of length $2$).

A distance-regular graph $\G$ of order $(s,t)$ with diameter $D$ is called
a {\em regular near polygon} if $a_i = c_i a_1$ for all $i=1,2,\ldots,D-1$. If
$a_d=c_D a_1$ we call $\G$ a regular near $2D$-gon; otherwise it is called
a regular near $(2D+1)$-gon. A regular near polygon of diameter $D$ is
geometric if and only if it is a regular near $2D$-gon.  We say $\G$ is
{\em thick} if $s \geq 2$ (the regular near polygons with $s=1$ are exactly the
bipartite distance-regular graphs and the generalized odd graphs).

Weng \cite{We99} defined a distance-regular graph to have {\em
geometric parameters $(D, b, \alpha)$} if it has classical
parameters $(D, b, \alpha, \beta)$ with $b \neq 1$ and $\beta =
\alpha\frac{1+b^D}{1-b}$. He used this concept in the partial
classification of distance-regular graphs with classical parameters
with $b<-1$. This does not seem to be related to geometric
distance-regular graphs.

\subsection{Homogeneity}\label{sec:homogeneity4}

Let $\G$ be a connected graph. For two distinct vertices $x$ and $y$,
define $\G_{i,j}(x,y) = \G_i(x) \cap \G_j(y)$. If it is clear (or
irrelevant) which pair $x,y$ is meant we will write $\G_{i,j}$ instead of
$\G_{i,j}(x,y)$. For $u \in \G_{i,j}$, let $p_{i,j;r,s}(u) = |\{ z \in
\G_{r,s} : z \sim u\}|$. We say the parameter $p_{i,j;r,s}$ exists with respect
to the pair $x,y$ if $p_{i,j;r,s}(u) =p_{i,j;r,s}(u')$ for all $u, u' \in
\G_{i,j}(x,y)$.

A connected graph $\G$ with diameter $D$ is called {\em $i$-homogeneous}
(in the sense of Nomura), $i =0,1, \ldots, D$ if for all pairs $x,y$ at
distance $i$ and all $r,s,r', s' \in \{0,1, \ldots, D\}$, the parameter
$p_{r,s; r', s'}$ exists and does not depend on the pair $x,y$, or in other
words, the partition $\{\G_{i,j}(x,y) : \G_{i,j}(x,y) \neq \emptyset, i,j =0,1,
\ldots, D\}$ is equitable for each pair $x,y$ at distance $i$ and the
parameters do not depend on the pair $x,y$.\footnote{In most of the literature
$\G_{i,j}$ is denoted as $D_i^j$. We chose different notation because $D$
stands for the diameter, and the superscript-subscript notation seems useful
only in intersection diagrams.}

Note that a $0$-homogeneous graph is distance-regular, and a $1$-homogeneous
graph is distance-regular. Examples of $1$-homogeneous distance-regular graphs
are the Johnson graphs $J(2D,D)$, the bipartite distance-regular graphs, and
the regular near $2D$-gons. To study $i$-homogeneous graphs, it is sometimes
useful to draw intersection diagrams with respect to two vertices $x$ and $y$.
In Figure \ref{pic:intersectiondiagram} we have an example of such an
intersection diagram for the Johnson graph $J(6,3)$.

\begin{figure}[h!]
\centering
\includegraphics[width=45mm,viewport=140 90 850 718,clip]{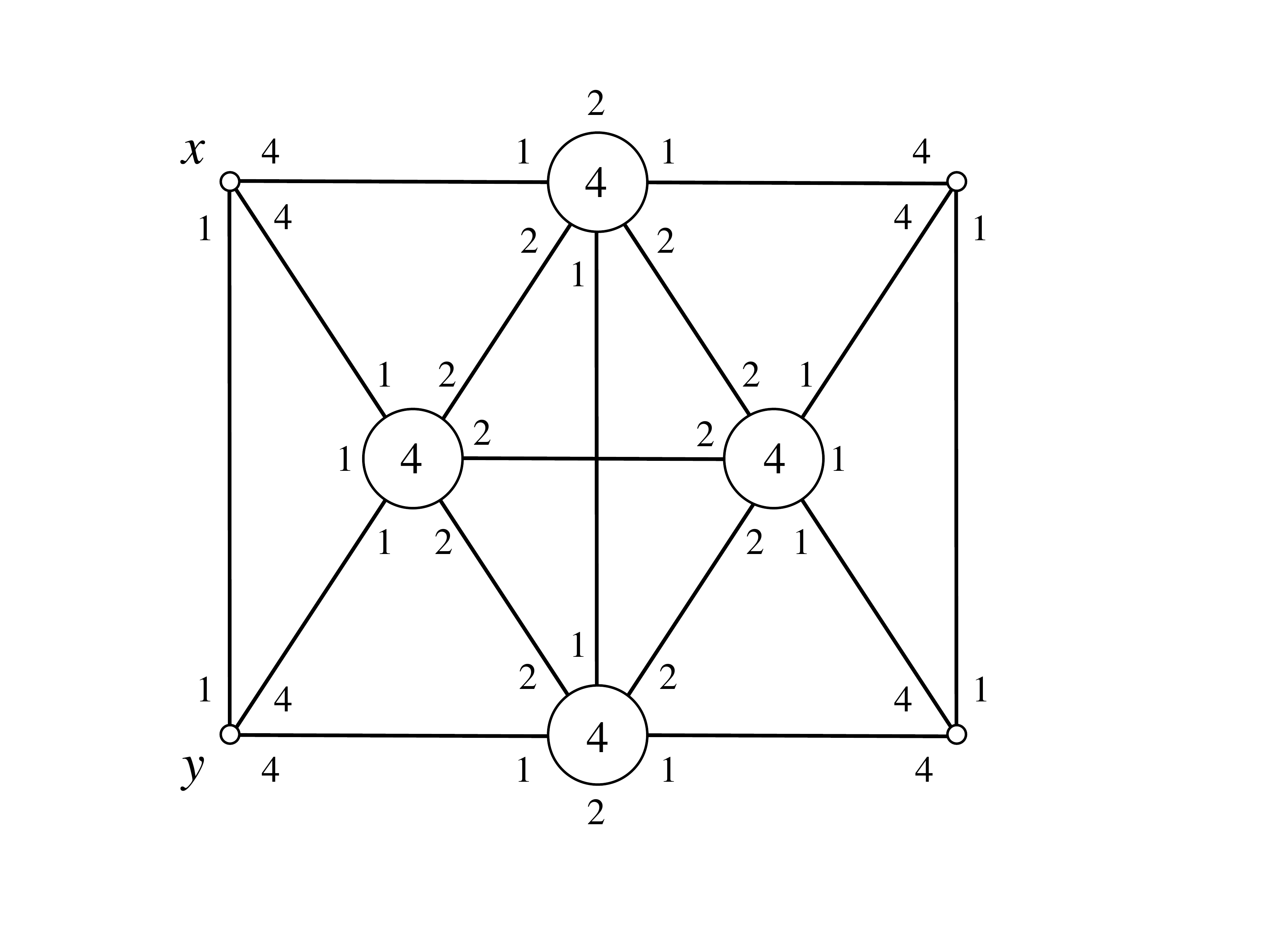}
\caption{Intersection diagram of $J(6,3)$} \label{pic:intersectiondiagram}
\end{figure}

\subsection{Designs}\label{sec:designs}

Consider an association scheme with primitive idempotents $E_i$ ($i=0,1,\dots,D$).
Let $T$ be a subset of $\{1,2,\dots,D\}$.
A set $Y$ of vertices of the association scheme with characteristic vector $\chi$ is called a (\emph{Delsarte}) $T$-\emph{design} if $E_i\chi=0$ for all $i\in T$.
This definition is due to Delsarte \cite{del}.

Suppose that the association scheme is $Q$-polynomial with respect to the ordering $E_0,E_1,\dots,E_D$.
In this case, a $\{1,2,\dots,t\}$-design is simply called a $t$-design.
The \emph{strength} of $Y$ is then defined by $\min\{i\ne 0:E_i\chi\ne 0\}-1$, i.e., it is the maximum integer $t$ for which $Y$ is a $t$-design.
Delsarte \cite{del} showed that the $t$-designs in the Johnson graphs and Hamming graphs are precisely the combinatorial block $t$-designs and the orthogonal arrays of strength $t$, respectively.
A similar interpretation was established for the other classical families of distance-regular graphs by Delsarte \cite{Delsarte1976JCTA}, Munemasa \cite{Munemasa1986GC}, and Stanton \cite{Stanton1986GC}.

For more results on $T$-designs in association schemes, see the recent survey by Martin and Tanaka \cite{martintanaka} and the references therein.


\section{\texorpdfstring{$Q$-polynomial}{Q-polynomial} distance-regular graphs}\label{sec:Qpol}



In this section, we collect (relatively new) results on $Q$-polynomial
distance-regular graphs. Throughout this section, we shall use the following
notation unless otherwise stated. Let $\G$ denote a distance-regular graph
with diameter $D\geq 3$ and valency $k\geq 3$. Let $\theta$ be an eigenvalue of
$\G$, $E$ the corresponding primitive idempotent, and $(u_i)_{i=0}^D$ the
standard sequence with respect to $\theta$.

Suppose for the moment that $E$ is $Q$-polynomial, and let $E_0,E_1=E,E_2,\dots,E_D$ be the corresponding $Q$-polynomial ordering.
Then by Leonard's theorem
(cf.~\cite[\S 8.1]{bcn}) there exist $p,r,r^{\ster}\in\mathbb{C}$ such that
\begin{equation}\label{TTR}
	u_{i-1}+u_{i+1}=pu_i+r, \quad \theta_{i-1}+\theta_{i+1}=p\theta_i+r^{\ster} \quad (i=1,2,\dots,D-1).
\end{equation}
It should be remarked that the sequence of polynomials $(v_i)_{i=0}^D$ (see~\eqref{distancepolynomials}) belongs to the
terminating branch of the Askey scheme \cite{KS1998R,KLS2010B} of (basic) hypergeometric orthogonal polynomials. (We also allow the specialization\footnote{The polynomials corresponding to the case $q=-1$ have recently been receiving considerable attention; see, e.g., \cite{GVZ2014SIGMA} and the references therein.} $q\rightarrow -1$.)
See also \cite{Terwilliger2006N}.
We call $E$ \emph{classical} if $(u_{i-1}-u_i)/(u_i-u_{i+1})$
is independent of $i=1,2,\dots,D-1$. It follows that $\G$ has a classical $Q$-polynomial idempotent if and only if it
has classical parameters such that $p=b+b^{-1}$; cf.~\cite[Thm.~8.4.1]{bcn}, \cite[Prop.~6.2]{Tanaka2011EJC}.

We begin with discussions on graphs with classical parameters.

\subsection{The graphs with classical parameters with \texorpdfstring{$b=1$}{b=1}}
\label{sec: the case b=1}

All graphs with classical parameters with $b=1$ have been determined: the Hamming graphs, Doob graphs, halved cubes,
Johnson graphs, and the Gosset graph; cf.~\cite[Thm.~6.1.1]{bcn} or \cite{Neu-b1}. Main contributors to this
classification were Egawa \cite{E-b1}, who characterized the Hamming and Doob graphs, and Terwilliger \cite{Ter-b1,
Ter-root} and Neumaier \cite{Neu-b1}, who used the classification of root lattices and the representation with respect
to the second largest eigenvalue to come to the final classification.
We note that $b=1$ implies $\theta_1=b_1-1$,
and the graphs satisfying the latter have been classified; cf.~\cite[Thm.~4.4.11]{bcn}.
In \cite{KS94,Shp98}, Koolen and Shpectorov used metric theory to
classify the distance-regular graphs whose distance-matrix has exactly one
positive eigenvalue. The distance-regular graphs with classical parameters with
$b=1$ have this property.
Godsil \cite{G98} considered the convex hull of the representation with respect
to a fixed eigenvalue. He classified when the $1$-skeleton of this polytope with
respect to the second largest eigenvalue is isomorphic to the original
distance-regular graph. In his classification he again finds all
distance-regular graphs with classical parameters with $b=1$.

\subsection{Recent results on graphs with classical parameters}\label{sec:recentclassical}

Metsch \cite[Cor. 1.3]{Me99} showed that if $\G$ has classical parameters and is not a Johnson, Grassmann, Hamming, or bilinear forms graph, then the parameter $\beta$ is bounded in terms of $D, b,$ and $\alpha$.

Terwilliger \cite{Terwilliger1995EJC} showed that if $\G$ has classical parameters with $b<-1$ then $\G$ has no kites of any length $i=2,3,\dots,D$. This result,
combined with earlier work of Ivanov and Shpectorov \cite{IS1991JMSJ}, proves that the Hermitian forms graphs
$\mathit{Her}(D,q^2)$ with $D\ge 3$ are uniquely determined by their intersection arrays. See also \cite{Weng1995GC}. A
related result by Weng \cite{W98} is as follows (cf.~Section \ref{sec: strongly closed subgraphs}).

\begin{prop}
Suppose $\G$ is $Q$-polynomial with $D\ge 3$, $c_2>1$, and $a_1\ne 0$. Then
the following are equivalent:
\begin{enumerate}[{\em (i)}]
\item $\G$ has classical parameters, and either $b < -1$, or $\G$ is
    a dual polar graph or a Hamming graph,
\item $\G$ has no parallelogram of length $2$ or $3$,
\item $\G$ is $D$-bounded.
\end{enumerate}
\end{prop}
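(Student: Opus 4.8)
The plan is to establish the cycle of implications (ii) $\Rightarrow$ (iii) $\Rightarrow$ (i) $\Rightarrow$ (ii), relying on Weng's theory of strongly closed (weak‑geodetically closed) subgraphs together with the Leonard‑pair relations \eqref{TTR} that come with the $Q$-polynomial hypothesis.

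First I would prove (ii) $\Rightarrow$ (iii) by inductively constructing strongly closed subgraphs $\Delta(x,y)$ of each diameter $i=1,2,\dots,D$. The assumption that $\G$ has no parallelogram of length $2$ says precisely that $\G$ contains no induced $K_{2,1,1}$, i.e.\ $\G$ is locally a disjoint union of cliques (it is of order $(s,t)$ with $s=a_1+1\ge 2$); one checks directly that each local clique is a strongly closed subgraph of diameter $1$, which settles the base case. For the inductive step, suppose $\Delta$ is a regular strongly closed subgraph of diameter $i<D$ with $x,y\in\Delta$ and $d(x,y)=i$, and pick $z\in\G_{i+1}(x)\cap\G_1(y)$; the goal is to enlarge $\Delta$ to a strongly closed subgraph of diameter $i+1$ containing $\Delta$ and $z$. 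The only obstruction to this enlargement is the appearance of a parallelogram or a kite of small length, and the content of Weng's argument is that forbidding lengths $2$ and $3$ already propagates the construction to all diameters. At each stage the hypothesis $c_2>1$ forces the newly built subgraph to be regular, hence distance-regular (Lemma~5.2 of \cite{W98}).

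For (iii) $\Rightarrow$ (i), assume $\G$ is $D$-bounded and fix a tower $\Delta_1\subset\Delta_2\subset\cdots\subset\Delta_D=\G$ of strongly closed subgraphs with $\Delta_i$ of diameter $i$. Each $\Delta_i$ is again distance-regular, it inherits the $Q$-polynomial property, and its intersection numbers are $c_j(\Delta_i)=c_j$, $a_j(\Delta_i)=a_j$, $b_j(\Delta_i)=b_j-b_i$ for $j<i$ (so $\Delta_i$ has valency $b_0-b_i$). Feeding this uniform behaviour of the $b_j$'s into the three-term recurrences \eqref{TTR} for the cosine sequences and the eigenvalues, level by level along the tower, pins the intersection array of $\G$ down to the classical shape \eqref{bb}; thus $\G$ has classical parameters $(D,b,\alpha,\beta)$ with $p=b+b^{-1}$. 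It then remains to decide which such graphs with $D\ge 3$, $c_2>1$ and $a_1\ne 0$ can actually be $D$-bounded: the compatibility of $b_j(\Delta_i)=b_j-b_i$ with $b_j=(\gauss{D}{1}-\gauss{j}{1})(\beta-\alpha\gauss{j}{1})$ forces either $\alpha=0$ (the dual polar graphs, $b=q\ge 2$, and the Hamming graphs, $b=1$) or $b<-1$ (the Hermitian forms graphs, and the $^2\A_{2D-1}$ dual polar graphs read with their negative classical parameters), while the remaining classical families with $\alpha\ne 0$ and $b\ge 1$ (Johnson, Grassmann, bilinear forms, halved cubes, Gosset) fail this compatibility and are excluded.

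Finally, (i) $\Rightarrow$ (ii) is a case check. For the Hamming graphs $H(D,e)$ with $e\ge 3$ and for the dual polar graphs with $c_2>1$ the absence of parallelograms of length $2$ and $3$ is an elementary computation in the coordinate (resp.\ subspace) model. For classical parameters with $b<-1$ one invokes Terwilliger's theorem (quoted above) that $\G$ then has no kite of any length: a parallelogram of length $2$ is the same configuration as a kite of length $2$, so none occur, and a parallelogram of length $3$ is excluded by a short computation with the classical $Q$-polynomial idempotent $E$ (for which $(u_{i-1}-u_i)/(u_i-u_{i+1})$ is independent of $i$), such a parallelogram forcing a linear dependence among $E\mathbf{e}_x+E\mathbf{e}_u$, $E\mathbf{e}_y$ and $E\mathbf{e}_z$ that is incompatible with the value of $c_2$ when $b<-1$. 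The main obstacle is the middle implication (iii) $\Rightarrow$ (i): extracting classical parameters out of mere $D$-boundedness needs the full Leonard-pair machinery carefully carried along the tower of strongly closed subgraphs, after which the identification with the three listed families rests on the classification of $Q$-polynomial graphs with classical parameters.
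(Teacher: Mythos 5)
Note first that the survey itself does not prove this proposition; it is quoted verbatim from Weng \cite{W98}, so your attempt has to be judged against Weng's actual arguments, and it has two genuine gaps. The first is in (ii) $\Rightarrow$ (iii). Weng's combinatorial theorem (cf.\ Proposition \ref{mpf}(ii), i.e.\ \cite[Thm.~6.4]{W98}) says that, under $a_1\neq 0$ and $c_2>1$, $\G$ is $m$-bounded if and only if it is $(m+1)$-parallelogram-free: the inductive enlargement of a strongly closed subgraph to diameter $i+1$ needs the absence of parallelograms of length $i+1$, not merely of lengths $2$ and $3$. Your claim that ``forbidding lengths $2$ and $3$ already propagates the construction to all diameters'' is exactly the missing step, and it is precisely here that the $Q$-polynomial hypothesis must be used: one needs the lemma (obtained from Terwilliger's balanced set condition, cf.\ \cite{Terwilliger1995DM}) that in a $Q$-polynomial distance-regular graph, no parallelograms of length $2$ or $3$ implies no parallelograms of any length. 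Your sketch never invokes $Q$-polynomiality in this implication, so as written the induction does not close.

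The second gap is in (iii) $\Rightarrow$ (i). The proposed exclusion of the classical families with $\alpha\neq 0$ and $b\geq 1$ by ``incompatibility'' of $b_j(\Delta_i)=b_j-b_i$ with the classical formula is false: for the Johnson graph $J(n,d)$ one has $b_j-b_i=(i-j)(n-i-j)$, which is exactly the intersection array of $J(n,i)$, so no arithmetic contradiction arises (and similarly for Grassmann and bilinear forms graphs). These families fail to be $D$-bounded for combinatorial, not numerical, reasons --- $J(n,d)$ is not even $1$-bounded, since the common neighbourhood of an edge is a disjoint union of two cliques and the graph contains parallelograms of length $2$ --- so the identification in (i) has to come from the classification of parallelogram-free $Q$-polynomial graphs (Liang--Weng \cite{LW97}, together with $a_1\neq 0$ killing the bipartite and generalized odd cases), or from Weng's direct derivation of classical parameters, rather than from array compatibility along the tower. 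Likewise, ``feeding $b_j(\Delta_i)=b_j-b_i$ into \eqref{TTR} pins down the classical shape'' is an assertion, not an argument; this derivation is the analytic core of \cite{W97,W98,We99}. Your outline of (i) $\Rightarrow$ (ii) is essentially sound (a parallelogram of length $2$ is a kite of length $2$, so Terwilliger's kite-free theorem \cite{Terwilliger1995EJC} applies when $b<-1$, and Hamming and dual polar graphs are a direct check), though the length-$3$ case for $b<-1$ is again only gestured at.
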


\noindent Liang and Weng \cite{LW97} showed that if $\G$ is $Q$-polynomial and $D\ge 4$ then $\G$ is parallelogram-free
if and only if either (i) $\G$ is bipartite, (ii) $\G$ is a generalized odd graph, or (iii) $\G$ has classical parameters and either $b < -1$ or
$\G$ is a Hamming graph or a dual polar graph.
Weng \cite{W97} showed that if $\G$ has
classical parameters with $b<-1, a_1\neq 0, c_2>1$, and $D \geq 4$, then $\G$
has geometric parameters
(cf.~Section
\ref{sec:generalizations+geometric}).
Building on this, he showed among other results that there are no distance-regular graphs with
classical parameters with $D \ge 4$, $c_2=1$, and $a_2>a_1>1$,
and that under the assumption
$D \ge 4$ and $c_2>1$, the dual polar graphs $^2\A_{2D-1}(-b)$ are the only
graphs with classical parameters with $b=-a_1-1$.
The latter characterizes the
dual polar graphs $^2\A_{2D-1}(\sqrt{q})$ by their intersection arrays for $D \ge 4$.
Weng \cite{We99} also showed the following result.

\begin{theorem}\label{thm:b<-1}
If $\G$ has classical parameters with $b<-1$, $a_1\neq 0$, $c_2>1$, and $D
\geq 4$, then either $\G$ is a dual polar graph $^2\A_{2D-1}(-b)$ or a
Hermitian forms graph $Her(D,(-b)^2)$, or $\alpha=(b-1)/2$, $\beta=-(1+b^D)/2$,
and $-b$ is a power of an odd prime.
\end{theorem}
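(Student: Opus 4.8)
The plan is to combine the two results quoted immediately above with the theory of strongly closed subgraphs, reducing the whole classification to a sharp numerical constraint. By Weng \cite{W97} the graph $\G$ has \emph{geometric} parameters, i.e.\ $\beta=\alpha\frac{1+b^{D}}{1-b}$, and by the preceding proposition (Weng \cite{W98}, applicable since a graph with classical parameters is $Q$-polynomial) $\G$ is $D$-bounded. Substituting the geometric-parameter relation into $a_{1}=b_{0}-b_{1}-c_{1}$ and $c_{2}=(1+b)(1+\alpha)$ gives the identities
\[
a_{1}+1=\alpha\,\frac{1+b}{1-b},\qquad c_{2}=(1+b)(1+\alpha)=a_{1}(1-b)+2,
\]
so that, writing $m:=-b\ge 2$, the pair $(m,a_{1})$ already determines $\alpha$ (hence $\beta$), namely $\alpha=-(a_{1}+1)\frac{m+1}{m-1}<0$. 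It therefore suffices to prove that the admissible pairs are exactly those with $m\in\{a_{1}+1,\ a_{1}+2,\ 2a_{1}+3\}$ and $m$ a prime power; these three values give respectively $\alpha=-m\frac{m+1}{m-1}$ with $b=-a_{1}-1$, then $\alpha=b-1$, and then $\alpha=(b-1)/2$ with $\beta=-(1+b^{D})/2$ (the last value of $m$ being automatically odd).

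Next I would pass to the tower of strongly closed subgraphs: using $D$-boundedness, fix a chain $\Delta_{1}\subset\Delta_{2}\subset\cdots\subset\Delta_{D}=\G$ with $\mathrm{diam}(\Delta_{i})=i$. Since $c_{2}>1$ each $\Delta_{i}$ is regular, hence a distance-regular graph, and since $c_{j}(\Delta_{i})=c_{j}(\G)$ for $j\le i$ one shows (as in Weng's work) that $\Delta_{i}$ has classical parameters $(i,b,\alpha,\beta_{i})$, in the geometric case with $\beta_{i}=\alpha\frac{1+b^{i}}{1-b}$; in particular $\Delta_{2}$ is then a strongly regular graph with $\lambda=a_{1}$, $\mu=c_{2}=a_{1}(m+1)+2$, valency $(a_{1}+1)(m^{2}+1)$, and nontrivial eigenvalues $m-1$ and $-m(a_{1}+1)-1$. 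I would then analyse how $\Delta_{2}$ sits inside $\Delta_{3}$ and $\Delta_{4}$ — here $D\ge 4$ guarantees that $\Delta_{4}$ exists — and use the hypothesis $a_{1}\ne 0$ together with the rigidity of this configuration to force $(m,a_{1})$ into the stated list.

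Granting the numerical trichotomy, I would identify the three families as follows. If $m=a_{1}+1$ then $b=-a_{1}-1$, and Weng's characterization quoted above ($^{2}\A_{2D-1}(-b)$ is the only graph with classical parameters with $b=-a_{1}-1$ when $D\ge 4$ and $c_{2}>1$) gives $\G\cong{}^{2}\A_{2D-1}(-b)$, so $-b=\sqrt q$ and $m$ is a prime power. If $m=a_{1}+2$ then $\alpha=b-1$, so $\G$ has the intersection array of $Her(D,(-b)^{2})$; since $b<-1$ forces $\G$ to be kite-free (Terwilliger \cite{Terwilliger1995EJC}), the characterization of Hermitian forms graphs via Ivanov--Shpectorov \cite{IS1991JMSJ} gives $\G\cong Her(D,(-b)^{2})$, again with $-b$ a prime power. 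Finally, if $m=2a_{1}+3$ then $\alpha=(b-1)/2$ and $\beta=\alpha\frac{1+b^{D}}{1-b}=-(1+b^{D})/2$; here $m$ is odd, and a further divisibility/structural argument (integrality of the eigenvalue multiplicities of $\G$, or the order-$m$ geometry carried by the Delsarte-clique structure of the geometric graph) forces $m=-b$ to be a power of a prime, necessarily odd.

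The main obstacle is the middle step: converting ``$\G$ is $D$-bounded with geometric parameters'' into the exact constraint $m\in\{a_{1}+1,a_{1}+2,2a_{1}+3\}$. One checks that the intersection array of $\G$ is feasible (all intersection numbers are nonnegative integers) for \emph{every} pair $(m,a_{1})$, so no purely numerical condition on the array of $\G$ alone can do the job; the constraint genuinely comes from the existence and rigidity of the tower $\Delta_{2}\subset\Delta_{3}\subset\Delta_{4}$, i.e.\ from the fact that the specific strongly regular graph $\Delta_{2}$ above must occur as the ``$\mu$-closure'' building block of the diameter-$3$ and diameter-$4$ strongly closed subgraphs of a geometric distance-regular graph. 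Making this precise — by tracking the local-graph parameters at each level, or the partial-geometry structure on the Delsarte cliques, and using $a_{1}\ne 0$ to exclude the degenerate configurations — is the technical heart of the argument.
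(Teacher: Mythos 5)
Your opening reduction is correct and carefully computed: granting Weng's results that $\G$ is $D$-bounded and has geometric parameters, the identity $a_1+1=\alpha\frac{1+b}{1-b}$ does show that the pair $(m,a_1)$ with $m=-b$ determines $\alpha$ and $\beta$, the three conclusions of the theorem correspond exactly to $m\in\{a_1+1,\,a_1+2,\,2a_1+3\}$ (your formulas for $c_2$, for the valency $(a_1+1)(m^2+1)$ of $\Delta_2$, and for its nontrivial eigenvalues $m-1$ and $-m(a_1+1)-1$ all check out), and your endgame is legitimate (Weng's $b=-a_1-1$ characterization for the dual polar case, kite-freeness plus Ivanov--Shpectorov for the Hermitian forms case). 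The difficulty is that the statement you declare ``it therefore suffices to prove'' --- that the admissible pairs are exactly $m\in\{a_1+1,a_1+2,2a_1+3\}$ --- \emph{is} the theorem, modulo the bookkeeping you have done, and your proposal never proves it. The middle step only records the parameters of the strongly regular subgraph $\Delta_2$ and announces that analysing how it sits inside $\Delta_3\subset\Delta_4$ will ``force $(m,a_1)$ into the stated list''; no counting identity, inequality, or contradiction is actually derived, and you concede in your last paragraph that making this precise is the technical heart. So what you have is a correct reduction plus a plausible plan in the spirit of Weng's argument, not a proof: the trichotomy, which is the entire content of \cite{We99} beyond the reduction, is missing.

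The prime-power claim in the third case is likewise unestablished: ``integrality of the eigenvalue multiplicities, or the order-$m$ geometry carried by the Delsarte-clique structure'' is a gesture rather than an argument (and your side remark that the intersection array is feasible for \emph{every} pair $(m,a_1)$ is itself unverified and plays no logical role). In Weng's framework the condition has a concrete structural source: for a $D$-bounded graph with geometric parameters, the poset of strongly closed subgraphs is a ranked meet semilattice whose intervals are modular atomic lattices isomorphic to projective geometries over $GF(b^2)$ \cite{W97}; with $D\geq 4$ the rank is large enough that this coordinatization forces $b^2$, hence $-b$, to be a prime power, and oddness then follows from $m=2a_1+3$. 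Both this and the actual combinatorial analysis of the diameter-$2$, $3$, $4$ subgraph configuration would have to be supplied before the argument closes. Note also that this survey contains no proof to lean on --- the theorem is quoted from Weng \cite{We99} --- so the missing steps are precisely what that paper establishes.
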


\noindent
Vanhove \cite{Vanhove2012JAC} showed that
a $((1-b)/2)$-\emph{ovoid} (i.e., a $(D-1)$-design with index $(1-b)/2$) in the
dual polar graph $^2\A_{2D-1}(-b)$ with $b$ odd would induce a distance-regular
graph having classical parameters of the latter case. For $D=2$, such
$((1-b)/2)$-ovoids are better known as \emph{hemisystems} and these were
constructed by Cossidente and Penttila \cite{CP2005JLMS} for every odd prime
power $-b$; see also~\cite{BGR2010BLMS}. No construction of a $((1-b)/2)$-ovoid
is known for $D\ge 3$.

Triangle-free distance regular graphs with classical parameters have been
studied by Pan, Lu, and Weng \cite{PanLuWeng08,PanWeng08,PanWeng09} and Hiraki
\cite{Hiraki09a}. One of the results is that if $\G$ has classical
parameters and $a_1=0, a_2 \neq 0, D\geq 3$ then either (i)
$(b,\alpha,\beta)=(-2,-2,((-2)^{D+1}-1)/3)$ ($c_2=1$), or (ii)
$(b,\alpha,\beta)=(-2,-3,-1-(-2)^D)$ ($c_2=2$), or
(iii)$(b,\alpha,\beta)=(-3,-2,-(1+(-3)^D)/2)$ ($c_2=2$); cf.~\cite{PanWeng09,Hiraki09a}.
Case (i) with $D=3$ is uniquely realized by the Witt graph $M_{23}$ \cite[\S 11.4B]{bcn}, whereas Huang, Pan, and Weng \cite{HPW2009pre} ruled out case (i) with $D\geq 4$.
Case (ii) is uniquely realized by the Hermitian forms graph $\mathit{Her}(D,4)$.

\subsection{Imprimitive graphs with classical parameters and partition graphs}
\label{sec: partition graphs}

It is known (\cite[Prop.~6.3.1]{bcn}) when a distance-regular graph with classical
parameters $(D,b,\alpha,\beta)$ with $D \geq 3$ is imprimitive:
it is bipartite
if and only if $\alpha=0$ and $\beta=1$, whereas it is antipodal if and only if
$b=1$ and $\beta=1+\alpha(D-1)$, in which case it is an antipodal double cover
of its folded graph. This folded graph has diameter $D'$ and intersection
numbers $b_i=(D-i)(1+\alpha(D-1-i))$ and $c_i=i(1+\alpha(i-1))$ for $i<D'$,
$b_{D'}=0$, and $c_{D'}=\gamma D'(1+\alpha(D'-1))$, where $\gamma=1$ if
$D=2D'+1$ and $\gamma=2$ if $D=2D'$.
The distance-regular graphs with such intersection numbers
are called \emph{pseudo partition graphs}.
Bussemaker and Neumaier \cite[Thm.~3.3]{BN1992MC} showed that pseudo
partition graphs with diameter $D' \geq 3$
must have the same intersection
arrays as in one of the three families of partition graphs: the folded cubes
($\alpha=0$), the folded Johnson graphs ($\alpha=1$), and the folded
halved cubes ($\alpha=2$).

The folded cubes are determined by their intersection arrays, except for the
folded $6$-cube, which has two mates (i.e., non-isomorphic distance-regular
graphs with the same intersection array) in the form of (other) incidence
graphs of $2$-$(16,6,2)$ designs (cf.~\cite[Thm.~9.2.7]{bcn}).
The characterization of the other two families of partition graphs is now complete,
due to work by Metsch, Gavrilyuk and Koolen:
\begin{prop}\label{folded Johnson}
{\em \cite{M297,M197,GK2012pre}} The folded Johnson graphs with
diameter at least three are uniquely determined as distance-regular graphs by their
intersection arrays.
\end{prop}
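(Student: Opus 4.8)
The plan is to take a distance-regular graph $\Gamma$ with the same intersection array as $\bar{J}(2n,n)$, where $n\ge 6$ so that the diameter $D'=\lfloor n/2\rfloor$ is at least $3$, and show $\Gamma\cong\bar{J}(2n,n)$. First I would read off the parameters forced by the array: $k=n^2$, $a_1=2n-2$, $c_2=4$, and $c_i=i^2$, $b_i=(n-i)^2$ for $1\le i<D'$, with $c_{D'}$ equal to $\lfloor n/2\rfloor^2$ or $2\lfloor n/2\rfloor^2$ according to the parity of $n$; from the array one also gets the smallest eigenvalue and, via the Delsarte bound (Proposition~\ref{delbound}), a clique bound of $n+1$, which matches the images in $\bar{J}(2n,n)$ of the $(n-1)$-subset ``lines'' of the Johnson graph. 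The key observation is that $\bar{J}(2n,n)$ is \emph{locally $L_2(n)$} (the $n\times n$ lattice graph): since the antipodal map of $J(2n,n)$ moves every vertex by distance $n\ge 3$, the ball of radius $1$ is unaffected by the folding, and the neighbourhood of a vertex of $J(2n,n)$ is already $L_2(n)$. So the proof reduces to two tasks: (a) show that $\Gamma$ is locally $L_2(n)$ (equivalently, that it carries a suitable family of $(n+1)$-cliques), and (b) globalise this local structure to identify $\Gamma$.

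For (a) the point is that $c_2=4$ is small, so the local graph $\Upsilon(x)$ --- a regular graph on $n^2$ vertices of valency $2n-2$ --- is very constrained; using counts of $\mu$-graphs, quadrangles, and completely regular codes one wants to force $\Upsilon(x)$ to be strongly regular with the parameters $(n^2,2(n-1),n-2,2)$ of the grid, after which uniqueness of $L_2(n)$ among strongly regular graphs for $n\ge 5$ finishes this step. This is essentially Metsch's ``clique-geometry'' method: extract a family of $(n+1)$-cliques with exactly two through each edge, so that $\Gamma$ becomes the point graph of a partial linear space. For (b) one then identifies that partial linear space --- it should be forced to be the one whose points are $n$-subsets of a $2n$-set, with lines given by the $(n-1)$- and $(n+1)$-subsets, modulo complementation --- and concludes that $\Gamma$ is either $J(2n,n)$ or its antipodal quotient $\bar{J}(2n,n)$; since $\operatorname{diam}J(2n,n)=n>\lfloor n/2\rfloor=\operatorname{diam}\Gamma$, only $\bar{J}(2n,n)$ remains. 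An alternative route for (b) is to lift $\Gamma$ to a distance-regular antipodal double cover with the intersection array of $J(2n,n)$, which is determined by its array for $n\ge 6$ by the Johnson-graph characterisation recalled in Section~\ref{sec:clasfamilies}, and then recover $\Gamma$ as its folded graph.

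The main obstacle is that almost nothing here is automatic from the intersection array: the relevant triple intersection numbers are not determined by the array, so proving $\Upsilon(x)\cong L_2(n)$ and extracting the clique geometry genuinely uses the global distance-regular structure, and Metsch's asymptotic estimates only close the argument once $n$ (equivalently the diameter) is large --- this is the content of \cite{M297,M197}. The remaining small diameters must be handled separately, and this is where Gavrilyuk and Koolen \cite{GK2012pre} do the work: for those finitely many explicit arrays the local and geometric analysis has to be carried out by hand, and one must also rule out any sporadic locally-$L_2(n)$ graph for small $n$. Carrying out these finitely many borderline cases --- rather than the asymptotic heart --- is the technically delicate part of the complete result.
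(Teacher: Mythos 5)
The paper does not actually prove this proposition --- it is a pointer to \cite{M297,M197,GK2012pre} --- so your proposal has to be measured against those works as the survey describes them. Your outline (force the local graph to be the $n\times n$ grid, extract the clique geometry by Metsch-style counting, then globalise) is indeed the route of Metsch's papers \cite{M297,M197}, and you correctly observe that the genuinely hard content is exactly what those papers supply. Where your account diverges from the paper is in what \cite{GK2012pre} contributes: it is not an ad hoc, ``by hand'' analysis of finitely many borderline arrays, nor a hunt for sporadic locally-$L_2(n)$ graphs, but the Terwilliger polynomial method of Section~\ref{sec:thinness-Q} --- the quartic polynomial $T$ determined by the intersection array satisfies $T(\eta)\geq 0$ for every non-degenerate local eigenvalue $\eta$, which pins down the local eigenvalues (hence the local structure) uniformly across the family. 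That is how Gavrilyuk and Koolen settled the folded halved $2m$-cubes for $m\geq 6$, and the survey states that the same algebraic approach yields the folded Johnson graphs; so the completion of the characterization is algebraic and uniform rather than a finite case check, which is the main methodological difference between your plan and what the paper invokes.

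Two concrete slips are worth flagging. First, the Delsarte bound does not give a clique bound of $n+1$ for all $n$: the eigenvalue $-n$ of $J(2n,n)$ survives the folding only when $n$ is even (the antipodal involution acts by $(-1)^j$ on the $j$-th eigenspace), and for odd $n$ the smallest eigenvalue of the folded graph is $2-n$, so Proposition~\ref{delbound} only gives $c\leq 1+n^2/(n-2)>n+1$; the existence and size of the relevant cliques must come from the geometric construction, not from tightness in the Delsarte bound. (Relatedly, cliques occurring two per edge do not give a partial linear space in the sense of Proposition~\ref{Metschresult-2.2}, where each edge lies on a unique line; Metsch's argument for these graphs is organised differently.) Second, your alternative globalisation --- ``lift $\Gamma$ to a distance-regular antipodal double cover with the array of $J(2n,n)$ and quote the Johnson characterization'' --- is not available as stated: a quotient cannot in general be unfolded, and constructing such a cover from $\Gamma$ would require essentially the same identification of the underlying geometry that you are trying to establish. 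Finally, as you concede yourself, steps (a) and (b) are a strategy rather than an argument: the proposal defers precisely the parts that constitute the proofs in \cite{M297,M197,GK2012pre}, so it should be read as a correct identification of (one of) the known routes, not as a proof.
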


\begin{prop}\label{folded halved cubes}
{\em \cite{M297,Me03,GK2012pre}}
The folded halved cubes with diameter at least three are uniquely determined as distance-regular graphs by their
intersection arrays.
\end{prop}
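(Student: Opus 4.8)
We outline the argument, which follows Metsch's geometric recognition method \cite{M297,Me03} and its completion by Gavrilyuk and Koolen \cite{GK2012pre}. Let $\Gamma$ be a distance-regular graph with the same intersection array as the folded halved $n$-cube $\frac12\overline{H(n,2)}$, where $n$ is even and $n\ge 12$ (equivalently, the diameter $D'$ of $\Gamma$ is at least $3$). Unwinding the intersection numbers --- these are the pseudo partition graph parameters with $\alpha=2$ from Section \ref{sec: partition graphs} --- one finds in particular $k=\binom n2$, $a_1=2(n-2)$, $c_2=6$, and that the terminal parameter is $c_{D'}=\gamma D'(2D'-1)$ with $\gamma\in\{1,2\}$; this ``doubling'' at the end of the array is the signature of a fold, and together with the value of $D'$ it will be used to tell $\Gamma$ apart from the halved cube itself.

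The plan is first to show that $\Gamma$ is \emph{locally} the triangular graph $T(n)=J(n,2)$. This does not follow from the intersection array alone, since local graphs of distance-regular graphs need not be strongly regular, and it is the first substantial step: by a direct analysis of the neighbourhoods --- controlling the small cliques through a vertex and the subgraphs induced on $\Gamma_1(x)\cap\Gamma_1(y)$ for $d(x,y)=2$ --- one proves that every local graph $\Upsilon(x)$ is strongly regular with the parameters $\big(\binom n2,\,2(n-2),\,n-2,\,4\big)$ of $T(n)$. Since the only other strongly regular graphs with these parameters are the three Chang graphs, which occur solely for $n=8$ (see \cite[Ch.~9]{BrHa}), the hypothesis $n\ge 12$ forces $\Upsilon(x)\cong T(n)$ for all $x$; in the same way the subgraph induced on $\Gamma_1(x)\cap\Gamma_1(y)$ at distance two is identified with the octahedron $K_{2,2,2}$.

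Next comes the geometric recognition. The maximal cliques of $T(n)$ form two orbits under $\mathrm{Aut}(T(n))$ --- the ``stars'' of size $n-1$ and the $3$-cliques --- and across $\Gamma$ these glue into two families of cliques. Using the local structure together with the octahedral $\mu$-graphs, one shows that the vertices of $\Gamma$ can be consistently labelled by the pairs $\{S,[n]\setminus S\}$ with $S\subseteq[n]$ of even cardinality, so that two vertices are adjacent exactly when the corresponding subsets differ in two (equivalently, $n-2$) coordinates: concretely, one reconstructs from the $3$-clique family a combinatorial geometry on the vertex set, recognizes it as the folded halved-cube geometry on an $n$-set, and reads off the adjacency. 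Since this labelling respects distances in $\Gamma$ and the terminal parameter $c_{D'}$ records the antipodal identification, one obtains $\Gamma\cong\frac12\overline{H(n,2)}$ (rather than the longer-diameter $\frac12 H(n,2)$).

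The main obstacle is this recognition step: passing from purely local data (locally $T(n)$, octahedral $\mu$-graphs) to a globally coherent coordinatization --- that is, simultaneously orienting all the local pictures and then pinning down the antipodal identification that distinguishes the fold from its double cover. This rests on a delicate analysis of how the two clique families meet along geodesics of length up to $D'$, and it is exactly for the smaller diameters, where several of these clique-counting arguments become tight or degenerate, that Metsch's analysis \cite{M297,Me03} needs the supplementary work of Gavrilyuk and Koolen \cite{GK2012pre} to reach all $D'\ge 3$. Once the coordinatization is in place, the adjacency rule is determined and the isomorphism with $\frac12\overline{H(n,2)}$ follows.
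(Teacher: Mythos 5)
This proposition is not proved in the survey at all: it is a pointer to the literature, and the route the survey itself describes (Section \ref{sec:thinness-Q}) is that the completion by Gavrilyuk and Koolen \cite{GK2012pre} is obtained from the \emph{Terwilliger polynomial} of a $Q$-polynomial distance-regular graph --- a degree-four polynomial, determined by the intersection array, that must be nonnegative at every non-degenerate local eigenvalue --- which is what pins down the local eigenvalues and hence the strong regularity of the local graphs, after which one can invoke the known local characterizations and Metsch's work \cite{M297,Me03}. Your sketch misattributes this step: you present the Gavrilyuk--Koolen contribution as ``a delicate analysis of how the two clique families meet along geodesics'' for small diameter, i.e.\ as a refinement of Metsch's clique-counting, whereas their argument is algebraic and aimed precisely at the point you pass over in one sentence, namely proving that every local graph is strongly regular with the parameters of $T(n)$. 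That local step does not follow from the intersection array by any ``direct analysis of the neighbourhoods'' that you actually carry out; it is the core difficulty of the whole characterization, and in your write-up it is asserted, not proved.

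The same applies to the second half of your outline: the statement that the two clique families can be glued into a consistent labelling of the vertices by complementary pairs of even subsets of $[n]$, respecting distances and the antipodal identification, is exactly the content of the geometric recognition theorems (Metsch's line-construction machinery together with local characterizations of halved cubes and their quotients), and again it is only asserted. Your parameter bookkeeping ($k=\binom n2$, $a_1=2(n-2)$, $c_2=6$, octahedral $\mu$-graphs, exclusion of the Chang graphs since $n\geq 12$) is correct and consistent with the pseudo partition graph data with $\alpha=2$, but as it stands the proposal is a plausible roadmap rather than a proof: the two steps that carry all the weight are missing, and the description of how \cite{GK2012pre} closes the remaining cases does not reflect the method (the Terwilliger polynomial) that the survey and that paper actually use.
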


\noindent
Thus, all pseudo partition graphs with diameter at least three are known.

\subsection{Characterizations of the \texorpdfstring{$Q$-polynomial}{Q-polynomial} property}\label{sec:Qpolcharacterizations}

Bannai and Ito \cite[p.~312]{bi} conjectured that every
\emph{primitive} distance-regular graph with sufficiently large
diameter is $Q$-polynomial. We note that the Doubled Odd graphs are
not $Q$-polynomial yet have arbitrarily large diameter, so that the
`primitivity' condition in the conjecture is necessary. Currently
we know of no (real) progress towards proving the conjecture;
however there are several new characterizations of the
$Q$-polynomial property (since `BCN'
\cite{bcn}). For completeness and because of its importance, we begin with Terwilliger's \emph{balanced set
condition} \cite{Terwilliger1987JCTA,Terwilliger1995DM}; cf.~\cite[\S 2.11, \S
8.3]{bcn}.
For distinct $x,y\in V$ and for $i,j=0,1,\dots,D$, we let
$\chi_{i,j}(x,y)=\sum_{z\in \G_{i,j}(x,y)}\mathbf{e}_z$ denote the
characteristic vector of $\G_{i,j}(x,y)=\G_i(x)\cap \G_j(y)$; cf.~Section \ref{sec:homogeneity4}.

\begin{theorem}{\em (Balanced set condition \cite{Terwilliger1987JCTA,Terwilliger1995DM})}
The primitive idempotent $E$ is $Q$-polynomial if and only if
$u_i\ne 1$  for all $i=1,2,\dots,D$ and
\begin{equation}\label{balanced set condition}
	E\chi_{i,j}(x,y)-E\chi_{j,i}(x,y)=p_{ij}^h\frac{u_i-u_j}{1-u_h}(E\mathbf{e}_x-E\mathbf{e}_y)
\end{equation}
for all $i,j=0,1,\dots,D$, $h=1,2,\dots,D$, and $x,y\in V$ with $d(x,y)=h$.
\end{theorem}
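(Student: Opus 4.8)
The plan is to prove both directions by exploiting the fact that the $Q$-polynomial property is governed entirely by the dual eigenvalues, i.e.\ by the behaviour of $E$ under the Hadamard product, and then to translate this into the vector identity \eqref{balanced set condition}. First I would fix a base vertex $x$ and pass to the Terwilliger-algebra picture of Section~\ref{sec:Talgebra}: writing $\hat z$ ($z\in V$) for the representation associated with $\theta$ (so $\langle\hat z,\hat w\rangle=\nu_0 u_{d(z,w)}$ as in Section~\ref{sec2:evmult}), the vector $E\chi_{i,j}(x,y)$ is, up to the factor arising from $E=UU^{\top}$, the sum $\sum_{z\in\G_{i,j}(x,y)}\hat z$. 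The key computational observation is that for any $z$ with $d(x,z)=i$, $d(y,z)=j$ one has $\langle E\mathbf e_z,\ \alpha\, E\mathbf e_x+\beta\,E\mathbf e_y\rangle$ depending only on $i,j$; so the projection of $\chi_{i,j}(x,y)$ onto the plane $\mathrm{span}\{E\mathbf e_x,E\mathbf e_y\}$ is forced, while \eqref{balanced set condition} asserts that the \emph{entire} vector $E\chi_{i,j}(x,y)-E\chi_{j,i}(x,y)$ lies in that plane and is the specific multiple of $E\mathbf e_x-E\mathbf e_y$ whose coefficient one reads off from the two inner products. Thus the content of the theorem is: $E$ is $Q$-polynomial $\iff$ these antisymmetrized sums have no component orthogonal to $E\mathbf e_x-E\mathbf e_y$ inside $E\R^v$.

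For the forward direction, assume the ordering $E_0,E_1=E,E_2,\dots,E_D$ is $Q$-polynomial. I would use \eqref{colsp Hadamard} together with the cometric vanishing $q_{1j}^h=0$ for $h<j-1$: iterating $E\circ(-)$ shows $E\mathbf e_x\circ E\mathbf e_x\circ\cdots$ (and hence all products of the $E\mathbf e_z$ that arise) stay inside $\bigoplus_{\ell\le i}E_\ell\R^v$. Concretely, the natural way to get the precise scalar $p_{ij}^h\frac{u_i-u_j}{1-u_h}$ is to apply $E$ to the defining recurrence \eqref{a_i}, or rather to its ``two-point'' analogue: multiply the equation $\sum_j A_i\circ A_j$-type relation by $\mathbf e_x,\mathbf e_y$ and project. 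More efficiently, I would invoke \eqref{TTR}: the three-term recurrence $u_{i-1}+u_{i+1}=pu_i+r$ for the cosine sequence is exactly the scalar shadow of a vector recurrence $E\chi_{\cdot}(x,y)$ satisfies when $E$ is $Q$-polynomial, and \eqref{balanced set condition} is obtained by solving that recurrence with the (thin) initial data $E\mathbf e_x,E\mathbf e_y$. The condition $u_i\ne 1$ is needed precisely so that $E\mathbf e_x\ne E\mathbf e_y$ when $d(x,y)=i$ (otherwise the right-hand side is ill-posed), which is why it appears as a hypothesis.

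For the converse, assume \eqref{balanced set condition} holds for all $i,j,h$ and all $x,y$ at distance $h$. Taking $i=1$ and letting $j$ run, the identity expresses $E\chi_{1,j}(x,y)$ in terms of $E\chi_{j,1}(x,y)$ and $E\mathbf e_x-E\mathbf e_y$; combining with the fact (from $AE=\theta E$ and \eqref{a_i}) that $\sum_j E\chi_{i,j}(x,y)=$ a known combination of $E\mathbf e_x$'s, one extracts a three-term recurrence in $j$ for the one-dimensional ``components'' $E_h A_j^{\ster}E_h$-type quantities; equivalently one shows $E_hA_1^{\ster}E_{h'}=0$ unless $|h-h'|\le 1$, which by \eqref{relations} is $q_{1h}^{h'}=0$ for $|h-h'|>1$, i.e.\ the cometric condition for $i=1$. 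Since (as recalled in Section~\ref{sec:2Qpol}) checking the cometric property for $i=1$ suffices, this gives the $Q$-polynomial property. The main obstacle I anticipate is the bookkeeping in this last step: one must show that the $i=1$ instances of \eqref{balanced set condition}, for \emph{all} pairs $x,y$ simultaneously, really do pin down the full set of relations $q_{1h}^{h'}=0$ — the point being that varying $x,y$ over all distances $h$ lets the $E\mathbf e_x-E\mathbf e_y$ span enough of each $E_h\R^v$ to conclude componentwise vanishing, rather than merely an averaged statement. This is where one genuinely uses that $E$ is a primitive idempotent of a $P$-polynomial scheme, and it is essentially Terwilliger's original argument; I would follow \cite{Terwilliger1987JCTA,Terwilliger1995DM} for the details.
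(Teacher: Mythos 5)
The survey does not prove this theorem: it is quoted directly from Terwilliger \cite{Terwilliger1987JCTA,Terwilliger1995DM} (cf.\ \cite[\S 8.3]{bcn}), so there is no in-paper argument to compare yours against, and your proposal has to stand on its own. Your framing of the content is correct and matches Terwilliger's viewpoint: since $\langle E\mathbf{e}_z,E\mathbf{e}_x\rangle=\nu_0u_{d(x,z)}$, taking inner products of $E\chi_{i,j}(x,y)-E\chi_{j,i}(x,y)$ with $E\mathbf{e}_x$ and $E\mathbf{e}_y$ shows that the coefficient $p_{ij}^h(u_i-u_j)/(1-u_h)$ is forced as soon as one knows this difference lies on the line spanned by $E\mathbf{e}_x-E\mathbf{e}_y$; the theorem is exactly that ``balancedness'' statement about the representation.

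However, as a proof both directions have genuine gaps. In the forward direction, the only substantive claim --- that $Q$-polynomiality forces the difference into $\mathrm{span}\{E\mathbf{e}_x-E\mathbf{e}_y\}$ --- is asserted via an unspecified ``vector recurrence whose scalar shadow is \eqref{TTR}''; producing such a recurrence (say by expanding $E(A\chi_{i,j}(x,y))$ through \eqref{a_i} and controlling the remaining terms with the vanishing $q_{1j}^h=0$ for $|j-h|>1$ via \eqref{colsp Hadamard} or \eqref{relations}) is precisely Terwilliger's computation, and it is not carried out; note also that $u_i\ne 1$ for $i\ge 1$ is itself a consequence of the $Q$-polynomial property (distinctness of the dual eigenvalues) that must be derived in this direction, not merely a well-posedness remark about the denominator. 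In the converse, writing ``$E_hA_1^{\ster}E_{h'}=0$ unless $|h-h'|\le 1$'' presupposes an ordering of the primitive idempotents that does not yet exist: the heart of the argument is to show that the representation diagram of $E$ is a path and thereby to construct the cometric ordering, and the related spanning issue (that the vectors $E\mathbf{e}_x-E\mathbf{e}_y$, as $x,y$ vary, detect each $E_h$-component) is flagged by you but then explicitly deferred to \cite{Terwilliger1987JCTA,Terwilliger1995DM}. With both crucial steps delegated to the cited papers, what you have is a correct outline of the known proof rather than a proof.
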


\noindent Terwilliger \cite{Terwilliger1995DM} obtained an inequality for every
$\ell=3,4,\dots,D$ involving only the intersection numbers, $\theta$, and
$(u_i)_{i=0}^D$, by applying Cauchy-Schwarz to
$E\chi_{i,1}(x,y)-E\chi_{1,i}(x,y)$ and $E\mathbf{e}_x-E\mathbf{e}_y$
with $\{i,h\}=\{\ell,\ell-1\}$, and averaging over $x,y\in V$ with
$d(x,y)=h$.
Equality is attained for all $\ell=3,4,\dots,D$ (or just for $\ell=3$) if
and only if $E$ is $Q$-polynomial; cf.~\cite[\S 8.3]{bcn}.
Instead of the four vectors in \eqref{balanced set condition}, we may also
consider the linear dependency of $E\chi_{i,j}(x,y)$, $E\mathbf{e}_x$, and
$E\mathbf{e}_y$. This was worked out in detail by Terwilliger
\cite{Terwilliger1988GC}.\footnote{The term `balanced set' was introduced in \cite{Terwilliger1988GC} in this context, but many authors now refer to \eqref{balanced set condition} as \emph{the} balanced set condition.}
In particular, he applied Cauchy-Schwarz to
$E\chi_{1,1}(x,y)$ and $E\mathbf{e}_x+E\mathbf{e}_y$, and took the average over
each of the sets $\{(x,y):d(x,y)=h\}$ $(h=1,2)$ to obtain an inequality
involving only $a_1,b_1,c_2,u_1,u_2$; in this case, equality is attained if and
only if $E$ is $Q$-polynomial with
$a_0^{\ster}=a_1^{\ster}=\dots=a_{D-1}^{\ster}=0$. The linear dependency among
$E\chi_{1,1}(x,y)$, $E\mathbf{e}_x$, and $E\mathbf{e}_y$ for adjacent $x$ and
$y$ is also relevant to the property of being tight; cf.~Section
\ref{sec:tightDRG}. There is also a `symmetric' version of \eqref{balanced set
condition} due to Terwilliger \cite[Thm.~2.6]{Terwilliger1995EJC}. This lead, in particular, to
the characterization of the Hermitian forms graphs by their intersection
arrays; cf.~Section \ref{sec:recentclassical}. Tonejc \cite{JurTonpre} recently
presented several inequalities by considering the vectors
$E\chi_{i,1}(x,y)+E\chi_{1,i}(x,y)$ and $E\mathbf{e}_x+E\mathbf{e}_y$.

The following result is due to Pascasio \cite{Pascasio2008DM} and may be viewed
as an extension of \cite[Thm.~8.2.1]{bcn} for the bipartite case.

\begin{prop}
\label{Pascasio} $E$ is $Q$-polynomial if and only if all following properties
hold:
\begin{enumerate}[{\em (i)}]
\item there exist $p,r\in\mathbb{C}$ such that $u_{i-1}+u_{i+1}=pu_i+r$
    $(i=1,2,\dots,D-1)$,
\item there exist $\xi,\omega,\eta^{\ster}\in\mathbb{C}$ such that
    $a_i(u_i-u_{i-1})(u_i-u_{i+1})=\xi u_i^2+\omega u_i+\eta^{\ster}$
    $(i=0,1,\dots,D)$, where $u_{-1}$ and $u_{D+1}$ are defined by (i) with
    $i=0$ and $i=D$, respectively,
\item $u_i\ne 1$ $(i=1,2,\dots,D)$.
\end{enumerate}
\end{prop}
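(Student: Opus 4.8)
The plan is to prove both implications by reducing to the standard equivalences already available in this circle of ideas, in particular Leonard's theorem as quoted in \eqref{TTR} and the balanced set condition. For the forward direction, suppose $E$ is $Q$-polynomial. Then condition (iii) is immediate, since $u_i\ne 1$ for all $i=1,2,\dots,D$ is part of the balanced set condition (and is also forced by the fact that $E$ has rank strictly less than $v$ while being $Q$-polynomial, so no nontrivial standard sequence entry can equal the trivial one). Condition (i) is exactly the first of the two three-term recurrences in \eqref{TTR}, which holds whenever $E$ is $Q$-polynomial by Leonard's theorem. So the only real content in the forward direction is condition (ii): I would derive the identity $a_i(u_i-u_{i-1})(u_i-u_{i+1})=\xi u_i^2+\omega u_i+\eta^{\ster}$ from the $Q$-polynomial structure. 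The natural route is to use the dual intersection numbers $c_i^{\ster}, a_i^{\ster}, b_i^{\ster}$ associated to the $Q$-polynomial ordering, together with the fact (Leonard's theorem again) that the dual eigenvalues $\theta_i^{\ster}$ satisfy their own three-term recurrence; one then expresses $a_i$ in terms of $\theta$, $u_{i-1},u_i,u_{i+1}$ by rearranging \eqref{standard_sequence} as $a_iu_i=\theta u_i-c_iu_{i-1}-b_iu_{i+1}$ and uses the companion recurrences to collect everything as a quadratic in $u_i$ with coefficients independent of $i$. The key book-keeping is that $c_i$ and $b_i$ can be written in a closed form in $u_{i-1},u_i,u_{i+1}$ using the two-branch structure of the Askey scheme, so the left-hand side of (ii) becomes a polynomial identity in $u_i$ alone.

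For the converse, assume (i), (ii), (iii); I want to conclude $E$ is $Q$-polynomial. The strategy is to verify the balanced set condition \eqref{balanced set condition}, or equivalently to produce a $Q$-polynomial ordering directly. Condition (i) already gives the recurrence for the standard sequence; the missing ingredient for Leonard's theorem is the companion recurrence $\theta_{i-1}+\theta_{i+1}=p\theta_i+r^{\ster}$ for the eigenvalues, with the \emph{same} $p$. Here is where (ii) does the work: the quantity $a_i(u_i-u_{i-1})(u_i-u_{i+1})$ can be rewritten, using \eqref{standard_sequence} and (i), in terms of $\theta$ and the $u$'s, and forcing it to be a quadratic in $u_i$ pins down a linear recurrence on the eigenvalues with the coefficient $p$ matching the one from (i). Condition (iii) guarantees that the $u_i$ are pairwise distinct (a standard consequence: if $u_i=u_j$ for $i<j$ with all $u_k\ne 1$, one derives a contradiction with the three-term recurrence and the positivity of the $c_i,b_i$), so that the polynomials $v_i(\theta)$ separate the $u_i$, and one can invoke the converse part of Leonard's theorem / the characterization in \cite[\S 8.1, \S 8.3]{bcn} to conclude that the ordering obtained from the dual eigenvalues $\theta_i^{\ster}$ (read off from the recurrence just established) is genuinely $Q$-polynomial.

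The main obstacle I expect is the bookkeeping in condition (ii): extracting, from the bare statement "$a_i(u_i-u_{i-1})(u_i-u_{i+1})$ is a fixed quadratic in $u_i$," exactly the right recurrence on the eigenvalues, and checking that the constant $p$ appearing there is forced to coincide with the $p$ of condition (i). This is precisely the point where Pascasio's argument replaces the usual Cauchy--Schwarz-with-equality characterization by a purely parametric one, and it requires care to handle the degenerate ranges $i=0$ and $i=D$ (where $u_{-1}$ and $u_{D+1}$ are \emph{defined} by (i)) without circularity, and to ensure the quadratic is not identically zero in the bipartite-like cases where many $a_i$ vanish. Once that recurrence is in hand, the rest is an application of Leonard's theorem and the standard dictionary between the $u_i$-recurrence, the $\theta_i$-recurrence, and the $Q$-polynomial property; I would cite \cite[Thm.~8.2.1]{bcn} for the bipartite specialization and \cite[\S 8.3]{bcn} for the general mechanism, noting that Pascasio's contribution is to show these hypotheses (i)--(iii) already suffice.
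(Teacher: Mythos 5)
First, note that the survey does not actually prove Proposition \ref{Pascasio}: it is stated with a citation to Pascasio \cite{Pascasio2008DM}, so there is no in-paper argument to compare your route against. Judged on its own, what you have written is a plan rather than a proof, and the plan has gaps precisely at the points where the theorem has content. In the forward direction, (i) and (iii) are indeed immediate from \eqref{TTR} and the balanced set condition, but (ii) is the whole point, and your derivation of it rests on the unestablished claim that $c_i$ and $b_i$ ``can be written in closed form in $u_{i-1},u_i,u_{i+1}$'' via the structure of the Askey scheme; this is exactly the computation that must be exhibited (either uniformly or case-by-case through the Leonard types), and asserting it does not discharge it.

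The converse is where the logic actually breaks. Conditions (i)--(iii) involve only the single eigenvalue $\theta$ and its cosine sequence; the other eigenvalues $\theta_0,\dots,\theta_D$ of $\Gamma$, let alone a candidate $Q$-polynomial ordering of the idempotents, appear nowhere in them. So rewriting $a_i(u_i-u_{i-1})(u_i-u_{i+1})$ ``in terms of $\theta$ and the $u$'s'' cannot, by formal manipulation alone, produce the companion recurrence $\theta_{i-1}+\theta_{i+1}=p\theta_i+r^{\ster}$ of \eqref{TTR}: that recurrence concerns a reordering of the spectrum of $\Gamma$ which is itself part of what has to be constructed. Moreover, even if both recurrences of \eqref{TTR} were in hand, the ``converse part of Leonard's theorem'' you invoke is not available in the form you need: \eqref{TTR} is quoted only as a necessary condition, and sufficiency for the $Q$-polynomial property means establishing the vanishing pattern of the Krein parameters $q_{1j}^h$ for $|h-j|\ge 2$, which involves all primitive idempotents and multiplicities, not just two three-term recurrences. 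A workable proof has to route (i)+(ii)+(iii) through a scalar characterization of $Q$-polynomiality in terms of $\theta$, $(u_i)_{i=0}^D$ and the intersection numbers --- e.g.\ the balanced set condition \eqref{balanced set condition}, the equality case of Terwilliger's inequalities in \cite{Terwilliger1995DM}, or the tail-type condition of \cite{JTZ2010JCTB} (which the survey explicitly says plays the role of (ii)) --- and none of that machinery is engaged in your sketch. Finally, the auxiliary claim that (iii) forces the $u_i$ to be pairwise distinct is asserted as ``standard'' but not proved, and it too is part of what the argument must deliver.
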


\noindent We call $E$ a \emph{tail} \cite{Lang2002EJC} if $E\circ E$ is a
linear combination of $E_0,E$, and at most one other primitive idempotent of
$\AL$. Juri\v{s}i\'{c}, Terwilliger, and \v{Z}itnik \cite{JTZ2010JCTB}
established a characterization similar to Proposition \ref{Pascasio}, where
property (ii) is replaced by $E$ being a tail. We shall discuss tails in detail
in Section \ref{sec:vanishingKrein}.

The following characterization is due to
Kurihara and Nozaki \cite{KN2012JCTA}; cf.~\cite{NT2011LAA}.

\begin{prop}
Let $F$ be a primitive idempotent other than $E$. Then there is a
$Q$-polynomial ordering $(E_i)_{i=0}^D$ such that $E=E_1$ and $F=E_D$ if and
only if $u_0,u_1,\dots,u_D$ are distinct, and for $i=0,1,\dots,D$, the
eigenvalue of $A_i$ for $F$ is
\begin{equation*}
	\frac{(1-u_1)(1-u_2)\cdots(1-u_D)}{(u_i-u_0)\cdots(u_i-u_{i-1})(u_i-u_{i+1})\cdots(u_i-u_D)}.
\end{equation*}
\end{prop}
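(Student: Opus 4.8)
The plan is to work with the eigenvalues of the distance matrices $A_i$ for the two idempotents $E$ and $F$, expressing everything in terms of the standard sequence $(u_i)_{i=0}^D$ associated with $E$. Recall from the discussion around \eqref{distancepolynomials} that $A_i = v_i(A)$, where $v_i$ has degree $i$, and that the eigenvalue of $A_i$ on the $E$-eigenspace equals $v_i(\theta) = u_i k_i$ (up to the usual normalization); more to the point, the cosine sequence $(u_i)_{i=0}^D$ for $E$ and the cosine sequence, say $(u_i^{\ster})_{i=0}^D$, for $F$ are exactly the (normalized) columns of the dual eigenmatrix $Q$. The key structural fact is that the association scheme has a $Q$-polynomial ordering with $E_1 = E$ and $E_D = F$ precisely when the corresponding two columns of $Q$ occupy the ``second'' and ``last'' slots of a $Q$-polynomial reordering of the idempotents. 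The first move is therefore to translate the $Q$-polynomial condition into a statement purely about these two cosine sequences, using the metric/cometric duality already invoked in the excerpt.

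First I would prove the forward direction. Assume $(E_i)_{i=0}^D$ is $Q$-polynomial with $E_1 = E$, $E_D = F$. By Leonard's theorem and \eqref{TTR}, the $u_i$ are distinct (this is part of the $Q$-polynomial package: the cosines of $E_1$ separate points), which gives the first asserted conclusion. For the formula, I would use the standard fact that in a $Q$-polynomial scheme the dual eigenvalues in the $E_D$-row are obtained from the $E_1$-cosines by the ``last column'' specialization of the $q$-Racah/dual-Hahn type orthogonal polynomials; concretely, the eigenvalue of $A_i$ on $F = E_D$ is the value at $i$ of the last orthogonal polynomial in the dual system, and for a terminating sequence of orthogonal polynomials the highest-degree member evaluated on the grid $\{u_0, u_1, \dots, u_D\}$ is the Lagrange-type quantity
$$
\prod_{j=0,\, j\ne i}^D \frac{1 - u_j}{u_i - u_j},
$$
because $E_D$ corresponds to the ``extreme'' idempotent and its cosine sequence alternates in the pattern dictated by the three-term recurrence with the boundary value forced at the point $1$ (the cosine $u_0 = 1$ plays the role of the reference point, and the normalization $\prod (1-u_j)$ in the numerator is exactly what makes the $i=0$ value come out to $1$). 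I would verify the normalization by checking the $i=0$ case directly: the product telescopes to $\prod_{j\ne 0}(1-u_j)/\prod_{j\ne 0}(u_0 - u_j) = \prod_{j\ne 0}(1-u_j)/\prod_{j\ne 0}(1-u_j) = 1$, as it must since $(A_0)=I$ has eigenvalue $1$ on every idempotent.

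For the converse, I would argue that the displayed formula, together with distinctness of the $u_i$, forces the cometric property. The idea is that the numbers
$$
\theta_i^{\ster} := \frac{(1-u_1)(1-u_2)\cdots(1-u_D)}{(u_i-u_0)\cdots(u_i-u_{i-1})(u_i-u_{i+1})\cdots(u_i-u_D)}
$$
form, as $i$ ranges over $0,\dots,D$, a sequence that satisfies a three-term recurrence in $i$ with respect to the ``variable'' $u_i$ — one recognizes the right-hand side as (up to scaling) the evaluation of the top-degree Lagrange interpolation basis polynomial, and such quantities are well known to satisfy the contiguous relations of the associated orthogonal system. From this I would deduce that $E_1 = E$ and the idempotent $F$ with these $A_i$-eigenvalues sit at positions $1$ and $D$ of a cometric ordering: explicitly, one checks that $E \circ F$ lies in the span of $E_0, E_1, \dots$ in the appropriate triangular pattern, equivalently that the Krein parameters $q_{1,D}^h$ vanish outside the allowed range, which by the ``$i=1$ suffices'' principle for the cometric property (stated in Section~\ref{sec:2Qpol}) upgrades to a full $Q$-polynomial ordering. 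I expect the main obstacle to be precisely this last bookkeeping step: showing that the interpolation identity is not merely a numerical coincidence at the level of eigenvalues but actually encodes the vanishing pattern of the Krein parameters. The cleanest route is probably to invoke the characterization of $Q$-polynomial orderings via the linear-algebraic condition $\sum_{\ell=0}^i (E_1\R^v)^{\circ \ell} = \sum_{\ell=0}^i E_\ell \R^v$ from Section~\ref{sec:morebackground} (the comment following \eqref{colsp Hadamard}), reducing the claim to a rank computation that the explicit eigenvalue formula makes transparent.
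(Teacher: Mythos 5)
A preliminary remark: the survey states this proposition as a theorem of Kurihara and Nozaki and gives no proof, so your sketch has to stand on its own; as written, the converse direction contains a genuine gap. Your plan there is to check that $E\circ F$ has the ``right'' expansion, i.e.\ that the Krein parameters $q_{1D}^h$ vanish outside the allowed range, and then to invoke the ``$i=1$ suffices'' principle. That principle says that, for a \emph{given} ordering $E_0,\dots,E_D$, cometricity can be read off from the parameters $q_{1j}^h$ for \emph{all} $j$; it does not say that the single Hadamard product $E\circ F$ controls anything, and in the converse direction there is not even an ordering of the intermediate idempotents yet --- producing $E_2,\dots,E_{D-1}$ is the whole content of the theorem, and neither the ``contiguous relations'' satisfied by the displayed numbers nor the rank condition after \eqref{colsp Hadamard} (whose right-hand side again presupposes the ordering) does that work. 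A route that does close the gap: distinctness of $u_0,\dots,u_D$ makes $E$ generate $\AL$ under $\circ$, so the representation diagram of $E$ (vertices the primitive idempotents, $E_h\sim E_\ell$ iff $q_{1h}^{\ell}\ne 0$) is connected, and $E_0$ is a pendant vertex whose unique neighbour is $E$ because $E\circ E_0=\frac{1}{v}E$; the displayed eigenvalue formula is equivalent (see below) to $F$ being trace-orthogonal to every entrywise polynomial of degree $<D$ in $E$, hence $F$ lies at distance at least $D$ from $E_0$ in this diagram; a connected graph on $D+1$ vertices with two vertices at distance $D$ is a path with those vertices as endpoints, and reading off the path order gives the $Q$-polynomial ordering with $E_1=E$, $E_D=F$.

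The forward direction is also only asserted: the fact you quote --- that the top member of a finite orthogonal system, evaluated on its grid, is the Lagrange-type quantity --- is true only after inserting the weights, and here the weights matter. What is orthogonal is the dual-eigenmatrix column $i\mapsto Q_{iD}$ against all polynomials of degree $<D$ in $u_i$ with respect to the weights $k_i$, because $\sum_{i=0}^D k_iQ_{ih}Q_{ih'}=vm_h\delta_{hh'}$ and, by $Q$-polynomiality, $Q_{ih}$ is a polynomial of degree $h$ in $u_i$ for each $h$. With the $u_i$ distinct this forces $k_iQ_{iD}=c\prod_{j\ne i}(u_i-u_j)^{-1}$, and only after converting via $m_DP_{Di}=k_iQ_{iD}$ does the weight cancel and the weight-free expression for the eigenvalue of $A_i$ on $F$ appear; the constant is fixed by $P_{D0}=1$ and $u_0=1$, which is the one computation you do carry out. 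This same observation shows that the displayed formula is exactly the statement that $F$ is proportional to the unique element of $\AL$ orthogonal to all entrywise polynomials of degree $<D$ in $E$, which is the form needed to run the converse argument above. So the Lagrange-interpolation idea is the right one, but both the identity itself and, more seriously, the passage from the identity back to the existence of a cometric ordering are missing from your proposal.
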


\noindent
This result
originated in an investigation of the $D$ distances occurring in the spherical
embedding $\{E\mathbf{e}_x\,:\, x\in V\}\subset\mathbb{R}^{m(\theta)}$,
extending a similar observation by Bannai and Bannai \cite{BB2005EJC} for strongly regular graphs.\footnote{See e.g., \cite{CGS1978NAWIM,Munemasa2004EJC,Suda2011JCD} for some results about spherical designs (cf.~\cite{BB2009EJC}) obtained in this way from $Q$-polynomial distance-regular graphs.}
Nozaki \cite{Nozaki2013pre} recently showed that $E$ is $Q$-polynomial provided that $v>\binom{m(\theta)+D-2}{D-1}+\binom{m(\theta)+D-3}{D-2}$ and $u_i\ne 1$ for $i=1,2,\dots,D$.

There are also many results characterizing $Q$-polynomial graphs within certain subclasses of distance-regular graphs,
such as bipartite graphs and tight graphs (cf.~Section \ref{sec:tightDRG}); see, e.g.,
\cite{Terwilliger1995DM,Pascasio2001GC,Lang2004JCTB,TW05,Suzuki2007EJC}. For example, if $\G$ is a thick regular near
polygon with $D\geq 3$, then $\G$ is $Q$-polynomial if and only if $\G$ has classical parameters;
cf.~\cite[Thm.~8.5.1]{bcn}. It should be remarked that De Bruyn and Vanhove \cite{DV2012pre} recently showed that for
$D\geq 4$ there are no $Q$-polynomial thick regular near polygons, apart from the Hamming graphs and dual polar graphs.
See also \cite[Thm.~C]{W97} and Theorem \ref{thm:RNPthick}.

\subsection{Classification results}
\label{Q-classification}

In this section, suppose that $E$ is a $Q$-polynomial idempotent, and let $p,r,r^{\ster}$ be as in \eqref{TTR}.
Note that these scalars depend on $E$.
We note also that, in the notation of Bannai and Ito \cite[\S III.5]{bi}
(cf.~\cite[\S 2]{Talgebra92}), $E$ is classical if and only if the $Q$-polynomial structure satisfies either type I with
$s^*=0$ or one of types IA, IIA, IIC; see \cite[Prop.~6.2]{Tanaka2011EJC} (cf.~\cite[Thm.~8.4.1]{bcn}). It turns out that most
graphs with $p=\pm 2$ already appeared in Sections \ref{sec: the case b=1} and \ref{sec: partition graphs}.

\subsubsection{Case \texorpdfstring{$p\ne\pm2$}{p ne pm 2}}
The $Q$-polynomial structure is type I or type IA in \cite{bi}.
The following result is due to Terwilliger [unpublished].

\begin{prop}
Type IA does not occur.
\end{prop}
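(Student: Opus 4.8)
The plan is to take the explicit description of the type IA $Q$-polynomial parameter arrays from \cite[\S III.5]{bi}, translate it into closed forms for the eigenvalues $\theta_i$ and the cosine sequence $(u_i)_{i=0}^D$, and then show that the combinatorial side-conditions satisfied by a genuine distance-regular graph with $D\ge 3$ and $k\ge 3$ cannot all hold — the obstruction being crude (a negative intersection number, or non-integral multiplicities), not a delicate inequality.

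First I would record the shape of the array. Since $p\ne\pm 2$, equation \eqref{TTR} already confines both $(\theta_i)$ and $(u_i)$ to the span of $\{1,q^i,q^{-i}\}$, where $p=q+q^{-1}$, and type I is the case in which this three-dimensional freedom is exhausted. Type IA is the degenerate branch in which one of the two sequences keeps only the geometric mode $q^i$, losing \emph{both} the $q^{-i}$ mode and the constant mode — this is exactly what separates it from the ordinary $s^{\ster}=0$ specialization of type I, where the constant mode survives. After normalizing $u_0=1$, this means (up to an affine reparametrization of the array) either $u_i=q^i$, or $\theta_i=kq^i$. In the first case I would substitute $u_i=q^i$ and $u_1=\theta/k$ into the recurrence $c_iu_{i-1}+a_iu_i+b_iu_{i+1}=\theta u_i$ and eliminate $a_i$ via $c_i+a_i+b_i=k$; dividing by $q^{i-1}$ collapses this to a linear identity in $b_i$, yielding $c_i=q(b_i-k)$. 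In the second case I would instead compute the multiplicities: Biggs' formula (Theorem \ref{Biggsformula}) together with the trace identities $\sum_i m(\theta_i)=v$, $\sum_i m(\theta_i)\theta_i=0$, $\sum_i m(\theta_i)\theta_i^2=vk$ gives the $m(\theta_i)$ as explicit rational functions of $q$ and $D$.

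The contradiction is then short. In the first case, the cosine sequence of a $Q$-polynomial idempotent is strictly monotonic, $1=u_0>u_1>\dots>u_D$, which forces $0<q<1$; but Proposition \ref{prop:unimodal} gives $b_i\le b_0=k$, with strict inequality for $i\ge 1$, so $c_i=q(b_i-k)<0$ for every $i\ge 1$, in blatant conflict with $c_1=1$ (and with the positivity and monotonicity of the $c_i$). In the second case, $\sum_i m(\theta_i)\theta_i=0$ together with $|\theta_i|\le k$ excludes $q>0$ and $|q|\ge 1$, leaving $-1<q<0$; one then checks that for $D\ge 3$ the resulting formulas for the $m(\theta_i)$ cannot all be positive integers, and, as a cross-check, that the Krein conditions (Proposition \ref{Krein condisions}) and the equality of multiplicities of algebraically conjugate eigenvalues fail. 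Either way, no distance-regular graph has a type IA $Q$-polynomial idempotent.

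The part that needs genuine care is the setup rather than any single estimate: identifying precisely which normalization and which of the two degenerations \cite{bi} attaches to the label "type IA", handling the boundary indices $i=1$ and $i=D$ separately (where $c_1=1$, $b_D=0$, and $a_D$ may break the bulk pattern), and making sure the residual discrete freedom ($q<0$, or a leftover sign in the array) is actually ruled out rather than quietly assumed away. Once the array is pinned down, I expect one of the two crude obstructions above to close the argument, with the hypothesis $D\ge 3$ entering only through the multiplicity-integrality version.
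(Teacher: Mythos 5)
There is a genuine gap, and it sits exactly where you flagged the need for care: the identification of what ``type IA'' actually is. In Bannai--Ito's parametrization (the one the paper uses, cf.\ \cite[\S III.5]{bi} and \cite[\S 2]{Talgebra92}), type IA is a \emph{three}-parameter family
$\theta_i=\theta_0-sb(1-b^i)$, $b_i=-tb^{i+1}(1-b^{i-D})$, $c_i=b(1-b^i)(s-tb^{i-D-1})$ with $b,s,t\ne 0$;
in particular $\theta_i$ is affine in $b^i$ with a constant term $\theta_0-sb$ that is in general nonzero, and the same goes for the dual side. Your reduction to ``either $u_i=q^i$ or $\theta_i=kq^i$'' (losing the constant mode as well as the $q^{-i}$ mode) therefore describes a strictly smaller family than type IA, and the ``affine reparametrization'' you invoke to get there is not available: affine changes of the eigenvalue array do not preserve the normalizations $\theta_0=k$, $u_0=1$, $c_1=1$ that tie the array to an actual distance-regular graph. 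So even if both of your cases were closed, general type IA arrays (with $\theta_0-sb\ne 0$) would remain untouched, and the proposition would not be proved.

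There are also problems inside the two cases themselves. In Case 1 you deduce $0<q<1$ from ``the cosine sequence of a $Q$-polynomial idempotent is strictly monotonic,'' but that is false in general: monotonicity characterizes the cosine sequence of the \emph{second largest} eigenvalue, whereas a $Q$-polynomial idempotent may be attached to the least eigenvalue (as happens for graphs with classical parameters with $b<-1$, e.g.\ the Hermitian forms graphs), in which case the sequence is not decreasing. With $q<0$ your identity $c_i=q(b_i-k)$ gives $c_i=|q|(k-b_i)>0$ and no contradiction, so the case is not closed. In Case 2 the multiplicities are not determined by $q$ and $D$ together with the three trace identities (there are $D+1$ unknowns), and Biggs' formula brings in the whole standard sequence, hence the remaining parameters of the array; ``one then checks that the $m(\theta_i)$ cannot all be positive integers'' is an assertion, not an argument. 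The paper's proof avoids all of this by working with the exact type IA formulas: the scheme is classical, so $b$ is an integer $\ne 0,\pm 1$ and $s,t$ are real; $\theta_0>\theta_1,\theta_2$ forces $b\ge 2$ and $s<0$; $c_2>0$ gives $sb^D\le 2sb^{D-1}<2t$; and then $\theta_0+\theta_D=b(1-b^{-D})(sb^D-2t)<0$, i.e.\ $\theta_D<-k$, which is impossible. If you want to salvage your approach, you would need to start from the full three-parameter form of type IA and then run a sign/integrality analysis of this kind, rather than the special pure-geometric forms.
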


\begin{proof}
If the $Q$-polynomial structure is type IA then $E$ is classical and we have
\begin{equation*}
	\theta_i=\theta_0-sb(1-b^i), \quad b_i=-tb^{i+1}(1-b^{i-D}), \quad c_i=b(1-b^i)(s-tb^{i-D-1})
\end{equation*}
for $i=0,1,\dots,D$, where $b,s,t\in\mathbb{C}\backslash\{0\}$ and $p=b+b^{-1}$; cf.~\cite[\S III.5]{bi}, \cite[\S
2]{Talgebra92}.
The corresponding classical parameters are $(D,b,\alpha,\beta)$, where $\alpha=tb^{1-D}(1-b)^2$ and $\beta=tb^{1-D}(1-b)$. In particular, $b$ is an integer distinct from $0,\pm 1$, and thus $s,t\in\mathbb{R}$.
From $\theta_0>\theta_1,\theta_2$, it follows that $b\ge 2$ and $s<0$.
Moreover, because $c_2>0$ we have $sb^D\leq 2sb^{D-1}<2t$.
But then
$\theta_0+\theta_D=2b_0-sb(1-q^D)=b(1-b^{-D})(sb^D-2t)<0$, so that $\theta_D<-\theta_0$, a contradiction.
\end{proof}

\noindent It follows that all graphs having classical parameters with $b\ne 1$ fall into type I with $s^*=0$ (with
respect to the associated $Q$-polynomial ordering).

\subsubsection{Case \texorpdfstring{$p=2$}{p=2}, \texorpdfstring{$r\ne 0$}{r ne 0}, \texorpdfstring{$r^{\ster}\ne 0$}{r* ne 0}}
The $Q$-polynomial structure is type II in \cite{bi}. Terwilliger \cite{Terwilliger1986JCTB} showed that if $D\geq 14$
then either $\G$ is the halved $(2D+1)$-cube, or $\G$ has the same intersection array as a folded Johnson graph or a
folded halved cube. By Propositions \ref{folded Johnson} and \ref{folded halved cubes}, the classification is now
complete for $D\geq 14$.

\subsubsection{Case \texorpdfstring{$p=2$}{p=2}, \texorpdfstring{$r=0$}{r=0}, \texorpdfstring{$r^{\ster}\ne 0$}{r* ne 0}} $E$ is classical and the $Q$-polynomial structure is type IIA in \cite{bi}.
It follows that $\G$ is either a Johnson graph, a halved cube, or the Gosset graph; cf.~Section \ref{sec: the case
b=1}.

\subsubsection{Case \texorpdfstring{$p=2$}{p=2}, \texorpdfstring{$r\ne 0$}{r ne 0}, \texorpdfstring{$r^{\ster}=0$}{r*=0}}
The $Q$-polynomial structure is type IIB in \cite{bi}.
Terwilliger \cite{Terwilliger1988C} showed that $\G$ is either a folded cube or one of the other two non-isomorphic graphs with the intersection array $\{6,5,4;1,2,6\}$ of the folded $6$-cube; cf.~\cite[\S
9.2D]{bcn}.

\subsubsection{Case \texorpdfstring{$p=2$}{p=2}, \texorpdfstring{$r=r^{\ster}=0$}{r=r*=0}}
$E$ is classical and the $Q$-polynomial structure is type IIC in \cite{bi}.
Egawa \cite{E-b1} showed that $\G$ is either a Hamming
graph or a Doob graph; cf.~\cite[\S 9.2B]{bcn}.

\subsubsection{Case \texorpdfstring{$p=-2$}{p=-2}}
The $Q$-polynomial structure is type III in \cite{bi}.
Terwilliger \cite{Terwilliger1987JCTB} showed that $\G$ is either the $D$-cube ($D$
even), the Odd graph $O_{D+1}$, or the folded $(2D+1)$-cube.

\medskip
\noindent Next, we move on to (almost) imprimitive graphs.

\subsubsection{Bipartite graphs}\label{sec:bipartiteDRG}
Suppose $\G$ is bipartite. Then $r^{\ster}=0$ by \cite[Thm.~8.2.1]{bcn}. If $p=\pm 2$ then it follows from the above
results that $\G$ is either the $D$-cube, the folded $2D$-cube, or one of the other two graphs with intersection array
$\{6,5,4;1,2,6\}$. Caughman \cite{Caughman2004GC} showed that if $p\ne\pm 2$ and $D\geq 12$ then $\G$ has classical
parameters $(D,b,0,1)$ where $b$ is an integer at least $2$.
These parameters are realized by the dual polar graphs $\mathcal{D}_D(b)$ and the Hemmeter graphs.

\subsubsection{Antipodal graphs}\label{sec:antipodalDRG}
Curtin \cite{Curtin1998DM} showed that bipartite $Q$-polynomial antipodal (double) covers are precisely the bipartite
$2$-homogeneous distance-regular graphs, and the latter graphs were classified by Nomura \cite{Nomura1995JCTB};
cf.~Section \ref{sec:homogeneity}. These are the $D$-cube, the regular complete bipartite graphs minus a perfect
matching,
the Hadamard graphs,
and the graphs with intersection arrays satisfying
\begin{equation*}
	(c_1,c_2,\dots,c_5)=(1,\mu,k-\mu,k-1,k), \quad b_i=c_{5-i} \ (i=0,1,\dots,4),
\end{equation*}
where $k=\gamma(\gamma^2+3\gamma+1)$, $\mu=\gamma(\gamma+1)$, and $\gamma\geq 2$ is an integer. The last case is
uniquely realized for $\gamma=2$ by the double cover of the Higman-Sims graph.

Dickie and Terwilliger \cite{DT1996EJC} gave a classification of non-bipartite $Q$-polynomial antipodal
distance-regular graphs as follows: the Johnson graph $J(2D,D)$, the halved $2D$-cube, the
non-bipartite Taylor graphs, and the graphs satisfying
\begin{gather}
	(c_1,c_2,c_3,c_4)=(1,p\eta,(p^2-1)(2\eta-p+1), p(2\eta+2\eta p-p^2)), \label{Q-AT4} \\
	b_i=c_{4-i} \ (i=0,1,2,3), \notag
\end{gather}
where $p\geq 3$, $\eta\geq 3p/4$ are integers and $\eta$ divides $p^2(p^2-1)/2$.
An example of the last case is the Meixner double cover ($p=4,\eta=6$); cf.~Section \ref{sec: Soicher and Meixner}. The
array \eqref{Q-AT4} with $p,\eta$ odd has been ruled out by Juri\v{s}i\'{c} and Koolen \cite[Cor.~3.2]{JuKo00EuJC}.

\subsubsection{Almost bipartite graphs}\label{sec:Qalmostbipartite}
The $Q$-polynomial generalized odd graphs have been classified by Lang and Terwilliger \cite{LT2007EJC}:
the folded $(2D+1)$-cube, the Odd graph $O_{D+1}$, and the graphs with $D=3$ satisfying
\begin{equation*}
	k=1+(p^2-1)(p(p+2)-(p+1)c_2), \quad c_3=-(p+1)(p^2+p-1-(p+1)c_2),
\end{equation*}
where $p<-2$ is an integer. No example is known for the last case.
We recall that the distance-$2$ graph $\G_2$ is again distance-regular, as it is the halved graph of
the bipartite double of $\G$; cf.~Section \ref{sec:2Porder}.

\subsubsection{Almost \texorpdfstring{$Q$-bipartite}{Q-bipartite} graphs}
Suppose $D\geq 4$ and $\G$ is almost $Q$-bipartite, i.e., $a_i^{\ster}=0$ for $i<D$ and $a_D^{\ster}>0$. Dickie
\cite{Dickie1995D} showed that $\G$ is either the halved $(2D+1)$-cube, the folded $(2D+1)$-cube, or a dual polar graph
$^2\mathcal{A}_{2D-1}(\sqrt{q})$. We note that the `$Q$-bipartite double' of $\G$ is a cometric association
scheme, and that $E_2$ is again a $Q$-polynomial idempotent; cf.~Sections \ref{sec:Qmultipleordering} and
\ref{sec:cometricschemes}.

\subsection{The Terwilliger algebras of \texorpdfstring{$Q$-polynomial}{Q-polynomial} distance-regular graphs}
\label{sec:thinness-Q}

Below we collect `handy' sufficient conditions for $\G$ being thin when it is $Q$-polynomial.

\begin{prop}{\em \cite[\S 5]{Talgebra92}}\label{thin criterion}
Suppose $\G$ is $Q$-polynomial with respect to the ordering $(E_i)_{i=0}^D$.
Then the following properties hold.
\begin{enumerate}[{\em (i)}]
\item $\G$ is thin with respect to $x\in V$ if for $i=1,2,\dots,D$ and for every $y,z\in\G_i(x)$, there is an
    automorphism $\pi$ of $\G$ such that $\pi(x)=x$, $\pi(y)=z$, and $\pi(z)=y$,
\item $\G$ is thin if $a_2=a_3=\dots=a_{D-1}=0$,
\item $\G$ is thin if $a_2^{\ster}=a_3^{\ster}=\dots=a_{D-1}^{\ster}=0$.
\end{enumerate}
\end{prop}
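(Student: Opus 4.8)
The plan is to prove all three statements by fixing a base vertex $x$ and showing that every irreducible $\TT=\TT(x)$-module $W$ is thin; since nothing in the arguments depends on the choice of $x$, this gives in each case that $\G$ is thin (resp.~thin with respect to $x$ in (i)). Recall that the unique irreducible $\TT$-module with $E_0^{\ster}W\ne 0$ is the primary module $\mathrm{span}_{\mathbb{C}}\{A_0\mathbf{e}_x,\dots,A_D\mathbf{e}_x\}$, which is thin, so in each part it suffices to treat an irreducible $W$ of endpoint $e\geq 1$, say with $E_i^{\ster}W\ne 0$ precisely for $e\leq i\leq f$.

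For (i), I would argue through the centralizer algebra, without even invoking the $Q$-polynomial hypothesis. The assumption says exactly that the stabilizer $G_x$ of $x$ in $\mathrm{Aut}(\G)$ acts generously transitively on each subconstituent $\G_i(x)$; hence the orbital association scheme of $G_x$ on $\G_i(x)$ is symmetric, so its Bose--Mesner algebra, which equals $E_i^{\ster}\,\mathrm{End}_{G_x}(\mathbb{C}^v)\,E_i^{\ster}$, is commutative for every $i$. Since $\TT$ lies inside the centralizer algebra $\mathrm{End}_{G_x}(\mathbb{C}^v)$, the corner $E_i^{\ster}\TT E_i^{\ster}$ is commutative for every $i$. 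As $\TT$ is semisimple, write $\TT\cong\bigoplus_k\mathrm{End}(W_k)$ over the distinct irreducible $\TT$-modules $W_k$; then $E_i^{\ster}\TT E_i^{\ster}\cong\bigoplus_k\mathrm{End}(E_i^{\ster}W_k)$, so commutativity for all $i$ forces $\dim E_i^{\ster}W_k\leq 1$ for all $i$ and $k$; that is, $\G$ is thin with respect to $x$.

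Part (ii) is the crux. The hypothesis $a_2=\dots=a_{D-1}=0$ means $E_i^{\ster}AE_i^{\ster}=0$ for $2\leq i\leq D-1$, so in the decomposition $A=L+F+R$ of \eqref{quantum decomposition} the flat term collapses to $F=E_1^{\ster}AE_1^{\ster}+E_D^{\ster}AE_D^{\ster}$ (using $E_0^{\ster}AE_0^{\ster}=0$). Hence on any irreducible $W$ that avoids the levels $1$ and $D$ (that is, with $e\geq 2$ and $f\leq D-1$), the term $F$ acts as zero, so $A=(L+R)|_W$; together with the fact that $A^{\ster}=A_1^{\ster}$ acts on $E_i^{\ster}W$ as the scalar $\theta_i^{\ster}$ (the $\theta_i^{\ster}$ being pairwise distinct by the $Q$-polynomial property, and governed by \eqref{TTR}), the restriction of the structure to $W$ is just that of a bipartite $P$- and $Q$-polynomial scheme: the relations between $L,R$ and $A^{\ster}$ are of $\mathfrak{sl}_2(\mathbb{C})$- or $U_q(\mathfrak{sl}_2)$-type, and $W$ is thin, its weight spaces $E_i^{\ster}W$ all being one-dimensional. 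It then remains to handle the irreducible modules with $e=1$ or $f=D$; for these one works directly, using $L\,E_e^{\ster}W=0$, $R\,E_f^{\ster}W=0$, the relations \eqref{relations}, and the cometric structure, to bound $\dim E_i^{\ster}W$. The main obstacle is precisely this last case: at the boundary levels $i=1,D$ the flat term $F$ need not vanish, the metric and cometric data must be combined, and it is here that the interior vanishing $a_2=\dots=a_{D-1}=0$ is genuinely used (the thinness conclusion fails without it).

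Part (iii) I would deduce from (ii) via the canonical duality of the Terwilliger algebra: all defining relations of $\TT$ are symmetric under $A_i\leftrightarrow A_i^{\ster}$, $E_i\leftrightarrow E_i^{\ster}$, $p_{ij}^h\leftrightarrow q_{ij}^h$, and under this symmetry the pair of hypotheses ``$P$-polynomial'' and ``$Q$-polynomial'' is interchanged (hence jointly preserved), ``thin'' becomes ``dual thin'', and $a_i=p_{1i}^i$ becomes $a_i^{\ster}=q_{1i}^i$. Thus the dual of (ii) asserts that a $P$- and $Q$-polynomial scheme with $a_i^{\ster}=0$ for $1<i<D$ has every irreducible $\TT$-module dual thin; combining this with the fact, recorded earlier in the text, that for a $Q$-polynomial scheme dual thin implies thin for irreducible modules, we conclude that $\G$ is thin, which is (iii).
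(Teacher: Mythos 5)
The survey itself contains no proof of this proposition (it is quoted from Terwilliger's subconstituent-algebra papers \cite{Talgebra92}), so your attempt can only be judged on its own terms and against the cited source. Part (i) of your argument is correct and is essentially the argument found there: the hypothesis says that $G_x$ acts generously transitively on each $\G_i(x)$, so each corner $E_i^{\ster}\,\mathrm{End}_{G_x}(\mathbb{C}^v)\,E_i^{\ster}$ is spanned by symmetric orbital matrices and hence commutative; since $\TT(x)$ lies in the centralizer algebra and is semisimple with $\TT(x)\cong\bigoplus_k\mathrm{End}(W_k)$, the corner $E_i^{\ster}\TT(x)E_i^{\ster}\cong\bigoplus_k\mathrm{End}(E_i^{\ster}W_k)$ can be commutative only if $\dim E_i^{\ster}W_k\leq 1$ for all $i,k$. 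As you note, no $Q$-polynomial hypothesis is needed for this part.

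The genuine gap is in (ii), and it propagates to (iii). For the ``interior'' modules your only inputs are that $F$ kills $W$ and that $A^{\ster}$ acts on each $E_i^{\ster}W$ by the scalar $\theta_i^{\ster}$ — but the latter holds for any scheme and any ordering, so the $Q$-polynomial property has not actually been used, and the assertion that $L,R,A^{\ster}$ satisfy $\mathfrak{sl}_2(\mathbb{C})$- or $U_q(\mathfrak{sl}_2)$-type relations ``with one-dimensional weight spaces'' is precisely the thinness to be proved, not a consequence of anything you derived: in the hypercube example of Section \ref{sec:Talgebra} those relations are verified from identities special to $H(D,2)$, not deduced from $F=0$, and if $F|_W=0$ alone sufficed, the same reasoning would show that every bipartite distance-regular graph is thin with no $Q$-polynomial hypothesis at all, which goes far beyond what is known. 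Moreover you explicitly leave open the modules with endpoint $1$ or meeting level $D$, and these are not a fringe case: endpoint-$1$ modules always exist (as $k\geq 3$), and since $D\geq 3$ and \cite[Prop.~5.5.1]{bcn} force $a_1=0$, the hypothesis of (ii) says exactly that $\G$ is bipartite or almost bipartite; in the almost bipartite case $a_D\neq 0$, so the flat part need not vanish on a module meeting level $D$ and your reduction $A|_W=(L+R)|_W$ is unavailable there. The actual proof is a substantive analysis of the module structure using the cometric relations in \eqref{relations} and the recurrences \eqref{TTR} (see Terwilliger \cite{Talgebra92} and, for the bipartite case, Caughman \cite{Caughman1999DM}), none of which appears in your sketch. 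Consequently (iii) is also unproven: the duality step is legitimate only as the meta-statement that a purely formal proof of (ii) can be repeated with the roles of $\AL$ and $\AL^{\ster}$ interchanged and then combined with ``dual thin implies thin'' for $Q$-polynomial schemes — $\TT$ carries no literal symmetry exchanging $A_i\leftrightarrow A_i^{\ster}$ — so it presupposes the proof of (ii) that is missing.
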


\noindent In particular, a $Q$-polynomial distance-regular graph is thin provided that it is bipartite (=$Q$-antipodal), almost bipartite, antipodal (=$Q$-bipartite), or almost $Q$-bipartite. It also follows that
many of the known graphs with classical parameters as well as partition graphs (cf.~Section \ref{sec: partition
graphs}) are thin; see \cite[Ex.~6.1]{Talgebra92} for details. The following graphs are known to be \emph{non}-thin:
Doob graphs, (bilinear, alternating, Hermitian, quadratic) forms graphs, and the twisted Grassmann graphs. The
irreducible $\TT$-modules of the Doob graphs were determined by Tanabe \cite{Tanabe1997JAC}. Concerning the twisted
Grassmann graph (cf.~Section \ref{twistedsection}), Bang, Fujisaki, and Koolen \cite{BFK09} showed that it is thin with
respect to any base vertex $x$ which is an $(e-1)$-dimensional subspace of the fixed hyperplane $H$, by verifying a
different combinatorial criterion for thinness \cite[Thm.~5.1(v)]{Talgebra92}. However, they also showed that if $x$ is
not contained in $H$ then the twisted Grassmann graph is not $1$-thin with respect to $x$.

The irreducible $\TT$-modules of bipartite (resp.~almost bipartite) $Q$-polynomial distance-regular graphs were
described by Caughman \cite{Caughman1999DM} (resp.~Caughman, MacLean, and Terwilliger~\cite{CMT2005DM}). For
these graphs, it turns out that the intersection array completely determines the structure of $\TT$. In particular,
explicit formulas for the multiplicities of the irreducible $\TT$-modules in $\mathbb{C}^v$ with small endpoints were
successfully used in the classification of these graphs; cf.~Section \ref{Q-classification}. Curtin and Nomura
\cite{CN2000JAC} and Curtin \cite{Curtin2001JCTB} studied the Terwilliger algebra of bipartite $Q$-polynomial antipodal
(double) covers which are not the $D$-cube; in this case, it follows that $\TT$ is a homomorphic image of the quantum
enveloping algebra $U_q(\mathfrak{sl}_2)$; cf.~Section \ref{sec: TD systems}.

In general, if $\G$ is $Q$-polynomial then the structure of irreducible $\TT$-modules with endpoint $1$ is determined
by the intersection array and the spectrum of the local graph $\Upsilon(x)$ with respect to the base vertex $x$;
cf.~\cite[Lecture 35]{Terwilliger1993N}. To be more precise, suppose for the moment that $\G$ is $Q$-polynomial, and
let $W$ be an irreducible $\TT$-module with endpoint $1$. Then $\dim E_1^{\ster}W=1$, so that $E_1^{\ster}W$ is an
eigenspace for $E_1^{\ster}AE_1^{\ster}$; let $\eta$ denote the corresponding eigenvalue. Then the isomorphism class of
$W$ is determined by $\eta$. Moreover, $W$ is thin if and only if $\eta$ is a root of a polynomial $T$ of degree $4$,
which we call the \emph{Terwilliger polynomial} of $\G$;
if $\G$ has classical parameters $(D,b,\alpha,\beta)$ then its four roots are $-1,-b-1,\beta-\alpha-1$, and $\alpha b\gauss{D-1}{1}-1$.
See also \cite[Lemma 4.7]{GK2012pre} and \cite[Cor.~4.12(5)]{Talgebra92}.
If $W$ is non-thin then it follows that\footnote{In fact, Terwilliger \cite[Lectures 34--37]{Terwilliger1993N} stated this result as a conjecture, and showed that $\dim E_i^{\ster}W\leq 2$ for $i=2,3,\dots,D-1$, $\dim E_D^{\ster}W\leq 1$, and that $W$ is thin if $\dim E_2^{\ster}W=1$. Now, that $W$ has diameter $D-1$ follows from a result of Go and Terwilliger \cite[Thm.~9.8]{GT2002EJC}, and the values of the $\dim E_i^{\ster}W$ follow from their symmetric and unimodal properties proved by Ito, Tanabe, and Terwilliger \cite{ITT2001P} in the more general context of tridiagonal systems; cf.~Section \ref{sec: TD systems}.}
$W$ has diameter $D-1$ and that $\dim E_1^{\ster}W=\dim E_D^{\ster}W=1$, $\dim E_2^{\ster}W=\dots=\dim E_{D-1}^{\ster}W=2$.
Hobart and Ito \cite{HI1998JAC} studied in detail the structure of such a non-thin irreducible $\TT$-module with
endpoint $1$. Miklavi\v{c} \cite{Miklavic2004EJC,Miklavic2009EJC} showed that $\G$ is $1$-homogeneous if it is
$Q$-polynomial with $a_1=0$, and described the unique irreducible $\TT$-module with endpoint $1$ (with $\eta=0$) when
$a_2\ne 0$, which turns out to be non-thin. Miklavi\v{c} \cite{Mik09} also described the irreducible $\TT$-modules with
endpoint $1$ when $\G$ has classical parameters with $b<-1$, $a_1\ne 0$, and is not a near polygon; there are exactly
two isomorphism classes, and the first one is thin with $\eta=-1$ and the second one is non-thin with $\eta=a_1$.

Suppose again that $\G$ is $Q$-polynomial, and let $\eta$ be a local eigenvalue of $\G$ (with respect to the base
vertex $x$), i.e., an eigenvalue of $\Upsilon(x)$. We call $\eta$ \emph{non-degenerate} if it has an eigenvector
orthogonal to the all-ones vector, and \emph{degenerate} otherwise. We note that $a_1$ is the only possible degenerate local eigenvalue and that it is
non-degenerate precisely when $\Upsilon(x)$ is disconnected. The Terwilliger polynomial $T$ mentioned above depends
only on the intersection array of $\G$ and the $Q$-polynomial ordering, and has the property that $T(\eta) \geq 0$
for every non-degenerate local eigenvalue $\eta$ for every base vertex $x$.
We note that if $\G$ has two $Q$-polynomial orderings then $T$ may be different for the different ordering.
Using the polynomial $T$, Gavrilyuk and Koolen \cite{GK2012pre} recently showed the uniqueness of the folded halved
$2m$-cube for $m \geq 6$; cf.~Section \ref{sec: partition graphs}. With the same approach we can also show the
uniqueness of the folded Johnson graphs. For the Grassmann graphs $J_q(2D, D)$ $(D \geq 3)$, Gavrilyuk and Koolen also
obtained partial results.
See also \cite[\S 4.3]{TTW16+} for more discussions on the Terwilliger polynomial.

See Section \ref{sec:thinness} for more results on the irreducible $\TT$-modules with endpoint $1$ of general
distance-regular graphs.

\subsection{Further results on \texorpdfstring{$Q$-polynomial}{Q-polynomial} distance-regular graphs}
\label{sec: further results}

In this section, we always assume that $\G$ is $Q$-polynomial.

\subsubsection{Antipodal covers}
Van Bon and Brouwer \cite{BB1988P} determined the distance-regular antipodal covers of the classical families of
distance-regular graphs; cf.~\cite[\S 6.12]{bcn}. Suppose $E$ is $Q$-polynomial, and recall (cf.~\eqref{TTR}) that
there exist $p,r\in\mathbb{C}$ such that $u_{i-1}+u_{i+1}=pu_i+r$ for $i=1,2,\dots,D-1$. Terwilliger
\cite{Terwilliger1993EJC} showed that if $\G$ has an antipodal cover of diameter $\tilde{D}\geq 7$, then this
three-term recurrence extends to $i=1,2,\dots,\tilde{D}-1$, where we formally define $u_i=u_{\tilde{D}-i}$
($i=D+1,D+2,\dots,\tilde{D}$). This parametric condition provides simple proofs of some of the (non-existence) results
in \cite{BB1988P}, and may be applied to the twisted Grassmann graphs as well; cf.~\cite{FKT07}. Caughman
\cite{Caughman1999JCTB} used the condition to show that if $\G$ is bipartite with $D\geq 4$ and has an antipodal cover
then $\G$ is the folded $2D$-cube; cf.~\cite[Cor.~12.3]{Lang2004JCTB}.

\subsubsection{Distance-regular graphs with multiple \texorpdfstring{$Q$-polynomial}{Q-polynomial} orderings}\label{sec:Qmultipleordering}
An association scheme can have at most two $P$-polynomial orderings, except for those coming from the polygons; cf.~\cite[\S 4.2D]{bcn}.
Bannai and Ito \cite[pp.~354--360]{bi} showed that if $k\geq 3$ and $D\geq 34$ then $\G$ has at most two $Q$-polynomial idempotents and moreover all eigenvalues are integral.
Brouwer, Cohen, and Neumaier \cite[p.~247]{bcn} conjectured
that the assumption $D\geq 34$ can be replaced by $D\ne 4$. Dickie
\cite[pp.~69--70]{Dickie1995D} established the result under the assumption $D\geq 5$.
Indeed, he showed that if $k\geq 3$ and $D\geq 5$ then $\G$ has more than one $Q$-polynomial idempotent if and only if $\G$ is either the $D$-cube ($D$ even), the halved $(2D+1)$-cube, the folded $(2D+1)$-cube, or a dual polar graph
$^2\!\mathcal{A}_{2D-1}(\sqrt{q})$, and these graphs have precisely two $Q$-polynomial idempotents but no non-integral eigenvalues.
(Note that if $\G$ has non-integral eigenvalues and $E$ is $Q$-polynomial then $E^{\sigma}$ is again $Q$-polynomial for any $\mathbb{Q}$-automorphism $\sigma$ of the splitting field over $\mathbb{Q}$.)
Building on work by
Dickie \cite{Dickie1995D}, Suzuki \cite{Suzuki1998JACb} showed that every association scheme has at most two
$Q$-polynomial idempotents, again except for those coming from the polygons; cf.~Section \ref{sec:cometricschemes}.
For $D\in\{2,3,4\}$, the known $Q$-polynomial distance-regular graphs with $k\geq 3$ and with non-integral eigenvalues belong to the following four families:
the conference graphs $(D=2)$, the incidence graphs of symmetric designs ($D=3$),
the Taylor graphs ($D=3$), and the Hadamard graphs ($D=4$).\footnote{It seems that the above conjecture by Brouwer et al.~was not properly stated, because we already have counterexamples with $D\in\{2,3\}$. We note that the graphs in the last three families are imprimitive.}
Note that the graphs in these families always have two $Q$-polynomial idempotents.
The other candidate intersection arrays $\{\mu(2\mu+1),(\mu-1)(2\mu+1),\mu^2,\mu;1,\mu,\mu(\mu-1),\mu(2\mu+1)\}$ ($\mu\geq 2$) of primitive $Q$-polynomial distance-regular graphs with non-integral eigenvalues given by Brouwer et al.~\cite[pp.~247--248]{bcn} were ruled out by Godsil and Koolen \cite{GoKo95}; cf.~Section \ref{sec:tablesD4prnon}.
Ma and Koolen \cite{KM2014pre} recently classified the distance-regular graphs with $k\geq 3$, $D=4$, and with two $Q$-polynomial idempotents; these are the $4$-cube, the halved $9$-cube, the folded $9$-cube, the dual polar graphs $^2\!\mathcal{A}_7(\sqrt{q})$, and the Hadamard graphs.

\subsubsection{Bounds for the girth}\label{sec:girth}
Brouwer, Cohen, and Neumaier \cite[p.~248]{bcn} conjectured that $\G$ has girth at most $6$, with equality only for the Odd graph $O_{D+1}$, and showed that the numerical girth $g$ of $\G$ is at most $7$.
Lewis \cite{Lewis2000DM} showed $c_3\geq 2$,
proving $g\leq 6$.
We note that if $\G$ has girth $6$, i.e., $a_1=a_2=0$ and $c_2=1$, then it follows from Proposition \ref{Pascasio} (or
\cite[Thm.~6.3]{Miklavic2004EJC}) that $a_1=a_2=\dots=a_{D-1}=0$, so that $\G$ is bipartite or almost
bipartite.
Miklavi\v{c} \cite{Miklavic2007DM} showed that if $\G$ is bipartite and $D=4$ then $c_2\geq 2$, i.e., $g=4$.

\subsubsection{The \texorpdfstring{Erd\H{o}s-Ko-Rado}{Erdos-Ko-Rado} theorem}\label{sec:EKR}
At the end of each of Sections 9.1-9.4 and 9.5A in \cite{bcn} there is a remark about the Erd\H{o}s-Ko-Rado theorem
for the graph in question. See
\cite{Tanaka2006JCTA,PSV2011JCTA,Tanaka2010pre,IM2013DCC,Tanaka2012pre,GM2016B}
for recent results on this topic.

\subsubsection{Unimodality of the multiplicities}\label{sec: unimodality of mi}
We recall from Proposition \ref{prop:unimodal} (iv) that the $k_i$ are unimodal. Concerning the multiplicities $m_i$,
Pascasio \cite{Pascasio2002EJC} showed that if $\G$ is $Q$-polynomial with respect to the ordering $(E_i)_{i=0}^D$ then
$m_{i-1}\le m_i\le m_{D-i}$ for $i=1,2,\dots,\lfloor D/2\rfloor$. This
result was originally conjectured by Dennis Stanton in 1993, and is a simple application of the theory of
tridiagonal systems; cf.~Section \ref{sec: TD systems}. We note that Bannai and Ito \cite[p.~205]{bi} earlier
conjectured that the multiplicities of a cometric association scheme satisfy the unimodal property.

\subsubsection{Posets associated with \texorpdfstring{$Q$-polynomial}{Q-polynomial} distance-regular graphs}
\label{sec: posets}

There are several classes of finite ranked posets that are closely related to $Q$-polynomial distance-regular graphs:
\emph{regular semilattices} \cite{Delsarte1976JCTA,Stanton1985JCTA}, \emph{uniform posets} \cite{Terwilliger1990P},
\emph{quantum matroids} \cite{Terwilliger1996P}. (For definitions, see the references given.)
Many of the known families of $Q$-polynomial distance-regular graphs arise as the top fibers of these posets, where two
vertices are adjacent if and only if they cover a common element.

Concerning quantum matroids, Terwilliger \cite[Thm.~38.2]{Terwilliger1996P} showed that if a quantum matroid is
`non-trivial'
 and `regular', then the graph on the top fiber with the above adjacency is distance-regular.
Moreover, in this case, the graph has classical parameters if its diameter is equal to the rank of the quantum matroid.
The culmination of the study of quantum matroids is the classification (\cite[Thm.~39.6]{Terwilliger1996P}) of
non-trivial regular quantum matroids with rank at least four: they are precisely those posets naturally associated with
Johnson, Hamming, Grassmann, bilinear forms, and dual polar graphs. We may use this classification as follows.

Fix a $Q$-polynomial ordering $(E_i)_{i=0}^D$ of $\G$. Let $Y$ be a non-empty subset of $V$ and let $\chi$ be its
characteristic vector. Brouwer, Godsil, Koolen, and Martin~\cite{BGKM03} defined the \emph{width} and \emph{dual width}
of $Y$ by $w=\max\{i:\chi^{\mathsf{T}}A_i\chi\ne 0\}$ and $w^{\ster}=\max\{i:\chi^{\textsf{T}}E_i\chi\ne 0\}$,
respectively. They showed among other results that $w+w^{\ster}\ge D$, and we call $Y$ a \emph{descendent}
(cf.~\cite{Tanaka2011EJC}) of $\G$ if equality holds. It follows that every descendent is completely regular, and that the
induced subgraph is a $Q$-polynomial distance-regular graph if it is connected;
cf.~\cite[Thm.~1--3]{BGKM03}.\footnote{The results in \cite{BGKM03} are in contrast with Delsarte theory \cite{del}
based on the minimum distance and (maximum) strength of a subset. We may remark that Suda \cite{Suda2012JCTA} recently
developed a theory which unifies and `interpolates' some of the theorems in \cite{del} and \cite{BGKM03} to a certain
extent.} We say that a set $\mathscr{D}$ of descendents of $\G$ \emph{satisfies} $\mathrm{(UD)}_i$ if each two vertices $x,y\in
V$ at distance $i$ are contained in a unique descendent in $\mathscr{D}$ with width $i$.

\begin{prop}{\em \cite{Tanaka2011EJC}}
Let $\mathscr{D}$ be a set of descendents of $\G$. Suppose that the following properties hold.
\begin{enumerate}[{\em (i)}]
\item $\G$ has classical parameters,
\item $\mathscr{D}$ satisfies $\mathrm{(UD)}_i$ for all $i$,
\item $Y_1\cap Y_2\in\mathscr{D}\cup\{\emptyset\}$ for all $Y_1,Y_2\in\mathscr{D}$.
\end{enumerate}
Then $\mathscr{D}$, together with the partial order defined by reverse inclusion, forms a non-trivial regular quantum
matroid. In particular, if $D\geqslant 4$ then $\G$ is either a Johnson, Hamming, Grassmann, bilinear forms, or dual
polar graph.
\end{prop}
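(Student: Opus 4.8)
The plan is to verify that $(\mathscr{D},\leq)$, where $Y\leq Z$ means $Y\supseteq Z$, satisfies the axioms for a non-trivial regular quantum matroid of rank $D$, and then to invoke Terwilliger's classification \cite[Thm.~39.6]{Terwilliger1996P}. First I would pin down the extreme elements: since $V$ is the unique descendent of width $D$ and the singletons $\{x\}$ ($x\in V$) are exactly the descendents of width $0$, the hypotheses $\mathrm{(UD)}_D$ and $\mathrm{(UD)}_0$ force $V\in\mathscr{D}$ and $\{x\}\in\mathscr{D}$ for all $x$, so $\hat{0}=V$ and the singletons are precisely the maximal elements. I would then take $\rho(Y)=D-w(Y)$ as candidate rank function. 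Using the structure theory of descendents from \cite{BGKM03} --- every descendent is completely regular and (as one checks from the classical parameters) connected, hence induces a $Q$-polynomial distance-regular graph with classical parameters whose diameter equals its width --- together with $\mathrm{(UD)}_i$, which guarantees inside each $Y\in\mathscr{D}$ ``enough'' sub-descendents of every smaller width, I would show $\mathscr{D}$ is ranked with rank function $\rho$, covering relations dropping $w$ by exactly one, and every maximal chain in $[V,Y]$ of length $\rho(Y)$; in particular all maximal elements have rank $D$.

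Next I would establish the lattice structure. Hypothesis (iii) says $\mathscr{D}$ is closed under non-empty intersection, so for $Y_1,Y_2$ in a common interval $[V,Z]$ the set $Y_1\cap Y_2$ lies in $\mathscr{D}$ and is their least upper bound there; since $[V,Z]$ is finite with greatest element $Z$, it is a lattice (the meet being the join of common lower bounds). It remains to verify that each principal order ideal $[V,Z]$ is a \emph{modular atomic} lattice, and the exchange (augmentation) axiom for $\mathscr{D}$. Modularity would follow from the identity $w(Y_1)+w(Y_2)=w(Y_1\wedge Y_2)+w(Y_1\vee Y_2)$ for $Y_1,Y_2\in[V,Z]$, derived from the classical-parameter structure of descendents. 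Atomicity amounts to showing every $Y\in[V,Z]$ is the join --- i.e.\ the intersection --- of the width-$(D-1)$ descendents containing it, equivalently that descendents of width $<D$ are intersections of ``hyperplane'' descendents. The exchange axiom, read through $\rho=D-w$, becomes: whenever $w(Y_1)>w(Y_2)$ there is a width-$(D-1)$ descendent $a\supseteq Y_2$ with $a\not\supseteq Y_1$ and $Y_1\cap a\ne\emptyset$; this I would prove by an incidence count inside the descendents, using $\mathrm{(UD)}_{D-1}$ to produce the hyperplane descendents and the classical parameters to force them to meet $Y_1$. I expect the atomicity step --- the ``hyperplane intersection'' property --- to be the main obstacle, since this is exactly where the soft hypotheses $\mathrm{(UD)}_i$ and (iii) have to be converted into the rigid incidence geometry of a quantum matroid.

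Finally, regularity --- that the number of elements of rank $i$ lying below (resp.\ above) a fixed element of rank $j$ depends only on $i,j$ --- follows by bookkeeping, since all relevant descendent parameters are determined by the intersection array of $\G$ (which has classical parameters) and the $\mathrm{(UD)}_i$ conditions make the counts uniform; non-triviality is immediate as $\G$ has diameter $\ge 3$ and valency $\ge 3$. When $D\ge 4$, \cite[Thm.~39.6]{Terwilliger1996P} then identifies $\mathscr{D}$ with the poset naturally associated with a Johnson, Hamming, Grassmann, bilinear forms, or dual polar graph; and since two maximal elements $\{x\},\{y\}$ cover a common element of $\mathscr{D}$ exactly when some width-$1$ (hence clique) descendent contains both $x$ and $y$, i.e.\ exactly when $x\sim y$ (by $\mathrm{(UD)}_1$), the graph $\G$ on the top fiber of $\mathscr{D}$ is the corresponding classical graph. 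The full argument is carried out in \cite{Tanaka2011EJC}.
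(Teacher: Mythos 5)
The survey gives no proof of this proposition---it is quoted from \cite{Tanaka2011EJC}---and your outline follows exactly the strategy of that reference: order $\mathscr{D}$ by reverse inclusion, take $D-w$ as the rank, verify the axioms of a non-trivial regular quantum matroid, and invoke \cite[Thm.~39.6]{Terwilliger1996P} together with the identification of $\G$ with the graph on the top fiber (your adjacency argument via the width-$1$ descendents supplied by $\mathrm{(UD)}_1$ is the right one). Your elementary steps are sound: $V$ is the unique descendent of width $D$ and the singletons are exactly the descendents of width $0$, so $\mathrm{(UD)}_D$ and $\mathrm{(UD)}_0$ put them in $\mathscr{D}$ and make the singletons precisely the maximal elements; hypothesis (iii) makes each ideal $[V,Z]$ a finite join-semilattice (join $=$ intersection) with minimum $V$, hence a lattice; and your weakened augmentation axiom (you only require $Y_1\cap a\ne\emptyset$, not that the width drop by exactly one) is harmless, since once modularity of $[V,Y_1\vee a]$ is available, an atom $a$ with $a\not\le Y_1$ has $Y_1\wedge a=\hat{0}$ and the rank identity forces $\rho(Y_1\vee a)=\rho(Y_1)+1$.

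As a proof, however, the proposal stops where the content begins. That the poset is ranked with covers dropping the width by exactly one, that each $[V,Z]$ is modular and \emph{atomic}, and that the regularity counts are uniform are not ``bookkeeping'': they rest on the structure theory of descendents in a graph with classical parameters---in particular that a descendent is convex (hence connected) and induces a distance-regular graph with classical parameters whose diameter equals its width, and that descendents of a descendent are again descendents of $\G$, so that the conditions $\mathrm{(UD)}_i$ and (iii) can be propagated inside each $Y\in\mathscr{D}$. None of this is argued; you flag atomicity as ``the main obstacle'' and then cite \cite{Tanaka2011EJC} for the full argument. So nothing in your plan is wrong, and it coincides with the route of the cited proof, but the conversion of hypotheses (i)--(iii) into the modular atomic lattice structure and the exchange axiom---the heart of the theorem---is left unproved, so the attempt is an outline rather than an independent proof.
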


\noindent It was also shown that if $\mathscr{D}$ is the set of \emph{all} descendents of $\G$ then condition (iii) in
the above proposition is implied by the other two. See \cite{BGKM03,Tanaka2006JCTA,HS2007EJC,Tanaka2011EJC} for more information
on descendents.

Unlike regular semilattices and quantum matroids, uniform posets are not assumed to be semilattices, but give rise to
at least 13 infinite families of $Q$-polynomial distance-regular graphs with unbounded diameter, rather than just five
as above; cf.~\cite[\S 4]{Terwilliger1990P}.
Suppose $\G$ is ($Q$-polynomial and) bipartite, and fix $x\in V$. Then we may view $\G$ as the Hasse diagram
of a ranked poset with $D+1$ fibers $\G_i(x)$ $(i=0,1,\dots,D)$. Miklavi\v{c} and Terwilliger \cite{MT2011pre} recently
showed that this poset is uniform.\footnote{See \cite{MT2011pre} for the precise statement of the result, noting that
the hypercube $H(D,2)$ with $D$ even has two $Q$-polynomial structures. They also introduced the concept of
\emph{strongly uniform} and investigated when the poset $\G$ has that property.} Caughman \cite{Caughman2003EJC} showed
that the graph on the top fiber $\G_D(x)$ defined in the previous manner (which is in this case the induced subgraph of
the distance-$2$ graph of $\G$) is distance-regular and $Q$-polynomial.
See \cite{TW2013AMC} and the references therein for more results on uniform posets.

The poset $\mathscr{S}$ consisting of all strongly closed subgraphs of $\G$ with partial order defined by reverse
inclusion plays an important role in the study of distance-regular graphs having classical parameters with $b<-1$.
Suppose $\G$ has geometric parameters $(D,b,\alpha)$ (cf.~Section \ref{sec:generalizations+geometric}) with $D\geq 4$ and is
$D$-bounded in the sense of Weng \cite{W97,We99}, i.e., every $\Delta\in\mathscr{S}$ is assumed to be regular. Then
$b<-1$ by \cite[Lemma~5.5]{We99}. (Conversely, if $\G$ has classical parameters with $b<-1,D\geq 4,a_1\ne 0,c_2>1$ then
$\G$ is $D$-bounded and has geometric parameters; cf.~\cite[Thm.~5.7,~5.8]{We99}.) In this case, Weng \cite{W97} showed
that $\mathscr{S}$ is a ranked (meet) semilattice and every interval is a modular atomic lattice which is isomorphic to
a projective geometry over $GF(b^2)$.

\subsection{Tridiagonal systems}
\label{sec: TD systems}

Let $W$ be a finite dimensional vector space over $\mathbb{C}$.
Let $\mathfrak{a}\in\mathrm{End}_{\mathbb{C}}(W)$ be diagonalizable, and let $(\theta_i)_{i=0}^{\delta}$ be an ordering of the distinct eigenvalues of $\mathfrak{a}$.
Then there is a sequence of elements $(\mathfrak{e}_i)_{i=0}^{\delta}$ in $\mathrm{End}_{\mathbb{C}}(W)$ such that
(i) $\mathfrak{ae}_i=\theta_i\mathfrak{e}_i$;
(ii) $\mathfrak{e}_i\mathfrak{e}_j=\delta_{ij}\mathfrak{e}_i$;
(iii) $\sum_{i=0}^{\delta}\mathfrak{e}_i=\mathfrak{1}$, where $\mathfrak{1}$ is the identity element in $\mathrm{End}_{\mathbb{C}}(W)$.
(Specifically, $\mathfrak{e}_i=\prod_{j\ne i}\frac{\mathfrak{a}-\theta_j\mathfrak{1}}{\theta_i-\theta_j}$ ($i=0,1,\dots,\delta$).)
We call $\mathfrak{e}_i$ the \emph{primitive idempotent} of $\mathfrak{a}$ associated with $\theta_i$ ($i=0,1,\dots,\delta$).
Let $\mathfrak{a}^{\ster}$ be another diagonalizable element in $\mathrm{End}_{\mathbb{C}}(W)$.
Let $(\theta_i^{\ster})_{i=0}^{\delta^{\ster}}$ be an ordering of the distinct eigenvalues of $\mathfrak{a}^{\ster}$ and let $(\mathfrak{e}_i^{\ster})_{i=0}^{\delta^{\ster}}$ be the corresponding sequence of the primitive idempotents.
The sequence
$\Phi=(\mathfrak{a}; \mathfrak{a}^{\ster}; (\mathfrak{e}_i)_{i=0}^{\delta};
(\mathfrak{e}_i^{\ster})_{i=0}^{\delta^{\ster}})$ is a \emph{tridiagonal system} (or \emph{TD system}) if
\begin{align*}
	\mathfrak{e}_i^{\ster}\mathfrak{a}\mathfrak{e}_j^{\ster}&=0 \quad \text{if } |i-j|>1 \quad (i,j=0,1,\dots,\delta^{\ster}), \\
	\mathfrak{e}_i\mathfrak{a}^{\ster}\mathfrak{e}_j&=0 \quad \text{if } |i-j|>1 \quad (i,j=0,1,\dots,\delta),
\end{align*}
and $W$ is irreducible as a $\mathbb{C}[\mathfrak{a},\mathfrak{a}^{\ster}]$-module.
This definition is due to Ito, Tanabe, and Terwilliger \cite{ITT2001P}.\footnote{TD systems can be defined on vector spaces over arbitrary fields, and many of the results are valid over wider classes of fields. However, for simplicity and in view of the connections to the theory of $Q$-polynomial distance-regular graphs, we shall only discuss TD systems over $\mathbb{C}$.}
Note that if $\G$ is a $Q$-polynomial distance-regular graph then it follows from \eqref{relations} that every irreducible $\TT$-module naturally has the structure of a TD system.

Suppose that $\Phi$ is a TD system.
Ito et al.~\cite{ITT2001P} showed $\delta=\delta^{\ster}$.
Define $U_i=(\sum_{h=0}^i\mathfrak{e}_h^{\ster}W)\cap(\sum_{\ell=i}^{\delta}\mathfrak{e}_{\ell}W)$ ($i=0,1,\dots,\delta$).
Note that $U_0=\mathfrak{e}_0^{\ster}W$, and that $(\mathfrak{a}-\theta_i\mathfrak{1})U_i\subseteq U_{i+1}$, $(\mathfrak{a}^{\ster}-\theta_i^{\ster}\mathfrak{1})U_i\subseteq U_{i-1}$ ($i=0,1,\dots,\delta$), where $U_{-1}=U_{\delta+1}=0$.
They showed $W=\bigoplus_{i=0}^{\delta}U_i$.
It also turns out that $\dim\mathfrak{e}_iW=\dim\mathfrak{e}_i^{\ster}W=\dim U_i$ $(i=0,1,\dots,\delta)$.
The sum $W=\bigoplus_{i=0}^{\delta}U_i$ is called the \emph{split decomposition}
and plays a crucial role in the theory of TD systems.
Let $\rho_i=\dim\mathfrak{e}_iW$ $(i=0,1,\dots,\delta)$ and call the sequence $(\rho_i)_{i=0}^{\delta}$ the \emph{shape} of $\Phi$.
They showed that the shape is symmetric and unimodal: $\rho_i=\rho_{\delta-i}$
($i=0,1,\dots,\delta$) and $\rho_{i-1}\le \rho_i$ ($i=1,2,\dots,\lfloor \delta/2\rfloor$).
A TD system with $\rho_0=\dots=\rho_{\delta}=1$ is called a \emph{Leonard system} \cite{Terwilliger2001LAA}.
Leonard systems provide a linear algebraic framework for Leonard's theorem and have been extensively studied; see \cite{Terwilliger2006N} and the references therein.
Note that if the TD system $\Phi$ is afforded on an irreducible $\TT$-module of a $Q$-polynomial distance-regular graph, then the $\TT$-module $W$ is thin if and only if $\Phi$ is a Leonard system.
See \cite{Cerzo2010LAA} for a detailed description of thin irreducible $\TT$-modules motivated by the theory of Leonard systems.

Ito et al.~\cite{ITT2001P} showed that there exist scalars $p,\gamma,\gamma^{\ster},\varrho,\varrho^{\ster}\in\mathbb{C}$ such that
\begin{align}
	 0&=[\mathfrak{a},\mathfrak{a}^2\mathfrak{a}^{\ster}-p\mathfrak{a}\mathfrak{a}^{\ster}\mathfrak{a}+\mathfrak{a}^{\ster}\mathfrak{a}^2-\gamma(\mathfrak{a}\mathfrak{a}^{\ster}+\mathfrak{a}^{\ster}\mathfrak{a})-\varrho\mathfrak{a}^{\ster}], \label{TD relation} \\
	 0&=[\mathfrak{a}^{\ster},\mathfrak{a}^{\ster 2}\mathfrak{a}-p\mathfrak{a}^{\ster}\mathfrak{a}\mathfrak{a}^{\ster}+\mathfrak{a}\mathfrak{a}^{\ster 2}-\gamma^{\ster}(\mathfrak{a}^{\ster}\mathfrak{a}+\mathfrak{a}\mathfrak{a}^{\ster})-\varrho^{\ster}\mathfrak{a}], \label{TD relation*}
\end{align}
where $[\mathfrak{b},\mathfrak{c}]:=\mathfrak{bc}-\mathfrak{cb}$, and (cf.~\eqref{TTR})
\begin{equation}\label{AW sequence}
	\frac{\theta_{i-2}-\theta_{i+1}}{\theta_{i-1}-\theta_i}=\frac{\theta_{i-2}^{\ster}-\theta_{i+1}^{\ster}}{\theta_{i-1}^{\ster}-\theta_i^{\ster}}=p+1 \quad (i=2,3,\dots,\delta-1).
\end{equation}
The relations \eqref{TD relation} and \eqref{TD relation*} generalize the $q$-Serre relations (which are among the defining relations of the quantum affine algebra $U_q(\widehat{\mathfrak{sl}}_2))$ and the Dolan-Grady relations (which are the defining relations of the Onsager algebra); cf.~\cite{Terwilliger2001P}.
It is conjectured (\cite[Conj.~13.7]{ITT2001P}) that there exist positive integers $\delta_1,\delta_2,\dots,\delta_n$ such that $\sum_{i=0}^{\delta}\rho_it^i=\prod_{j=1}^n(1+t+\dots+t^{\delta_j})$, where $t$ is an indeterminate.
This conjecture in fact suggests that $\Phi$ would be regarded as a `tensor product' of Leonard systems.
Let $q$ be a nonzero scalar in $\mathbb{C}$ such that $p=q^2+q^{-2}$.
Using the representation theory of $U_q(\widehat{\mathfrak{sl}}_2)$ (cf.~\cite{CP1991CMP}), Ito and Terwilliger \cite{IT2007JAA,IT2010KJM} indeed constructed \emph{all} TD systems (up to isomorphism\footnote{A TD system $\Phi'=(\mathfrak{a}'; \mathfrak{a}^{\ster \prime}; (\mathfrak{e}_i')_{i=0}^{\delta}; (\mathfrak{e}_i^{\ster \prime})_{i=0}^{\delta})$ on a vector space $W'$ is \emph{isomorphic} to $\Phi$ if there is an isomorphism of vector spaces $\sigma:W\rightarrow W'$ such that $\sigma\mathfrak{a}=\mathfrak{a}'\sigma$, $\sigma\mathfrak{a}^{\ster}=\mathfrak{a}^{\ster \prime}\sigma$, and $\sigma\mathfrak{e}_i=\mathfrak{e}_i'\sigma$, $\sigma\mathfrak{e}_i^{\ster}=\mathfrak{e}_i^{\ster \prime}\sigma$ for $i=0,1,\dots,\delta$.}) explicitly as tensor products of Leonard systems (i.e., \emph{evaluation modules}), under the assumption that $q$ is not a root of unity.
We remark that in this case the split decomposition corresponds to the weight space decomposition.
See also \cite{IT2004JPAA,IT2007CA,IT2009JCISS,IT2009JA,Funk-Neubauer2009LAA,HI2012pre}.
Ito [private communication] pointed out that the proofs of most of the results in \cite{IT2010KJM} work under the weaker assumption $q^2\ne\pm 1$, i.e., $p\ne\pm 2$.
It seems that the above conjecture is still open for general TD systems, but Nomura and Terwilliger \cite{NT2008LAAd,NT2010LAAa} showed among other results that $\rho_0=1$, and more generally, $\rho_i\leq \binom{\delta}{i}$ ($i=0,1,\dots,\delta$), a result which would follow directly from the conjecture.
See, e.g., \cite{Hartwig2007LAA,IT2007LAA,IS2014LAA} for some results on TD systems with $p=2$.

Observe now that the $1$-dimensional subspace $\mathfrak{e}_0^{\ster}W$ is invariant under
\begin{equation*}
	(\mathfrak{a}^{\ster}-\theta_1^{\ster}\mathfrak{1})(\mathfrak{a}^{\ster}-\theta_2^{\ster}\mathfrak{1})\dots(\mathfrak{a}^{\ster}-\theta_i^{\ster}\mathfrak{1})(\mathfrak{a}-\theta_{i-1}\mathfrak{1})\dots(\mathfrak{a}-\theta_1\mathfrak{1})(\mathfrak{a}-\theta_0\mathfrak{1})
\end{equation*}
for $i=0,1,\dots,\delta$, and let $\zeta_i$ be the corresponding eigenvalue ($i=0,1,\dots,\delta$).
The sequence $((\theta_i)_{i=0}^{\delta};(\theta_i^{\ster})_{i=0}^{\delta};(\zeta_i)_{i=0}^{\delta})$ is called the \emph{parameter array} of $\Phi$.
Nomura and Terwilliger \cite{NT2008LAAd} showed that the parameter array is a complete invariant for a TD system.
Ito, Nomura, and Terwilliger \cite{INT2011LAA} established the following theorem:

\begin{theorem}{\em \cite[Thm. 3.1]{INT2011LAA}}\label{classification of sharp TD systems}
Let $\pi=((\theta_i)_{i=0}^{\delta};(\theta_i^{\ster})_{i=0}^{\delta};(\zeta_i)_{i=0}^{\delta})$ be a sequence of scalars in $\mathbb{C}$ such that $\theta_i\ne\theta_j$, $\theta_i^{\ster}\ne\theta_j^{\ster}$ if $i\ne j$ $(i,j=0,1,\dots,\delta)$, and suppose that \eqref{AW sequence} holds for some $p\in\mathbb{C}$.
Then there exists a (unique) TD system with parameter array $\pi$ if and only if $\zeta_0=1$, $\zeta_{\delta}\ne 0$, and $\sum_{i=0}^{\delta}\zeta_i\prod_{\ell=i+1}^{\delta}(\theta_0-\theta_{\ell})(\theta_0^{\ster}-\theta_{\ell}^{\ster})\ne 0$.
\end{theorem}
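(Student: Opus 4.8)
The plan is to dispose of uniqueness at once, then establish necessity of the three conditions, and then the substantive existence (``if'') direction. Uniqueness requires nothing new: Nomura and Terwilliger showed that the parameter array is a complete invariant of a TD system, so any two TD systems with parameter array $\pi$ are isomorphic. For necessity, let $\Phi=(\mathfrak{a};\mathfrak{a}^{\ster};(\mathfrak{e}_i)_{i=0}^{\delta};(\mathfrak{e}_i^{\ster})_{i=0}^{\delta})$ be a TD system with parameter array $\pi$, and recall the split decomposition $W=\bigoplus_{i=0}^{\delta}U_i$, where $(\mathfrak{a}-\theta_i\mathfrak{1})U_i\subseteq U_{i+1}$ and $(\mathfrak{a}^{\ster}-\theta_i^{\ster}\mathfrak{1})U_i\subseteq U_{i-1}$, and $U_0=\mathfrak{e}_0^{\ster}W$ is one-dimensional by the Nomura--Terwilliger fact $\rho_0=1$. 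Then $\zeta_0=1$ because the operator in the definition of $\zeta_0$ is the empty product $\mathfrak{1}$, whose eigenvalue on the line $U_0$ is $1$. For $\zeta_{\delta}$, the defining operator realizes a round trip $U_0\to U_1\to\cdots\to U_{\delta}\to U_{\delta-1}\to\cdots\to U_0$; using the nonvanishing properties of the raising and lowering factors in the split decomposition (Ito, Tanabe, and Terwilliger) together with irreducibility of $W$, this composite is nonzero, so $\zeta_{\delta}\ne 0$. Finally, the sum condition records the non-degeneracy of the interaction between $\mathfrak{e}_0$ and $\mathfrak{e}_0^{\ster}$: writing $\mathfrak{e}_0$ as a polynomial in $\mathfrak{a}$ and $\mathfrak{e}_0^{\ster}$ as a polynomial in $\mathfrak{a}^{\ster}$ and computing $\mathfrak{e}_0\mathfrak{e}_0^{\ster}$, which is nonzero, in the split basis, one finds a Wronskian-type scalar equal to $\sum_{i=0}^{\delta}\zeta_i\prod_{\ell=i+1}^{\delta}(\theta_0-\theta_{\ell})(\theta_0^{\ster}-\theta_{\ell}^{\ster})$, which therefore cannot vanish.

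For existence I would build a TD system by hand, using the split decomposition as a blueprint. Given $\pi$ satisfying the distinctness hypotheses, the Askey--Wilson relation \eqref{AW sequence} for some $p\in\mathbb{C}$, and the three conditions, I posit a graded space $W=\bigoplus_{i=0}^{\delta}U_i$ (the $U_i$ and their dimensions to emerge from a recursion) together with linear ``raising'' maps $R_i\colon U_i\to U_{i+1}$ and ``lowering'' maps $L_i\colon U_i\to U_{i-1}$, and declare that $\mathfrak{a}$ act on $U_i$ as $\theta_i\mathfrak{1}+R_i$ and $\mathfrak{a}^{\ster}$ act on $U_i$ as $\theta_i^{\ster}\mathfrak{1}+L_i$. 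The vanishing conditions $\mathfrak{e}_i^{\ster}\mathfrak{a}\mathfrak{e}_j^{\ster}=0$ and $\mathfrak{e}_i\mathfrak{a}^{\ster}\mathfrak{e}_j=0$ for $|i-j|>1$ then hold by construction. Starting from a one-dimensional $U_0$, the data $U_i,R_i,L_i$ are pinned down recursively by imposing that $\mathfrak{a}$ and $\mathfrak{a}^{\ster}$ be honestly diagonalizable with eigenvalue sets exactly $\{\theta_i\}_{i=0}^{\delta}$ and $\{\theta_i^{\ster}\}_{i=0}^{\delta}$; here the Askey--Wilson condition on the $\theta$'s is precisely what makes the two triangular actions mutually consistent, $\zeta_{\delta}\ne 0$ keeps the recursion from collapsing before step $\delta$ (so $\mathfrak{e}_{\delta}W\ne 0$ and $\delta^{\ster}=\delta$), and the sum condition forces $\theta_0^{\ster}$ to be a genuine eigenvalue of $\mathfrak{a}^{\ster}$, so that $\mathfrak{e}_0^{\ster}$ is a true idempotent rather than producing a nontrivial generalized eigenspace. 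Irreducibility then follows because $U_0$ generates $W$ under $\mathfrak{a},\mathfrak{a}^{\ster}$ by construction, and a final check, the same Wronskian-type bookkeeping as in the necessity argument, confirms that the parameter array of this system is $\pi$.

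As a cross-check and an alternative in the generic range, when $p\ne\pm 2$ and $q$ (with $p=q^2+q^{-2}$) is not a root of unity, every admissible $\pi$ is realized by assembling the appropriate tensor product of Leonard ($=$ evaluation) modules for $U_q(\widehat{\mathfrak{sl}}_2)$ through the Ito--Terwilliger classification, which recovers the same three conditions. The main obstacle is precisely the existence direction in full generality: carrying out the split-basis construction \emph{uniformly} in $p$, including $p=\pm 2$ and $q$ a root of unity where the quantum-group machinery does not apply, and, within that construction, verifying the two non-routine points about the constructed operators, namely that they are genuinely diagonalizable with the prescribed spectra (no spurious generalized eigenvectors, so that the $\mathfrak{e}_i$ and $\mathfrak{e}_i^{\ster}$ are orthogonal idempotents) and that the module is irreducible.
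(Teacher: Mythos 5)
A preliminary remark: this survey does not prove the theorem at all --- it is quoted, with citation, from Ito, Nomura, and Terwilliger \cite{INT2011LAA} --- so there is no in-paper argument to compare yours against, and your proposal has to stand on its own. The parts of it that do stand are the easy ones: uniqueness via the Nomura--Terwilliger result that the parameter array is a complete invariant, and $\zeta_0=1$, which is immediate once one knows $\dim\mathfrak{e}_0^{\ster}W=1$. The other two necessity claims, however, are asserted rather than proved, and they are not routine. Since $\prod_{i=0}^{\delta-1}(\mathfrak{a}-\theta_i\mathfrak{1})$ is a nonzero multiple of $\mathfrak{e}_{\delta}$ and $\prod_{i=1}^{\delta}(\mathfrak{a}^{\ster}-\theta_i^{\ster}\mathfrak{1})$ is a nonzero multiple of $\mathfrak{e}_0^{\ster}$, the condition $\zeta_{\delta}\ne 0$ amounts to $\mathfrak{e}_0^{\ster}\mathfrak{e}_{\delta}\mathfrak{e}_0^{\ster}\ne 0$, and the sum condition amounts to $\mathfrak{e}_0^{\ster}\mathfrak{e}_0\mathfrak{e}_0^{\ster}\ne 0$. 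Neither follows from ``nonvanishing of the raising and lowering factors plus irreducibility'': a composite of nonzero raising and lowering chains through the split decomposition can perfectly well vanish, and your parenthetical ``computing $\mathfrak{e}_0\mathfrak{e}_0^{\ster}$, which is nonzero'' is itself unjustified and in any case weaker than the statement you need. These nonvanishing facts are genuine theorems from the Nomura--Terwilliger $\mu$-conjecture circle of papers, and the displayed sum is a value of the Drinfel'd polynomial (as the survey remarks right after the theorem); treating them as bookkeeping is the first real gap.

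The second and larger gap is the existence direction, which is the substance of the theorem and which you only outline. The recursion that is supposed to produce the spaces $U_i$, the maps $R_i,L_i$, and even their dimensions is never specified, and the two points you explicitly defer --- that the constructed $\mathfrak{a}$ and $\mathfrak{a}^{\ster}$ are diagonalizable with exactly the prescribed spectra (no generalized eigenvectors, so that the $\mathfrak{e}_i,\mathfrak{e}_i^{\ster}$ exist), and that the resulting module is irreducible with parameter array $\pi$ --- are precisely what has to be proved; saying that \eqref{AW sequence} ``makes the triangular actions consistent'' and that the sum condition ``forces $\theta_0^{\ster}$ to be a genuine eigenvalue'' is heuristic, not an argument. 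The published proof does not run by such a split-basis induction: it realizes the putative system on a module for an algebra presented by relations abstracted from TD systems and invokes the $\mu$-conjecture theorems of Nomura and Terwilliger to show the relevant finite-dimensional module is nonzero, with the sum in the statement entering exactly as a Drinfel'd-polynomial evaluation. Your cross-check via evaluation modules for $U_q(\widehat{\mathfrak{sl}}_2)$ handles only $p\ne\pm 2$ with $q$ not a root of unity, and you acknowledge yourself that the uniform construction is the unresolved obstacle. As it stands, then, this is a plan with the hard steps missing, not a proof.
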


\noindent
We remark that the left-hand side of the last condition on the $\zeta_i$ is a certain value of the \emph{Drinfel'd polynomial} of the corresponding TD system; cf.~\cite{IT2009JCISS,IT2010KJM}.
See, e.g., \cite{NT2009LAAa,NT2010LAAb,Bockting-Conrad2012LAA} for more results on TD systems.

Given the above progress in the theory of TD systems, it is important to `pull back' the results to the study of $Q$-polynomial distance-regular graphs.
For example, Pascasio \cite{Pascasio2002EJC} used the symmetric and unimodal property of the shape of $\Phi$ to study the multiplicities $m_i$ of a $Q$-polynomial distance-regular graph $\G$; cf.~Section \ref{sec: unimodality of mi}.
Terwilliger \cite{Terwilliger2005GC} `extended', so to speak, the split decompositions of the TD systems on the irreducible $\TT$-modules to the entire standard module $\mathbb{C}^v$, and obtained the \emph{split} and \emph{displacement decompositions for} $\G$.
Ito and Terwilliger \cite{IT2009EJC} used these decompositions to show that for the forms graphs there are four natural algebra homomorphisms from $U_q(\widehat{\mathfrak{sl}}_2)$ to $\TT$ via the so-called $q$-\emph{tetrahedron algebra}
$\boxtimes_q$ \cite{IT2007CA}, and that $\TT$ is generated by each of their images together with the center $Z(\TT)$.
Corresponding results for the case $p=2$, i.e., for Hamming and Doob graphs, were recently obtained by Morales and Pascasio \cite{MP2012pre}.
See also \cite{IT2009MMJ,Kim2009EJC,Kim2010DM} for more results on the split and displacement decompositions.
Worawannotai \cite{Worawannotai2012PhD} applied a similar idea to dual polar graphs to show (among other results) that there are two algebra homomorphisms from the quantum algebra $U_q(\mathfrak{sl}_2)$ to $\TT$, and that $\TT$ is again generated by each of their images together with $Z(\TT)$.
The split and displacement decompositions have also been applied to the Assmus-Mattson theorem for codes in $Q$-polynomial distance-regular graphs \cite{Tanaka2009EJC}; cf.~\cite[\S 2.8]{bcn}.


\section{The Terwilliger algebra and combinatorics}\label{sec:Talg6}


In this section, let $\G$ be a distance-regular graph with diameter $D\geq 3$, valency $k\geq 3$, and eigenvalues $k =
\theta_0 > \theta_1 > \cdots > \theta_D$. Concerning $1$-homogeneity of distance-regular graphs, we shall occasionally
consider the following weaker concepts. We say $\G$ is $1$-\emph{homogeneous with respect to an edge} $xy$ if the
parameters $p_{i,j;r,s}$ exist with respect to $x,y$ for all $i,j,r,s=0,1,\dots,D$; cf.~Section \ref{sec:homogeneity4}.
We say $\G$ is $1$-\emph{homogeneous with respect to a vertex} $x\in V$ if it is $1$-homogeneous with respect to the
edge $xy$ for every $y\in \G(x)$ and the parameters $p_{i,j;r,s}$ do not depend on the choice of $y$.

\subsection{Homogeneity and tight distance-regular graphs}

\subsubsection{Tight distance-regular graphs}
\label{sec:tightDRG}

Juri\v{s}i\'{c}, Koolen, and Terwilliger \cite{JKT2000JAC} showed the following so-called `fundamental bound':
\begin{equation}\label{FB}
	\left(\theta_1 + \frac{k}{a_1+1}\right) \left(\theta_D + \frac{k}{a_1+1}\right) \geq -\frac{ka_1b_1}{(a_1+1)^2}.
\end{equation}
For $a_1 =0$, equality holds if and only if $\G$ is bipartite.
One way to prove this bound is to use the fact
(\cite[Thm.~4.4.3]{bcn}) that $(\eta_i-\tilde{\theta}_1)(\eta_i-\tilde{\theta}_D)\leq 0$ for $i=2,3,\dots,k$, where
$\tilde{\theta}_1=-1-\frac{b_1}{1+\theta_1}$, $\tilde{\theta}_D=-1-\frac{b_1}{1+\theta_D}$, and
$a_1=\eta_1\geq \eta_2 \geq \dots \geq \eta_k$ are the eigenvalues of a local graph; cf.~\cite{JuKo00EuJC}. This
immediately shows that if $a_1\ne 0$ then equality holds if and only if every (or at least one) local graph is
connected strongly regular with non-trivial eigenvalues $\tilde{\theta}_1,\tilde{\theta}_D$.
We may also prove \eqref{FB} by considering the determinants of the Gram matrices of the three vectors
$E\textbf{e}_x,E\textbf{e}_y,E\chi_{1,1}(x,y)$ for adjacent vertices $x,y\in V$ and $E\in\{E_1,E_D\}$, where
$\chi_{1,1}(x,y)$ is the characteristic vector of $\G_{1,1}(x,y)=\G(x)\cap\G(y)$; cf.~\cite{JKT2000JAC}. See also
\cite{Pascasio2003DM} for another proof. We say $\G$ is {\em tight} if $a_1 \neq 0$ and equality holds in \eqref{FB}.
Juri\v{s}i\'{c} et al.~\cite{JKT2000JAC} showed that $\G$ is tight if and only if $a_1 \neq 0$, $a_D =0$,
and $\G$ is $1$-homogeneous.
To be more precise, call an edge $xy$ \emph{tight with respect to} a non-trivial
eigenvalue $\theta$ if $E\textbf{e}_x,E\textbf{e}_y,E\chi_{1,1}(x,y)$ are
linearly dependent, where $E$ is the primitive idempotent corresponding to
$\theta$. Then the following properties are all equivalent: (i) $\G$ is
tight; (ii) $a_1\ne 0$ and every (or at least one) edge of $\G$ is tight
with respect to both $\theta_1$ and $\theta_D$; (iii) $a_1\ne 0$, $a_D=0$, and
$\G$ is $1$-homogeneous (or $1$-homogeneous with respect to an edge).
Pascasio \cite{Pascasio2001GC} showed that if $\G$ is $Q$-polynomial then the following
properties are equivalent: (i) $\G$ is tight; (ii) $\G$ is non-bipartite and $a_D=0$; (iii) $\G$ is non-bipartite and
$a_D^{\ster}=0$. More characterizations of the tightness property will be given in the next
sections. The fundamental bound inspired quite a bit of the later research by Terwilliger and his students.

It follows from the above result of Pascasio that the non-bipartite antipodal $Q$-polynomial distance-regular graphs are tight; examples are the Johnson graph $J(2D,D)$, the
halved $2D$-cube, the non-bipartite Taylor graphs and the Meixner $2$-cover; cf.~Section \ref{sec:antipodalDRG}.
There are several sporadic examples known, all of which have diameter $4$, and of which only one is primitive, namely
the Patterson graph.
Juri\v{s}i\'{c} and Koolen \cite[Thm.~3.2]{JK2002DM} showed that tight distance-regular graphs with $D=3$ are precisely the non-bipartite Taylor graphs.
Suda \cite{Suda2012EJC} recently gave a simple proof of this result by looking at the
intersection matrix $L$; cf.~\eqref{matrixl}.

Using the fact that the Patterson graph, the Meixner $4$-cover, the $3.O_7(3)$-graph, and the $3.O_6^-(3)$-graph are
tight and hence $1$-homogeneous, one can easily show that the minimal convex subgraph of two vertices at distance two
is a complete multipartite graph $K_{n \times t}$ with $n \geq 2$, $t \geq 2$.
This leads in each of the cases to its
uniqueness as a distance-regular graph; cf.~\cite{JK2003JAC,JuKo07JAC,JuKo11,JuKopre,BrJuKo08}.

The family of tight antipodal distance-regular graphs with $D=4$ is called the $\mathrm{AT}4$-family. That they are
$1$-homogeneous gives rise to several feasibility conditions; cf.~\cite{JuKo00EuJC}.
Juri\v{s}i\'{c} and Koolen
\cite{JuKo11} classified the members of the $\mathrm{AT}4$-family with complete multipartite $\mu$-graphs. Juri\v{s}i\'{c},
Munemasa, and Tagami \cite{JuMuTa10} simplified, generalized, and strengthened some of the results in \cite{JuKo11}.

Vidali and Juri\v{s}i\'{c} \cite{VJ2013pre} recently showed the non-existence of primitive tight distance-regular graphs with classical parameters $(D,b,b-1,b^{D-1})$, where $D\geq 4$ and $b>1$.

\subsubsection{The \texorpdfstring{$\mathrm{CAB}$}{CAB} condition and \texorpdfstring{$1$}{1}-homogeneous distance-regular graphs}

Juri\v{s}i\'{c} and Koolen \cite{JuKo00DCC} introduced the $\mathrm{CAB}_j$ condition.
For vertices $x,y\in V$
at distance $i =0,1, \dots, D$, define the sets
$C_i(x,y) = \G_{i-1}(x)\cap \G(y)$, $A_i(x,y) =
\G_i(x) \cap \G(y)$, and $B_i(x,y) = \G_{i+1}(x) \cap \G(y)$ (with $\G_{-1}(x) = \G_{D+1}(x) =
\emptyset$).
For $j=0,1, \ldots, D$, we say $\G$ \emph{satisfies} $\mathrm{CAB}_j$, if for
all $i=0,1, \ldots, j$ and $x, y\in V$ at distance $i$, the partition $\{C_i(x,y),
A_i(x,y), B_i(x,y)\}$ of the local graph $\Upsilon(y)$ is equitable (where we
assume that empty sets are excluded from the partition).
It is clear that $\G$ satisfies $\mathrm{CAB}_0$, and that $\G$ satisfies $\mathrm{CAB}_1$ if and only if it is locally strongly regular.
Note that if $\G$ satisfies
$\mathrm{CAB}_2$ then the $\mu$-graph $\Upsilon(x,y)$ for vertices $x, y\in V$ at distance $2$ is
regular.
Juri\v{s}i\'{c} and Koolen \cite{JuKo00DCC} showed that if $\G$ satisfies $\mathrm{CAB}_j$ then for all $i=0,1,\dots,j$ and $x,y\in V$ at
distance $i$, the quotient matrix of $\{C_i(x,y),
A_i(x,y), B_i(x,y)\}$ does not depend on the pair $x,y$, but only on $i$.
They also showed that if $a_1 \neq 0$ then $\G$ satisfies $\mathrm{CAB}_D$ if and only if it is $1$-homogeneous.
Note that if $a_1=0$ then
$\G$ always satisfies $\mathrm{CAB}_D$.
Nomura \cite{N294} showed that  the $1$-homogeneous distance-regular graphs of order $(s,t)$ with $s \geq 2$, $t \geq
1$ are exactly the regular near $2D$-gons, a result that can be shown easily using the $\mathrm{CAB}_D$
condition.
Juri\v{s}i\'{c} and Koolen \cite{JuKo00DCC} also determined the $1$-homogenous Terwilliger graphs,
and gave an algorithm to determine all $1$-homogeneous distance-regular graphs
that are locally a given strongly regular graph.
See also \cite{JK2003JAC}.

\subsubsection{More results on homogeneity}\label{sec:homogeneity}
Miklavi\v{c} \cite{Miklavic2004EJC} showed that the triangle-free $Q$-polynomial distance-regular graphs are $1$-homogeneous.
Note that if $a_1=0$ then the multiplicity of an eigenvalue distinct from $\pm k$ is at least $k$ by Terwilliger's tree bound; cf.~Section \ref{sec:tree_bound}.
Coolsaet, Juri\v{s}i\'{c}, and Koolen \cite{CoJuKo08EuJC} showed among other results that $\G$ is $1$-homogeneous if it has an eigenvalue with multiplicity $k$, $a_1=0$, $a_2\ne 0$, and $a_4=0$ (when $D\geq 4$), and then ruled out the infinite family of intersection arrays $\{2\mu^2 + \mu, 2\mu^2 + \mu -1, \mu^2, \mu, 1; 1, \mu, \mu^2, 2\mu^2 + \mu -1, 2\mu^2 + \mu\}$ ($\mu\geq 2$).
For $\mu=1$, this intersection array is uniquely realized by the dodecahedron.
Juri\v{s}i\'{c}, Koolen, and \v{Z}itnik \cite{JKZ2008EJC} showed among other results that if $\G$ is primitive and has an eigenvalue with multiplicity $k$, $a_1=0$, and $D=3$, then the association scheme underlying $\G$ is formally self-dual and thus $\G$ is $Q$-polynomial and $1$-homogeneous.

Nomura \cite{Nomura1995JCTB} classified the $2$-homogeneous bipartite distance-regular graphs; cf.~Section \ref{sec:antipodalDRG}.
Nomura \cite{Nomura1996P} also classified the $2$-homogeneous generalized odd graphs.
Yamazaki \cite{Yamazaki1996JCTB} observed that if $\G$ is bipartite then $\G$ has an eigenvalue with multiplicity $k$ if and only if it is $2$-homogeneous, while Curtin \cite{Curtin1998DM} showed that if $\G$ is bipartite then $\G$ is $2$-homogeneous if and only if it is $Q$-polynomial and antipodal.

\subsection{Thin modules}
\label{sec:thinness}

Thin irreducible $\TT$-modules with endpoint $1$ have been extensively
studied;
see e.g., \cite{Terwilliger1993N,GT2002EJC,Terwilliger2002LAA,Terwilliger2004JAC} and Section \ref{sec:thinness-Q}. For
example, let $\mathbf{v}$ be a nonzero vector in $E_1^{\ster}\mathbb{C}^v$ which is orthogonal to $A_1\mathbf{e}_x$, so
that $E_0\mathbf{v}=0$. Go and Terwilliger \cite{GT2002EJC} showed that if $E_i\mathbf{v}$ vanishes for some
$i=1,2,\dots,D$ then $i\in\{1,D\}$ and $\AL\mathbf{v}$ is a thin irreducible $\TT$-module with endpoint $1$ and
diameter $D-2$. There is also a characterization of thin irreducible $\TT$-modules with endpoint $1$, involving the
pseudo primitive idempotents introduced by Terwilliger and Weng \cite{TW04}. Let $\theta\in\mathbb{C}$ (not necessarily
an eigenvalue of $\G$).
The \emph{pseudo cosine sequence for} $\theta$ is the sequence $(\sigma_i)_{i=0}^D$ defined by
$\sigma_0=1$ and the recursion $c_i\sigma_{i-1}+a_i\sigma_i+b_i\sigma_{i+1}=\theta\sigma_i$ for $i=0,1,\dots,D-1$;
cf.~\eqref{standard_sequence}.
A \emph{pseudo primitive idempotent} $E_{\theta}$ \emph{associated with} $\theta$ is then any
nonzero scalar multiple of $\sum_{i=0}^D\sigma_iA_i$.
We also define $E_{\infty}$ to be any  nonzero scalar multiple of $A_D$. Let $\mathbf{v}$ be as
above, and let $(\AL;\mathbf{v})=\{M\in\AL:M\mathbf{v}\in E_D^{\ster}\mathbb{C}^v\}$.
Note that $J\in (\AL;\mathbf{v})$.
Terwilliger and Weng \cite{TW04} showed that $\TT\mathbf{v}$ is a thin irreducible
$\TT$-module (with endpoint $1$) if and only if $\dim(\AL;\mathbf{v})\geq 2$. Moreover, if this is the case, then
$\dim(\AL;\mathbf{v})=2$ and we have $(\AL;\mathbf{v})=\mathrm{span}_{\mathbb{C}}\{J,E_{\tilde{\eta}}\}$, where $\eta$
is the local eigenvalue corresponding to $\TT\mathbf{v}$ and
\begin{equation*}
	\tilde{\eta}=\begin{cases} \infty & \text{if}\ \eta=-1, \\ -1 & \text{if}\ \eta=\infty, \\ -1-\frac{b_1}{1+\eta} & \text{if}\ \eta\ne -1,\infty. \end{cases}
\end{equation*}
Terwilliger \cite{Terwilliger2004JAC} obtained an
inequality\footnote{\label{similar to SET}We may remark that the
discussions in \cite{Terwilliger2004JAC} and those in the proof of
the spectral excess theorem (Theorem \ref{spectral excess theorem})
given by Fiol and Garriga \cite{FG97} are similar in nature. In
\cite{Terwilliger2004JAC}, Terwilliger is concerned with the
thinness of irreducible $\TT$-modules with endpoint $1$ of a
distance-regular graph, whereas Fiol and Garriga \cite{FG97} take a
``local approach'', which can be understood as being concerned with
the thinness of the primary $\TT$-module of a general (finite,
simple, and connected) graph. (See \cite{Terwilliger1993N} for the
basic theory about the Terwilliger algebra of a general graph.) In
both cases, the characterization of the thinness as equality in a
bound is obtained by focusing on two specific vectors in
$E_D^{\ster}\mathbb{C}^v$.} involving the local eigenvalues of
$\G$, and showed that equality is attained if and only if $\G$ is
$1$-thin with respect to the base vertex $x$. Go and Terwilliger
\cite[Thm.~13.7]{GT2002EJC} showed that the following properties
are all equivalent: (i) $\G$ is tight; (ii) $\G$ is non-bipartite,
$a_D=0$, and $\G$ is $1$-thin; (iii) $\G$ is non-bipartite,
$a_D=0$, and $\G$ is $1$-thin with respect to at least one vertex.

We have some comments. It is well known that $a_1\ne 0$ implies $a_i\ne 0$ for
$i=1,2,\dots,D-1$; cf.~\cite[Prop.~5.5.1]{bcn}.
Dickie and Terwilliger \cite{DT1998JAC} showed that if $\G$ is $1$-thin with respect to at least one vertex
then $a_1=0$ implies $a_i=0$ for $i=1,2,\dots,D-1$. We note that these results
have dual versions for $Q$-polynomial association schemes;
cf.~\cite{Dickie1995D,DT1998JAC}.

Collins \cite{Collins1997GC} showed that $\G$ is thin with $c_3=1$ if and only if it is a generalized octagon of order
$(1,t)$. This shows that if $\G$ is thin then the numerical girth $g$ is at most $8$ (and cannot be $7$).
(Collins \cite{Collins1997GC} only mentioned the implication for the girth of $\G$.)
Suzuki \cite{Suzuki2006EJC} strengthened this result as follows.
Suppose $\G$ is of order $(s,t)$, and recall that $g$ coincides with the geometric girth in this case.
Suzuki showed among other
results that (i) $g\leq 11$ if there is a thin irreducible $\TT$-module with endpoint $3$; (ii) $\G$ is a regular near
polygon\footnote{The referee kindly pointed out an error in \cite[Thm.~1.2(ii)]{Suzuki2006EJC}.} if and only if it is $1$-thin; (iii) if $g\geq 8$ then $\G$ is a generalized $2D$-gon of order $(1,t)$ if and
only if it is $1$- and $2$-thin; (iv) if $g\geq 8$ then $\G$ is a generalized octagon of order $(1,t)$ if and only if
it is $1$-, $2$-, and $3$-thin.

Curtin \cite{Curtin1999GC} studied the Terwilliger algebras of bipartite distance-regular graphs.
Suppose for the moment that $\G$ is bipartite.
Then he showed among other results that $\G$ is always $1$-thin with a unique irreducible $\TT$-module with endpoint $1$ up to isomorphism, and that if $\G$ is $2$-thin with respect to the base vertex $x$ then the intersection array is determined by $D$ and the multiplicity in $\mathbb{C}^v$ of each of the irreducible $\TT$-modules $W$ with endpoint $2$, together with the scalar $\psi(W)=-\frac{b_2b_3}{c_2(\eta(W)+1)}-1$, where $\eta(W)$ is the eigenvalue of $E_2^*A_2E_2^*$ on $E_2^*W$, which is an eigenvalue of the local graph of $x$ in the halved graph of $\G$.
See also \cite{Curtin1999EJC}.
In particular, if $\G$ is $2$-thin with respect to $x$ with (at most) two irreducible $\TT$-modules $W_1,W_2$ with endpoint $2$ up to isomorphism, then it turns out that the intersection array is determined by $D,k,c_2,\psi(W_1)$, and $\psi(W_2)$.

Collins \cite{Collins2000DM} studied in detail the relation between the irreducible $\TT$-modules of an almost bipartite distance-regular graph $\G$ and those of its bipartite double $\tilde{\G}$.
In particular, he showed that $\G$ is thin if and only if $\tilde{\G}$ is thin.

\subsection{Vanishing Krein parameters}\label{sec:vanishingKrein}

Vanishing of Krein parameters often leads to strong (combinatorial) consequences.
A classical example is a result of Cameron, Goethals, and Seidel \cite{CGS1978JA} which states that if a strongly regular graph satisfies either $q_{11}^1=0$ or $q_{22}^2=0$ then for every vertex, the induced subgraphs on both of the subconstituents are strongly regular.\footnote{In passing, by the results of \cite{CGS1978JA} one can quickly find all the irreducible $\TT$-modules of a strongly regular graph.
In particular, it is always thin; cf.~\cite{TY1994KJM}.}
See \cite{Godsil1992AJC,JK2002DM,Jurisic2003DM} for similar results for antipodal distance-regular graphs with diameter $3$ or $4$.
In this section, we discuss more results on this topic.

\subsubsection{Triple intersection numbers}\label{sec:triple intersection numbers}
Coolsaet, Juri\v{s}i\'{c}, and others used
vanishing Krein parameters to obtain information on triple intersection
numbers as follows.
Let $x, y, z\in V$.
For $r, s, t =0,1,\dots, D$, let $p^{x,y,z}_{r, s, t} = |\{ u\in V : d(x,u) = r, d(y, u) =
s, d(z,u) = t\}|$.
Now, if $q^h_ {ij} =0$ then it follows from \eqref{3-tensor} that
\begin{equation}\label{triples}
	\sum_{r, s, t = 0}^D Q_{ri} Q_{sj} Q_{th} p^{x,y,z}_{r, s, t} = 0.
\end{equation}
This equation gives some extra information on the triple intersection numbers. We note that
\eqref{3-tensor} was also used earlier by Terwilliger \cite{Terwilliger1985AGG} to study the number of $4$-vertex
configurations with given mutual distances; he showed that if $\G$ is $Q$-polynomial then such numbers can be computed
from the intersection array and the numbers of $4$-vertex cliques in $\G_1,\G_2,\dots,\G_{\lfloor D/2\rfloor}$.
Using \eqref{triples}, Coolsaet and Juri\v{s}i\'{c} \cite{CoJu08} ruled out the infinite family of intersection arrays
$\{4r^3+8r^2+6r+1, 2r(r+1)(2r+1), 2r^2+2r+1; 1, 2r(r+1), (2r+1)(2r^2+2r+1)\}$ $(r \geq 2)$.
The case $r=1$, i.e., $\{19, 12, 5; 1, 4, 15\}$, was eliminated by Neumaier; cf.~\cite[\S 5.5A]{BCNcoradd}.
Coolsaet and Juri\v{s}i\'{c} also
ruled out the intersection array $\{ 74, 54, 15; 1, 9, 60\}$.
Juri\v{s}i\'{c} and Vidali \cite{JurVidpre}
used the above idea of triple intersection numbers to show that there exists a set of vertices mutually at distance
$3$ of size $p^3_{33}+2$  for distance-regular graphs with intersection arrays $\{(2r^2-1)(2r+1), 4r(r^2-1), 2r^2; 1,
2(r^2-1), r(4r^2-2)\}$ or $\{ 2r^2(2r+1), (2r-1)(2r^2+r+1), 2r^2; 1, 2r^2, r(4r^2-1)\}$ $( r \geq 2)$, and showed that
consequently such graphs do not exist. Urlep \cite{urlep12} used \eqref{triples} to rule out the intersection arrays
$\{(r + 1)(r^3 - 1), r(r - 1)(r^2 + r - 1), r^2 - 1; 1, r(r + 1), (r^2 - 1)(r^2 + r - 1)\}$ ($r \geq 3$). For $r=2$,
this intersection array is uniquely realized by the halved $7$-cube.
Vidali \cite{Vidali} recently used \eqref{triples} again to rule out the intersection array $\{55, 54, 50, 35, 10; 1, 5, 20, 45, 55\}$.

\subsubsection{Hadamard products of two primitive idempotents}
\label{sec: Hadamard products} Another important use of vanishing Krein parameters is the study of pairs of non-trivial
primitive idempotents $E,F$ such that $E\circ F$ is a linear combination of a small number of primitive idempotents;
cf.~\eqref{Krein}.
For convenience, we define
$e(M)=\{E_i:ME_i\ne 0\}$ for $M\in\AL$. Pascasio \cite{Pascasio1999JAC} showed that non-trivial primitive idempotents
$E,F$ satisfy $|e(E\circ F)|=1$ precisely when one of the following holds: (i) $\G$ is tight, $\{E,F\}=\{E_1,E_D\}$,
and $e(E\circ F)=\{E_{D-1}\}$; (ii) $\G$ is bipartite and $E_D\in\{E,F\}$.

Suppose for the moment that $\G$ is bipartite with $D\geq 4$.
Let $\theta,\theta'$ be eigenvalues of $\G$ other than $\pm k$, and let $E,F$ be the primitive idempotents associated with $\theta,\theta'$.
Then $|e(E\circ F)|>1$ by (ii) above.
MacLean \cite{MacLean2000DM} called the pair $\{E,F\}$ \emph{taut} if $|e(E\circ F)|=2$.
He showed that $|e(E\circ F)|=2$ if and only if $\theta,\theta'$ attain equality in what he called the `bipartite fundamental bound'.
We comment on the proof of this result. Let $E=E_i$ and $F=E_j$, and for $t=0,1,2$, let $\mathbf{f}_t$ be the vector in
$\mathbb{R}^{D+1}$ with $h$-coordinate
\begin{equation}\label{MacLean's vector}
	\theta_h^t\sqrt{\frac{q_{ij}^hm_h}{m_im_j}} \quad (h=0,1,\dots,D).
\end{equation}
Then $\mathbf{f}_0,\mathbf{f}_1,\mathbf{f}_2$ are linearly dependent if and only if $|e(E\circ F)|=2$, and computing the determinant of the (positive semidefinite) Gram matrix of $\mathbf{f}_0,\mathbf{f}_1,\mathbf{f}_2$ yields the bipartite fundamental bound.
See \cite{MT2006DM,MacLean2012DM} for more proofs of this bound.
We say $\G$ is \emph{taut} if it has a taut pair of primitive idempotents and is not $2$-homogeneous.
MacLean \cite[Thm.~1.4]{MacLean2000DM} showed that $\{E,F\}$ is taut precisely when one of the following holds:
(i) $\G$ is taut and $\{E,F\}\in\{\{E_h,E_{\ell}\}:h\in\{1,D-1\},\ell\in\{\tau,D-\tau\}\}$ where $\tau=\lfloor D/2\rfloor$; (ii) $\G$ is $2$-homogeneous and $\{E,F\}\cap\{E_1,E_{D-1}\}\ne\emptyset$.
For $D=4,5$, $\G$ is taut or $2$-homogeneous if and only if $\G$ is antipodal \cite[\S\S7--8]{MacLean2000DM}.
MacLean and Terwilliger \cite{MT2006DM} showed among other results that if $D$ is odd then the following are equivalent:
(i) $\G$ is taut or $2$-homogeneous; (ii) $\G$ is antipodal and $2$-thin; (iii) $\G$ is antipodal and $2$-thin with respect to at least one vertex; see also \cite{MT2008DM}.
Examples of taut graphs with odd $D\geq 5$ are the Doubled Odd graphs, the Doubled Hoffman-Singleton
graph, the Doubled Gewirtz graph, and the Doubled $77$-graph; cf.~ \cite[p.~131]{MacLean2003JAC}. For $D$ even and at least
$6$, MacLean \cite[Thm.~5.8]{MacLean2000DM} showed that $\G$ is taut or $2$-homogeneous if and only if its halved
graphs are tight. If $\G$ is taut in this case, then it turns out however that $D\ne 6$ and that no known example of a
tight distance-regular graph with diameter at least $4$ can be a halved graph of $\G$; cf.~\cite{MacLean2004JCTB}.

Retaining the situation of the last paragraph, let $\Delta_E$ be the representation diagram\footnote{The \emph{representation diagram} of $E=E_i$ is
the simple graph with vertex set $\{0,1,\dots,D\}$, where two distinct vertices $h,\ell$ are adjacent whenever
$q_{ih}^{\ell}\ne 0$.} of $E=E_i$, and let $(u_h)_{h=0}^D$ be the standard sequence associated with $E$. Note that $0$ and
$D$ are leaves (i.e., terminal vertices) in $\Delta_E$, and that $j$ is a leaf in $\Delta_E$ precisely when $|e(E\circ
F)|=2$ and $F\in e(E\circ F)$. Lang \cite{Lang2004JCTB} showed that
\begin{equation}\label{Lang's inequality}
	(u_1-u_{h+1})(u_1-u_{h-1})\geq (u_2-u_h)(u_0-u_h) \quad (h=1,2,\dots,D-1),
\end{equation}
with equality for every $h=1,2,\dots,D-1$ (or just for $h=3$) if and only if $u_{h-1}-pu_h+u_{h+1}$ is independent of
$h=1,2,\dots,D-1$ for some $p\in\mathbb{R}$. When $E$ attains equality, Lang \cite{Lang2002EJC} showed that (i) $u_D\ne
1$ if and only if $\Delta_E$ is a path (i.e., $E$ is $Q$-polynomial); (ii) $u_D=1$ if and only if $\Delta_E$ is the
disjoint union of two paths. It follows that if case (ii) occurs then $\G$ is antipodal and the folded graph is
$Q$-polynomial; cf.~\cite[Thm.~10.2, 10.4]{Lang2004JCTB}. Note that in both cases (i) and (ii), $E$ is a \emph{tail},
i.e., $|e(E\circ E)|\leq 3$ and $|e(E\circ E)\backslash\{E_0,E\}|\leq 1$. Conversely, Lang \cite{Lang2002EJC} showed
that if $E$ is a tail and $D\ne 6$ then $E$ attains equality in \eqref{Lang's inequality}.
Lang \cite{Lang2003JAC} also showed that $\Delta_E$ has a leaf other than $0,D$ if and only if $E$ attains
equality in \eqref{Lang's inequality} and case (ii) occurs above. One of the other results in \cite{Lang2004JCTB} is
that if $D\geq 6$ and $\G$ has more than one primitive idempotent that attains equality in \eqref{Lang's inequality},
then $\G$ is the $D$-cube.

Suppose now that $\G$ is arbitrary (i.e., $D\geq 3$ and not necessarily bipartite).
By considering the Gram matrix of $\mathbf{f}_0$ and $\mathbf{f}_1$, Pascasio \cite{Pascasio2003DM} later extended some of the results in \cite{Pascasio1999JAC}, as well as the
fundamental bound, to the level of $P$-polynomial character algebras.
Tomiyama \cite{Tomiyama2001DM} considered the situation where one of $1,D$ is a leaf in $\Delta_E$ and generalized some of the results in \cite{Lang2003JAC,Lang2004JCTB,Pascasio1999JAC,Pascasio2001GC}.

Assume $E=F$ (so $i=j$) and $\theta(=\theta')\neq\pm k$.
Then $|e(E\circ E)|\geq 2$.
We call $E$ a \emph{light tail}\footnote{It is easy to see that if $e ( E \circ E ) = \{ E_0, E \}$ then $\G$ is antipodal with $D=3$; cf.~\cite[Thm.~ 4.1(b)]{JTZ2010EJC}. We view this case as degenerate,
so we propose to assume $E \not\in e ( E \circ E )$ as well in the definition of a light tail.} \cite{JTZ2010EJC} if $|e(E\circ E)|=2$.
Let $\mathbf{f}_0',\mathbf{f}_1'$ be the vectors obtained from $\mathbf{f}_0,\mathbf{f}_1$, respectively, by the removal of
the $0$-coordinate. Note that $\mathbf{f}_0',\mathbf{f}_1'$ are linearly dependent if and only if $E$ is a light tail.
Juri\v{s}i\'{c}, Terwilliger, and \v{Z}itnik \cite{JTZ2010EJC} considered the Gram matrix of
$\mathbf{f}_0',\mathbf{f}_1'$. In this case, the resulting inequality gives a lower bound on the multiplicity of
$\theta$; cf.~Proposition \ref{light tail bound}. They showed among other results that distance-regular graphs with a
light tail are close to being $1$-homogeneous, i.e., the parameters $p_{i,j;r,s}$ exist with respect to, and are
independent of, every pair of adjacent vertices $x,y\in V$ for all $i,j,r,s=0,1,\dots,D$ except possibly $i=j=2,3,\dots,D-1$. In
particular, the local graphs are strongly regular. We note that these results generalize those of Cameron, Goethals,
and Seidel \cite{CGS1978JA} mentioned at the beginning of Section \ref{sec:vanishingKrein}. They indeed showed that
primitive strongly regular graphs with a light tail (and $k\geq 3$) are precisely the Smith graphs.

\subsection{Relaxations of homogeneity}

In the previous sections, we explored connections among homogeneity, thin modules, tightness,
local graphs, Hadamard products of two primitive idempotents, and so on. In fact, many of these results can be
generalized in several directions, as we discuss below.

We say $\G$ is \emph{pseudo $1$-homogeneous with respect to an edge} $xy$ \cite{JT2008JAC} if the parameters $p_{i,j;r,s}$ exist with respect to $x,y$ for all $i,j,r,s=0,1,\dots,D$ except possibly $i=j=D$.
Let $\theta\in\mathbb{R}\backslash \{k\}$, and let $E_{\theta}$ be a pseudo primitive idempotent associated with $\theta$; cf.~Section \ref{sec:thinness}.
We say the edge $xy$ is \emph{tight with respect to} $\theta$ \cite{JT2008JAC} if a non-trivial linear combination of
$E_{\theta}\mathbf{e}_x,E_{\theta}\mathbf{e}_y,E_{\theta}\chi_{1,1}(x,y)$ is contained in the subspace
$\mathrm{span}_{\mathbb{R}}\{\mathbf{e}_z:z\in \G_{D,D}(x,y)\}$. Juri\v{s}i\'{c} and Terwilliger \cite{JT2008JAC}
showed among other results that if $a_1\ne 0$ then the edge $xy$ is tight with respect to two distinct real numbers if
and only if $\G$ is pseudo $1$-homogeneous with respect to $xy$ and the induced subgraph on $\G_{1,1}(x,y)$ is not a
clique. Under the condition $a_1\ne 0$, Curtin and Nomura \cite{CN2005JCTB} characterized the
situation where $\G$ is $1$-thin with respect to $x$ with precisely two non-isomorphic irreducible $\TT(x)$-modules with
endpoint one, in terms of the pseudo $1$-homogeneous property\footnote{It should be
remarked that Curtin and Nomura \cite{CN2005JCTB} do not require the existence of the parameter $p_{D,D-1;r,s}$ with
respect to $x,y$.} of the edges $xy$ ($y\in\G(x)$).
They studied in detail the case where $a_1=0$ as well.
Extending the work of Pascasio \cite{Pascasio1999JAC} on the tightness property, Pascasio and Terwilliger \cite{PT2006LAA} described exactly when $E_{\theta}\circ E_{\theta'}$ with $\theta,\theta'\in\mathbb{R}$ is a scalar multiple of $E_{\tau}$ for some $\tau\in\mathbb{R}$.

Suppose for the moment that $\G$ is bipartite with $D\geq 4$, and let $x,y$ be vertices with $d(x,y)=2$.
Curtin \cite[\S\S 4--5]{Curtin1998DM} showed that $\G$ is $2$-homogeneous if and only if $|\G_{1,1}(x,y)\cap\G_{i-1}(z)|$ depends only on $i=1,2,\dots,D-1$ and is independent of $z\in \G_{i,i}(x,y)$.
We say $\G$ is \emph{almost $2$-homogeneous} \cite{Curtin2000EJC} if the same condition holds for $i=1,2,\dots,D-2$.
Recall that $\G$ is $1$-thin with a unique irreducible $\TT$-module with endpoint $1$ up to
isomorphism.
Curtin \cite{Curtin2000EJC} showed among other results that $\G$ is almost $2$-homogeneous if and only if it
is $2$-thin with a unique irreducible $\TT$-module with endpoint $2$ up to isomorphism.
Curtin \cite{Curtin2000EJC}
and Juri\v{s}i\'{c}, Koolen, and Miklavi\v{c} \cite{JKM2005JCTB} classified the almost $2$-homogeneous bipartite
distance-regular graphs: the $2$-homogeneous graphs (cf.~Section \ref{sec:antipodalDRG}), the generalized
$2D$-gons of order $(1,k-1)$, the folded $2D$-cube, and the coset graph of the extended binary Golay
code.\footnote{See also \cite{Suzuki2008GC} for a generalization of this result (as well as Nomura's
classification \cite{Nomura1995JCTB,Nomura1996P} of bipartite or almost bipartite $2$-homogeneous distance-regular
graphs) to triangle-free distance-regular graphs. That the coset graph of the extended binary Golay code is almost
$2$-homogeneous was pointed out by Lang \cite[Lemma~3.4]{Lang2008EJC}.}
Lang \cite{Lang2008EJC} considered when
$E_{\theta}\circ E_{\theta}$ with $\theta\in\mathbb{C}\backslash\{k,-k\}$ is a linear combination of $J$ and $E_{\tau}$ for some $\tau\in\mathbb{C}$, and showed that this occurs precisely when $\G$ is almost $2$-homogeneous and $c_2\geq 2$.

In some cases, it is possible to get an equitable partition of
$V$ from $\Pi=\{\G_{i,j}(x,y):\G_{i,j}(x,y)\ne\emptyset,\,
i,j=0,1,\dots,D\}$, where $d(x,y)=h\in\{1,2\}$, by refining some of the
$\G_{i,i}(x,y)$ $(i=2,3,\dots,D)$ into two cells, even when $\Pi$ itself is
not equitable. This was worked out in detail by Miklavi\v{c} for
distance-regular graphs having classical parameters with $b<-1$, $a_1\ne 0$
\cite{Miklavic2005JCTB} ($h=1$), for bipartite $Q$-polynomial distance-regular
graphs with $c_2=1$ \cite{Miklavic2007DM} ($h=2$), and for the bipartite dual
polar graphs $\D_D(q)$ \cite{Miklavic2013GC} ($h=2$). See also
\cite{Miklavic2007EJC} for a description of the $\AL$-module spanned by
$\{\chi_{i,j}(x,y):i,j=0,1,\dots,D\}$ with $h=2$ (cf.~Section
\ref{sec:Qpolcharacterizations}) for bipartite $Q$-polynomial distance-regular
graphs. The parameters of the new equitable partition give rise to additional
integrality conditions, and he used these conditions to show that there is no
bipartite $Q$-polynomial distance-regular graph with $D=4$ and girth $6$;
cf.~Section \ref{sec:girth}.


\section{Growth of intersection numbers and bounds on the diameter}\label{sec:diameterbounds}


In this section we will look at the growth of intersection numbers and its
consequences for bounds on the diameter.

\subsection{The Ivanov bound}
Ivanov \cite{Iv83} obtained the first general diameter bound for
distance-regular graphs.

\begin{theorem} \label{ivanovbound}{\em (The Ivanov bound)}\label{Ivanov}
Let $\G$ be a distance-regular graph with diameter $D \geq 2$, head $h$, and
valency $k$.
Let $2 \leq i \leq i+j \leq D-1$. If $(c_{i-1}, a_{i-1}, b_{i-1}) \neq (c_i, a_i, b_i) = (c_{i+j}, a_{i+j} , b_{i+j})$, then $j \leq i$.
In particular, $D < 2^{k-1}(h +1).$
\end{theorem}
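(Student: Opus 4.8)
Granting the first assertion of the theorem, the bound $D<2^{k-1}(h+1)$ should come out of an iteration: that assertion says an intersection triple $(c_i,a_i,b_i)$ which is ``new'' at position $i$ (i.e.\ $(c_{i-1},a_{i-1},b_{i-1})\ne(c_i,a_i,b_i)$) cannot recur past position $2i$, so starting from the head one can ``double'' one's way to the far end of the intersection array only a bounded number of times. To set this up, first observe that by Proposition~\ref{prop:unimodal}(i),(ii) the sequence $(c_i)_{i=1}^{D}$ is non-decreasing and $(b_i)_{i=0}^{D-1}$ is non-increasing; hence if $(c_i,a_i,b_i)=(c_m,a_m,b_m)$ for some $i<m$, then $c_\ell$ is squeezed between $c_i$ and $c_m$ and $b_\ell$ between $b_m$ and $b_i$ for $i\le\ell\le m$, so the whole triple is constant there since $a_\ell=k-c_\ell-b_\ell$. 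Thus $\{1,\dots,D-1\}$ is a disjoint union of maximal intervals $I_1,\dots,I_r$ on each of which $(c_i,a_i,b_i)$ is constant; write $i_m=\min I_m$, so $1=i_1<i_2<\dots<i_r$, and set $i_{r+1}=D$. By the definition of the head, $I_1=\{1,\dots,h\}$, whence $i_2=h+1$.

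\textbf{The doubling recursion.} For $2\le m\le r$, apply the first assertion of the theorem with index $i=i_m$ (note $i_m\ge i_2=h+1\ge 2$) and $j=i_{m+1}-1-i_m$: the triples at positions $i_m-1$ and $i_m$ differ while those at $i_m$ and $i_{m+1}-1$ agree, and $i+j=i_{m+1}-1\le D-1$, so the hypothesis holds and we get $j\le i$, that is,
\[
i_{m+1}+1\le 2(i_m+1).
\]
The same assertion with $m=r$ (so that $i+j=D-1$) gives $D+1\le 2(i_r+1)$. Iterating the displayed inequality from $i_2+1=h+2$ yields $i_m+1\le 2^{m-2}(h+2)$ for $2\le m\le r$, and hence
\[
D+1\le 2(i_r+1)\le 2^{r-1}(h+2),\qquad\text{i.e.}\qquad D\le 2^{r-1}(h+2)-1 .
\]

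\textbf{Bounding $r$, and the main obstacle.} It then remains to bound $r$, the number of distinct intersection triples occurring for $1\le i\le D-1$. If $r\le k-1$, then, using $k\ge 2$ (a distance-regular graph with $D\ge 2$ has $k\ge 2$),
\[
D\le 2^{k-2}(h+2)-1=2^{k-2}(h+1)+2^{k-2}-1<2^{k-1}(h+1),
\]
which is the assertion. So the heart of the matter is the bound $r\le k-1$. One has $1\le c_i\le k-1$ and $1\le b_i\le k-1$ for $1\le i\le D-1$ (since $b_i>0$ for $i<D$ and $a_i\ge 0$), and at each boundary of the intervals $I_m$ at least one of $c_i,b_i$ changes strictly; but this monotonicity alone gives only the weaker estimate $r\le 2k-3$, because the pairs $(c_i,b_i)$ merely trace a monotone staircase inside a triangular region of ``size'' about $k$, and both coordinates may change several times. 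The genuine difficulty is therefore to feed the first assertion of the theorem back in — exploiting that a ``new'' triple cannot persist too long — so as to exclude the case where $c_i$ and $b_i$ both vary many times, i.e.\ to show that the number of doublings one actually performs is governed by $k$ rather than $2k$ (morally, a factor of $2$ should be charged only to the at most $k-2$ strict increases of $c_i$, the changes of $b_i$ being absorbed into the bookkeeping). Making this charging argument precise is, I expect, the technical core of the deduction; everything else is the routine induction carried out above.
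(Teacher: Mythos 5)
Your reduction of the numerical bound to a doubling recursion is sound as far as it goes: the observation that equal columns force constancy on the whole intervening range, the identification $I_1=\{1,\dots,h\}$, and the inequality $i_{m+1}\le 2i_m+1$ obtained by applying the first assertion at each interval start are all correct. But the proposal has two genuine gaps. First, you prove nothing about the first assertion itself, which is the substantive content of the theorem: the survey does not prove it either, but quotes it from Ivanov \cite{Iv83} and points to Suzuki's proof via intersection diagrams \cite{Su99}, so the combinatorial argument showing that a column which is new at position $i$ cannot recur beyond position $2i$ is exactly what a proof must supply, and it is entirely absent here.

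Second, the step you defer — and explicitly flag as the "technical core" — is not mere bookkeeping, and the lemma you reduce it to is false. You need $r\le k-1$ distinct columns among $i=1,\dots,D-1$, but the dodecahedron (intersection array $\{3,2,1,1,1;1,1,1,2,3\}$, $k=3$, $h=1$) has the three distinct columns $(1,0,2)$, $(1,1,1)$, $(2,0,1)$, so $r=k$ there. Consequently the heuristic you sketch, charging each doubling to one of the at most $k-2$ strict increases of $c_i$, cannot work as stated: a column change caused purely by a drop of $b_i$ (as at $i=2$ in the dodecahedron) also starts a new maximal interval and also triggers a doubling in your recursion. To get an exponent of order $k$ rather than $2k$ one must bring in additional structure beyond monotonicity — for instance Proposition \ref{prop:unimodal}(iii), which gives $c_i\le b_j$ whenever $i+j\le D$ and hence $b_{i_m}-c_{i_m}\ge 0$ at every interval start $i_m$ with $2i_m\le D$; since this nonnegative quantity is at most $k-2$ and strictly drops at each new column, only boundedly many starts can lie before the midpoint, and the doubling recursion pushes the remaining starts past $D/2$. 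An argument of this kind (or Suzuki's intersection-diagram analysis) is what is needed to close the deduction, and together with the unproven first assertion it is precisely what your proposal leaves out.
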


\noindent Suzuki \cite[p.~67]{Su99} gave a proof of this bound using so-called intersection diagrams. Bang, Hiraki, and
Koolen \cite{BHK06, HK98} improved the Ivanov bound, as we shall discuss below. One of the tools that they used is the
following result of Koolen \cite{Ko192}, \cite[Prop.~2.3]{Koothesis}.

\begin{prop}\label{kooprop}
Let $\G$ be a distance-regular graph with diameter $D$.
\begin{enumerate}[{\em (i)}]
\item If $c_i > c_{i-1}$ for some $i =2, \ldots, D$, then $c_{i-j} +c_j \leq c_i$ for all $j=1,\ldots,i-1$,
\item If $b_i > b_{i+1}$ for some $i =0, \ldots, D-2$, then $b_i \geq b_{i+j} +
c_j$ for all $j=1,\ldots, D-i$.
\end{enumerate}
\end{prop}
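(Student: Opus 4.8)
The plan is to prove both parts by a direct combinatorial argument: fix a vertex together with a geodesic, and count the neighbours of a chosen vertex according to their distances to reference vertices placed along geodesics. Part~(ii) will be handled by an argument dual to that for part~(i) (replace ``distance $i-1$ from $x$'' by ``distance $i+1$ from $x$'' and run geodesics outward instead of inward), so I describe (i) in more detail.

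\textbf{Part (i).} Fix $x,y$ with $d(x,y)=i$ and a geodesic $x=x_0,x_1,\dots,x_i=y$; the set $C:=\G_1(y)\cap\G_{i-1}(x)$ has exactly $c_i$ elements. First, every neighbour $u$ of $y$ with $d(x_j,u)=i-j-1$ lies in $C$, since $i-1=d(x,y)-1\le d(x,u)\le d(x,x_j)+d(x_j,u)=i-1$; this gives a subset $C'\subseteq C$ with $|C'|=c_{i-j}$. Symmetrically, every neighbour $u$ of $y$ with $d(x_{i-j},u)=j-1$ lies in $C$, giving a subset $C''\subseteq C$ with $|C''|=c_j$. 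If $C'\cap C''=\emptyset$ then $c_{i-j}+c_j=|C'|+|C''|\le|C|=c_i$ and we are done.

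The subsets $C'$ and $C''$ need not be disjoint in general (e.g.\ when $j=i/2$ the two reference vertices coincide), and this is precisely where the hypothesis $c_i>c_{i-1}$ enters. Applying the first observation at the neighbour $x_1$ of $x$ (with $d(x_1,y)=i-1$), the $c_{i-1}$ neighbours of $y$ at distance $i-2$ from $x_1$ exhaust $\G_1(y)\cap\G_{i-2}(x_1)\subseteq C$, so since $|C|=c_i>c_{i-1}$ there is a $w\in C$ with $d(x_1,w)=i-1$. A geodesic from $x$ to $w$ thus avoids the edge $xx_1$, and appending the edge $wy$ yields an $x$--$y$ geodesic diverging from $x_0,x_1,\dots,x_i$ already at the first step. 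The plan is to iterate this so as to produce a second $x$--$y$ geodesic whose midpoints are far enough from $x_j$ and $x_{i-j}$ to force the corresponding subsets of $C$ to be disjoint --- equivalently, to show that the strict inequality provides enough ``room'' inside $C$ to separate two subsets of sizes $c_{i-j}$ and $c_j$, for every $j=1,\dots,i-1$. Making this rerouting precise (tracking how the divergence of the two geodesics propagates to disjointness of the neighbour-subsets) is the heart of the argument and the step I expect to be the main obstacle.

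\textbf{Part (ii).} Fix $x,y$ with $d(x,y)=i$; then $B:=\G_1(y)\cap\G_{i+1}(x)$ has $b_i$ elements. Choosing $z$ with $d(y,z)=j$ and $d(x,z)=i+j$ (possible as $i+j\le D$), every neighbour $u$ of $y$ with $d(z,u)=j-1$ satisfies $i+1=d(x,y)+1\ge d(x,u)\ge d(x,z)-d(z,u)=i+1$, so $u\in B$; this is a subset of $B$ of size $c_j$. Choosing $x'$ with $d(x',x)=j$ and $d(x',y)=i+j$, every neighbour $u$ of $y$ with $d(x',u)=i+j+1$ satisfies $i+1\ge d(x,u)\ge(i+j+1)-j=i+1$, so $u\in B$; this is a subset of $B$ of size $b_{i+j}$. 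When $i+j=D$ the second subset is empty (as $b_D=0$) and $b_i\ge c_D$ follows at once; otherwise the two subsets need not be disjoint, and, exactly as in part~(i), the strict inequality $b_i>b_{i+1}$ is used to separate them, giving $b_{i+j}+c_j\le b_i$. In each part $j=1$ recovers the hypothesis, and the extreme values $j=i-1$ in~(i) and $j=D-i$ in~(ii) lie within the stated ranges. A cleaner route, if one exists, would phrase both inequalities as a single superadditivity statement for an auxiliary quantity attached to geodesics; but in any presentation the disjointness step powered by the strict inequality is the crux.
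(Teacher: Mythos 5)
There is a genuine gap, and it sits exactly at the step you yourself flag as the obstacle. Note first that the survey contains no proof of Proposition \ref{kooprop}: it is quoted from Koolen \cite{Ko192} and \cite[Prop.~2.3]{Koothesis}, so your argument has to stand on its own. The containment part of your setup is correct and routine (it is the same triangle-inequality counting used for Proposition \ref{prop:unimodal}(iii)): for reference vertices $p\in\G_j(x)\cap\G_{i-j}(y)$ and $q\in\G_{i-j}(x)\cap\G_j(y)$, the sets $\G(y)\cap\G_{i-j-1}(p)$ and $\G(y)\cap\G_{j-1}(q)$ have sizes $c_{i-j}$ and $c_j$ and both lie in $C=\G(y)\cap\G_{i-1}(x)$. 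But the whole content of the proposition is the disjointness of these two sets, and a common element forces $d(p,q)\le (i-j-1)+(j-1)=i-2$, so what you actually need is that $p,q$ can be chosen with $d(p,q)\ge i-1$; this is precisely where the hypothesis $c_i>c_{i-1}$ must do real work. Your proposal replaces this by a plan ("iterate the rerouting"), with no argument that the single vertex $w$ obtained from $|C|>c_{i-1}$ can be propagated into a second geodesic whose relevant interior vertex is at distance at least $i-1$ from $x_{i-j}$ -- and in fact that single application only yields $w\in C$ with $d(x_1,w)\in\{i-1,i\}$, not necessarily $i-1$ as you assert. So the proof is incomplete at its crux, and nothing in the sketch indicates how the strict inequality, used once per step, accumulates into the quantitative separation you need for every $j$.

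The same gap recurs in part (ii), and there the geometry is not simply "dual": with $z\in\G_j(y)\cap\G_{i+j}(x)$ and $x'\in\G_j(x)\cap\G_{i+j}(y)$, a common neighbour of $y$ lying in both reference sets forces $d(z,x')\ge i+2$, so disjointness is guaranteed when the reference vertices are chosen \emph{close} together ($d(z,x')\le i+1$), the opposite of the separation wanted in (i); again no mechanism is given for arranging this from $b_i>b_{i+1}$. Two smaller slips: in the endpoint case $i+j=D$ the conclusion is $b_i\ge c_{D-i}$ (not $b_i\ge c_D$), which is just Proposition \ref{prop:unimodal}(iii) and needs no hypothesis; and the existence of $z$ with $d(y,z)=j$, $d(x,z)=i+j$ deserves a word (it follows from $p_{j,i}^{i+j}>0$ for $i+j\le D$ and the symmetry of the intersection numbers). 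To complete the argument you would need a substantive lemma saying that the strict drop $c_{i-1}<c_i$ (resp.\ $b_{i+1}<b_i$) forces the existence of suitably placed reference vertices, and that lemma -- the nontrivial part of Koolen's proof, which Wajima reproved "with a completely different method" according to the survey -- is exactly what is missing here.
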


\noindent Wajima \cite{Waji94} also obtained Proposition \ref{kooprop} (but with a completely different method), and
Hiraki \cite{Hi07} obtained slight improvements of this result. Another tool by Bang et al.~\cite{BHK06} is the
following.

\begin{prop} \label{BHKprop}
Let $\G$ be a distance-regular graph with valency $k$ and diameter $D$. For
$1 \leq c \leq k$, define $\xi_c = \min\{ i : c_i \geq c\}$ and $\eta_c = |\{ i
: c_i = c\}|$. Then $\eta_c \leq \xi_c -1$.
\end{prop}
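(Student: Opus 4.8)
The plan is to derive the inequality from two facts established earlier: the monotonicity $1=c_1\le c_2\le\cdots\le c_D$ (Proposition~\ref{prop:unimodal}(i)) and Koolen's subadditivity property of the $c$-sequence at its ``jump'' points (Proposition~\ref{kooprop}(i)). Note that the inequality requires $c\ge 2$ (one has $\xi_1=1$ but $\eta_1\ge 1$), so fix $c$ with $2\le c\le k$; if no $c_i$ equals $c$ then $\eta_c=0$ and there is nothing to prove, so assume $c$ is attained. Since the $c_i$ are non-decreasing and $c_1=1<c$, the indices $i\in\{1,\dots,D\}$ with $c_i<c$ form an initial block $\{1,\dots,\xi_c-1\}$, and the indices with $c_i=c$ form the next block $\{\xi_c,\dots,\xi_c+\eta_c-1\}$. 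Thus $\eta_c\le\xi_c-1$ is \emph{equivalent} to the statement that this second block does not reach index $2\xi_c-1$, i.e.
\[
	c_{2\xi_c-1}>c\quad\text{whenever }2\xi_c-1\le D,
\]
the case $2\xi_c-1>D$ being immediate (then $\eta_c\le D-\xi_c+1\le\xi_c-1$). So the proof reduces to this one inequality.

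Write $s=\xi_c$, so $c_{s-1}<c=c_s$ (hence $s\ge 2$), and suppose toward a contradiction that $c_{2s-1}=c$; by monotonicity this forces $c_s=c_{s+1}=\cdots=c_{2s-1}=c$. The clean sub-case is when the $c$-sequence strictly increases at index $2s-1$: then Proposition~\ref{kooprop}(i) with $i=2s-1$ and $j=s-1$ yields $c_{2s-1}\ge c_s+c_{s-1}\ge c+1$, contradicting $c_{2s-1}=c$. Unfortunately the contrary hypothesis $c_{2s-1}=c$ is exactly what prevents such a jump at $2s-1$, so one is forced to propagate the argument through the block structure of $(c_i)_{i=0}^D$. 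Concretely, let $1=\gamma_1<\gamma_2<\cdots<\gamma_t$ be the distinct values of the $c_i$ and put $s_m=\xi_{\gamma_m}$, so block $m$ is $\{s_m,\dots,s_{m+1}-1\}$ (with $s_{t+1}:=D+1$); the claim for every attained $c=\gamma_m$ is the single family $s_{m+1}\le 2s_m-1$ for $m\ge 2$. The plan is to prove these by induction on $m$: at the step from $m$ to $m+1$ one applies Proposition~\ref{kooprop}(i) at the jump point $s_{m+1}$ (valid when $m<t$) to obtain relations such as $c_{s_m}+c_{s_{m+1}-s_m}\le\gamma_{m+1}$ and $c_1+c_{s_{m+1}-1}\le\gamma_{m+1}$, and then feeds in the induction hypotheses bounding the lengths $s_2,\,s_3-s_2,\dots$ of the earlier blocks together with the monotonicity of $(c_i)$; a short case distinction on whether $s_{m+1}\ge 2s_m$ produces a contradiction. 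The terminal block $m=t$ (where $c=c_D$ and there is no jump at $s_{t+1}$) is handled separately, using instead $c_i\le b_j$ for $i+j\le D$ from Proposition~\ref{prop:unimodal}(iii) and the fact that $(b_i)$ is non-increasing; Proposition~\ref{kooprop}(ii) may also be brought in here.

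The main obstacle is precisely this propagation along the blocks: a single use of Proposition~\ref{kooprop}(i) at $s=\xi_c$ only constrains the indices below $s$ and says nothing about index $2s-1$, so the bound cannot be seen locally and genuinely needs the inductive bootstrap through all the constant stretches of the $c$-sequence (this is the substance of the Bang--Hiraki--Koolen argument). Once $c_{2\xi_c-1}>c$ has been secured for every $c$, the block $\{\xi_c,\dots,\xi_c+\eta_c-1\}$ is contained in $\{\xi_c,\dots,2\xi_c-2\}$, a set of $\xi_c-1$ elements, which is exactly $\eta_c\le\xi_c-1$.
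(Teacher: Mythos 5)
Your preliminary observations are sound: the statement as printed cannot hold for $c=1$ (there $\xi_1=1$ while $\eta_1\ge 1$), so it must be read for $2\le c\le k$ — consistent with the fact that $\eta_1$ survives as a separate parameter in the diameter bound that follows — and your reduction of the claim to the single inequality $c_{2\xi_c-1}>c$ whenever $2\xi_c-1\le D$ is correct, as is the remark that Proposition \ref{kooprop}(i) applied at $i=2\xi_c-1$ would finish matters if only the $c$-sequence jumped there. Note also that the survey gives no proof of Proposition \ref{BHKprop}; it is quoted from Bang, Hiraki, and Koolen \cite{BHK06}, so your argument has to stand on its own.

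It does not, and the gap is exactly at the step you wave through as ``a short case distinction on whether $s_{m+1}\ge 2s_m$ produces a contradiction.'' Applying Proposition \ref{kooprop}(i) at the jump $s_{m+1}$ with $j=s_m$ gives $\gamma_m+c_{s_{m+1}-s_m}\le\gamma_{m+1}$, so when the block is long ($s_{m+1}\ge 2s_m$) you only learn $\gamma_{m+1}\ge 2\gamma_m$: a constraint on the next \emph{value}, not on the \emph{length} of the current block, and the induction hypotheses about earlier block lengths contribute nothing toward a contradiction. The terminal block is worse: there is no jump at which to apply Proposition \ref{kooprop}(i), and the substitutes you propose (Proposition \ref{prop:unimodal}(iii) and Proposition \ref{kooprop}(ii)) involve the $b_i$, which can simply be taken large. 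Concretely, take $D=7$, $(c_1,\dots,c_7)=(1,1,1,2,2,2,2)$, $b_0=k=52$, $b_1=\dots=b_6=50$, $b_7=0$: the $c_i$ are non-decreasing, the $b_i$ non-increasing, $c_i\le b_j$ for $i+j\le D$, all $a_i\ge 0$, and both parts of Proposition \ref{kooprop} hold (the only $c$-jump is at $i=4$, where $c_1+c_3=c_2+c_2=2\le c_4$, and the $b$-jumps at $i=0,6$ are easily checked), yet $\eta_2=4>\xi_2-1=3$. Hence Proposition \ref{BHKprop} is not a formal consequence of Propositions \ref{prop:unimodal} and \ref{kooprop}, and no bookkeeping at the level of the intersection array along the lines you sketch can close the argument; the proof in \cite{BHK06} necessarily extracts additional combinatorial information from the graph itself to rule out such arrays, and that is precisely the ingredient missing from your outline.
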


\noindent Using a combination of  Propositions \ref{kooprop} and \ref{BHKprop}, Bang et al.~\cite{BHK06} improved the
Ivanov bound as follows (using the notation as introduced in Proposition \ref{BHKprop}):

\begin{prop} Let $\G$ be a distance-regular graph with valency $k$ and diameter $D$.
Then $$D < \frac{1}{2}k^{\alpha}\eta_1 +1,$$ where $\alpha = \inf\{ x >0 :
4^{\frac{1}{x}} - 2^{\frac{1}{x}} \leq 1\} \approx 1.441$.
\end{prop}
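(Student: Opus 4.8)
The plan is to work entirely with the intersection numbers along a single axis and to feed Propositions~\ref{BHKprop} and \ref{kooprop} into one another. First I would dispose of the trivial cases $k\leq 2$ (complete graphs and polygons), where the inequality is immediate, and assume $k\geq 3$. Then I would fix the bookkeeping: with $\xi_c,\eta_c$ as in Proposition~\ref{BHKprop}, set $N_c=\xi_{c+1}-1$ for the number of indices $i$ with $c_i\leq c$, so that $N_c=N_{c-1}+\eta_c$, $N_1=\eta_1$, and $D=N_{c_D}$ with $c_D\leq k$ (since $b_D=0$ forces $c_D=k-a_D\leq k$). Proposition~\ref{BHKprop} then reads $N_c\leq 2N_{c-1}$ for $c\geq 2$, which by itself only reproduces the Ivanov-type estimate $D\leq 2^{c_D-1}\eta_1\leq 2^{k-1}\eta_1$.

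The improvement comes from the observation that the doubling $N_c\leq 2N_{c-1}$ is wasteful unless the plateau $\eta_c$ is nearly as long as $N_{c-1}$, and that by Proposition~\ref{kooprop}(i) a long plateau cannot be followed by a small jump. Listing the distinct values $1=v_0<v_1<\dots<v_m=c_D$ with plateau lengths $r_\ell$ and cumulative counts $N_{(\ell)}$, Proposition~\ref{kooprop}(i) applied at the jump index $N_{(\ell-1)}+1$ with $j=r_\ell$ gives $v_{\ell+1}\geq v_\ell+c_{r_\ell}$; in particular a full plateau ($r_\ell=N_{(\ell-1)}$, which is exactly the case $N_{(\ell)}=2N_{(\ell-1)}$) forces the Fibonacci-type inequality $v_{\ell+1}\geq v_\ell+v_{\ell-1}$. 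Since the values cannot exceed $k$, this bounds the number of (near-)doublings by roughly $\log_\varphi k$, where $\varphi=(1+\sqrt5)/2$; and the exponent $\alpha$ enters precisely because $\alpha$ is characterized by $\varphi^\alpha=2$ (rewrite $4^{1/\alpha}-2^{1/\alpha}=1$ as $t^2-t=1$ with $t=2^{1/\alpha}$, so $t=\varphi$). One then runs a (genuinely two-step) induction of the shape $N_{(\ell)}\leq\tfrac12 v_{\ell+1}^{\alpha}\eta_1$, with a small additive slack: at a level where the single value-ratio $v_{\ell+1}/v_\ell$ falls below $\varphi$ the naive step fails, but the plateau at that level is then correspondingly short, so the slack in Proposition~\ref{BHKprop} compensates for the smaller value jump it permits, and the Fibonacci inequality at the following level restores the balance. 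Reading this at the top level yields $D=N_{(m)}\leq\tfrac12 k^{\alpha}\eta_1+O(1)$.

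The ingredient that actually earns the constant $\tfrac12$ (rather than $1$) is Proposition~\ref{kooprop}(ii): it constrains the non-increasing sequence $b_i$ (equivalently $c_i+a_i$) across its own jumps by $b_i\geq b_{i+j}+c_j$, and together with $c_i+a_i+b_i=k$ and Proposition~\ref{kooprop}(i) this rules out the configuration in which every plateau is simultaneously full — a purely $c$-sided analysis only gives $D\lesssim k^{\alpha}\eta_1$, whereas the mutual obstruction between the superadditivity of $(c_i)$ at $c$-jumps and the corresponding inequality for $(b_i)$ at $b$-jumps halves the number of affordable doublings. Finally I would absorb the $O(1)$ correction into the stated $+1$, using $k\geq 3$ (so $k^{\alpha}>2$) to cover the first few levels and the degenerate case $c_D=1$, where $D=\eta_1<\tfrac12 k^{\alpha}\eta_1+1$ outright.

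The main obstacle is the amortized bookkeeping itself: one must simultaneously juggle the at-most-doubling of the cumulative counts (Proposition~\ref{BHKprop}), the superadditive growth of the values at $c$-jumps (Proposition~\ref{kooprop}(i)), and the dual inequality for the $b_i$ at $b$-jumps (Proposition~\ref{kooprop}(ii)), keep the inductive inequality self-consistent across levels whose value-ratio is below or above the golden ratio, and track the constants tightly enough to land exactly on $\tfrac12 k^{\alpha}\eta_1+1$ instead of a cruder $Ck^{\alpha}\eta_1$. It is here — not in any single inequality — that the exact value of $\alpha$ (via $\varphi^{\alpha}=2$) is used.
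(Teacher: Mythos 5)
First, a point of comparison: the survey does not actually prove this proposition — it is quoted from Bang, Hiraki, and Koolen \cite{BHK06}, with only the remark that it follows from "a combination of Propositions \ref{kooprop} and \ref{BHKprop}". So your skeleton is certainly aimed in the right direction: using Proposition \ref{BHKprop} (for $c\geq 2$) to read the plateau structure as an at-most-doubling of the cumulative counts, and Proposition \ref{kooprop}(i) at the jump after each plateau to force Fibonacci-type growth of the values $v_\ell$, so that full doublings can occur at most about $\log_\varphi k$ times and $2^{\log_\varphi k}=k^{\alpha}$ since $\varphi^{\alpha}=2$. Your identification of $\alpha$ via $t^2-t=1$, $t=2^{1/\alpha}$, is correct, and the Fibonacci inequality for a full plateau is correct once a small indexing slip is fixed: Proposition \ref{kooprop}(i) must be applied at the jump index $N_{(\ell)}+1$ (not $N_{(\ell-1)}+1$), with $j=r_\ell$, to obtain $v_{\ell+1}\geq v_\ell+c_{r_\ell}$.

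The genuine gap is that the quantitative core of the argument is asserted rather than carried out, and it is precisely there that the stated constant lives. The two-step induction "$N_{(\ell)}\leq\frac12 v_{\ell+1}^{\alpha}\eta_1$ plus slack" is never set up: you do not specify the induction hypothesis, the handling of levels whose value-ratio is below $\varphi$, or why "the slack in Proposition \ref{BHKprop} compensates" — this amortization over mixed configurations (short plateaus followed by small jumps) is exactly the hard part, not a routine verification. More seriously, the claim that Proposition \ref{kooprop}(ii) "halves the number of affordable doublings" is offered with no inequality chain at all; no concrete use of $b_i\geq b_{i+j}+c_j$ (or of the related fact $c_i\leq b_j$ for $i+j\leq D$, which yields $c_i+c_{D-i}\leq k$) is exhibited, and a straightforward implementation of your scheme does not land on $\frac12$: taking all plateaus full and values growing like Fibonacci numbers bounded by $k$ gives a constant near $0.8$, and even adding the two-sided constraint $c_i+c_{D-i}\leq k$ in the obvious "meet in the middle" way only improves this to roughly $2^{1-\alpha}\approx 0.74$. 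So the passage from your outline to the exact bound $D<\frac12 k^{\alpha}\eta_1+1$ requires a substantially sharper extremal analysis than anything sketched, and as written the proposal cannot be accepted as a proof; it is a plausible reconstruction of the strategy of \cite{BHK06} with the decisive estimates missing.
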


\noindent Hiraki \cite{Hi01} showed, using earlier work from \cite{CH99, Hi94, Hi98,
HK02}, that if $h \geq 2$, then $c_{2h+3} \geq 2$ or, in other words, $\eta_1
\leq 2h+2$. This  immediately implies that if $h \geq 2$, then $$D \leq
k^{\alpha}(h+1) +1.$$ For $h =1$, it is conjectured by Hiraki \cite{Hi95} that
there exists a constant $C$ such that  $\eta_1 \leq C$. Chen, Hiraki, and Koolen \cite{CHK98}
showed that if $a_1 \neq 2$ and $a_1 \leq 100$, then $c_4 \geq 2$.

In the next sections we present better diameter bounds for certain subclasses
of distance-regular graphs.

\subsection{Distance-regular graphs of order \texorpdfstring{$(s,t)$}{(s,t)}}

The following result was first shown by Terwilliger
\cite{Terwscak} for distance-regular graphs with
$a_1 = 0$ or $c_2 \geq 2$. Later it was generalized by Faradjev, Ivanov, and Ivanov \cite{FII} to
distance-regular graphs with $a_1> 0$. We present their bound for the case that $\G$ is locally a
disjoint union of cliques and $c_{h+1} \geq 2$ holds. In the next section we will
also present the bound of Terwilliger for the case $c_2 \geq 2$.

\begin{prop}{\em (cf.~\cite[Thm.~1.4.3,~Cor.~1.4.4]{Su99})}
Let $\G$ be a distance-regular graph of order $(s,t)$ with
head $h$, valency $k$, and diameter $D \geq 2$. If $c_{h+1} > 1$, then $b_i > b_{i+h}$
and $c_i < c_{i+h}$  for all $i=0,1, \ldots, D-h$, and in particular, $D \leq
t h +1$.
\end{prop}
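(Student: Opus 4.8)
The plan is to combine the local clique geometry of a graph of order $(s,t)$ with the rigidity that a long head imposes on short geodesics.

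\emph{Step 1: basic parameters.} Because $\G$ has order $(s,t)$, every local graph $\Upsilon(x)$ is a disjoint union of $t+1$ cliques of size $s$; equivalently, every edge lies in a unique maximal clique (a ``line'') of size $s+1$, with $t+1$ lines through each vertex. Hence $a_1=s-1$, $b_1=st$ and $k=b_0=s(t+1)$. By the definition of the head, $(c_i,a_i,b_i)=(1,s-1,st)$ for $1\le i\le h$, while $(c_{h+1},a_{h+1},b_{h+1})\ne(1,s-1,st)$; by hypothesis $c_{h+1}\ge 2$.

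\emph{Step 2: the monotonicity $c_i<c_{i+h}$ and $b_i>b_{i+h}$ for $0\le i\le D-h$.} Since $(c_i)$ is non-decreasing and $(b_i)$ is non-increasing (Proposition~\ref{prop:unimodal}), it suffices to show that neither sequence is constant on $h+1$ consecutive indices in the relevant range; for $i=0$ the inequalities $c_0=0<1=c_h$ and $b_0=s(t+1)>st=b_h$ are immediate. This is the heart of the proof, and I would establish it by an intersection-diagram argument. Fix $x,y$ with $d(x,y)=i+h$ and a geodesic $x=x_0,x_1,\dots,x_{i+h}=y$. Because $c_1=\dots=c_h=1$, every geodesic of length at most $h$ is unique, so the ball of radius $h$ about $y$ behaves as rigidly as in a near polygon, and the partition of $\Upsilon(x_i)$ into lines ``towards $x_0$ / at $x_0$-distance / away from $x_0$'' refines the corresponding partition of $\Upsilon(x_0)$ along the geodesic. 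If the window of $c$-values (resp.\ $b$-values) from $i$ to $i+h$ were flat, one counts in two ways the vertices reached by ``one extra step'' from these configurations and finds that the branching of geodesics forced by $c_{h+1}\ge 2$ has no room to be absorbed, a contradiction. Koolen's inequalities (Proposition~\ref{kooprop}) provide a convenient device to propagate, with period $h$, the first strict increase of $c$ and the first strict decrease of $b$ that occur just after the head; the statement for $b$ runs dually.

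\emph{Step 3: the diameter bound.} Iterating $b_i>b_{i+h}$ from $b_1=st$ produces a strictly decreasing chain $st=b_1>b_{1+h}>b_{1+2h}>\cdots$ of positive intersection numbers. Here the order-$(s,t)$ hypothesis re-enters quantitatively: using the line geometry one checks that each ``away'' line through a vertex at distance $i$ contributes a multiple of $s$ to $b_i$ — the head-rigidity forces such lines to be ``pure'' — so every $b_i$ is divisible by $s$. A strictly decreasing chain of positive multiples of $s$ starting at $st$ has at most $t$ terms before reaching $0$; therefore $b_{1+th}=0$, which forces $D\le th+1$.

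\emph{Main obstacle.} The crux is Step 2: ruling out a flat stretch of length $h+1$ in the intersection array. This requires a careful bookkeeping of distances in a multi-vertex configuration, playing the uniqueness of short geodesics (from $c_1=\dots=c_h=1$) against the unavoidable branching at level $h+1$ (from $c_{h+1}\ge 2$). Once that is in hand, Step 3 is only a short counting argument whose one delicate ingredient is the divisibility $s\mid b_i$ inherent to graphs of order $(s,t)$.
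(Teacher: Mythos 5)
Your Step 1 and the reduction at the start of Step 2 are fine, but both places where the real work has to happen are defective. In Step 2 you never actually rule out a flat window of length $h+1$: the sentence ``one counts in two ways \dots\ and finds that the branching \dots\ has no room to be absorbed'' states the desired conclusion rather than proving it, and Koolen's inequalities (Proposition~\ref{kooprop}) cannot ``propagate with period $h$'', because they only apply at an index where a strict change of $c_i$ or $b_i$ is \emph{already known} to occur --- which is exactly what has to be produced. The known proofs (Terwilliger; Faradjev--Ivanov--Ivanov; Suzuki \cite[Thm.~1.4.3]{Su99}) carry out a genuine intersection-diagram analysis of the sets $\G_{i,j}(x,y)$ for suitably chosen pairs $x,y$, exploiting the partition of each local graph into lines; none of that analysis appears in your sketch, so the heart of the proposition is missing.

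More seriously, Step 3 is wrong. In an order-$(s,t)$ graph with $c_{h+1}>1$ it is \emph{not} true that away lines are pure, nor that $s\mid b_i$. Take a generalized quadrangle of order $(q,q)$ possessing a spread (e.g.\ $W(q)$, any $q\geq 3$) and delete the spread lines: the resulting graph is distance-regular of order $(q,q-1)$ with intersection array $\{q^2,\,q(q-1),\,1;\,1,\,q-1,\,q^2\}$; for $q=3$ this is one of the antipodal $4$-covers of $K_{10}$ with array $\{9,6,1;1,2,9\}$ mentioned elsewhere in this survey. Here $h=1$ and $c_2=q-1\geq 2$, so all hypotheses hold, yet $b_2=1$ is not divisible by $s=q$: the line through a distance-$2$ vertex that contains its unique distance-$3$ neighbour also contains distance-$2$ vertices, i.e.\ it is a mixed away line, so ``head-rigidity forces purity'' fails. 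Without the divisibility, the chain $b_1>b_{1+h}>b_{1+2h}>\cdots$ only yields a bound of order $sth$ rather than $th+1$; a drop of at least $s$ every $h$ steps is precisely what the example refutes. The correct route to the diameter bound argues at the level of lines --- for instance, one tracks the number of lines through a vertex at distance $i$ from the base vertex that contain no vertex at distance $i-1$, a quantity bounded above by $t+1$ which is forced to decrease strictly every $h$ steps --- and the inequalities $b_i>b_{i+h}$, $c_i<c_{i+h}$ come out of that line-level analysis rather than serving as its source.
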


\noindent For the bipartite case, Koolen \cite{Ko192} and Hiraki
\cite{Hi07} made some improvements. Hiraki \cite{Hi07} showed that
if $\G$ is a bipartite distance-regular graph with head $h \geq
2$ and diameter $D$, then $\G$ is a Doubled Odd graph or $D \leq
\lfloor \frac{k+2}{2} \rfloor h$; see also Section
\ref{sec:otherinfinite}. For the weakly geometric case, Suzuki
obtained the following.

\begin{prop}{\em(cf.~\cite[Prop.~3.1.6]{Su99})}
Let $\G$ be a weakly geometric distance-regular graph of order $(s,t)$ with
head $h$ and diameter $D$. Then $b_i > b_{i+h+1}$ and $c_i < c_{i+h+1}$
for all $i=0,1, \ldots, D-h-1$. In particular, $D \leq t (h+1) +1$.
\end{prop}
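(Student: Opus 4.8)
The plan is to mimic the proof of the preceding proposition for distance-regular graphs of order $(s,t)$, but with the r\^ole of neighbourhoods played by the lines of the associated partial linear space. Write $\mathcal{C}$ for the set of lines (cliques) witnessing weak geometricity; by the order-$(s,t)$ hypothesis each vertex lies on exactly $t+1$ lines, each of size $s+1$, each edge lies on a unique line, and two distinct lines meet in at most one vertex. Since each line $C$ is a completely regular code, for every vertex $z$ the quantity $|C\cap\G_i(z)|$ depends only on $i$ and on $d(z,C)$; in particular $\phi_j:=|C\cap\G_j(z)|$ for $d(z,C)=j$ is a well-defined integer depending only on $j$ (the same for all lines). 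First I would record the easy structural facts $\phi_1=1$ (a vertex adjacent to a line has exactly one neighbour on it, because the neighbourhood of each vertex is a disjoint union of lines-minus-a-vertex) together with the $K_{2,1,1}$-freeness that is equivalent to being of order $(s,t)$.

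The crux is the following lemma: if $j\le h$, then $\phi_j=1$, i.e. a line at distance $j\le h$ from a vertex has a unique vertex on it at that distance. The point is that $c_1=\dots=c_h=1$ forces uniqueness of geodesics of length at most $h$. Suppose $w_1\sim w_2$ both lie in $C\cap\G_j(z)$ and take the geodesic $z=z_0,z_1,\dots,z_j=w_1$; then $d(z_{j-1},w_2)\in\{1,2\}$. If $d(z_{j-1},w_2)=1$, then $z_{j-1}$ is adjacent to both $w_1$ and $w_2$, and since the neighbourhood of $w_1$ is a disjoint union of cliques with $C\setminus\{w_1\}$ one of them (containing $w_2$), we get $z_{j-1}\in C$, contradicting $d(z,C)=j$. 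The remaining case $d(z_{j-1},w_2)=2$ is to be excluded by combining the analogous analysis of a geodesic from $z$ to $w_2$, the uniqueness of short geodesics, and induction on $j$ (the line through $w_1$ and $z_{j-1}$ lies at distance $j-1$ from $z$, so the inductive hypothesis applies to it). I expect this last case to be the main technical obstacle.

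Next I would run the following along a fixed geodesic $x=z_0,z_1,\dots,z_D=y$ realising the diameter; we may assume $D\ge 2$, else there is nothing to prove. Let $L_i\in\mathcal{C}$ be the line containing the edge $z_iz_{i+1}$; these lines are pairwise distinct. For an interior vertex $z_i$ classify each of its $t+1$ lines as \emph{backward}, \emph{level}, or \emph{forward} according as it contains a vertex at distance $i-1$, only $i$, or $i+1$ from $x$ (a line is of exactly one of these types, since distances from $x$ vary by at most $1$ along a clique); let $\beta_i$ be the number of forward lines through $z_i$. One checks $\beta_1=t$: the unique line through $z_1$ meeting $x$ is backward, and maximality of lines rules out a level line through $z_1$. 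Using the Lemma I would show $\beta_i>\beta_{i+h+1}$ for $1\le i\le D-h-2$: since $d(z_{i+h+1},L_i)=h$, the Lemma gives that $z_{i+1}$ is the unique vertex of $L_i$ nearest $z_{i+h+1}$, and tracing the chain $L_i,L_{i+1},\dots$ one injects the forward lines through $z_{i+h+1}$ into the forward lines through $z_i$, missing $L_i$ itself. A parallel (or ``backward-dual'') bookkeeping of the vertices lying on these lines yields the stated inequalities $b_i>b_{i+h+1}$ and $c_i<c_{i+h+1}$. Finally, to deduce the diameter bound, suppose $D\ge t(h+1)+2$; then the indices $1,\,1+(h+1),\,\dots,\,1+t(h+1)$ are all at most $D-1$, so the monotonicity just established gives $\beta_1>\beta_{1+(h+1)}>\dots>\beta_{1+t(h+1)}$. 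But $\beta_1=t$, while each $\beta_{1+m(h+1)}$ is the forward-line count of an interior vertex and hence at least $1$; a strictly decreasing chain of $t+1$ positive integers starting at $t$ is impossible. Therefore $D\le t(h+1)+1$.
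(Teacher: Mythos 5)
Your overall architecture (a ``unique nearest vertex on a line within the head'' lemma, then counting forward lines along a geodesic) is a reasonable one, but as written it has two genuine gaps. First, the key lemma ($\phi_j=1$ for $j\le h$) is left unproved exactly where it is hard: you concede Case 2, and the tools you list (uniqueness of short geodesics, induction) do not close it. The missing idea is the count $a_j=a_1=s-1$: pick any neighbour $u_1$ of $w_1$ with $d(z,u_1)=j-1$ and let $C'$ be the line on the edge $u_1w_1$; then $d(z,C')=j-1$, so by induction $u_1$ is the only vertex of $C'$ in $\Gamma_{j-1}(z)$, hence the remaining $s-1$ vertices of $C'\setminus\{w_1\}$ lie in $\Gamma_j(z)$ and, since $a_j=s-1$, they exhaust the neighbours of $w_1$ in $\Gamma_j(z)$; but $w_2$ would be one more, since $C'\ne C$ (as $u_1\notin C$) forces $w_2\notin C'$. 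This disposes of both of your cases at once; without some such counting the induction does not go through. (Also, your opening assertion that the lines are the $(s+1)$-cliques, exactly $t+1$ through each vertex, with $\phi_1=1$, is true but not part of the hypothesis: one must first argue, using complete regularity of the lines, that members of $\mathcal{C}$ cannot be proper subcliques of the maximal cliques.)

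The more serious gap is the passage from line counts to the stated inequalities. The ``injection of the forward lines through $z_{i+h+1}$ into the forward lines through $z_i$'' is never defined -- these families sit at different vertices and tracing $L_i,L_{i+1},\dots$ produces no map between them -- so even $\beta_i>\beta_{i+h+1}$ is unproved. Worse, even granted, it would not give $b_i>b_{i+h+1}$: for levels beyond the head the unique-nearest-vertex property is unavailable, a forward line through a vertex of $\Gamma_m(x)$ may contain anywhere from $1$ to $s$ vertices of $\Gamma_{m+1}(x)$, so all one gets is $b_i\ge\beta_i$ and $b_{i+h+1}\le s\,\beta_{i+h+1}$, which points the wrong way unless $s=1$; and nothing concrete is offered for $c_i<c_{i+h+1}$ (``backward-dual bookkeeping'' is not an argument, and a backward line beyond the head can likewise carry up to $s$ vertices of $\Gamma_{m-1}(x)$). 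What is needed is an actual bridge between the graph parameters and the code parameters of a line: for a line $C$ with $d(x,C)=j$ meeting both $\Gamma_j(x)$ and $\Gamma_{j+1}(x)$ one has $c_j\le\gamma_j(C)\le c_{j+1}$ and $b_{j+1}\le\beta_j(C)\le b_j$, and the strict drop within each window of length $h+1$ then has to be extracted from the head together with your lemma; none of this appears in your outline. Your final deduction of $D\le t(h+1)+1$ from the chain $\beta_1>\beta_{1+(h+1)}>\cdots$ is sound as logic, but it rests on the two unestablished steps above.
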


\begin{corollary}\label{s>t implies geometric}
Let $\G$ be a distance-regular graph of order $(s,t)$
with head $h$ and diameter $D$. If $s > t$, then $\G$ is
geometric and hence $b_i > b_{i+h+1}$ and $c_i < c_{i+h+1}$ for all
$i=0,1, \ldots, D-h-1$ and in particular,  $D \leq t (h+1) +1$.
\end{corollary}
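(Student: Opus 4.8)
The plan is to reduce the corollary to the single claim that $s>t$ forces $\G$ to be geometric, since a geometric distance-regular graph is weakly geometric (as noted in Section~\ref{sec:generalizations+geometric}), and the preceding proposition then gives $b_i>b_{i+h+1}$ and $c_i<c_{i+h+1}$ for $i=0,1,\dots,D-h-1$, hence $D\le t(h+1)+1$. To prove the claim I would first record the basic structure of a graph $\G$ of order $(s,t)$ (we may assume $D\ge 2$, so $t\ge 1$ and $s\ge 2$): since $\Upsilon(x)$ is a disjoint union of $t+1$ cliques $K_s$, every edge of $\G$ lies in a unique maximal clique, every maximal clique has exactly $s+1$ vertices, and $k=(t+1)s$, so $a_1=s-1$ and $b_1=ts$. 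Applying the Delsarte bound (Proposition~\ref{delbound}) to a maximal clique $C$ gives $s+1\le 1-k/\theta_{\min}$, i.e.\ $\theta_{\min}\ge -k/s=-(t+1)$; rewritten, $1-k/\theta_{\min}\ge s+1$, so any Delsarte clique has size $\ge s+1$, hence exactly $s+1$. Consequently $\G$ is geometric if and only if $\theta_{\min}=-(t+1)$, in which case every maximal clique is a Delsarte clique and the collection $\mathcal{C}$ of all maximal cliques witnesses geometricity. So everything comes down to the reverse inequality $\theta_{\min}\le -(t+1)$ under the assumption $s>t$.

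For this I would use Godsil's characterization (\cite{Godsilac}) that a clique is a Delsarte clique exactly when it is a completely regular code of covering radius $D-1$, and try to show directly that the distance partition $\{C_0,C_1,\dots\}$ of a fixed maximal clique $C$, where $C_i=\{x:d(x,C)=i\}$, is equitable with covering radius $D-1$. The start is robust and uses only $s\ge 2$: a vertex $z\in C_1$ has exactly one neighbour in $C$, for if it had two, say $y_1,y_2$, then (the common neighbours of the adjacent pair $y_1,y_2$ being precisely the other vertices of $C$) one local clique of $z$ would lie inside $C$, producing a second maximal clique through the edge $y_1y_2$, a contradiction. The induction step is to propagate this control of the nearest-point set $\G_i(z)\cap C$ to all $i$ and to show the counts of neighbours in $C_{i-1},C_i,C_{i+1}$ depend only on $i$. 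This is exactly where $s>t$ must enter, via a counting argument: for $c\in C$ at distance $i$ from $z$, the $ts$ neighbours of $c$ outside $C$ split among its $t$ local cliques other than $C$, and comparing the number of those cliques that "point back toward $z$" with the intersection number $c_i$ forces the extraneous configurations (such as $|\G_i(z)\cap C|\ge 2$) to disappear once $s>t$, keeping the partition equitable and the covering radius equal to $D-1$. Once $C$ is known to be a completely regular code of covering radius $D-1$, Godsil's characterization gives $s+1=1-k/\theta_{\min}$, i.e.\ $\theta_{\min}=-(t+1)$, so $C$ — and hence every maximal clique — is a Delsarte clique and $\G$ is geometric.

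Assembling the pieces: $\G$ is geometric with respect to its set of maximal cliques, therefore weakly geometric with respect to the same set, and it has order $(s,t)$ with head $h$, so the preceding proposition applies and yields $b_i>b_{i+h+1}$ and $c_i<c_{i+h+1}$ for $i=0,1,\dots,D-h-1$, and in particular $D\le t(h+1)+1$. The main obstacle is the induction in the second paragraph — showing that the distance partition of a maximal clique is equitable of covering radius $D-1$, and pinning down precisely how $s>t$ rules out the bad configurations; the reduction to this statement and the deduction of the corollary from it are routine given the material already in the excerpt. (Alternatively, if "$s>t\Rightarrow\G$ geometric" is taken as a known result, the corollary is immediate from it and the preceding proposition.)
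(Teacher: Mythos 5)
Your overall architecture matches the paper's: reduce everything to the single claim that $s>t$ forces $\G$ to be geometric, and then invoke the preceding proposition on weakly geometric graphs of order $(s,t)$ to get $b_i>b_{i+h+1}$, $c_i<c_{i+h+1}$ and $D\le t(h+1)+1$. The preliminary observations are also fine: every edge lies in a unique maximal clique of size $s+1$, $k=(t+1)s$, the Delsarte bound gives $\theta_{\min}\ge -(t+1)$, and $\G$ is geometric precisely when $\theta_{\min}=-(t+1)$.

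However, the core of your argument — the reverse inequality $\theta_{\min}\le -(t+1)$, which you propose to obtain by showing that a maximal clique $C$ is a completely regular code of covering radius $D-1$ — is not actually carried out. Beyond the easy first step (a vertex of $C_1$ has a unique neighbour in $C$), the induction that the distance partition of $C$ is equitable, that the covering radius is exactly $D-1$, and above all the place where $s>t$ is supposed to kill the "bad configurations", is only gestured at ("comparing the number of those cliques that point back toward $z$ with $c_i$ forces the extraneous configurations to disappear"), and you yourself flag it as the main obstacle. As stated this is a genuine gap, and it is also the hard way round: complete regularity of the cliques is essentially equivalent to the Delsarte property you are trying to establish. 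The paper instead gets geometricity from a short linear-algebra argument (its Proposition \ref{suffgeom1}): let $N$ be the vertex--clique incidence matrix of the set $\mathcal{C}$ of maximal cliques; then $NN^{\top}=(t+1)I+A$, so $\theta_{\min}\ge -(t+1)$, and double counting incident pairs gives $|\mathcal{C}|(s+1)=v(t+1)$, so $s>t$ forces $|\mathcal{C}|<v$; hence $NN^{\top}$ is singular, $\theta_{\min}=-(t+1)$, the maximal cliques of size $s+1=1-k/\theta_{\min}$ are Delsarte cliques, and $\G$ is geometric. Replacing your unproved induction by this rank argument closes the gap and yields exactly the paper's proof.
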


\begin{corollary} \label{cordiameter} For all integer $t \geq 1$ there exists
a constant $C_t$ such that for all distance-regular graphs $\G$
of order $(s,t)$, the diameter of $\G$ is bounded by $C_t
h(\G)$, where $h(\G)$ is the head of $\G$.
\end{corollary}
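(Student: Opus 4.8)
The plan is to split according to whether $s>t$ or $s\le t$; the two ranges are handled by entirely different tools. First I would note that for a distance-regular graph $\G$ of order $(s,t)$ with $t\ge 1$ one always has $h=h(\G)\ge 1$: each local graph is a disjoint union of $t+1\ge 2$ cliques and is therefore disconnected, so $\G$ is not complete and $D\ge 2$; hence the index $i=1$ lies in the range $\{1,\dots,D-1\}$ defining $\ell(c_1,a_1,b_1)$, which forces $h\ge 1$. This remark lets additive constants be absorbed into multiples of $h$.

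For $s>t$, Corollary \ref{s>t implies geometric} immediately gives that $\G$ is geometric and $D\le t(h+1)+1$. Since $h\ge 1$ we have $t\le th$ and $1\le h$, so $D\le th+t+1\le (2t+1)h$, which is the claimed inequality with constant $2t+1$. Thus the entire difficulty lies in the complementary range $s\le t$.

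For $s\le t$, the key point is that the valency is now bounded in terms of $t$ alone: $k=s(t+1)\le t(t+1)$. If $k=2$ then necessarily $s=1$, $t=1$, and $\G$ is a polygon; reading off the intersection arrays from Section \ref{sec:polygons} gives $h=D-1$, so $D=h+1\le 2h$. If $k\ge 3$, I would invoke the (now proven) Bannai--Ito conjecture asserting that there are only finitely many distance-regular graphs of any fixed valency at least three (Section \ref{sec:proofBIconjecture}); consequently there are only finitely many distance-regular graphs with valency at most $t(t+1)$, and writing $M_t$ for the largest diameter occurring among them we obtain $D\le M_t\le M_t h$. Taking $C_t=\max\{2t+1,M_t\}$ then finishes the proof.

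I expect the $s\le t$ case to be the real obstacle, and in particular it seems unavoidable to call on the Bannai--Ito theorem there: the finiteness for bounded valency is precisely what handles the otherwise-intractable configuration $h=1$, $c_2=1$. For the subcase $h\ge 2$ one could instead argue effectively, using Hiraki's bound $\eta_1\le 2h+2$ together with the improved Ivanov bound $D<\tfrac12 k^{\alpha}\eta_1+1$ to get $D<k^{\alpha}(h+1)+1\le\bigl(2t^{\alpha}(t+1)^{\alpha}+1\bigr)h$; but the residual subcase $h=1$ would still require either Chen--Hiraki--Koolen-type control on small $a_1$ or, ultimately, the Bannai--Ito theorem, so it is cleanest to use the latter once and for all.
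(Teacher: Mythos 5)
Your argument is correct: $h\ge 1$ holds for every graph of order $(s,t)$ with $t\ge 1$, the case $s>t$ is settled by Corollary \ref{s>t implies geometric}, and in the case $s\le t$ the valency $k=s(t+1)\le t(t+1)$ is bounded in terms of $t$, after which your appeal to the (proven) Bannai--Ito conjecture, together with the polygon case $k=2$, does finish the proof. The difference from the route the survey has in mind lies in that second half. The corollary sits at the end of Section 6.2 and is meant to fall out of the results of that section: for $s>t$ one uses the geometric bound $D\le t(h+1)+1$, and for $s\le t$ one simply feeds the bounded valency into the Ivanov bound of Theorem \ref{ivanovbound}, which states unconditionally that $D<2^{k-1}(h+1)$; with $k\le t(t+1)$ and $h\ge 1$ this gives $D<2^{t(t+1)}h$, so one may take the explicit constant $C_t=\max\{2t+1,\,2^{t(t+1)}\}$. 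Your version instead imports the Bannai--Ito theorem, which is a much deeper result (its proof in Section \ref{sec:proofBIconjecture} itself begins from the Ivanov bound $D\le 4^k h$) and which makes $C_t$ ineffective, whereas the elementary route stays inside Section 6 and yields an explicit constant. In particular, your closing remark that the configuration $h=1$, $c_2=1$ with $s\le t$ "unavoidably" requires Bannai--Ito (or Chen--Hiraki--Koolen-type results) is not right: the plain Ivanov bound $D<2^{k-1}(h+1)$ needs no hypothesis on $h$, $c_2$, or $a_1$, so once the valency is bounded by $t(t+1)$ that subcase is already covered. The refinement via Hiraki's $\eta_1\le 2h+2$ and the improved bound $D<\tfrac12 k^{\alpha}\eta_1+1$ only sharpens the exponential dependence on $t$ for $h\ge 2$; it is not needed for the statement as given.
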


\subsection{A bound for distance-regular graphs with \texorpdfstring{$c_2 \geq2$}{c2 ge 2}}\label{sec:boundc2}

Terwilliger \cite{Terwscak} obtained the following bound for
distance-regular graphs with $c_2 \geq 2$.

\begin{prop}{\em (cf.~\cite[Thm.~5.2.5,~Prop.~1.9.1]{bcn})}
Let $\G$ be a distance-regular with diameter $D \geq 2$ and $c_2 \geq
2$. If $c_2 \geq 2(a_1 +1)$, then  $c_i -b_i \geq c_{i-2} - b_{i-2} + 2 \ \ (i
=2, 3, \ldots, D)$, and in particular $D \leq k$. Moreover, if $\max\{a_1, 2\}
\leq c_2$, then $c_i \geq c_{i-1} + 1$ $(i=2,3, \ldots, D)$.
\end{prop}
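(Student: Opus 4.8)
The heart of the matter is the ``Moreover'' clause: once we know that $c_i \ge c_{i-1}+1$ for $i=2,3,\dots,D$ whenever $\max\{a_1,2\}\le c_2$, the rest is bookkeeping. Indeed, $c_2 \ge 2(a_1+1)$ forces both $a_1 \le c_2$ and $2\le c_2$, so under the first hypothesis the strict increase of the $c_i$ holds; combining $c_i \ge c_{i-1}+1 \ge c_{i-2}+2$ (with the convention $c_0=0$) with the monotonicity $b_{i-2}\ge b_i$ from Proposition~\ref{prop:unimodal}(ii) yields $c_i-b_i \ge (c_{i-2}-b_{i-2})+2$, which is the first displayed inequality. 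Moreover $c_1=1$ together with $c_i\ge c_{i-1}+1$ gives $c_i\ge i$ for all $i$, and $c_D$ counts neighbours of a vertex, hence $D\le c_D\le k$. So the plan is: prove $c_i\ge c_{i-1}+1$, then assemble the two corollaries exactly as above.

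For the key inequality I would argue by double counting in a local configuration hung off a geodesic. Fix $x,y$ with $d(x,y)=i$; we may assume $i\ge 3$, since for $i=2$ the claim is just the standing hypothesis $c_2\ge 2$. Choose $z\in\G_{i-1}(x)\cap\G(y)$ and set $C=\G(z)\cap\G_{i-2}(x)$ and $T=\G_{i-1}(x)\cap\G(y)$, so $|C|=c_{i-1}$, $|T|=c_i$, and $z\in C\cap T$ is adjacent to every element of $C$. Two observations, each obtained by pinching the triangle inequality between distances measured from $x$ and from $y$: (a) every $w\in C$ has $d(w,y)=2$, and any common neighbour of $w$ and $y$ automatically lies in $\G_{i-1}(x)$; hence $\G(w)\cap\G(y)=\G(w)\cap T$ has exactly $c_2$ elements, one of which is $z$, so $w$ has exactly $c_2-1$ neighbours in $T\setminus\{z\}$. (b) For $v\in T\setminus\{z\}$, since $v$ and $z$ are both adjacent to $y$ we have $1\le d(v,z)\le 2$, so $|\G(v)\cap\G(z)|\in\{a_1,c_2\}$; and because $y$ is a common neighbour of $v$ and $z$ that does not lie in $\G_{i-2}(x)$, the vertex $v$ has at most $|\G(v)\cap\G(z)|-1\le c_2-1$ neighbours in $C$, where the hypothesis $a_1\le c_2$ is exactly what makes this bound uniform in $v$.

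Counting the edges between $C$ and $T\setminus\{z\}$ from the $C$-side gives precisely $c_{i-1}(c_2-1)$ by (a), while from the $(T\setminus\{z\})$-side it is at most $(c_i-1)(c_2-1)$ by (b); since $c_2-1\ge 1$ we may divide and get $c_{i-1}\le c_i-1$, as required. The step I expect to be the main obstacle is not any computation but the careful bookkeeping in observations (a) and (b) — verifying that the various neighbours are forced onto, or just off, a geodesic from $x$ to $y$, and noticing that $a_1\le c_2$ is precisely the condition needed so that the per-vertex bound in (b) does not depend on whether $v\sim z$ or $d(v,z)=2$. Once the inequality $c_i\ge c_{i-1}+1$ is in hand, the passage to $c_i-b_i$ via Proposition~\ref{prop:unimodal} and the bound $D\le k$ are immediate.
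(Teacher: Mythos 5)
Your proof is correct. Note that the survey itself gives no proof of this proposition — it is quoted from BCN (Thm.~5.2.5, Prop.~1.9.1) — so there is no in-paper argument to compare against; your double count between $C=\G(z)\cap\G_{i-2}(x)$ and $T=\G_{i-1}(x)\cap\G(y)$ is essentially the classical Terwilliger-style argument behind the cited results. I checked the details: in (a) every $w\in C$ satisfies $d(w,y)=2$ and all $c_2$ common neighbours of $w$ and $y$ are forced into $T$, with $z$ among them; in (b) the bound $|\G(v)\cap\G(z)|\le\max\{a_1,c_2\}=c_2$ together with the common neighbour $y\notin\G_{i-2}(x)$ gives at most $c_2-1$ neighbours of $v$ in $C$; dividing the edge count by $c_2-1\ge 1$ gives $c_{i-1}\le c_i-1$, and the case $i=2$ is the hypothesis $c_2\ge 2$. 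Your reduction of the first clause to the second is also legitimate, since $c_2\ge 2(a_1+1)$ implies $\max\{a_1,2\}\le c_2$, and then $c_i\ge c_{i-2}+2$ combined with $b_i\le b_{i-2}$ (Proposition~\ref{prop:unimodal}(ii)) gives the displayed inequality, while $c_1=1$ and $c_i\ge c_{i-1}+1$ give $D\le c_D\le k$. One trivial slip: you write $z\in C\cap T$, but $z\notin C$ (a vertex is not its own neighbour); what you use, namely $z\in T$ and $z$ adjacent to every element of $C$, is true, so the argument is unaffected.
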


\noindent Caughman \cite{Cau97} improved this result for bipartite
distance-regular graphs as follows.

\begin{prop}
Let $\G$ be a bipartite distance-regular graph with valency $k$, diameter
$D \geq 3$, and $c_2 \geq 2$. Let $i =1,2,\ldots, D-1$. If $k  > c_i
((c_2-1)(c_2-2)(c_i-c_{i-1}-1)/2 +1)$, then $c_{i+1} \geq c_i(c_2 -1) + 1$.
\end{prop}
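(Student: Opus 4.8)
The plan is to prove the contrapositive: assuming $c_{i+1}\le c_i(c_2-1)$, I will show $k\le c_i\bigl((c_2-1)(c_2-2)(c_i-c_{i-1}-1)/2+1\bigr)$, contradicting the hypothesis. Fix vertices $x,y$ with $d(x,y)=i+1$ and consider three consecutive distance layers relative to $x$: the set $C=\G_i(x)\cap\G(y)$ (so $|C|=c_{i+1}$), the set $W=\G_{i-1}(x)\cap\G_2(y)$, and $\G_{i-2}(x)$. The first step is to record the biregular ladder structure forced by bipartiteness together with $c_2\ge 2$ (which makes every path of length $2$ lie in a quadrangle). Tracking parities of distances from $x$ and from $y$ gives: (a) for each $z\in C$ the set $\G(z)\cap\G_{i-1}(x)$ has size $c_i$ and lies in $W$; (b) for each $w\in W$ the set $\G(w)\cap\G(y)$ has size $c_2$ and lies in $C$; (c) two distinct vertices of $C$ are at distance $2$ and have $y$ among their $c_2$ common neighbours, hence have at most $c_2-1$ common neighbours in $W$. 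Thus the bipartite graph between $C$ and $W$ is biregular with degrees $c_i$ (on $C$) and $c_2$ (on $W$), so $|W|=c_ic_{i+1}/c_2$; one layer down, for each $w\in W$ there is a similar strong regularity linking $\G(w)\cap\G_i(x)$ and $\G(w)\cap\G_{i-2}(x)$ (of sizes $b_{i-1}$ and $c_{i-1}$), and here the quantity $c_i-c_{i-1}-1$ will appear once we recall that, by Terwilliger's theorem (the result being sharpened; it applies since $\G$ is bipartite and $c_2\ge 2$), already $c_i\ge c_{i-1}+1$.

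The second step extracts collisions from the assumption by pigeonhole. Fixing $z_0\in C$, each of its $c_i$ neighbours $w\in W$ is by (b) adjacent to exactly $c_2-1$ vertices of $C\setminus\{z_0\}$, so this multiset has size $c_i(c_2-1)\ge c_{i+1}>|C\setminus\{z_0\}|$ and some $z_1\in C\setminus\{z_0\}$ is reached from at least two distinct $w$'s; that is, $|\G(z_0)\cap\G(z_1)\cap W|\ge 2$. Iterating the same idea one layer deeper — two colliding vertices $w,w'\in W$ are again at distance $2$, lie in a quadrangle, and have their $c_2$ common neighbours distributed between $\G_{i-2}(x)$ and $\G_i(x)$ — one builds, around a carefully chosen base vertex $w_0\in W$, a family of ``branches'': $c_i$ of them (one for each neighbour of a fixed $C$-vertex in $W$), each carrying a system of sub-configurations whose count is governed by $\binom{c_2-1}{2}$ and which, by the distance constraints and the $c_2$-regularity in (b) and (c), are forced to be pairwise distinct vertices among the neighbours of $w_0$ in $\G_i(x)$. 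Counting these, together with the $c_{i-1}$ neighbours of $w_0$ in $\G_{i-2}(x)$, against the valency $k$ gives $k\le c_i\bigl(1+(c_2-1)(c_2-2)(c_i-c_{i-1}-1)/2\bigr)$, the desired contradiction.

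The hard part is the bookkeeping in that last step: one must show the configurations produced by the nested pigeonhole arguments across the layers $C$, $W$, and $\G_{i-2}(x)$ are genuinely distinct neighbours of $w_0$, so that the multiplicities collapse to the exact coefficient $(c_2-1)(c_2-2)/2$ rather than something weaker — equivalently, that the overcounting is controlled at every level. This is where the biregularities (a)--(c) and the bipartite parity constraints have to be used precisely: the naive edge count between $C$ and $W$ alone merely reproduces Terwilliger's bound $c_{i+1}\ge c_i+1$ and loses the factor $c_2-1$ entirely, so the substance lies in choosing the vertex to centre the final count on and in packaging the branches so that the contribution of the bottom layer is exactly $c_i-c_{i-1}-1$ per branch-unit instead of the trivial $c_i$.
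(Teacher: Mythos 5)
Your preliminaries are correct: with $d(x,y)=i+1$, $C=\G_i(x)\cap\G(y)$ and $W=\G_{i-1}(x)\cap\G_2(y)$, the parity arguments give exactly your (a)--(c), the $C$--$W$ bipartite graph is biregular with degrees $c_i$ and $c_2$, and under the contrapositive hypothesis $c_{i+1}\le c_i(c_2-1)$ the first pigeonhole does produce $z_0,z_1\in C$ with at least two common neighbours in $W$. But the proof stops exactly where the proposition begins. The entire quantitative content of the statement is the bound $k\le c_i\bigl((c_2-1)(c_2-2)(c_i-c_{i-1}-1)/2+1\bigr)$, and your second step never derives it: the ``branches'' and ``sub-configurations'' are never defined as concrete sets of vertices, the base vertex $w_0$ is never specified, distinctness is asserted rather than proved, and the factor $c_i-c_{i-1}-1$ is something you say ``will appear'' rather than something you exhibit. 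You concede this yourself (``the hard part is the bookkeeping''), but that bookkeeping \emph{is} the proof; note that the survey itself states this proposition without proof, citing Caughman \cite{Cau97}, so the step you have deferred is precisely the substance of that paper, not a routine verification.

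There is also a logical-direction problem in the count as you describe it. You propose to show that the sub-configurations ``are forced to be pairwise distinct vertices among the neighbours of $w_0$ in $\G_i(x)$'' and then to count them, together with the $c_{i-1}$ neighbours of $w_0$ in $\G_{i-2}(x)$, ``against the valency $k$.'' Distinctness only says that the number of configurations is at most $b_{i-1}=k-c_{i-1}$, i.e.\ it yields a \emph{lower} bound on $k$; to conclude $k\le c_i\bigl(1+\binom{c_2-1}{2}(c_i-c_{i-1}-1)\bigr)$ you would instead need an exhaustion statement --- that \emph{every} neighbour of $w_0$ in $\G_i(x)$ is captured by one of the $c_i$ branches, with multiplicity controlled so that each branch accounts for at most $1+\binom{c_2-1}{2}(c_i-c_{i-1}-1)$ of them. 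Configurations manufactured by pigeonhole collisions only see special neighbours, so this covering claim is a genuine additional assertion, and nothing in your sketch addresses it. Until the configurations are defined, the covering and multiplicity statements are proved, and the coefficient $(c_2-1)(c_2-2)(c_i-c_{i-1}-1)/2$ is actually extracted, what you have is a plausible plan plus elementary preliminaries, not a proof.
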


\noindent Moreover, Terwilliger \cite{Ter-quad} obtained the following diameter bound.

\begin{prop}\label{thm:terwilintersectionnos} {\em (cf.~\cite[Thm.~5.2.1,~Cor.~5.2.2]{bcn})}
Let $\G$ be a distance-regular graph with diameter $D$. If $\G$
contains an induced quadrangle, then $ c_i - b_i \geq c_{i-1} - b_{i-1} + a_1
+2$ and, in particular, $D \leq \frac{k+c_D}{a_1 +2}$.
\end{prop}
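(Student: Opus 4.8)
The plan is to prove the step inequality $c_i-b_i\ge c_{i-1}-b_{i-1}+a_1+2$ for $i=1,2,\dots,D$ and then telescope. For $i=1$ this is an \emph{equality} for every distance-regular graph: since $k=1+a_1+b_1$ one has $c_0-b_0+a_1+2=-k+a_1+2=1-b_1=c_1-b_1$, so the induced quadrangle is only needed for $i\ge 2$. Summing the inequality over $i=1,\dots,D$, the left-hand side telescopes to $(c_D-b_D)-(c_0-b_0)=c_D+k$ (using $b_D=0$, $c_0=0$, $b_0=k$), while the right-hand side equals $D(a_1+2)$; this yields $D(a_1+2)\le k+c_D$, which is the asserted diameter bound. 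Thus the entire content is the step inequality for $i\ge 2$.

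Fix $i$ with $2\le i\le D$, vertices $x,y$ with $d(x,y)=i$, and a neighbour $w$ of $y$ with $d(x,w)=i-1$ (possible since $c_i\ge 1$). I would work with the five sets $C=\G(y)\cap\G_{i-1}(x)$, $B=\G(y)\cap\G_{i+1}(x)$, $C'=\G(w)\cap\G_{i-2}(x)$, $B'=\G(w)\cap\G_i(x)$, and $N=\G(y)\cap\G(w)$, of sizes $c_i$, $b_i$, $c_{i-1}$, $b_{i-1}$, and $a_1$ respectively. Two easy observations: $w\in C$ and $y\in B'$; and every vertex of $N$, being a common neighbour of the adjacent pair $y,w$ and hence at distance $i-1$ or $i$ from $x$, lies in $C\cup B'$. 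Consequently $N=(N\cap C)\sqcup(N\cap B')$, while $C\supseteq\{w\}\sqcup(N\cap C)$ and $B'\supseteq\{y\}\sqcup(N\cap B')$, all unions being disjoint.

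The core of the argument is to construct injections $f\colon C'\to C$ with $f(C')$ disjoint from $\{w\}\cup N$, and $g\colon B\to B'$ with $g(B)$ disjoint from $\{y\}\cup N$. For $z\in C'$ the triangle inequality forces $d(z,y)=2$ (from $z\sim w\sim y$ together with $d(x,z)=i-2$ and $d(x,y)=i$), and any common neighbour of $z$ and $y$ is automatically at distance $i-1$ from $x$, hence in $C$; one wants $f(z)$ to be a common neighbour of $z$ and $y$ that is neither $w$ nor adjacent to $w$ — equivalently, $\{z,w,y,f(z)\}$ is an induced quadrangle — chosen injectively in $z$. Symmetrically, for $z\in B$ one has $d(z,w)=2$, every common neighbour of $z$ and $w$ lies in $B'$, and one wants $g(z)$ a common neighbour of $z$ and $w$ that is neither $y$ nor adjacent to $y$. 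Granting $f$ and $g$, the set $C$ contains the $1+|N\cap C|+c_{i-1}$ pairwise distinct vertices $\{w\}\cup(N\cap C)\cup f(C')$, and $B'$ contains the $1+|N\cap B'|+b_i$ pairwise distinct vertices $\{y\}\cup(N\cap B')\cup g(B)$; adding the two bounds and using $|N\cap C|+|N\cap B'|=a_1$ gives $c_i+b_{i-1}\ge c_{i-1}+b_i+a_1+2$, which is the step inequality.

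The main obstacle is exactly the construction of $f$ and $g$: one must show that the presence of a single induced quadrangle anywhere in $\G$ forces, at \emph{every} pair of vertices at distance two and for \emph{each} prescribed common neighbour $w$ (resp.\ $y$) of that pair, the existence of a further common neighbour non-adjacent to $w$ (resp.\ $y$), and that such choices can be made consistently (injectively) over all the vertices $z$ in play. This fails, for instance, in the icosahedron (which has no induced quadrangle, and where indeed the step inequality breaks down), so the quadrangle hypothesis is doing precisely this work: it must guarantee that the $\mu$-graphs $\G(u)\cap\G(v)$ ($d(u,v)=2$) are never ``clustered'' around one of their vertices, and that induced quadrangles propagate suitably through the graph. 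This is the substance of Terwilliger's analysis; see \cite{Ter-quad} and \cite[Thm.~5.2.1,~Cor.~5.2.2]{bcn}. Everything else — the distance computations and the telescoping sum — is routine bookkeeping.
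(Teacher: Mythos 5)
Your reduction and bookkeeping are fine: the $i=1$ case is indeed an identity (since $k=1+a_1+b_1$), the distance computations showing $N\subseteq C\cup B'$, $f(C')\subseteq C$, $g(B)\subseteq B'$ are correct, and the telescoping of $c_i-b_i\geq c_{i-1}-b_{i-1}+a_1+2$ over $i=1,\dots,D$ does give $D\leq\frac{k+c_D}{a_1+2}$. But the proposal has a genuine gap, and you name it yourself: the injections $f\colon C'\to C$ and $g\colon B\to B'$ avoiding $\{w\}\cup N$ (resp.\ $\{y\}\cup N$) are never constructed. That construction is not a technical loose end --- it \emph{is} the theorem. The hypothesis is only that some induced quadrangle exists somewhere in $\G$; turning that into the statement that at \emph{every} distance-$2$ pair arising along a geodesic, and for \emph{every} prescribed common neighbour, there is a further common neighbour non-adjacent to it, chosen injectively over all $z\in C'$ and $z\in B$, is exactly Terwilliger's quadrangle-propagation/counting argument. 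Ending the argument with ``this is the substance of Terwilliger's analysis; see \cite{Ter-quad} and \cite[Thm.~5.2.1, Cor.~5.2.2]{bcn}'' cites the very result to be proved, so as a standalone proof the proposal establishes only the easy part (the step-to-diameter reduction), not the step inequality itself.

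For context: the survey itself gives no proof of this proposition either --- it is stated with the citation to \cite[Thm.~5.2.1, Cor.~5.2.2]{bcn} (Terwilliger) --- so there is no in-paper argument to compare your route against. What would be needed to complete your plan is the actual content of Terwilliger's proof: a lemma showing that the existence of one induced quadrangle forces quadrangles (or, more precisely, the required non-adjacent common neighbours in the $\mu$-graphs along geodesics) in the configurations you use, together with a counting scheme that replaces your pointwise injective choices --- e.g.\ counting edges between the relevant sets rather than picking individual images $f(z)$, which is how one usually avoids the consistency/injectivity issue you flag. Your icosahedron remark correctly shows the quadrangle hypothesis is essential, but it does not supply the missing construction.
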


\noindent The distance-regular graphs with diameter $\frac{k+c_D}{a_1 +2}$ and containing a quadrangle have second
largest eigenvalue $b_1-1$ and have been classified: besides the strongly regular graphs with smallest eigenvalue $-2$,
these are the Hamming graphs, Doob graphs, halved cubes, Johnson graphs, locally Petersen graphs, and the Gosset graph,
see \cite[Thm.~5.2.3]{bcn}; also cf.~Section \ref{sec: the case b=1}. Note that if a distance-regular graph contains a
quadrangle then the second largest eigenvalue is at most $b_1-1$.

Neumaier \cite{N04} showed among other results that if there are infinitely
many distance-regular graphs with fixed $a_1, c_2, a_i, c_i$ containing an
induced quadrangle then necessarily $c_{i+1} \geq  1+(c_2-1)c_i$. For dual
polar graphs, equality holds.

\subsection{The Pyber Bound}\label{sec:Pyber}

Using a slightly weaker result than Proposition \ref{kooprop}, Pyber \cite{Py99} showed that $ D \leq 5 \log_2v$ for a
distance-regular graph with $v$ vertices and diameter $D$. This essentially settles a problem in `BCN'
\cite[p.~189]{bcn}. Pyber's bound was improved by Bang, Hiraki, and Koolen \cite{BHK06} to $D < \frac{8}{3} \log_2v$.


\section{The Bannai-Ito conjecture}\label{sec:BIconjecture}


In 1984, Bannai and Ito \cite[p.~237]{bi} made the following conjecture.

\begin{biconj} There are finitely many distance-regular graphs with fixed
valency at least three.
\end{biconj}

This Bannai-Ito conjecture has recently been proved by Bang, Dubickas, Koolen,
and Moulton \cite{BaDuKoMo09}. In the next section, we will give an outline of
this proof.

\subsection{Proof of the Bannai-Ito conjecture}\label{sec:proofBIconjecture}

Let $\G$ be a distance-regular graph with valency $k \geq 3$, head $h$, and
diameter $D$. The Ivanov bound (Theorem \ref{ivanovbound}) tells us that $D
\leq 4^k h$. So in order to prove the conjecture it suffices to bound $h$ as a
function of $k$.

Bannai and Ito \cite{BI89} obtained the following result, using head and tail.
Recall that the latter is defined by $t = \ell(b_1, a_1, c_1)$.

\begin{theorem}\label{banito}
Let $M \geq 1$ and $k \geq 3$. Then there are finitely many triangle-free
distance-regular graphs with valency $k$, and diameter $D \leq h + t + M$.
\end{theorem}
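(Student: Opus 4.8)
The plan is to reduce the statement to bounding two integer parameters for each fixed $k$ and $M$, and then to close the argument by a spectral computation on the (essentially explicit) intersection array. Triangle-freeness is exactly $a_1=0$. In a graph of girth $2h+2$ the balls of radius $h$ are trees, so $a_1=0$ forces $(c_i,a_i,b_i)=(1,0,k-1)$ for $i=1,\dots,h$; dually, straight from the definition $t=\ell(b_1,a_1,c_1)$ together with the monotonicity of Proposition~\ref{prop:unimodal}, the $t$ indices forming the tail satisfy $(c_i,a_i,b_i)=(k-1,0,1)$. Writing $m:=D-1-h-t\le M$ for the number of remaining (``middle'') indices, each such index carries a triple of nonnegative integers summing to $k$ (at most $\binom{k+2}{2}$ choices) and $c_D$ is one of at most $k$ values, so for fixed $k$ and $M$ there are only finitely many possible ``middle profiles''. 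Since the entire intersection array is determined by $h$, $t$, and the middle profile, and the spectrum and the multiplicities are in turn determined by the intersection array (Propositions~\ref{dimleqD+1},~\ref{distincteig} and Theorem~\ref{Biggsformula}), it suffices to prove that $h$ and $t$ are bounded in terms of $k$ and $M$ for each fixed middle profile.

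To do this I would analyse the standard sequence $(u_i)_{i=0}^D$ of an eigenvalue $\theta\ne\pm k$ using the pinned-down head and tail. On the head block the recurrence~\eqref{standard_sequence} reads $u_{i-1}+(k-1)u_{i+1}=\theta u_i$, whose solutions are governed by the roots of $(k-1)z^2-\theta z+1=0$; on the tail block it reads $(k-1)u_{i-1}+u_{i+1}=\theta u_i$, governed by $z^2-\theta z+(k-1)=0$, which always has a root of modulus at least $\sqrt{k-1}$. Consequently the partial sums of $\sum_i k_iu_i^2$ over the head and over the tail have closed forms (dilated Chebyshev polynomials and their Christoffel--Darboux sums) whose growth in $h$ and in $t$ is explicit, while $v=\sum_i k_i$ is a $(k,M)$-bounded multiple of $k(k-1)^h$. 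Feeding this into Biggs' formula $m(\theta)=v/\sum_i k_iu_i^2$ expresses each nontrivial multiplicity as an explicit rational function of $h$, $t$, and the middle profile. One then shows that for $h+t$ large these expressions cannot all be positive integers: for a suitable $\theta$ with $|\theta|<2\sqrt{k-1}$ the tail contribution dominates the denominator, forcing $m(\theta)$ to be a small rational, whereas $a_1=0$ forces $m(\theta)\ge k$ by Terwilliger's tree bound; the complementary (and harder) case, bounding $h$, uses the head Chebyshev structure together with the arithmetic constraint that algebraically conjugate eigenvalues share a multiplicity and that all $D$ nontrivial multiplicities are positive integers summing to $v-1$. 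Bounding $h$ and $t$ bounds $D$ and hence $v$ (a $(k,M)$-bounded number), so there are finitely many such graphs.

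The main obstacle is exactly this middle step. Producing the bound from Biggs' formula is not just an estimate: it requires the exact evaluation of $\sum_i k_iu_i^2$ on the head and tail blocks, a careful treatment of the degenerate spectral regimes $|\theta|=2\sqrt{k-1}$ and $|\theta|>2\sqrt{k-1}$ (where the head contribution rather than the tail may dominate, so the naive size comparison fails), and — crucially — the interaction between the explicit Chebyshev structure forced by a long head/tail and the integrality and conjugacy constraints on the multiplicities. This interplay, rather than any single inequality, is what rules out large $h+t$, and it is also the place where the hypothesis $D\le h+t+M$ (keeping the ``free'' part of the array of bounded size) is used in an essential way; the Ivanov bound (Theorem~\ref{ivanovbound}) and Proposition~\ref{kooprop} are the natural auxiliary tools for packaging the remaining combinatorial estimates.
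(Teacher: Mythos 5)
Your reduction to finitely many ``middle profiles'' and your intention to combine Biggs' formula (Theorem~\ref{Biggsformula}) with the fact that algebraically conjugate eigenvalues have equal multiplicities do match the skeleton of the paper's (Bannai--Ito's) argument, but the proposal has a genuine gap: the step that actually produces a contradiction for large head $h$ is only described, never carried out, and you concede as much. The paper's proof rests on two concrete devices that are absent from your plan: (i) an interlacing argument with the head block of the intersection matrix, which locates an eigenvalue $\theta$ in the very narrow window $\bigl(2\sqrt{k-1}\cos\frac{3\pi}{h+1},\,2\sqrt{k-1}\cos\frac{\pi}{h+1}\bigr)$, so that Biggs' formula pins its multiplicity down to order $v/h^3$; and (ii) the number-theoretic observation that $\prod_{\theta'\in\Theta}\bigl((\theta')^2-k+1\bigr)$, taken over the algebraic conjugates of $\theta$, is a nonzero rational integer, which for $h$ large forces a conjugate $\theta'$ with $|(\theta')^2-k+1|>1$; such a $\theta'$ has multiplicity $\Omega(v/h)$ if $|\theta'|<2\sqrt{k-1}$ and $O(v/a^h)$ for some fixed $a>1$ otherwise, so in either case its multiplicity cannot agree with the $\Theta(v/h^3)$ multiplicity of $\theta$ once $h$ is large. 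Without a mechanism of this kind that exhibits two conjugate eigenvalues whose multiplicities are provably of different orders, the ``integrality plus conjugacy'' constraints you invoke have no force.

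Moreover, the one concrete mechanism you do propose, bounding $t$ because for some $|\theta|<2\sqrt{k-1}$ ``the tail contribution dominates the denominator, forcing $m(\theta)$ to be a small rational'' in conflict with the tree bound $m(\theta)\ge k$, fails quantitatively. In that spectral regime the roots governing the head recurrence have modulus $(k-1)^{-1/2}$ and those governing the tail have modulus $(k-1)^{1/2}$, so $k_iu_i^2$ stays bounded (in terms of $k$ and the middle profile) on both blocks; hence $\sum_i k_iu_i^2=O(h+t)$, while $v$ is of order $(k-1)^{h}$ up to factors bounded in $k$ and $M$. Biggs' formula then gives $m(\theta)$ of order at least $v/(h+t)$, which is exponentially larger than $k$, so no conflict with Terwilliger's tree bound can arise; multiplicities below $k$ occur only when $|\theta|>2\sqrt{k-1}$, where it is the head, not the tail, whose contribution is geometric. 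In fact $t$ is the easy parameter: the counting $1\le k_D\le k(k-1)^{h+M-2}(k-1)^{-t}$ along the array bounds $t$ in terms of $h$, $k$, and $M$, so the entire theorem hinges on bounding $h$ -- precisely the step your proposal leaves open.
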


\noindent The key idea of the proof of this theorem is as follows. By
interlacing, $\G$ has an eigenvalue $\theta$ in the interval $(2\sqrt{k-1} \cos
\frac{3\pi}{h+1}, 2\sqrt{k-1} \cos \frac{\pi}{h+1})$. Let $\Theta$ be the set
of algebraic conjugates of $\theta$. Then
$$\prod_{\theta' \in \Theta} ((\theta')^{ 2} - k+1)$$
is a nonzero integer. Let $S = \{ x \in [-k, k] : |x^2 - k+1| > 1\}$. If $h$ is
large enough, then $\theta\not\in S$ and there is an algebraic conjugate
$\theta'$ of $\theta$ which is in $S$.  Now it can be shown, using Biggs'
formula (Theorem \ref{Biggsformula}), that the multiplicity of $\theta$ is of
order $\frac{v}{h^3}$. If $\theta' \in (-2\sqrt{k-1}, +2\sqrt{k-1})$, then the
multiplicity of $\theta'$ is $\Omega(\frac{v}{h})$, and else the multiplicity
of $\theta'$ is $O(\frac{v}{a^h})$ for some fixed real $a > 1$. This shows that
the multiplicities of $\theta$ and $\theta'$ are not the same if $h$ large,
which is a contradiction to the fact that they are algebraic conjugates.
Therefore $h$ is bounded.

Suzuki \cite{S594} generalized this result by replacing the triangle-free
condition by the condition $(a_1+1)(a_1+2) \leq k$. Bang, Koolen, and Moulton
\cite{BaKoMo07} extended the result as follows.

\begin{prop} \label{bakomo-epsilon}
Let $k \geq 3$. Then there exists a positive $\epsilon = \epsilon_k$ such that
there are finitely many distance-regular graphs $\G$ with valency $k$, and
diameter $D \leq h + t + \epsilon h$.
\end{prop}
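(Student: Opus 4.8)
The plan is to extend the proof technique of Bannai--Ito (Theorem \ref{banito}) and Suzuki \cite{S594} from the triangle-free (or $(a_1+1)(a_1+2)\le k$) setting to arbitrary valency $k\ge 3$, at the cost of introducing the $\epsilon_k h$ slack in the diameter hypothesis. The key point is still an eigenvalue-multiplicity argument: we want to show that if $D\le h+t+\epsilon h$ for a suitably small $\epsilon=\epsilon_k$, then $h$ (and by symmetry $t$, and hence $D$ via the Ivanov bound, Theorem \ref{ivanovbound}) is bounded in terms of $k$. First I would recall that a distance-regular graph with head $h$ has a ``path-like'' segment in its intersection array: on indices $1,2,\dots,h+1$ the triple $(c_i,a_i,b_i)$ equals $(1,a_1,b_1)$, and dually on the last $t+1$ indices it equals $(b_1,a_1,1)$. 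Interlacing the intersection matrix $L$ (see \eqref{matrixl}) with the tridiagonal submatrix coming from these repeated triples forces $\G$ to have an eigenvalue $\theta$ close to the spectral edge of the corresponding infinite path operator, i.e.\ in a small window around $a_1+2\sqrt{b_1}$ (the analogue of $2\sqrt{k-1}$ when $a_1=0$); more precisely $\theta\in\bigl(a_1+2\sqrt{b_1}\cos\tfrac{3\pi}{h+1},\,a_1+2\sqrt{b_1}\cos\tfrac{\pi}{h+1}\bigr)$.

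Next I would set up the arithmetic obstruction. Consider the polynomial $\prod_{\theta'\in\Theta}\bigl((\theta'-a_1)^2-4b_1\bigr)$ over the Galois conjugates $\Theta$ of $\theta$; since $a_1,b_1\in\mathbb Z$ and this is a symmetric function of the conjugates it is a rational integer, and it is nonzero because $(\theta-a_1)^2\ne 4b_1$ (the edge value is not actually attained for finite $D$). Hence its absolute value is at least $1$, so there must be a conjugate $\theta'$ lying \emph{outside} the interval $[a_1-2\sqrt{b_1},a_1+2\sqrt{b_1}]$ — unless all conjugates stay in a shrinking neighborhood of the edge, which can be excluded for large $h$ by a separation/height argument on algebraic integers of bounded degree (this is where Dubickas-style estimates on conjugates enter, cf.\ the role of \cite{BaDuKoMo09}, though here we only need the cruder input already used in \cite{BI89, S594, BaKoMo07}). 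Now I would invoke Biggs' formula (Theorem \ref{Biggsformula}): for the edge-window eigenvalue $\theta$, the standard sequence $(u_i)$ behaves like a discrete sine wave on the first $h$ and last $t$ coordinates, so $\sum_i k_i u_i^2$ is of order $v\cdot(\text{polynomial in }h,t)$, giving $m(\theta)=\Theta\!\left(v/(h+t+1)^{?}\right)$; whereas for a conjugate $\theta'$ strictly outside $[a_1-2\sqrt{b_1},a_1+2\sqrt{b_1}]$ the standard sequence grows or decays geometrically along the path segments, so $\sum_i k_i (u_i')^2$ is dominated by either a linear-in-$\max(h,t)$ term or an exponentially large term, yielding $m(\theta')=O(v/a^{\max(h,t)})$ for some fixed $a>1$ depending only on $k$. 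Since algebraically conjugate eigenvalues have equal multiplicities, comparing the two estimates forces $h$ (equivalently $\max(h,t)$) to be bounded by a constant depending only on $k$.

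The slack $\epsilon_k h$ is exactly what must be managed in the polynomial-versus-exponential comparison: the hypothesis $D\le h+t+M$ in Theorem \ref{banito} makes the ``middle'' part of the array (indices outside the two path segments) have bounded length, so it contributes only a bounded factor; replacing $M$ by $\epsilon h$ means the middle part may have length up to $\epsilon h$, and one must check that the terms $k_i u_i^2$ and $k_i (u_i')^2$ coming from these middle indices do not overwhelm the estimates. The point is that $k_i\le k^i$ (crudely, $k_i$ grows at most like a power of $k$ per step, cf.\ Proposition \ref{order in b_i, c_i}), while the geometric decay rate $a>1$ of $(u_i')$ on the path segments is fixed; choosing $\epsilon_k$ small enough that $k^{\epsilon}<a$ (say $\epsilon_k$ slightly less than $\log a/\log k$) guarantees that the contribution of the $\epsilon h$ middle indices is absorbed, and the exponential gap $a^{h(1-\epsilon)}$ still dwarfs the polynomial multiplicity of $\theta$ for large $h$. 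I expect this last balancing — making the choice of $\epsilon_k$ explicit and verifying that the middle-segment contributions to $\sum k_i u_i^2$ are genuinely negligible, uniformly over all admissible arrays — to be the main obstacle; everything else is a careful but routine adaptation of the interlacing step and the conjugate-separation step already present in \cite{BI89, S594}. Finally, once $h$ and $t$ are bounded in terms of $k$, the hypothesis gives $D\le h+t+\epsilon_k h = O_k(1)$, and combined with the fact that a distance-regular graph of bounded diameter and bounded valency has boundedly many vertices (again Proposition \ref{order in b_i, c_i}), only finitely many such $\G$ exist, completing the proof of Proposition \ref{bakomo-epsilon}.
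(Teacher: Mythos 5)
Your plan reproduces the Bannai--Ito/Suzuki argument anchored at ${\cal R}_1=a_1+2\sqrt{b_1}$ and tries to absorb the middle part of the intersection array (length up to $\epsilon h$) by choosing $\epsilon_k$ with $k^{\epsilon}<a$. That balancing only handles the case in which the algebraic conjugate $\theta'$ falls \emph{outside} $[a_1-2\sqrt{b_1},\,a_1+2\sqrt{b_1}]$, where $m(\theta')$ is exponentially small in $h$. But your integrality step ($\prod_{\theta'\in\Theta}((\theta'-a_1)^2-4b_1)$ a nonzero integer) only produces a conjugate bounded away from the two edge points $a_1\pm2\sqrt{b_1}$; it may well lie in the interior of the interval, and there the contradiction in Theorem \ref{banito} rests on a merely \emph{polynomial} gap, $m(\theta)\approx v/h^3$ versus $m(\theta')=\Omega(v/h)$. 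Those estimates require upper bounds on $\sum_i k_iu_i^2$ via Theorem \ref{Biggsformula}, and each of the up to $\epsilon h$ middle steps can multiply $k_iu_i^2$ by a factor bounded only in terms of $k$ (whether a middle run is oscillatory or exponentially growing is governed by the indicator interval of its own triple, which need not contain $\theta$ or $\theta'$); note that $|u_i|\le 1$ controls $k_iu_i^2$ only by $k_i$, which is itself exponential in $i\approx h$. So both lower bounds on multiplicities degrade by a factor of the shape $C_k^{\epsilon h}$, and no fixed $\epsilon>0$ makes $C_k^{\epsilon h}$ smaller than the polynomial gap $h^2$ as $h\to\infty$. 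This is exactly the ``main obstacle'' you flagged, and the proposed choice of $\epsilon$ does not close it. Two further points: the degree of $\theta$ is not bounded (it can be as large as $D$), so a ``separation argument for algebraic integers of bounded degree'' is not available; and with a middle of length $\epsilon h$ the interlacing step pinning an eigenvalue within $O(1/h^2)$ of ${\cal R}_1$ is no longer automatic, since middle triples can have right indicator points exceeding ${\cal R}_1$ (indeed ${\cal R}_g\ge {\cal R}_1$), so many eigenvalues may lie above ${\cal R}_1$.

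This is precisely why the proof of Proposition \ref{bakomo-epsilon} in the paper (following Bang, Koolen, and Moulton) does \emph{not} work near ${\cal R}_1$: it introduces the indicator intervals ${\cal I}_i=({\cal L}_i,{\cal R}_i)$ for all $g\le 2k-3$ distinct columns $(\gamma_i,\alpha_i,\beta_i)$ of the intersection array, exploits the unimodality of $({\cal R}_i)_i$ and ${\cal R}_g\ge{\cal R}_1$, deletes the $g+1$ boundary rows and columns of $L$ to show by interlacing that at most $2g+2$ eigenvalues avoid all indicator intervals, and then locates the special eigenvalue $\theta$ near a right indicator point \emph{different from} ${\cal R}_1$ before running the conjugate--multiplicity contradiction of Theorem \ref{banito}. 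In other words, the indicator-interval machinery is not an optional refinement of your scheme; it is what allows the multiplicity estimates to survive a middle part whose length is proportional to $h$, and a correct proof along your lines would have to rebuild it.
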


\noindent The proof of Proposition \ref{bakomo-epsilon} closely follows the
proof of Theorem \ref{banito}. Instead of considering an eigenvalue in the
above mentioned interval close to $2\sqrt{k-1}$, so-called indicator intervals
are used.

Let $G = \{ (c_i, a_i, b_i) : i=1,2,\ldots, D-1\}$ and $g = |G|$; note that $g
\leq 2k-3$. We will assume that $G=\{ (\gamma_i, \alpha_i, \beta_i) :
i=1,2,\ldots, g\}$ is ordered by $\gamma_{i+1} \geq \gamma_i$ and $\beta_{i+1}
\leq \beta_i$. Let $\ell_i = \ell(\gamma_i, \alpha_i, \beta_i)$ for $i=1,2,
\ldots, g$, whence $h = \ell_1$. Let ${\cal L}_i = \alpha_i - 2\sqrt{\gamma_i
\beta_i}$ and  ${\cal R}_i = \alpha_i + 2\sqrt{\gamma_i \beta_i}$  be the {\em
left} and {\em right indicator points}, respectively. The {\em indicator
interval}  is defined as the open interval ${\cal I}_i=({\cal L}_i, {\cal
R}_i)$, $i=1,2, \ldots, g$. Using the fact that the $c_i$s are non-decreasing
and the $b_i$s are non-increasing, it is fairly easy to see that $({\cal
R}_i)_i$ is a unimodal sequence.  The fact that $b_i + a_i \geq a_1 +2$
$(i=1,2,\ldots, D-1)$ implies that ${\cal R}_g \geq {\cal R}_1$.

By removing from the (tridiagonal) intersection matrix $L$ rows and columns $0,
\ell_1, \ell_1 +\ell_2, \ldots, \ell_1 + \cdots +\ell_{g-1}, D$ and using
interlacing, it follows that at most $2g+2$ eigenvalues of $\G$ (in general
this is a relatively small number compared to the total) do not lie in any of
the indicator intervals. Instead of taking $\theta$ close to ${\cal R}_1$, one
can show that there must exist an eigenvalue $\theta$ close to a right
indicator point different from ${\cal R}_1$ if $D-h-t$ is large enough. Then it
is shown in a similar way as in Theorem \ref{banito} that there exists an
algebraic conjugate of $\theta$ whose multiplicity is different from the
multiplicity of $\theta$; again a contradiction, so $h$ is bounded.

Until now the approach was to calculate the multiplicity of a specific
eigenvalue precisely and then show that this eigenvalue has an algebraic
conjugate with a different multiplicity. For a proof of the Bannai-Ito
conjecture one needs to use another tactic, especially in the case that $D-h-t$
is large. The idea here is to find an interval ${\cal I}$ in which there are at
least $\delta h$ eigenvalues (where $\delta$ is a positive real number only
depending on $k$) and in which every two algebraic conjugate eigenvalues
$\theta$ and $\theta'$ satisfy $|\theta- \theta'| \leq f(h)$, where $f(h)
\rightarrow 0 \ \ (h \rightarrow \infty)$. The main reason that one can find
such an interval ${\cal I}$ is that the right indicator points form a unimodal
sequence. Although to calculate the multiplicities of the eigenvalues only
involves three-term recurrence relations, to show that ${\cal I}$ really exists
and that we can approximate the multiplicities in ${\cal I}$ well enough is
extremely technical and subtle. Using some elementary number theory, it then
follows that the number of algebraic conjugates of eigenvalues in ${\cal I}$
(which must all be eigenvalues of $\G$) is at least $z(h) h $, where $z(h)
\rightarrow \infty \ \ (h \rightarrow \infty)$. But as the the number of
eigenvalues besides the valency is exactly $D$, which --- by the Ivanov bound
--- is at most $4^k h$, we see that this is a contradiction if $h$ is large.
Again, this means that $h$ is bounded, which proves the Bannai-Ito conjecture.

\subsection{Extensions of the Bannai-Ito conjecture}
Bannai and Ito \cite{BI87} showed that the length $\ell(c, k-2c, c)$ is bounded by $10k 2^k$ for every $c$. This
inspired Hiraki, Suzuki, and others to obtain bounds for $\ell(1, k-2, 1)$. The current best bound is by Hiraki
\cite{Hi103}, who obtained $\ell(1, k-2, 1) \leq 14$ if $k \geq 3$ and $\ell(1, k-2, 1) \leq 1$ if $k \geq 58$.
Inspired by this, Bang, Koolen, and Moulton \cite{BKM03} showed that if $b$ and $c$ are positive integers, then there
exists a constant $k_{\min} \geq \max \{b+c, 3\}$ such that if $\G$ is a distance-regular graph with valency $k
\geq k_{\min}$ and $h \geq 2$, then $\ell(c, k-b-c,b)\leq 1$. This implies, by using the validity of the Bannai-Ito
conjecture, that if $b$ and $c$ are positive integers, then there exists a constant $\ell_{\max}$ such that for every
distance-regular graph $\G$ with valency $k \geq \max \{b+c, 3\}$ and $h \geq 2$, we have that $\ell(c,
k-b-c,b)\leq \ell_{\max}$. It is still an open problem whether this is true for $h =1$ and $c_2 = 1$. This has been
conjectured by Bang et al.~\cite{BKM03}. Park, Koolen, and Markowsky \cite{PaKoMar} extended the Bannai-Ito conjecture
as follows.

\begin{prop}
Let $M$ be a positive integer. Then there are finitely many distance-regular
graphs with valency $k \geq 3$, diameter $D \geq 6$, and $\frac{k_2}{k} \leq
M$.
\end{prop}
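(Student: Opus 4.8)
The plan is to deduce the proposition from the Bannai--Ito theorem (Bang, Dubickas, Koolen, Moulton; see Section \ref{sec:proofBIconjecture}) by showing that the valency $k$ is bounded by a function of $M$ alone. Once $k\le f(M)$, the graphs in question have only finitely many possible valencies, and for each valency there are finitely many distance-regular graphs, so the proposition follows. To set this up, first reformulate the hypothesis: since $c_1=1$ we have $k_2 = kb_1/c_2$, so $k_2/k\le M$ is exactly the inequality $b_1\le Mc_2$. Moreover, by Proposition \ref{prop:unimodal}(iv) the ratios $k_{i+1}/k_i = b_i/c_{i+1}$ are non-increasing in $i$, hence $b_i\le Mc_{i+1}$ for all $i=1,\dots,D-1$; in particular $k_i\le M^{i-1}k$ and $v\le 1 + k(M^D-1)/(M-1)$. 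The whole problem is thus to bound $k$, and I would split according to the size of $c_2$.

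If $c_2$ is bounded in terms of $M$, then $b_1\le Mc_2$ is bounded, so it remains to bound $a_1$ (recall $k=1+a_1+b_1$). When $a_1=0$ the graph is triangle-free, $k=b_1+1$ is already bounded, and we are done. When $a_1\ge 1$, the local graph $\Upsilon(x)$ is $a_1$-regular on $k$ vertices, its complement is $b_1$-regular of bounded valency, and any two nonadjacent vertices of $\Upsilon(x)$ have at most $c_2$ common neighbours in $\Upsilon(x)$. Using Terwilliger's constraint on the non-principal local eigenvalues $\eta$ (they satisfy $(\eta-\tilde\theta_1)(\eta-\tilde\theta_D)\le 0$, with $\tilde\theta_1,\tilde\theta_D$ as in Section \ref{sec:tightDRG}) together with the fact that the complement of $\Upsilon(x)$ has all its eigenvalues in $[-b_1,b_1]$, one forces $\Upsilon(x)$ to have a very restricted spectrum; feeding this into the diameter bounds of Section \ref{sec:diameterbounds} and using $D\ge 6$ should bound $a_1$, hence $k$.

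If instead $c_2$ is large, then $c_2\ge 2$, so $\G$ contains an induced quadrangle and Proposition \ref{thm:terwilintersectionnos} gives $D\le (k+c_D)/(a_1+2)\le 2k/(a_1+2)$; combined with $D\ge 6$ this yields $a_1\le k/3-2$, so $b_1\ge 2k/3$, and then $b_1\le Mc_2$ forces $c_2\ge 2k/(3M)$. Invoking $c_i\le b_j$ for $i+j\le D$ (Proposition \ref{prop:unimodal}(iii)) together with $D\ge 6$, all of $b_1,\dots,b_{D-2}$ and $c_2,\dots,c_{D-1}$ are then of order $k$, and each ratio $b_i/c_{i+1}$ lies between $2/(3M)$ and $M$. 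In this ``homogeneous'' regime the intersection matrix $L$, after dividing by $k$, is close to a fixed tridiagonal matrix whose entries are controlled by $M$, and I would run a Biggs-formula/eigenvalue-multiplicity argument in the spirit of the Bannai--Ito proof: produce an eigenvalue $\theta$ whose multiplicity (via Theorem \ref{Biggsformula} and the $M$-controlled growth of the $k_i$) is provably different from that of an algebraically conjugate eigenvalue, or an indicator interval containing more mutually close eigenvalues than $D$ permits, arriving at a contradiction once $k$ is large.

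I expect this last (large $c_2$) regime to be the main obstacle. The diameter bounds available in Section \ref{sec:diameterbounds} all control $D$ only in terms of $k$, so bounding $k$ itself means carrying the delicate multiplicity estimates of the Bannai--Ito machinery through with growth governed by $M$ rather than by a fixed valency, and the crux is ensuring that the ``enough room'' conditions in those estimates can be made to depend only on $M$ and on $D\ge 6$. It is exactly here that the hypothesis $D\ge 6$ is essential, as it rules out the bounded-$k_2/k$ imprimitive families (Hadamard graphs with $D=4$, incidence graphs of Hadamard $2$-designs with $D=3$, and the like) for which the statement fails.
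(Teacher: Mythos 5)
First, a caveat about the comparison itself: the survey states this proposition with a citation to \cite{PaKoMar} and gives no proof, so there is no in-paper argument to measure yours against; the auxiliary facts the survey does attribute to that paper (the positivity of the standard sequence of $\theta_1$ up to $\lfloor D/2\rfloor$ and the resulting inequalities in Section \ref{sec:standardsequence}, which for $D\ge 6$ tie $k_2/k$ directly to $\theta_1/k$) indicate that the original route is an eigenvalue argument rather different from either branch you sketch. Judged on its own terms, your opening reduction --- bound $k$ by a function of $M$ and then invoke the Bannai--Ito theorem --- is the right frame, and the reformulation $k_2/k=b_1/c_2$ together with the non-increasing ratios $k_{i+1}/k_i$ is fine. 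But the argument does not close, and it contains one outright false step: in the large-$c_2$ branch you assert that $c_2\ge 2$ forces an induced quadrangle so that Proposition \ref{thm:terwilintersectionnos} applies. This implication is false; Terwilliger graphs such as the icosahedron, the Doro graph and the Conway--Smith graph have $c_2\ge 2$ and no induced quadrangles, which is exactly why Section \ref{sec:existencequadranglesT} collects \emph{sufficient} conditions for quadrangles. Hence the chain $D\le 2k/(a_1+2)$, so $a_1\le k/3-2$, so $c_2\ge 2k/(3M)$, is unjustified as written (it can be repaired once $c_2$ is a fixed proportion of $k$ via the Terwilliger-graph bounds, e.g.\ \cite[Cor.~1.16.6]{bcn}, but then the dichotomy has to be organized around that regime, which you do not do).

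More importantly, both branches stop exactly where the real work begins. In the bounded-$c_2$ branch you must exclude families with $b_1\le Mc_2$ bounded, $a_1$ (hence $k$) unbounded, and $D\ge 6$; the results of Section \ref{sec:diameterbounds} only bound $D$ in terms of $k$ and the head, never $k$ or $a_1$ in terms of bounded data, so ``feeding this into the diameter bounds \ldots should bound $a_1$'' is an expectation, not an argument, and nothing you cite addresses a hypothetical graph with bounded $b_1,c_2$, huge $a_1$, and $D\ge 6$. In the large-$c_2$ branch, ``run a Biggs-formula/eigenvalue-multiplicity argument in the spirit of the Bannai--Ito proof'' is a program rather than a proof: no eigenvalue is produced, no multiplicity is estimated, and you yourself flag this as the main obstacle. (Note that once $c_2\ge \varepsilon k$ with $\varepsilon=\varepsilon(M)$, Propositions \ref{kooprop}(i) and \ref{BHKprop} bound $D$, and hence $v/k$, by a function of $M$ alone, which places you in the setting of the Koolen--Park theorem of Section \ref{sec:morecombinatorial}; but that is itself a separate nontrivial theorem, not something available for free.) As it stands, the proposal is a plausible plan with genuine gaps in both cases, and the one concrete quantitative step it does take rests on the false quadrangle claim.
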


\noindent For diameter at most four, the analogous result is not true. For
diameter two this is clear. For diameter three, the Taylor graphs have $k_2 =k$
and the incidence graphs of the complements of projective planes of order $t$
have $k = t^2$ and $k_2 = t^2 +t$. For diameter four, the Hadamard graphs  have
$k_2 = 2(k-1)$.

Koolen and Park  \cite{KoPa11} showed that the only primitive distance-regular
graphs with $\frac{k_2}{k} \leq 1.5$, and diameter at least three are  the
Johnson graph $J(7,3)$ and the halved $7$-cube.

\subsection{The distance-regular graphs with small valency}
The edge is the only distance-regular graph with valency one, and the polygons are the distance-regular graphs with
valency two. The distance-regular graphs with valency three have been classified by Biggs, Boshier, and Shawe-Taylor
\cite{BBS} (see also \cite[Thm.~7.5.1]{bcn}): There are exactly 13 of them and all have diameter at most 8.

The intersection arrays of the distance-regular graphs with valency four have been
classified by Brouwer and Koolen \cite{BK99}. There are exactly 17 such
intersection arrays and all have diameter at most 7 (all graphs are known, except perhaps
for point-line incidence graphs of a generalized hexagon of order three).

The distance-regular graphs with valency 6 and $a_1 = 1$ (i.e., of order
$(2,2)$) have been classified by Hiraki, Nomura, and Suzuki \cite{HNS}. There are
exactly five of them and they are all geometric. This last result also
completes the classification of all  distance-regular graphs with valency at
most 7 and $a_1 \geq 1$ (see \cite{HNS} for a complete list).

The larger $t$ is, the more difficult it is to classify the distance-regular
graphs of order $(s,t)$. For example, it is much harder to classify the
distance-regular graphs with valency $5$ than the distance-regular with valency
$6$ and $a_1 = 1$. For $t=1$ we have the line graphs, and Yamazaki \cite{Y95}
developed some theory for the case $t=2$. It is not known whether for a
distance-regular graph with order $(s,t)$, one can bound the diameter in terms
of $t$ only, if $t \geq 2$ (see also Corollary \ref{cordiameter}).


\section{Geometric distance-regular graphs}\label{sec:Geometricdrg}


\subsection{Metsch's characterizations}\label{sec:Metsch}

As mentioned before, Metsch characterized most of the Grassmann graphs and
bilinear forms graphs by their intersection arrays. From these intersection
arrays, he recovers the geometric properties of these graphs. An important
ingredient for this is the following proposition, which is used to construct
lines --- large cliques --- that partition the edge set.

\begin{prop}\label{Metschresult-2.2}{\em \cite[Result~2.2]{Me95}}
Let $\mu \geq 1$, $\lambda_1,~\lambda_2$, and $m$ be integers. Assume that
$\G$ is a connected graph with the following properties:
\begin{enumerate}[{\em (i)}]
\item Every two adjacent vertices have at least $\lambda_1$ and at most
    $\lambda_2$ common neighbors,
\item Every two nonadjacent vertices have at most $\mu$ common neighbors,
\item $2\lambda_1-\lambda_2>(2m-1)(\mu-1)-1$,
\item Every vertex has fewer than $(m+1)(\lambda_1+1)-\frac{1}{2}m(m+1)(\mu-1)$ neighbors.
\end{enumerate}
Define a line to be a maximal clique $C$ satisfying $|C|\geq
\lambda_1+2-(m-1)(\mu-1)$. Then every vertex is on at most $m$ lines, and every
two adjacent vertices lie in a unique line.
\end{prop}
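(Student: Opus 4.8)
This is a ``grand clique'' statement of Bose--Laskar type: conditions (i)--(iv) are calibrated so that a maximal clique of the prescribed threshold size $\lambda_1+2-(m-1)(\mu-1)$ behaves like a line of a partial linear space. I would organize the proof around three claims: (A) every edge lies in \emph{some} line; (B) every edge lies in \emph{at most one} line; (C) every vertex lies on at most $m$ lines. I would prove (B) first, as it is the cleanest, and because the disjointness it yields is what makes the bookkeeping in (C) run.

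\textbf{Uniqueness (B).} Suppose an edge $xy$ lies in two distinct lines $C_1,C_2$. Both are maximal cliques, so neither contains the other; pick $u\in C_1\setminus C_2$. By maximality of $C_2$ there is $v\in C_2$ with $u\not\sim v$, and then $v\in C_2\setminus C_1$ (else $u\sim v$ inside $C_1$). Every $z\in C_1\cap C_2$ is adjacent to both $u$ and $v$ (it lies in $C_1\ni u$ and in $C_2\ni v$ and differs from both), so $C_1\cap C_2\subseteq\G(u)\cap\G(v)$, whence $|C_1\cap C_2|\le\mu$ by (ii); since $x,y\in C_1\cap C_2$ this already contradicts the case $\mu=1$. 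On the other hand $(C_1\cup C_2)\setminus\{x,y\}\subseteq\G(x)\cap\G(y)$, which has size at most $\lambda_2$ by (i). Combining,
\[
\lambda_2\ \ge\ |C_1|+|C_2|-|C_1\cap C_2|-2\ \ge\ 2\bigl(\lambda_1+2-(m-1)(\mu-1)\bigr)-\mu-2\ =\ 2\lambda_1+1-(2m-1)(\mu-1),
\]
so $2\lambda_1-\lambda_2\le(2m-1)(\mu-1)-1$, contradicting (iii).

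\textbf{Existence (A).} Fix an edge $xy$ and set $N=\G(x)\cap\G(y)$, so $\lambda_1\le|N|\le\lambda_2$ by (i), and note that two non-adjacent vertices of $N$ have at most $\mu-2$ common neighbours \emph{inside} $N$, since $x,y$ are common neighbours lying outside $N$. Take a maximal clique $C\supseteq\{x,y\}$ of largest possible size; I would show $|C|\ge\lambda_1+2-(m-1)(\mu-1)$, after which $C$ is a line through $xy$. If not, then $|N\setminus C|\ge\lambda_1-|C\setminus\{x,y\}|\ge(m-1)(\mu-1)+1$, and each $z\in N\setminus C$ is, by maximality of $C$, non-adjacent to some vertex of $C\setminus\{x,y\}$. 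I would then double-count these non-adjacencies: (ii) bounds how many vertices of $N$ can miss a fixed vertex of $C$, and (iii)--(iv) are designed so that any such configuration either produces a maximal clique through $xy$ strictly larger than $C$ or forces some vertex to have degree at least the value forbidden by (iv) --- a contradiction either way.

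\textbf{At most $m$ lines (C).} By (B), any two lines through a vertex $x$ meet only in $x$, so the sets $C\setminus\{x\}$, for the lines $C\ni x$, are pairwise disjoint cliques in $\G(x)$, each of size at least $\lambda_1+1-(m-1)(\mu-1)$. A crude count against (iv) is not by itself enough when $m$ is large, so I would sharpen it using (ii): a neighbour $z$ of $x$ that lies on one line through $x$ is adjacent to at most $\mu-1$ vertices of any \emph{other} line through $x$ (otherwise $z$, together with a suitable non-neighbour of $z$ on that line, would have more than $\mu$ common neighbours), which pins down how the at least $\lambda_1$ common neighbours of $x$ and $z$ distribute among the lines at $x$; feeding this structural restriction into the count of $|\G(x)|$ and comparing with (iv) bounds the number of lines through $x$ by $m$.

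\textbf{Main obstacle.} The routine part is (B). The real work is (A) together with the refined clique-packing estimate in (C): in both places one must play off (ii), (iii) and (iv) against one another simultaneously --- in (A) to show that a largest maximal clique through an edge already attains the threshold size, and in (C) to show that no more than $m$ of the resulting pairwise-almost-disjoint lines can cluster at a vertex. I expect the delicate inclusion--exclusion and averaging (over $N$ in (A), and over the lines at a vertex in (C)), together with the need to isolate the degenerate case $\mu=1$, to be where essentially all the care is required.
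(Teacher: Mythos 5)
The paper itself gives no proof of this proposition --- it is quoted from Metsch \cite[Result~2.2]{Me95} --- so your argument has to stand on its own. Your uniqueness step (B) does: the chain $\lambda_2\ge |C_1\cup C_2|-2=|C_1|+|C_2|-|C_1\cap C_2|-2\ge 2\bigl(\lambda_1+2-(m-1)(\mu-1)\bigr)-\mu-2$, with $|C_1\cap C_2|\le\mu$ coming from a non-adjacent pair $u\in C_1\setminus C_2$, $v\in C_2\setminus C_1$, contradicts (iii) exactly as you say. Step (C) is close but not finished as written: your key observation --- a vertex $z$ on one line through $x$ has at most $\mu-1$ neighbours in $C'\setminus\{x\}$ for any other line $C'$ through $x$ --- is correct, but ``distributing the $\ge\lambda_1$ common neighbours of $x$ and $z$ among the lines at $x$'' does not close the count, since those common neighbours need not lie on any line at all. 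What does work: if $C_1,\dots,C_{m+1}$ were lines through $x$, use your $\mu-1$ bound to choose greedily pairwise non-adjacent $z_i\in C_i\setminus\{x\}$ (possible because $|C_i\setminus\{x\}|\ge\lambda_1+1-(m-1)(\mu-1)>m(\mu-1)$, using $\lambda_1\ge(2m-1)(\mu-1)$, which follows from (iii) and $\lambda_1\le\lambda_2$), then apply inclusion--exclusion to the sets $M_i=\{z_i\}\cup(\G(x)\cap\G(z_i))\subseteq\G(x)$: by (i) $|M_i|\ge\lambda_1+1$ and by (ii) $|M_i\cap M_j|\le\mu-1$, so $|\G(x)|\ge(m+1)(\lambda_1+1)-\tfrac12m(m+1)(\mu-1)$, contradicting (iv). (The same count shows $\G$ has no $(m+1)$-claws, which is the real content of (iv).)

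The genuine gap is your existence step (A), and that is precisely where the substance of Metsch's result lies. Your sketch offers no mechanism: the one concrete assertion --- that (ii) ``bounds how many vertices of $N$ can miss a fixed vertex of $C$'' --- is not what (ii) gives; nothing in (i)--(iv) directly limits, for a fixed $c\in C$, the number of $z\in N$ non-adjacent to $c$. What (ii) gives is the reverse statement: each $z\in N\setminus C$, having some non-neighbour $c\in C\setminus\{x,y\}$, is adjacent to at most $\mu-2$ vertices of $C\setminus\{x,y\}$. And the hoped-for dichotomy ``either a maximal clique through $xy$ larger than $C$, or a degree violating (iv)'' does not fall out of these facts: for example, nothing you have said excludes the configuration in which $N\setminus C$ is itself a clique of size at most $|C|-2$, which yields neither a larger clique through $xy$ nor, by itself, any conflict with (iv); ruling out such configurations (and the intermediate ones) is exactly the delicate counting that Metsch's proof performs, and crude bounds such as $|N|\le\alpha(N)\cdot\omega(N)$ are neither valid in general nor numerically sufficient here. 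So, as it stands, you have proved that an edge lies in at most one line and (after the repair above) that a vertex lies on at most $m$ lines, but not that every edge lies in a line --- and it is this existence half that the survey actually uses when it says each edge is in exactly one line.
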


\noindent In \cite{Me95}, Metsch used Proposition \ref{Metschresult-2.2} and a characterization of projective incidence
structures by Ray-Chaudhuri and Sprague \cite{RCSprague} (see also \cite[Thm.~9.3.9]{bcn}, and \cite{Cuypers92} for a
generalization by Cuypers) to characterize the Grassmann graphs. The interesting thing is that he hardly required any
of the regularity conditions that follow from the intersection array. The only conditions that Metsch used were the
intersection number $c_2$ and upper and lower bounds on the number $b_0(x)$ of neighbors of a vertex $x$, and on the
number of common neighbors $a_1(x,y)$ of two adjacent vertices $x$ and $y$, and a lower bound on the number $b_2(x,y)$
of vertices $x$ and $y$ at distance two. In weaker form, the characterization is as follows.

\begin{prop} {\em \cite[Thm.~1.1]{Me95}}
Let $q \ge 2$ be an integer, and let $D$ and $n$ be integers satisfying $2D \le
n$. Let $s+1=(q^{n-D+1}-1)/(q-1)$ and $m=(q^{D}-1)/(q-1)$. Let $\G$ be a
connected $ms$-regular graph with the property that every two adjacent vertices
have $a_1=s-1+(m-1)q$ common neighbors and every two vertices at distance two
have $c_2=(q+1)^2$ common neighbors, and such that every two vertices $x$ and
$y$ at distance two have $b_2(x,y)>(m-q-1)(s-q^2-q)$. If $D \neq 2,
\frac{n}{2}$, or $\frac{n-1}{2}$ (for all $q$) and $(D,q) \neq
(\frac{n-2}{2},2), (\frac{n-2}{2},3)$, or $(\frac{n-3}{2},2)$, then $q$ is a
prime power and $\G$ is the Grassmann graph $J_q(n,D)$.
\end{prop}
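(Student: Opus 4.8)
The strategy is to reconstruct the point--line geometry of the Grassmann graph from the meagre data provided and then invoke a known recognition theorem for that geometry. In outline: (1) use Proposition~\ref{Metschresult-2.2} to extract a family $\mathcal{L}$ of large cliques -- ``lines'' -- so that each edge lies in a unique line and each vertex lies on only boundedly many lines, making $(\G,\mathcal{L})$ a partial linear space with collinearity graph $\G$; (2) exploit the \emph{exact} values $a_1=s-1+(m-1)q$ and $c_2=(q+1)^2$, together with the lower bound on $b_2$, to show that $(\G,\mathcal{L})$ behaves locally like the geometry of $D$-subspaces and $(D-1)$-subspaces of a projective space of rank $n$; (3) apply the Ray-Chaudhuri--Sprague classification of projective incidence structures to identify this geometry, whence $q$ is a prime power and $\G\cong J_q(n,D)$.

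For Step~1 I would invoke Proposition~\ref{Metschresult-2.2} with $\lambda_1=\lambda_2=a_1$, $\mu=c_2=(q+1)^2$, and its parameter chosen of the order of $m=(q^{D}-1)/(q-1)$ (an upper bound on the number of lines through a vertex). After substituting the given expressions and using $2D\le n$ and $s+1=(q^{n-D+1}-1)/(q-1)$, hypotheses (i)--(iv) of that proposition become elementary polynomial inequalities in $q$, $D$, $n$, and one checks that a valid choice of the parameter exists as long as the exceptional triples are excluded; the clique-size threshold $\lambda_1+2-(m-1)(\mu-1)$ is arranged so that the resulting lines are exactly the ``star'' cliques (those of size $s+1$) and the smaller cliques are not counted. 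The proposition then delivers $\mathcal{L}$ with the stated properties.

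For Step~2, the value $c_2=(q+1)^2$ forces two concurrent lines to span a unique ``plane'' of the expected shape, and iterating this produces, around each vertex, a nested family of flats mimicking the subspace lattice; the value of $a_1$ then pins the common line size to $s+1=(q^{n-D+1}-1)/(q-1)$ and the number of lines on a point to $(q^{D}-1)/(q-1)$, and shows the residue of a point is again an incidence structure of the same type in smaller rank. The hypothesis $b_2(x,y)>(m-q-1)(s-q^2-q)$ is precisely what guarantees these residual configurations are non-degenerate (enough vertices at distance two; distinct lines meet in at most one point), so that the recursion does not collapse. It is here that one needs $D\ne 2,\frac n2,\frac{n-1}{2}$ and $(D,q)\notin\{(\frac{n-2}{2},2),(\frac{n-2}{2},3),(\frac{n-3}{2},2)\}$: for these values the construction genuinely fails, and indeed there are honest counterexamples or ambiguities (for $D=2$ the line graph of a $2$-$((q^{n}-1)/(q-1),q+1,1)$ design has the same intersection array; for $n=2D+1$, i.e.\ $D=\frac{n-1}{2}$, so does the twisted Grassmann graph; the remaining cases fail the counting estimates by a narrow margin).

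For Step~3, the incidence structure built in Step~2 satisfies the axioms of a Grassmann-type projective incidence structure of rank $n$ over order $q$, so the theorem of Ray-Chaudhuri and Sprague~\cite{RCSprague} (cf.~\cite[Thm.~9.3.9]{bcn}; see also Cuypers~\cite{Cuypers92}) identifies it with the geometry of $D$-subspaces incident to $(D-1)$-subspaces of a projective space $\mathrm{PG}(n-1,q)$; by the Veblen--Young theorem a finite projective space of dimension at least three can only exist over a field of prime-power order, so $q$ is a prime power, and the collinearity graph of that geometry is by construction $J_q(n,D)$ (which in particular has diameter $D$). The main obstacle is Step~2: one must bootstrap a rigid multi-rank geometry from just $a_1$, $c_2$, the valency, and a one-sided bound on $b_2$ -- with no distance-regularity available -- and simultaneously verify that the excluded parameter triples are exactly those for which the bootstrap, or the recognition theorem, breaks down; this case analysis is the technical heart of Metsch's argument.
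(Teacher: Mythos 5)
Your outline follows exactly the route the survey attributes to Metsch \cite{Me95}: construct the ``star'' lines via Proposition~\ref{Metschresult-2.2} and then recognize the resulting partial linear space with the Ray-Chaudhuri--Sprague characterization of projective incidence structures \cite{RCSprague}, which is all the detail the paper itself supplies, since it states the result without proof. So the approach is the same as the paper's; just be aware that your Step~2 (and the verification that the excluded parameter triples are precisely where the counting in Proposition~\ref{Metschresult-2.2} and the geometric bootstrap break down) is the substantial part of Metsch's argument and is only sketched, not established, in your proposal.
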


\noindent Building on work by Huang \cite{Huang87} and a characterization of
attenuated spaces by Sprague \cite{Sprague}, Metsch \cite{Me99} also used
Proposition \ref{Metschresult-2.2} to characterize the bilinear forms graphs.

\begin{prop} Let $\G$ be a distance-regular graph with
classical parameters $(D,q,\alpha,\beta)$, where $\alpha=q-1$ and $D \ge 3$.
Suppose that either $q=2$ and $\beta \ge q^{D+4}-1$ or $q \ge 3$ and $\beta \ge
q^{D+3}-1$. Then $q$ is a prime power, $\beta=q^e-1$ for some integer $e$, and
$\G$ is the bilinear forms graph $Bil(D \times e,q)$.
\end{prop}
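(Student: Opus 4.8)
The plan is to recover from the intersection array alone the structure of an attenuated space, and then identify its point graph as a bilinear forms graph. First I would unwind the classical parameters $(D,q,q-1,\beta)$: with $\alpha=q-1$ one computes $k=b_0=\gauss{D}{1}\beta$, $c_i=q^{i-1}\gauss{i}{1}$ (so $c_1=1$ and $c_2=q(q+1)$), $a_1=\beta+q^D-q-1$, and $b_i=q^i\gauss{D-i}{1}(\beta+1-q^i)$, all explicit in $q,D,\beta$, together with $\theta_D=-\gauss{D}{1}$. In particular the hypothesis on $\beta$ forces $\beta$, hence $a_1$, to be large relative to the remaining parameters, and by Proposition~\ref{delbound} every clique of $\G$ has at most $\beta+1$ vertices.

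The heart of the argument is to equip $\G$ with a family $\mathcal{L}$ of ``lines'' --- maximal cliques of size close to $\beta+1$ --- such that every edge lies on a unique line and every vertex lies on at most $\gauss{D}{1}$ of them (this being the expected number of Delsarte cliques through a vertex of $Bil(D\times e,q)$ when $e>D$). This is exactly the setting of Metsch's line lemma, Proposition~\ref{Metschresult-2.2}, applied with $\lambda_1=\lambda_2=a_1$, $\mu=c_2=q(q+1)$ and $m=\gauss{D}{1}$; its conditions (iii) and (iv) become explicit inequalities in $q,D,\beta$, and it is here that the bounds $\beta\ge q^{D+4}-1$ ($q=2$) and $\beta\ge q^{D+3}-1$ ($q\ge 3$) are spent, the $q=2$ case needing an extra factor of $q$ because condition (iv) is tightest there. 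Since a single application to $\G$ is too weak once $D\ge 4$, I expect one must pass to the local graphs of $\G$, identify them (by a short computation) as rook's graphs --- line graphs of complete bipartite graphs --- feed those into Huang's \cite{Huang87} analysis, and induct on $D$. A double count with $\sum_i k_i$ and the value of $c_2$ then upgrades ``at most $\gauss{D}{1}$ lines'' to exactly $\gauss{D}{1}$, pins every line to size $\beta+1$, and (from $c_2=q(q+1)$) shows that two points at distance $2$ span a grid of lines, exactly the incidence pattern of two parallel classes of an attenuated plane.

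With the partial linear space $(\G,\mathcal{L})$ in hand I would verify the axioms of an attenuated space of rank $D$ and then invoke Sprague's \cite{Sprague} classification of attenuated spaces (again using $e$ large, and clearing the small-rank and $q=2$ exceptions), building on Huang \cite{Huang87}. This yields that $(\G,\mathcal{L})$ is the attenuated space over $GF(q)$ of rank $D$ inside $GF(q)^{D+e}$; in particular $q$ is a prime power and $\beta+1=q^e$, and since the point graph of that space is by definition the bilinear forms graph $Bil(D\times e,q)$, we conclude $\G\cong Bil(D\times e,q)$. The main obstacle is the middle paragraph: extracting the full line structure with only the \emph{stated} --- and rather tight --- lower bound on $\beta$, in particular controlling condition (iv) of Metsch's lemma, which is what forces the argument to be localized and inductive in $D$ rather than a one-shot application; and then checking enough regularity of $(\G,\mathcal{L})$ at distances $2$ and $3$ to make Sprague's theorem apply, where the $q=2$ versus $q\ge 3$ dichotomy and the exceptional low-diameter cases genuinely appear.
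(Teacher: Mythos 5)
The survey does not actually prove this proposition: it is quoted from Metsch \cite{Me99}, and the only indication of method given is the remark that Metsch, building on work of Huang \cite{Huang87} and on Sprague's characterization of attenuated spaces \cite{Sprague}, used Proposition~\ref{Metschresult-2.2}. Your outline follows exactly that attributed route (parameter computation, Metsch's clique lemma to produce a system of lines through every edge, recognition of an attenuated space, then Sprague/Huang to identify it and force $q$ to be a prime power and $\beta=q^e-1$), so at the level of strategy you and the cited source agree, and your parameter computations ($c_2=q(q+1)$, $a_1=\beta+q^D-q-1$, $\theta_D=-\gauss{D}{1}$, Delsarte clique size $\beta+1$) are correct.

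What you have, however, is a sketch of Metsch's argument rather than a proof, and you have correctly located where the real work lies. With $m=\gauss{D}{1}$, $\mu=c_2=q(q+1)$, $\lambda_1=\lambda_2=a_1$, condition (iv) of Proposition~\ref{Metschresult-2.2} becomes $\gauss{D}{1}\beta<(\gauss{D}{1}+1)(\beta+q^D-q)-\tfrac12\gauss{D}{1}(\gauss{D}{1}+1)(q^2+q-1)$, i.e.\ $\beta$ must exceed a quantity of order $q^{2D}$; the stated hypotheses $\beta\ge q^{D+3}-1$ (resp.\ $q^{D+4}-1$ for $q=2$) do not give this once $D\ge 4$, so the one-shot application genuinely fails, and the localized/inductive construction of the line system that you only gesture at (``pass to the local graphs \dots{} and induct on $D$'') is precisely the technical core of \cite{Me99}; your proposal supplies neither that step nor the verification of Sprague's axioms from the resulting incidence structure. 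One small correction along the way: the local graph of $Bil(D\times e,q)$ is the $(q-1)$-clique extension of a $\gauss{D}{1}\times\gauss{e}{1}$ grid, so it is a rook's graph only when $q=2$. In short: right plan, the same one the survey attributes to Metsch, but the decisive steps remain unproved.
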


\noindent Proposition \ref{Metschresult-2.2} can be used further in
characterizing other geometric distance-regular graphs. We will get back to
this in Section \ref{sec:geometric}.

\subsection{Characterization of Doubled Odd and Doubled Grassmann
graphs}\label{sec:otherinfinite} In Section \ref{sec:generalizations+geometric}
we mentioned the distance-biregular graphs that arise as incidence graphs
between the vertices and the cliques coming from the $(D+1)$-subspaces in the
Grassmann graph $J_q(n,D)$, and the similar one (with $(D+1)$-subsets) from the
Johnson graph $J(n,D)$. Cuypers \cite{Cuypers92} classified the
distance-biregular graphs with diameter at least 5 and $c_2^R=1<c_3^R=c_4^R$
($R$ being one of the color classes): the only ones are the above mentioned
graphs and the Doubled Moore graphs. This implies that the Doubled Grassmann
graphs, the Doubled Odd graphs (the case $n=2D+1$), and also the Doubled
Hoffman-Singleton graph are determined as distance-regular graphs by their
intersection arrays.

Hiraki \cite{Hi07} (also) characterized the Odd graphs and the Doubled Odd
graphs among the distance-regular graphs by a few of their intersection
numbers, as we already mentioned for the Odd graphs in Section
\ref{sec:determinedbyarray}. Moreover, Hiraki \cite{Hi03D} characterized the
Doubled Grassmann graphs, the Doubled Odd graphs, and the Odd graphs by their
strongly closed subgraphs.

\subsection{Bounds on claws}\label{sec:claws}
An $m$-claw in a graph $\G$ is an induced $m$-star $K_{1,m}$, or in other
words, a coclique of size $m$ in one of the local graphs $\Upsilon(x), x \in
V$.

If $\G$ is a geometric distance-regular graph (with respect to a set of
Delsarte cliques $\cal{C}$) with smallest eigenvalue $-m$, then it follows
easily that each vertex is in $m$ cliques of $\cal{C}$, and hence $\G$ has
no $(m+1)$-claws. Under some conditions, a reverse statement can be made, as we
shall see at the end of this section. Besides this, the existence of claws of
certain size gives rise to new parameter conditions. But as we shall see,
sometimes the intersection numbers force the existence of claws, thus giving
some nonexistence results.

A special case of a result of Metsch's work \cite[Lemma~1.1.b]{metsch91} on the existence of large cliques in graphs is
the following (see also work by Godsil \cite[Lemma~2.3]{Godsil93} or Koolen and Park \cite[Lemma~2]{KoPa10}).

\begin{lemma}
Let $\G$ be a distance-regular graph. Let $x$ be a vertex and let $\mu$ be
the maximum size of $\G(x) \cap \G(y) \cap \G(z)$ where $y \sim x
\sim z$ and $y \not\sim z$. If the local graph $\Upsilon(x)$ contains a
coclique of size $m$, then $\mu \geq \frac{ m (a_1 +1) - k}{\binom{m}{2}}$.
\end{lemma}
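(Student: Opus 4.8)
The plan is to count, in two different ways, the number of edges (in the local graph $\Upsilon(x)$, equivalently in $\G$) between the vertices of the coclique and the remaining neighbors of $x$. Fix the vertex $x$ and let $S = \{y_1, y_2, \dots, y_m\} \subseteq \G(x)$ be a coclique of size $m$ in $\Upsilon(x)$; thus the $y_i$ are pairwise non-adjacent vertices, each adjacent to $x$. Write $N = \G(x)$, so $|N| = b_0 = k$, and let $\Upsilon(x)$ be the induced subgraph on $N$, which is regular of valency $a_1$.

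First I would bound the number of edges leaving $S$ within $\Upsilon(x)$ from below using the valency. Each $y_i$ has exactly $a_1$ neighbors inside $N$; since $S$ is a coclique, none of these neighbors lies in $S$, so all $a_1$ of them lie in $N \setminus S$. Hence the number of edges of $\Upsilon(x)$ with one endpoint in $S$ and the other in $N\setminus S$ is exactly $m a_1$. On the other hand, I would bound this same quantity from above by counting, for each vertex $z \in N\setminus S$, how many $y_i$ it can be adjacent to. If $z$ is adjacent to $y_i$ and to $y_j$ (with $i\ne j$), then since $y_i \not\sim y_j$, the three vertices $y_i, z, y_j$ realize the configuration in the statement of the lemma (with the pair $\{y_i,y_j\}$ playing the role of $\{y,z\}$ there and $z$ a common neighbor of $x, y_i, y_j$); more precisely, $z \in \G(x)\cap\G(y_i)\cap\G(y_j)$. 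Consequently the number of pairs $(y_i,y_j)$ with $i<j$ that are simultaneously dominated by a fixed $z$ is at most $\binom{d_z}{2}$ where $d_z = |\{i : z\sim y_i\}|$, and this in turn is controlled by $\mu$: indeed $|\G(x)\cap\G(y_i)\cap\G(y_j)| \le \mu$ for every such pair.

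The key step is to combine these counts correctly. Summing over all pairs $\{y_i,y_j\}$, the number of vertices $z\in N$ adjacent to both is at most $\mu$; summing over the $\binom{m}{2}$ pairs gives $\sum_{i<j}|\G(x)\cap\G(y_i)\cap\G(y_j)| \le \binom{m}{2}\mu$. But the left-hand side also equals $\sum_{z\in N}\binom{d_z}{2}$, and by convexity (or just by $\binom{d}{2}\ge d-1$ for $d\ge 1$, applied to the $z$ with $d_z\ge 1$, together with $\binom{d}{2}\ge 0$ otherwise) one gets $\sum_{z\in N}\binom{d_z}{2} \ge \sum_{z\in N}(d_z - 1) \ge \Big(\sum_{z\in N} d_z\Big) - k$. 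Now $\sum_{z\in N}d_z = \sum_{i=1}^m |\{z\in N : z\sim y_i\}| = m a_1$ (again each $y_i$ has $a_1$ neighbors in $N$, none in $S$, but including $z=x$? — no: $x\notin N$, so all $a_1$ neighbors of $y_i$ in $N$ are counted, and moreover $y_i$ is adjacent to $x$ which contributes the "$+1$" discrepancy, hence $m a_1$ should in fact be $m(a_1+1) - (\text{edges within } S) = m(a_1+1)$ once one accounts for $x$ being a common neighbor of all $y_i$). I would be careful here: the cleanest bookkeeping is to include $x$ itself as a common neighbor, so that $|\G(x)\cap\G(y_i)| = a_1 + 1$ counting $x$, but $x$ is common to \emph{all} pairs and contributes $\binom{m}{2}\cdot 1$ to the double count in a way that must be subtracted or absorbed; tracking this correctly yields $m(a_1+1) - k \le \binom{m}{2}\mu$, i.e. $\mu \ge \frac{m(a_1+1)-k}{\binom{m}{2}}$, as claimed.

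The main obstacle is precisely this bookkeeping of whether $x$ and the edges internal to $N\setminus S$ are double-counted: one must decide consistently whether "common neighbor of $y_i$ and $y_j$" is taken inside $\G(x)$ (so automatically including $x$) or inside all of $V_\G$, and then reconcile the valency count $a_1$ (which does \emph{not} count $x$, as $x\notin\Upsilon(x)$) with the $(a_1+1)$ appearing in the final inequality. The correct viewpoint is: each $y_i$ together with $x$ has a closed structure, and summing $|\G(x)\cap\G(y_i)|$ over $i$ — where now this set \emph{excludes} the trivial contributions and one notes that the $y_j$ with $j\ne i$ are \emph{not} in $\G(y_i)$ since $S$ is a coclique — gives $m a_1$; adding back the $m$ contributions from $x$ to restore $\G[N\cup\{x\}]$-degrees gives the $m(a_1+1)$, while the subtracted $k$ accounts for the $|N\cup\{x\}|$-many "$-1$" terms in $\binom{d}{2}\ge d-1$. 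Once this is pinned down, the rest is the straightforward double-counting above and division by $\binom{m}{2}$ (which is positive since $m\ge 2$, as a $1$-claw is vacuous).
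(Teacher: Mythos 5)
Your overall strategy is the right one (it is essentially the standard double count behind the references the paper cites for this lemma, which the paper itself states without proof): fix the coclique $S=\{y_1,\dots,y_m\}$ in $\Upsilon(x)$, set $d_z=|\{i:z\sim y_i\}|$ for $z\in\G(x)$, and compare $\sum_{z}\binom{d_z}{2}=\sum_{i<j}|\G(x)\cap\G(y_i)\cap\G(y_j)|\le\binom{m}{2}\mu$ with a lower bound coming from $\sum_{z}d_z=ma_1$. But the step that should produce the numerator $m(a_1+1)-k$ is never actually carried out. Summing $\binom{d_z}{2}\ge d_z-1$ over all $k$ vertices of $\G(x)$ only gives $ma_1-k$, and the ``bookkeeping'' you invoke to recover the missing $m$ via the vertex $x$ does not hold together: $\sum_{z\in\G(x)}d_z$ equals exactly $ma_1$, not $m(a_1+1)$; and if you enlarge the ground set to $\G(x)\cup\{x\}$, you then have $k+1$ (not $k$) terms $-1$, while each pair $\{y_i,y_j\}$ acquires up to $\mu+1$ common neighbours in the enlarged set, because $\mu$ by definition counts common neighbours inside $\G(x)$ only. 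That route therefore yields only $\binom{m}{2}(\mu+1)\ge m(a_1+1)-(k+1)$, which is strictly weaker than the lemma. As written, your argument rigorously proves $\mu\ge (ma_1-k)/\binom{m}{2}$ but not the stated inequality.

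The missing idea is one you already have on the page but do not use at the decisive moment: since $S$ is a coclique, every neighbour of every $y_i$ inside $\G(x)$ lies in $\G(x)\setminus S$, so $d_z=0$ for all $z\in S$ and the support of $d$ is contained in a set of only $k-m$ vertices. Apply $\binom{d_z}{2}\ge d_z-1$ only over $z\in\G(x)\setminus S$ (equivalently, only over the support of $d$): this gives $\sum_{z}\binom{d_z}{2}\ge\sum_{z\in\G(x)\setminus S}(d_z-1)=ma_1-(k-m)=m(a_1+1)-k$, and combining with $\sum_{z}\binom{d_z}{2}\le\binom{m}{2}\mu$ finishes the proof (with $m\ge 2$ so that $\binom{m}{2}>0$). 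The vertex $x$ plays no role and is best kept out of the count altogether.
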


\noindent The following consequence of this lemma was observed by Koolen and Park \cite[Thm.~4]{KoPa10}, using that a
`greedy' coclique in $\Upsilon(x)$ has at least $k/(a_1+1)$ vertices.

\begin{prop}\label{shillaterw}
Let $\G$ be a distance-regular graph with valency $k$ and diameter $D \geq
2$, and let $m' = \lceil \frac{k}{a_1+1} \rceil.$ Then
\begin{equation}\label{KoPa}
	c_2 -1 \geq \frac{m'(a_1+1)-k}{\binom{m'}{2}}
\end{equation}
with equality implying that $\G$ is a Terwilliger graph.
\end{prop}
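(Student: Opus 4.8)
The plan is to apply the cited lemma of Metsch (the one immediately preceding the proposition) to a carefully chosen coclique inside a local graph $\Upsilon(x)$, and then observe that the resulting inequality is exactly \eqref{KoPa} once we feed in the right value of $m$. First I would fix an arbitrary vertex $x$ and note that the local graph $\Upsilon(x)$ is $a_1$-regular on $k$ vertices. A standard greedy argument produces an independent set in $\Upsilon(x)$ of size at least $\lceil k/(a_1+1)\rceil = m'$: repeatedly pick a vertex, delete it together with its at most $a_1$ neighbours, so each step removes at most $a_1+1$ vertices and the process runs for at least $\lceil k/(a_1+1)\rceil$ steps. Hence $\Upsilon(x)$ contains a coclique of size $m'$.

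Next I would invoke the lemma with $m = m'$. Writing $\mu$ for the maximum size of $\G(x)\cap\G(y)\cap\G(z)$ over configurations $y\sim x\sim z$, $y\not\sim z$, the lemma gives
\[
\mu \;\geq\; \frac{m'(a_1+1)-k}{\binom{m'}{2}}.
\]
The key observation is then that $\mu \leq c_2 - 1$ always holds in a distance-regular graph: if $y\sim x\sim z$ with $y\not\sim z$, then $d(y,z)=2$, so $|\G(y)\cap\G(z)| = c_2$; but $x$ itself lies in $\G(y)\cap\G(z)$ and is not a common neighbour of $y$ and $z$ together with the $\mu$-set counted above only if we are careful — more precisely, $\G(x)\cap\G(y)\cap\G(z) \subseteq \G(y)\cap\G(z)\setminus\{x\}$ is not quite right; rather one argues that $\G(x)\cap\G(y)\cap\G(z)$ together with $x$ is a subset of... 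Let me phrase it cleanly: the common neighbours of $y$ and $z$ number exactly $c_2$, one of which is $x$, and $\G(x)\cap\G(y)\cap\G(z)$ consists of common neighbours of $y$ and $z$ other than $x$ (since $x\notin\G(x)$), so $|\G(x)\cap\G(y)\cap\G(z)| \leq c_2 - 1$. Taking the maximum over all such $y,z$ gives $\mu \leq c_2-1$, and combining with the lemma yields \eqref{KoPa}.

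For the equality case, suppose \eqref{KoPa} holds with equality. Then the chain of inequalities above is tight, so in particular $\mu = c_2 - 1$, meaning there exist $y\sim x\sim z$, $y\not\sim z$ with $|\G(x)\cap\G(y)\cap\G(z)| = c_2-1$, i.e. every common neighbour of $y$ and $z$ except $x$ is also adjacent to $x$. I would then argue that this forces $\Upsilon(x,y)$ — equivalently the $\mu$-graph of a pair at distance $2$ — to be a clique for \emph{every} such pair, which is precisely the definition of a Terwilliger graph (see Section~\ref{sec:miscdefs}). The mechanism: equality in the lemma propagates (through the proof of the lemma, which one may quote) to say that in $\Upsilon(x)$ the chosen $m'$-coclique $\{y_1,\dots,y_{m'}\}$ has all pairwise ``$\mu$-sets'' $\G(x)\cap\G(y_i)\cap\G(y_j)$ of the common maximal size and these sets partition an appropriate vertex set, and a counting/double-counting over which triangles appear forces $\Upsilon(x,y)$ to be complete. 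The main obstacle I anticipate is exactly this last step: extracting the Terwilliger-graph conclusion from equality requires either re-deriving the equality conditions inside Metsch's lemma \cite[Lemma~1.1.b]{metsch91} or citing them directly, and making sure the ``for all pairs at distance two'' quantifier is genuinely obtained (not just for one pair) — this needs that $x$ was arbitrary and that the bound $\mu \le c_2-1$ is attained uniformly. Everything else is routine.
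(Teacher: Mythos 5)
Your proof of the inequality is correct and is exactly the route the paper takes (it simply attributes the statement to Koolen and Park \cite{KoPa10} and does not write it out): a greedy coclique of size $m'=\lceil k/(a_1+1)\rceil$ in the $a_1$-regular local graph $\Upsilon(x)$, the lemma quoted just before the proposition, and the observation that $\G(x)\cap\G(y)\cap\G(z)$ consists of common neighbours of $y,z$ other than $x$, so $\mu\le c_2-1$.

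The equality clause, however, is not proved as written, and the mechanism you sketch is not the one that works. From equality in \eqref{KoPa} you extract only $\mu(x)=c_2-1$, an \emph{existential} statement (one good pair per vertex $x$), and equality conditions inside the lemma for the one coclique you happened to choose cannot by themselves deliver the universal statement that every $\mu$-graph is a clique; your ``partition plus triangle-count'' suggestion does not bridge this. The missing idea is to run the tight chain through an \emph{arbitrary} nonadjacent pair: for every vertex $x$ and every coclique $\{y_1,\dots,y_{m'}\}$ of size $m'$ in $\Upsilon(x)$ one has $k\ \ge\ |\bigcup_i(\{y_i\}\cup(\G(x)\cap\G(y_i)))|\ \ge\ m'(a_1+1)-\sum_{i<j}|\G(x)\cap\G(y_i)\cap\G(y_j)|\ \ge\ m'(a_1+1)-\binom{m'}{2}(c_2-1)$, and equality in \eqref{KoPa} forces every step to be tight for \emph{every} such coclique; moreover, every nonadjacent pair $y,z\in\G(x)$ lies in one, since $\{y,z\}$ extends to a maximal coclique, which dominates the $a_1$-regular graph $\Upsilon(x)$ and hence has at least $k/(a_1+1)$, so at least $m'$, vertices (note $m'\ge 2$ because $b_1\ge 1$), and one may pass to an $m'$-subset containing $y$ and $z$. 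Tightness of the last step then gives $|\G(x)\cap\G(y)\cap\G(z)|=c_2-1$ for \emph{all} $x$ and all nonadjacent $y,z\in\G(x)$, i.e., every common neighbour of $y,z$ other than $x$ is adjacent to $x$; taking $u,v$ at distance two and letting $x,x'$ be any two of their common neighbours, this applied with apex $x$ yields $x\sim x'$, so $\G(u)\cap\G(v)$ is a clique and $\G$ is a Terwilliger graph. You correctly anticipated that the ``for all pairs at distance two'' quantifier is the crux and that the quoted lemma (which has no equality clause) must be re-derived, but without the extension-to-an-$m'$-coclique step your argument stops short of the conclusion.
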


\noindent This result shows that there are no distance-regular graphs with
intersection arrays $\{ 44, 30, 5; 1, 3, 40\}$, $\{65, 44, 11; 1, 4, 55\}$, $\{
81, 56, 24, 1; 1, 3, 56, 81\}$, $\{117, 80, 30, 1; 1, 6, 80, \linebreak 117\}$, $\{ 117,
80, 32, 1; 1, 4, 80, 117\}$ and $\{ 189, 128, 45, 1; 1, 9, 128, 189\}$ (the
last four were also ruled out by Juri\v{s}i\'{c} and Koolen \cite{JuKo00EuJC}).

Gavrilyuk \cite{GavElec10} showed that the only distance-regular graphs with $c_2>1$ for which equality holds in
\eqref{KoPa} are the Icosahedron, the Conway-Smith graph, and the Doro graph. Gavrilyuk \cite{Gav11} also extended the
above by using Brooks' theorem to eliminate the existence of a distance-regular graph with intersection array $\{ 55,
36, 11; 1, 4, 45\}$. Brooks' theorem (see \cite[Thm.~14.4]{BondyMurty}) states that the chromatic number of a connected
graph is at most its maximum valency, except for the odd cycles and complete graphs. For a distance-regular graph
$\G$, this implies that $\Upsilon(x)$ has a coclique of size at least $k/a_{1}$, unless possibly when $\Upsilon(x)$
contains an odd cycle (if $a_1=2$) or an $(a_1+1)$-clique as one of its components. This means that Proposition
\ref{shillaterw} can be sharpened a bit.

\begin{prop} Let $\G$ be a distance-regular graph with valency $k$ and
diameter $D \geq 2$, and let $m'' =\lceil \frac{k}{a_1} \rceil$. If for some
vertex $x$, the local graph $\Upsilon(x)$ does not contain an odd cycle or an
$(a_1+1)$-clique as one of its components, then $$c_2 -1 \geq
\frac{m''(a_1+1)-k}{\binom{m''}{2}}.$$
\end{prop}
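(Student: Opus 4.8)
The plan is to upgrade the ``greedy coclique'' estimate used in Proposition~\ref{shillaterw} by invoking Brooks' theorem, and then feed the resulting larger coclique into the Lemma. First I would record that the local graph $\Upsilon(x)$ is $a_1$-regular on $k=b_0$ vertices: for a neighbour $y$ of $x$ the number of common neighbours of $x$ and $y$ equals $a_1$ by definition, so each vertex of $\Upsilon(x)$ has exactly $a_1$ neighbours inside $\Upsilon(x)$. In particular every connected component of $\Upsilon(x)$ is a connected $a_1$-regular graph. (We may assume $a_1\ge 1$: if $a_1=0$ then every component is a $K_1=K_{a_1+1}$, so the hypothesis fails and there is nothing to prove; similarly if $D=1$ then $\Upsilon(x)\cong K_k$ is a single $(a_1+1)$-clique and the statement is vacuous.)

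Next I would apply Brooks' theorem (see \cite[Thm.~14.4]{BondyMurty}) componentwise. A connected graph with maximum degree $a_1$ has chromatic number at most $a_1$ unless it is the complete graph $K_{a_1+1}$ or, when $a_1=2$, an odd cycle. By hypothesis $\Upsilon(x)$ has no component of either excluded type, so every component of $\Upsilon(x)$ is $a_1$-colourable, and hence $\chi(\Upsilon(x))\le a_1$. A proper colouring of $\Upsilon(x)$ with at most $a_1$ colours partitions its $k$ vertices into at most $a_1$ cocliques, so the largest has at least $\lceil k/a_1\rceil=m''$ vertices; discarding vertices, $\Upsilon(x)$ contains a coclique of size exactly $m''$ (and $m''\ge 2$ by the remark above, so $\binom{m''}{2}\ge 1$).

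Now I would apply the Lemma (cf.~\cite[Lemma~1.1.b]{metsch91}) with $m=m''$, obtaining $\mu\ge\frac{m''(a_1+1)-k}{\binom{m''}{2}}$, where $\mu$ is the maximum of $|\G(x)\cap\G(y)\cap\G(z)|$ over triples with $y\sim x\sim z$ and $y\not\sim z$. The last step is the bound $\mu\le c_2-1$: for such $y$ and $z$ we have $d(y,z)=2$, so $y$ and $z$ have exactly $c_2$ common neighbours; one of them is $x$ itself, which does not belong to $\G(x)$, whence $|\G(x)\cap\G(y)\cap\G(z)|\le c_2-1$. Combining the two inequalities gives $c_2-1\ge\mu\ge\frac{m''(a_1+1)-k}{\binom{m''}{2}}$, as required.

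The regularity of $\Upsilon(x)$, the pigeonhole step producing the coclique, and the estimate $\mu\le c_2-1$ are all routine. The only point demanding care is the use of Brooks' theorem: one must apply it to each component separately and then take the maximum over components to bound $\chi(\Upsilon(x))$, and one must check that the $a_1$-regular connected exceptions to Brooks' theorem are exactly the complete graph $K_{a_1+1}$ together with the odd cycles (the latter only when $a_1=2$) --- precisely the components ruled out by the hypothesis. This matching of the exceptional cases is the crux of the argument.
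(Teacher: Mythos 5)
Your proposal is correct and takes essentially the same route as the paper: apply Brooks' theorem componentwise to the $a_1$-regular local graph $\Upsilon(x)$ (whose excluded components are exactly the Brooks exceptions, $K_{a_1+1}$ and, for $a_1=2$, odd cycles) to obtain a coclique of size $m''=\lceil k/a_1\rceil$, then feed this into the lemma of Metsch and use $\mu\le c_2-1$. The degenerate-case remarks and the bound $\mu\le c_2-1$ are precisely the (implicit) steps the paper relies on when sharpening Proposition~\ref{shillaterw}, so no further comment is needed.
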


\noindent In particular, if $\G$ has smallest eigenvalue $\theta_{\min}$,
then this proposition can be applied when $-1- \frac{b_1}{\theta_{\min}+1} <
a_1$ (in which case the local graph is connected) and $a_1 \neq 2$.

In her work on distance-regular graphs without $4$-claws, Bang \cite{Bapre} also ruled out the intersection array $\{
55, 36, 11; 1, 4, 45\}$. Moreover, she related such graphs to geometric distance-regular graphs with smallest
eigenvalue $-3$. This led to the following more general result by Bang and Koolen \cite{BaKo??}.

\begin{prop}
Let $m \geq 3$ be an integer, and let $\G$ be a distance-regular graph with
diameter $D \geq 2$ and valency larger than $\max\{m^2-m, \frac{m^2-1}m(a_1
+1)\}.$ Then $\G$ has no $(m+1)$-claws if and only if $\G$ is a
geometric distance-regular graph with smallest eigenvalue $-m$.
\end{prop}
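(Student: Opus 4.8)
The plan is to prove the two implications separately, the forward direction being essentially immediate and the reverse direction being the substantial one. First I would dispose of the ``only if'' direction: if $\G$ is geometric with respect to a set $\cal C$ of Delsarte cliques and has smallest eigenvalue $-m$, then by the Delsarte bound (Proposition \ref{delbound}) each $C\in\cal C$ has exactly $1+k/m$ vertices, and a standard double-count of the pairs (edge at $x$, clique through $x$) shows each vertex lies in exactly $m$ members of $\cal C$; since these $m$ cliques pairwise meet only in $x$ (as each edge lies in a \emph{unique} clique), picking one further vertex from each yields at most an $m$-coclique in $\Upsilon(x)$, so $\G$ has no $(m+1)$-claw. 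For this direction the valency hypothesis is not even needed.

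For the ``if'' direction, suppose $\G$ has no $(m+1)$-claw and $k>\max\{m^2-m,\tfrac{m^2-1}{m}(a_1+1)\}$. The idea is to apply Metsch's clique-construction machinery (Proposition \ref{Metschresult-2.2}) with the parameter $m$ of that proposition equal to the present $m$. The local graph $\Upsilon(x)$ is $a_1$-regular with maximum coclique size at most $m$; by the standard ``greedy coclique'' argument a coclique of size $\lceil k/(a_1+1)\rceil$ exists, so the no-$(m+1)$-claw hypothesis forces $k\le m(a_1+1)$, which together with the Lemma preceding Proposition \ref{shillaterw} gives the key bound $\mu\ge \frac{m(a_1+1)-k}{\binom m2}$ on the number $\mu$ of common neighbours of $x$ with a pair $y\sim x\sim z$, $y\not\sim z$; this in turn bounds $c_2$ and the other local parameters. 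One then verifies that hypotheses (i)–(iv) of Proposition \ref{Metschresult-2.2} hold with $\lambda_1=\lambda_2=a_1$ and $\mu$ (more precisely, with the appropriate local $\mu$), the crucial inequalities (iii) $2a_1-a_1>(2m-1)(\mu-1)-1$ and (iv) $k<(m+1)(a_1+1)-\tfrac12 m(m+1)(\mu-1)$ being exactly where the two lower bounds $k>m^2-m$ and $k>\tfrac{m^2-1}{m}(a_1+1)$ on the valency are consumed. The conclusion of Metsch's proposition is that the maximal cliques of size $\ge a_1+2-(m-1)(\mu-1)$ form a set $\cal C$ of ``lines'' such that every vertex is on at most $m$ of them and every edge lies in a unique one; combined with the no-$(m+1)$-claw condition one gets that every vertex is on \emph{exactly} $m$ lines, so $\cal C$ partitions the edge set and each $C\in\cal C$ is regular of the same size $c=|C|$.

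It remains to identify $c=1+k/m$ and to show that each $C\in\cal C$ is a Delsarte clique, i.e.\ attains the Delsarte bound. Counting edges at a fixed vertex $x$ through its $m$ lines gives $k=m(c-1)$, hence $c=1+k/m=1-k/(-m)=1-k/\theta_{\min}$, which is exactly the Delsarte bound (Proposition \ref{delbound}); equivalently, one shows $\chi_C^{\top}E_{\min}\chi_C=0$ by using that the partition of $V$ by distance to $C$ is equitable (the cliques being lines of a partial linear space whose point graph is $\G$) and reading off from the quotient matrix that the eigenvalue $\theta_{\min}$ contributes nothing — this is the characterization of Delsarte cliques as completely regular codes of covering radius $D-1$ recalled just after Proposition \ref{delbound}. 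Thus $\G$ is geometric with respect to $\cal C$, and its smallest eigenvalue is $-m$ (it is at most $-m$ because a clique of size $1-k/\theta_{\min}=c$ forces $\theta_{\min}\le -m$, and at least $-m$ since the absence of $(m+1)$-claws bounds it below by a standard interlacing/ratio argument, or simply because $c\le 1-k/\theta_{\min}$ must be consistent). The main obstacle is the bookkeeping in verifying Metsch's four hypotheses: one has to pin down the right local parameter playing the role of $\mu$ in Proposition \ref{Metschresult-2.2} (it is the maximum number of common neighbours of a nonadjacent pair \emph{within a local graph}, not the global $c_2$), and then check that the valency bounds $k>m^2-m$ and $k>\tfrac{m^2-1}{m}(a_1+1)$ are exactly strong enough to make (iii) and (iv) hold; this is the delicate, computational heart of the argument, and it is where I would expect to spend the most care.
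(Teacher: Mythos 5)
Your ``only if'' direction is fine (and is exactly the remark at the start of Section \ref{sec:claws}): in a geometric graph with smallest eigenvalue $-m$ each vertex lies in exactly $m$ Delsarte cliques, which partition the local graph's vertex set, so no $(m+1)$-claw exists. The ``if'' direction, however, has a genuine gap: the reduction to Proposition \ref{Metschresult-2.2} cannot work. For a distance-regular graph one is forced to take $\lambda_1=\lambda_2=a_1$ and $\mu=c_2$ there, so conditions (iii) and (iv) amount to $a_1\geq(2m-1)(c_2-1)$ and $k<(m+1)(a_1+1)-\frac{1}{2}m(m+1)(c_2-1)$, i.e.\ they require $c_2$ to be small relative to $a_1$ — and nothing in the hypotheses (no $(m+1)$-claws, $k>m^2-m$, $k>\frac{m^2-1}{m}(a_1+1)$) supplies such a bound. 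Indeed, graphs squarely within the scope of the statement violate both conditions: the Hamming graph $H(5,7)$ has $k=30$, $a_1=5$, $c_2=2$, smallest eigenvalue $-5$, is geometric (hence has no $6$-claws), and satisfies $30>\max\{20,\frac{24}{5}\cdot 6\}$, yet $a_1=5<9=(2m-1)(c_2-1)$ and $k=30\not<21=(m+1)(a_1+1)-\frac{1}{2}m(m+1)(c_2-1)$. So the two valency bounds are not ``exactly where (iii) and (iv) are consumed''; they cannot yield (iii) and (iv) at all, and no sharpening of the bookkeeping can fix this, since the conditions are simply false for admissible graphs.

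A second, related slip: the lemma preceding Proposition \ref{shillaterw} gives a \emph{lower} bound on $\mu$ (hence on $c_2-1$), which is the wrong direction for Metsch's hypotheses; switching to the ``local $\mu$'' does not help either, because Proposition \ref{Metschresult-2.2} applied to $\G$ requires the global bound on common neighbours of nonadjacent vertices, and applying it inside each local graph would leave you without control of the local parameters and without a way to glue the local line systems into one edge-partition of $\G$. Note that the survey itself gives no proof of this proposition (it cites Bang and Koolen, in preparation), and its neighbouring Proposition \ref{prop@}, which \emph{does} go through Metsch's result, needs the extra hypothesis $a_1\geq\frac{1}{2}m(m+1)(c_2+1)$ precisely to secure (iii) and (iv) — an assumption unavailable here. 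The real content of the Bang--Koolen result is to extract the clique structure from claw-freeness and the valency bounds alone (by analysing maximal cliques and local graphs directly), not via Proposition \ref{Metschresult-2.2}; your concluding step — once every edge is in a unique line and every vertex in exactly $m$ lines, the counting/$NN^{\top}=mI+A$ argument of Proposition \ref{suffgeom1} identifies the lines as Delsarte cliques and pins the smallest eigenvalue at $-m$ — is fine, but it presupposes the structure whose construction is the missing heart of the proof.
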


\noindent For $m=3$, a slightly stronger result was obtained by Bang \cite{Bapre}, in the sense that the
corresponding result holds for valency larger than $\max\{3, \frac83(a_1 +1)\}.$ We finally note that the
distance-regular graphs without $3$-claws have been determined by Blokhuis and Brouwer \cite{BlBr97},
and that some more work on distance-regular graphs without $4$-claws has been done by Guo and
Makhnev \cite{GM2013} and Bang, Gavrilyuk, and Koolen \cite{BGK4claws}.

\subsection{Sufficient conditions}\label{sec:geometric}

\begin{prop}\label{suffgeom1}
Let $\G$ be a distance-regular graph with diameter $D$ with the property
that there exists a positive integer $m$ and a set $\cal C$ of cliques in
$\G$ such that every edge is contained in exactly one clique of $\cal C$
and every vertex $x$ is contained in exactly $m$ cliques of $\cal C$. If
$|{\cal C}| < |V|$, then $\G$ is geometric with smallest eigenvalue $-m$.
In particular, this is the case if $\min\{ |C| : C \in {\cal C}\} > m$.
\end{prop}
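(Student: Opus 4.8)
The plan is to work with the point--clique incidence structure determined by $\mathcal{C}$. Let $N$ be the $|V|\times|\mathcal{C}|$ matrix with rows indexed by $V$ and columns by $\mathcal{C}$, where $N_{xC}=1$ if $x\in C$ and $N_{xC}=0$ otherwise. The first step is to identify $NN^{\top}$. Its diagonal entry at $x$ counts the members of $\mathcal{C}$ through $x$, which equals $m$; its $(x,y)$-entry for $x\ne y$ counts the members of $\mathcal{C}$ containing both $x$ and $y$. Since each $C\in\mathcal{C}$ is a clique, two non-adjacent vertices lie in no common member, while two adjacent vertices lie in exactly one, because the edge $xy$ is covered exactly once. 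Hence
\begin{equation*}
	NN^{\top}=A+mI,
\end{equation*}
where $A$ is the adjacency matrix of $\G$.

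Since $NN^{\top}$ is a Gram matrix it is positive semidefinite, so every eigenvalue of $A$ is at least $-m$; in particular $\theta_{\min}\ge -m$. On the other hand $\operatorname{rank}(NN^{\top})\le\operatorname{rank}(N)\le|\mathcal{C}|<|V|$, so the $|V|\times|V|$ matrix $A+mI$ is singular, which forces $-m$ to be an eigenvalue of $A$. Therefore $\theta_{\min}=-m$, and in particular the Delsarte bound (Proposition~\ref{delbound}) applies: every clique $C$ of $\G$ satisfies $|C|\le 1-k/\theta_{\min}=1+k/m$.

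It remains to show that each $C\in\mathcal{C}$ meets this bound with equality. Fix $x\in V$. Among the $m$ members of $\mathcal{C}$ through $x$, the sets $C\setminus\{x\}$ are pairwise disjoint --- a common neighbour of $x$ lying in two of them would be an edge covered twice --- and their union is $\G(x)$, since every edge at $x$ is covered. Hence $\sum_{C\ni x}(|C|-1)=|\G(x)|=k$, the sum being over $C\in\mathcal{C}$ with $x\in C$. The left-hand side is a sum of $m$ terms, each at most $k/m$ by the Delsarte bound, with total $k=m\cdot(k/m)$; so every term equals $k/m$, i.e.\ every $C\in\mathcal{C}$ through $x$ has $|C|=1+k/m$. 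As every member of $\mathcal{C}$ contains some vertex, every $C\in\mathcal{C}$ is a Delsarte clique, so $\G$ is geometric with respect to $\mathcal{C}$ with smallest eigenvalue $-m$. For the final assertion, counting incident point--clique pairs in two ways gives $m|V|=\sum_{x\in V}|\{C\in\mathcal{C}:x\in C\}|=\sum_{C\in\mathcal{C}}|C|\ge|\mathcal{C}|\cdot\min_{C}|C|$, which exceeds $m|\mathcal{C}|$ when $\min_{C}|C|>m$; hence $|\mathcal{C}|<|V|$ and the first part applies.

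There is no genuinely hard step here; the points requiring care are that one really needs the strict inequality $|\mathcal{C}|<|V|$ to conclude that $A+mI$ is singular (when $|\mathcal{C}|=|V|$ the graph need not be geometric), and that it is the counting identity $\sum_{C\ni x}(|C|-1)=k$ together with the Delsarte bound that upgrades $\theta_{\min}=-m$ to the statement that every $C\in\mathcal{C}$ is a \emph{Delsarte} clique. One should also note in passing that this same identity rules out singleton members of $\mathcal{C}$, since such a $C$ would contribute $0<k/m$ to a sum that must equal $k$.
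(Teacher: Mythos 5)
Your proof is correct and follows essentially the same route as the paper's: the incidence matrix $N$ with $NN^{\top}=mI+A$, the rank argument from $|\mathcal{C}|<|V|$ forcing $\theta_{\min}=-m$, the Delsarte bound combined with the count $\sum_{C\ni x}(|C|-1)=k$ over the $m$ cliques through a fixed vertex (the paper phrases this as an averaging argument), and the incidence-pair count for the final assertion. Your write-up merely makes the disjointness/covering details explicit, which the paper leaves implicit.
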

\begin{proof}
Consider the $|V| \times |{\cal C}|$ incidence matrix $N$, where $N_{xC} =
1$ if $x \in C$ and 0 otherwise. Then $NN^{\top} = m I + A$, so the smallest
eigenvalue  $\theta_{\min}$ of $\G$ satisfies $\theta_{\min} \geq -m$.
Suppose now that $| V| > |{\cal C}|$. Then $NN^{\top}$ is singular and hence
$\theta_{\min} = -m$. By the Delsarte bound, every clique $C$ has size at most
$1 -\frac{k}{\theta_{\min}}= 1 + \frac{k}{m} $. On the other hand, by
considering the cliques $C  \in {\cal C}$ containing a fixed vertex, we see
that they have $1 + \frac{k}{m}$ vertices on average. This means that all
cliques in $C$ contain exactly $1 + \frac{k}{m}$ vertices, and hence $\G$
is a geometric distance-regular graph. In particular, if $\min\{ |C| : C \in
{\cal C}\} > m$, then it follows by counting the number of incident pairs
$(x,C)$ in two different ways that $|{\cal C}| < |V|$, so $\G$ is
geometric.
\end{proof}

\noindent This proposition implies that distance-regular graphs
of order $(s,t)$ are geometric with smallest eigenvalue $-t-1$ if
$s>t$; see also Corollary \ref{s>t implies geometric}. Using
Proposition \ref{Metschresult-2.2} we now obtain the following
result.

\begin{prop}\label{prop@} Let $m \geq 2$ be an integer, and let $\G$ be a
distance-regular graph with $(m-1)(a_1+1) < k < m(a_1 +m)$ and
diameter $D \geq 2$. If $a_1 \geq \frac12m(m+1)(c_2+1) $, then
$\G$ is geometric with smallest eigenvalue $-m$.
\end{prop}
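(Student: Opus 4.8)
The plan is to verify that $\G$ satisfies all four hypotheses of Proposition \ref{Metschresult-2.2} (Metsch's line-detection lemma), apply it to produce a collection $\cal C$ of large cliques (``lines'') that partition the edge set with each vertex on at most $m$ lines, and then invoke Proposition \ref{suffgeom1} to conclude that $\G$ is geometric with smallest eigenvalue $-m$. The main point is the bookkeeping: choose the integer parameters in Proposition \ref{Metschresult-2.2} as $\lambda_1=\lambda_2=a_1$ and $\mu=c_2$, so that conditions (i) and (ii) hold trivially (any two adjacent vertices have exactly $a_1$ common neighbors, and any two nonadjacent vertices have at most $c_2$ common neighbors, since $c_2$ is attained precisely at distance $2$ and is the maximum). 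With these choices, condition (iii) becomes $2a_1-a_1 > (2m-1)(c_2-1)-1$, i.e.\ $a_1 \ge (2m-1)(c_2-1)$, and condition (iv) becomes $k < (m+1)(a_1+1)-\tfrac12 m(m+1)(c_2-1)$.

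First I would check (iii) and (iv) follow from the stated hypothesis $a_1 \ge \tfrac12 m(m+1)(c_2+1)$ together with $k < m(a_1+m)$. For (iii): since $m\ge 2$ we have $\tfrac12 m(m+1) \ge (2m-1)$ for $m \ge 2$ (indeed $m^2+m \ge 4m-2 \iff m^2-3m+2\ge 0 \iff (m-1)(m-2)\ge 0$), hence $a_1 \ge \tfrac12 m(m+1)(c_2+1) \ge (2m-1)(c_2+1) \ge (2m-1)(c_2-1)$, so (iii) holds (with room to spare). For (iv): we must show $k < (m+1)(a_1+1) - \tfrac12 m(m+1)(c_2-1)$. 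Using $k < m(a_1+m) = m a_1 + m^2$, it suffices to check $m a_1 + m^2 \le (m+1)(a_1+1) - \tfrac12 m(m+1)(c_2-1)$, i.e.\ $a_1 \ge m^2 + \tfrac12 m(m+1)(c_2-1) - m - 1 + $ (a small correction); rearranging, $a_1 \ge m^2 - m - 1 + \tfrac12 m(m+1)(c_2-1)$, and since $\tfrac12 m(m+1)(c_2+1) = \tfrac12 m(m+1)(c_2-1) + m(m+1) = \tfrac12 m(m+1)(c_2-1) + m^2+m$, the hypothesis gives $a_1 \ge \tfrac12 m(m+1)(c_2-1) + m^2 + m \ge \tfrac12 m(m+1)(c_2-1) + m^2 - m - 1$, which is exactly what is needed. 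So (iv) holds.

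Next, Proposition \ref{Metschresult-2.2} yields that every vertex lies on at most $m$ lines and every edge lies in a unique line, where a line is a maximal clique $C$ with $|C| \ge a_1 + 2 - (m-1)(c_2-1)$. Let $\cal C$ be the set of these lines. The remaining work is to upgrade ``at most $m$ lines per vertex'' to ``exactly $m$'', and to control $|\cal C|$ so that Proposition \ref{suffgeom1} applies. For the count: each line through a fixed vertex $x$ has at least $\ell_0 := a_1 + 2 - (m-1)(c_2-1)$ vertices, so uses at least $\ell_0 - 1$ of the $k$ edges at $x$; since the lines through $x$ partition these $k$ edges, $x$ lies on at least $k/(\ell_0-1)$ lines, and the lower bound $k > (m-1)(a_1+1)$ combined with $\ell_0 - 1 \le a_1+1$ forces this to be more than $m-1$, hence exactly $m$ lines per vertex. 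Then $\min\{|C|:C\in{\cal C}\} \ge \ell_0 > m$ (using $a_1$ large: $\ell_0 - m = a_1 + 2 - (m-1)(c_2-1) - m \ge a_1 + 2 - (m-1)c_2 - m$, which is positive since $a_1 \ge \tfrac12 m(m+1)(c_2+1) \ge (m-1)c_2 + m$ for $m\ge2$), so by the last clause of Proposition \ref{suffgeom1} we get $|{\cal C}| < |V|$ and $\G$ is geometric with smallest eigenvalue $-m$.

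The main obstacle I anticipate is not any single inequality but making the chain of elementary estimates in the previous paragraph airtight and self-consistent — in particular confirming that $\ell_0 - 1 \le a_1 + 1$ and that $k > (m-1)(a_1+1)$ together genuinely force at least $m$ (not merely more than $m-1$ in a way that could round down); the cleanest route is probably to note $k/(\ell_0-1) \ge k/(a_1+1) > m-1$ and that the number of lines is an integer, hence $\ge m$, and then combine with the $\le m$ from Metsch. One should also double-check the edge-cases $c_2 = 1$ (where $\ell_0 = a_1+2$, the largest possible line size, and the argument simplifies) and verify that the hypothesis $k < m(a_1+m)$ is consistent with $k > (m-1)(a_1+1)$ — it is, since $m(a_1+m) - (m-1)(a_1+1) = a_1 + m^2 - m + 1 > 0$ — so the hypothesis is not vacuous.
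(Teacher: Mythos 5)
Your proposal is correct and takes essentially the same route as the paper's proof: apply Metsch's Proposition \ref{Metschresult-2.2} with $\lambda_1=\lambda_2=a_1$ and $\mu=c_2$ (checking conditions (iii) and (iv) from $a_1\geq\frac12 m(m+1)(c_2+1)$ and $k<m(a_1+m)$), use $k>(m-1)(a_1+1)$ to upgrade ``at most $m$'' to ``exactly $m$'' lines per vertex, note the minimal line size $a_1+2-(m-1)(c_2-1)$ exceeds $m$, and invoke Proposition \ref{suffgeom1}. One small fix: the lower bound on the number of lines through a vertex should come from the fact that each line, being a clique, has at most $a_1+2$ vertices and hence uses at most $a_1+1$ edges at that vertex (not from the minimum line size $\ell_0$, where your inequality momentarily points the wrong way), after which your final chain $k/(a_1+1)>m-1$ together with integrality gives exactly the paper's argument.
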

\begin{proof}
The two given lower bounds on $a_1$ assure that Proposition
\ref{Metschresult-2.2} can be applied, i.e., that $ a_1 \geq (2m-1)(c_2-1)$ and
$k<(m+1)(a_1+1)-\frac12m(m+1)(c_2-1)$. Thus the set ${\cal C}$ of maximal
cliques of size at least $a_1 +2 - (m-1)(c_2-1)$ forms a set of lines such that
each vertex is in at most $m$ lines, and each edge is in exactly one line. The
given lower bound on $k$ assures that each vertex is in exactly $m$ lines.
Because the minimal line size is at least $a_1 +2 - (m-1)(c_2-1)$, which is at
least $m+1$ by one of the given inequalities, it follows from Proposition
\ref{suffgeom1} that $\G$ is geometric with smallest eigenvalue $-m$.
\end{proof}

\noindent We note that the assumption $k<m(a_1+m)$ holds for all
distance-regular graphs with smallest eigenvalue $-m$ (see \cite{KoBa10}).

\subsection{Distance-regular graphs with a fixed smallest eigenvalue}\label{sec:fixedsmallestev}

Generalizing results by Neumaier \cite{Neu80} and Godsil \cite{Godsil93}, Koolen
and Bang \cite{KoBa10} showed the following.
\begin{theorem}\label{thm:nongeometric}
For given $m \geq 2$, there are only finitely many non-geometric
distance-regular graphs with both valency and diameter at least $3$ and smallest
eigenvalue at least $-m$.
\end{theorem}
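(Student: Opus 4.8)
The plan is to reduce the statement to a bound on the valency and then invoke the Bannai--Ito conjecture, proved in Section~\ref{sec:BIconjecture}: since there are only finitely many distance-regular graphs of any fixed valency $k\ge 3$, it suffices to produce a function $N(m)$ such that every non-geometric distance-regular graph $\G$ with diameter $D\ge 3$, valency $k\ge 3$, and smallest eigenvalue $\theta_{\min}\ge -m$ has $k\le N(m)$. (For $D=2$ this recovers Neumaier's theorem \cite{Neu80}, which serves as the model.) The first, easy, step bounds $k$ in terms of $a_1$. Fix a vertex $x$; the local graph $\Upsilon(x)$ has $k$ vertices and is $a_1$-regular, so a greedy argument produces a coclique of size $t\ge\lceil k/(a_1+1)\rceil$ in $\Upsilon(x)$, and together with $x$ this is an induced $K_{1,t}$ (a $t$-claw) in $\G$. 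Since $K_{1,t}$ has smallest eigenvalue $-\sqrt{t}$, eigenvalue interlacing for induced subgraphs gives $-m\le\theta_{\min}\le-\sqrt{t}$, hence $t\le m^2$ and therefore $k\le m^2(a_1+1)$. Thus it remains to bound $a_1$ for non-geometric graphs.

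To bound $a_1$ I would show that a non-geometric $\G$ with $a_1$ large must in fact be geometric, a contradiction. Two ingredients are needed. First, a Neumaier-type ``claw bound'': one shows $c_2\le C(m)$ for an explicit $C(m)$, for every non-geometric $\G$ with $\theta_{\min}\ge -m$ and $D\ge 3$ --- roughly, if $c_2$ were large then two vertices at distance two would share many neighbors, and combined with $\theta_{\min}\ge -m$ one runs Metsch's clique lemma (Proposition~\ref{Metschresult-2.2}) to manufacture a system of large cliques covering each edge exactly once, contradicting non-geometricity via Proposition~\ref{suffgeom1}. Second, one needs the sharper valency bound $k<m(a_1+m)$, valid whenever $\theta_{\min}\ge -m$; this comes from a finer analysis of the spectrum of the local graph, which itself has smallest eigenvalue $\ge -m$ by interlacing and whose eigenvalues are further constrained by \cite[Thm.~4.4.3]{bcn}.

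Granting these, suppose $\G$ is non-geometric with $a_1\ge\tfrac12 m(m+1)(C(m)+1)$. If moreover $k>(m-1)(a_1+1)$, then $(m-1)(a_1+1)<k<m(a_1+m)$ together with $a_1\ge\tfrac12 m(m+1)(c_2+1)$ are exactly the hypotheses of Proposition~\ref{prop@}, which yields that $\G$ is geometric with smallest eigenvalue $-m$, a contradiction. (Equivalently one applies Metsch's lemma directly with parameter $m$: hypotheses (i)--(iv) hold because $\lambda_1=\lambda_2=a_1$, $\mu=c_2$, $a_1\ge(2m-1)(c_2-1)$, and $k<(m+1)(a_1+1)-\tfrac12 m(m+1)(c_2-1)$ since $k<m(a_1+m)$ and $a_1$ is large; the resulting lines are cliques of size $\ge a_1+2-(m-1)(c_2-1)>m$, each edge lies on a unique line, and a double count against the Delsarte bound $|C|\le 1+k/m$ from Proposition~\ref{delbound} forces every vertex onto exactly $m$ lines and every line to have exactly $1+k/m$ vertices, i.e.\ the lines are Delsarte cliques.) The complementary regime $k\le(m-1)(a_1+1)$, where the local graph has bounded independence number, is handled via the structure theory of distance-regular graphs of order $(s,t)$: such $\G$ is essentially of order $(s,t)$ with $t$ small, hence either geometric by Corollary~\ref{s>t implies geometric} (contradiction) or with $s\le t$ and thus bounded valency; cf.\ Corollary~\ref{cordiameter}. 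In all cases a non-geometric such $\G$ satisfies $a_1<\tfrac12 m(m+1)(C(m)+1)$, so $k\le m^2(a_1+1)\le N(m)$, and Bannai--Ito finishes the proof.

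\textbf{Main obstacle.} The hard part is establishing the Neumaier-type bound $c_2\le C(m)$ together with the sharp valency bound $k<m(a_1+m)$: this is where one must control the delicate interplay of $a_1$, $c_2$, and $k$ --- in particular the intermediate regime in which $c_2$ is comparable to $a_1$, where neither Metsch's lemma nor Proposition~\ref{prop@} applies directly and the clique structure must be extracted by hand. Everything else --- the star-interlacing bound $k\le m^2(a_1+1)$, the double count against the Delsarte bound, and the final appeal to Bannai--Ito --- is routine.
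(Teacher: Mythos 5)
Your overall skeleton --- bound the valency of a non-geometric graph in terms of $m$ and then invoke the Bannai--Ito theorem, using the claw/interlacing bound $k\le m^2(a_1+1)$ and Metsch's lemma (Proposition \ref{Metschresult-2.2}) together with the Delsarte bound to force geometricity when $a_1$ is large relative to $c_2$ --- is indeed the route taken in the literature and matches what the paper indicates around Theorem \ref{thm:nongeometric}. But note that the paper itself does not reprove the hard part: it cites Koolen and Bang \cite{KoBa10} for the case $c_2\ge 2$ and only supplies the missing case $c_2=1$ (there $\G$ is of order $(s,t)$, non-geometricity forces $s\le t$ by Corollary \ref{s>t implies geometric}, a $(t+1)$-claw and interlacing give $t+1\le m^2$, hence $k\le (m^2-1)m^2$, and Bannai--Ito finishes). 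The two quantitative inputs you defer --- a Neumaier-type bound $c_2\le C(m)$ in the non-geometric regime and the inequality $k<m(a_1+m)$ for graphs with smallest eigenvalue at least $-m$ --- are exactly the content of \cite{KoBa10} that the survey declines to reproduce. Your proposal does not prove them (the second is merely asserted to follow from ``a finer analysis'' of local spectra), so what you have is a reduction of the theorem to its technical core, not a proof; this is a genuine gap.

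Moreover, the one argument you sketch for the $c_2$ bound is logically backwards: Metsch's lemma requires $2\lambda_1-\lambda_2>(2m-1)(\mu-1)-1$, i.e.\ $a_1$ large \emph{relative to} $c_2$, so a hypothetically large $c_2$ does not let you ``run'' the lemma and contradict non-geometricity --- it simply takes you outside its hypotheses, and the dangerous regime where $c_2$ is comparable to $a_1$ (which you yourself flag) is left untouched; in \cite{KoBa10} the bounds on $c_2$ and the inequality $k<m(a_1+m)$ come from separate claw/local-graph arguments, not as corollaries of Metsch. A smaller problem is your treatment of the regime $k\le(m-1)(a_1+1)$: a valency inequality does not make $\G$ ``essentially of order $(s,t)$'' --- that structure means every local graph is a disjoint union of cliques, and the paper invokes it only when $c_2=1$, where it holds automatically. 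That sub-case is repairable (apply Proposition \ref{prop@} with the integer $m'\le m-1$ determined by $(m'-1)(a_1+1)<k\le m'(a_1+1)$), but the main gap --- establishing the two inequalities above --- is precisely the substance of the theorem and remains open in your write-up.
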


\noindent Note that valency $2$ is excluded because of the odd polygons; and diameter $2$ because of the complete
multipartite graphs. Koolen and Bang \cite{KoBa10} did not quite prove this result, as they restricted themselves to
graphs with $c_2 \geq 2$. The graphs with $c_2 =1$ are of order $(s,t)$, with $s=a_1+1$ and $t=k/s-1$. If such a graph
has smallest eigenvalue at least $-m$, then by interlacing (see Section \ref{sec:interlacing}), the existence of a
$(t+1)$-claw implies that $t+1 \leq m^2$. Because a distance-regular graph of order $(s,t)$ that is not geometric has
$s\leq t$, it follows that $k=s(t+1) \leq (m^2-1)m^2$. Therefore the result for the case $c_2=1$ follows from the
Bannai-Ito conjecture. Note that the (general) result was known for $m=2$, see \cite[Thm.~3.12.4,~4.2.16]{bcn}.

Because the smallest eigenvalue of a geometric graph is always integral, this theorem also gives a partial answer to
the question from \cite[p.~130]{bcn} whether every distance-regular graph with valency at least three and diameter at
least three has an integral eigenvalue besides the valency.

One may also wonder whether it is true that for a given integer $m \geq 2$,
there are only finitely many geometric distance-regular graphs with
$D \geq 3$, $c_2 \geq 2$, and smallest eigenvalue $-m$, besides the Grassmann graphs, Johnson graphs, bilinear forms
graphs, and Hamming graphs.
For $D=2$ this is not true, but Neumaier \cite{Neu80} showed that
in essence the geometric strongly regular graphs fall into two infinite classes.

Concerning large smallest eigenvalue, we know that the distance-regular graphs with smallest eigenvalue $-1$ are
exactly the complete graphs. The ones with smallest eigenvalue $-2$ are either strongly regular (and classified by
Seidel \cite{Seidel-2}) or line graphs (and classified by Mohar and Shawe-Taylor \cite{Mdrline})
\cite[Thm.~3.12.14]{bcn}. Among these, the only geometric distance-regular graphs with $D \geq 3$ and $k \geq 3$ are
the generalized $2D$-gons of order $(s, 1)$, $s \geq 2$ and $D=3,4,6$, and the line graphs of the Petersen graph, the
Hoffman-Singleton graph, and putative Moore graphs on 3250 vertices (see \cite[Thm.~4.2.16]{bcn}).
Bang and Koolen \cite{BaKo14} finished the classification of the geometric distance-regular graphs with diameter at least $3$, $c_2 \geq 2$, and smallest eigenvalue $-3$, by showing that such a graph is a Hamming graph, Johnson graph, or a generalized quadrangle of order $(s,3)$ minus a spread (with $s=3$ or $5$).
Yamazaki \cite{Y95} obtained strong restrictions on distance-regular graphs of
order $(s,2)$, $s \geq 3$. The (five) distance-regular graphs of order $(2,2)$ were classified by Hiraki, Nomura, and
Suzuki \cite{HNS}. All of these graphs are geometric with smallest eigenvalue $-3$.

\subsection{Regular near polygons}\label{sec:rnp}

Recall from Section \ref{sec:generalizations+geometric} that a distance-regular
graph $\G$ of order $(s,t)$ with diameter $D$ is called a regular near
$2D$-gon if $a_i = c_i a_1$ for all $i=1,2, \ldots, D$, and that such a graph is
geometric. We call $\G$ thick if $s \geq 2$.

\begin{theorem}\label{thm:RNPthick}{\em (cf.~\cite[Thm.~6.6.1,~9.4.4]{bcn})}
Let $\G$ be a thick regular near $2D$-gon with $D \geq 4$. If $c_2 \geq 3$
or $c_i = i \ (i=2,3)$, then $\G$ is either a dual polar graph or a Hamming
graph.
\end{theorem}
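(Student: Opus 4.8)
The plan is to reduce the statement to the classification of thick generalized quadrangles and hexagons together with known characterizations of dual polar graphs and Hamming graphs among regular near polygons. First I would recall the basic structural facts about a regular near $2D$-gon $\G$ of order $(s,t)$: each line has $s+1$ vertices, each vertex lies on $t+1$ lines, and for two vertices $x,y$ at distance $i$ the quantity $c_i$ counts the lines through $y$ meeting $\G_{i-1}(x)$, so $c_2$ equals $1$ plus the number of lines through a point of $\G_2(x)$ that are "parallel-type" toward $x$; in particular $c_2 = 1$ forces $\G$ to be a \emph{generalized $2D$-gon}, which by Feit--Higman is excluded for $D\geq 4$ (no thick ones exist). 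So $c_2 \geq 2$ automatically, and the real dichotomy is $c_2 = 2$ versus $c_2 \geq 3$.

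The main tool is the theory of quads and higher-dimensional convex subspaces in near polygons (Shult--Yanushka, Brouwer--Wilbrink; cf.~\cite[Ch.~6]{bcn}). Since $\G$ is thick and $c_2 \geq 2$, every pair of points at distance $2$ lies in a convex subgraph $Q$ which is itself a thick regular near quadrangle, i.e.~a thick generalized quadrangle of order $(s,t_2)$ with $t_2 = c_2 - 1$. When $c_2 \geq 3$ (so $t_2 \geq 2$), one shows the near polygon has "nice" quads in the sense needed to propagate local structure; when $c_i = i$ for $i=2,3$ (so the quads are grids, $t_2 = 1$ — wait, $c_2 = 2$ gives $t_2 = 1$, a grid quad) the relevant input is instead that the convex closure of two points at distance $3$ is a thick regular near hexagon with $c_2=2, c_3=3$. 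The next step is to invoke the classification/characterization results: a thick regular near polygon all of whose quads are classical (big quads) and which satisfies the relevant "has quads through every point-line antiflag" condition is, by Brouwer--Wilbrink and Cameron (cf.~\cite[Thm.~6.6.1]{bcn}), a dual polar graph $^2A_{2D}$, $B_D$, $C_D$, $D_D$, $^2D_{D+1}$, a Hamming graph $H(D,s+1)$, or a very short list of sporadic exceptions none of which survive $D \geq 4$ together with the numerical constraints $c_2 \geq 3$ or $(c_2,c_3)=(2,3)$. The case $c_i = i$ for $i=2,3$ is precisely the hypothesis that pins down the "grid-quad / Hamming-like" branch, and one checks that the only thick members are Hamming graphs and the dual polar graphs $D_D(q)$ (which also satisfy $c_i = i$, i.e.~$q=$ ... no: $c_i = (q^i-1)/(q-1)$, so $c_i = i$ forces $q$ such that this holds only for $q$ giving $c_2 = 2$, i.e.~$q=2$? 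Actually $c_2=2 \Rightarrow q=2$ but then $c_3 = 7 \ne 3$, so $D_D$ is excluded in that sub-branch and only Hamming survives there — conversely $c_2 \geq 3$ allows the dual polar graphs). I would organize the argument so that $c_2 \geq 3$ yields the dual polar graphs (using that big quads exist and applying \cite[Thm.~9.4.4]{bcn}) while $c_2 = 2, c_3 = 3$ yields the Hamming graphs.

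Concretely the key steps, in order, are: (1) observe $c_2 \geq 2$ (Feit--Higman / no thick generalized $2D$-gons for $D \geq 4$); (2) build quads and show they are regular thick generalized quadrangles of order $(s, c_2-1)$; (3) show the quads are "big" or that the near polygon has quads through each antiflag, using $c_2 \geq 3$ or $c_3 = 3$ to force enough regularity (this is where one uses $D \geq 4$ to have room for the propagation, via the convex subgraph of two points at distance $3$ being a regular near hexagon); (4) apply the Brouwer--Wilbrink/Cameron classification of dense near polygons with big quads, \cite[Thm.~6.6.1,~9.4.4]{bcn}, to conclude $\G$ is a dual polar graph or Hamming graph; (5) check that the sporadic possibilities allowed by that classification (e.g.~the Hall--Janko near octagon, the $M_{24}$ near hexagon, extended quads, etc.) are incompatible with $D \geq 4$ \emph{and} the hypothesis $c_2 \geq 3$ or $c_i = i\ (i=2,3)$ — for instance the exceptional near octagons have $c_2 = 1$ or $2$ with $c_3 \ne 3$.

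\textbf{Main obstacle.} The hard part will be step (3): verifying that the numerical hypotheses $c_2 \geq 3$ or $c_i = i\ (i=2,3)$ are exactly strong enough to force \emph{big} quads (equivalently, that $\G$ is a "dense near polygon with classical quads" in the technical sense), rather than merely having local quads that fail to glue globally. This is the point where one must either cite the delicate quad-propagation lemmas from the near-polygon literature (Shult--Yanushka, De Clerck--Van Maldeghem, Brouwer--Wilbrink) or reprove the relevant special cases; getting the exceptional list in step (5) pruned correctly for $D \geq 4$ under the two separate hypotheses is the fiddly bookkeeping that must be done carefully. Everything else — steps (1), (2), (4) — is a fairly direct appeal to results already in \cite{bcn}.
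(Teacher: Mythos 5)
There is a genuine gap, and it sits exactly where you flagged it: steps (3)--(4). The ``classification of dense near polygons with big quads'' that you want to invoke for arbitrary diameter $D\geq 4$, with a short sporadic list to prune, is not a theorem available in \cite{bcn} or elsewhere; BCN Thm.~6.6.1/9.4.4 do not say this. What the paper actually does in the case $c_2\geq 3$ is first apply the Brouwer--Wilbrink inequality machinery \cite{BWilbrink} (with the gap in their argument repaired by De Bruyn \cite{DBr06a}) to derive the exact identity $c_3=c_2^2-c_2+1$, and only then use the remark in \cite[p.~277]{bcn} on Brouwer--Cohen's local recognition result \cite{bcohen} to identify $\G$ as a dual polar graph. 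Your proposal never produces this identity, and without it the appeal to the BCN characterizations does not go through; ``big quads plus propagation'' is a hope, not an argument, and you yourself concede you have no proof of it. (A small side error: thick generalized octagons do exist, so Feit--Higman does not exclude generalized $2D$-gons for $D=4$; this is harmless only because the hypotheses already force $c_2\geq 2$.)

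The second branch, $c_2=2$, $c_3=3$, is where your route fails most clearly, because in that case the quads are grids and there is simply no classification to cite -- which is precisely why the paper supplies its own sketch ``as it is not in the literature.'' The paper's argument there is of a completely different nature: first Brouwer--Wilbrink is used to upgrade the hypothesis to $c_i=i$ for all $i\leq D-1$; then, following Nomura \cite{Nomura90}, one constructs a map $\phi:H(t+1,s-1)\rightarrow\G$ whose fibres $\phi^{-1}(x)$ form a completely regular partition, so that by Theorem \ref{uniformlyregularpartition} each fibre is a completely regular code of minimum distance $2D$; truncating gives a perfect $(D-1)$-error-correcting code in a Hamming graph, and the perfect code theorem (cf.~\cite{Hong}) forces this code to be a single word when $D\geq 4$ and $s\geq 2$, whence $c_D=D$ and $\G\cong H(D,s+1)$. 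Your proposal contains neither this idea nor any substitute that would close the grid-quad case, so as written the proof does not work; if you want to salvage your outline, the $c_2\geq 3$ branch can be repaired by inserting the Brouwer--Wilbrink/De Bruyn identity before citing \cite{bcn}, but the $c_i=i$ branch needs the coding-theoretic argument (or an equally substantive replacement), not a citation.
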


\begin{proof}
(sketch) For $c_2 \geq 3$, the proof is implicitly given in \cite{bcn}. Brouwer and Wilbrink \cite{BWilbrink}
showed that a thick regular near $2D$-gon with $D \geq 4$ and $c_2 \geq 3$ satisfies $c_3 = c^2_2 - c_2 +1$ (the gap as
mentioned in \cite[p.~206]{bcn} is repaired by De Bruyn \cite{DBr06a}). From \cite[p.~277,~Rem.~ii]{bcn} (a remark on a
a result by Brouwer and Cohen \cite{bcohen}), it follows that a thick regular near $2D$-gon with $c_3 = c^2_2 - c_2 +1$
and $c_2 \geq 3$ is a dual polar graph.

For the case $c_i = i \ (i=2,3)$, we will give a sketch of the proof, as it is not in the literature. Let $\G$ be a
thick regular near $2D$-gon of order $(s, t)$ with $D \geq 4$ and $c_i = i \ (i=2,3)$. First, by a result of Brouwer
and Wilbrink \cite{BWilbrink}, one may assume that $c_i = i$ for $i \leq D-1$. Second, it can be shown that if $\G$
is of order $(s,t)$ with $c_i = i \ (i=2,3)$ and $a_2=c_2a_1$, then there exists a map $\phi: H(t+1,s-1) \rightarrow
\G$, such that the partition $\{\phi^{-1}(x):x\in V\}$ is completely regular (cf.~\cite[Thm.~3]{Nomura90}). Using
Theorem \ref{uniformlyregularpartition}, one can show that $\phi^{-1}(x)$ is a completely regular code with minimum
distance $2D$. Now its truncated code is a perfect $(D-1)$-error-correcting code and by the perfect code theorem (see
for example \cite{Hong}), the only such codes with $D \geq 4$ (that are relevant to us; $s \geq 2$) are the codes
consisting of exactly one code word. This shows that $c_D = D$ and that $\G$ is the Hamming graph $H(D, s+1)$. This
finishes the proof of the theorem.
\end{proof}

\noindent In some cases, the intersection numbers of a distance-regular graph imply that it must be a regular near
$2D$-gon; if $c_2=1$, $a_1 \leq 1$, or the graph has classical parameters $(D,-a_1-1,\alpha,\beta)$, see
\cite{Terwilliger1995EJC}. This for example implies that there can be no distance-regular graphs with intersection
array $\{147,144,135;1,4,49\}$ (and classical parameters $(3,-3,-3,21)$), because it would yield a regular near hexagon
with $(s,c_2,c_3)=(3,4,49)$ and this was ruled out by Shult according to Brouwer \cite{Brouwer1981}.

Let $\G$ be a thick regular near polygon with diameter $D$ and head $h$. Hiraki \cite{H97} showed that if $D \geq
2h+1$, then $h \in \{1,2, 3\}$ (he mentions also the possibility $h=5$, but this would lead to a thick generalized
12-gon, a contradiction). This result also follows from Proposition \ref{mbounded} (ii) ($m=h-1$), as we obtain a thick
generalized $2(h+1)$-gon as strongly closed subgraph, and by the Feit-Higman theorem (cf.~\cite[Thm.~6.5.1]{bcn}), it
follows that $h+1 \in \{2, 3, 4\}$. Hiraki \cite{Hi499} conjectured that if $D> 2h+1$, then $h =1$. For a thick regular
near $2D$-gon with $D \leq 2h$, one can bound the valency in terms of $a_1$, see \cite{HiKo04c}.

De Bruyn and Vanhove \cite{DeBVhpre} (see also \cite{Vanhove2012JAC}) obtained
that for a regular near $2D$-gon with $a_1>0$, the intersection numbers satisfy
$c_2 \leq (a_1+1)^2+ 1$ and
$$\frac{((a_1+1)^i -1)(c_{i-1} - (a_1 +1)^{i-2})}{(a_1+1)^{i-2} -1} \leq c_{i} \leq
\frac{((a_1+1)^i +1)(c_{i-1}+ (a_1 +1)^{i-2})}{(a_1+1)^{i-2} + 1}$$ for $i=3,4, \ldots, D$. Neumaier \cite{Neu1990JCTA}
obtained the upper bound for odd $i$ and the lower bound for even $i$ as a specialization of the balanced set condition
of Section \ref{sec:Qpolcharacterizations} for the smallest eigenvalue of a regular near $2D$-gon. For $D=i=3$, the
upper bound is the Haemers-Mathon bound \cite[p.~60]{Haemersthesis}\footnote{This bound is also called the Mathon
bound. It was obtained jointly by Haemers and Mathon. Yanushka recognized that the bound can be obtained from a Krein
condition. Besides the remark in \cite{Haemersthesis}, this is all unpublished.} for regular near hexagons. The upper
bound for even $i$ and the lower bound for odd $i$ can be seen as a specialization of Tonejc's \cite{JurTonpre}
modification of the balanced set condition.

The following is a slight extension of a result due to Brouwer, Godsil, Koolen, and Martin \cite[Thm.~10]{BGKM03}:
\begin{prop}\label{thm:BGKM}
Let $\G$ be a thick regular near $2D$-gon with quads (i.e., geodetically closed subgraphs with diameter two).
Then the second smallest eigenvalue $\theta_{D-1}$ of $\G$ satisfies
\begin{equation*}
	\theta_{D-1} \geq a_1+1-\frac{b_1}{(a_1+1)(c_2-1)},
\end{equation*}
with equality if and only if every quad has width and dual degree summing to $D$. Equality occurs only for the dual
polar graphs and Hamming graphs.
\end{prop}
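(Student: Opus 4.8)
The plan is to exhibit the matrix $(A-\beta I)(A-\theta_{\min}I)$, where $\beta$ denotes the right-hand side of the asserted bound, as a positive semidefinite matrix built out of the quads of $\G$, and then to read off the equality case from the kernel of that matrix. First I would normalize parameters. Writing $s=a_1+1$ and letting $\G$ have order $(s,t)$ (it is locally a disjoint union of $t+1$ cliques $K_s$), we have $k=s(t+1)$ and $b_1=st$, and since the lines of $\G$ are Delsarte cliques (Proposition \ref{delbound}) we get $\theta_D=\theta_{\min}=-(t+1)$; hence $\beta=a_1+1-\tfrac{b_1}{(a_1+1)(c_2-1)}$ simplifies to $s-\tfrac{t}{c_2-1}$ (note $c_2\ge 2$ since the quads are non-degenerate). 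Because $\G$ has quads, every two vertices at distance $2$ lie in a unique quad, and geodetic closedness forces each quad $Q$ to be the collinearity graph of a generalized quadrangle of order $(s,c_2-1)$: two non-collinear vertices of $Q$ have all of their $c_2$ common neighbours in $Q$, so $\mu_Q=c_2$, and consequently each vertex of $Q$ lies on exactly $c_2$ lines of $Q$.

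Next I would introduce the vertex--quad incidence matrix $N$ of $\G$. Since a quad has diameter $2$, the entry $(NN^{\top})_{xy}$ is the number of quads containing both $x$ and $y$, which depends only on $d(x,y)$, so $NN^{\top}=\nu_0 I+\nu_1 A+A_2$, where $\nu_0$ and $\nu_1$ are the numbers of quads through a vertex and through an edge. A short double count of the flags (line through a fixed vertex, quad containing that line), using that a quad through a line contains $c_2-1$ further lines through each of its vertices, yields $\nu_1=\tfrac{t}{c_2-1}$ and $\nu_0=\tfrac{t(t+1)}{c_2(c_2-1)}$. On the other hand, from $A^2=AA_1=kI+a_1A+c_2A_2$ one computes $(A-\beta I)(A-\theta_{\min}I)=(k-\beta(t+1))I+(a_1+t+1-\beta)A+c_2A_2$, and substituting the above values of $k,a_1,\beta$ this becomes $\tfrac{t(t+1)}{c_2-1}I+\tfrac{tc_2}{c_2-1}A+c_2A_2=c_2\,NN^{\top}$. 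Since $NN^{\top}$ is positive semidefinite and $\theta_j-\theta_{\min}>0$ for $j\le D-1$, it follows that $\theta_j\ge\beta$ for every $j\le D-1$; in particular $\theta_{D-1}\ge\beta$. This step is essentially a computation; the only points needing care are the identification of each quad as a generalized quadrangle of the stated order and the evaluation of $\nu_0,\nu_1$.

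For the equality analysis I would argue as follows. Since each vertex of a quad $Q$ lies on $c_2$ lines of $Q$, the characteristic vector satisfies $\chi_Q=\tfrac1{c_2}\sum_{L\subseteq Q}\chi_L$, so $\chi_Q$ lies in the column space of the vertex--line incidence matrix; as the kernel of the latter is the $\theta_{\min}$-eigenspace, $E_D\chi_Q=0$ for every quad. Now $\theta_{D-1}=\beta$ holds if and only if the $\theta_{D-1}$-eigenspace lies in $\ker\bigl((A-\beta I)(A-\theta_{\min}I)\bigr)=\ker(NN^{\top})$, equivalently $E_{D-1}\chi_Q=0$ for every quad $Q$. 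Combined with $E_D\chi_Q=0$ and the general inequality $w(\chi_Q)+w^{\ster}(\chi_Q)\ge D$ (here $w(\chi_Q)=2$, since $Q$ is geodetically closed of diameter $2$), this says exactly that every quad is a descendent, and one then checks that a descendent quad has dual width, hence dual degree, equal to $D-2$, so that its width and dual degree sum to $D$. The hard part will be the final claim that equality forces $\G$ to be a dual polar graph or a Hamming graph: for this I would use that a descendent quad is completely regular, so the distance partition with respect to it is equitable with a tridiagonal quotient matrix whose spectrum lies in that of $\G$, which constrains the local structure enough to invoke the classification of thick regular near $2D$-gons (cf.\ Theorem \ref{thm:RNPthick}) together with the descendent theory of \cite{BGKM03,Tanaka2011EJC}. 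The converse implication is immediate from the known spectra of dual polar and Hamming graphs.
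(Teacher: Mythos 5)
Your positive-semidefiniteness argument for the bound itself is correct, and it is genuinely different from the paper: the paper does not reprove the inequality or the width/dual-degree equivalence (it presents the proposition as a slight extension of \cite[Thm.~10]{BGKM03}) and only supplies an argument for the last sentence. The identity $(A-\beta I)(A+(t+1)I)=c_2NN^{\top}$, with $\nu_1=t/(c_2-1)$ and $\nu_0=t(t+1)/(c_2(c_2-1))$, checks out (granting the standard facts that two points at distance $2$ lie in a unique quad and that quads are sub-GQ's of order $(s,c_2-1)$ carrying full lines), and so does the reformulation of equality as $E_{D-1}\chi_Q=0$ for every quad. Two caveats on your equality discussion, though. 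First, the dual-width/descendent machinery you invoke is defined only for $Q$-polynomial graphs, and a thick regular near $2D$-gon is not assumed $Q$-polynomial (for $D\geq 4$ it essentially never is outside the graphs in the conclusion, by De Bruyn--Vanhove), so this is circular; the statement is about the dual \emph{degree} $s^{\ster}$, and the correct substitute is $\rho\geq D-w=D-2$ together with Delsarte's bound $\rho\leq s^{\ster}$, which needs no $Q$-polynomial ordering. Second, you only treat one direction of the equivalence: if every quad has $w+s^{\ster}=D$, then exactly one of $E_1\chi_Q,\dots,E_{D-1}\chi_Q$ vanishes, but nothing in your argument shows that the vanishing index is $D-1$, which is what $\theta_{D-1}=\beta$ requires.

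The genuine gap is the final sentence, which is exactly the part the paper proves. Your plan --- that the equitable distance partition of a completely regular quad ``constrains the local structure enough to invoke Theorem \ref{thm:RNPthick} together with descendent theory'' --- does not work as stated: Theorem \ref{thm:RNPthick} requires $D\geq 4$ and either $c_2\geq 3$ or $c_i=i$ $(i=2,3)$, and establishing that hypothesis is the whole substance of the step (and the cited descendent results of \cite{BGKM03,Tanaka2011EJC} again presuppose classical parameters). The paper's bridge is the Brouwer--Wilbrink relation \cite{BWilbrink}: $c_3\geq c_2(c_2-1)+1$, with equality if and only if no vertex of a subhexagon $H$ lies at distance $2$ from a quad $Q\subseteq H$. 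Under the equality hypothesis such a vertex cannot exist: for $D\geq 4$ it would contradict the complete regularity of $Q$ (which follows from $\rho=s^{\ster}=D-2$), and for $D=3$ it would force width plus dual degree at least $4>D$. Hence $c_3=c_2(c_2-1)+1$, which yields $c_2\geq 3$ or $(c_2,c_3)=(2,3)$, so Theorem \ref{thm:RNPthick} applies when $D\geq 4$, while $D=3$ is handled separately by \cite[Thm.~9.4.4]{bcn}. Without this (or an equivalent) passage from complete regularity of the quads to the value of $c_3$, your argument does not reach the classification of the equality case.
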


\noindent The last sentence of the above proposition follows from the following. Let $H$ be a subhexagon of $\G$ and
$Q$ be a subquadrangle in $H$. Brouwer and Wilbrink \cite{BWilbrink} showed that $c_3 \geq c_2(c_2-1) +1$ with equality
if and only if there is no vertex at distance 2 from $Q$ in $H$; see also \cite[p.~26]{DBr06a}. Suppose there is a
vertex $x$ at distance 2 from $Q$ in $H$. If $D \geq 4$, this means that $Q$ cannot be a completely regular code in
$\G$, as this vertex has distance at most 3 to all vertices in $Q$, while there also exists a vertex $y$ at distance 2
from $Q$ with distance 4 to some vertex in $Q$. If $D=3$, then the dual degree is at least the covering radius of $Q$
in $H$, which is at least two, and therefore the sum of the width and dual degree is at least 4. Therefore $c_3 =
c_2(c_2-1) +1$, and hence $\G$ is a dual polar graph or a Hamming graph (for $D \geq 4$, this follows from Theorem
\ref{thm:RNPthick}, whereas for $D=3$, it follows from \cite[Thm.~9.4.4]{bcn}).

For more results on regular near polygons, we refer to \cite{HiKo04a, HiKo04b, HiKo06, TW05}.


\section{Spectral characterizations}\label{sec:spectralchar}


It is known that distance-regularity of a graph is in general not determined by
the spectrum of the graph; see below and the overview by Van Dam, Haemers,
Koolen, and Spence \cite{DHKS06}. See also the survey by Fiol \cite{Fi02} on
algebraic characterizations of distance-regular graphs, and the surveys by Van
Dam and Haemers \cite{DH03, DH09} on spectral characterizations of graphs.

\subsection{Distance-regularity from the
spectrum}\label{sec:drgfromspectrum}

The following proposition surveys the cases for which it is known that
distance-regularity follows from the spectrum.

\begin{prop}\label{drs1}
If $\G$ is a distance-regular graph with diameter $D$, valency $k$, girth
$g$, and distinct eigenvalues $k=\theta_0,\theta_1, \dots, \theta_D$,
satisfying one of the following properties, then every graph cospectral with
$\G$ is also distance-regular, with the same intersection array as
$\G$:
\begin{enumerate}[{\em (i)}]
\item $g \geq 2D-1$ {\em \cite{BH93}},
\item $g \geq 2D-2$ and $\G$ is bipartite {\em \cite{DH02}},
\item $g \geq 2D-2$ and $c_{D-1}c_D<-(c_{D-1}+1)(\theta_1+\cdots+\theta_D)$
    {\em \cite{DH02}},
\item $\G$ is a generalized odd graph, that is, $a_1=\cdots=a_{D-1}=0,\
    a_D\neq 0$ {\em \cite{DHodd, HL99}},
\item $c_1=\cdots=c_{D-1}=1$ {\em \cite{DH02}},
\item $\G$ is the dodecahedron, or the icosahedron {\em
    \cite{HS95}},
\item $\G$ is the coset graph of the extended ternary Golay code {\em
    \cite{DH02}},
\item $\G$ is the Ivanov-Ivanov-Faradjev graph {\em
    \cite{DHKS06}},
\item $\G$ is the Hamming graph $H(3,e)$, with $e \geq 36$ {\em
    \cite{BDK08}}.
\end{enumerate}
\end{prop}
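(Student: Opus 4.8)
The plan is to recognize that this proposition is a compilation of results from the cited papers, so the write-up should be organized around a common spectral core together with case-specific remarks; I will not attempt to reprove the references, only to indicate how each case fits the scheme. First I would record the preliminaries used throughout. Suppose $\G'$ is cospectral with a distance-regular graph $\G$ of diameter $D$ and valency $k$. Then $\G'$ has the same number of vertices $v$ and of edges $\frac12 vk$ (the latter from $\sum_i\theta_i^2$), so $\G'$ is $k$-regular — the largest eigenvalue $k$ equals the average degree, which forces regularity — and $\G'$ is connected, since $k$ has multiplicity $1$. By Proposition~\ref{D<=d} the diameter $D'$ of $\G'$ is at most $D$, and $\dim\AL(\G')=D+1$. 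Because $\G$ is distance-regular, its distance matrices are polynomials $A_i=v_i(A)$ whose coefficients, degrees, and row sums $k_i$ are determined by the spectrum alone (through the three-term recurrence coming from the intersection matrix $L$ of \eqref{matrixl}); hence the natural candidates for the distance matrices of $\G'$ are the same polynomials $v_i(A')$, and the whole problem reduces to showing that $v_i(A')$ is a $0/1$ matrix supported exactly on the pairs at distance $i$ in $\G'$, for $i=0,1,\dots,D$. The intersection array is then read off directly.

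Next I would treat the girth-driven cases (i), (ii), (iii), (v) together. The entries of $A'^{\ell}$ count walks of length $\ell$, and below the girth these counts are rigid: odd closed walks shorter than the girth are impossible, and a short walk between two vertices is essentially confined to a tree, so its count depends only on $k$ and the distance. The inductive step is to strip from $A'^i$ the contributions of pairs at distance $<i$, leaving $v_i(A')=A'_i$; the hypotheses — $g\ge 2D-1$ in (i); $g\ge 2D-2$ with bipartiteness in (ii); $g\ge 2D-2$ with $c_{D-1}c_D<-(c_{D-1}+1)(\theta_1+\cdots+\theta_D)$ in (iii); and $c_1=\cdots=c_{D-1}=1$ in (v) — are exactly what guarantee that this stripping succeeds all the way to $i=D$ without ambiguity. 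I would cite \cite{BH93,DH02} for the quantitative heart of the argument, which bounds the numbers of short cycles of $\G'$ from the power sums $\sum_i\theta_i^{\ell}$ and checks that these are consistent only with the distance-regular local structure.

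For the remaining cases I would use their own methods. For (iv), the generalized odd graphs, one first observes from the spectrum that the odd girth equals $2D+1$ — it is the smallest odd $\ell$ with $\sum_i\theta_i^{\ell}\ne0$ — and then invokes \cite{DHodd,HL99}, where a connected graph with $D+1$ distinct eigenvalues and odd girth $2D+1$ is shown to be distance-regular of this type, using the distance partition around a vertex and, if convenient, the correspondence with a Johnson graph as in Section~\ref{sec:determinedbyarray}; Biggs' formula (Theorem~\ref{Biggsformula}) controls the multiplicities along the way. Cases (vi), the dodecahedron and icosahedron \cite{HS95}; (vii), the coset graph of the extended ternary Golay code \cite{DH02}; and (viii), the Ivanov--Ivanov--Faradjev graph \cite{DHKS06} are small enough to be settled by a (partly computer-assisted) enumeration of all graphs with the prescribed spectrum, checking distance-regularity of each.

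The hard case, and the one I expect to be the main obstacle, is (ix): $H(3,e)$ with $e\ge 36$ \cite{BDK08}. For small $e$ there genuinely exist non-distance-regular cospectral mates, so neither a purely local nor a purely spectral argument can suffice; one must combine the spectral constraints with a delicate combinatorial reconstruction of the $K_e\times K_e\times K_e$ product structure that becomes rigid only once $e$ is large, thereby ruling out Godsil--McKay-type switching deformations. This is precisely why the proposition is a patchwork rather than a single theorem: distance-regularity is not spectrally determined in general, so each family requires its own device to exclude the combinatorial deformations that share its spectrum, and in the girth cases (i)--(iii), (v) that device is the girth hypothesis itself, with the references quantifying how much girth is enough.
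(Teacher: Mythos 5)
The paper itself gives no proof of this proposition---it is a survey compilation in which each case is simply credited to the cited source---and your proposal takes essentially the same route, deferring the substance of cases (i)--(ix) to \cite{BH93,DH02,DHodd,HL99,HS95,DHKS06,BDK08} while sketching the common spectral preliminaries (regularity, connectedness, $D'\le D$, and the polynomials $v_i$) in a way consistent with the paper's framework. Two small inaccuracies worth fixing but not affecting the citation-based argument: edge and closed-walk counts should be taken with multiplicities, i.e.\ $\tr A^{\ell}=\sum_i m_i\theta_i^{\ell}$ rather than $\sum_i\theta_i^{\ell}$, and in case (ix) it is only known that some small $e$ (e.g.\ $e=3$) admit non-distance-regular cospectral mates, the threshold $e\ge 36$ being an artifact of the proof in \cite{BDK08} rather than evidence of mates for all smaller $e$.
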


\noindent In fact, more general results hold, because it is actually not in all cases (explicitly) required that the graph is cospectral to a distance-regular graph. Instead, for the graph to be distance-regular, it suffices that a similar spectral condition holds, where the diameter $D$ is replaced by the number of distinct eigenvalues minus one, and the intersection numbers by the so-called {\em preintersection numbers}; for details, we refer to Abiad, Van Dam, and Fiol \cite{AbiadQuasi14}.

Note that the polygons, strongly regular graphs, and bipartite
distance-regular graphs with diameter three are special cases of
(i) and (ii). We also refer to the survey paper by Van Dam and
Haemers \cite{DH03}, where a list of distance-regular graphs that
are known to be determined by the spectrum is included (except that
the antipodal 7-cover of $K_9$ is not mentioned). Van Dam, Haemers,
Koolen, and Spence \cite{DHKS06} give a list of graphs cospectral
with distance-regular graphs on at most 70 vertices (where Hadamard
graphs on $64$ vertices are missing). Note that Van Dam and Haemers
\cite{DH03} conjectured that almost all graphs are determined by
the spectrum. It follows from the prolific constructions of
distance-regular graphs by Fon-Der-Flaass
\cite{FonDerFlaassprolific} (see also Section
\ref{sec:Preparata}) that almost all distance-regular graphs are {\em not}
determined by the spectrum.

For (ix), we refer to Bang, Van Dam, and Koolen \cite{BDK08}, who showed that the Hamming graph $H(3,e)$ with diameter
three is uniquely determined by its spectrum for $e\geq 36$. Moreover, it is shown that for given $D\geq 2$, every
graph cospectral with the Hamming graph $H(D,e)$ is locally the disjoint union of $D$ copies of the complete graph of
size $e-1$, that is, it is geometric, for $e$ large enough. The latter is obtained by bounding the number of common
neighbours of two vertices in terms of the spectrum, and applying Proposition \ref{Metschresult-2.2}. The result on the
Hamming graphs with diameter three then follows from a result by Bang and Koolen \cite{BK08} who showed that if a graph
cospectral with $H(3,e)$ has the same local structure as $H(3,e)$, i.e., if it is geometric, then it is either the
Hamming graph $H(3,e)$ or the dual graph of $H(3,3)$. Furthermore, it is known that for $D\geq e\geq 3$, $(D\geq 4
\mbox{ and }e=2)$, or $(D \geq 2 \mbox{ and }e=4)$, the Hamming graph $H(D,e)$ is not uniquely determined by its
spectrum, whereas for $(2\leq D\leq 3 \mbox{ and }e=2)$ or $(e \geq D=2 \mbox{ and }e\neq 4)$, the Hamming graph
$H(D,e)$ is uniquely determined by its spectrum (cf.~\cite{bcn, DHKS06, HS95,hoffman63}).

Van Dam, Haemers, Koolen, and Spence \cite{DHKS06} showed that the
Ivanov-Ivanov-Faradjev graph is determined by its spectrum, whereas the Johnson
graphs, the Doubled Odd graphs, the Grassmann graphs, the Doubled Grassmann
graphs, the antipodal covers of complete bipartite graphs, and many of the
Taylor graphs are shown to have cospectral mates that are not distance-regular.
These mates are usually obtained by Godsil-McKay switching or by constructing
partial linear spaces that resemble the structure of the distance-regular
graphs in question. Van Dam and Haemers \cite{DH02} also used switching to
construct cospectral mates that are not distance-regular for the Wells graph,
the bipartite double of the Hoffman-Singleton graph, the triple cover of
$GQ(2,2)$, and the Foster graph.

\subsection{The \texorpdfstring{$p$-rank}{p-rank}}\label{sec:prank}

The $p$-ranks of $\G$, that is, the ranks over $GF(p)$ of matrices of the form
$A+\alpha I+\beta J$ with $\alpha,\beta$ integral (and $A$ the adjacency
matrix), can sometimes be used to distinguish cospectral graphs. Peeters
\cite{Peetersprank} studied these $p$-ranks of distance-regular graphs. He
showed among other results that for odd $e$, the Hamming graphs $H(3,e)$ are
determined by the spectrum and the $2$-rank of $A+I$. On the other hand, he
showed that the $p$-ranks of the Doob graphs and the Hamming graphs (with the
same intersection array) are the same.

\subsection{Spectral excess theorem}\label{sec:spectralexcess}

The spectral excess theorem by Fiol and Garriga \cite{FG97} states that a
connected regular graph with $d+1$ distinct eigenvalues is distance-regular
(with diameter $d$) if and only if for every vertex, the number of vertices at
distance $d$ from that vertex (the excess) equals a given expression in terms
of the spectrum (the spectral excess). So a simple `quasi-spectral' property
suffices for a graph to be distance-regular. To specify the result, one should
know that from the spectrum of a regular graph, a system of orthogonal
polynomials $v_i, i=0,1,\dots,d$
--- the so-called {\em predistance polynomials} --- can be constructed. For
distance-regular graphs, this system is well-known, and satisfies $A_i=v_i(A)$,
for $i=0,1,\dots,d$, where $A_i$ is the distance-$i$ adjacency matrix; see
(\ref{distancepolynomials}).

\begin{theorem}{\em (Spectral excess theorem)}\label{spectral excess theorem}
Let $\G$ be a connected k-regular graph on
$n$ vertices with $d+1$ distinct eigenvalues and corresponding orthogonal
polynomials $v_i, i=0,1,\dots,d$, and let $k_d(x)$ be the number of vertices at
distance $d$ from $x$. Then $\G$ is distance-regular if and only if
$k_d(x)=v_d(k)$ for all $x$.
\end{theorem}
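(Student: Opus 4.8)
The plan is to work in the adjacency algebra $\AL = \R[A]$ of the $k$-regular graph $\G$, exploiting the two natural inner products on the space of polynomials of degree at most $d$. Write $\AL$ with its standard trace inner product $\langle M, N\rangle = \frac1n \tr(MN)$. Since $\G$ has $d+1$ distinct eigenvalues, $\{I, A, \dots, A^d\}$ is a basis of $\AL$, and applying Gram--Schmidt to this basis with respect to $\langle\,,\,\rangle$ produces the \emph{predistance polynomials} $v_0, v_1, \dots, v_d$, a system of orthogonal polynomials (monic up to a normalization chosen so that $v_i(k)$ plays the role of $k_i$; the standard normalization is $\langle v_i(A), v_i(A)\rangle = v_i(k)$, which is arranged inductively and is the key structural fact about this system). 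The ``only if'' direction is essentially the observation recorded in Section \ref{sec2:evmult}: if $\G$ is distance-regular then $A_i = v_i(A)$ for $i=0,1,\dots,d=D$, and then $k_d(x) = (A_d)_{xx}'$-row-sum $= v_d(k)$ for every $x$, independently of $x$; this is immediate and I would dispatch it in a sentence.

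The substance is the ``if'' direction. First I would introduce the \emph{distance polynomial} side: let $A_d$ be the genuine distance-$d$ matrix of $\G$ (this is well-defined for any graph, $d$ here being the number of distinct eigenvalues minus one, which is at most the true diameter by Proposition \ref{D<=d}, and in fact I will show it equals the diameter a posteriori). The two matrices to compare are $v_d(A)$ and $A_d$. The crucial inequality is
\begin{equation*}
	\frac1n\sum_{x} k_d(x) \;=\; \langle A_d, A_d\rangle \cdot \frac{1}{(\text{something})} \;\geq\; v_d(k),
\end{equation*}
more precisely: expanding $A_d = \sum_{i=0}^d \alpha_i v_i(A) + (\text{component orthogonal to }\AL)$ and using that $A_d$ has zero--one entries with exactly $k_d(x)$ ones in row $x$, one gets $\langle A_d, A_0\rangle = 0$ for $0\le i<d$ forced by $\G_0,\dots,\G_{d-1}$ being ``filled'' only after... — here I would instead follow Fiol--Garriga directly: one shows $\langle A_d, v_i(A)\rangle = \delta_{id}\, v_d(k)$ for the \emph{highest} index, because $v_i(A)$ is a polynomial of degree $i$ in $A$ and $(A^j)_{xy}=0$ whenever $d(x,y)>j$, so only $v_d$ can pair nontrivially against $A_d$, and the pairing value is computed from $v_d(k)$. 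Combining with the expansion of $A_d$ in the orthogonal basis $\{v_i(A)\}$ plus a possibly-nonzero orthogonal complement, Cauchy--Schwarz (or just Bessel) yields
\begin{equation*}
	\overline{k_d} := \frac1n\sum_x k_d(x) \;\geq\; v_d(k),
\end{equation*}
with equality if and only if $A_d \in \AL$, i.e. $A_d = v_d(A)$.

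Finally, the hypothesis $k_d(x) = v_d(k)$ for \emph{all} $x$ forces $\overline{k_d}=v_d(k)$, hence $A_d = v_d(A) \in \AL$. From $A_d\in\AL$ one bootstraps: $A_{d-1} = $ (a polynomial in $A$ obtained by subtracting lower distance-matrices) — concretely, $I, A, A_2, \dots$ can be recovered one at a time since $A\cdot A_i$ has support in distances $i-1,i,i+1$, and knowing $A_d$ and $A$ lets one peel off $A_{d-1}$, then $A_{d-2}$, etc.; alternatively, invoke that $\{I,A,\dots,A^d\}$ and $\{I,A,A_2,\dots,A_d\}$ now both span $\AL$, and the combinatorial identity $\sum A_i = J$ plus regularity gives the three-term recurrence $AA_i = b_{i-1}A_{i-1}+a_iA_i+c_{i+1}A_{i+1}$ with constant coefficients, which is exactly distance-regularity. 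I would close by noting $d=D$. The main obstacle — and the part I would need to be most careful about — is establishing the equality-case characterization of the inequality $\overline{k_d}\ge v_d(k)$ cleanly: one must correctly identify the normalization of the $v_i$ (so that $v_i(k) = \langle v_i(A), v_i(A)\rangle = \|v_i(A)\|^2$, not merely proportional), verify that $A_d$ is orthogonal to $v_0(A),\dots,v_{d-1}(A)$ for the right structural reason (degree/support argument), and compute $\langle A_d, v_d(A)\rangle$ exactly; the pointwise-to-average step ``$k_d(x)\equiv v_d(k) \Rightarrow$ equality'' is then trivial but the inequality's proof is where all the orthogonal-polynomial bookkeeping lives.
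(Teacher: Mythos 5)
The survey itself does not prove this theorem: Section \ref{sec:spectralexcess} only states it and points to Fiol--Garriga \cite{FG97}, Van Dam \cite{Damexcess}, and Fiol--Gago--Garriga \cite{Fiolexcess}, so your proposal has to be measured against those standard arguments, and it does follow the standard ``global'' route: predistance polynomials normalized so that $\langle v_i(A),v_i(A)\rangle=v_i(k)$ with $\langle M,N\rangle=\frac1n\tr(MN)$, orthogonality $\langle A_d,v_i(A)\rangle=0$ for $i<d$ by the degree/support argument, the Hoffman identity $\sum_i v_i(A)=J$, and a projection/Bessel argument whose equality case gives $A_d=v_d(A)$. One correction before the real issue: your general inequality is stated backwards. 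Since $\|A_d\|^2=\overline{k_d}:=\frac1n\sum_x k_d(x)$ and $\langle A_d,v_d(A)\rangle=\langle A_d,J\rangle=\overline{k_d}$, Bessel gives $\overline{k_d}^{\,2}/v_d(k)\le\overline{k_d}$, i.e.\ $\overline{k_d}\le v_d(k)$: the average excess is at most the spectral excess. The inequality $\overline{k_d}\ge v_d(k)$ you wrote is false in general (any regular graph whose diameter is strictly smaller than $d$ has $\overline{k_d}=0<v_d(k)$; note also that Proposition \ref{D<=d} says $D\le d$, not the reverse as in your parenthetical). This slip does not damage the theorem, because your hypothesis forces equality pointwise, hence on average, and the equality case of Bessel still yields $A_d=v_d(A)$; in fact under the pointwise hypothesis you can bypass the inequality entirely via $\|A_d-v_d(A)\|^2=\overline{k_d}-2\overline{k_d}+v_d(k)=0$.

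The genuine gap is the final step, from $A_d=v_d(A)\in\AL$ to distance-regularity. This is precisely the theorem of Fiol, Garriga, and Yebra \cite{FHY96}, which the survey explicitly calls the first important step towards the spectral excess theorem; it is not routine bootstrapping. Your second alternative (``$\{I,A,\dots,A^d\}$ and $\{I,A,A_2,\dots,A_d\}$ both span $\AL$'') is circular, since the membership of $A_2,\dots,A_{d-1}$ in $\AL$ is exactly what must be proved. Your first alternative (peeling) is the right idea but is missing its two key ingredients. The peeling runs through the three-term recurrence of the $v_i$ modulo the minimal polynomial: the component of $xv_d$ on $v_{d-1}$ is nonzero (a standard fact for orthogonal polynomials of a positive measure), so $v_{d-1}(A)$ is a linear combination of $AA_d$ and $A_d$ and therefore vanishes off the distance-$(d-1)$ pairs; iterating downward, using the recurrence and the support already established for $v_{i+1}(A)$ and $v_{i+2}(A)$, gives that each $v_i(A)$ vanishes off the distance-$i$ pairs. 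Then, and only then, the identity $\sum_j v_j(A)=J$ forces the entries of $v_i(A)$ on distance-$i$ pairs to equal $1$ (terms with $j<i$ vanish by degree, terms with $j>i$ by the support statement), so $A_i=v_i(A)$ for all $i$; the recurrence then reads $AA_i$ as a fixed linear combination of $A_{i-1},A_i,A_{i+1}$, which is distance-regularity, and $d=D$. As written, your conclusion asserts the outcome of this lemma rather than proving it, and the step you flag as delicate (the inequality bookkeeping) is actually the easy half.
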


\noindent In fact, the theorem can be stated a bit stronger: instead of
requiring that $k_d(x)=v_d(k)$ for all $x$, it is sufficient to require that
the harmonic mean of $n-k_d(x)$ equals $n-v_d(k)$. Another remark is that the
spectral excess $v_d(k)$ can be computed from the spectrum
$\{k=\theta_0^{1},\theta_1^{m_1},\dots,\theta_d^{m_d} \}$ directly as
$$v_d(k)=\frac{n}{\pi_0^2}\left[\sum_{i=0}^d\frac{1}{m_i
\pi_i^2}\right]^{-1},$$ where $\pi_i=\prod_{j \neq
i}|\theta_i-\theta_j|$ for $i=0,1,\dots,d$.

The first result of this kind was obtained by Cvetkovi\'c \cite{C70} and by
Laskar \cite{L69}, who showed that for a Hamming or Doob graph with diameter
three, distance-regularity is determined by the spectrum and having the correct
number of vertices at distance two from each vertex. This result was
generalized to all distance-regular graphs with diameter three by Haemers
\cite{Ha96}, and subsequently by Van Dam and Haemers \cite{DH97}, who proved
the spectral excess theorem for graphs with four distinct eigenvalues (not
assuming that the graph has the spectrum of a distance-regular graph).

At the same time, Fiol, Garriga, and Yebra \cite{FHY96} showed that
a graph with $d+1$ distinct eigenvalues is distance-regular if each
vertex has at least one vertex at distance $d$ and its distance-$d$
adjacency matrix $A_d$ is a polynomial of degree $d$ in the
adjacency matrix $A$, which is the first important step towards the
spectral excess theorem, which was then proved by Fiol and Garriga
in \cite{FG97}. The improvement to considering the above mentioned
harmonic mean was later proved in \cite{Fi02} (see also
\cite{Damexcess}). Fiol also obtained more specific results for
antipodal distance-regular graphs \cite{F97} and for strongly
distance-regular graphs \cite{F00} (a distance-regular graph with
diameter $D$ is {\em strongly distance-regular} if its distance-$D$
graph is strongly regular; examples are the connected strongly
regular graphs, antipodal distance-regular graphs, and
distance-regular graphs with $D=3$ and $\theta_2=-1$). Elementary
proofs of the spectral excess theorem are given by Van Dam
\cite{Damexcess} and Fiol, Gago, and Garriga \cite{Fiolexcess}. The
original proof by Fiol et al.~\cite{FG97, FHY96} has a local
approach and, because of that, it is quite technical.\footnote{In
the language of the Terwilliger algebra, (part of) this local
approach can be interpreted as finding a condition on the thinness
of the primary $\TT$-module; see Footnote
\ref{similar to SET}.} We remark however that by this local
approach, Fiol et al.~manage to prove more related results. Van Dam
and Fiol \cite{DFLaplacian} generalized the spectral excess theorem
by dropping the regularity condition and using the Laplacian
eigenvalues. We refer the interested reader also to surveys by Fiol
\cite{Fi02,Fi05}.

A useful application of the spectral excess theorem is, for example, given by the construction by Van Dam and
Koolen \cite{DK05} of a new family of distance-regular graphs with the same intersection array as certain
Grassmann graphs, see Section \ref{twistedsection}. Distance-regularity of these graphs is proved by showing
that they have the same spectrum as the Grassmann graphs, and then checking the number of vertices at
extremal distance from each vertex. The spectral excess theorem was also used by Van Dam and Haemers
\cite{DHodd} to show that each regular graph with $d+1$ distinct eigenvalues and shortest odd cycle of length
$2d+1$ is a distance-regular generalized odd graph. Lee and Weng \cite{LeeWeng12} generalized this by
dropping the regularity condition, using a version of the spectral excess theorem for nonregular graphs. Van
Dam and Fiol \cite{damfiol12} obtained the same result by an alternative method that avoids the spectral
excess theorem; these results generalize Proposition \ref{drs1} (iv) above.

Kurihara \cite{Kur2011T} obtained a dual version of the spectral excess theorem, in the sense that it
characterizes when a spherical $2$-design generates a cometric association scheme. Kurihara and Nozaki
\cite{KN2011pre} and Nomura and Terwilliger \cite{NT2011LAA} independently derived a spectral
characterization of $P$-polynomial schemes (and hence distance-regular graphs) among symmetric association
schemes that is closely related to the spectral excess theorem.

\subsection{Almost distance-regular graphs}\label{sec:almostdrg}

Motivated by spectral and other algebraic characterizations of distance-regular graphs, Dalf\'{o}, Van Dam,
Fiol, Garriga, and Gorissen \cite{DalfoDamFiol} studied 
\emph{almost} distance-regular graphs.
They used the
spectrum and the predistance polynomials of a graph to discuss concepts such as $m$-walk-regularity and
partial distance-regularity. It was shown by Rowlinson \cite{r97} that a graph is distance-regular if and
only if the number of walks of given length between vertices depends only on the distance between these
vertices. Godsil and McKay \cite{gmk} called a graph walk-regular if the number of closed walks of given
length is constant. The concept of $m$-walk-regularity, as introduced by Dalf\'{o}, Fiol, and Garriga
\cite{DaFiGa09}, generalizes both, and requires the invariance of the number of walks of each given length
between vertices at each given distance at most $m$. Algebraically, this is equivalent to $A_i \circ E_j =
\frac1v Q_{ij} A_i$ for all $i=0,1,\ldots,m$ and $j=0,1,\ldots,d$ (and some $Q_{ij}$), where the notation is
as usual (cf.~Section \ref{sec2:evmult}). An interesting problem raised in \cite{DalfoDamFiol} is to
determine the smallest $m=m(D)$ such that each $m$-walk-regular graph with diameter $D$ is distance-regular.
Informally, the question is till what distance $m$ one needs to check $m$-walk-regularity to assure
distance-regularity. We expect that $m(D)$ is approximately $D/2$.

Dalf\'{o}, Van Dam, and Fiol \cite{dalfoperturbation} showed that
$m$-walk-regular graphs can be characterized through the cospectrality of
certain perturbations of such graphs. As a consequence, some new
characterizations of distance-regularity in terms of certain perturbations are
obtained. C\'{a}mara, Van Dam, Koolen, and Park \cite{CDKP2013} observed a
structural gap between $1$-walk-regularity and $2$-walk-regularity. They showed
among other results that Godsil's bound on the valency in terms of a
multiplicity (in Theorem \ref{thm:godsilbound}), Terwilliger's bounds on the
local eigenvalues \cite[Thm.~4.4.3]{bcn}, and the fundamental bound \eqref{FB}
generalize to $2$-walk-regular graphs. Moreover, they show that there are
finitely many non-geometric $2$-walk-regular graphs with given smallest
eigenvalue and given diameter (in the same spirit as Theorem
\ref{thm:nongeometric}).

Another concept is that of $m$-partial distance-regularity (distance-regularity up to distance $m$). This
means that for $i \le m$, the distance-$i$ matrix can be expressed as a polynomial of degree $i$ in the
adjacency matrix, which is equivalent to saying that the intersection numbers $c_i,a_i,b_i$ are well defined
up to $c_m$. We note that there are $(D-1)$-partially distance-regular graphs with diameter $D$ that are not
distance-regular; for example the direct product of an edge and the folded cube. Lee and Weng
\cite{LeeWeng14} used $2$-partial distance-regularity to characterize the distance-regular graphs among the
bipartite graphs whose halved graphs are distance-regular (cf.~Proposition \ref{prop:halved}).

Related to these concepts are two other generalizations of distance-regular
graphs. Weichsel \cite{w82} called a graph distance-polynomial if each
distance-$i$ matrix can be expressed as a polynomial in the adjacency matrix. A
graph is called distance degree regular if each distance-$i$ graph is regular.
Such graphs were studied by Bloom, Quintas, and Kennedy \cite{bloom}, Hilano
and Nomura \cite {hilanomura}, and also by Weichsel \cite{w82} (as
super-regular graphs). A concept that is dual to partial distance-regularity
was introduced by Dalf\'{o}, Van Dam, Fiol, and Garriga \cite{dalfodual}.


\section{Subgraphs}\label{sec: subgraphs}




Let $\G$ be a distance-regular graph.
In this section, a subgraph in $\G$ will always be an induced subgraph.
Recall that a code in $\G$ is simply a non-empty subset of $V_{\G}$.
Therefore, subgraphs, codes, and (vertex) subsets will be virtually the same objects in this section, and we shall adopt one of these names depending on the context.
Completely regular codes will be separately discussed in Section \ref{sec:crc}.

\subsection{Strongly closed subgraphs}\label{sec: strongly closed subgraphs}

Suzuki \cite[Thm.~1.1]{Su295} showed that strongly closed subgraphs of distance-regular graphs are usually
distance-regular.

\begin{theorem}
Let $\Delta$ be a strongly closed subgraph of a distance-regular graph
$\G$. Let $h$ be the head of $\G$ and $k$ be the valency of $\G$. Then one
of the following holds:
\begin{enumerate}[{\em (i)}]
\item $\Delta$ is distance-regular,
\item $2 \leq D_{\Delta} \leq h$,
\item $h$ and $D_{\Delta}$ are even, and $\Delta$ is a distance-biregular graph with $c_{2i-1}
    =c_{2i}$ for all $i=1,2,\dots, \frac{1}{2}D_{\Delta}$,
\item $h =3$, $D_{\Delta}= 5$, and $\Delta$ is isomorphic to the graph obtained by replacing each edge
in a complete graph $K_{\ell+1}$, $\ell \geq 3$, by a path of length $3$,
\item $h =6$, $D_{\Delta} = 8$, and $\Delta$ is isomorphic to the
graph obtained by replacing each edge in a Moore graph with valency $\ell\in \{3,7,57\}$ by a
path of length $3$.
\end{enumerate}
\end{theorem}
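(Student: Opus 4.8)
The plan is to convert the defining property of strong closure into inheritance of the ``lower'' local parameters of $\G$, reducing the classification to a question about the valencies occurring in $\Delta$, and then to run a case analysis governed by the head $h$ and the diameter $D_\Delta$. First I would record the elementary consequences of strong closure: if $x,y\in V_\Delta$ then every geodesic of $\G$ joining them lies in $\Delta$, so distances in $\Delta$ agree with those in $\G$ and $D_\Delta\le D$. Moreover, for $x,y\in V_\Delta$ with $d(x,y)=i$, any neighbour $z$ of $y$ with $d(x,z)=i-1$ satisfies $d(x,z)+d(z,y)=i\le d(x,y)+1$, and any neighbour $z$ of $y$ with $d(x,z)=i$ satisfies $d(x,z)+d(z,y)=i+1=d(x,y)+1$; in both cases $z\in V_\Delta$. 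Hence $c_i(\Delta)=c_i$ and $a_i(\Delta)=a_i$ for all $i\le D_\Delta$, independently of the chosen pair, so that $b_i(\Delta;x,y)=k_\Delta(y)-c_i-a_i$ depends on $y$ only through its valency $k_\Delta(y)$. In particular $\Delta$ is distance-regular precisely when it is regular, in which case its intersection array is determined by that of $\G$ and by $D_\Delta$; this is case (i), so from now on I would assume $\Delta$ is not regular and aim to reach one of (ii)--(v). Applying the same remark to $x,y\in V_\Delta$ with $d(x,y)=D_\Delta$ (so that $b_{D_\Delta}(\Delta;x,y)=0$) shows that every vertex realizing the diameter of $\Delta$ has the common valency $c_{D_\Delta}+a_{D_\Delta}$, which will turn out to be the smaller of the two valencies of $\Delta$.

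Next I would clear the easy regimes. If $D_\Delta\le 1$ then $\Delta$ is a clique, hence distance-regular; and if $h=1$ --- in particular if $c_2\ge 2$ --- then $\Delta$ is regular, hence distance-regular (cf.~\cite[Lemma~5.2]{W98} for the case $c_2>1$, and \cite{Su295}), so again case (i). Thus I may assume $h\ge 2$ and $D_\Delta\ge 2$; then, by the inheritance above, $c_2=1$ and $c_i=1$, $a_i=a_1$ for all $i$ with $1\le i\le\min\{h,D_\Delta\}$. If $D_\Delta\le h$ this is exactly case (ii), so the only remaining situation is: $\Delta$ non-regular, $h\ge 2$, and $D_\Delta\ge h+1$ (whence $c_i=1$, $a_i=a_1$ for $1\le i\le h$).

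This last situation is the heart of the proof. The constraints $c_i=1$, $a_i=a_1$ $(i\le h)$ make geodesics of length at most $h$ in $\Delta$ unique and render $\Delta$, up to radius $h$, ``tree-like with maximal cliques of size $a_1+1$''. The plan is: (a) show $\Delta$ is triangle-free (so $a_1=0$) --- if $a_1\ge 1$ then $c_2=1$ makes $\G$ of order $(a_1+1,t)$, and a propagation argument along geodesics shows a non-regular such $\Delta$ would have $D_\Delta\le h$, a contradiction; (b) using the distance partitions of $\Delta$ based at a vertex and at an edge, together with the fact that all diametral vertices share the valency $c_{D_\Delta}+a_{D_\Delta}$, show that $\Delta$ has exactly two valency classes; (c) if $\Delta$ is bipartite, it is then distance-biregular, and reading off the inherited $c_i$'s forces $h$ and $D_\Delta$ to be even with $c_{2i-1}=c_{2i}$ for $i=1,\dots,D_\Delta/2$ --- this is case (iii); (d) if $\Delta$ is not bipartite, the valency-$2$ class must consist of the interior vertices of degree-$2$ paths, and collapsing these paths exhibits $\Delta$ as the graph obtained from a connected $\ell$-regular graph $M$ with $\ell\ge 3$ by replacing each edge by a path of length $3$. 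A girth-and-diameter count --- using $\mathrm{girth}(\Delta)=2h+3$ (valid because $a_1=0$) and $\mathrm{diam}(\Delta)=D_\Delta$ --- then forces $M$ to be a Moore graph with $\mathrm{diam}(M)=h/3$, and the Moore-graph theorem gives $\mathrm{diam}(M)\in\{1,2\}$: $\mathrm{diam}(M)=1$ yields $h=3$ and $M=K_{\ell+1}$ (case (iv)), while $\mathrm{diam}(M)=2$ yields $h=6$ and $\ell\in\{3,7,57\}$, the valency-$2$ Moore graph $C_5$ being excluded since its length-$3$ subdivision $C_{15}$ is regular (case (v)).

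The main obstacle is steps (a)--(d): showing, by purely combinatorial bookkeeping, that a non-regular strongly closed subgraph of diameter exceeding $h$ is triangle-free, has only two valency classes, and --- when non-bipartite --- is a uniform length-$3$ subdivision of a regular graph. What makes this tractable is the tree-likeness up to radius $h$ forced by $c_i=1$, $a_i=a_1$ $(i\le h)$; but propagating this through the intersection diagrams based at a vertex and at an edge, and excluding hybrid configurations, is technical and delicate. By contrast, once the subdivision description is secured, recognizing $M$ as a Moore graph and invoking the Moore-graph theorem is routine.
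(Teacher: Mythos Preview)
The paper does not prove this theorem; it is stated as Suzuki's result \cite[Thm.~1.1]{Su295} and followed only by the remarks that (i) holds precisely when $b_{D_\Delta-1}>b_{D_\Delta}$, and that (iv) or (v) hold precisely when $a_1=0$ and $(c_{D_\Delta-1},a_{D_\Delta-1})=(c_{D_\Delta},a_{D_\Delta})=(1,1)$. So there is no proof in the paper to compare against directly.

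That said, your outline is the natural route and matches Suzuki's original strategy: inherit $c_i,a_i$ from $\G$, reduce distance-regularity to regularity, and then split the non-regular case by bipartiteness, ending with the Moore-graph classification for the subdivision case. The paper's post-theorem remarks are consistent with your framing; in particular, they confirm that the non-bipartite outcomes (iv)/(v) are characterised by the low valency in $\Delta$ being $c_{D_\Delta}+a_{D_\Delta}=2$.

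A couple of points where your sketch is looser than it should be. First, in step (a) your ``propagation argument'' for $a_1=0$ is asserted but not indicated; showing that a non-regular strongly closed $\Delta$ with $D_\Delta>h$ forces $a_1=0$ is genuine work in Suzuki's paper, not a one-liner. Second, in step (d) you write $\mathrm{girth}(\Delta)=2h+3$ ``because $a_1=0$'', but this also requires $c_{h+1}=1$ (otherwise the girth is $2h+2$); this does hold in the relevant situation --- the paper's remark gives $(c_{D_\Delta-1},a_{D_\Delta-1})=(1,1)$, hence $c_{h+1}=1$ since $h+1\le D_\Delta-1$ --- but it should be said. Likewise, that the degree-$2$ paths all have the \emph{same} length (namely $3$) must be extracted from the parameters, not assumed. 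You flag steps (a)--(d) as the hard part, which is accurate; the Moore-graph endgame is, as you say, routine once the subdivision description is in hand.
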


\noindent
It follows that (i) holds above precisely when $b_{D_{\Delta}-1}>b_{D_{\Delta}}$, and that (iv) or (v) hold above precisely when $a_1=0$ and $(c_{D_{\Delta}-1},a_{D_{\Delta}-1})=(c_{D_{\Delta}},a_{D_{\Delta}})=(1,1)$.
The Biggs-Smith graph is the only known example of a distance-regular graph with $h\geq 2$ which satisfies $(c_{h+1},a_{h+1})=(c_{h+2},a_{h+2})=(1,1)$.
We note that if $c_2 \geq 2$ then every strongly closed subgraph of $\G$ is distance-regular.

Hiraki \cite{Hi98} introduced the \emph{condition} $\mathrm{(SC)}_m$ as the condition\footnote{We note that $\mathrm{(SC)}_m$ for some $m\in\{1,2,\dots,D_{\G}-1\}$ implies $K_{2,1,1}$-freeness, which in turn implies $h$-boundedness, where $h$ is the head of $\G$; cf.~\cite[p.~129, Remarks]{Hi01}.} that for all vertices $x$ and $y$ at
distance $m$ there exists a strongly closed subgraph $\Delta(x,y)$ with diameter $m$ containing $x$ and $y$.
Hiraki \cite[Thm.~1]{Hi01} showed that $\mathrm{(SC)}_m$ is equivalent to $m$-boundedness for $m=1,2,\dots,D_{\G}-1$.

It is clear that if $\G$ is $m$-bounded then it is $(m+1)$-parallelogram-free.
The converse is not true in general because every bipartite distance-regular graph is parallelogram-free, but it does not even need to be $2$-bounded, as the incidence graph of a $2$-$(11,6,3)$-design shows.
In some cases, however, $(m+1)$-parallelogram-freeness is known to be equivalent to $m$-boundedness.

\begin{prop}\label{mpf}
Let $\G$ be a distance-regular graph with diameter $D$ and let $m\in\{1,2, \ldots,D-1\}$.
Suppose one of the following holds:
\begin{enumerate}[{\em (i)}]
\item $m=1$,
\item $c_2 >1$ and $a_1 >0$,
\item $c_2 =1$ and $a_2 > a_1 >0$,
\item $m=2$ and $a_2 > a_1=0$,
\item $c_{m+1} = 1$ and $a_2 > a_1$.
\end{enumerate}
Then $\G$ is $(m+1)$-parallelogram-free if and only if $\G$ is $m$-bounded.
\end{prop}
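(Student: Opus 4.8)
The implication ``$m$-bounded $\Rightarrow$ $(m+1)$-parallelogram-free'' is the one already pointed out before the statement, and it is immediate: if $(x,y,z,u)$ is a parallelogram of length $\ell$ then $\ell\ge2$ (for $\ell=1$ the conditions $d(x,z)=d(y,z)=0$ would force $x=y$), so when $\ell\le m+1$ we have $d(y,z)=\ell-1\le m$, an $m$-bounded graph provides a strongly closed subgraph $\Delta(y,z)$ of diameter $\ell-1$, and from $d(y,x)+d(x,z)=d(z,u)+d(u,y)=\ell=d(y,z)+1$ strong closure places both $x$ and $u$ into $\Delta(y,z)$; since a strongly closed subgraph is isometrically embedded, this contradicts $d(x,u)=\ell$. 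So the content is the converse, and the plan is to prove it by induction on the distance $i$, establishing $i$-boundedness for $i=1,2,\dots,m$. At each step I would invoke Hiraki's equivalence $\mathrm{(SC)}_i\Leftrightarrow i$-boundedness (\cite[Thm.~1]{Hi01}): it then suffices to produce, for each pair $x,y$ at distance $i$, a strongly closed subgraph of diameter $i$ through $x$ and $y$ --- equivalently, to show that the smallest strongly closed subgraph $\langle x,y\rangle$ containing $x,y$ has diameter exactly $i$. The set-up is consistent because $(i+1)$-parallelogram-freeness holds for every $i\le m$, and the numerical hypothesis in each of (ii)--(v) restricts to the analogous one with $m$ replaced by $i$ (for (v) because the $c_j$ are non-decreasing, and (iv) concerns only $i\le2$).

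The base case $i=1$ is assertion (i), and is available in all five cases since $m\ge1$ forces $\G$ to be at least $2$-parallelogram-free. A parallelogram of length $2$ amounts to a triangle $\{x,y,z\}$ together with a vertex $u\in\G(y)\cap\G(z)$ with $u\not\sim x$; thus $2$-parallelogram-freeness says exactly that $\G(p)\cap\G(q)$ induces a clique for every edge $pq$. Hence $C(p,q):=\{p,q\}\cup(\G(p)\cap\G(q))$ is a clique of size $a_1+2$, and it is the unique maximal clique through $pq$ (every clique through $pq$ lies inside it). As all these maximal cliques have the same size, each $C(p,q)$ equals the maximal clique through every one of its own edges, so it is strongly closed of diameter $1$; therefore $\G$ is $1$-bounded, which also settles case (i).

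For the inductive step $2\le i\le m$, I would assume $\G$ is $(i-1)$-bounded --- so a strongly closed $\Delta(u,v)$ of diameter $d(u,v)$ is available for every pair with $d(u,v)\le i-1$ --- and, given $x,y$ with $d(x,y)=i$, build $\Delta(x,y)$ as a suitable union of these smaller strongly closed subgraphs (pinning down the precise form of the union is itself one of the technical points). The two things to verify are that $\Delta(x,y)$ has diameter $i$ and that it is strongly closed. For the diameter, two vertices of $\Delta(x,y)$ at distance $i+1$ would, through the pieces containing them, yield a parallelogram of length at most $i+1\le m+1$, which is excluded by hypothesis; so $\mathrm{diam}(\Delta(x,y))=i$. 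Strong closure is where the case-specific hypotheses come in: when $c_2>1$ (case (ii)) every strongly closed subgraph is regular by \cite[Lemma~5.2]{W98}, which makes the pieces fit together coherently; when $c_2=1$ (cases (iii) and (v)) the condition $a_2>a_1$ --- reinforced by $c_j=1$ for all $j\le m+1$ in (v) --- makes the short-cycle structure rigid enough to control the overlaps of the pieces; and (iv) ($m=2$, $a_1=0$) only involves $i=2$ and can be handled directly, the relevant subgraph being triangle-free of diameter $2$. In each of (ii)--(v) the requirement $a_1>0$ or $a_2>a_1$ is what excludes bipartite-type behaviour: a bipartite distance-regular graph contains no parallelogram of any length, yet the incidence graph of a $2$-$(11,6,3)$-design is not $2$-bounded, so $(m+1)$-parallelogram-freeness genuinely must be supplemented.

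The hard part will be this inductive step: showing that the constructed union is strongly closed and has exactly diameter $i$, and carrying this out uniformly enough to cover all five hypotheses. The delicate point is that $(m+1)$-parallelogram-freeness alone does not suffice (witness the bipartite counterexample), so the argument must bring in the relevant numerical hypothesis precisely where the purely combinatorial condition fails to pin down the local structure. Suzuki's structure theorem for strongly closed subgraphs (\cite[Thm.~1.1]{Su295}) and Hiraki's characterisation of $m$-boundedness are the tools I would lean on to keep the bookkeeping under control.
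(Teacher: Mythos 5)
Your easy direction ($m$-bounded $\Rightarrow$ $(m+1)$-parallelogram-free) and your base case are fine: the strong-closure condition does swallow both $x$ and $u$ of a parallelogram of length $\ell\le m+1$ into $\Delta(y,z)$ and contradicts its diameter, and $2$-parallelogram-freeness is indeed equivalent to $K_{2,1,1}$-freeness, whence the maximal cliques $\{p,q\}\cup(\G(p)\cap\G(q))$ are strongly closed of diameter $1$, settling (i). But the converse in cases (ii)--(v) is the entire content of the proposition, and your proposal does not prove it: you announce an induction in which $\Delta(x,y)$ is ``a suitable union'' of smaller strongly closed subgraphs, then concede that ``pinning down the precise form of the union is itself one of the technical points'' and that ``the hard part will be this inductive step.'' The two claims you would need --- that the constructed set is strongly closed and that its diameter is exactly $i$ --- are exactly what is left unargued; your diameter remark (two far-apart vertices ``through the pieces containing them'' yield a parallelogram) cannot even be checked without the construction, and the case-specific sentences (``makes the pieces fit together coherently,'' ``makes the short-cycle structure rigid enough'') are placeholders rather than arguments. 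So there is a genuine gap: the heart of the proof is missing.

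For comparison, the paper itself gives no proof either: it is a survey statement assembled from the literature, with (ii) due to Weng \cite[Thm.~6.4]{W98}, (iii) and the $c_2=1$ part of (iv) due to Suzuki \cite{S96}, the $c_2>1$ part of (iv) due to Weng \cite[Prop.~6.7]{W98}, and (v) due to Hiraki \cite{H196}. Each of these is a substantial paper; the standard route (e.g.\ in Weng's work) is not a union of smaller strongly closed subgraphs but a direct construction of $\Delta(x,y)$ as a weak-geodetic closure $\{z:d(x,z)+d(z,y)\le d(x,y)+1\}$ (or an inductively defined closure), with parallelogram-freeness and the numerical hypotheses ($a_1>0$, $c_2>1$, $a_2>a_1$, $c_{m+1}=1$) used to verify regularity and strong closure, and with the regularity of strongly closed subgraphs when $c_2>1$ (\cite[Lemma~5.2]{W98}) playing the role you gesture at. If you want a self-contained proof, you would essentially have to reproduce those arguments case by case; as it stands your text establishes only the direction that the paper already calls immediate.
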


\noindent
We remark that (i) is obvious, (ii) was shown by Weng \cite[Thm.~6.4]{W98}, (iii) was shown by Suzuki \cite{S96}, (iv) was
shown by Suzuki \cite{S96} for the case $c_2 =1$ (extending \cite[Lemma 4.3.13]{bcn}) and by Weng \cite[Prop.~6.7]{W98}
for the case $c_2>1$, and (v) was shown by Hiraki \cite{H196}. Hiraki \cite{Hiraki08b} also obtained other sufficient
conditions for a distance-regular graph to be $m$-bounded.

The following proposition summarizes the known results on $\G$ being $m$-bounded for some $m$ that have been obtained by using combinatorial methods.
Some more results are known under the assumption that $\G$ is $Q$-polynomial; cf.~ Section \ref{sec:recentclassical}.
\begin{prop}\label{mbounded}
Let $\G$ be a distance-regular graph with diameter $D \geq 3$ and head
$h\geq 1$. Let $m\in\{1,2,\ldots, D-h\}$. Then $\G$ is $m$-bounded
if one of the following holds:
\begin{enumerate}[{\em (i)}]
\item $c_{m+h} = 1$ and $a_{m-1}<a_m$,
\item $\G$ is $K_{2,1,1}$-free, $a_1>0$, $a_i = c_i a_1$ for $i=1,2, \dots,
    m+h-1$, and $c_{m-1} < c_m$.
\end{enumerate}
\end{prop}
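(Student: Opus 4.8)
The plan is to reduce both parts to the condition $\mathrm{(SC)}_m$ (the existence, for every pair of vertices at distance $m$, of a strongly closed subgraph of diameter $m$ containing them), which by Hiraki \cite[Thm.~1]{Hi01} is equivalent to $m$-boundedness for $m = 1,2,\dots,D-1$; note that $m \le D - h \le D - 1$ so this equivalence applies. Thus it suffices to construct, for each pair $x,y$ with $d(x,y) = m$, a strongly closed subgraph $\Delta(x,y)$ of diameter exactly $m$. The natural candidate is the smallest strongly closed subgraph containing $x$ and $y$; the whole difficulty is in showing that its diameter does not exceed $m$. Both hypotheses (i) and (ii) are designed to control precisely this: they force the ``defect'' in the intersection numbers to appear early enough that no strongly closed subgraph of diameter between $h+1$ and $m$ (or $m+h-1$) can absorb $x$ and $y$ without collapsing its diameter down to $m$.

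For part (i), the key structural input is Suzuki's classification of strongly closed subgraphs (the theorem immediately preceding in Section \ref{sec: strongly closed subgraphs}). First I would verify that $\mathrm{(SC)}_j$ holds for all $j \le m$ by induction on $j$; the base case $j=1$ is automatic and the inductive step uses the hypothesis $c_{m+h}=1$ together with the fact that $c_j \le c_{m+h}$ for $j \le m+h$, forcing all the relevant $c_j$ to equal $1$, which (as in \cite{H196}, cf.~Proposition \ref{mpf}(v)) makes the locally-geodesic structure rigid enough to build strongly closed subgraphs of diameter $j$ one step at a time. The condition $a_{m-1} < a_m$ is then what rules out the degenerate alternatives (ii)--(v) in Suzuki's theorem for the subgraph $\Delta(x,y)$: those alternatives all force $a_{D_\Delta - 1} = a_1 = 0$ or impose equalities among consecutive $a_i$ that are incompatible with $a_{m-1} < a_m$ once $D_\Delta$ is pinned down in terms of $m$. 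Hence $\Delta(x,y)$ is distance-regular (case (i) of Suzuki's theorem) with diameter $m$, giving $\mathrm{(SC)}_m$.

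For part (ii) the graph is $K_{2,1,1}$-free (equivalently of order $(s,t)$ for some $s,t$; cf.~Section \ref{sec:generalizations+geometric}) and the near-polygon-type relations $a_i = c_i a_1$ hold up to $i = m+h-1$. Here I would exploit that $K_{2,1,1}$-freeness already yields $h$-boundedness (as noted in the remark after the definition of $\mathrm{(SC)}_m$, citing \cite[p.~129]{Hi01}), so one has strongly closed subgraphs of diameter $h$ to start from. Then I would push from $j$-boundedness to $(j+1)$-boundedness for $h \le j < m$ by a ``regular near polygon inside a subgraph'' argument: the relations $a_i = c_i a_1$ restricted to the relevant range, combined with $c_{m-1} < c_m$ to exclude the exceptional Moore-graph and $K_{\ell+1}$-subdivision cases of Suzuki's classification, force the minimal strongly closed subgraph through $x,y$ to be a (distance-regular) subgraph of diameter $m$ rather than something larger. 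The relevant local computation is essentially the one underlying Weng's and Suzuki's results quoted in Proposition \ref{mpf}.

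The main obstacle, in both parts, is the inductive construction of the strongly closed subgraphs — i.e., showing $\mathrm{(SC)}_j \Rightarrow \mathrm{(SC)}_{j+1}$ in the stated range — and in particular checking that the minimal strongly closed subgraph of diameter $j+1$ containing a given pair does not ``overshoot'' to diameter larger than $m$ once we reach $j+1 = m$. This is where the hypotheses on $c_{m+h}$ (resp.\ the $a_i = c_i a_1$ relations) and the strict inequality $a_{m-1} < a_m$ (resp.\ $c_{m-1} < c_m$) must be used together with Suzuki's classification to eliminate the degenerate alternatives; getting those eliminations to work for all $j$ simultaneously, rather than just at the top, is the delicate point and will require carefully tracking how $D_\Delta$ relates to $m$ and $h$ at each stage.
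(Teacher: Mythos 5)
There is a genuine gap here, and it is the one you yourself flag at the end: the inductive construction of the strongly closed subgraphs is not carried out, and it is precisely that construction which constitutes the entire content of the result. Note first that the survey does not prove this proposition at all; it attributes part (i) to Ivanov and Brouwer (the case $m=2$, \cite[Prop.~4.3.11]{bcn}) and to Hiraki \cite[Thm.~1.3]{Hi98}, and part (ii) to Hiraki \cite[Thm.~1.1]{Hi499} (generalizing Brouwer--Wilbrink \cite{BWilbrink}), so a blind proof would have to reproduce the substance of those papers. Your reduction to $\mathrm{(SC)}_m$ via \cite[Thm.~1]{Hi01} is legitimate (the range $m\le D-h\le D-1$ is fine, and in case (i) the monotonicity of the $c_i$ together with $c_{m+h}=1$ indeed gives $c_j=1$ for $j\le m+h$), but the substitute you propose for the hard step does not work. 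Taking the \emph{minimal} strongly closed subgraph $\Delta$ containing $x,y$ with $d(x,y)=m$ and then invoking Suzuki's classification cannot bound $D_\Delta$: Suzuki's theorem describes the structure of a strongly closed subgraph that is already given, and $\G$ itself is always strongly closed, so a priori the minimal $\Delta$ may have diameter anywhere up to $D$; neither $a_{m-1}<a_m$ nor $c_{m-1}<c_m$ "rules out" the large-diameter possibility through that classification, because the alternatives (ii)--(v) in Suzuki's theorem concern degenerate small-$c$ behaviour of $\Delta$, not its size relative to $m$.

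In the actual proofs (Hiraki, and Ivanov--Brouwer for $m=2$) the strongly closed subgraph $\Delta(x,y)$ is \emph{explicitly constructed} -- roughly as a set of vertices defined by distance conditions relative to $x$ and $y$ (vertices lying on or near geodesics between them) -- and the hypotheses $c_{m+h}=1$, $a_{m-1}<a_m$ (resp.\ $K_{2,1,1}$-freeness, $a_i=c_ia_1$ for $i\le m+h-1$, $c_{m-1}<c_m$) are used in lengthy counting arguments to verify that this candidate set is strongly closed, regular, and of diameter exactly $d(x,y)$, inductively for all distances $i\le m$; this is also where the remark following the proposition ($b_{m-1}>b_m$, so $\Delta$ is distance-regular with valency $a_m+c_m$) comes from. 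Your sketch gestures at this induction ("$\mathrm{(SC)}_j\Rightarrow \mathrm{(SC)}_{j+1}$") but supplies none of it, and the mechanism you do describe (classification plus minimality) is not a viable replacement. So the proposal identifies the right framework and the right auxiliary results, but the proof itself is missing.
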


\noindent
Result (i) was obtained by Ivanov and Brouwer (cf.~\cite[Prop.~4.3.11]{bcn}) for $m=2$, and by
Hiraki \cite[Thm.~1.3]{Hi98} for the other cases.
Result (ii) was obtained by Hiraki \cite[Thm.~1.1]{Hi499}, generalizing a result of Brouwer and Wilbrink \cite{BWilbrink} for thick regular near polygons with $h=1$.
We remark that each of the assumptions (i) and (ii) implies $b_{m-1}>b_m$, so that if $x$ and $y$ are at distance $m$ then $\Delta =\Delta(x,y)$ is distance-regular with valency $a_m+c_m$.
In particular, if $c_{2h+1} =1$ and $m = h+1$,
then $\Delta$ is a Moore geometry and it is known that such a graph is either an odd polygon or has diameter at most
$2$; cf.~\cite[Thm.~6.8.1]{bcn}.
This shows the following proposition in the case $a_1 >0$.
The case $a_1 =0$ uses results by Chen, Hiraki, and Koolen \cite{CH99, Hi94,HK02}.

\begin{prop}{\em \cite[Thm.~2]{Hi01}}
Let  $\G$ be a distance-regular graph with head $h \geq 1$ and diameter $D
\geq 2h+3$. Then $h =1$ or $c_{2h+3} \geq 2$.
\end{prop}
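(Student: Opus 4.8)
The plan is to argue by contradiction, following the route indicated in the remarks preceding the statement. Assume $h\ge 2$, $D\ge 2h+3$, and, contrary to the claim, that $c_{2h+3}=1$. Since the $c_i$ are non-decreasing (Proposition~\ref{prop:unimodal}(i)), this forces $c_i=1$ for all $i=1,2,\dots,2h+3$. First I would extract two facts from the head being exactly $h$. Because $c_i$ is non-decreasing and $b_i$ is non-increasing, the index set defining $h$ is the initial segment $\{1,\dots,h\}$, so $(c_i,a_i,b_i)=(1,a_1,b_1)$ and in particular $a_i=a_1$ for $i\le h$. On the other hand $c_{h+1}=1$ while $(c_{h+1},a_{h+1},b_{h+1})\ne(1,a_1,b_1)$; since $a_{h+1}+b_{h+1}=k-1=a_1+b_1$ and $b_{h+1}\le b_h=b_1$, this gives $b_{h+1}<b_1$ and hence $a_h=a_1<a_{h+1}$.

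Next I would invoke $m$-boundedness via Proposition~\ref{mbounded}(i) with $m=h+1$: its hypotheses $c_{m+h}=c_{2h+1}=1$ and $a_{m-1}<a_m$ are exactly the two facts just established, and $m=h+1\le D-h$ because $D\ge 2h+3$. Hence $\G$ is $(h+1)$-bounded; moreover hypothesis~(i) yields $b_h>b_{h+1}$, so by the remark following Proposition~\ref{mbounded} the strongly closed subgraph $\Delta=\Delta(x,y)$ attached to any pair $x,y$ at distance $h+1$ is distance-regular of diameter $h+1$ and valency $a_{h+1}+c_{h+1}=a_{h+1}+1$. Since $\Delta$ is strongly closed it is isometrically embedded, and the $c_i$ and $a_i$ are inherited for $i\le h+1$ and $i\le h$ respectively: if $u,v\in V_{\Delta}$ with $d(u,v)=i$ and $z\sim v$ with $d(u,z)\in\{i-1,i\}$, then $d(u,z)+d(z,v)\le d(u,v)+1$, so $z\in V_{\Delta}$. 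Thus $c_i^{\Delta}=1$ for $i\le h+1$ and $a_i^{\Delta}=a_1$ for $i\le h$, which makes $\Delta$ a Moore geometry of diameter $h+1\ge 3$.

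Now I would apply the classification of Moore geometries \cite[Thm.~6.8.1]{bcn}: a Moore geometry of diameter at least $3$ is an ordinary polygon, hence has valency $2$. Therefore $a_{h+1}+1=2$, so $a_{h+1}=1$, and $a_1^{\Delta}=0$. If $a_1>0$ this already contradicts $a_1^{\Delta}=a_1$, which settles the proposition in that case. It then remains to handle $a_1=0$, where the above no longer closes the argument: we are left with a triangle-free distance-regular graph $\G$ whose (numerical, hence actual) girth is $2h+3$, with $c_i=1$ for $i\le 2h+3$, $a_{h+1}=1$, and with every pair of vertices at distance $h+1$ contained in a strongly closed $(2h+3)$-gon. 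This last step is the main obstacle: ruling out such configurations is genuinely harder than the elementary bookkeeping above, and I would invoke as a black box the structural results on triangle-free distance-regular graphs of Chen, Hiraki, and Koolen \cite{CH99,Hi94,HK02}, which are tailored precisely to this kind of situation (long runs of $c_i=1$, vanishing $a_i$, and short strongly closed cycles in the triangle-free case), to derive the needed contradiction. Combining the two cases shows that $c_{2h+3}=1$ is impossible once $h\ge 2$, proving the proposition.
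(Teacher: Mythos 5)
Your argument is correct and follows essentially the same route as the paper: apply Proposition~\ref{mbounded}(i) with $m=h+1$ (using $c_{2h+1}=1$ and $a_h<a_{h+1}$ forced by the head), use the remark after that proposition to get a strongly closed distance-regular subgraph of diameter $h+1$ and valency $a_{h+1}+c_{h+1}$, recognize it as a Moore geometry and invoke \cite[Thm.~6.8.1]{bcn} to settle $a_1>0$. Your treatment of the remaining case $a_1=0$ by citing \cite{CH99,Hi94,HK02} as a black box is exactly what the paper does as well, so nothing is missing relative to the paper's own sketch.
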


\noindent
We note that Wang \cite{W01} did related work. We remark also that if $\G$ is a distance-regular graph with $h=1$ and
$c_4 =1$, then by Proposition \ref{mbounded}(i) and a result from `BCN' \cite[Thm.~5.9.9(i)]{bcn}, $\G$ has a
distance-regular subgraph with diameter 3 and $c_3=1$. No such (latter) graph is known, however. Chen, Hiraki, and
Koolen \cite{CHK98} in fact showed that no such graph with $a_1 \neq 3$ and $a_1 \leq 30$ exists.

Let $\G$ be a distance-regular graph with diameter $D$.
Suppose $\G$ is $D$-bounded and every strongly closed subgraph is regular.
In particular, we have $b_i>b_{i+1}$ for $i=0,1,\dots,D-1$.
Let $\mathscr{S}$ be the poset
consisting of all strongly closed subgraphs of $\G$ with partial order defined by reverse inclusion. Weng
\cite{W97} showed that $\mathscr{S}$ is a ranked meet semilattice and every interval in $\mathscr{S}$ is atomic and
lower semimodular.
He also showed the inequalities
\begin{equation*}
	\frac{b_{D-i-1}-b_{D-i+1}}{b_{D-i-1}-b_{D-i}}\geq \frac{b_{D-i-2}-b_{D-i}}{b_{D-i-2}-b_{D-i-1}} \quad (i=1,2,\dots,D-2),
\end{equation*}
with equality for all $i=1,2,\dots,D-2$ if and only if every interval in $\mathscr{S}$ is a modular atomic lattice.
See also Section \ref{sec: posets}.

For some more work on strongly closed subgraphs in distance-regular graphs, we refer to \cite{Hiraki2012GC} and the references therein.

\subsection{Bipartite closed subgraphs}

Let $\G$ be a distance-regular graph with diameter $D$.
We say the \emph{condition $\mathrm{(BGC)}_j$} holds if for every pair of vertices at
distance $j$ there exists a bipartite closed subgraph with diameter $j$ containing this pair. This condition was introduced by Hiraki \cite{Hi303}, and he showed that $\mathrm{(BGC)}_j$ with $j\in\{1,2,\dots,D-1\}$ implies
$\mathrm{(BGC)}_i$ for all $i=1,2, \ldots, j$.
By combining results of Hiraki \cite{Hi303} and Koolen \cite{Ko192, Ko292}, we have the following.

\begin{prop} {\em \cite[Cor.~4.8]{Hi303}}
Let $\G$ be a distance-regular graph with diameter $D \geq 3$. Let
$t\in\{2,3,\dots, D-1\}$ be such that $c_t = c_{t-1}+1$ and $a_1 = a_2 = \dots =
a_{t-1} = 0$. Then the condition $\mathrm{(BGC)}_t$ holds if and only if one of the
following holds:
\begin{enumerate}[{\em (i)}]
\item $(c_1, c_2, \dots, c_t) = (1, 1, \dots, 1, 2)$ and every bipartite
    closed subgraph with diameter $t$ is the ordinary $2t$-gon,
\item $(c_1, c_2, \dots, c_t) = (1, 2, \dots, t)$ and every bipartite
    closed subgraph with diameter $t$ is the $t$-cube,
\item $t = 2s+1$ is odd, $(c_1, c_2, \dots, c_t) = (1,1, 2,2,\dots, s,s,s+1)$,
    and every bipartite closed subgraph with diameter $t$ is the Doubled Odd
    graph with valency $s+1$,
\item $t=4$, $(c_1, c_2, c_3, c_4) = (1, 1, 2, 3)$, and every bipartite
    closed subgraph with diameter $4$ is the Pappus graph.
\end{enumerate}
\end{prop}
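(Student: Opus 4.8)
The plan is to analyze the chain $\mathrm{(BGC)}_t$ under the stated hypotheses $c_t=c_{t-1}+1$ and $a_1=\cdots=a_{t-1}=0$, and extract the structure of the bipartite closed subgraphs $\Delta$ of diameter $t$ that can occur. First I would observe that each such $\Delta$ is itself bipartite and, by the $\mathrm{(BGC)}$ hypothesis together with the monotonicity result of Hiraki \cite{Hi303} (namely that $\mathrm{(BGC)}_t$ implies $\mathrm{(BGC)}_i$ for $i\le t$), every pair of vertices of $\Delta$ at distance $i<t$ lies in a bipartite closed subgraph of diameter $i$; this propagates regularity of the local structure down through $\Delta$. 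Since $c_2>1$ is not assumed, I cannot immediately invoke that all strongly closed subgraphs are distance-regular, but because $a_1=\cdots=a_{t-1}=0$ the subgraph $\Delta$ is at least locally very restricted, and the condition $c_t=c_{t-1}+1$ is exactly the borderline at which the intersection numbers $c_1,\dots,c_t$ are forced into one of a small number of patterns.

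The key step is to show that the sequence $(c_1,\dots,c_t)$ must be one of the four listed sequences. Here I would use Koolen's inequalities (Proposition~\ref{kooprop}): whenever $c_i>c_{i-1}$, one has $c_{i-j}+c_j\le c_i$ for all $j$; since $a_i=0$ forces $c_{i+1}\ge c_i$ (unimodality of the $c_i$, Proposition~\ref{prop:unimodal}(i)), the whole sequence $c_1\le\cdots\le c_t$ is nondecreasing, and the constraint $c_t=c_{t-1}+1$ together with the superadditivity-type bound pins down the "jump pattern." A short case analysis on where the jumps $c_i>c_{i-1}$ occur — combined with the classification of bipartite distance-regular graphs with small valency having the relevant intersection numbers (the ordinary polygon for the all-ones-then-$2$ pattern, the $t$-cube for $(1,2,\dots,t)$, the Doubled Odd graph for the $(1,1,2,2,\dots,s,s,s+1)$ pattern via Proposition~\ref{prop:odd} applied to the halved graph, and the Pappus graph for the sporadic $(1,1,2,3)$ case with $t=4$) — identifies $\Delta$ uniquely in each case. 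The converse direction is easier: in each of the four cases one checks directly that the named graph is bipartite, closed, of diameter $t$, and that any two vertices at distance $t$ in $\G$ that span a diameter-$t$ bipartite closed subgraph must span one isomorphic to it, using that the named graph is itself determined by the relevant few intersection numbers.

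The main obstacle I anticipate is the sporadic case $t=4$, $(c_1,c_2,c_3,c_4)=(1,1,2,3)$: ruling out any bipartite closed subgraph other than the Pappus graph requires a genuinely finite check, since the generic structural arguments (Koolen's inequalities, the Odd-graph characterization) give the parameters but not uniqueness of the graph. I would handle this by noting that a bipartite distance-regular graph of diameter $4$ with these intersection numbers has a prescribed spectrum via Biggs' formula (Theorem~\ref{Biggsformula}) and a prescribed halved graph, and then invoking the known uniqueness of the Pappus graph among graphs with that intersection array (this is the kind of small-case uniqueness result recorded in `BCN' \cite{bcn}). A secondary technical point is making sure the closure hypothesis $\mathrm{(BGC)}_t$ really forces $\Delta$ to be distance-regular when $c_2=1$; here I would lean on the fact that $a_1=\cdots=a_{t-1}=0$ makes $\Delta$ bipartite with girth at least $2t-2$ or so, which is enough to force distance-regularity of $\Delta$ by a girth argument (cf.~the spectral/girth criteria of Section~\ref{sec:drgfromspectrum} and Hiraki's analysis in \cite{Hi303}). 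Assembling these pieces gives both implications of the stated equivalence.
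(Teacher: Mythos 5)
The survey does not prove this proposition at all: it is quoted verbatim from Hiraki \cite[Cor.~4.8]{Hi303}, with the remark that it is obtained ``by combining results of Hiraki \cite{Hi303} and Koolen \cite{Ko192, Ko292}.'' So there is no in-paper argument for your sketch to match, and your proposal has to stand on its own as a proof of a nontrivial theorem --- which it does not yet do. The central step, namely that $c_t=c_{t-1}+1$ together with $a_1=\cdots=a_{t-1}=0$ and $\mathrm{(BGC)}_t$ forces $(c_1,\dots,c_t)$ into exactly the four listed patterns, is exactly the substance of the cited papers, and you dispose of it with ``a short case analysis.'' Proposition~\ref{kooprop} only gives $c_{i-j}+c_j\le c_i$ at indices where $c_i>c_{i-1}$; by itself it does not exclude sequences such as $(1,1,2,2,3)$ or $(1,2,2,3)$, and ruling these out requires working inside the bipartite closed subgraphs themselves (this is where Hiraki's analysis lives), not just with the numerical inequalities. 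Likewise the ``if'' direction is glossed: conditions (i)--(iv) contain universal statements about the bipartite closed subgraphs, and deducing $\mathrm{(BGC)}_t$ from them still requires producing such a subgraph through every pair at distance $t$.

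A step that would actually fail is your claim that large girth (from $a_1=\cdots=a_{t-1}=0$) ``forces distance-regularity of $\Delta$ by a girth argument.'' Closedness plus girth does not imply distance-regularity; the criteria of Section~\ref{sec:drgfromspectrum} presuppose cospectrality with a distance-regular graph, which you do not have for an arbitrary closed subgraph, and Suzuki's theorem concerns \emph{strongly} closed subgraphs and, when $c_2=1$, does not even guarantee regularity. Without (distance-)regularity of $\Delta$ you cannot invoke the ``determined by its intersection array'' results for the $2t$-gon, the $t$-cube, the Doubled Odd graph and the Pappus graph, so the identification of $\Delta$ in each case is unsupported. A smaller but symptomatic slip: you propose to recognize the Doubled Odd graph ``via Proposition~\ref{prop:odd} applied to the halved graph,'' but the halved graphs of a Doubled Odd graph are Johnson graphs (it is the \emph{folded} graph that is the Odd graph), so that route does not work; one would instead use a characterization of the Doubled Odd graph itself (e.g.\ by its intersection array, or Cuypers' distance-biregular characterization), and again only after $\Delta$ is known to be distance-regular.
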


\noindent
Some more general results are obtained by Hiraki \cite{Hi303}.

\subsection{Maximal cliques}

In most cases, it is easy to determine the maximal cliques of classical distance-regular graphs; cf.~\cite{Hemmeter1986EJC}.
However, the structure of the maximal cliques of the quadratic forms graphs turns out to be extremely complicated.
Hemmeter, Woldar, and Brouwer completed the classification of the maximal cliques in this case in a series of papers \cite{Hemmeter1988EJC,HW1990EJCa,HW1990EJCb,BHW1995EJC,HW1999EJC}.
Brouwer and Hemmeter \cite{BH1992EJC} classified the maximal cliques of half dual polar graphs and Ustimenko graphs (which are the distance $1$-or-$2$ graphs of dual polar graphs $\B_m(q)$ and $\C_m(q)$, respectively).
The maximal cliques of twisted Grassmann graphs were described by Van Dam and Koolen \cite{DK05}.

Hemmeter \cite{Hemmeter1986EJC} observed that if $\G$ is a bipartite distance-regular graph with diameter $D\geq 4$, then $\G_1(x)$ is a maximal clique of the halved graph for every $x\in V_{\G}$.
Using this fact, he was able to determine all bipartite distance-regular graphs whose halved graphs belong to one of the known (at the time) infinite families with unbounded diameter; cf.~\cite{Hemmeter1984UM,Hemmeter1988EJC}.
Brouwer, Godsil, Koolen, and Martin \cite[Cor.~2]{BGKM03} showed that if a distance-regular graph $\G$ has a Delsarte clique then it cannot have an antipodal cover of odd diameter.
Van Dam and Koolen \cite{DK05} looked at the structure of the maximal cliques of the twisted Grassmann graphs to show that these graphs are not vertex-transitive.

\subsection{Convex subgraphs}

Lambeck \cite{Lambeck1990D} studied in detail the noncomplete convex subgraphs of classical distance-regular graphs.
He classified such subgraphs in Johnson, Hamming, Grassmann, dual polar, bilinear forms, Hermitian forms, alternating forms graphs, and also quadratic forms graphs $Qua(n,q)$ with $q$ odd.
The noncomplete convex subgraphs of $Qua(n,q)$ with $q$ even were classified by Munemasa, Pasechnik, and Shpectorov \cite{MPS1993JAC}.
It turns out that if $\G$ is one of these graphs then its noncomplete convex subgraphs are distance-regular and belong to the same family as $\G$, with the exception of $Her(D,4)$, which has $K_{2,2}$ as a convex subgraph.

Tanaka \cite{Tanaka2011EJC} used the above results to describe the descendents (cf.~Section \ref{sec: posets}) of these graphs.

\subsection{Designs}
\label{sec: designs - subgraphs}

For recent updates on the study of combinatorial block designs and orthogonal arrays (i.e., $t$-designs in the Johnson and Hamming graphs), we refer the reader to \cite{CD2007B}.
It should be remarked here that Keevash \cite{Keevash2014pre} has recently proved that, given $t$, $k$, and $\lambda$, the natural divisibility conditions for the existence of a block $t$-$(v,k,\lambda)$ design are also sufficient, provided that $v$ is large enough.
This generalizes the result of Wilson \cite{Wilson1975JCTA} for the case $t=2$ and that of Teirlinck \cite{Teirlinck1987DM} which establishes the existence of block $t$-designs for all $t$.

A number of simple $t$-designs over finite fields (i.e., $t$-designs in the Grassmann graphs) with $t$ at most $3$ have been constructed by many researchers; see, e.g., \cite{BKOW2013pre} and the references therein.
Recently, Fazeli, Lovett, and Vardy \cite{FLV2013pre} showed that non-trivial simple $t$-designs over finite fields $\mathbb{F}_q$ exist for all $t$ and $q$.

Delsarte $T$-designs in a distance-regular graph with $|T|=D-1$ (where $D$ is the diameter of the graph) have dual degree $1$.
Such designs are necessarily completely regular with covering radius $1$, and will be briefly discussed in Section \ref{sec: crc in other drg}.

\subsection{The Terwilliger algebra with respect to a code}\label{sec:Terwilliger algebra w.r.t. code}

Let $\G$ be a distance-regular graph with diameter $D\geq 3$, adjacency matrix $A$, and eigenvalues $k=\theta_0>\theta_1>\dots>\theta_D$, and let $C$ be a non-empty subset of $V_{\G}$ (i.e., a code) with covering radius $\rho$.
Let $\{C_0=C,C_1,\dots,C_{\rho}\}$ be the distance partition with respect to $C$, and let $\chi_i$ be the characteristic vector of $C_i$ ($i=0,1,\dots,\rho$).
For each $i=0,1,\dots,\rho$, let $E_i^{\ster}=E_i^{\ster}(C)$ be the diagonal matrix in $M_{v\times v}(\mathbb{C})$ with diagonal entries $(E_i^{\ster})_{yy}=(\chi_i)_y$.
The \emph{Terwilliger algebra} $\TT=\TT(C)$ \emph{with respect to} $C$ is the subalgebra of $M_{v\times v}(\mathbb{C})$ generated by $A,E_0^{\ster},E_1^{\ster},\dots,E_{\rho}^{\ster}$.
The algebra $\TT(C)$ was first introduced and studied by Martin and Taylor \cite{MT1997pre} for binary Hamming graphs.
We shall use the same terminology as in the case of the ordinary Terwilliger algebra (i.e., with respect to a vertex); cf.~Section \ref{sec:Talgebra}.
However, as observed by Martin and Taylor \cite{MT1997pre} and Suzuki \cite{Suzuki2005JAC}, the primary $\TT$-module is thin (and is therefore equal to $\mathrm{span}_{\mathbb{C}}\{\chi_0,\chi_1,\dots,\chi_{\rho}\}$) precisely when $C$ is a completely regular code.

Suzuki \cite{Suzuki2005JAC} studied irreducible $\TT$-modules in detail.
The results in \cite{Suzuki2005JAC} generalize (to some extent) both the theory of tight graphs (cf.~Go and Terwilliger \cite{GT2002EJC}) and the theory of the width of a code (cf.~Brouwer et al. \cite{BGKM03}).
Suzuki \cite{Suzuki2005JAC} showed that a $\TT(C)$-module with endpoint $\nu$ is also a $\TT(C_{\nu})$-module with endpoint $0$, where irreducibility and thinness are also preserved.
This allows us to focus on the irreducible modules with endpoint $0$.

Recall that the width of $C$ is defined by $w=\max\{i:\chi_0^{\mathsf{T}}A_i\chi_0\ne 0\}$, where $A_i$ is the distance-$i$ matrix of $\G$ ($i=0,1,\dots,D$); cf.~Section \ref{sec: posets}.
For $i=0,1,\dots,D$, let $E_i$ be the primitive idempotent associated with $\theta_i$.
Let $\mathbf{v}$ be a nonzero vector in $E_0^{\ster}\mathbb{C}^v$.
Then it is easy to see that there is a polynomial $f$ of degree at most $w$ such that $||E_i\mathbf{v}||^2=f(\theta_i)m(\theta_i)$ ($i=0,1,\dots,D$), where $m(\theta)$ denotes the multiplicity of an eigenvalue $\theta$ of $\G$.
This immediately gives the inequality $w\geq D-r(\mathbf{v})$, where $r(\mathbf{v})=|\{i:E_i\mathbf{v}\ne 0\}|-1$.
(Note that $r(\chi_0)$ is the dual degree of $C$.)
The vector $\mathbf{v}$ is said to be \emph{tight} (with respect to $C$) if $w=D-r(\mathbf{v})$.
Suzuki \cite{Suzuki2005JAC} showed among other results that if $\mathbf{v}$ is tight then $\TT\mathbf{v}$ is a thin irreducible $\TT$-module with endpoint $0$.
This result was previously obtained by Brouwer, Godsil, Koolen, and Martin \cite{BGKM03} for $\mathbf{v}=\chi_0$, and generalizes a theorem of Go and Terwilliger \cite[Thm.~9.8]{GT2002EJC}.
An important consequence is that if $\G$ is $Q$-polynomial then every irreducible module of the ordinary Terwilliger algebra $\TT(x)$ with \emph{displacement} $0$ is thin.\footnote{The \emph{displacement} (\cite{Terwilliger2005GC}) of an irreducible $\TT(x)$-module $W$ is $\eta=e+e^{\ster}+\delta-D$, where $e,e^{\ster},\delta$ are the endpoint, dual endpoint, and the diameter of $W$, respectively. It follows from Caughman's results \cite[Lemmas 5.1, 7.1]{Caughman1999DM} that $0\leq\eta\leq D$.}
This fact was used, e.g., to extend the Assmus-Mattson theorem; cf.~\cite[\S 5]{Tanaka2009EJC}.

Hosoya and Suzuki \cite{HS2007EJC} called $\G$ \emph{tight with respect to} $C$ if the orthogonal complement of $\mathrm{span}_{\mathbb{C}}\{\chi_0\}$ in $E_0^{\ster}\mathbb{C}^v$ is spanned by tight vectors.
By \cite[Thm.~13.6]{GT2002EJC}, $\G$ is tight in the sense of Section \ref{sec:tightDRG} if and only if $\G$ is non-bipartite and tight with respect to $\G_1(x)$ for some (or all) $x\in V_{\G}$.
Hosoya and Suzuki also introduced a homogeneity \emph{with respect to} $C$ in terms of the partition of $V_{\G}$ by the distances from both $C$ and a fixed vertex in $C$, and studied the relation between these two concepts.
They moreover showed that if $\G$ is $Q$-polynomial then the dual eigenmatrix of the association scheme induced on a descendent (cf.~Section \ref{sec: posets}) of $\G$ satisfies a certain system of linear equations, which in particular implies that $\G$ is tight with respect to every descendent.
This system of linear equations turned out to be fundamental to the study of descendents; cf.~\cite{Tanaka2009LAAb,Tanaka2011EJC}.
Lee \cite{Lee2013PhD} studied the above partition for Delsarte cliques (which are descendents of width $1$) in $Q$-polynomial distance-regular graphs that have the most general $q$-\emph{Racah type},\footnote{In the notation of Bannai and Ito \cite[\S III.5]{bi} (cf.~\cite{Talgebra92}), this $Q$-polynomial structure satisfies type I with $s\ne 0$ and $s^*\ne 0$. The polygons are the only known examples of this type.} and showed among other results that there is a natural action of the double affine Hecke algebra of type $(C_1^{\vee},C_1)$ on the subspace of $\mathbb{C}^v$ spanned by the characteristic vectors of the cells of the partition.
See \cite{TTW16+} for detailed information on the Terwilliger algebra with respect to a descendent.


\section{Completely regular codes}\label{sec:crc}


In this section, we discuss completely regular codes in distance-regular graphs.
Many combinatorial configurations can be viewed as completely regular codes with certain additional properties and/or special parameters in their underlying distance-regular graphs; cf.~Section \ref{sec: crc in other drg}.

As we have seen, a Delsarte clique in a distance-regular graph $\G$ with diameter $D$ is a completely regular code in $\G$ with covering radius $D-1$, and all the geometric distance-regular graphs have plenty of Delsarte cliques.
Martin \cite[Thm.~2.3.3]{Martinthesis} showed that if two distinct vertices $x,y$ in a distance-regular graph $\G$ with diameter $D$ form a completely regular code then either $d(x,y)=1$ and $a_1=a_2=\dots=a_{D-1}=0$, i.e., $\G$ is bipartite or almost bipartite, or $d(x,y)=D$ and $\G$ is antipodal.
C\'{a}mara, Dalf\'{o}, Delorme, Fiol, and Suzuki \cite{CDDFS2013JCTA} showed that all the edges of a connected graph are completely regular codes with the same parameters if and only if the graph is a bipartite or almost bipartite distance-regular graph.
(The assumption on the parameters of the edges was later dropped by Suzuki \cite{Suzuki2014JAC}.)
See \cite{FG1999LAA,FG2001SIAM,CFFG2009EJC,CFFG2010EJC,CFFG2014DAM} and also Section \ref{sec:Terwilliger algebra w.r.t. code} for some algebraic characterizations of complete regularity.

For certain distance-regular graphs, we can show that they come from completely regular partitions in Hamming graphs.

\begin{theorem}\label{thmquotient}
Let $\G$ be a distance-regular graph with diameter $D \geq 3$, valency $k$, intersection numbers $c_i = i$
for $i=1, 2, 3$, and $a_2 = 2 a_1$. Then $a_1 +1$ divides $k$ and there exists a completely regular partition
of the Hamming graph $H(k/(a_1 +1), a_1+2)$ or a Doob graph with valency $k$, with covering radius $D$ and
parameters $\gamma_i = c_i$, $\beta_{i-1} = b_{i-1}$ for $i=1,2, \ldots, D$. The latter case only occurs when
$a_1 = 2$.
\end{theorem}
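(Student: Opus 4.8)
The plan is to prove this essentially as a slight strengthening of a theorem of Nomura \cite{Nomura90} (the same result underlies the $c_i=i$ $(i=2,3)$ case of Theorem~\ref{thm:RNPthick}): there the near-polygon structure is assumed, whereas here the order-type structure must be \emph{extracted} from the intersection numbers, and it is precisely this extraction that also produces the Doob alternative. Write $e:=a_1+2$ and $d:=k/(a_1+1)$. The strategy has three stages: (i) determine all the local graphs $\Upsilon(x)$; (ii) use this to build a surjective covering-type map $\phi$ from $H(d,e)$ — or, when $a_1=2$, from a suitable Cartesian product of copies of $K_4$ and of the Shrikhande graph of valency $k$ — onto $\G$; and (iii) check that the fibres of $\phi$ form the asserted completely regular partition, invoking Theorem~\ref{uniformlyregularpartition} (or verifying it directly).

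\emph{Stage (i): local structure.} First I would show that for every vertex $x$ each connected component of $\Upsilon(x)$ is either a clique $K_{a_1+1}$ or — and this can occur only when $a_1=2$ — a hexagon $C_6$. Here $c_2=2$ forces the $\mu$-graph of any two vertices at distance $2$ to consist of exactly two vertices; $a_2=2a_1=c_2a_1$ controls how triangles through $x$ propagate into the second subconstituent; and $c_3=3$ rules out the remaining candidate components (for instance the locally-Petersen graphs, which all have $c_3\ne 3$). Since $\Upsilon(x)$ is $a_1$-regular and $C_6$ has valency $2$, a hexagon component forces $a_1=2$, as claimed. An immediate byproduct is that $a_1+1$ divides $k=|\Upsilon(x)|$: in the all-clique case $k$ is a multiple of $a_1+1$ directly, and in the mixed case $a_1+1=3$ divides $k=6m+3n$, where $m$ and $n$ count hexagon and triangle components. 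In particular $H(d,e)$ has valency $d(e-1)=k$.

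\emph{Stage (ii): the covering map.} The cliques $\{x\}\cup\Delta$, with $\Delta$ ranging over the clique components of $\Upsilon(x)$, form a family $\mathcal{C}$ of maximal cliques of size $e$ such that every edge lies in a unique member of $\mathcal{C}$ and every vertex lies in exactly $d$ of them; the two-vertex $\mu$-graphs supply quadrangles, yielding a parallelism on these lines with exactly $d$ classes (in the Doob case the hexagon components play the role of the extra ``Shrikhande coordinates''). I would then invoke the fact that $H(d,e)$ — respectively the Doob graph whose local graph matches $\Upsilon(x)$ — is simply connected with respect to its triangles and these quadrangles, and that this system of triangles and quadrangles agrees locally with that of $\G$. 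Fixing base vertices $o\in H(d,e)$ and $\tilde o\in V_{\G}$ together with an identification of their (matching) neighbourhoods, one extends to a graph homomorphism $\phi$ onto $\G$ by lifting paths from $o$; simple-connectedness of $H(d,e)$ makes $\phi(y)$ independent of the chosen path, and by construction $\phi$ is a local isomorphism carrying lines of $H(d,e)$ onto members of $\mathcal{C}$.

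\emph{Stage (iii): finishing, and the obstacle.} Because $\phi$ is a surjective local isomorphism it is a covering of finite graphs, so lifting and projecting shortest paths gives $d_{H(d,e)}(y,\phi^{-1}(v))=d_{\G}(\phi(y),v)$ for every $v\in V_{\G}$. Hence the distance partition of $H(d,e)$ with respect to the code $C:=\phi^{-1}(v)$ is $\{\phi^{-1}(\G_i(v)):i=0,1,\dots,D\}$, which is equitable with intersection numbers $\gamma_i=c_i$ and $\beta_{i-1}=b_{i-1}$ and covering radius exactly $D$; since these data do not depend on $v$, the partition $\Pi=\{\phi^{-1}(v):v\in V_{\G}\}$ is a completely regular partition. (Equivalently, $\Pi$ is uniformly regular with $H(d,e)/\Pi\cong\G$ distance-regular, so Theorem~\ref{uniformlyregularpartition} applies and the stated parameters follow.) I expect Stage~(i) together with the simple-connectedness input of Stage~(ii) to be the main difficulty: excluding every exotic local component besides $K_{a_1+1}$ and $C_6$ is a delicate intersection-diagram argument, and showing that $H(d,e)$ and the Doob graphs are exactly the simply connected graphs with the relevant local structure is a genuine local-recognition theorem — this is precisely where $c_3=3$ and $a_2=2a_1$ are used and where the exceptional role of $a_1=2$ appears.
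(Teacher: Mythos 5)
Your three-stage outline is, in its architecture, the route taken in the literature that this survey cites rather than proves: Theorem~\ref{thmquotient} is attributed to Rif\`{a} and Huguet \cite{RiHu} for $a_1=0$ (following Brouwer \cite{Brouwer1983IEEE}), to Nomura \cite{Nomura90} for $a_1\ne 2$, and to Koolen \cite{KoolenDoob} for the Doob case $a_1=2$; all of these proceed by analysing the local graphs and then constructing a locally bijective map $\phi$ from a Hamming (or Doob) graph onto $\G$ whose fibres give the partition. Your Stage (iii) is correct as far as it goes: once $\phi$ is a surjective local isomorphism, distances to a fibre in the source equal distances to the corresponding vertex in $\G$, so the fibres form a completely regular partition with $\gamma_i=c_i$, $\beta_{i-1}=b_{i-1}$ and covering radius $D$ (alternatively, Theorem~\ref{uniformlyregularpartition} applies). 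The divisibility $a_1+1\mid k$ also follows once Stage (i) is in place.

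The genuine gap is that Stages (i) and (ii), which carry essentially all of the content of the theorem, are asserted rather than proved. In Stage (i), the only facts that come for free are that $\Upsilon(x)$ is $a_1$-regular and that two nonadjacent local vertices have at most one common local neighbour (from $c_2=2$); that the components are cliques $K_{a_1+1}$ when $a_1\ne 2$, and that for $a_1=2$ the only non-clique components are hexagons (ruling out $C_5$, $C_7$, $C_8,\dots$), is precisely the hard local analysis of \cite{Nomura90,KoolenDoob}, and the justification you offer (e.g.\ that locally Petersen graphs have $c_3\ne 3$) does not bear on it, since the issue is which components a local graph can have, not whether the local graph is the Petersen graph. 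In Stage (ii), simple connectedness of the source is not by itself enough: path-lifting is well defined only if one shows that every triangle and quadrangle of $H(d,e)$ (resp.\ of the chosen Doob graph) closes up in $\G$ under the lifted map, and this ``local agreement of the two triangle/quadrangle systems'' is exactly the recognition work you defer — it is where $c_3=3$ and $a_2=2a_1$ must actually be used. The Doob alternative is more delicate than your sketch suggests: for an edge governed by a hexagon component the maximal cliques are triangles, not $(a_1+2)$-cliques, so the line system and parallelism you describe do not cover those edges; moreover, the asserted ``fact'' that the Doob graph is simply connected with respect to triangles and quadrangles itself needs proof (the Shrikhande factor is a torus triangulation, so its triangular faces alone do not suffice; one must use the wrap-around and commutator quadrangles), and one must then verify that these particular quadrangles map onto closed $4$-cycles of $\G$, which does not follow from anything you have established. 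In short, the plan matches the cited proofs, but everything labelled ``the main difficulty'' is the proof, and it is missing.
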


\noindent
This theorem was shown by Rif\`{a} and Huguet \cite{RiHu} when $a_1 =0$ following ideas of Brouwer \cite{Brouwer1983IEEE} (cf.~\cite[Prop.~4.3.6, Thm.~11.3.2]{bcn}), by Nomura \cite{Nomura90} when  $a_1 \neq 2$, and by Koolen \cite{KoolenDoob} when $a_1 =2$.

\subsection{Parameters}
It is known that the sequence $(c_i)_i$ in a distance-regular graph
$\G$ is non-decreasing, but this is not true in general for the
sequence $(\gamma_i)_i$ of a completely regular code in $\G$.
Koolen \cite{Ko95} gave an infinite family of completely regular
codes in the Doubled Odd graphs with the property that the sequence
$(\gamma_i)_i$ is not necessarily increasing, disproving a
conjecture of Martin \cite{Martinthesis}. Koolen also gave a
sufficient condition for $\G$ that the sequence $(\gamma_i)_i$ is
increasing for every completely regular code in $\G$. Martin
[private communication] showed that the sequence $(\gamma_i)_i$ is
strictly increasing for any completely regular code in a Hamming
graph.

\subsection{Leonard completely regular codes}

Let $\G$ be a distance-regular graph with diameter $D$, valency $k$, and eigenvalues $k=\theta_0,\theta_1,\dots,\theta_D$ (not necessarily in decreasing order).
Let $E_i$ be the primitive idempotent associated with $\theta_i$ for $i=0,1,\dots,D$.
Let $C$ be a completely regular code in $\G$ with covering radius $\rho$.
Let $\{C_0 = C,C_1, \dots, C_{\rho}\}$ be the distance partition with respect to $C$, and let $\mathbf{x}_i$ be the characteristic vector of $C_i$ for $i=0,1,\dots, \rho$.
Let $\mathrm{Spec}(C)=\{\theta_{i_0}=k,\theta_{i_1},\dots,\theta_{i_{\rho}}\}$ be (an ordering of) the spectrum of the quotient matrix of the corresponding distance partition.
We say $C$ is \emph{Leonard} (with respect the above ordering) if
\begin{equation*}
	(E_{i_1} \mathbf{x}_0)^{\circ \ell} \in \mathrm{span}_{\mathbb{C}}\{E_{i_0}\mathbf{x}_0,\dots,E_{i_{\ell}}\mathbf{x}_0\} \, \backslash \, \mathrm{span}_{\mathbb{C}}\{E_{i_0}\mathbf{x}_0,\dots,E_{i_{\ell-1}}\mathbf{x}_0\}
\end{equation*}
for $\ell=1,2,\dots,\rho$.
This definition is due to Koolen, Lee, and Martin \cite{KoLeMa2010}.
Let $A$ be the adjacency matrix of $\G$, and let $A^{\ster}=A^{\ster}(C)$ be the diagonal matrix in $M_{v\times v}(\mathbb{C})$ with diagonal entries $(A^{\ster})_{yy}=\frac{v}{|C|}(E_{i_1}\mathbf{x}_0)_y$.
They showed among other results that $C$ is Leonard if and only if the matrices $A$ and $A^{\ster}$ act on $\mathrm{span}_{\mathbb{C}}\{\mathbf{x}_0,\dots,\mathbf{x}_{\rho}\}=\mathrm{span}_{\mathbb{C}}\{E_{i_0}\mathbf{x}_0,\dots,E_{i_{\rho}}\mathbf{x}_0\}$ as a Leonard pair \cite{Terwilliger2001LAA}.
If $\G$ is a translation distance-regular graph and $C$ is additive, then it follows that $C$ is Leonard if and only if its coset graph is a $Q$-polynomial distance-regular graph.

Next we consider a weaker condition than being Leonard:
\begin{equation*}
	(E_{i_1} \mathbf{x}_0)^{\circ \ell} \in \mathrm{span}_{\mathbb{C}}\{E_{i_0}\mathbf{x}_0,\dots,E_{i_{\ell}}\mathbf{x}_0\} \quad (\ell=1,2,\dots,\rho).
\end{equation*}
As a class of completely regular codes satisfying this condition, Koolen, Lee, and Martin \cite{KoLeMa2010} also introduced \emph{harmonic} completely regular codes as follows.
Suppose $\G$ is $Q$-polynomial with respect to the ordering $\theta_0,\theta_1,\dots,\theta_D$.
We say $C$ is \emph{harmonic} if there is a positive integer $t$ such that $i_{\ell}=t\ell$ for $\ell=0,1,\dots,\rho$.
Descendents (cf.~Section \ref{sec: posets}) are examples of harmonic completely regular codes with $t=1$.
Tanaka \cite[Prop.~4.6]{Tanaka2011EJC} showed that a descendent in $\G$ with width $w$ and dual width $w^*=D-w(=\rho)>1$ is \emph{not} Leonard (with respect to this ordering) \emph{precisely when} $w$ is odd and the $Q$-polynomial structure satisfies type III in the notation of Bannai and Ito \cite[\S III.5]{bi} (cf.~\cite{Talgebra92}).

\subsection{Completely regular codes in the Hamming graphs}

Neumaier \cite{Neu92} conjectured that the only completely regular codes (with at least two words) in the Hamming graphs with minimum distance at least $8$ are the extended binary Golay code and the (binary) repetition codes of length at least $8$.
But he forgot to mention the even subcode of the binary Golay code (i.e., the subcode of the Golay code consisting of the codewords with even weight), as remarked by Borges, Rif\`{a}, and  Zinoviev \cite{BoRiZiDM08}, which was implicitly known to be completely regular.
(The bipartite double of the coset graph of the binary Golay code is distance-regular and has intersection array $\{ 23, 22, 21, 20, 3, 2, 1; 1, 2, 3, 20, 21, 22, 23\}$, and it follows from Theorem \ref{thmquotient} that there is a completely regular partition of the $23$-cube corresponding to this graph.
It is easy to check that this partition corresponds to the cosets of the even subcode of the Golay code, because all distances are even and it has exactly half the number of codewords of the Golay code; see also \cite[p.~362]{bcn}).
So we would like to rephrase Neumaier's conjecture as follows.

\begin{conj}
The only completely regular codes (with at least two words) in the Hamming graphs with minimum distance at least $8$ are the extended binary Golay code, the even subcode of the binary Golay code, and the repetition codes.
\end{conj}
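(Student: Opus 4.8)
The plan is to translate the conjecture into a statement about the intersection numbers $(\gamma_i,\alpha_i,\beta_i)_{i=0}^{\rho}$ of the distance partition of $C$ in $H(n,q)$, to dispose of the small codes by hand, and then to recognize the two Golay-type codes through the Lloyd-polynomial machinery and the classical classification of perfect and nearly-perfect codes, treating the repetition codes as a degenerate family that must be allowed. First I would record the cheap consequences of $d\ge 8$: the packing radius is $e=3$, so every vertex within distance $3$ of $C$ has a unique nearest codeword and hence $\gamma_1=1$, $\gamma_2=2$, $\gamma_3=3$ (and $\alpha_i=0$ for all $i$ when $q=2$, since $H(n,2)$ is bipartite). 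By Martin's result quoted above, a completely regular code with exactly two words in any distance-regular graph consists either of two adjacent vertices or of two vertices at distance equal to the diameter, the latter requiring the ambient graph to be antipodal; since $H(n,q)$ is antipodal only for $q=2$, the only two-word completely regular codes with $d\ge 8$ are, up to equivalence, the binary repetition codes $\{0^n,1^n\}$ with $n\ge 8$. So from now on I may assume $|C|\ge 3$, and the remaining target list is: the extended binary Golay code, the even subcode of the binary Golay code, and the $q$-ary repetition codes.

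Next I would set up the Lloyd-type constraints. The eigenvalues of the (tridiagonal) quotient matrix of the distance partition of $C$ form a subset $\{\theta_{i}:i\in S\}$ of the spectrum of $H(n,q)$, with $0\in S$ and $|S|=\rho+1$; the MacWilliams transform of the outer distribution of $C$ is a non-negative vector supported on $S$, and the associated Lloyd polynomial must have all its roots at $\{n(q-1)-qi:i\in S\}$. Combined with $\gamma_1=1,\gamma_2=2,\gamma_3=3$ and with Martin's theorem that $(\gamma_i)_i$ is \emph{strictly} increasing in a Hamming graph (so $\gamma_i\ge i$ and $\gamma_i+\alpha_i+\beta_i=n(q-1)$), these should already force $q$ to be small, indeed $q=2$ unless $C$ is the $q$-ary repetition code; this is precisely the step that, in the classification of perfect codes, rules out all but the Golay and Hamming alphabets, and I expect the same estimates on roots of $q$-ary Kravchuk/Lloyd polynomials (in the spirit of Tiet\"av\"ainen's work) to apply here, the extra input being the equitability of the partition.

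With $q=2$ and $|C|\ge 3$ in hand, the crucial dichotomy is that the covering radius $\rho$ is genuinely unbounded over the allowed list (the repetition codes realise $\rho=\lfloor n/2\rfloor$), so one cannot simply bound $\rho$. Instead I would prove that if $C$ is not equivalent to a repetition code then $\rho\le e+1=4$. Here I would exploit the completely regular (equitable) structure beyond the Lloyd condition, for instance through the Terwilliger algebra $\TT(C)$ of the code, whose primary module is thin exactly because $C$ is completely regular (Section~\ref{sec:Terwilliger algebra w.r.t. code}): the thinness, together with $\gamma_i=i$ for $i\le 3$ and strict monotonicity of $(\gamma_i)_i$, should show that deep distance classes $C_i$ collapse to the one-dimensional behaviour typical of the repetition code, while any genuinely two-dimensional module at endpoint $1$ forces $\rho$ to be bounded. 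One could also try to route this through the coset graph when $C$ happens to be linear, applying the Bannai--Ito conjecture to the valency-$n$ distance-regular graph so obtained, but the general (non-linear) case needs the algebraic argument. Once $\rho\le 4$ and $d\ge 8$, so $\rho\in\{3,4\}$, the parameters satisfy the uniform-packing equations (the $[23,12,7]$ Golay code with $\rho=3$ is excluded because its minimum distance is $7$), and I would finish by invoking the classification of uniformly packed binary codes with these parameters — equivalently, by a direct finite Lloyd-polynomial analysis for $\rho\in\{3,4\}$, which should leave only the extended Golay code, its even subcode, and the short repetition codes.

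The main obstacle I expect is precisely the reduction $\rho\le 4$ for non-repetition codes: the Lloyd polynomial by itself does not exclude large $\rho$, and making the completely regular structure do this work — presumably via a careful analysis of $\TT(C)$-modules, or via new growth estimates on the $\gamma_i$ and $\beta_i$ of a completely regular code in a Hamming graph with $d\ge 8$ — is the point at which the present tools are not obviously sufficient, which is why the statement remains a conjecture. A secondary, but also nontrivial, difficulty is carrying out the $q=2$ reduction uniformly in $q$ and $\rho$, which to our knowledge has only been done in special cases (small covering radius, or small alphabet) in the literature on completely regular codes.
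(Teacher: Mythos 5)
This statement is an open conjecture in the paper (a corrected restatement of Neumaier's conjecture, amended to include the even subcode of the binary Golay code); the paper offers no proof, so there is nothing to compare your argument against except the conjecture's own list of codes. Your text is in any case a research programme rather than a proof: you yourself concede that the central reduction is ``the point at which the present tools are not obviously sufficient,'' so no step of the argument beyond the easy observations ($\gamma_i=i$ for $i\le 3$, the two-word case via Martin's theorem) is actually established. The proposed $q=2$ reduction via Lloyd/Kravchuk root estimates is likewise only asserted, not carried out, and the blanket claim that $\alpha_i=0$ for all $i$ when $q=2$ is false unless the distance classes are parity-homogeneous (already the odd-length binary repetition code has $\alpha_\rho\neq 0$ in its last cell).

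More seriously, the key intermediate claim you propose to prove --- that every completely regular code with $d\ge 8$ which is not a repetition code has covering radius $\rho\le e+1=4$ --- is refuted by one of the three codes on the conjecture's own list. The even subcode of the binary Golay code is a $[23,11,8]$ code with covering radius $7$: as the paper notes, its cosets give a completely regular partition of the $23$-cube whose quotient is the bipartite double of the Golay coset graph, of diameter $7$. Ironically, this is exactly the code Neumaier overlooked and the reason the paper rephrases his conjecture. Consequently your endgame (a finite Lloyd-polynomial analysis for $\rho\in\{3,4\}$ plus the classification of uniformly packed codes) could never recover this code, so even if the hard bound you aim for were provable in some form, the dichotomy ``repetition code or $\rho\le 4$'' is the wrong one; any successful strategy must accommodate at least one further family of deep ($\rho=7$) completely regular codes, and as far as is known no argument bounding $\rho$ for non-repetition codes with $d\ge 8$ exists --- which is precisely why the statement remains a conjecture.
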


Gillespie \cite{Gillespie2013DM} showed that the only completely regular codes in the binary Hamming graphs $H(D,2)$ with minimum distance greater than $\max\{2,D/2\}$ are the repetition codes and the dual code of the binary $[7,4,3]$-Hamming code.
Meyerowitz \cite{Meyerowitz2003DM} described all the completely regular codes with strength $0$ in the Hamming graphs.

Brouwer \cite{Brouwer1990DM} showed that any truncation of an even and almost even binary completely regular code is again completely regular.
Brouwer \cite{Brouwer1993DM} also gave a necessary and sufficient condition on when the extension of a binary completely regular code is again completely regular.

We say that a binary code $C$ of length $D$ is \emph{self-complementary} if $\mathbf{1} + c\in C$ for all $c \in C$, and \emph{non-self-complementary} otherwise,\footnote{Self-complementary codes and non-self-complementary codes are sometimes called antipodal codes and non-antipodal codes, respectively, in the literature. However, it seems that these are somewhat confusing names.} where $\mathbf{1}=(1,1,\dots,1)$ denotes the all-ones vector in $GF(2)^D$.
Borges, Rif\`{a}, and Zinoviev \cite{BoRiZiDM08} showed among other results that if $C$ is a binary non-self-complementary completely regular code with covering radius $\rho$, minimum distance at least $3$, and distance partition $\{C_0 = C,C_1, \dots, C_{\rho}\}$, then $C_{\rho}=C+\mathbf{1}$, from which it follows that $C \cup C_{\rho}$ is again a completely regular code.
This is a special case of the following construction:
if $C$ is a completely regular code with covering radius $\rho$ and if $\gamma_i = \beta_{\rho-i}$ for $i = 1,2, \ldots, \rho$ with $2i \neq \rho$, then $C \cup C_{\rho}$ is also completely regular.

\subsubsection{Completely transitive codes and generalizations}
\label{sec: completely transitive codes}
Giudici and Praeger \cite{GuPr99} defined the notion of a completely transitive code in a graph.
A code $C$ in a graph $\G$ is called \emph{completely transitive} if there is a group $H$ of automorphisms of $\G$ such that every cell of the distance-partition of $C$ is an orbit of $H$.
It is clear that a completely transitive code is completely regular.
Next suppose that $\G$ is a Cayley graph $\mathrm{Cay}(G, S)$.
Let $C$ be a subgroup of $G$ with covering radius $\rho$ (as a code in $\G$).
We say $C$ is \emph{coset-completely transitive} if the subgroup $\mathfrak{T}$ of the automorphism group of $G$ consisting of the elements that stabilize both $C$ and $S$ has exactly $\rho +1$ orbits on $G/C$.
This is an extension of the notion of coset-completely transitive codes in Hamming graphs $H(D,q)$ defined by Giudici and Praeger \cite{GuPr99},\footnote{To be more precise, they defined the concept for linear codes and considered the stabilizer of $C$ in the group
of weight-preserving sesquilinear automorphisms of $GF(q)^D$.} which in turn generalizes a concept of Sol\'{e} \cite{Sole1990DM}.
It is easy to see that if $C$ is coset-completely transitive and is in the center of $G$,
then $C$ is completely transitive with $H=C\rtimes\mathfrak{T}$.

There are many examples of coset-completely transitive additive codes in Hamming graphs, such as the codes in the Golay family:
the binary Golay code, the extended binary  Golay code, the even subcode of the binary Golay code, the punctured code of the binary Golay code, the even subcode of the punctured Golay code, the twice punctured binary Golay code, the ternary Golay code, and the extended ternary Golay code.
Rif\`{a} and Zinoviev \cite{RiZi11} showed that the lifts of the perfect Hamming codes are coset-completely transitive and that the coset graphs of these codes are the bilinear forms graphs.
Rif\`{a} and Zinoviev \cite{RiZipre08} also constructed an infinite family of binary linear completely transitive codes whose coset graphs are the halved cubes.
Borges, Rif\`{a}, and Zinoviev constructed many more linear completely regular and completely transitive codes;  see, e.g., \cite{BRZ2014DCC,RZ2010IEEE}.
Gillespie and Praeger \cite{GP2013DCC,GP2012-4pre} also considered generalizations of completely transitive codes.

There are only a few completely transitive binary codes known which are not coset-completely transitive, among them are the Hadamard code of length $12$ and its punctured code.
Gillespie and Praeger \cite{GP2013JCTA} showed that these codes are characterized as binary completely regular codes by their lengths and minimum distances.
See also \cite{GP2012-5pre}.
Borges, Rif\`{a}, and Zinoviev \cite{BR2000IEEE,BRZ2001IEEE} showed that the only binary coset-completely transitive codes with minimum distance at least $9$ are the binary repetition codes.
Gillespie, Giudici, and Praeger \cite{GGP2012pre} showed that the only completely transitive codes in the Hamming graphs with minimum distance at least $5$ such that the corresponding groups $H$ of automorphisms are faithful on coordinates are the binary repetition codes.

\subsubsection{Arithmetic completely regular codes}
\label{sec: arithmetic crc}

Harmonic completely regular codes in Hamming graphs
were called \emph{arithmetic} completely regular codes and studied in detail by Koolen, Lee, Martin, and Tanaka \cite{KoLeMapre09}.
Let $C$ be a completely regular code in $H(n,q)$ with covering radius $1$.
Then the cartesian product $C \times C \times \dots \times C$ ($t$ times) is an arithmetic completely regular code with covering radius $t$ in $H(nt, q)$.
Next, let $C$ be a completely regular code in $H(n,q)$ with covering radius $\rho\geq 1$ and parameters $\beta_i,\gamma_i$ ($i=0,1,\dots,\rho$).
Similarly, let $C'$ be a completely regular code in $H(n',q')$ with covering radius $\rho'\geq 1$ and corresponding parameters $\beta_i',\gamma_i'$ ($i=0,1,\dots,\rho'$).
Koolen, Lee, Martin, and Tanaka \cite[Prop.~3.4]{KoLeMapre09} showed that $C \times C'$ is completely regular in $H(n,q)\times H(n',q')$ if and only if there are integers $\beta,\gamma$ such that $\beta_{\rho-i}=\beta i$, $\gamma_i=\gamma i$ for $i=0,1,\dots,\rho$, and $\beta_{\rho'-i}'=\beta i$, $\gamma_i'=\gamma i$ for $i=0,1,\dots,\rho'$.
We note that completely regular codes having parameters of this form are arithmetic.
From this result it follows that if we take $C'$ to be a perfect binary $1$-error correcting code with length $2^t-1$ which is not isomorphic to the binary Hamming code $C$ of the same length, then the cartesian product of $C'$ and $s$ copies of $C$ is completely regular with covering radius $s+1$, but this code is certainly not completely transitive, answering a problem of Gillespie \cite[Problem 11.6]{gillespiethesis}.

Koolen, Lee, Martin, and Tanaka \cite[Thm.~3.16]{KoLeMapre09} also classified all arithmetic completely regular linear codes.
This is a generalization of a result of Bier \cite{Bier1987DM}.
Borges, Rif\`{a}, and Zinoviev \cite{BRZ2010AMC} classified all completely regular linear codes with covering radius $1$ (which are clearly arithmetic) using a different approach.

Fon-Der-Flaass \cite{FonDerFlaass2007SMJ} showed that, for fixed positive integers $\beta_0$ and $\gamma_1$, there is a completely regular code in $H(D,2)$ with covering radius $1$ and parameters $\beta_0$ and $\gamma_1$ for \emph{some} $D$ if and only if $\frac{\beta_0+\gamma_1}{(\beta_0,\gamma_1)}$ is a power of $2$.
(He attributed this result to S.~Avgustinovich and A.~Frid.)
Note that if $C$ is such a code in $H(D,2)$ then so is $C\times GF(2)$ in $H(D+1,2)$.
Fon-Der-Flaass \cite{FonDerFlaass2007SMJ} also obtained lower and upper bounds on the smallest diameter $D=D_0(\beta_0,\gamma_1)$ for which such a code exists.
A code in $H(D,2)$ is called \emph{degenerated} if it is isomorphic to $C\times GF(2)$ for some code $C$ in $H(D-1,2)$, and \emph{non-degenerated} otherwise.
Simon \cite{Simon1983P} showed that for any non-degenerated bipartition of $GF(2)^D(=V_{H(D,2)})$ there is a vertex adjacent to at least $\Omega(\log_2 D)$ vertices in the other cell.
This gives an upper bound on the maximum diameter $D=D_1(\beta_0,\gamma_1)(\geq D_0(\beta_0,\gamma_1))$ for which there is a non-degenerated binary completely regular code with covering radius $1$ and parameters $\beta_0$ and $\gamma_1$.
We remark that the method of Simon works in general for binary completely regular codes, not only for covering radius $1$.

\subsection{Completely regular codes in other distance-regular graphs}
\label{sec: crc in other drg}

The completely regular codes with strength $0$ in the Johnson graphs as well as Hamming graphs were described by Meyerowitz \cite{Meyerowitz1992JCISS,Meyerowitz2003DM}.
Note that descendents (cf.~Section \ref{sec: posets}) in $Q$-polynomial distance-regular graphs are examples of completely regular codes with strength $0$.
Brouwer, Godsil, Koolen, and Martin \cite{BGKM03} used Meyerowitz's results to determine all the descendents in the Johnson and Hamming graphs.
Tanaka \cite{Tanaka2006JCTA,Tanaka2011EJC} extended the classification of descendents to all of the $15$ known infinite families of distance-regular graphs having classical parameters with unbounded diameter.

Martin \cite{Martin1994JAC} determined the completely regular codes with strength $1$ and minimum distance at least $2$ in the Johnson graphs.
Martin \cite{Martin1998JCD} also studied general completely regular $t$-designs in the Johnson graphs in detail.
Sporadic examples include the $5$-$(24,8,1)$, $4$-$(23,7,1)$, and $3$-$(12,6,2)$ designs.
Completely transitive codes (cf.~Section \ref{sec: completely transitive codes}) in the Johnson graphs were studied by Godsil and Praeger \cite{GP1997pre}.
Liebler and Praeger \cite{LP2013pre} also considered generalizations of completely transitive codes in the Johnson graphs.
Completely regular codes in the Odd graphs were studied by Martin \cite{Martin-pre}.
Koolen \cite{Ko95} classified the completely regular codes in the Biggs-Smith graph.

The completely regular codes of a distance-regular graph with covering radius $1$ are exactly the same as (non-trivial) \emph{intriguing sets} studied by De Bruyn and Suzuki \cite{DBS10}.
Note that a code in a distance-regular graph is an intriguing set if and only if it has dual degree $1$.
Tight sets and $m$-ovoids in finite polar spaces are examples of intriguing sets.
Hemisystems (cf.~Section \ref{sec:recentclassical}) are $1$-designs in the dual polar graphs $^2\A_3(\sqrt{q})$ with $q$ odd, and are therefore intriguing sets.
Gavrilyuk and Mogilnykh \cite{GM2012pre} showed among other results the non-existence and uniqueness of certain Cameron-Liebler line classes in $PG(3,q)$, which are intriguing sets with dual width $1$ in $J_q(4,2)$.
See also \cite{Metsch2014JCTA,GM2014JCTA,FMX2014pre,BDMR2014pre}.

Recall that the incidence graph of a symmetric design is a $Q$-polynomial bipartite distance-regular graph with diameter $3$.
Martin \cite{Martin2001JCMCC} observed that several geometric substructures in finite projective spaces are Delsarte $T$-designs with $T\in\{\{1,3\},\{2,3\}\}$ in the corresponding bipartite distance-regular graphs, so that they provide more examples of intriguing sets.

Vanhove \cite{Vanhove2011JCD} showed among other results that partial spreads with maximum size $\sqrt{q^3}+1$ in $^2\A_5(\sqrt{q})$ as well as spreads in $\B_D(q)$ and $\C_D(q)$ with $D\in\{3,5\}$ are completely regular.
See also \cite{Vanhove2011PhD} for more results.

Perfect codes in a distance-regular graph $\G$ are completely regular, but non-trivial ones are very rare.\footnote{Here, `non-trivial' means that the minimum distance is at least $3$, and also at most $D_{\G}-2$ if $\G$ is an antipodal $2$-cover with $D_{\G}$ odd.}
It is well known that the only non-trivial perfect codes in the Hamming graphs $H(D,q)$ with minimum distance $\delta\geq 7$ (or $\delta=5$ and $q$ a prime power) are the binary Golay code and the ternary Golay code; cf.~\cite[\S 11.1D]{bcn}.
See, e.g., \cite{Etzion2007JCD} and the references therein for recent progress towards proving a longstanding conjecture of Delsarte \cite[p.~55]{del} that there are no non-trivial perfect codes in the Johnson graphs.
Chihara \cite{Chihara1987SIAM} showed that there are no non-trivial perfect codes in the Grassmann graphs, dual polar graphs, and the forms graphs, except possibly $\B_D(q)$ and $\C_D(q)$ with $D=2^m-1$ for some positive integer $m$.
Her proof depends only on a detailed analysis of the orthogonal polynomials $(v_i)_{i=0}^D$ associated with these graphs (see \eqref{distancepolynomials} and the remark after \eqref{TTR}), so that we also obtain, e.g., the non-existence for the twisted Grassmann graphs.
Martin and Zhu \cite{MZ1995DCC} gave a simple proof of the non-existence for the Grassmann and bilinear forms graphs using Delsarte's `Anticode Bound' (cf.~\cite[Prop.~2.5.3]{bcn}).
We note that the maximum anticodes in this case are precisely the descendents, in view of the Erd\H{o}s-Ko-Rado theorem for these graphs; cf.~\cite{Tanaka2006JCTA}.
Koolen and Munemasa \cite{KM2000JSPI} constructed perfect codes with minimum distance $3$ in the two Doob graphs with diameter $5$.
Krotov \cite{Krotov2014pre} recently showed among other results the existence of perfect codes with minimum distance $3$ in infinitely many Doob graphs.


\section{More combinatorial properties}\label{sec:morecombinatorial}


\subsection{Distance-regular graphs with a relatively small number of vertices} The Taylor graphs and Hadamard graphs
form infinite families of graphs that have a relative small number of vertices compared to the valency $k$. Indeed,
Taylor graphs have $2k+2$ vertices and Hadamard graphs have $4k$ vertices. The following result, obtained by Koolen and
Park \cite{KoPapre12}, shows that these two families are exceptions.
\begin{theorem}
Let $\alpha > 2$. Then there are finitely many distance-regular graphs with $v$ vertices, valency $k$, diameter $D \geq
3$ satisfying $v \leq \alpha k$, besides imprimitive distance-regular graphs with diameter $3$ and antipodal bipartite
distance-regular graphs with diameter $4$.
\end{theorem}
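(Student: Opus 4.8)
The plan is to reduce the statement to a single bound: we show that, outside the two exceptional families, the valency $k$ is at most a function of $\alpha$. Once $k$ is bounded, the Bannai--Ito conjecture --- now a theorem (Section~\ref{sec:proofBIconjecture}) --- gives only finitely many distance-regular graphs of that valency, and we are done. The cases $k\le 2$ are immediate: $k=1$ gives only the edge, and $k=2$ gives the polygons $C_v$, of which only the ones with $v\le 2\alpha$ satisfy $v\le\alpha k$. So assume henceforth $k\ge 3$ and $D\ge 3$.

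First I would harvest bounded combinatorial data from $v\le\alpha k$. Writing $\rho_i=k_{i+1}/k_i=b_i/c_{i+1}$, we have $k_i=k\,\rho_1\rho_2\cdots\rho_{i-1}$ and $v=1+k\bigl(1+\rho_1+\rho_1\rho_2+\cdots+\rho_1\cdots\rho_{D-1}\bigr)$, and by the proof of Proposition~\ref{prop:unimodal} the sequence $(k_i)$ is log-concave, so $\rho_0\ge\rho_1\ge\cdots\ge\rho_{D-1}$. On the head $\{1,\dots,h\}$ one has $k_i=k\,b_1^{\,i-1}$, hence $k\sum_{i=0}^{h-1}b_1^{\,i}\le v\le\alpha k$, which forces $h\le\alpha$ since $b_1\ge 1$. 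For every $i\le\eta_1:=|\{i:c_i=1\}|$ one has $k_i=k_{i-1}b_{i-1}\ge k_{i-1}\ge k$, so $\eta_1 k\le v$ and $\eta_1\le\alpha$; thus $c_i\ge 2$ for $i>\lfloor\alpha\rfloor$, and by Proposition~\ref{BHKprop} each value of $c_i$ is attained only boundedly often in terms of $\alpha$. Finally, if $i^{\ast}$ is the least index with $\rho_{i^{\ast}}<1$, then $k_1,\dots,k_{i^{\ast}}\ge k$ gives $i^{\ast}\le\alpha$, while $\sum_{i\ge i^{\ast}}k_i\le k_{i^{\ast}}/(1-\rho_{i^{\ast}})$ combined with $k_{i^{\ast}}\ge k$ gives $\rho_{i^{\ast}}\le 1-\alpha^{-1}$; so $(k_i)$ decays geometrically past its peak with ratio bounded away from $1$. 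In short, the head, the number $\eta_1$, the length of the ascending part of $(k_i)$, and the decay ratio of its tail are all controlled by $\alpha$.

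For the diameter I would use a dichotomy. If $D\ge 6$, then $k_2\le v\le\alpha k$, and the extension of the Bannai--Ito conjecture by Park, Koolen, and Markowsky (Section~\ref{sec:proofBIconjecture}, \cite{PaKoMar}) --- there are only finitely many distance-regular graphs with $k\ge 3$, $D\ge 6$, and $k_2/k$ bounded --- applies directly with $M=\lceil\alpha\rceil$; the case $D=5$ is handled by the same method. For $D\in\{3,4\}$ I would split according to whether $c_2=1$ (so that $\G$ is of order $(s,t)$ with $s=a_1+1$, and the diameter bounds of Section~\ref{sec:diameterbounds} apply) or $c_2\ge 2$ (so that Terwilliger's growth estimates for $c_i-b_i$ of Section~\ref{sec:boundc2} apply), and in each subcase bound $k$. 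A key tool is the identity $\tr A^2=vk$, which gives $\sum_{j=1}^{D}m_j\theta_j^2=k(v-k)\le(\alpha-1)k^2$: hence any eigenvalue $\theta_j\notin\{k,-k\}$ with $|\theta_j|\ge\varepsilon k$ has multiplicity $m_j\le(\alpha-1)\varepsilon^{-2}$, and Godsil's bound on the valency in terms of an eigenvalue multiplicity (Theorem~\ref{thm:godsilbound}) then bounds $k$. One is thereby reduced to graphs all of whose nontrivial eigenvalues are small relative to $k$. This last case is where the exceptions live: by Smith's theorem (Theorem~\ref{prop:imprimitive}), the imprimitive graphs with $D=3$ are incidence graphs of symmetric $2$-designs and antipodal covers of complete graphs, and together with antipodal bipartite graphs of diameter $4$ these do satisfy $v\le\alpha k$ with $k$ unbounded, which is why they must be excluded. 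Outside these, one argues --- using the controlled head and the bound on $\eta_1$ --- that $k$ is still bounded, completing the proof.

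The main obstacle is precisely this final point: bounding the valency of the non-exceptional distance-regular graphs of small diameter ($D\le 5$) with $v\le\alpha k$. For such small $D$ the hypothesis is much weaker than for $D\ge 6$ --- the intersection numbers $c_i$ cannot grow enough to force a Moore-type collapse --- so the bound on $k$ cannot come from the diameter alone and must be extracted from a careful quantitative analysis combining the spectral estimate $\sum_j m_j\theta_j^2\le(\alpha-1)k^2$ with the bounds on the head and on $\eta_1$. In the case $D=3$ this is a non-classificatory strengthening of the Koolen--Park theorem \cite{KoPa11} classifying the primitive distance-regular graphs with $k_2/k\le 3/2$.
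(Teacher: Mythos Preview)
Your proposal is a strategy sketch rather than a proof, and you say so yourself in the final paragraph: the ``main obstacle'' --- bounding $k$ for the non-exceptional graphs of diameter at most $5$ --- is precisely the substance of the theorem, and you leave it unresolved. The survey does not prove this result either (it simply cites \cite{KoPapre12}), so a detailed comparison of routes is not possible, but the gaps in your outline are concrete.

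For $D\ge 6$ your reduction via Park--Koolen--Markowsky \cite{PaKoMar} is correct: $k_2\le v\le\alpha k$ gives $k_2/k\le\alpha$, and their theorem applies. But the claim that ``the case $D=5$ is handled by the same method'' is unjustified: that result, as stated in the survey, requires $D\ge 6$, and the survey explicitly observes that the analogous statement fails for $D\le 4$. The case $D=5$ genuinely needs its own treatment.

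For $D\in\{3,4\}$ your spectral reduction is valid as far as it goes: if some $\theta_j\ne\pm k$ satisfies $|\theta_j|\ge\varepsilon k$, then $m_j\le(\alpha-1)\varepsilon^{-2}$ and Godsil's bound (Theorem~\ref{thm:godsilbound}) controls $k$. But the residual case --- all nontrivial eigenvalues of size $o(k)$ --- is not disposed of. You assert that the exceptional families ``live'' there and that ``outside these, one argues\dots that $k$ is still bounded'', but no argument is given. This is not a formality: for a primitive graph with $D=3$ you must actually exclude an infinite family with $\max_{j\ge 1}|\theta_j|=o(k)$ and $v\le\alpha k$, and your bounds on the head $h$ and on $\eta_1$ do not do this on their own. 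Pointing to \cite{KoPa11} as an analogy is fair, but an analogy is not a proof.

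One small correction in your first paragraph: the claimed bound $\rho_{i^{\ast}}\le 1-\alpha^{-1}$ does not follow from what you wrote. The inequality $\sum_{i\ge i^{\ast}}k_i\le k_{i^{\ast}}/(1-\rho_{i^{\ast}})$ is an \emph{upper} bound on the tail sum; combining it with another upper bound $\sum_{i\ge i^{\ast}}k_i\le v\le\alpha k$ yields nothing. Since you do not actually use this decay rate later, the slip is harmless to the strategy --- but the strategy itself remains incomplete.
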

\noindent As a consequence, they also obtained the following.
\begin{theorem}
Let $0 < \epsilon < 1$. Then there are finitely many distance-regular graphs with valency $k \geq 3$, diameter $D \geq
3$ satisfying $c_2 \geq \epsilon k$, besides imprimitive distance-regular graphs with diameter $3$ and antipodal
bipartite distance-regular graphs with diameter $4$.
\end{theorem}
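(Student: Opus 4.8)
The plan is to deduce the statement from the preceding theorem of Koolen and Park by showing that the hypothesis $c_2\geq\epsilon k$ by itself forces $v\leq\alpha k$ for a suitable $\alpha=\alpha(\epsilon)>2$; the list of exceptions is then inherited unchanged. We may assume $k\geq 2/\epsilon$, since there are only finitely many distance-regular graphs of any bounded valency by the (now proven) Bannai--Ito conjecture; in particular $c_2\geq\epsilon k\geq 2$.

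First I would extract the elementary structural consequences of $c_2\geq\epsilon k$. By Proposition \ref{prop:unimodal} we have $c_j\geq c_2$ for $j\geq 2$ and $c_i\leq b_j$ whenever $i+j\leq D$; together with $a_j=k-b_j-c_j\geq 0$ this gives $\epsilon k\leq b_j\leq(1-\epsilon)k$ and $\epsilon k\leq c_j\leq(1-\epsilon)k$ for all $2\leq j\leq D-2$. In particular $D\geq 4$ forces $\epsilon\leq 1/2$, and if $\epsilon>1/2$ then $D=3$. The easy half of the argument is this: \emph{once} a bound $D\leq D_0(\epsilon)$ is available, the recursion $k_{i+1}=b_ik_i/c_{i+1}$ (see Proposition \ref{order in b_i, c_i}) gives $k_{i+1}/k_i=b_i/c_{i+1}\leq k/(\epsilon k)=1/\epsilon$ for $i\geq 1$, hence $k_i\leq k\,\epsilon^{-(i-1)}$ and $v=\sum_{i=0}^Dk_i\leq 1+k\sum_{i=1}^{D_0}\epsilon^{-(i-1)}\leq\alpha(\epsilon)k$. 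For $D\in\{3,4\}$ this needs nothing further, and the preceding theorem then yields finiteness modulo exactly the imprimitive diameter-$3$ and antipodal bipartite diameter-$4$ graphs; so the real content lies in the range $D\geq 5$, where the preceding theorem has no exceptions.

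The main obstacle is thus the diameter bound $D\leq D_0(\epsilon)$ for $D\geq 5$ (so $\epsilon\leq 1/2$), and I expect it to absorb essentially all the effort. I would prove it by a case analysis steered by the growth results of Section \ref{sec:diameterbounds}. If $b_0>b_1>\cdots>b_{D-1}$ is strictly decreasing, then Proposition \ref{kooprop}(ii) applied at the even indices with $j=2$ gives $b_{i+2}\leq b_i-c_2\leq b_i-\epsilon k$, so $b_{2m}\leq k(1-m\epsilon)$; since $b_{2m}\geq 1$ this forces $m<1/\epsilon$, whence $D<2/\epsilon+3$. When instead the $b$-sequence has a plateau, the identity $a_{i+1}=k-c_{i+1}-b_i$ together with $b_i,c_{i+1}\geq\epsilon k$ pushes some $a_i$ (in good cases $a_1$ itself) to be large; if in addition $\G$ contains an induced quadrangle --- which it does unless $\G$ is a Terwilliger graph --- then Proposition \ref{thm:terwilintersectionnos} gives $D\leq(k+c_D)/(a_1+2)\leq 2k/(a_1+2)$, of size $O(1/\epsilon)$ as soon as $a_1$ is a constant fraction of $k$. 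The remaining configurations are to be handled through the monotonicity of $(c_i)$ and $(b_i)$ and Proposition \ref{BHKprop}: for instance $\eta_{c_2}\leq\xi_{c_2}-1=1$ already forces $c_3>c_2$, and the $c$-side analogue of the displayed step (using $c_i\geq c_{i-2}+c_2$ at strict-increase positions) again bounds the diameter in terms of $1/\epsilon$ once the increase positions are suitably controlled; the residual Terwilliger-graph case satisfies $c_2\leq a_1+1$, hence $a_1\geq\epsilon k-1$, and falls under the known strong restrictions on distance-regular Terwilliger graphs with $\mu=c_2\geq 2$. Assembling these pieces yields $D\leq D_0(\epsilon)$, which combined with the easy half above completes the reduction to the preceding theorem. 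I expect the delicate point to be the ``$a_1$ small, plateau in $b$'' configuration (which includes the bipartite graphs), where neither the quadrangle bound nor the Terwilliger restriction applies and the constant must be squeezed purely out of the two monotone sequences and the inequality $c_2\geq\epsilon k$.
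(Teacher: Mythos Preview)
Your overall reduction---Bannai--Ito for $k\leq 1/\epsilon$, and for larger $k$ a diameter bound leading to $v\leq\alpha(\epsilon)k$ and then the preceding theorem---is exactly the paper's strategy, and your ``easy half'' is correct. The divergence is in how you reach the diameter bound. The paper simply invokes the Ivanov bound (Theorem~\ref{ivanovbound}); the intended reading is that Proposition~\ref{kooprop} together with the plateau statement in Theorem~\ref{ivanovbound} yields a $k$-independent bound with \emph{no case split}. Concretely: whenever $c_i>c_{i-1}$ (with $i\geq 3$), Proposition~\ref{kooprop}(i) gives $c_i\geq c_{i-2}+c_2\geq c_{i-2}+\epsilon k$; telescoping over every second strict-increase position shows that $c$ can strictly increase at most $2/\epsilon$ times on $[1,D-1]$. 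The same reasoning via Proposition~\ref{kooprop}(ii) bounds the number of strict decreases of $b$ by $2/\epsilon$. Since any change in the triple $(c_i,a_i,b_i)$ forces a change in $c_i$ or in $b_i$, the number $g$ of distinct triples is $O(1/\epsilon)$, and the plateau bound in Theorem~\ref{ivanovbound} then gives $D\leq 2^{O(1/\epsilon)}$.

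Your proposed case analysis (strictly decreasing $b$; plateau with large $a_1$ plus an induced quadrangle via Proposition~\ref{thm:terwilintersectionnos}; Terwilliger graphs; the residual ``$a_1$ small, plateau in $b$'' configuration) is therefore a detour, and the last case---which you yourself flag as unresolved---is not an obstruction at all once one counts changes in both sequences globally rather than trying to force a single long monotone run in $b$ alone. You already name the key ingredient at the end of your sketch (``$c_i\geq c_{i-2}+c_2$ at strict-increase positions''): apply it, and its $b$-side analogue, directly to bound $g$, and drop the quadrangle/Terwilliger machinery entirely.
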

\noindent For $k \leq 1/ \epsilon$, this result follows from the Bannai-Ito conjecture (see Section
\ref{sec:proofBIconjecture}). If $k
> 1/\epsilon,$ then $c_2 \geq 2$ and one can use the Ivanov bound (Theorem \ref{ivanovbound}) to bound the diameter, and hence  one
can bound the number of vertices by a constant times $k$.

In the case that $\G$ contains a quadrangle, Koolen and Park \cite{KoPapre} obtained the following bound on $c_2$ in
terms of the valency and the diameter.

\begin{prop}\label{c2}
Let $\Gamma$ be a distance-regular graph with valency $k \geq 3$ and diameter
$D \geq 4$. If $\Gamma$ contains an induced quadrangle, then $c_2 \leq
\frac{2}{D} k$ with equality if and only if $D \geq 5$ and $\Gamma$ is a
$D$-cube or $D=4$ and $\Gamma$ is a Hadamard graph.
\end{prop}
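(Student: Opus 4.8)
The plan is to combine Terwilliger's inequality for distance-regular graphs containing an induced quadrangle (Proposition~\ref{thm:terwilintersectionnos}) with the standard monotonicity and unimodality inequalities (Propositions~\ref{prop:unimodal} and~\ref{kooprop}). Since $\G$ contains an induced quadrangle we have $c_2\ge 2$, and Proposition~\ref{thm:terwilintersectionnos} gives $c_i-b_i\ge c_{i-1}-b_{i-1}+a_1+2$ for $i=1,\dots,D$. Telescoping this from $i=1$ to $i=D$ and using $c_0=b_D=0$, $b_0=k$ yields
\[
c_D+k\ge D(a_1+2),
\]
so, because $c_D\le k$, we obtain $a_1+2\le\frac{k+c_D}{D}\le\frac{2k}{D}$.

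Next I would split into two cases. If $c_2\le a_1+1$, then $c_2<a_1+2\le\frac{2k}{D}$ with strict inequality, and there is nothing more to prove (and no equality occurs). If $c_2\ge a_1+2$, then $c_2\ge\max\{a_1,2\}$, so the companion result of Terwilliger quoted in Section~\ref{sec:boundc2} shows that $(c_i)$ is strictly increasing; hence Koolen's Proposition~\ref{kooprop}(i) applies at each even index with $j=2$ to give $c_{2\ell}\ge c_{2\ell-2}+c_2$, whence $c_{2\ell}\ge\ell c_2$. If $D=2m$ is even this reads $c_D\ge\frac D2 c_2$, and with $c_D\le k$ it gives $c_2\le\frac{2k}{D}$ at once. (For $D=4$ one can argue more directly: $c_2\le b_{D-2}=b_2$ by Proposition~\ref{prop:unimodal}(iii), so $2c_2\le c_2+b_2=k-a_2\le k$.)

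The genuinely delicate case is $D=2m+1$ odd with $c_2\ge a_1+2$: here the chain only yields $c_{D-1}\ge\frac{D-1}{2}c_2$, and together with $c_{D-1}\le b_1$, $c_D\ge c_{D-1}+1$ and $c_D\le k$ this produces only the slightly weaker bound $c_2\le\frac{2(k-1)}{D-1}$. I expect closing this half-unit gap to be the main obstacle. The plan is to use integrality of $c_2$ together with the lower bound $k\ge\frac{D-1}{2}c_2+1$: when $c_2=2$ these force $k\ge D$ and hence $c_2\le\frac{2k}{D}$, while when $c_2\ge 3$ they confine $k$ to within $\frac{c_2}{2}$ of $\frac{D-1}{2}c_2$, which via $c_i\le b_{D-i}$ and the strict monotonicity of $(c_i)$ forces the whole intersection array; the finitely many surviving candidate arrays are then eliminated by examining their halved and folded graphs (which would have to be nonexistent strongly regular graphs).

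Finally, for the equality statement I would trace back through the argument: equality forces $c_D=k$ (so $a_D=0$) and tightness in every inequality invoked, which pins down the intersection array. For $D\ge5$ this should force $c_i=i$, $a_i=0$, $b_i=D-i$ for all $i$, i.e.\ the intersection array of the $D$-cube, and since $H(D,2)$ is determined by its intersection array, $\G$ is the $D$-cube (for $D=4$ the $D$-cube is itself a Hadamard graph, which is why the cube case is stated only for $D\ge5$). For $D=4$ one obtains, after ruling out the non-extremal candidate arrays, the Hadamard intersection array $\{4\mu,4\mu-1,2\mu,1;1,2\mu,4\mu-1,4\mu\}$, and every distance-regular graph with this array is a Hadamard graph. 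I expect this equality analysis — in particular eliminating the non-$D$-cube arrays with $c_{2\ell}=\ell c_2$ when $D\ge6$ is even, and pinning down $a_1=0$ and $c_3=k-1$ when $D=4$ — to be where the most care is needed, and it is the point at which one is likely to invoke further known characterizations rather than purely elementary estimates.
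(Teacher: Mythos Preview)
The paper is a survey and does not give a proof of this proposition; it only attributes the result to Koolen and Park \cite{KoPapre}. So there is no ``paper's own proof'' to compare against, and I can only evaluate your argument on its own merits.

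Your approach is sound for even $D$ (and for the trivial Case~1), but it has a genuine gap for odd $D\ge 5$ in Case~2. There your chain $c_{2\ell}\ge\ell c_2$ together with $c_D\ge c_1+\frac{D-1}{2}c_2$ and $c_D\le k$ only yields $c_2\le\frac{2(k-1)}{D-1}$, which is \emph{weaker} than $c_2\le\frac{2k}{D}$ whenever $k>D$---and you do know $k\ge D$ here, but not $k\le D$. Your proposed rescue (``the finitely many surviving candidate arrays are then eliminated'') is not actually finite: for each odd $D$ and each $c_2\ge 3$ you get a nonempty problematic window $\frac{D-1}{2}c_2+1\le k<\frac{D}{2}c_2$, and even if the array is forced for each such $(D,c_2,k)$, there are infinitely many such triples. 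For a concrete illustration, $D=5$, $k=7$, $c_2=3$, $a_i=0$ produces the (bipartite antipodal) array $\{7,6,4,3,1;1,3,4,6,7\}$ which passes the Terwilliger quadrangle inequality at every step and is not excluded by any of your estimates, yet has $c_2=3>\frac{2k}{D}=2.8$. You need a uniform argument ruling out all such arrays, not a case check. (One useful extra inequality you have not fully exploited is $c_{D-1}\le b_1=k-a_1-1$, which closes the gap immediately whenever $c_2\le 2a_1+2$; the hard residual sub-case is $c_2\ge 2a_1+3$.)

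Separately, your equality analysis is too optimistic. For even $D$, equality in your chain pins down only $c_D=k$ and $c_{2\ell}=\ell c_2$; it does \emph{not} determine the odd-indexed $c_{2\ell+1}$ or the $a_i$. For instance with $D=6$, $k=9$, $c_2=3$ your constraints allow $c_3\in\{4,5\}$ and $c_5\in\{7,8\}$, so ``tightness in every inequality invoked'' does not yet force the $D$-cube array. More input (e.g., tracing equality in Terwilliger's step inequality at every $i$ to force $a_1=0$ and $b_{i-1}-b_i=c_i-c_{i-1}$, and then invoking the classification after Proposition~\ref{thm:terwilintersectionnos}) is needed before you can conclude $\G$ is a cube or a Hadamard graph.
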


\noindent
The assumption of having induced quadrangles is necessary as the
Foster graph and the Biggs-Smith graph have $k =3$, $c_2 =1$ and $D \geq 7$.
We wonder whether the assumption can be removed for $k$ large
enough.

Note also that diameter three is exceptional, because the complete bipartite
graph $K_{k+1,k+1}$ minus a perfect matching has valency $k$ and $c_2=k-1$.
Koolen and Park \cite{KoPa11} showed that if a distance-regular graph has
diameter three, then $c_2 \leq k/2$ or it is bipartite or a Taylor graph. They
also showed that if a distance-regular graph with diameter at least three and
valency $k$ has $a_1 \geq \frac{k-2}{2}$, then it is a Taylor graph, a line
graph, the Johnson graph $J(7,3)$, or the halved 7-cube.

For $4 \leq D \leq 6$, Koolen and Park \cite{KoPapre} strengthened Proposition
\ref{c2} as follows:

\begin{prop}
Let $\Gamma$ be a distance-regular graph with valency $k \geq 3$ and
diameter $D$.
Then the following hold:
\begin{enumerate}[{\em (i)}]
\item If $D \geq 6$ and $c_2 \geq 2$, then $c_2 \leq k/3$,
\item If  $D =4$ and $c_2 > k/3$ then  $\Gamma$ is a Hadamard graph (and hence
$c_2 = k/2$),
\item If $D =5$ and $c_2 > k/3$, then $\Gamma$ is the $5$-cube,
\item If $D=6$ and $c_2 > 2k/7$, then $\Gamma$ is the $6$-cube or the
    generalized dodecagon of order $(1,2)$.
\end{enumerate}
\end{prop}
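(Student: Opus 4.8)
The plan is to split on the value of $c_2$. Suppose first that $c_2 = 1$. Then in (ii) and (iii) the hypothesis $c_2 > k/3$ would force $k < 3$, contrary to $k \ge 3$, so those statements are vacuous; in (iv) the hypothesis $c_2 > 2k/7$ forces $k = 3$, and then the classification of distance-regular graphs of valency three (Biggs, Boshier, and Shawe-Taylor \cite{BBS}; cf.~\cite[Thm.~7.5.1]{bcn}) leaves only the generalized dodecagon of order $(1,2)$, with intersection array $\{3,2,2,2,2,2;1,1,1,1,1,3\}$, as a possibility with diameter six. (In (i) the value $c_2 = 1$ is excluded by hypothesis.)

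From now on assume $c_2 \ge 2$. If $\G$ has no induced quadrangle then it is a Terwilliger graph, so $a_1 \ge 1$; but distance-regular Terwilliger graphs with $c_2 \ge 2$ are severely restricted (cf.~\cite[\S 1.16]{bcn}) and, in particular, do not occur with diameter at least four, so this sub-case is empty under the hypotheses of (i)--(iv). Hence we may assume $\G$ contains an induced quadrangle and apply Proposition~\ref{c2}: $c_2 \le 2k/D$, with equality only for the $D$-cube when $D \in \{5,6\}$ and for a Hadamard graph when $D = 4$. When $D \ge 6$ this already gives $c_2 \le 2k/D \le k/3$, which is (i). It remains, for (ii)--(iv), to show that the open intervals $k/3 < c_2 < k/2$ (for $D = 4$), $k/3 < c_2 < 2k/5$ (for $D = 5$), and $2k/7 < c_2 < k/3$ (for $D = 6$) contain no feasible value of $c_2$; granting this, Proposition~\ref{c2} finishes the proof, since then $c_2 = 2k/D$ (the $D$-cube, determined by its array by \cite[\S 9.2B]{bcn}, for $D = 5,6$) or $c_2 = k/2$ (a Hadamard graph, for $D = 4$) are the only remaining options above the claimed bound.

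To exclude the open intervals I would telescope Terwilliger's sharper inequality $c_i - b_i \ge c_{i-1} - b_{i-1} + a_1 + 2$ (Proposition~\ref{thm:terwilintersectionnos}), valid since $\G$ has a quadrangle, against the monotonicity and unimodality relations $1 = c_1 \le c_2 \le \cdots \le c_D$, $k = b_0 \ge b_1 \ge \cdots \ge b_{D-1}$, and $c_i \le b_j$ whenever $i + j \le D$ (Propositions~\ref{order in b_i, c_i} and~\ref{prop:unimodal}); this pins down $b_1, b_2, a_1$ in terms of $c_2$ and $k$ closely enough that, for diameter $4 \le D \le 6$, $c_2$ can lie strictly between the stated threshold and $2k/D$ only for near-extremal intersection arrays. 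These are then forced either to disappear outright or to reduce to a short list by adjoining the claw lower bound $c_2 - 1 \ge (m'(a_1+1)-k)/\binom{m'}{2}$ with $m' = \lceil k/(a_1+1)\rceil$ (Proposition~\ref{shillaterw}), the inequalities $c_i > c_{i-1}$ available when $c_2 \ge \max\{a_1,2\}$ (Section~\ref{sec:boundc2}), and the bound $\theta_2 \le b_1 - 1$; any array surviving all of these is eliminated by a feasibility check using integrality of the eigenvalue multiplicities via Biggs' formula (Theorem~\ref{Biggsformula}), the Krein conditions (Proposition~\ref{Krein condisions}), and the absolute bound (Proposition~\ref{absolute bound}).

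The main obstacle is exactly this interval-exclusion. Proposition~\ref{c2} is far from tight inside the intervals, so genuinely new arithmetic on the intersection numbers is required, and one must control it uniformly in $k$ rather than by a finite search, since a fixed proportion $c_2/k$ generally yields infinitely many parameter sets. Two further points need care: the boundary values are genuinely not excludable (the value $c_2 = 2k/7$ for $D = 6$ is realized on the $c_2 = 1$ side by the generalized dodecagon, so the argument must not spill into the boundary), and the regime $a_1 > c_2$, where the strict-growth input $c_i > c_{i-1}$ is unavailable, must be handled separately using the stronger consequences that flow from $c_2 \ge 2(a_1+1)$ (Section~\ref{sec:boundc2}) or by direct combinatorial counting of common neighbours.
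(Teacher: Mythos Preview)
The paper is a survey and does not prove this proposition; it is simply stated with a citation to Koolen and Park \cite{KoPapre}, so there is no proof in the paper to compare your attempt against.

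On the substance of your outline: the overall skeleton (split off $c_2=1$, reduce to the quadrangle case via Proposition~\ref{c2}, then exclude the residual interval below $2k/D$) is the natural route, and your handling of (i) is fine since $2k/D\le k/3$ for $D\ge 6$ already. But two steps are genuine gaps rather than details. First, your dismissal of the Terwilliger case is not justified: the assertion that distance-regular Terwilliger graphs with $c_2\ge 2$ ``do not occur with diameter at least four'' is not a theorem anywhere in \cite{bcn} or in this survey. The results you cite (\cite[\S 1.16]{bcn}) only classify such graphs under additional hypotheses like $k<50(\mu-1)$; nothing there bounds the diameter. You need either a separate argument ruling out Terwilliger graphs in the relevant $c_2/k$ range, or a different reduction that does not split on the presence of a quadrangle.

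Second, and more seriously, the interval-exclusion step for (ii)--(iv) is the entire content of the proposition beyond Proposition~\ref{c2}, and you explicitly leave it as a programme (``I would telescope\ldots'', ``any array surviving all of these is eliminated by a feasibility check''). Telescoping $c_i-b_i\ge c_{i-1}-b_{i-1}+a_1+2$ against the monotonicity inequalities does not by itself force $c_2$ down to the claimed thresholds uniformly in $k$, and invoking Biggs' formula, Krein conditions, and the absolute bound as a catch-all is not a proof: one must show that only finitely many arrays (or none) survive, and then actually dispose of them. As written, this is a plausible research plan, not a proof.
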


\subsection{Distance-regular graphs with multiple \texorpdfstring{$P$-polynomial}{P-polynomial} orderings}\label{sec:2Porder}
In the following, we assume that both the diameter and valency of $\G$ are at least three, and we follow `BCN' \cite[\S
4.2.D]{bcn}. If a distance-regular graph $\Gamma$ has a second $P$-polynomial ordering then the corresponding
distance-regular graph $\Delta$ with the second ordering is either the distance-2 graph $\G_2$, the
distance-$(D-1)$ graph $\G_{D-1}$, or the distance-$D$ graph $\G_D$.

The first case ($\Delta=\G_{2}$) is only possible if $\Delta$ is a Taylor graph (with $c_2 < k-1$) or a generalized odd
graph.

The second case ($\Delta=\G_{D-1}$) occurs if and only if $\Gamma$ is an antipodal 2-cover with diameter $D$ such that
$a_i = 0$ for all $i < (D-1)/2$. In this case, the folded graph is either bipartite or a generalized odd graph. If the
folded graph is bipartite, then $\Gamma$ is bipartite and the diameter $D$ is even. For diameter 4 only the Hadamard
graphs occur; for diameter 6 only the 6-cube. For larger diameter only the $D$-cubes are known. If the folded graph is
a generalized odd graph with diameter 3 then $\Gamma$ is a Taylor graph. For larger diameter and bipartite $\Gamma$
only the Doubled Odd graphs and the cubes are known. If $\Gamma$ is not bipartite, then only three graphs are known:
the Wells graph ($D=4)$, the dodecahedron $(D=5)$, and the coset graph of the truncated even subcode of the binary
Golay code (see \cite[p.~365]{bcn}).

For the last case ($\Delta=\G_{D}$) either $\Delta$ is a generalized odd graph or $a_D=0$. In the latter case
($a_D=0$), it holds that $p^D_{2D} \neq 0$ and if $D =4$ then $p^4_{34} = 0$; moreover, Suzuki \cite{Suzuki1996JCTA}
showed that $D \leq 4$.

\subsection{Characterizing antipodality and the height}\label{sec:charantipodal}

Let $\Gamma$ be a distance-regular graph with {valency at least $3$. It is well-known that $\Gamma$ is an antipodal
$2$-cover if and only if $b_{D-i} = c_i$ for all $i =1,2,\dots,D$ \cite{G74}. Araya and Hiraki \cite{AH98} improved
this by showing that $\Gamma$ is an antipodal 2-cover if and only if $b_{D-i} = c_i$ for $i =1,2,\dots,\lceil D/2
\rceil$. This also improved earlier work of Araya, Hiraki, and Juri\v si\'c \cite{AHJ96}, who showed that a
distance-regular graph is an antipodal 2-cover if there is a $j$ with $b_j =1$ and $D \geq 2j$. Araya, Hiraki, and
Juri\v si\'c \cite{AHJ97} also showed that if $b_2=1$, then $\G$ is an antipodal cover, in particular it is either an
antipodal cover of a complete graph ($D=3$), an antipodal $2$-cover of a strongly regular graph with $\lambda =0$ and
$\mu = 2$ ($D=4$), or the dodecahedron ($D=5$). This solved one of the problems in `BCN' \cite[Prob.~(i),~p.~182]{bcn}.

Suzuki \cite{S294} showed that if $k_i = k_j$ for some $i < j$ with $i+j \leq D$, then either $k_D=1$ or $k_i = k_{i+1}
= \cdots = k_j$, thus solving another problem in `BCN' \cite[Prob.~(ii),~p.~168]{bcn}. The only known distance-regular
graphs with $k_i = k_j$ for some $i < j$ with $i+j \leq D$ and $k_D \geq 2$ are the odd polygons, but it is unknown
whether any others could exist. Hiraki, Suzuki, and Wajima \cite{HSW95} showed that if $k_2=k_j$ for $2+j\leq D$ and
$2<j$, then $\Gamma$ is indeed a polygon $(k=2)$ or an antipodal 2-cover ($k_D=1$). In order to show this result, the
height of a distance-regular graph was used; a notion that we will introduce next.

The {\em height} $\text{ht}(\Gamma)$ of a distance-regular graph $\Gamma$ with diameter $D$ is defined as the maximal
$i$ such that the intersection number $p^D_{Di}$ is nonzero. Note that $\Gamma$ is an antipodal 2-cover if and only if
$\text{ht}(\Gamma) = 0$.  The case $\text{ht}(\Gamma) = 1$ occurs exactly when the distance-$D$ graph $\G_D$ is a
generalized odd graph; see \cite[Prop.~4.2.10]{bcn}. Nakano \cite{N01} strengthened some of the results in \cite{HSW95}
by showing that if $k_i=k_j$ for some $i$ and $j$ such that $i<j\leq D-i$ and $\text{ht}(\Gamma)$ is even and at most
$2(D-2i)$, then this height must be zero, that is, $k_D = 1$.

Suzuki \cite{S594} asked whether $\Gamma$ can be characterized by its induced
subgraphs on $\Gamma_D(x)$, for $x \in V$. Some results in this direction were
obtained by Hiraki \cite{Hi299}, who showed that if every induced subgraph on
$\Gamma_{D,\text{ht}(\Gamma)}(x,y)$ is a clique whenever $d(x,y) = D$, then
$\text{ht}(\Gamma) = D$, $D-1$, or $1$.  He also showed that if
$p^D_{D,\text{ht}(\Gamma)} = 1$, then $\text{ht}(\Gamma)$ equals $D$, $1$, or
$0$. Also Tomiyama \cite{To96, To98} gave some results in this direction, that
is, for the case $\text{ht}(\Gamma) = 2$.

\subsection{Bounds on \texorpdfstring{$k_D$}{kD} for primitive distance-regular
graphs}\label{sec:boundskD}

Let $\G$ be distance-regular with diameter $D$ and valency $k$. Brouwer et al.~\cite[Prop.~5.6.1]{bcn} showed that if
$\Gamma$ is not antipodal, then $k \leq k_D(k_D-1)$. Suzuki \cite{Su91, S594} showed that in this case also the
diameter is bounded by a function of $k_D$. This now also follows from the above statement and the validity of the
Bannai-Ito conjecture (see Section \ref{sec:BIconjecture}).

Park \cite{Papre12} showed that if $\G$ has valency and diameter at least $3$ and satisfies $k_{D-1} + k_{D} \leq 2k$,
then $\G$ is an antipodal $2$-cover, $\G$ is bipartite with $D=3$, $\G$ is the Johnson graph $J(7,3)$, or $\G$ is the
halved 7-cube. In the case $D=3$, there are infinitely many bipartite non-antipodal distance-regular graphs with $k_2 +
k_3 \leq 2k$, for example the incidence graphs of the complements of projective planes of order at least $3$. This
result also confirms a conjecture by Bendito, Carmona, Encinas, and Mitjana \cite{BCEM12} that states that no primitive
distance-regular graph with diameter three has the so-called $M$-property.

\subsection{Terwilliger graphs and existence of quadrangles}\label{sec:existencequadranglesT}

Recall that a distance-regular graph without induced quadrangles is called a {\em Terwilliger graph}. In this section
we collect some sufficient conditions for a distance-regular graph with $c_2 \ge 2$ to contain induced quadrangles.
Note that by Proposition \ref{thm:terwilintersectionnos}, the existence of a quadrangle implies that $c_i - b_i  \geq
c_{i-1} - b_{i-1} + a_1 + 2$. On the other hand, it is shown in the proof of \cite[Thm.~5.4.1]{bcn} that a
distance-regular graph with $c_3 < 2c_2$ and $c_2 \geq 2$ has an induced quadrangle. The following are some more such
combinatorial conditions.
\begin{prop}
Let $\Gamma$ be an amply regular  Terwilliger graph
with diameter $D \geq 2$ and with parameters $(v,k, \lambda, \mu)$ such that $\mu \geq 2$.
\begin{enumerate}[{\em (i)}]
\item If  $k \leq (6 + \frac{8}{57} )(\lambda + 1)$, then $\Gamma$ is the icosahedron, the Doro graph (see \cite[\S
    12.1]{bcn}), or the Conway-Smith graph (see \cite[\S 13.2]{bcn}) \cite[Prop.~6]{KoPa11},
\item If $k < 50(\mu-1)$, then $\Gamma$ is the icosahedron, the Doro graph, or the Conway-Smith graph
    \cite[Cor.~1.16.6(ii)]{bcn},
\item If $24\mu > 10(\lambda +1)$, then $\Gamma$ is the icosahedron, the
    Doro graph, or the Conway-Smith graph \cite{kooterwpre}.
\end{enumerate}
\end{prop}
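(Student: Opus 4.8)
The plan is to reduce the whole statement to a study of the local graphs $\Upsilon(x)$ and then, in each of the three regimes, to run a greedy-coclique / claw-counting argument of the type underlying Metsch's clique lemma (Proposition~\ref{Metschresult-2.2}) and the bound of Proposition~\ref{shillaterw}.

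First I would record the structure of a local graph. Let $\Gamma$ be amply regular and Terwilliger with parameters $(v,k,\lambda,\mu)$, $\mu\ge 2$, and fix $x\in V$; put $\Delta=\Upsilon(x)$. Then $\Delta$ has $k$ vertices and is regular of valency $\lambda$, because the neighbours of $y\in\Delta$ inside $\Delta$ are exactly the $\lambda$ common neighbours of the adjacent pair $x,y$. If $y,z\in\Delta$ are non-adjacent, then $d_\Gamma(y,z)=2$, so $\Upsilon(y,z)$ is a $\mu$-clique containing $x$; hence $y$ and $z$ have exactly $\mu-1$ common neighbours in $\Delta$, and these form a clique. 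In particular $\Delta$ has diameter at most $2$, which forces $\lambda\ge 1$ (a $\lambda$-regular graph of diameter $\le 2$ on $k\ge 2$ vertices cannot be edgeless), and a greedy argument produces a coclique of $\Delta$ of size at least $m:=\lceil k/(\lambda+1)\rceil$.

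Second, for such a coclique $S=\{s_1,\dots,s_m\}$ I would double-count the pairs $(w,\{s_i,s_j\})$ with $w\in\Upsilon_\Delta(s_i)\cap\Upsilon_\Delta(s_j)$. Using $|\Upsilon_\Delta(s_i)\cap\Upsilon_\Delta(s_j)|=\mu-1$ for $i\ne j$, $|\Upsilon_\Delta(s_i)|=\lambda$, and convexity of $t\mapsto\binom{t}{2}$, one obtains the inequality
\[
(m-1)(\mu-1)(k-m)\ \ge\ \lambda\bigl((\lambda+1)m-k\bigr),
\]
which is essentially the Terwilliger--Koolen--Park claw bound specialised to this situation. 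Together with the extra information that the common neighbourhoods in $\Delta$ are cliques (so have size $\mu-1\le\lambda$) and with eigenvalue interlacing applied to $\Delta$, one can push this to show that $\Delta$ is very close to being strongly regular with $\mu$-parameter $\mu-1$, and that its parameters are tightly controlled. Each of the hypotheses, (i) $k\le(6+\tfrac{8}{57})(\lambda+1)$, (ii) $k<50(\mu-1)$, (iii) $24\mu>10(\lambda+1)$, then pins down the feasible parameters so severely that $\Delta$ must be the pentagon $C_5$ (giving $\mu=2$, $\lambda=2$, $k=5$) or the Petersen graph (giving $\mu=2$, $\lambda=3$, $k=10$); the delicate point in (i) is that the constant $6+\tfrac{8}{57}$ is exactly what is needed to exclude the Hoffman--Singleton graph, for which $k/(\lambda+1)=\tfrac{50}{8}>6+\tfrac{8}{57}$, as well as its conjectural valency-$57$ analogue, whereas (ii) and (iii) rule these out much more crudely.

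Finally I would invoke the known ``locally $X$'' characterisations: an amply regular Terwilliger graph that is locally $C_5$ is the icosahedron, and one that is locally the Petersen graph is either the Doro graph or the Conway--Smith graph (these are classical; see BCN). The main obstacle is the middle step: extracting enough regularity of $\Delta$ from the claw-counting inequality and the clique structure, and then driving the standard feasibility conditions for a strongly regular graph (integrality of multiplicities, the Krein conditions, the absolute bound of Proposition~\ref{absolute bound}) down to this two-element list. This is where essentially all of the work lies, and where the three different constants --- hence the three different attributions --- come from. A secondary subtlety is that the locally-Petersen case bifurcates into two non-isomorphic global graphs, which have to be separated using the diameter and the higher intersection numbers.
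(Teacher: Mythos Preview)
The paper does not prove this proposition; it simply lists three results with citations to [KoPa11], to BCN Cor.~1.16.6(ii), and to the unpublished [kooterwpre]. So there is no ``paper's proof'' to compare against, and I will assess your sketch on its own.

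Your first paragraph is correct: $\Delta=\Upsilon(x)$ is $\lambda$-regular on $k$ vertices, any two non-adjacent vertices of $\Delta$ have exactly $\mu-1$ common neighbours in $\Delta$ and these form a clique (so $\Delta$ is again a Terwilliger graph, now with $\mu$-parameter $\mu-1$), and the double-counting inequality $(m-1)(\mu-1)(k-m)\ge \lambda\bigl((\lambda+1)m-k\bigr)$ is valid.

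The genuine gap is the next step. You propose to show that $\Delta$ is ``very close to being strongly regular'' and then to invoke SRG feasibility (integral multiplicities, Krein, absolute bound) to pin $\Delta$ down to $C_5$ or the Petersen graph. But $\Delta$ is \emph{not} known to be strongly regular: for \emph{adjacent} $y,z\in\Delta$ the number of common neighbours in $\Delta$ is $|\Upsilon(x)\cap\Upsilon(y)\cap\Upsilon(z)|$, a triple intersection over triangles of $\Gamma$, and nothing in the hypotheses makes this constant. So the SRG machinery is not available, and your sketch also gives no mechanism to force $\mu=2$ (all three target graphs have $\mu=2$, yet each of (i)--(iii) allows $\mu\ge 3$ at face value).

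What the cited references use is exactly the observation above, iterated: $\Delta$ is a Terwilliger graph with $\mu$-parameter $\mu-1$, its local graphs are Terwilliger with $\mu$-parameter $\mu-2$, and so on; after $\mu-1$ steps one reaches a graph with $\mu$-parameter $1$ and diameter $2$, i.e., essentially a Moore graph, whose valency lies in $\{2,3,7,57\}$. The three numerical hypotheses (hence the three different constants and attributions) are each strong enough to exclude the Hoffman--Singleton and putative valency-$57$ cases at the bottom of the tower and to bound the growth of parameters back up, forcing $\mu=2$ and $\Delta\in\{C_5,\text{Petersen}\}$. This is also why the remark immediately after the proposition singles out the locally Hoffman--Singleton case as the obstruction to sharpening the bounds. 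Your final step (locally $C_5$ gives the icosahedron; locally Petersen gives Doro or Conway--Smith) is correct.
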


\noindent In \cite[Thm.~4.4.11]{bcn}, the distance-regular graphs with second largest eigenvalue $b_1 -1$ are
classified. For Terwilliger graphs we can go a little further.

\begin{prop} {\em \cite{kooterwpre}}
Let $\Gamma$ be a distance-regular Terwilliger graph with diameter $D \geq 3$
and distinct eigenvalues $\theta_0 > \theta_1 > \cdots > \theta_D$. Then
$\theta_1 \leq b_1/2 -1$ and $\theta_D \geq -b_1/3 -1$, unless $\Gamma$ is the
icosahedron, the Doro graph, or the Conway-Smith graph.
\end{prop}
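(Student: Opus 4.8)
The plan is to translate each inequality into a bound on the eigenvalues of the local graphs of $\Gamma$ and then feed in the structure theory of Terwilliger graphs assembled immediately above. Put $\tilde\theta_1=-1-b_1/(1+\theta_1)$ and $\tilde\theta_D=-1-b_1/(1+\theta_D)$, as in the discussion of the fundamental bound \eqref{FB}. Since $\Gamma$ is connected and non-complete we have $\theta_1>-1>\theta_D$, and since $D\ge 3$ and $c_2\ge 2$ (see below) Proposition~\ref{prop:unimodal} gives $b_1\ge c_2\ge 2$; a short manipulation then yields
\[
\theta_1\le\tfrac{b_1}{2}-1 \iff \tilde\theta_1\le -3, \qquad \theta_D\ge-\tfrac{b_1}{3}-1 \iff \tilde\theta_D\ge 2 .
\]
By Terwilliger's bound on local eigenvalues (\cite[Thm.~4.4.3]{bcn}, recalled above: $(\eta_i-\tilde\theta_1)(\eta_i-\tilde\theta_D)\le 0$ for the eigenvalues $\eta_i$ of any local graph), every eigenvalue $\eta\ne a_1$ of every local graph $\Upsilon(x)$ lies in $[\tilde\theta_1,\tilde\theta_D]$. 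Hence it suffices to exhibit one vertex whose local graph has an eigenvalue $\le -3$ and one (possibly different) vertex whose local graph has a non-principal eigenvalue $\ge 2$; the two choices may be made independently, since $\tilde\theta_1,\tilde\theta_D$ depend only on $\theta_1,\theta_D,b_1$.

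First set aside the case $c_2=1$, in which each $\Upsilon(x)$ is a disjoint union of $(a_1+1)$-cliques and $\Gamma$ is of order $(a_1+1,t)$; this behaves differently and must be treated separately, and (as the generalized odd graphs such as $O_4$ show) the hypothesis $c_2\ge 2$ is genuinely needed. So assume $c_2\ge 2$. Then $\Gamma$ is amply regular with $\mu=c_2$, so each $\Upsilon(x)$ is a quadrangle-free $a_1$-regular graph on $k$ vertices in which any two non-adjacent vertices have exactly $c_2-1$ common neighbours (necessarily forming a clique); in particular $\Upsilon(x)$ has diameter at most $2$ (as $c_2\ge 2$), so $k\le a_1^2+1$. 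If $\Gamma$ satisfies the hypothesis of any of the three sufficient conditions (i)--(iii) in the preceding proposition, then $\Gamma$ is the icosahedron, the Doro graph, or the Conway-Smith graph and there is nothing to prove; so assume it satisfies none of them. Contraposing condition (i) gives $k>(6+\tfrac{8}{57})(a_1+1)$, which combined with $k\le a_1^2+1$ forces $a_1\ge 7$.

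Suppose first that some local graph $\Upsilon(x)$ is triangle-free. Then, being also quadrangle-free with diameter $2$, $a_1$-regular, and having every pair of non-adjacent vertices with exactly $c_2-1$ common neighbours, $\Upsilon(x)$ is strongly regular with parameters $(k,a_1,0,c_2-1)$, and quadrangle-freeness then forces $c_2-1=1$, so $\Upsilon(x)$ is a Moore graph of diameter $2$; since $a_1\ge 7$ it is the Hoffman-Singleton graph or the putative $3250$-vertex Moore graph. Both of these have smallest eigenvalue $\le -3$ (namely $-3$ and $-8$) and second largest eigenvalue $\ge 2$ (namely $2$ and $7$), so $\tilde\theta_1\le -3$ and $\tilde\theta_D\ge 2$, and we are done.

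It remains to handle the case in which every local graph contains a triangle (equivalently, $\Gamma$ has a $K_4$), and this is the main obstacle. The plan is to use such a triangle, together with the sparseness of $\Upsilon(x)$ (we have $a_1<k/6$, and contraposing (ii),(iii) also bounds $c_2$ from above), to extract an induced copy of a fixed quadrangle-free graph with smallest eigenvalue $\le -3$ --- for instance $K_{1,1,m}$ with $m\ge 6$ when $c_2$ is large enough, or a large claw $K_{1,9}$ --- and, separately, an induced copy of a fixed quadrangle-free graph with second largest eigenvalue $\ge 2$ --- for instance $2K_3$ or $K_{1,4}\cup K_3$; interlacing then gives both inequalities. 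Carrying this out requires controlling how edges are distributed inside the neighbourhoods of a quadrangle-free, diameter-$2$, $a_1$-regular local graph which (unlike in the triangle-free case) need not be strongly regular, producing large enough cocliques or matchings where they are needed; this structural step, which does not reduce to a short calculation, is where the real work lies.
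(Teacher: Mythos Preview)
The paper does not actually supply a proof of this proposition; it merely states it and cites the unpublished manuscript \cite{kooterwpre}. So there is no ``paper's own proof'' to compare against, and your attempt has to be evaluated on its own merits.

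Your reduction is correct and is the natural way in: using $\tilde\theta_1=-1-b_1/(1+\theta_1)$ and $\tilde\theta_D=-1-b_1/(1+\theta_D)$ together with Terwilliger's bound \cite[Thm.~4.4.3]{bcn} does convert the two assertions into ``some local graph has an eigenvalue $\le -3$'' and ``some local graph has a non-principal eigenvalue $\ge 2$''. The observation that the stated result fails for $c_2=1$ (the Odd graph $O_4$ has $b_1=3$ and $\theta_1=2>b_1/2-1$) is also well taken; the surrounding text of Section~\ref{sec:existencequadranglesT} makes clear that $c_2\ge 2$ is implicitly assumed throughout, so this is a clarification rather than a defect of the proposition. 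Your treatment of the case where some local graph is triangle-free (hence a Moore graph of valency $\ge 7$) is clean and complete.

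The genuine gap is exactly where you say it is: the case in which every local graph contains a triangle. You propose to find induced subgraphs such as $K_{1,9}$, $K_{1,1,m}$, or $2K_3$ and interlace, but you do not carry this out, and it is not clear that your proposed witnesses can always be found. For instance, to interlace to second eigenvalue $\ge 2$ via $2K_3$ you would need two vertex-disjoint \emph{induced} triangles in a single $\Upsilon(x)$, and nothing you have written forces this; likewise, a $K_{1,9}$ inside $\Upsilon(x)$ requires a vertex of $\Upsilon(x)$ to have $9$ pairwise non-adjacent neighbours inside $\Upsilon(x)$, and the quadrangle-freeness and the ``$\mu=c_2-1$'' co-edge-regularity of $\Upsilon(x)$ are not by themselves enough to guarantee that (the number of common neighbours of two adjacent vertices of $\Upsilon(x)$ is not controlled by the intersection array). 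So as it stands the argument is an outline with its hardest step missing, not a proof. If you want to push this approach through, you will need a structural statement about quadrangle-free, connected, $a_1$-regular graphs on $k>(6+\tfrac{8}{57})(a_1+1)$ vertices with all non-adjacent pairs having exactly $c_2-1$ common neighbours, strong enough to force both a smallest eigenvalue $\le -3$ and a second eigenvalue $\ge 2$; that is where the content of \cite{kooterwpre} presumably lies.
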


\noindent Note that if we would know the Terwilliger distance-regular graphs that are locally Hoffman-Singleton, then
we could improve the above results. Note that there are two feasible intersection arrays known which could be locally
Hoffman-Singleton: $\!\{ 50, 42, 9; 1, 2, 42\}$ and $\{ 50, 42, 1; 1, 2, 50\}$, see \cite[p.~36]{bcn}. Gavrilyuk and
Makhnev \cite{GavMak2011} have worked on the classification of these graphs.

\subsection{Connectivity and the second eigenvalue}

\subsubsection{Connectivity and matchings}
Brouwer and Koolen \cite{BrKo09} showed that a (non-complete) distance-regular
graph $\G$ with valency $k>2$ is  $k$-connected and that the only way to
disconnect $\G$ by removing $k$ vertices is to remove the neighborhood of some
vertex. This implies that also the edge-connectivity of $\G$ equals its
valency, and consequently, that every distance-regular graph on an even number
of vertices has a perfect matching. This had been derived before by Brouwer and
Haemers \cite{BH05}, who also showed that the only way to disconnect $\G$ by
removing $k$ edges is to remove the edges through some vertex.

It was noted by Beezer and Farrell \cite{BeFa00} that in general, the number of
perfect matchings does not follow from the intersection array. They showed that
the numbers of matchings consisting of $i$ edges are determined by the
intersection array for $i=1,2,\ldots, 5$; however the Hamming graph $H(2,4)$
and the Shrikhande graph (which have the same intersection array) have
different numbers of matchings with $i$ edges for every $i>5$.

\subsubsection{The second largest eigenvalue}\label{sec:DIAMETEReigenvalues}
Koolen, Park, and Yu \cite{KoPaYu11} showed that for given $\alpha >1$, there
are only finitely many distance-regular graphs with $k \geq 3$ and $D \geq 3$
whose second largest eigenvalue $\theta_1$ satisfies $\alpha \geq \theta_1 >1$.
Note that the (infinite family of) regular complete bipartite graphs minus a
perfect matching are the only distance-regular graphs with $D \geq 3$ and
$\theta_1=1$, and there are no distance-regular graphs with $D \geq 3$ and
$\theta_1<1$. The distance-regular graphs with $D \geq 3$ and $\theta_1 \leq 2$
were also classified.

The distance-regular graphs with $D \geq 3$ and $a_1 \geq 2$ such that all
local graphs have second largest eigenvalue at most one have been classified by
Koolen and Yu \cite{KoYu11}. One may wonder whether, given $\alpha \geq 1$,
there are only finitely many distance-regular graphs with $D \geq 3$ and $a_1 >
\alpha$ such that each local graph has second largest eigenvalue at most
$\alpha$. The condition $a_1 > \alpha$ ensures that the local graphs are
connected, and thus excludes infinite families such as the Hamming graphs $H(D,
\alpha + 2)$.

\subsubsection{The standard sequence}\label{sec:standardsequence}

Let $\Gamma$ be a distance-regular graph with diameter $D$ and let
$$L(i)= \left[
\begin{tabular}{lllllll}
$0$ & $b_0$\\
$c_1$ & $a_1$ & $b_1$\\
  & $c_2$ & $a_2$ & $b_2$\\
  &       & . & . & .\\
  &       &       & . & . & .\\
  &       &       &   & $c_{i-1}$ & $a_{i-1}$ & $b_{i-1}$\\
  &       &       &   &\makebox{\hspace{.324cm}}   & $c_i$  & $a_i$
\end{tabular}
\right],$$

$$M(i)= \left[
\begin{tabular}{llllll}
$a_i$ & $b_i$\\
$c_{i+1}$ & $a_{i+1}$ & $b_{i+1}$\\
  & . & . & .\\
  &       & . & . & .\\
  &       &   & $c_{D-1}$ & $a_{D-1}$ & $b_{D-1}$\\
  &       &   &\makebox{\hspace{.324cm}}   & $c_D$  & $a_D$
\end{tabular}
\right].$$
Cioab\u{a} and Koolen \cite{CioaKopre} studied the eigenvalues
of these matrices in order to answer a question by Brouwer. Note that the
eigenvalues of $L= L(D)=M(0)$ are the $D+1$ distinct eigenvalues $\theta_0>
\theta_1 > \cdots
>\theta_D$ of $\Gamma$. Let $\rho_i$ be the largest eigenvalue of $L(i)$, let
$\sigma_i$ be the largest eigenvalue of $M(i)$, and let $u_0=1, u_1, \ldots,
u_D$ be the standard sequence of the second largest eigenvalue $\theta_1$ of
$\G$. By the theory of orthogonal polynomials, it follows that this sequence
has one sign change. Also, for $i= 2,3, \ldots, D-1$ and $\varepsilon \in \{+1,
-1\}$, if $\varepsilon u_i
>0$, then $\varepsilon \rho_{i-1} < \varepsilon \theta_1 < \varepsilon
\sigma_{i+1}$. If $u_i = 0$, then $\theta_1 = \rho_{i-1} = \sigma_{i+1}$.

For $D=3$, this means that $\theta_1$ lies between $a_3$ and
$\frac{a_1 + \sqrt{a_1^2 + 4k}}{2}$, and if two of these three numbers are
equal, then they are all equal. The latter case defines the class of Shilla
graphs introduced by Koolen and Park \cite{KoPa10}.

Cioab\u{a} and Koolen \cite{CioaKopre} used the above to derive
that the induced subgraph $\Xi(j)$ on $\Gamma_j(x) \cup \Gamma_{j+1}(x) \cup
\ldots \cup \Gamma_D(x)$ is connected if $j \leq D/2$, and that $\Xi(D/2+1)$ is
not connected if and only if  $\Gamma$ is an antipodal $r$-cover with $r \geq
3$. This answers a question by Brouwer. It is not clear when $\Xi((D+1)/2)$ is
disconnected.

A final remark on the standard sequence of the second
eigenvalue is that Park, Koolen, and Markowsky \cite{PaKoMar} showed that
$u_{j}> 0$ if $j < D/2$ and $u_j \geq 0$ if $j = D/2$. Moreover, they showed
that $u_{D/2} = 0$ if and only if $\Gamma$ is an antipodal cover.

We remark that if $\theta_1 < \alpha k$, 
then $c_t + a_t < \alpha k$ and hence $b_t = k- (a_t + c_t) > (1-\alpha)k$ for $2t+2 \leq D$.  This implies
that 
$k_t > k \frac{(1-\alpha)^{t-1}}{\alpha^{t-1}}$ for $2t + 2 \leq D$. So if $\theta_1 < k/2$, then $k_t > k_{t-1}$,
which gives a partial answer to a problem in `BCN' \cite[p.~189]{bcn}. If $\theta_1 < k/3$, then we can improve Pyber's
bound of Section \ref{sec:Pyber}.

\subsection{The \texorpdfstring{distance-$D$}{distance-D} graph}\label{sec:distance-D graph}

The spectral excess theorem (cf.~Theorem \ref{spectral excess theorem}) states that a connected regular graph with $d+1$ distinct eigenvalues is distance-regular precisely when the distance-$d$ graph is regular with the `right' valency determined by the spectrum of the graph.
As mentioned in Section \ref{sec:spectralexcess}, Fiol \cite{F00} specialized this theorem to strongly distance-regular graphs.
Fiol \cite[Conj.~3.6]{Fiol2001CPC} also conjectured that a distance-regular graph with diameter at least $4$ is strongly distance-regular if and only if it is antipodal.\footnote{We note that a distance-regular graph with diameter $3$ is strongly distance-regular precisely when it has $-1$ as an eigenvalue; cf.~\cite[Prop.~4.2.17]{bcn}. Some non-antipodal examples are the Odd graph $O_4$ and the Sylvester graph.}

Fiol \cite{Fiol2001CPC} showed that a distance-regular graph with diameter $D$ and distinct eigenvalues $k=\theta_0>\theta_1>\dots>\theta_D$ is strongly distance-regular if and only if, for $i=1,2,\dots,D$, the multiplicity $m_i$ of $\theta_i$ is expressed as a certain rational function in $\theta_0,\theta_1,\dots,\theta_D$, and the number of vertices, $v$.
Brouwer and Fiol \cite{BF2014pre} among other results strengthened this result for the case $D=4$ as follows:

\begin{prop}
Let $\G$ be a distance-regular graph with diameter $4$.
Then the following are equivalent:
\begin{enumerate}[{\em (i)}]
\item $\G$ is strongly distance-regular, i.e., the distance-$4$ graph $\G_4$ is strongly regular,
\item $b_3 = a_4 +1$ and $b_1 = b_3c_3$,
\item $(\theta_1 +1)(\theta_3 +1) = (\theta_2 +1)(\theta_4 +1) = -b_1$.
\end{enumerate}
\end{prop}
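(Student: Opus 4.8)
The plan is to analyze the distance-$4$ graph $\G_4$ of a distance-regular graph $\G$ with diameter $4$ through its adjacency matrix $A_4 = v_4(A)$, exploiting that $A_4$ is a polynomial of degree $4$ in $A$ (see \eqref{distancepolynomials}) and that its eigenvalues on the eigenspace $V_i$ of $\theta_i$ are $v_4(\theta_i) = P_{i4}$. For $\G_4$ to be strongly regular (diameter at most $2$), we need $A_4$ to have at most $3$ distinct eigenvalues; since $A_4$ already has the eigenvalue $k_4 = v_4(k)$ with multiplicity $1$ (on the constant vectors) and $\G_4$ is a $k_4$-regular graph, strong regularity of $\G_4$ is equivalent to $\{v_4(\theta_1), v_4(\theta_2), v_4(\theta_3), v_4(\theta_4)\}$ taking at most two distinct values. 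I would first set up the standard sequence (cosine sequence) $(u_i)_{i=0}^4$ for each $\theta_j$ and recall $v_i(\theta_j) = k_i u_i(\theta_j)$, so that $v_4(\theta_j) = k_4 u_4(\theta_j)$ where $u_4$ is the last cosine. Thus (i) is equivalent to the last cosines $u_4(\theta_1), \dots, u_4(\theta_4)$ taking at most two values, and in fact (since $\G$ is primitive or we are in the relevant case) exactly two: one equal to a common value, say on a subset, and since $\G_4$ is a genuine strongly regular graph the natural splitting is $u_4(\theta_1) = u_4(\theta_3)$ taking one value and $u_4(\theta_2) = u_4(\theta_4)$ taking the other (the parity pattern forced by the orthogonal-polynomial sign-change structure).

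The core of the argument is to translate these cosine identities into the intersection-number conditions (ii) and the eigenvalue conditions (iii), and to show all three are equivalent. For (iii), I would use the three-term recurrence $c_i u_{i-1} + a_i u_i + b_i u_{i+1} = \theta u_i$ specialized at $i = 4$: since $b_4 = 0$, this reads $c_4 u_3 + a_4 u_4 = \theta u_4$, so $u_4/u_3 = c_4/(\theta - a_4)$. Also at $i = D = 4$ the relevant tail data $(b_3, c_4, a_4)$ with $b_3 + a_4 = k - c_4$ will enter. The eigenvalue condition $(\theta_1+1)(\theta_3+1) = (\theta_2+1)(\theta_4+1) = -b_1$ is of the form "the map $\theta \mapsto (\theta+1)$ has a product that is constant across the conjugate-looking pairs"; I expect this to be exactly the statement that the second-largest and second-smallest eigenvalues of the distance-$4$ graph coincide, i.e., that $\G_4$ has only three distinct eigenvalues. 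The cleanest route is to express $v_4(\theta)$ via the factorization $v_4(A) = J - I - A - A_2 - A_3$ is not polynomial-clean; instead use $\sum_{i=0}^4 v_i(A) = J$ and $AJ = kJ$, and push everything through the recurrence to get $v_4(\theta)$ as an explicit rational expression in $\theta$ whose numerator, after clearing denominators, factors through $(\theta+1)$ and the tail parameters. Condition (ii), $b_3 = a_4 + 1$ and $b_1 = b_3 c_3$, should drop out by substituting this explicit form and matching coefficients; the identity $b_3 = a_4 + 1$ is recognizable as forcing the distance-$4$ graph to be a "generalized quadrangle-like" structure (it is precisely the antipodality-type balance $b_{D-i} = c_i$ at $i=1$ being replaced by its strongly-regular analogue), and $b_1 = b_3 c_3$ fixes the remaining degree of freedom.

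Concretely, the steps in order: (1) record $v_4(\theta_j) = k_4 u_4(\theta_j)$ and reduce (i) to a two-valued condition on $(u_4(\theta_j))_j$; (2) using Biggs' formula (Theorem \ref{Biggsformula}) or direct orthogonality, show that if $\G_4$ is strongly regular then its non-principal eigenvalues, call them $\lambda^{\pm}$, satisfy a quadratic whose coefficients are symmetric functions of the $\theta_j$; (3) compute $v_4(\theta)$ in closed form from the recurrence with $b_4 = 0$ — I would write $u_1 = \theta/k$, then $u_2, u_3, u_4$ in terms of $\theta$ and the intersection numbers, yielding $v_4(\theta)$ as (cubic in $\theta$)$\times\theta/(\text{const})$ or similar — and impose that $v_4$ take two values on $\{\theta_1,\dots,\theta_4\}$; (4) show this is equivalent to $(\theta+1)^2 + b_1$ dividing the relevant polynomial, giving (iii); (5) translate (iii) back into (ii) via $a_1 = k - b_1 - c_1 = k - b_1 - 1$ and the tail relations, and conversely. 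Throughout I would lean on \cite[Prop.~4.2.17]{bcn} and the surrounding theory of strongly distance-regular graphs quoted just before the statement, and on Fiol's rational-function multiplicity criterion \cite{Fiol2001CPC}, of which this proposition is the $D=4$ specialization.

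\textbf{Main obstacle.} The hard part will be step (3)–(4): producing the explicit closed form of $v_4(\theta)$ cleanly enough that the factor $(\theta+1)^2 + b_1$ is visibly the obstruction, and verifying that the two-valued condition forces exactly the pairing $\{\theta_1,\theta_3\}$ versus $\{\theta_2,\theta_4\}$ rather than some other $2{+}2$ or $3{+}1$ split. The sign-change structure of the cosine sequence of $\theta_1$ (exactly one sign change, with $u_j > 0$ for $j < D/2$ by the remark in Section \ref{sec:standardsequence}) should rule out the spurious splits, but making this rigorous for all four eigenvalues simultaneously — and handling the degenerate case where $\G$ might fail to be primitive — is where the real care is needed. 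The equivalence with (ii) is then a bookkeeping exercise in the intersection array, and I do not expect it to pose conceptual difficulty once (iii) is established.
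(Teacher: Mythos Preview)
The paper is a survey and does not prove this proposition; it is stated as a result of Brouwer and Fiol \cite{BF2014pre} (see Section~\ref{sec:distance-D graph}), with no argument given beyond the attribution and the remark that it strengthens Fiol's earlier rational-function multiplicity criterion \cite{Fiol2001CPC} for the case $D=4$. So there is no proof in the paper to compare your proposal against.

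That said, your overall strategy---reduce (i) to the statement that $v_4$ takes at most two values on $\{\theta_1,\theta_2,\theta_3,\theta_4\}$, compute $v_4(\theta)$ explicitly from the three-term recurrence, and match coefficients to extract (ii) and (iii)---is the natural one and is essentially what one finds in \cite{BF2014pre}. One refinement: rather than guessing the pairing $\{\theta_1,\theta_3\}$ versus $\{\theta_2,\theta_4\}$ from sign-change heuristics, it is cleaner to observe that for any $\theta\ne k$ one has $\sum_{i=0}^4 v_i(\theta)=0$, so $v_4(\theta)=-\sum_{i=0}^3 v_i(\theta)$ is a \emph{cubic} in $\theta$; the two-valued condition on four points then forces $v_4(\theta)-\lambda$ to have a double root for each value $\lambda$, and the resulting factorization over $\mathbb{R}$ gives the pairing automatically (a cubic minus a constant can have at most one repeated root, so the two values correspond to two disjoint pairs, and interlacing of the $\theta_j$ with the roots of $v_4'$ fixes which pairs). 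This sidesteps the case analysis you flag as the main obstacle. The equivalence of (ii) and (iii) is then, as you say, bookkeeping via the characteristic polynomial of the intersection matrix $L$.
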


\noindent
See also \cite{Fiol2014pre}.
In \cite{BF2014pre}, Brouwer and Fiol in fact studied the more general situation where the distance-$D$ graph has at most $D$ distinct eigenvalues; or equivalently, where the distance-$D$ matrix $A_D$ generates a proper subalgebra of the adjacency algebra $\AL$.
They showed for example that a distance-regular graph with diameter $D$ belongs to this class provided that $D$ is odd and the distance $1$-or-$2$ graph is distance-regular (e.g., the Odd graph $O_{D+1}$, the folded $(2D+1)$-cube, and the dual polar graphs $\B_D(q)$ and $\C_D(q)$).


\section{Multiplicities}\label{sec:mult}


\subsection{Terwilliger's tree bound}\label{sec:tree_bound}

Terwilliger \cite{TerMult82} showed that if a distance-regular graph $\G$, say with valency $k$, contains an
isometric subgraph that is also a tree, then the multiplicity of each eigenvalue $\theta \neq \pm k$ of $\G$ is at
least the number of leaves (i.e., vertices of valency one) of that tree. This has been generalized by Hiraki and Koolen
\cite[Prop.~3.1]{HK02} to the case where the subgraph is a block graph (i.e., a graph whose $2$-connected components
are complete). Their result is too technical to state here, however, we mention its following consequence.

\begin{prop}{\em (cf.~\cite[Prop.~3.3]{HK02})}
Let $\G$ be a distance-regular graph of order $(s,t)$ and with
head $h$.
Let $\theta \neq k$ be an
eigenvalue with multiplicity $m$, and let $n=\lfloor \frac{h+1}{2} \rfloor$. Then the following hold:
\begin{enumerate}[{\em (i)}]
\item If $h$ is odd and $\theta \neq -t-1$, then
$m \geq (t+1)t^{n-1}s^n$,
\item If $h$ is even and $\theta \neq -t-1$, then
$m \geq (s+1)(st)^n$,
\item If $h$ is odd and $\theta = -t-1$, then
$m \geq 1+(t+1)(s-1)\frac{(st)^{n}-1}{st-1}$,
\item If $h$ is even and $\theta = -t-1$, then
$m \geq \frac{(s-1)(s+1)((st)^{n+1}-1)}{(st-1)s} + \frac{1}{s}$.
\end{enumerate}
\end{prop}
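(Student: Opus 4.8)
The plan is to deduce this from the block-graph generalization of Terwilliger's tree bound, due to Hiraki and Koolen \cite{HK02}, by exhibiting a suitable isometric block subgraph inside a distance-regular graph $\G$ of order $(s,t)$ with head $h$. Recall that $\G$ being of order $(s,t)$ means every local graph is a disjoint union of $t+1$ cliques of size $s$, and that having head $h$ means $(c_i,a_i,b_i)=(c_1,a_1,b_1)=(1,s-1,(t+1)(s-1)+\text{something})$ — more precisely $(c_i,a_i,b_i)=(1,s-1,ts)$ — for $i=1,2,\dots,h$, so that in particular $c_i=1$ and the induced cliques on neighborhoods persist up to distance $h$. The first step is to observe that, starting from a fixed vertex $x$, one can greedily build an isometric subtree-of-cliques: attach to $x$ all $t+1$ of its maximal cliques (each of size $s$, giving $x$ itself together with its $k=s(t+1)$ neighbors), then to each such neighbor attach its remaining $t$ cliques lying at distance $2$ from $x$, and so on, iterating $n=\lfloor (h+1)/2\rfloor$ levels deep from $x$. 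Because $c_i=1$ for $i\le h$, there are no "short returns" and this construction genuinely produces a geodetically embedded block graph of the type handled in \cite[Prop.~3.1]{HK02}; the bound there then says the multiplicity $m$ of any $\theta\neq k$ is at least the number of end-cliques (the analogue of leaves) of this block graph, with a correction term when $\theta=-t-1$ coming from the local eigenvalue structure.

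The second step is the combinatorial bookkeeping: count the number of "pendant" cliques (leaf-blocks) of the block graph just constructed. The branching is regular — the root $x$ has $t+1$ cliques, and every subsequently added clique contributes $s-1$ "new" vertices, each of which is the base of $t$ further cliques at the next distance level. Carrying this out for $n$ levels produces a count of the form $(t+1)t^{n-1}s^{n}$ in the odd-$h$ case (where the last level is a layer of cliques) and $(s+1)(st)^{n}$ in the even-$h$ case (where, to keep the subgraph isometric through distance $h$, one starts the count from an edge rather than a vertex — this is the standard parity split in tree-bound arguments, since an isometric path of $h+1$ vertices has $\lfloor(h+1)/2\rfloor$ "complete" steps from either a central vertex or a central edge depending on the parity of $h$). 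This is exactly where formulas (i) and (ii) come from. For formulas (iii) and (iv), the exceptional eigenvalue $\theta=-t-1$ is precisely the value for which the restriction to each local graph (a disjoint union of $t+1$ $K_s$'s) has a nontrivial kernel interaction with the all-ones vectors of the cliques, so the block-graph bound of \cite{HK02} gains a geometric-series correction $\sum_{j}(st)^{j}$; substituting $n$ levels and simplifying the finite geometric sum $\frac{(st)^{n}-1}{st-1}$ (respectively $\frac{(st)^{n+1}-1}{st-1}$) yields the stated expressions.

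The main obstacle — and the place where care is genuinely required — is verifying that the clique-tree one builds is actually \emph{isometric} (distance-preserving) in $\G$ all the way out to the level determined by $h$, and that it has the precise block-graph shape to which \cite[Prop.~3.1]{HK02} applies. This is where the hypothesis on the head $h$ does all the work: for $i\le h$ one has $c_i=1$, which forbids two distinct geodesics from $x$ meeting a common vertex at distance $i$, so distances inside the constructed subgraph agree with distances in $\G$; for $n=\lfloor(h+1)/2\rfloor$ one stays within this safe range because two vertices in the subgraph are at distance at most $2n-1\le h$ (odd case) or $2n\le h$ (even case) from each other via the root. Once isometry is established, the rest is the mechanical leaf-count above and invoking the proposition, together with (for the $\theta=-t-1$ cases) identifying the correct local contribution; I would present the isometry verification in full detail and then simply cite \cite[Prop.~3.1,~3.3]{HK02} for the bound itself, since the excerpt already states that the consequence we want is \cite[Prop.~3.3]{HK02}.
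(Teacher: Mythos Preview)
The paper does not give its own proof of this proposition; it simply quotes it from \cite[Prop.~3.3]{HK02} and indicates that it is a consequence of the block-graph generalization of Terwilliger's tree bound \cite[Prop.~3.1]{HK02}. Your plan---build an isometric tree-of-cliques inside $\G$ using the order-$(s,t)$ local structure and the condition $(c_i,a_i,b_i)=(1,s-1,st)$ for $i\le h$, then invoke the block-graph bound---is exactly the route the paper points to, and your leaf counts $(t+1)t^{n-1}s^n$ and $(s+1)(st)^n$ come out correctly.

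Two small corrections to your write-up. First, each newly attached clique contributes $s$ new vertices, not $s-1$: the maximal cliques in $\G$ have $s+1$ vertices, and when you attach one through a vertex $y$ at distance $i$, the other $s$ vertices all lie at distance $i+1$ (since $a_i=s-1$ forces all same-distance neighbors of $y$ to sit in the clique toward the root). Your formulas already reflect this, so the slip is only in the prose. Second, in the even-$h$ case, rooting at a clique and going out $n=h/2$ levels gives a subgraph of diameter $2n+1=h+1$, not $2n$; the isometry check at that last step therefore needs a word beyond just ``$c_i=1$ for $i\le h$''---you will want to use the full equality $(c_i,a_i,b_i)=(1,s-1,st)$ up to $i=h$ together with the clique structure to rule out shortcuts between opposite leaves. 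This is the one place where you should be more careful than your sketch currently is.
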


\noindent This generalizes a result of Zhu \cite[Prop.~3.5]{Zhu93} who obtained that $m\geq (t+1)(s-1)$ for $n=1$. It
also generalizes a result of Bannai and Ito \cite{BI87} who showed
that if $a_1 \neq 0$, then $m \geq (k/2)^n$.

C\'{a}mara, Van Dam, Koolen, and Park
\cite{CDKP2013} showed that in a $1$-walk-regular graph with valency $k$
and an eigenvalue $\theta\neq k$ with multiplicity $m$, a clique
can have size at most $m+1$. This result is well-known for
distance-regular graphs. Powers \cite{Pow88} already observed
earlier that for distance-regular graphs equality in this clique
bound cannot occur if $\theta$ is the second eigenvalue, except for
the complete graph. We can generalize this as follows.

\begin{prop}
Let $\G$ be a distance-regular graph with valency $k$. If $\G$
contains a clique with $c$ vertices, and $\theta \neq k$ is an
eigenvalue of $\G$ with multiplicity $m$, then $c \leq m+1$, with
equality only if $\theta=\theta_{\min}$ and $\G$ is complete,
complete multipartite, or bipartite.
\end{prop}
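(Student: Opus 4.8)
The plan is to first derive $c\le m+1$ by a rank argument for the primitive idempotent, and then to extract from the equality case enough information about the standard sequence to identify the graph. First I would set $E$ to be the primitive idempotent associated with $\theta$ and $(u_i)_{i=0}^D$ the standard sequence with respect to $\theta$, so that $(E)_{xy}=\nu_0 u_{d(x,y)}$ with $u_0=1$, $u_1=\theta/k$. Since $d(x,y)=1$ for distinct $x,y\in C$, the principal submatrix $E_C$ of $E$ indexed by $C$ equals $\nu_0\bigl((1-u_1)I+u_1J\bigr)$, a $c\times c$ matrix with eigenvalues $\nu_0\bigl(1+(c-1)u_1\bigr)$ (simple) and $\nu_0(1-u_1)$ (multiplicity $c-1$), the latter nonzero because $\theta\ne k$. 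As $E$ is the orthogonal projection onto the $m$-dimensional $\theta$-eigenspace, it — and hence its principal submatrix $E_C$ — is positive semidefinite of rank at most $m$, so $c-1\le\operatorname{rank}E_C\le m$, which is the asserted bound.

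Next, assuming $c=m+1$, I would note that $\operatorname{rank}E_C=m$ exactly, which forces the remaining eigenvalue of $E_C$ to vanish: $1+(c-1)u_1=0$, i.e. $u_1=-1/m$ and $\theta=-k/m$, equivalently $c=1-k/\theta$. Since $c\ge 2$ we have $\theta<0$, and as $t\mapsto 1-k/t$ is increasing on $(-\infty,0)$ and $\theta_{\min}\le\theta$, this gives $1-k/\theta_{\min}\le c$; combined with the Delsarte bound $c\le 1-k/\theta_{\min}$ (Proposition \ref{delbound}) I obtain $\theta=\theta_{\min}$ and that $C$ is a Delsarte clique. By the characterization of Delsarte cliques \cite[Lemmas~13.7.2,~13.7.4]{Godsilac}, $C$ is then completely regular with covering radius $D-1$ — so every $i\in\{0,1,\dots,D-1\}$ arises as a distance $d(y,C)$ — and satisfies $\phi_iu_i+(c-\phi_i)u_{i+1}=0$ for $y$ with $d(y,C)=i$, where $\phi_i=|\G_i(y)\cap C|$; moreover $\sum_{x\in C}\hat x=0$ by \eqref{eq:repdelsarte}, where $x\mapsto\hat x$ is the representation associated with $\theta$.

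The crux of the argument will be to determine the $u_i$. Rescaling the representation so that $\|\hat x\|=1$ and $\hat x\cdot\hat y=u_{d(x,y)}$ for all $x,y$, I would observe that, since the Gram matrix of $\{\hat x:x\in C\}$ is $E_C$ of rank $m$, these $m+1$ unit vectors span $\mathbb{R}^m$, have common pairwise inner product $-1/m$, and sum to $0$ — a regular simplex with circumcenter at the origin. Pairing $\sum_{x\in C}\hat x=0$ with $\hat y$ gives $\sum_{x\in C}u_{d(x,y)}=0$; and writing $\hat y=\sum_{x\in C}\mu_x\hat x$ with $\sum_x\mu_x=0$ one computes $\mu_x=u_{d(x,y)}/(1-u_1)$ and $1=\|\hat y\|^2=(1-u_1)\sum_x\mu_x^2$, whence $\sum_{x\in C}u_{d(x,y)}^2=1-u_1$. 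For $y$ with $d(y,C)=i$ the distances $d(x,y)$ ($x\in C$) equal $i$ exactly $\phi_i$ times and $i+1$ the remaining $c-\phi_i$ times, so these identities read $\phi_iu_i+(c-\phi_i)u_{i+1}=0$ and $\phi_iu_i^2+(c-\phi_i)u_{i+1}^2=1-u_1$ (here $u_i\ne u_{i+1}$, since otherwise the first forces $u_i=0$ and then the second fails). Eliminating $\phi_i$ yields $u_iu_{i+1}=u_1=-1/m$; as this holds for $i=0,1,\dots,D-1$ and $u_0=1$, it forces $u_i=1$ for $i$ even and $u_i=-1/m$ for $i$ odd.

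Finally I would feed this alternating standard sequence into $c_iu_{i-1}+a_iu_i+b_iu_{i+1}=\theta u_i$, using $c_i+a_i+b_i=k$ and $\theta=-k/m$ (and assuming $D\ge 2$, as diameter $1$ means $\G$ is complete). At an even index $i$ with $1\le i\le D-1$ the recurrence reduces to $a_i(1+1/m)=0$, hence $a_i=0$; at $i=1$ it gives $a_1=k(m-1)/m$. If $a_1=0$, then $m=1$, so $\theta_{\min}=-k$ and $\G$ is bipartite. If $a_1\ne 0$, then $a_i\ne 0$ for all $i\in\{1,\dots,D-1\}$ by \cite[Prop.~5.5.1]{bcn}, so there is no even index in that range and $D\le 2$; then $D=2$ and the recurrence at $i=2=D$ (with $b_D=0$) forces $c_2=k$, i.e. $\mu=k$, so $\G$ is complete multipartite. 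I expect the third paragraph — obtaining $u_iu_{i+1}=-1/m$, and hence the alternating form of the standard sequence, from the simplex picture together with the Delsarte-clique relations — to be the main obstacle; everything else is bookkeeping resting on facts already in the excerpt (positivity and rank of $E$, the Delsarte bound, the completely-regular-code description of Delsarte cliques, and the implication $a_1\ne 0\Rightarrow a_i\ne 0$).
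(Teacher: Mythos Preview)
Your proof is correct, and the overall architecture matches the paper's: the same rank argument on the principal submatrix $E_C$ gives $c\le m+1$, and in the equality case both proofs identify $\theta=\theta_{\min}$ and $C$ as a Delsarte clique via $c=1-k/\theta$. The divergence is in how the graph is pinned down afterwards.

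The paper extracts only $u_2=1$ from the representation (working with a single vertex at distance~$1$ from $C$), then observes $v_2(\theta)=v_2(k)$ so that $\G_2$ is disconnected; for $D>2$ it argues combinatorially that disconnectedness of $\G_2$ forces $a_2=0$, and a short shortest-path argument then shows $a_1>0$ is impossible, whence $\theta=-k$ and $\G$ is bipartite. Your route instead leans on the full completely-regular structure of the Delsarte clique (covering radius $D-1$): the simplex identity $\sum_{x\in C}u_{d(x,y)}^2=1-u_1$ at \emph{every} distance $i$ from $C$, combined with $\phi_iu_i+(c-\phi_i)u_{i+1}=0$, yields $u_iu_{i+1}=-1/m$ for all $i$, so the entire standard sequence is determined. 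The endgame is then purely algebraic via the three-term recurrence and the implication $a_1\ne 0\Rightarrow a_i\ne 0$ for $i<D$. Your approach is more uniform and gets more mileage out of the Delsarte-clique machinery; the paper's is more localized (needing only $u_2=1$) but pays for it with ad hoc combinatorics at the end.
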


\begin{proof} Consider a clique $C$ with $c$ vertices, the
idempotent $E=UU^{\top}$ and standard sequence $(u_i)_{i=0}^D$
corresponding to $\theta$. Recall from the proof of Biggs' formula
(Theorem
\ref{Biggsformula}) that $E =\sum_{i=0}^D
\nu_i A_i$, where $\nu_i = \nu_0 u_i$ for $i=0, 1,
\ldots, D$. The submatrix of $E$ indexed by the vertices of $C$
equals $\nu_0(I+u_1(J-I))$, which has rank at least $c-1$ (recall
that $u_1=\theta/k \neq 1$). Because the rank of $E$ equals $m$,
the bound $c\leq m+1$ follows. If equality holds then
$1+u_1(c-1)=0$, and hence $c=1-k/\theta$, which implies that $C$ is
a Delsarte clique and $\theta=\theta_{\min}$.

Suppose now that $\G$ is not a complete graph. We aim to show first
that $u_2=1$. Consider a representation associated to $\theta$ (see
Section \ref{sec2:evmult}); for simplicity we normalize it so that
the vectors $\hat{x}$ have length one for all $x \in V$, and the
inner products between these vectors are given by the standard
sequence. Because the rank of the above submatrix of $E$ is $m$, it
follows that the vectors $\hat{z}$, with $z
\in C$ span the row space of $U$. In particular, if we consider a vertex $x$ at distance one from $C$,
then $\hat{x}=\sum_{z \in C}\alpha_z \hat{z}$ for certain
$\alpha_z$. By taking inner products with $\hat{z}$, it follows
that $\alpha_z$ depends only on whether $x$ is adjacent to $z$ or
not. Hence, because $\sum_{z \in C} \hat{z} =0$,
see
\eqref{eq:repdelsarte}, we may assume that $\alpha_z=0$ for $z
\nsim x$. Now let $y$ be a vertex in $C$ that is not adjacent to
$x$. We then obtain that $$1-u_2=\langle
\hat{x},\hat{x} \rangle-\langle
\hat{x},\hat{y} \rangle=\sum_{z \in C}\alpha_z(\langle
 \hat{z},\hat{x} \rangle-\langle
 \hat{z},\hat{y} \rangle)=0,$$ and hence indeed $u_2=1$.

From
\eqref{standard_sequence}, it now follows that $a_1=k+\theta$, and
hence the polynomial $v_2(z)=\frac{1}{c_2}(z^2-a_1z-k)$ from
\eqref{distancepolynomials} satisfies $v_2(\theta)=v_2(k)$,
which implies that $\G_2$ is disconnected. If the diameter equals
two, then $G$ is a complete multipartite graph, so we may now
assume that $D>2$.

Because $\G_2$ is disconnected, it follows that $a_2=0$, for
otherwise $p_{22}^1>0$, which would imply that from every path
between two given vertices in $\G$ one can construct a path in
$\G_2$ between these two vertices. Suppose now that $a_1>0$. Let
$x_0
\sim x_1\sim x_2\sim x_3$ be a shortest path between two vertices $x_0$ and
$x_3$ at distance three, and let $y$ be a common neighbor of $x_1$
and $x_2$. Because $a_2$ is zero, it follows that $y$ is also
adjacent to $x_0$ and $x_3$, and so the latter are not at distance
three, which is a contradiction. Thus $a_1=0$, and because
$a_1=k+\theta$, it follows that $\theta=-k$, and hence $\G$ is
bipartite.
\end{proof}

We remark that the above proof, and hence the result, is also valid
for $2$-walk-regular graphs, just like part of
Godsil's bound in the next section (cf.~Section
\ref{sec:almostdrg}).

\subsection{Godsil's bound}

Godsil \cite{God88} obtained the following lower bound on the
multiplicity of an eigenvalue.
\begin{theorem}\label{thm:godsilbound}{\em (Godsil's bound)}
Let $\G$ be a distance-regular graph with valency $k$ and diameter
$D$, and suppose $\G$ is not a complete
multipartite graph. If $\G$ has an eigenvalue with multiplicity $m
\geq 3$, then $D \leq 3m-4$ and $k \leq
\frac{(m+2)(m-1)}{2}$.
\end{theorem}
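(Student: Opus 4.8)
The plan is to fix an eigenvalue $\theta$ of $\G$ with multiplicity $m\geq 3$ and exploit the representation $x\mapsto\hat x\in\R^m$ associated to $\theta$ (Section \ref{sec2:evmult}), together with the standard sequence $(u_i)_{i=0}^D$. First I would recall that $\langle\hat x,\hat y\rangle=\nu_0 u_{d(x,y)}$, so that after rescaling to unit vectors the pairwise inner products are exactly the $u_i$; in particular, since $\G$ is not complete multipartite, the proof of the previous proposition (or a direct argument) shows we may assume $u_2\neq 1$. The key point for the diameter bound is that the vectors $\hat x$ all lie in $\R^m$, hence span a space of dimension $m$, so there cannot be ``too many'' mutually distinguishable configurations.

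For the bound $D\leq 3m-4$, the approach I would take is the classical one: pick two vertices $x,y$ with $d(x,y)=D$ and look at a geodesic $x=x_0,x_1,\dots,x_D=y$. The images $\hat x_0,\hat x_1,\dots,\hat x_D$ are $D+1$ unit vectors in $\R^m$ with prescribed inner products (those along the geodesic are $u_0,u_1,\dots$ in an interlocking pattern governed by the three-term recurrence \eqref{standard_sequence}). The essential fact, coming from the theory of orthogonal polynomials, is that consecutive triples $\hat x_{i-1},\hat x_i,\hat x_{i+1}$ are linearly independent (this is where $u_2\neq 1$ and the nonvanishing of the $c_i$ enter), so the geodesic ``uses up'' dimensions at a controlled rate; combined with the fact that $\hat x_0$ and $\hat x_D$ are far apart, one gets a linear bound on $D$ in terms of $m$. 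Making the constant exactly $3m-4$ requires a careful bookkeeping of how fast the span grows (roughly one new dimension per three steps along the geodesic, after an initial segment), which is the delicate combinatorial-linear-algebra core of Godsil's argument; I would follow his original computation rather than reprove it from scratch.

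For the valency bound $k\leq (m+2)(m-1)/2=\binom{m+1}{2}-1$, the plan is to apply the absolute bound (Proposition \ref{absolute bound}) or rather its underlying mechanism from \eqref{colsp Hadamard}: the column space of $E\circ E$ is spanned by the vectors $\hat x\circ\hat y$ as $x,y$ range over $V$, equivalently by the symmetric tensors, so $|e(E\circ E)|$, i.e.\ the number of distinct idempotents appearing in $E\circ E$, has its sum of multiplicities bounded by $\binom{m+1}{2}$. Concretely, $E\circ E$ lies in the Bose--Mesner algebra and equals $\sum_i \nu_0^2 u_i^2 A_i$, and since the $u_i^2$ (for $i=0,\dots,D$) together with the constraint from $\G_2$ being connected force $E\circ E$ to involve $E_0$, $E$ itself, and enough other idempotents, a rank count gives $k+1=\operatorname{tr}(A_1\circ\mathbf 1)$-type bound; more precisely one shows the valency $k=k_1$ is bounded because $A_1$ appears in $E\circ E$ with nonzero coefficient $u_1^2\neq 0$ and the rank of $E\circ E$ is at most $\binom{m+1}{2}$ while it is at least $1+k_1\cdot(\text{something})$ unless degeneracies occur — the complete multipartite case being exactly the degenerate one that is excluded. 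I would extract the constant $(m+2)(m-1)/2$ by looking at the primary $\TT$-module / the neighborhood of a single vertex: the $k$ vectors $\{\hat z:z\sim x\}$ together with $\hat x$ give $k+1$ unit vectors whose Gram matrix has rank $m$, and tensoring/symmetrizing within $\R^m$ bounds $k$.

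The main obstacle I expect is pinning down the exact constants $3m-4$ and $(m+2)(m-1)/2$ rather than merely getting \emph{some} linear/quadratic bound: the linear-independence-of-triples argument for the diameter is clean in spirit but the precise accounting of the initial segment of the geodesic (and the role of the exceptional eigenvalue $\theta=\theta_{\min}$, where cliques behave specially) needs care, and for the valency bound one must correctly identify which Hadamard powers $\hat x^{\circ 2}$ are genuinely independent so as not to lose a factor. I would therefore structure the writeup as: (1) set up the spherical representation and reduce to $u_2\neq 1$; (2) prove the geodesic linear-independence lemma and derive $D\leq 3m-4$; (3) do the Hadamard-square rank count around a vertex to derive $k\leq(m+2)(m-1)/2$; (4) handle the complete multipartite exclusion explicitly. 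I would lean on Godsil \cite{God88} for the sharp constants and present the argument at the level of ``key lemma plus consequence'' rather than grinding every inequality.
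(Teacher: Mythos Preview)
The paper does not give a proof of this theorem: it simply states Godsil's result with the citation \cite{God88} and moves on. So there is no proof in the paper against which your proposal can be compared.

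That said, a few remarks on your sketch relative to Godsil's actual argument. For the valency bound, the cleanest route is not the absolute bound on $E\circ E$ but the observation that the $k$ vectors $\hat z$ with $z\sim x$ all have the same inner product $u_1$ with $\hat x$, hence after projecting off $\hat x$ they lie on a sphere in $\R^{m-1}$; since $d(y,z)\in\{1,2\}$ for $y,z\in\G(x)$, these projected vectors form a two-distance set, and the classical bound on spherical two-distance sets in $\R^{m-1}$ gives $k\le\binom{m+1}{2}-1=(m+2)(m-1)/2$. Your ``tensoring/symmetrizing'' paragraph is groping toward this, but the intermediate discussion of $E\circ E$, $A_1$ appearing with coefficient $u_1^2$, and rank counts is not the mechanism that produces the constant, and as written does not yield a bound on $k$. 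For the diameter bound, your geodesic idea is in the right spirit, but Godsil's argument is not literally ``one new dimension per three steps''; it proceeds via a sequence of sign changes in the standard sequences and a counting of independent vectors among the $\hat x_i$, and the exclusion of complete multipartite graphs enters through $u_2\ne 1$ in a specific place. Since you already say you would defer to \cite{God88} for the constants, your write-up would in the end amount to the same citation the paper makes.
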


\noindent Godsil's diameter bound was improved by Hiraki and Koolen \cite{HK02}.

\begin{prop} {\em \cite[Thm.~1.1,~1.2]{HK02}}
Let $\G$ be a distance-regular graph with diameter $D$. If $\G$ has
an eigenvalue with multiplicity $m \geq 3$, then $D \leq m+6$, unless $h=1$ and $c_2 =1$, in which case $D < m+2 + \log_5 m$.
\end{prop}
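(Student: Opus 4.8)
The plan is to combine Terwilliger's tree bound (Section \ref{sec:tree_bound}) with the structural analysis of how intersection numbers can stabilize. First I would fix an eigenvalue $\theta \ne \pm k$ with multiplicity $m \ge 3$, and recall from Godsil's bound (Theorem \ref{thm:godsilbound}) that $k$ is already bounded, so $\G$ has only a bounded number of possible parameter triples $(c_i,a_i,b_i)$; write $g = |\{(c_i,a_i,b_i) : 1 \le i \le D-1\}| \le 2k-3$. The strategy is to show that if the diameter is large compared to $m$, then some parameter triple repeats many consecutive times, and then to exploit that repetition to build an isometric tree (or block graph) with more than $m$ leaves, contradicting the tree bound. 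The key technical input is that a long run of equal triples $(c_i,a_i,b_i) = \dots = (c_{i+\ell},a_{i+\ell},b_{i+\ell})$ forces the existence of a long isometric path, and if moreover $a_1 \ne 0$ or $c_2 \ge 2$, each vertex of that path can be ``decorated'' to produce extra leaves, pushing the leaf count above $m$.

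The main steps, in order: (1) Using Proposition \ref{kooprop} (the Koolen inequalities on $c_i$ and $b_i$) together with Proposition \ref{BHKprop} ($\eta_c \le \xi_c - 1$), bound the number of \emph{distinct} values the sequences $(c_i)$ and $(b_i)$ can take in terms of $k$, and hence (via Godsil's $k \le (m+2)(m-1)/2$) in terms of $m$. (2) Deduce that if $D$ is much larger than this bound, there is an index range on which $(c_i,a_i,b_i)$ is constant of length $\ell$ that grows with $D$. (3) On such a range, locate an isometric subtree: in the generic case one gets an isometric path of length roughly $\ell$ with a branch at each end (coming from $b_{i-1} > b_i$ type transitions at the boundary of the run, cf.~the discussion of strongly closed subgraphs and $m$-boundedness), and whenever $c_2 \ge 2$ one can attach further pendant vertices along the path because two vertices at distance $2$ have $\ge 2$ common neighbors; this yields a block graph to which the Hiraki--Koolen generalization of the tree bound (cited as \cite[Prop.~3.1]{HK02}, and in the form of Proposition of Section \ref{sec:tree_bound}) applies, giving multiplicity $> m$ unless the run is short. (4) Track the bookkeeping carefully to arrive at $D \le m+6$. (5) Handle the exceptional regime $h = 1$, $c_2 = 1$ separately: here the graph is locally a union of cliques with $c_2 = 1$, the relevant subgraphs are long paths rather than richly branched trees, and one uses the sharper growth estimate for $\eta_1$ from Hiraki \cite{Hi01} (or the $c_4 \ge 2$ results of Chen--Hiraki--Koolen) to get the weaker logarithmic bound $D < m + 2 + \log_5 m$.

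The hard part will be step (3): producing, from a long run of constant intersection parameters, an isometric block subgraph whose number of leaves \emph{provably} exceeds $m$, while keeping the subgraph isometric (so that the tree bound actually applies). The subtlety is that merely having $c_i = c_{i+1} = \dots$ does not by itself create branching in the middle of the corresponding path — branching must be imported from the $c_2 \ge 2$ hypothesis or from the transitions $b_{i-1} > b_i$, $c_{i+1} > c_i$ at the ends of the run, and one must verify geodesic-closedness of the resulting configuration. A secondary obstacle is making the counting in step (4) tight enough to land exactly on $m+6$ rather than some larger linear function of $m$; this presumably requires the refined versions of Proposition \ref{kooprop} due to Wajima and Hiraki alluded to in the text, and a case analysis on small values of $h$ and $c_2$.
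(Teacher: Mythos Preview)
The survey does not reproduce the proof from \cite{HK02}; it only states the result. Comparing your proposal to the argument in \cite{HK02} (whose main tool, the block-graph extension of Terwilliger's tree bound, is summarized in Section~\ref{sec:tree_bound}): you correctly single out that tool, but your route to applying it has a genuine quantitative gap.

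The problem is in steps (1)--(2). Godsil's bound $k \le (m+2)(m-1)/2$ gives $g \le 2k-3 = O(m^2)$, so pigeonhole produces a constant-parameter run of length $\ell \ge (D-1)/g \approx D/m^2$. In step (3) you then build an isometric block graph \emph{inside that run} and attach pendants; but the number of leaves you obtain is at most proportional to $\ell$, so the tree bound yields only $m \gtrsim D/m^2$, i.e., $D = O(m^3)$ --- weaker than Godsil's $3m-4$ and nowhere near $m+6$. No amount of bookkeeping in step (4) will close that gap. The detour through constant-parameter runs is in fact unnecessary: the pendant-attachment idea you sketch in step (3) works along \emph{any} geodesic $x_0,\dots,x_D$, not only inside a constant run, and that is what \cite{HK02} exploits. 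One takes the full geodesic and, at essentially every index, attaches an extra vertex (using $a_1>0$, or $c_2\ge 2$, or $a_2\ne a_1$, as the case may be), verifying that the resulting block graph is isometric; this already has roughly $D$ leaves, so the block-graph bound gives $m \ge D - O(1)$ directly, and a case analysis on $a_1,c_2,h$ pins the constant down to $6$. The exception $h=1$, $c_2=1$ is precisely where no single-step attachment is available at a generic $x_i$; there one falls back on a tree branching only near one end of the geodesic, whose leaf count grows geometrically rather than linearly, whence the $\log_5 m$ term. Drop steps (1)--(2) and run (3) along the full geodesic.
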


\noindent Note that the Doubled Odd graph with valency $k$ has diameter $2k-1$ and an
eigenvalue $k-1$ with multiplicity $2k-2$, so this result is close to the
best possible. Yet another lower bound is obtained by Juri\v si\' c, Terwilliger, and \v{Z}itnik \cite{JTZ2010EJC} (cf.~Section \ref{sec: Hadamard products}):
\begin{prop}\label{light tail bound}
Let $\G$ be a distance-regular graph with valency $k$. If $\theta \neq \pm k$ is an eigenvalue of $\G$
with multiplicity $m$, then
\begin{equation*}
	m \geq k - \frac{a_1 k (\theta +1)^2}{(k+ \theta)^2 + a_1 (\theta^2 - k)}.
\end{equation*}
\end{prop}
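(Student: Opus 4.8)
The plan is to follow the method indicated at the end of Section \ref{sec: Hadamard products}, specializing the vectors $\mathbf{f}_0,\mathbf{f}_1$ there to the case $E=F$ and reading off a lower bound on $m=m(\theta)$ from the positive semidefiniteness of a $2\times2$ Gram matrix. Let $E$ be the primitive idempotent associated with $\theta$, let $(u_i)_{i=0}^D$ be the standard sequence with respect to $\theta$ (so $u_1=\theta/k$ and \eqref{standard_sequence} holds), and recall from the proof of Theorem \ref{Biggsformula} that $E=\nu_0\sum_{i=0}^D u_iA_i$ with $\nu_0=m/v$, so that $E_{xy}=\nu_0u_{d(x,y)}$. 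Define numbers $q^0,q^1,\dots,q^D$ by $E\circ E=\tfrac1v\sum_{h=0}^D q^hE_h$; these form a row of the Krein array, so $q^h\geq 0$ by Proposition \ref{Krein condisions}, and $q^0=m$. For $s=0,1$ let $\mathbf{f}_s\in\R^{D+1}$ have $h$-th coordinate $\theta_h^s\sqrt{q^hm_h}\,/\,m$, and let $\mathbf{f}_s'$ be obtained from $\mathbf{f}_s$ by deleting the $h=0$ coordinate. The key point is that the Gram matrix of $\mathbf{f}_0',\mathbf{f}_1'$ is positive semidefinite, hence its $2\times2$ determinant is nonnegative.

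Next I would evaluate the inner products. Using $A^nE_h=\theta_h^nE_h$ gives $\langle\mathbf{f}_s,\mathbf{f}_t\rangle=\tfrac{1}{m^2}\sum_h\theta_h^{s+t}q^hm_h=\tfrac{v}{m^2}\tr\big((E\circ E)A^{s+t}\big)$, and since $(E\circ E)_{xy}=E_{xy}^2=\nu_0^2u_{d(x,y)}^2$, this trace equals $\nu_0^2\sum_{x,y}u_{d(x,y)}^2(A^{s+t})_{xy}$. Counting walks of length $0$, $1$, $2$ and using $k_2c_2=kb_1$, one obtains
\begin{equation*}
	\langle\mathbf{f}_0,\mathbf{f}_0\rangle=1,\qquad\langle\mathbf{f}_0,\mathbf{f}_1\rangle=\frac{\theta^2}{k},\qquad\langle\mathbf{f}_1,\mathbf{f}_1\rangle=k\,(1+a_1u_1^2+b_1u_2^2).
\end{equation*}
The $h=0$ coordinate of $\mathbf{f}_s$ is $k^s/\sqrt m$ (because $q^0=m$, $m_0=1$, $\theta_0=k$), so deleting it subtracts $k^{s+t}/m$ from $\langle\mathbf{f}_s,\mathbf{f}_t\rangle$. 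Writing $\alpha=1+a_1u_1^2+b_1u_2^2$, the determinant inequality reads $\big(1-\tfrac1m\big)\big(k\alpha-\tfrac{k^2}{m}\big)\geq\big(\tfrac{\theta^2}{k}-\tfrac{k}{m}\big)^2$, which upon expanding and multiplying through by $m$ collapses to
\begin{equation*}
	\Big(k\alpha-\frac{\theta^4}{k^2}\Big)m\ \geq\ k^2+k\alpha-2\theta^2 .
\end{equation*}

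It remains to make the two coefficients explicit and to pin down the sign of $k\alpha-\theta^4/k^2$. From \eqref{standard_sequence} with $i=1$ one has $b_1u_2=(\theta^2-a_1\theta-k)/k$, so using $b_1=k-a_1-1$ a direct (if lengthy) computation gives
\begin{equation*}
	k\alpha-\frac{\theta^4}{k^2}=\frac{(k-\theta)^2\big[(k+\theta)^2+a_1(\theta^2-k)\big]}{k^2b_1},\qquad k^2+k\alpha-2\theta^2=\frac{(k-\theta)^2\big[(k+\theta)^2-a_1(k+2\theta+1)\big]}{kb_1}.
\end{equation*}
The bracket $(k+\theta)^2+a_1(\theta^2-k)$ equals $(1+a_1)\theta^2+2k\theta+k(k-a_1)$, a quadratic in $\theta$ with positive leading coefficient and discriminant $-4a_1b_1k\leq 0$, hence it is positive; therefore $k\alpha-\theta^4/k^2>0$, the displayed inequality may be divided by it, and the common factor $(k-\theta)^2$ cancels, yielding $m\geq k\cdot\dfrac{(k+\theta)^2-a_1(k+2\theta+1)}{(k+\theta)^2+a_1(\theta^2-k)}$, which is precisely $k-\dfrac{a_1k(\theta+1)^2}{(k+\theta)^2+a_1(\theta^2-k)}$.

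The conceptual input is minimal --- only positive semidefiniteness of a $2\times2$ Gram matrix --- so the main obstacle is bookkeeping: carrying out the polynomial identities of the last paragraph cleanly (the cancellation of $(k-\theta)^2$ is what makes the final formula compact) and keeping the direction of the inequality under control, which is exactly why the sign check on the denominator must be done. As a sanity check, when $a_1=0$ the bound reduces to $m\geq k$, in agreement with Terwilliger's tree bound (Section \ref{sec:tree_bound}); and equality throughout forces $\mathbf{f}_0',\mathbf{f}_1'$ to be linearly dependent, i.e.\ $E$ to be a light tail, which is how this estimate connects to the surrounding results of Section \ref{sec: Hadamard products}.
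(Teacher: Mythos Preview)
Your proof is correct and follows precisely the approach the paper indicates in Section~\ref{sec: Hadamard products}: computing the $2\times 2$ Gram matrix of the truncated vectors $\mathbf{f}_0',\mathbf{f}_1'$ (with $E=F$) and extracting the multiplicity bound from the nonnegativity of its determinant, with equality corresponding to $E$ being a light tail. The computations of the inner products, the factorizations exhibiting the common factor $(k-\theta)^2$, and the sign check on the denominator via the discriminant $-4a_1b_1k$ are all accurate (one minor quibble: the $q^h$ are Krein \emph{parameters}, not a ``row of the Krein array'').
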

Koolen, Kim, and Park \cite{KoKiPapre} refined the above valency bound of Godsil. By using the theory developed by
Juri\v si\' c et al.~\cite{JTZ2010EJC}, they were able to show that for $k \geq 3$ and $m \geq 3$, the only possible
distance-regular graphs with diameter at least 3 and $k =\frac{(m+2)(m-1)}{2}$ are Taylor graphs with intersection
array $\{ (2\alpha +1)^2(2\alpha^2 + 2 \alpha -1), 2\alpha^3(2\alpha +3), 1; 1, 2\alpha^3(2\alpha +3), (2\alpha
+1)^2(2\alpha^2 + 2 \alpha -1)\}$, with $m = 4\alpha^2 + 4 \alpha -1$, where $\alpha$ is an integer not equal to $0$
and $-1$ or $\alpha = \frac{-1\pm \sqrt{5}}{2}$ ($m=3$).

Terwilliger (cf.~\cite[Thm.~4.4.4]{bcn}) showed that if a distance-regular graph with valency $k$ has an eigenvalue
$\theta \neq k$ with multiplicity $m <k$, then $\theta$ is either the second largest or the smallest eigenvalue.
Moreover, in this case $-1 - \frac{b_1}{\theta+1}$ is an algebraic integer as it is an eigenvalue of a local graph.
Also, if $m \leq (k-1)/2$, then $\theta$ is an integer such that $\theta +1$ divides $b_1$. Terwilliger's result was
slightly improved by Godsil and Hensel \cite{GodsilHensel92} for antipodal distance-regular graphs, and by Godsil and
Koolen \cite{GoKo95} for the case that $a_D= 0$. As a consequence, Godsil and Koolen showed that a distance-regular
graph with intersection array $\{\mu(2\mu+1), (\mu-1)(2\mu +1), \mu^2, \mu; 1, \mu, \mu(\mu-1), \mu(2\mu +1) \}$ with
$\mu \geq 2$ does not exist.

\subsection{The distance-regular graphs with a small multiplicity}
Let $\G$ be a distance-regular graph with valency $k$. The eigenvalues $k$, and $-k$ in case $\G$ is bipartite,
are the only eigenvalues with multiplicity one. Each eigenvalue of a polygon, besides $\pm k$, has multiplicity two;
and the polygons are the only distance-regular graphs with an eigenvalue having multiplicity two. The five Platonic
solids, i.e., the icosahedron, dodecahedron, cube, octahedron, and tetrahedron are the only distance-regular graphs
with an eigenvalue having multiplicity three. Zhu \cite{Zhu93} (see also \cite{Zhuthesis}) determined the
distance-regular graphs with an eigenvalue having multiplicity four, whereas Martin and Zhu \cite{MarZhupre} (see also
\cite[Ch.~7]{Koothesis}) determined those with an eigenvalue having multiplicity five, six, or seven. Koolen and Martin
\cite{KooMarpre} (see also \cite[Ch.~7]{Koothesis}) determined the distance-regular graphs with an eigenvalue having
multiplicity eight.

\subsection{Integrality of multiplicities}\label{sec:integralmultiplicities}
Biggs' formula (Theorem \ref{Biggsformula}) for the multiplicities of the eigenvalues
and the requirement that these multiplicities are positive integers
is a crucial part of Biggs' definition \cite[Def.~21.5]{biggs} of feasible
intersection arrays of distance-regular graphs.

Godsil and McKay \cite{gmk} generalized Biggs' formula to walk-regular
graphs, thus obtaining feasibility conditions for such graphs. Recall that a
graph is called walk-regular if the number of closed walks of given length from
a vertex to itself is independent of the chosen vertex but depends only on the
length, $\ell$ say; in other words, every power $A^{\ell}$ of the adjacency
matrix has constant diagonal.

Chv\'{a}tal \cite[Thm.~3]{Chvatal} showed that for strongly regular graphs,
Biggs' feasibility condition of integer multiplicities implies the condition
that the number of closed walks of length $p$ is divisible by $p$ for every
prime $p$. The latter condition is essentially a condition on the spectrum
because the number of closed walks of length $p$ equals $\tr A^p =\sum_{i=0}^d
m_i \theta_i^p.$

Here we generalize Chv\'{a}tal's result as follows.

\begin{prop} Let $\{\theta_0^{m_0},\theta_1^{m_1},\dots,\theta_d^{m_d}\}$ be the
multiset of roots of a monic polynomial with coefficients in ${\mathbb Z}$. If
$\sum_{i=0}^d m_i\theta_i=0$, then every prime $p$ divides $\sum_{i=0}^d m_i\theta_i^p$.
\end{prop}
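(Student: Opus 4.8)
The plan is to pass to characteristic $p$ and exploit the additivity of the Frobenius map. Write $f(x)=\prod_{i=0}^d(x-\theta_i)^{m_i}\in\mathbb{Z}[x]$, a monic polynomial of degree $n=\sum_i m_i$, and for $k\ge 0$ set $p_k=\sum_{i=0}^d m_i\theta_i^k$, the $k$-th power sum of the roots counted with multiplicity. The first thing to record is that $p_k\in\mathbb{Z}$ for every $k$: the elementary symmetric functions $e_1,\dots,e_n$ of the roots are (up to sign) the coefficients of $f$, hence integers, and Newton's identities express each $p_k$ as a $\mathbb{Z}$-polynomial in $e_1,\dots,e_n$. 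In particular the quantity $\sum_i m_i\theta_i^p=p_p$ appearing in the statement is a well-defined integer, so the assertion $p\mid p_p$ makes sense.

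Next I would reduce $f$ modulo $p$: let $\bar f\in\mathbb{F}_p[x]$ be its reduction and factor $\bar f(x)=\prod_{j=1}^n(x-\alpha_j)$ over an algebraic closure $\overline{\mathbb{F}_p}$, the $\alpha_j$ taken with multiplicity. Since Newton's identities hold over an arbitrary commutative ring and the elementary symmetric functions of the $\alpha_j$ are exactly the reductions $\bar e_1,\dots,\bar e_n$ of the $e_i$, the power sums $\bar p_k:=\sum_{j=1}^n\alpha_j^k\in\mathbb{F}_p$ coincide with the reductions modulo $p$ of the integers $p_k$ (equivalently, $\bar p_k$ and $p_k\bmod p$ satisfy the same Newton recursion with the same coefficients, hence agree).

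The key step is then the identity $\bar p_p=\bar p_1^{\,p}$ in $\overline{\mathbb{F}_p}$. Indeed, the Frobenius map $x\mapsto x^p$ is a ring endomorphism of $\overline{\mathbb{F}_p}$, hence additive, so $\left(\sum_{j=1}^n\alpha_j\right)^{p}=\sum_{j=1}^n\alpha_j^{p}$, i.e. $\bar p_1^{\,p}=\bar p_p$. Since $\bar p_1$ lies in $\mathbb{F}_p$, Fermat's little theorem gives $\bar p_1^{\,p}=\bar p_1$, so $\bar p_p=\bar p_1$; that is, $p\mid p_p-p_1$. Under the hypothesis $p_1=\sum_i m_i\theta_i=0$ this yields $p\mid p_p$, which is the claim.

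I do not expect a serious obstacle. The only point needing a little care is the bookkeeping that the power sums $\bar p_k$ of the reduced roots are genuinely the reductions of the integer power sums $p_k$, which is handled by the Newton-identity remark above; note also that working with the $\alpha_j$ listed with multiplicity (rather than with the $\theta_i$ and the $m_i$ separately) makes the trivial congruence $m_i^p\equiv m_i\pmod p$ unnecessary. Alternatively one could phrase the whole argument directly with the $\theta_i$ viewed in $\overline{\mathbb{F}_p}$ via a prime of $\overline{\mathbb{Z}}$ above $p$, but reducing $f$ and factoring over $\overline{\mathbb{F}_p}$ keeps everything elementary.
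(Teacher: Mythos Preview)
Your proof is correct, and the core idea (additivity of $p$-th powers modulo $p$, followed by Fermat) is the same as the paper's. The packaging differs: the paper stays in characteristic zero and groups the roots into Galois conjugacy classes $\Theta$, noting that $\sum_{\theta\in\Theta}\theta\in\mathbb{Z}$ and that the multinomial theorem gives $\sum_{\theta\in\Theta}\theta^p\equiv\bigl(\sum_{\theta\in\Theta}\theta\bigr)^p\pmod p$ as integers; summing over the classes (with their common multiplicities) finishes. You instead reduce $f$ modulo $p$ and invoke Frobenius in $\overline{\mathbb{F}_p}$ directly on the full list of roots, using Newton's identities as the bridge between characteristic $0$ and characteristic $p$. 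Your route is arguably cleaner in that it treats all roots uniformly and sidesteps the observation that conjugate roots share a multiplicity; the paper's route avoids the Newton-identity bookkeeping but needs that grouping step.
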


\begin{proof}
By grouping algebraic conjugates, say $\Theta_i$ is the set of algebraic
conjugates of $\theta_i$, and observing that $\sum_{\theta \in \Theta_i}
\theta^p \equiv (\sum_{\theta \in \Theta_i} \theta)^p \equiv (\sum_{\theta \in
\Theta_i} \theta) \mod p$ (the latter equality is by Fermat), it follows that
$\sum_{i=0}^d m_i \theta_i^p \equiv \sum_{i=0}^d m_i \theta_i \equiv 0 \mod p$.
\end{proof}

For a distance-regular graph, both the eigenvalues with multiplicities and the numbers of walks of length $p$
follow from its intersection array. When one wants to generate putative intersection arrays for
distance-regular graphs, testing the integrality of multiplicities is an important but computationally
expensive part. The proposition indicates that testing integrality of the multiplicities is stronger than
testing that the number of closed walks of length $p$ is divisible by $p$. Note that given the intersection
array it is relatively easy to compute the number of closed walks of given length recursively by using the
distance polynomials $v_i$ in \eqref{distancepolynomials} and their sum, the Hoffman polynomial (hence it is
not necessary to first compute the eigenvalues and multiplicities); cf.~\cite[Thm.~2]{Chvatal}. Brouwer,
Cohen, and Neumaier \cite[p.~134]{bcn} give the intersection array $\{26,25,5,1;1,5,25,26\}$ for a
distance-regular graph for which some of the eigenvalues have irrational multiplicities. We found that the
number of closed walks of length $7$ is not divisible by $7$ in this example. An example of an intersection
array that survives the tests on the number of walks is $\{18,15;1,2\}$; this corresponds to a strongly
regular graph with parameters $(v,k,\lambda,\mu)=(154,18,2,2)$. Indeed, the number of closed walks of length
$\ell$ equals $18(18^{\ell-1}-4^{\ell-1})$ for odd $\ell$, and $18^2+153\cdot4^2$ for $\ell=2$. However, the
eigenvalues $4$ and $-4$ have multiplicities $74.25$ and $78.75$, respectively.


\section{Applications}\label{sec:applications}



\subsection{Combinatorial optimization}\label{sec:applcombo}

One of the formulations of Lov\'{a}sz's $\vartheta$-\emph{function bound} \cite{Lovasz1979IEEE} on the independence number and the Shannon capacity of a graph is as a semidefinite program (SDP).
McEliece, Rodemich, and Rumsey \cite{MRR1978JCISS} and Schrijver \cite{Schrijver1979IEEE} observed that if the adjacency matrix of the graph belongs to the Bose-Mesner algebra $\AL$ of an association scheme then we may solve the SDP as an ordinary linear program (LP) and the resulting bound `essentially' coincides with Delsarte's \emph{linear programming bound} \cite{del} on which his theory on codes and designs is based.
In fact, the same idea works for any SDP whenever the matrices defining the problem belong to $\AL$;
Goemans and Rendl \cite{GR1999C} applied this to the \emph{max-cut problem},
and Vallentin \cite{Va08} to finding the least \emph{distortion embeddings} of distance-regular graphs.
Delsarte's theory has been most successful when the association scheme is metric and/or cometric (but see also \cite[\S 8]{martintanaka}).
We especially recommend the survey by Delsarte and Levenshtein \cite{DL1998IEEE} on this topic.
Besides codes and designs, Delsarte's LP was also used to prove the \emph{Erd\H{o}s-Ko-Rado theorem} for several families of $Q$-polynomial distance-regular graphs; cf.~Section \ref{sec:EKR}.

In 2005, Schrijver \cite{Schrijver2005IEEE} applied a variant of an extension of the $\vartheta$-function bound based on \emph{matrix cuts} \cite{LS1991SIAM} to get a new upper bound on the sizes of binary codes.
In this case, the matrices defining the SDP belong to the Terwilliger algebra $\TT$ of the hypercube $H(D,2)$.
This method has been applied to codes in Johnson graphs
by Schrijver \cite{Schrijver2005IEEE}, to codes in (nonbinary) Hamming graphs
by Gijswijt, Schrijver, and Tanaka \cite{Gijswijt2005PhD,GST2006JCTA}, and also to the \emph{kissing number} problem in the real sphere $S^{n-1}\subset\mathbb{R}^n$ by Bachoc and Vallentin \cite{BV2008JAMS}; see \cite{BGSV2012B} for more results on this topic.

Generalizing De Klerk and Sotirov's idea of exploiting group symmetry \cite{DKSot10}, De Klerk, De Oliveira
Filho, and Pasechnik \cite{KOP2012B} recently proposed an SDP relaxation\footnote{It seems that this possible
generalization was indicated implicitly in \cite[p.~232]{DKSot10}. It should be remarked that the Bose-Mesner
algebra may also be replaced by a coherent algebra.} of the \emph{quadratic assignment problem} (without
linear term) for which one of the two defining matrices belongs to the Bose-Mesner algebra $\AL$. For
example, the polygons and the complete multipartite graphs correspond to the \emph{traveling salesman
problem} (cf.~\cite{KPS2008SIAM}) and the \emph{maximum $k$-partition problem} (cf.~\cite{Dobre}),
respectively. De Klerk and Sotirov \cite{KS2011MPA} showed that the relaxation can be strengthened further,
provided the association scheme is vertex transitive. Their computational results involve instances related
to the hypercube $H(D,2)$, and we again encounter the Terwilliger algebra $\TT$; see also
\cite{Sotirov2012B}. Van Dam and Sotirov \cite{DS12, DS13} exploited symmetry to obtain bounds for the
bandwidth of among others Hamming and Johnson graphs, and for the graph partition problem for several other
distance-regular graphs.

We finally mention Lee \cite{Leespectral, Leedimension}, who used the symmetry
of the Johnson graph (and other association schemes) to derive results on the
dimension of certain polytopes that are relevant to cutting plane algorithms
for combinatorial optimization problems such as the bisection problem, the
traveling salesman problem, and the perfect matching problem.

\subsection{Random walks, diffusion models, and quantum walks}\label{sec:randomwalks}

Given a graph $\G$, a {\it random walk} on $\G$ is the walk of a particle that
travels at random upon the vertices of a graph. At each stage the particle
moves to a vertex that is adjacent to its current location, and the
probabilities that it moves to each of its neighbors are equal. The particle
has no memory, so it is as likely to return to a vertex it has just been to as
it is to move to a new vertex.

\subsubsection{Diffusion models and stock portfolios}\label{sec:diffusionmodels}

Random walks on certain distance-regular graphs correspond to important models
for the diffusion of particles. The Ehrenfests' urn model that was proposed to
explain the second law of thermodynamics corresponds to random walks on the
hypercube $H(D,2)$. The classical Bernoulli-Laplace diffusion model corresponds
to random walks on the Johnson graph $J(2D,D)$. Diaconis and Shahshahani
\cite{DiaSha87} obtained results on the rate of convergence (i.e., total
variation distance) to the stationary distribution in the latter model, using
the algebraic properties of the Johnson graph. They find a sharp cut-off point
at about $\frac14 D \log D$ steps, in the sense that a few steps earlier the
variation distance is essentially maximal, while a few steps later it tends to
0 exponentially fast. Belsley \cite{Bels98} obtained similar results for the
(non-bipartite) classical families of examples of Section
\ref{sec:clasfamilies}, and Hora \cite{Hora00} obtained results for the halved
cube and the quadratic forms graph (among others). Diaconis and Saloff-Coste
\cite{DiSaCo} studied separation cut-offs for infinite families of Markov
chains and applied their results to families of distance-regular graphs with
unbounded diameter.

Distance-regular graphs have also been used as examples for interacting particle systems such as the so-called
antivoter model; see \cite[Ch.~14]{Aldousbook}.

An application in finance is given by Billio, Cal\`{e}s, and Gu\'{e}gan
\cite{Billio}, who used random walks on the Johnson graph to study the momentum
strategy for stock portfolios.

\subsubsection{Chip-firing and the abelian sandpile model}

Chip-firing on a graph is a solitaire game that is related to random walks. It
is played with a pile of chips at each vertex of the graph. At each step of the
game, a vertex is {\em fired}, in the sense that a chip moves to each of its
adjacent vertices (if the vertex has sufficient chips). Chip-firing is related
to the abelian sandpile model for self-organized criticality from statistical
physics \cite{LevineSandpile}, to avalanche models, and the dollar game.
Related to these games and models is the critical group (sandpile group, Picard
group) of a graph. Biggs \cite{Biggschipfiring} observed that the critical
group of a distance-regular graph is in general not determined by the
intersection array. He also introduced a subgroup of layered configurations
that {\em is} determined by the intersection array. The Shrikhande graph and
Hamming graph $H(2,4)$ were discussed to illustrate these issues.

\subsubsection{Biggs' conjecture on resistance and potential}\label{sec:biggsconjecture}

Let $x$ be a vertex of $\G$, and suppose we start a random walk at $x$. For
every other vertex $y$, we let the {\it hitting time} $H_{xy}$ be the
expected number of steps needed to get to $y$. The {\it cover time} $C_x(\G)$
is the expected number of steps that a random walk started at $x$ requires
before it has visited every vertex of $\G$.

The calculations required to determine these notions for arbitrary graphs are
often quite intensive, even for moderately-sized graphs. It is therefore
desirable to study graphs possessing symmetry properties that make calculations
feasible. Van Slijpe \cite{Slijpe84}, Devroye and Sbihi \cite{DevSb90}, and
Biggs \cite{biggsp} all (independently) derived that in distance-regular
graphs, the hitting times for vertices $x$ and $y$ at distance $j$ are given in
terms of the intersection numbers and valencies by
$$H_{xy}=k \sum_{i=1}^j \frac{1}{k_ic_i} \sum_{h=i}^D k_h.$$
Biggs \cite{biggsp} did not have this result explicitly, as he stated it in
terms of potentials and electric resistance: let us consider a graph to be an
electric circuit with edges corresponding to resistors of unit resistance. The
effective resistance between two vertices can
--- in theory --- be calculated using the familiar rules for resistances in
series and in parallel. This resistance measures how easily electricity may
flow between the vertices, and can likewise be shown (see \cite{doysne} or
\cite{biggs97}) to measure how easily a random walk will move from one vertex
to another. Naturally, the higher the resistance between $x$ and $y$, the more
difficult it is for a random walk to pass from $x$ to $y$, and conversely. For
distance-regular graphs it is possible to give an explicit value for the
resistance between two vertices, as we shall now describe.

Let $\G$ be a distance-regular graph with valency at least 3. Using the
intersection numbers, define the {\it Biggs potentials} $\phi_i$ recursively by
$\phi_0=v-1$ and $\phi_i = (c_i\phi_{i-1}-k)/b_i$, for $i=0,1,\ldots,D-1.$

The resistance $\rho_j$ between vertices at distance $j$ is then
obtained by $\rho_j=2 \sum_{i=0}^{j-1}\phi_i/vk$; see \cite{biggsp}
(and the hitting time is a factor $vk/2$ larger). This shows that
understanding the behavior of the Biggs potentials is crucial for
the study of electric resistance in distance-regular graphs. Biggs
\cite{biggsp} conjectured that $\phi_1+ \phi_2+
\cdots + \phi_{D-1} \leq \frac{94}{101}\phi_0$ and thus $\max_j \rho_j = \rho_D
\leq (1 + \frac{94}{101}) \rho_1$ with equality only in the case of the
Biggs-Smith graph. This conjecture was later proved by Markowsky and Koolen
\cite{MK10}. It implies that the resistance between two vertices is always at
most $4/k$, and turns out to be a characteristic feature of the Biggs
potentials, namely that the sum of the later $\phi_i$ is dominated by the
earlier ones. In particular, Koolen, Markowsky, and Park \cite{phi1} showed\footnote{It was even conjectured that
the factor $3j+3$ can be replaced by a universal constant, but this was disproved in \cite{kmcollection}.}
that $\phi_{j+1} + \phi_{j+2}+ \cdots + \phi_{D-1} < (3j+3)\phi_j$ for each
$j = 0,1,\ldots,D-2$, and that $\phi_{2} + \phi_3 +
\cdots + \phi_{D-1} \leq \phi_1$ with equality only in the case of the
dodecahedron. The latter result can be used to prove Biggs' conjecture, and is
a much stronger statement. It also implies that if $D\geq 3$, then
$\rho_D/\rho_1 \leq 1+6/k$, which shows that for large $k$, all vertices become
nearly equidistant when measured with respect to the resistance metric.

By applying techniques from \cite{comcov}, the above implies the following for
the hitting times and cover times in distance-regular graphs: For all vertices
$x,y$, we have that $H_{xy} \leq (1+\min(\frac{6}{k},\frac{94}{101}))(v-1)$ and
$C_x(\G) \leq (4+o(1))(v-1)\ln{v}.$ In fact, Feige \cite{fue} showed that for
arbitrary graphs, we have $C_x(\G) \geq (1+o(1))v\ln{v}$, so that the upper
bound for distance-regular graphs is the best possible for large $v$, up to the
multiplicative constant.

We note that the resistances between vertices at distance at
most 3 in distance-regular graphs on at most 70 vertices have been calculated
explicitly by Jafarizadeh, Sufiani, and Jafarizadeh \cite{JaSuJa07}, whereas
those in some other families of distance-regular graphs, such as Hadamard
graphs, were calculated in \cite{JaSuJa09}.

\subsubsection{Quantum walks}

 The above classical random walks can be considered as
 Markov chains on the set of vertices of a graph, with the stochastic transition matrix $\frac1k A$. These
 have applications as described, but also in classical randomized algorithms.
 Likewise, in quantum information theory and quantum physics, there are applications of {\em quantum walks} in quantum
 computing. In the case of quantum walks, the state space is the set of
 directed edges (where each edge in the graph is replaced by two oppositely directed
 edges), and the transition matrix $U$ is unitary, that is, $UU^*=I$. For
 details, we refer to the introductory overview by
 Kempe \cite{Kempe03} and the more graph-theoretical description by Emms, Severini, Wilson, and
 Hancock \cite{Emms}.

The first results on quantum walks on distance-regular graphs
concerned the hypercubes, and were obtained by Moore and Russell \cite{MR2002P}
and Kempe \cite{Kempe}. Jafarizadeh and Salimi \cite{JaSa06} studied quantum
walks on, among others, Hamming graphs and Johnson graphs, using the quantum
decomposition $A=L+F+R$ (see \eqref{quantum decomposition}) of the adjacency
matrix.
See also Section \ref{sec:spectral analysis}.
Similarly, Salimi \cite{Salimi} considered the Odd graphs.

An important feature of quantum networks is {\em perfect state transfer}. This occurs for example between the
antipodes in the hypercubes. Godsil \cite{Godsilperiodic} obtained that if a distance-regular graph $\G$ has
perfect state transfer, then $\G$ is an antipodal double cover. He also constructed a family of Taylor graphs
with perfect state transfer coming from certain Hadamard matrices. Jafarizadeh and Sufiani \cite{JaSu08}
discussed perfect state transfer in several other distance-regular antipodal double covers. Coutinho, Godsil,
Guo, and Vanhove \cite{CGGV14} determined for many more distance-regular antipodal double covers whether they
have perfect state transfer; among others for all such graphs in the tables of `BCN' \cite{bcn}.
Chan \cite{Chan2013pre} showed among other results that for arbitrary $\tau>0$ there exist graphs having perfect state transfer at time less than $\tau$ by taking unions of some of the distance-$i$ graphs of the hypercubes.

See \cite{MR2002P,BKMT2008IJQI,Chan2013pre} for some work on \emph{instantaneous uniform mixing} of continuous-time quantum walks on the Hamming graphs, folded cubes, halved cubes, and the folded halved cubes.

\subsection{Miscellaneous applications}

New classes of error-correcting pooling designs were constructed by Bai, Huang,
and Wang \cite{BHW09} from Johnson graphs, Grassmann graphs, antipodal
distance-regular graphs, and distance-regular graphs of order $(s,t)$. Other
classes were constructed by Zhang, Guo, and Gao \cite{ZGG09}, who used
$D$-bounded distance-regular graphs. Gao, Guo, Zhang, and Fu \cite{GGZF08} used
subspaces in $D$-bounded distance-regular graphs to construct authentication
codes.

Some applications of distance-transitive graphs referred to by Cohen
\cite{cohen04} also apply to distance-regular graphs: Driscoll, Healy Jr., and
Rockmore \cite{DrHeRo} apply the discrete polynomial transform to obtain fast
algorithms for data analysis on distance-transitive graphs, mainly just by
using its three-term recurrence relation; Jwo and Tuan \cite{JwoTu} determined
the transmitting delay in networks that can be modeled as a distance-transitive
antipodal double cover (such as the hypercube).

Distance-regularity is one of the symmetry properties that are studied in a
survey paper by Lakshmivarahan, Jwo, and Dhall \cite{LaJwoDh} on
interconnection networks. Among others, shortest routing algorithms in such
networks are discussed.

Distance-regular graphs, in particular strongly regular graphs,
occur in constructions of energy minimizing spherical codes. As shown by Cohn,
Elkies, Kumar, and Sch\"urmann \cite{CEKS10}, spherical codes obtained from a
spectral embedding of a strongly regular graph are {\em balanced}, that is,
they are in equilibrium under all force laws acting between pairs of points
with strength given by a fixed function of distance. As a consequence, these
spherical codes appear frequently in the study of universally optimal spherical
codes; see \cite{CK07, BBCGKS09}.


\section{Miscellaneous}\label{sec:misc}


\subsection{Distance-transitive graphs}\label{sec:dtgmisc} As
already expressed in `BCN' \cite[Ch.~7]{bcn}, it seems to be feasible to classify all distance-transitive graphs,
starting with the primitive ones, given the classification of finite simple groups. For more information on this, we
refer to the historical essay
--- and then state of the art survey --- by Ivanov \cite{IvanovDTG}, the
introduction to the field --- and survey --- by Cohen \cite{cohen04}, and the
(currently) most recent survey by Van Bon \cite{vanbon07}. Concerning the
classification of imprimitive distance-transitive graphs, we mention the
classification of antipodal distance-transitive covers of complete graphs by
Godsil, Liebler, and Praeger \cite{GLP98} and the classification of
distance-transitive covers of complete bipartite graphs by Ivanov, Liebler,
Penttila, and Praeger \cite{ILPP97}. Moreover, Alfuraidan and Hall \cite{AH09}
`finished' the classification of distance-regular graphs whose so-called
primitive core (i.e., the primitive graph obtained after halving and/or
quotienting) is a known distance-transitive graph with diameter at least three.

\subsection{The metric dimension}\label{sec:metricdim}

Given a graph, a {\em resolving set} $W$ is a set of vertices such
that every vertex in the graph is uniquely determined by the
distances to the vertices in $W$. The {\em metric dimension} of a
graph is the size of a smallest resolving set. Babai
\cite{Babaimetric} studied the metric dimension of graphs motivated
by the graph isomorphism problem (he actually studied the problem
more generally in coherent configurations). His results imply an
upper bound on the metric dimension for primitive distance-regular
graphs in terms of the number of vertices, the diameter, and the
valencies. Chv\'{a}tal \cite{Chvatalmetric} obtained an asymptotic
result on the metric dimension of the Hamming graphs, as a result
of his work on strategies for the game {\em Mastermind}. For
details, we refer to the survey paper by Bailey and Cameron
\cite{Metricdimension}. For recent results on the metric dimension
of Johnson graphs, Grassmann graphs, bilinear forms graphs, and
symplectic dual polar graphs, we refer to \cite{Johnsonmetric,
GWL13}, \cite{BaileyMeagher}, \cite{FengWang}, and \cite{GWL13DP},
respectively. Guo, Wang, and Li \cite{GWL13} also obtained results
on the Doubled Odd graphs, Doubled Grassmann graphs, and twisted
Grassmann graphs. The metric dimension of all `small'
distance-regular graphs was determined by Bailey
\cite{Baileysmall}. Bailey \cite{Baileyimprimitive} also related
the metric dimension of several families of imprimitive
distance-regular graphs to the metric dimension of corresponding
primitive distance-regular graphs. The {\em fractional metric
dimension} of vertex-transitive distance-regular graphs, in
particular Hamming and Johnson graphs, was studied by Feng, Lv, and
Wang \cite{FLWfractionalmetric}.

\subsection{The chromatic number}\label{sec:chromatic}

The {\em chromatic number} of a graph is the smallest number of
colors needed to color the vertices such that adjacent vertices
have different colors. Bipartite graphs clearly have chromatic
number $2$. It is not hard to see that the chromatic number of the
Hamming graph $H(d,q)$ equals $q$. Blokhuis, Brouwer, and Haemers
\cite{BlokhuisBH07} studied distance-regular graphs with chromatic
number $3$. They showed that besides the complete tripartite
graphs, the intersection number $a_1$ is at most $1$ in such
graphs, and they obtained several results for the case $a_1=1$; it
seems that the triangle-free case is much more difficult. All
graphs with chromatic number $3$ among the known distance-regular
graphs were classified by Blokhuis et al.~\cite{BlokhuisBH07}:
these are the complete tripartite graphs, the odd cycles, the Odd
graphs, the Hamming graphs $H(D,3)$, and nine exceptional graphs.
It was also shown that the folded cubes have chromatic number $4$.
Koolen and Qiao \cite{KQ15} classified the non-bipartite distance-regular
graphs with diameter three, valency $k$, and smallest eigenvalue at
most $-k/2$. Using these results, they obtained a complete
classification of the distance-regular graphs with diameter three
and chromatic number $3$. Hahn, Kratochv\'{\i}l,
\v{S}ir\'{a}\v{n}, and Sotteau
\cite{injectivecolor} obtained results on the chromatic number of the halved cubes; see also \cite{webchromatic}.
Etzion and Bitan \cite{colorJohnson} and Brouwer and Etzion \cite{BrEt11} provide a summary of results on the chromatic
number of the Johnson graphs.

\subsection{Cores}\label{sec:cores}

A \emph{core} is a graph having no endomorphisms other than
automorphisms. Every graph is homomorphically equivalent (i.e.,
there are homomorphisms in both directions) to a unique core,
called the \emph{core of} the graph. We say that a graph is
\emph{core-complete} if it is either a core or has a complete core.
Cameron and Kazanidis \cite{CK2008JAMS} showed among other results
that rank $3$ graphs are core-complete. Godsil and Royle
\cite{GR2011AC} showed among other results the core-completeness of
many infinite families of geometric strongly regular graphs. In
particular, by virtue of a result of Neumaier \cite{Neu80}, it
follows that, for given $m\geq 2$, all but finitely many strongly
regular graphs with smallest eigenvalue at least $-m$ are
core-complete. Roberson \cite{Rob16} finally showed
that all strongly regular graphs are core-complete. Concerning
general distance-regular graphs, Godsil and Royle
\cite{GR2011AC} showed that distance-transitive graphs are
core-complete, and that triangle-free non-bipartite
distance-regular graphs are cores. Huang, Lv, and Wang
\cite{HLW2014pre} studied cores and endomorphisms of the Grassmann
graphs.

\subsection{Modular representations}

Some work has been done on the adjacency algebra (denoted $\AL_K\subset M_{v\times v}(K)$) of a
distance-regular graph $\G$ over a field $K$ of characteristic $p>0$. Arad, Fisman, and Muzychuk
\cite[Thm.~1.1]{AFM1999IJM} showed among other results that $\AL_K$ is semisimple if and only if the
\emph{Frame number} $v^{D+1}\prod_{i=1}^D(k_i/m_i)$ (which is an integer) is not divisible by $p$. See also
\cite[Thm.~4.2]{Hanaki2000JA}. Hanaki \cite{Hanaki2002AM} showed that $\AL_K$ is a local algebra if $v$ is a
power of $p$.
For strongly regular graphs, Hanaki and Yoshikawa \cite{HY2005JAC} determined the structure of $\AL_K$ and studied the modular standard module $K^v$.
In this case, the $p$-rank of $M\in\AL_K$ (cf.~Section \ref{sec:prank}) can be interpreted as the dimension of the submodule $MK^v$, and their results provide us information as to which elements of $\AL_K$ we should look at.
Yoshikawa \cite{Yoshikawa2004JAC} determined the structure of $\AL_K$ for Hamming graphs. The
structure of $\AL_K$ for Johnson graphs was studied by Shimabukuro
\cite{Shimabukuro2005AC,Shimabukuro2011DM}. Shimabukuro \cite{Shimabukuro2007EJC} also computed the number of
irreducible representations of $\AL_K$ for the classical families of distance-regular graphs.
Shimabukuro and Yoshikawa \cite{SY2014pre} recently studied the structure of $\AL_K$ for Grassmann graphs.
For more information on modular representations of general (non-commutative) association schemes (i.e., homogeneous coherent configurations), we refer to the recent survey by Hanaki \cite{Hanaki2009EJC}.

\subsection{Asymptotic spectral analysis}\label{sec:spectral analysis}

Let $\G$ be a distance-regular graph with adjacency matrix $A$.
Observe that the complex Bose-Mesner algebra $\AL=\AL_{\mathbb{C}}$ of $\G$ is a commutative $*$-algebra, and that $\frac{1}{v}\tr$ is a state on $\AL$, i.e., a unital positive linear $*$-functional on $\AL$.
Thus, $(\AL,\frac{1}{v}\tr)$ is a classical algebraic probability space, and we may view $A$ as an algebraic random variable.
From this point of view, Hora \cite{Hora1998IDAQPRT} obtained, as variations of the central limit theorem, asymptotic spectral distributions for the families of Hamming graphs, Johnson graphs, halved cubes, and Grassmann graphs.
He used the information on the spectra of these graphs directly,
but then the method of quantum decomposition, first introduced in this context by Hashimoto \cite{Hashimoto2001IDAQPRT}, was applied to Hamming graphs by Hashimoto, Obata, and Tabei \cite{HOT2001P} and to Johnson graphs (among others) by Hashimoto, Hora, and Obata \cite{HHO2003JMP}, which provided a more conceptual and succinct (and `fully-quantum') approach to the results of Hora \cite{Hora1998IDAQPRT}.
See also \cite{HO2003P}.
Their theory, in the final form given in \cite{HO2007B,HO2008TAMS}, turns out to be closely related to the Terwilliger algebra (in the case of distance-regular graphs; though they did not use the language of the Terwilliger algebra).
Let $\TT$ be the Terwilliger algebra of $\G$ with respect to $x\in V$.
Let $L$, $F$, and $R$ be the lowering, flat, and raising matrices, respectively; cf.~\eqref{quantum decomposition}.
The \emph{quantum decomposition}\footnote{The quantum decomposition for the complete graph $K_2$ is sometimes referred to as a \emph{quantum coin-tossing}.} of $A$ is the expression $A=L+F+R$.
The primary $\TT$-module, together with $L$ and $R$, naturally has the structure of a one-mode interacting Fock space,\footnote{In this context, $L$ and $R$ are called the \emph{annihilation} and the \emph{creation operators}, respectively.} and they took the limit of the coefficients of the three-term recurrence relation of the associated orthogonal polynomials (which are certain normalizations of the $v_i$ from \eqref{distancepolynomials}) to get the \emph{quantum central limit theorem}.
See \cite{HO2007B,HO2008TAMS} for more details.
For the above four families of distance-regular graphs, these orthogonal polynomials belong to the Askey scheme by virtue of Leonard's theorem (see also the comments after \eqref{TTR}), and their results agree with the limit relations of the polynomials in the Askey scheme as described in \cite{KS1998R,KLS2010B}.
We note that they also considered some other states as well; see \cite{Hora2000PTRF,HO2007B}.
The case of the Odd graphs was discussed in detail by Igarashi and Obata \cite{IO2006P}.
Associated to the Odd graphs are the Bannai/Ito polynomials (which are a $q\rightarrow -1$ limit of the most general $q$-Racah polynomials), and the generalized Hermite polynomials arise as the orthogonal polynomials corresponding to the limit distribution.
See also \cite{GVZ2014SIGMA}.

\subsection{Spin models}\label{sec:spinmodels}

A (symmetric) \emph{spin model} is a nowhere-zero symmetric matrix $W\in M_{v\times v}(\mathbb{C})$ which
satisfies certain `invariance equations', and was introduced by Jones \cite{Jones1989PJM} as a tool for
creating invariants of knots and links. Nomura \cite{Nomura1997JAC} showed that every spin model $W$ belongs
to its \emph{Nomura algebra} $\mathcal{N}_W\subset M_{v\times v}(\mathbb{C})$, which is the Bose-Mesner
algebra (over $\mathbb{C}$) of a self-dual association scheme; see also
\cite{Jaeger1996P,JMN1998JAC,CGM2003TAMS}. We say that a distance-regular graph $\G$ with diameter $D$
\emph{supports} a spin model $W$ if its adjacency algebra $\AL$ (over $\mathbb{C}$) satisfies
$W\in\AL\subset\mathcal{N}_W$. In this case, $\AL$ inherits the duality of $\mathcal{N}_W$. In particular,
$\G$ is $Q$-polynomial. Many examples of spin models have been constructed in this situation; cf.~\cite[\S
9]{CN1999JCTB}. Write $W=\sum_{i=0}^Dt_iA_i$, where $A_i$ is the distance-$i$ matrix of $\G$ for
$i=0,1,\dots,D$. Curtin and Nomura \cite{CN1999JCTB} showed among other results that if $t_1\ne\pm t_0$ then
the intersection array of $\G$ is described by $t_1/t_0$, $t_0t_2/t_1^2$, and $D$. Curtin \cite{Curtin1999DM}
showed that $\G$ is thin if $t_i\ne\pm t_0$ for $i=1,2,\dots,D$, and Caughman and Wolff \cite{CW2005JAC}
determined the structure of the Terwilliger algebra $\TT$. Moreover, Curtin \cite[Thm.~1.6]{Curtin2007RJ}
showed that, in view of \cite[Thm.~5.3]{CW2005JAC}, every irreducible $\TT$-module affords not just a Leonard
system (cf.~Section \ref{sec: TD systems}) but a \emph{Leonard triple system} \cite{Curtin2007LAA}. See
\cite{Huang2012pre,Brown2013pre,TZ2013pre} and the references therein for related results.
Nomura \cite{Nomura1995JCTB,Nomura1996P} and Curtin and Nomura \cite{CN2004JAC} studied the homogeneity of
$\G$. It is known (cf.~\cite[\S 4.4]{Jaeger1996P}) that the diagonal matrix $T$ of size $D+1$ defined by
$T_{ii}=t_i$ ($i=0,1,\dots,D$) satisfies the \emph{modular invariance property}, i.e., $(PT)^3$ is a scalar
matrix, where $P=Q$ is the eigenmatrix of $\G$. (Recall $P^2=vI$.) Chihara and Stanton \cite{CS1995GC} showed
that $\G$ has at most $12$ (diagonal) solutions $T$ to $(PT)^3=I$ in general, and classified the solutions
when $\G$ is a forms graph or a Hamming graph. See also \cite{Nomura2002KJM}.

\subsection{Cometric association schemes}\label{sec:cometricschemes}

Distance-regular graphs form the class of metric (or $P$-polynomial) association schemes. Cometric (or
$Q$-polynomial) association schemes are the `dual version' of distance-regular graphs, but the systematic
study of cometric (but not necessarily metric) association schemes has begun rather recently. One of the
pioneers in this area is Suzuki \cite{Suzuki1998JACa,Suzuki1998JACb}, who studied imprimitive cometric
association schemes and association schemes with multiple $Q$-polynomial orderings, using a method of Dickie
\cite{Dickie1995D} based on matrix identities; cf.~\eqref{3-tensor}. In particular, he showed that an
imprimitive cometric association scheme with $D\geq 7$ and with first multiplicity $m_1>2$ is $Q$-bipartite
and/or $Q$-antipodal; cf.~Theorem \ref{prop:imprimitive}. In his classification, there remained two cases of
open parameter sets with $D\in\{4,6\}$. These were recently ruled out by Cerzo and Suzuki \cite{CS2009EJC}
for $D=4$, and by Tanaka and Tanaka \cite{TT2011EJC} for $D=6$. Similarly, there was an open case in the
classification of association schemes with multiple $Q$-polynomial orderings, which was recently ruled out by
Ma and Wang \cite{MaWang5Q}. Thus, the situation is dual to that of Section \ref{sec:2Porder}. Van Dam,
Martin, and Muzychuk \cite{DMM2010pre} studied cometric $Q$-antipodal association schemes and showed that
these are uniform and related to linked systems. Three-class cometric $Q$-antipodal association schemes for
example are equivalent to linked systems of symmetric designs.
LeCompte, Martin, and Owens \cite{LMO2010} showed that  four-class cometric $Q$-antipodal and
$Q$-bipartite association schemes are equivalent to real mutually unbiased bases.
See also \cite{Suda2009pre}. See \cite{MMW07,DMM2010pre} for a comprehensive study on imprimitive cometric
association schemes. Martin and Williford \cite{MW2009EJC} proved the dual of the Bannai-Ito conjecture
discussed in Section \ref{sec:BIconjecture}: There are finitely many cometric association schemes with fixed
first multiplicity at least three. Kurihara \cite{Kur2011T} obtained a dual version of the spectral excess
theorem discussed in Section \ref{sec:spectralexcess}. See also \cite{KN2012JCTA,Nozaki2013pre}.

Concerning constructions of cometric (but not metric) association schemes, the main sources are block
designs, spherical designs, real mutually unbiased bases, and hemisystems and other strongly regular
decompositions of a strongly regular graph; see a survey by Bannai and Bannai \cite{BB2009EJC}, and also the
online table by Martin \cite{Martin2010www}. The `bipartite doubles' of the association schemes of the
Hermitian dual polar graphs $^2\A_{2D-1}(\sqrt{q})$ provide an infinite family of cometric but not metric
association schemes with unbounded diameter; cf.~\cite[pp.~313--315]{bi}. The `extended $Q$-bipartite double'
construction was introduced and worked out in detail by Martin, Muzychuk, and Williford \cite{MMW07}.
Penttila and Williford \cite{PenWil2011} constructed the first known infinite family of primitive cometric
association schemes that are not metric. Hollmann and Xiang \cite{HX2006JAC} earlier constructed a family of
$3$-class association schemes that have the same parameters as those found by Penttila and Williford, without
realizing it was cometric. See also \cite{Cossidente2013JAC}.
Recently, Moorhouse and Williford \cite{Williford2014AGT} constructed an infinite family of cometric $Q$-bipartite association schemes as certain `double covers' of the association schemes of the symplectic dual polar graphs $\C_D(q)$ with $q\equiv 1\pmod{4}$.
These association schemes have two $Q$-polynomial orderings, and are not metric; cf.~Section \ref{sec:Qmultipleordering}.
Such a `double cover' has a quadratic splitting field when $q$ is a non-square, and Moorhouse and Williford asked whether or not it is in general the `extended $Q$-bipartite double' of a primitive cometric but not metric association scheme when $q$ is a square.
If this is indeed the case, then this family would provide counterexamples to the conjecture of Bannai and Ito \cite[p.~312]{bi} mentioned at the end of Section \ref{sec:2Qpol}.

We refer the reader to \cite{BB2009EJC,martintanaka,DMM2010pre} for more
information and recent updates on cometric association schemes.


\section{Tables}\label{sec:tables}

In this section, we report progress on the `feasibility' and `uniqueness' of
the intersection arrays that were listed in the tables of `BCN'
\cite{bcn,BCNcoradd}, and some additional (larger) ones not in the tables. We
note that these tables are also available online in machine readable form
(although they are not exactly the same) \cite{tables}.

\subsection{Diameter \texorpdfstring{$3$}{3} and primitive}

\subsubsection{Uniqueness}

For the following intersection array, there is a unique distance-regular graph with that array:

\bigskip

\noindent $\{ 6,5,2; 1, 1,3\}$ ($v=57$): Perkel graph; Coolsaet and Degraer
\cite{CooDe05}.

\subsubsection{Existence}

For the following intersection arrays, there is a distance-regular graph with
that array:

\bigskip

\noindent $\{20,18,6; 1,1,15\}$ ($v=525$): unitary non-isotropics graph ($q=5$); \cite[Thm.~12.4.1]{bcn}.\\
$\{26,24,19; 1,3,8\}$ ($v=729$): Brouwer graph ($q=3$); Brouwer and Pasechnik \cite{BP2011}, see Section \ref{sec:Kasami}.\\
$\{31, 30, 17; 1, 2, 15\}$ ($v=1024$): Kasami graph ($q=2,j=2$); \cite[Thm.~11.2.1~(13)]{bcn}.\\
$\{110, 81, 12; 1, 18, 90\}$ ($v=672$): Moscow-Soicher graph (see Section \ref{sec: KoolenRiebeek}).

\subsubsection{Nonexistence}

The following intersection arrays are not feasible:

\bigskip

\noindent $\{5, 4, 3; 1, 1, 2\}$ ($v=56$): Fon-Der-Flaass \cite{FDF193}.\\
$\{13, 10, 7; 1, 2, 7\}$ ($v=144$): Coolsaet \cite{Co95}.\\
$\{19, 12, 5; 1, 4, 15\}$ ($v=96$): Coolsaet and Juri\v{s}i\'{c} \cite{CoJu08}, Neumaier \cite[p.~15]{BCNcoradd}.\\
$\{21, 16, 8; 1, 4, 14\}$ ($v=154$): Coolsaet \cite{Co05}.\\
$\{22, 16, 5; 1, 2, 20\}$ ($v=243$): Sumalroj and Worawannotai \cite{WS16}.\\
$\{35, 24, 8; 1, 6, 28\}$ ($v=216$): Juri\v{s}i\'{c} and Vidali \cite{JurVidpre}.\\
$\{36, 25, 8; 1, 4, 20\}$ ($v=352$): $\theta_1 = 14$ with multiplicity $32$ \cite[Thm.~4.4.4]{bcn}.\\
$\{40, 33, 8; 1, 8, 30\}$ ($v=250$): Juri\v{s}i\'{c} and Vidali \cite{JurVidpre}.\\
$\{44, 30, 5; 1, 3, 40\}$ ($v=540$): Koolen and Park \cite{KoPa10}, see Section \ref{sec:claws}.\\
$\{45, 30, 7; 1, 2, 27\}$ ($v=896$): Gavrilyuk and Makhnev \cite{GavMak2011T}.\\
$\{52, 35, 16; 1, 4, 28\}$ ($v=768$): Gavrilyuk and Makhnev \cite{GavMakpre}.\\
$\{55, 36, 11; 1, 4, 45\}$ ($v=672$): Bang \cite{Bapre} and Gavrilyuk \cite{Gav11}.\\
$\{56, 36, 9; 1, 3, 48\}$ ($v=855$): Bang \cite{Bapre} and Gavrilyuk \cite{Gav11}.\\
$\{65, 44,11; 1, 4, 55\}$ ($v=924$): Koolen and Park \cite{KoPa10}, see Section \ref{sec:claws}.\\
$\{69, 48,24; 1, 4, 46\}$ ($v=1330$): Gavrilyuk and Makhnev \cite{GavMakpre}.\\
$\{72, 45, 16; 1, 8, 54\}$ ($v=598$): $\theta_1 = 26$ with multiplicity $45$ \cite[Thm.~4.4.4]{bcn}.\\
$\{74, 54, 15; 1, 9,60\}$ ($v=630$): Coolsaet and Juri\v{s}i\'{c} \cite{CoJu08}.\\
$\{77, 60, 13; 1, 12, 65\}$ ($v=540$):  Coolsaet and Juri\v{s}i\'{c} \cite{CoJu08}.\\
$\{85, 54, 25; 1, 10, 45\}$ ($v=800$): $\theta_1 = 35$ with multiplicity $34$ \cite[Thm.~4.4.4]{bcn}.\\
$\{90,60, 12; 1, 12, 72\}$ ($v=616$): $\theta_1 = 27$ with multiplicity $48$ \cite[Thm.~4.4.4]{bcn}.\\
$\{104, 66, 8; 1, 12, 88\}$ ($v=729$): Urlep \cite{urlep12}.\\
$\{105, 102, 99; 1, 2, 35\}$ ($v=20608$): De Bruyn and Vanhove \cite{DV2012pre}.\\
$\{112,  77, 16; 1, 16, 88\}$ ($v=750$): $\theta_1 = 32$ with multiplicity $49$ \cite[Thm.~4.4.4]{bcn}.\\
$\{119, 96, 18; 1, 16, 102\}$ ($v=960$): Juri\v{s}i\'{c} and Vidali \cite{JurVidpre}.\\
$\{145, 84, 25; 1, 20, 105\}$ ($v=900$): $\theta_1 = 55$ with multiplicity $29$
\cite[Thm. 4.4.4]{bcn}.

%


\subsection{Diameter \texorpdfstring{$4$}{4} and primitive}

\subsubsection{Uniqueness}

For the following intersection array, there is a unique distance-regular graph with that array:

\bigskip

\noindent $\{ 280, 243, 144, 10; 1, 8, 90, 280\}$ ($v=22880$): Patterson graph
(see \cite[\S 13.7]{bcn}); Brouwer, Juri\v{s}i\'{c}, and Koolen
\cite{BrJuKo08}.

\subsubsection{Nonexistence}\label{sec:tablesD4prnon}
The following intersection arrays are not feasible:

\bigskip

\noindent $\{5,4,3,3;1,1,1,2\}$ ($v=176$): Fon-Der-Flaass \cite{FDF293}.\\
$\{39,32,20,2;1,4,16,30\}$ ($v=768$): Lambeck \cite{Lamb93}.\\
$\{ \mu(2\mu +1), (\mu-1)(2\mu +1), \mu^2, \mu; 1, \mu, \mu(\mu-1),
\mu(\mu+1)\}$ ($v=8\mu^2(\mu+1)$), $\mu \geq 2$: \linebreak Godsil and Koolen
\cite{GoKo95} (besides this family, in the tables also those with $\mu=4,5,6,7$
are explicitly mentioned: $\{ 36, 27, 16, 4; 1,4,12,36\}$,
$\{55,44,25,5; 1, 5,20, 55\}$, $\{ 78, 65, 36, 6;\linebreak 1, 6,
30, 78\}$, $\{105, 90, 49, 7; 1, 7, 42, 105\}$).\\ $\{50,48 ,48,
32; 1, 1, 9, 25\}$ ($v=31635$): De Bruyn \cite{DeB2010ElJC}.


\subsection{Diameter \texorpdfstring{$4$}{4} and bipartite}

\subsubsection{Existence}

For the following intersection array, there is a distance-regular graph with that array:

\bigskip

\noindent $\{45, 44, 36, 5; 1, 9, 40, 45\}$ ($v=486$): Koolen-Riebeek graph (see Section \ref{sec:
KoolenRiebeek}).

\subsubsection{Nonexistence}

The following intersection arrays are not feasible:

\bigskip

\noindent $\{36,35,27,6; 1,9,30,36\}$ ($v=324$): Galazidis (see \cite{tables})\footnote{$\G_1\cup \G_4$ would be a strongly regular graph with parameters $(324,57,0,12)$, which does not exist \cite{GavMak2005}.}.\\
 $\{36,35,33,3; 1,3,33,36\}$ ($v=912$): Huang (see \cite{tables})\footnote{The halved graphs would be the complement of a strongly regular graph with parameters $(456,35,10,2)$, which does not exist \cite{BN88} by Proposition \ref{shillaterw}.}.\\
  $\{88,87,77,4; 1,11,84,88\}$ ($v=1452$): Huang (see \cite{tables})\footnote{The halved graphs would be the complement of a strongly regular graph with parameters $(726,29,4,1)$, which does not exist by Proposition \ref{shillaterw}.}.

%
%


\subsection{Diameter \texorpdfstring{$4$}{4} and antipodal}

\subsubsection{Uniqueness}

For the following intersection arrays, there is a unique distance-regular graph with that array:

\bigskip

\noindent $\{32, 27, 8, 1; 1, 4, 27, 32\}$ ($v=315$): Soicher graph (see Section \ref{sec: Soicher and Meixner}); Soicher \cite{Soicher15}.\\
$\{ 45, 32, 12, 1,; 1, 6, 32, 45\}$ ($v=378$): $3.O_6^-(3)$-graph
(see \cite[\S
13.2C]{bcn}); Juri\v{s}i\'{c} and Koolen \cite{JuKo11}.\\
$\{56, 45, 16, 1; 1, 8, 45, 56\}$ ($v=486$): Soicher graph (see Section \ref{sec: Soicher and Meixner});
Brouwer \cite{AEBSoicher} (see also \cite[Thm.~11.4.6]{BCNcoradd}).\\
$\{117, 80, 24, 1; 1, 12, 80, 117\}$ ($v=1134$): $3.O_7(3)$-graph (see \cite[\S
13.2D]{bcn}); Juri\v{s}i\'{c} and Koolen \cite{JuKopre}.\\
$\{176, 135, 36, 1; 1, 12, 135, 176\}$ ($v=2688$): Meixner 4-cover (see Section
\ref{sec: Soicher and Meixner} and \cite[\S 12.4A]{BCNcoradd}); Juri\v{s}i\'{c}
and Koolen \cite{JuKopre}.

\subsubsection{Existence}

For the following intersection arrays, there is a distance-regular graph with
that array:

\bigskip

\noindent
$\{176, 135, 24, 1; 1, 24, 135, 176\}$ ($v=1344$): Meixner 2-cover (see Section \ref{sec: Soicher and
Meixner} and \cite[\S 12.4A]{BCNcoradd}).\\
$\{416,315,64,1;1,32,315,416\}$ ($v=5346$): Soicher graph (see Section \ref{sec: Soicher and Meixner}).

\subsubsection{Nonexistence}

The following intersection arrays are not feasible:

\bigskip

\noindent $\{32, 27, 6, 1; 1, 6, 27, 32\}$ ($v=210$): Soicher \cite{Soicher15}.\\
$\{32, 27, 9, 1; 1, 3, 27, 32\}$ ($v=420$): Soicher \cite{Soicher15}.\\
$\{45, 32, 9, 1; 1, 9, 32, 45\}$ ($v=252$): Juri\v{s}i\'{c}
and Koolen \cite{JuKo00EuJC}.\\
$\{45, 32, 15, 1; 1, 3, 32, 45\}$ ($v=756$):  Juri\v{s}i\'{c}
and Koolen \cite{JuKo00EuJC}.\\
$\{45, 40, 11, 1; 1, 1, 40, 45\}$ ($v=2352$): $\overline{\G}$ is of order $(5,8)$ with $c_2=12$, \cite[Thm.~4.2.7]{bcn}.\\
$\{ 56, 45, 12, 1; 1, 12, 45, 56\}$ ($v=324$): Brouwer \cite[Thm.~11.4.6]{BCNcoradd}.\\
$\{ 56, 45, 18, 1; 1, 6, 45, 56\}$ ($v=648$): Brouwer \cite[Thm.~11.4.6]{BCNcoradd}.\\
$\{ 56, 45, 20, 1; 1, 4, 45, 56\}$ ($v=972$): Brouwer \cite[Thm.~11.4.6]{BCNcoradd}.\\
$\{ 56, 45, 21, 1; 1, 3, 45, 56\}$ ($v=1296$): Brouwer \cite[Thm.~11.4.6]{BCNcoradd}.\\
$\{81, 56, 18, 1; 1, 9, 56, 81\}$ ($v=750$): Juri\v{s}i\'{c}
and Koolen \cite{JuKo11}.\\
$\{81, 56, 24, 1; 1, 3, 56, 81\}$ ($v=2250$): Juri\v{s}i\'{c}
and Koolen \cite{JuKo00EuJC}.\\
$\{ 96, 75, 24, 1; 1, 8, 75, 96\}$ ($v=1288$): Juri\v{s}i\'{c}
and Koolen \cite{JuKo11}.\\
$\{ 96, 75, 28, 1; 1, 4, 75, 96\}$ ($v=2576$): Juri\v{s}i\'{c}
and Koolen \cite{JuKo00EuJC}.\\
$\{115, 96, 32, 1; 1, 8, 96, 115\}$ ($v=1960$): Juri\v{s}i\'{c}
and Koolen \cite{JuKo00EuJC}.\\
$\{115, 96, 35, 1; 1, 5, 96, 115\}$ ($v=3136$): Juri\v{s}i\'{c}
and Koolen \cite{JuKo00EuJC}.\\
$\{115, 96, 36, 1; 1, 4, 96, 115\}$ ($v=3920$): Juri\v{s}i\'{c}
and Koolen \cite{JuKo00EuJC}.\\
$\{117, 80, 27, 1; 1, 9, 80, 117\}$ ($v=1512$): Juri\v{s}i\'{c}
and Koolen \cite{JuKo00EuJC}.\\
$\{117, 80, 30, 1; 1, 6, 80, 117\}$ ($v=2268$): Juri\v{s}i\'{c}
and Koolen \cite{JuKo00EuJC}.\\
$\{117, 80, 32, 1; 1, 4, 80, 117\}$ ($v=3402$): Juri\v{s}i\'{c}
and Koolen \cite{JuKo00EuJC}.\\
$\{175, 144, 25, 1; 1, 25, 144, 175\}$ ($v=1360$): Juri\v{s}i\'{c}
and Koolen \cite{JuKo00EuJC}.\\
$\{175, 144, 40, 1; 1, 10, 144, 175\}$ ($v=3400$): Juri\v{s}i\'{c}
and Koolen \cite{JuKo11}.\\
$\{176, 135, 40, 1; 1,8,135, 176\}$ ($v=4032$): Juri\v{s}i\'{c}
and Koolen \cite{JuKo00EuJC}.\\
$\{189, 128, 27, 1; 1, 27, 128, 189\}$ ($v=1276$): Juri\v{s}i\'{c}
and Koolen \cite{JuKo00EuJC}.\\
$\{189, 128, 36, 1; 1, 18, 128, 189\}$ ($v=1914$): Juri\v{s}i\'{c}
and Koolen \cite{JuKo11}.\\
$\{189, 128, 45, 1; 1, 9, 128, 189\}$ ($v=3828$): Juri\v{s}i\'{c}
and Koolen \cite{JuKo00EuJC}.\\
$\{204, 175, 40, 1; 1, 20, 175, 204\}$ ($v=2400$): Juri\v{s}i\'{c}
and Koolen \cite{JuKo00EuJC}.\\
$\{204, 175, 45, 1; 1, 15, 175, 204\}$ ($v=3200$): Juri\v{s}i\'{c}
and Koolen \cite{JuKo00EuJC}.\\
$\{261, 176, 54, 1; 1, 18, 176, 261\}$ ($v=3600$): Juri\v{s}i\'{c}
and Koolen \cite{JuKo00EuJC}.\\
$\{414, 350, 45, 1; 1, 45, 350, 414\}$ ($v=4050$): Juri\v{s}i\'{c} and Koolen
\cite{JuKo00EuJC}.

\subsection{Diameter \texorpdfstring{$5$}{5} and antipodal} 

\subsubsection{Uniqueness}

For the following intersection array, there is a unique distance-regular graph with that array:

\bigskip

\noindent $\{ 22, 20, 18, 2,1 ; 1,2, 9, 20, 22\}$ ($v=729$): coset graph of the
dual of the ternary Golay code; Blokhuis, Brouwer, and Haemers
\cite{BlokhuisBH07}.

\subsubsection{Nonexistence}

The following intersection arrays are not feasible:

\bigskip

\noindent $\{ 2 \mu^2 + \mu, 2\mu^2 + \mu -1, \mu^2, \mu, 1; 1, \mu, \mu^2,
2\mu^2 + \mu -1, 2\mu^2 + \mu\}$ ($v=4\mu^2(2\mu+3)$), $\mu \geq
2$: Coolsaet, Juri\v{s}i\'{c}, and Koolen \cite{CoJuKo08EuJC}
(besides this family, in the tables also those with $\mu=3,4,5,6,7$
are explicitly mentioned: $\{21, 20, 9,3,1; 1,3,9,20,21\}$,
$\{36,35,16,4,1;1,4,16,35,36\}$,\linebreak$\{55, 54, 25, 5, 1;
1,5,25,54,55\}$, $\{78, 77, 36, 6, 1; 1, 6, 36, 77, 78\}$, $\{105,
104, 49, 7, 1; 1, 7, 49, \linebreak104, 105\}$).\\ $\{105, 90, 49,
7, 1; 1, 7, 49, 90, 105\}$ ($v=2912$): $\theta_1 = 35$ with
multiplicity $78$ \cite[Thm.~4.4.4]{bcn}.

\subsection{Diameter \texorpdfstring{$5$}{5} and bipartite} 

\subsubsection{Uniqueness}

For the following intersection arrays, there is a unique distance-regular graph with that array:

\bigskip

\noindent $\{ 7, 6,6,4,4; 1,1, 3, 3, 7\}$ ($v=310$): Doubled Grassmann ($q=2$); Cuypers \cite{Cuypers92}, see Section \ref{sec:otherinfinite}.\\
$\{13, 12, 12, 9, 9; 1, 1, 4, 4, 13\}$ ($v=2420$): Doubled Grassmann ($q=3$);
Cuypers \cite{Cuypers92}, see Section \ref{sec:otherinfinite}.

\subsubsection{Nonexistence}

The following intersection array is not feasible:

\bigskip

\noindent $\{55, 54, 50, 35, 10; 1, 5, 20, 45, 55\}$ ($v=3500$): Vidali \cite{Vidali}.

\subsection{Diameter \texorpdfstring{$6$}{6} and imprimitive}

\subsubsection{Nonexistence}

The following intersection arrays are not feasible:

\bigskip

\noindent $\{15, 14, 12, 6, 1, 1; 1, 1, 3, 12, 14, 15\}$ ($v=1518$, antipodal):
Ivanov and Shpectorov \cite{IvaShp90}\\
$\{7, 6, 6, 5, 4, 3; 1, 1, 2, 3, 4, 7\}$ ($v=686$, bipartite): Koolen \cite{Ko292}.


\section{Open problems and research directions}\label{sec12:Openproblems}

In this section we mention some important open problems. The most
important one is to classify all distance-regular graphs of large
enough diameter.

\subsection{The classification of distance-regular graphs of large diameter}

We first restrict the classification of distance-regular graphs to
the following three problems.

\begin{problem} Classify the $Q$-polynomial distance-regular graphs with large enough
diameter. \end{problem}

\begin{problem}
Classify the geometric distance-regular graphs with large
enough diameter.\end{problem}

\begin{problem}\label{problem:3}
Prove or disprove the following conjecture of Bannai and Ito
\cite[p.~312]{bi}: A primitive distance-regular graph with large
enough diameter is $Q$-polynomial.\footnote{This conjecture is true
in the case of distance-transitive graphs and also for thick
regular near polygons with $c_2 \geq 3$ and diameter at least $4$.
On the other hand, the recent construction of an infinite family of
imprimitive cometric but not metric association schemes by
Moorhouse and Williford \cite{Williford2014AGT} may be relevant to
disproving the dual version of this conjecture on cometric
association schemes (which was also raised by Bannai and Ito,
see Section \ref{sec:2Qpol}); cf. Section
\ref{sec:cometricschemes}.}
\end{problem}

\subsection{General problems}

\begin{problem} Generalize results on distance-regular graphs to larger classes of
graphs; for example, the Delsarte clique bound.
\end{problem}

\begin{problem} Which results on distance-regular graphs can be dualized to cometric association
schemes? See Section \ref{sec:cometricschemes}.
\end{problem}

Below, we will give a list of more specific (and typically smaller)
problems related to the classification of distance-regular graphs.

\subsection{\texorpdfstring{$Q$-polynomial}{Q-polynomial} distance-regular graphs}

The $Q$-polynomial distance-regular graphs fall into types I, IA,
II, IIA, IIB, IIC, and III from
\cite{bi}.
In Section \ref{Q-classification}, we showed that type IA cannot occur and that the distance-regular graphs
of types IIA, IIB, IIC, and III are completely determined.
For type II, the classification is known for $D\geq 14$.

\begin{problem} \begin{enumerate}[(i)]
\item Determine the graphs of type II with $D \leq 13$.
\item Determine the $Q$-polynomial distance-regular graphs of type I. A
subproblem is to determine the distance-regular graphs with classical
parameters with $b \neq 1$.
\end{enumerate}
\end{problem}

The classification of imprimitive $Q$-polynomial distance-regular
graphs is complete for $D\geq 12$, except for the classification of
the distance-regular graphs with the same intersection array as the
bipartite dual polar graphs. See Sections
\ref{sec:bipartiteDRG} and \ref{sec:antipodalDRG}.

\begin{problem} Classify the
graphs that have the same intersection array as the bipartite dual
polar graphs and the Hemmeter graphs for $D \geq 12$. Also, improve
the condition $D\geq 12$ for the bipartite case.
\end{problem}

\begin{problem}
Show that a $Q$-polynomial distance-regular graph with diameter $D$ is imprimitive if and only if $a_D = 0$.
\end{problem}

Lang and Terwilliger \cite{LT2007EJC} almost classified the $Q$-polynomial generalized odd graphs with diameter at
least three, leaving open one set of intersection arrays for $D=3$. See Section \ref{sec:Qalmostbipartite}.

\begin{problem} Classify the $Q$-polynomial generalized odd graphs with diameter three.
\end{problem}

\begin{problem}\label{prob:2P+Q}
Classify the primitive $Q$-polynomial distance-regular graphs with two
$P$-polynomial orderings and diameter three or four. See Section
\ref{sec:2Porder}. \end{problem}

\begin{problem} Classify the primitive distance-regular graphs with two
$Q$-polynomial orderings and diameter three. 
See Section \ref{sec:Qmultipleordering}.\end{problem}

\begin{problem}
Let $\eta_0, \eta_1, \ldots, \eta_D$ be a $Q$-polynomial ordering of the eigenvalues (that is, of the
corresponding idempotents), and let $\theta_0 > \theta_1 > \cdots > \theta_D$ be the natural ordering of the eigenvalues.
What can be said of the relation between the $\eta_i$ and the $\theta_j$?
For example, determine whether $\eta_1 \in \{ \theta_1, \theta_D, \theta_{D-1}\}$,
or whether $\{\theta_1, \theta_D\} \cap \{\eta_1, \eta_D\} \neq \emptyset $.
(For bipartite graphs and antipodal graphs, see \cite{Caughman1998GC,Pascasio1999JAC} and also Section \ref{sec:antipodalDRG}.)
\end{problem}

\begin{problem}
Classify the tight $Q$-polynomial distance-regular graphs with $D=4$.
\end{problem}

\subsection{Vanishing Krein parameters}

Bannai and Ito \cite[p.~312]{bi} conjectured that primitive
distance-regular graphs with large enough diameter are
$Q$-polynomial (see Problem \ref{problem:3}), and
so for such graphs most Krein parameters vanish.

\begin{problem}\label{problem:primalof}
Show that there exists a constant $C$ such that for every primitive distance-regular graph there exists a primitive idempotent, say, $E_1$, such that $|\{j:q_{1j}^i\ne 0\}|\leq C$ for all $i$.
\end{problem}

Note that this problem is dual to Problem \ref{problem:dualof}.

\begin{problem} Let $\theta_i$ be a tail (see Section \ref{sec:Qpolcharacterizations}). \begin{enumerate}[(i)]
\item  Determine whether $\theta_i \in \{\theta_1, \theta_D, \theta_{D-1}\}$. This last case ($\theta_i=\theta_{D-1}$)
    should only occur for bipartite distance-regular graphs.
\item Determine $j\ne 0,i$ such that $q^j_{ii} \neq 0$.
\item Is it possible to classify the distance-regular graphs with a light tail? Besides the antipodal $Q$-polynomial
    distance-regular graphs, there seem to be only the halved cubes and the Hermitian dual polar graphs
    $^2\A_{2D-1}(\sqrt{q})$.
\end{enumerate}
\end{problem}

\begin{problem} Sometimes, one can use the absolute bound to show that some Krein parameters vanish,
if one of the non-trivial eigenvalues has a small multiplicity.
\begin{enumerate}[(i)]
\item Find more conditions that imply that some Krein parameters vanish.
\item Study distance-regular graphs with no vanishing (non-trivial) Krein
parameters. Among the primitive distance-regular graphs with diameter three that are not
$Q$-polynomial, there are few that have a vanishing (non-trivial) Krein
parameter (we checked that there is only one on at most 100 vertices: the
Sylvester graph).
\end{enumerate}
\end{problem}

\begin{problem}
Juri\v{s}i\'{c}, Coolsaet, and others have used vanishing Krein parameters to show that certain families of intersection arrays are not feasible, and also to show the uniqueness of some distance-regular graphs by their intersection arrays.
See Section \ref{sec:triple intersection numbers}.
But it is not known when the method of vanishing Krein parameters gives enough extra information in order to decide the non-existence of certain intersection arrays.
Explore this.
\end{problem}

\subsection{Classical parameters}
\begin{problem} Characterize the classical distance-regular
graphs by their intersection arrays. \end{problem}

\begin{problem}
Show that $Q$-polynomial geometric distance-regular graphs which are not polygons have classical parameters.
\end{problem}

\begin{problem} Decide whether the Grassmann graphs $J_q(2D,D)$ are determined by their intersection
arrays. See Section \ref{sec:clasfamilies}. Decide whether there are other
distance-regular graphs than the twisted Grassmann graphs with the same
intersection arrays as the Grassmann graphs $J_q(2D+1,D)$.
\end{problem}

The following problem was posed by Vanhove in his thesis \cite[Pr.~8]{Vanhove2011PhD}.

\begin{problem}
Determine whether all distance-regular graphs with classical parameters
$(D, b, \alpha, \beta) = (D,-q,-(q + 1)/2,-((-q)^D + 1)/2)$, $q$ odd, are
subgraphs of the Hermitian dual polar graph $^2\A_{2D-1}(q)$ (for sufficiently large $D$).
See Theorem \ref{thm:b<-1} and the paragraph that follows it.
\end{problem}

\subsection{Geometric distance-regular graphs}

\begin{problem} Determine whether for a given integer $m \geq 2$, there are only finitely many
geometric distance-regular graphs with $D \geq 3$, $c_2 \geq 2$, and smallest eigenvalue $-m$, besides the Grassmann
graphs, Johnson graphs, bilinear forms graphs, and Hamming graphs. See Section \ref{sec:fixedsmallestev}. Note that the
generalized $2D$-gons of order $(q,1)$ for $D=3,4,6$ (which exist for all prime powers $q$) are geometric with smallest
eigenvalue $-2$, but they have $c_2 =1$; see also \cite[Thm.~4.2.16]{bcn}.\end{problem}

\begin{problem}
Classify the geometric distance-regular
graphs with $a_1 \geq 1$ and $c_2 \geq 2$.
\end{problem}

\begin{problem}
Classify the (non-bipartite) geometric distance-regular graphs that are also $Q$-polynomial.
\end{problem}

Let $\G$ be a geometric distance-regular graph with respect to a set of cliques $\cal C$. We call an induced subgraph $\Delta$ of $\G$ {\em a subspace}  if $\Delta$ is closed and for each edge $xy$ contained in
$\Delta$,  all the vertices of the clique $C \in {\cal C}$ containing $x$ and $y$ are in $\Delta$.

\begin{problem} Find sufficient and necessary conditions for the existence of subspaces, in a similar fashion as the $m$-boundedness condition. See Section \ref{sec: subgraphs}.
\end{problem}

\begin{problem} Classify the geometric distance-regular graphs having the property that for each pair of distinct vertices $x$ and $y$ there exists a (unique) subspace $\Phi(x,y)$ of diameter $d(x,y)$. This would be an
extension of the classification of thick regular near polygons with $c_2 \geq 2$.\end{problem}

\begin{problem} Complete the classification of thick regular near polygons with diameter at least $4$ and $c_2 \geq 2$. Only the case $c_2 =2$
and $c_3 >3$ needs to be considered. The case $c_2=1$ seems to be too difficult at the moment. See also Theorem
\ref{thm:RNPthick}.
\end{problem}

\begin{problem}
Let $\G$ be a geometric distance-regular graph with respect to $\cal C$. Define the dual graph on
vertex set $\cal C$, where two cliques are adjacent if they intersect. Determine when this dual graph is
distance-regular (this happens for the Johnson graphs and the Grassmann graphs).
\end{problem}

\subsection{The Bannai-Ito conjecture}

The Bannai-Ito conjecture can be interpreted as a diameter bound in terms of the valency, but the current proof (cf.~Section \ref{sec:proofBIconjecture}) gives a very bad bound.
On the other hand, all the known distance-regular graphs with valency $k$ at least three have $D \leq 2k+2$,
with equality only for the Foster graph.

\begin{problem}  Find a good diameter bound in terms of the valency.
\end{problem}

\begin{problem} Let $\G$ be a distance-regular graph with diameter $D$, head $h$, and valency $k$ at least three.
\begin{enumerate}[(i)]
\item Prove Ivanov's conjecture that $\ell(c_i, a_i, b_i) \leq h+1$ \cite[p.~191]{bcn},
\item Show that $D \leq (2k-3)h + 1$ except if $\G$ is the dodecahedron (in which case $D =5$, $h =1$, and $k=3$),
\item Show that if $c_i \geq 2$ for some $i$ then $|\{i : c_i = c\}| \leq \min \{i : c_i \geq 2\} - 1$ for $c=2,3,\dots, k-1$. See
    also Proposition \ref{BHKprop}.
\end{enumerate}
\end{problem}

All the
known distance-regular graphs except for the polygons have $h \leq 5$, with equality for the generalized
dodecagons.

\begin{problem}
Prove the conjecture of Suzuki \cite[Conj.~1.5.2]{Su99} that claims that there exists a constant $H$ such that all
distance-regular graphs with valency at least three have head $h \leq H$. \end{problem}

\begin{problem}
Suzuki's conjecture in the above problem would imply that the girth of a distance-regular graph is bounded. Prove the more specific (unpublished) conjecture by Koolen and Suzuki that the girth of a distance-regular graph with valency at least three is at most $12$.
\end{problem}

\begin{problem}
Show that every distance-regular graph with valency and diameter at least three has an
integral eigenvalue besides the valency. This was posed as a question by `BCN' \cite[p.~130]{bcn}. Clearly this is the case for bipartite distance-regular graphs and more
generally for geometric distance-regular graphs.
\end{problem}

\begin{problem}
Define the degree of an algebraic integer as the degree of its minimal polynomial. All the
eigenvalues of the known distance-regular graphs have degree at most three; with the
Biggs-Smith graph as the only example having an eigenvalue with degree equal to three. In this light we propose the following conjecture: {\em Every eigenvalue of
a distance-regular graph with valency at least three has degree at most three.} This conjecture could be a first step to
show the above conjecture of Suzuki.\end{problem}

\begin{problem} Develop theory for distance-regular graphs
with only integral eigenvalues. It is easy to show that for such graphs the diameter $D$ is bounded by $2k$, where $k$ is the valency.
If possible, improve this bound.
Also obtain a good bound for the head.
\end{problem}

\subsection{Combinatorics}

\begin{problem} Classify the $1$-homogeneous distance-regular graphs that are not bipartite nor a generalized odd graph. See Section
\ref{sec:homogeneity}.\end{problem}

\begin{problem}
Study distance-regular graphs that are locally strongly regular.
\end{problem}

\begin{problem} Determine whether $k_i = k_j$ for some distinct $i$ and $j$ with
$i+j \leq D$ and $k_D \geq 2$ implies that $k=2$. See Section
\ref{sec:charantipodal}.\end{problem}

\begin{problem} Given an integer $\alpha \geq 1$, determine whether there are only finitely many
distance-regular graphs with diameter at least three and $a_1 > \alpha$ such
that each local graph has second largest eigenvalue at most $\alpha$. See
Section \ref{sec:DIAMETEReigenvalues}.\end{problem}

It is known that if $c_2 \geq 2$ then $c_3 > c_2$ \cite[Thm.~5.4.1]{bcn}.

\begin{problem} Show that if $c_2 \geq 2$, then the $c_i$
are strictly increasing. \end{problem}

\begin{problem} Determine whether one needs to remove at least $2k - 2 - a_1$ vertices
in order to disconnect a distance-regular graph with diameter at least three
such that each resulting component has at least two vertices. Note that if this
is the case, then this is best possible because $2k - 2 - a_1$ is the size of the neighborhood
of an edge. Cioab\u{a}, Kim, and Koolen \cite{CKK12} showed that it is not
true for strongly regular graphs, but it is believed it may be true for strongly regular graphs
with $k \geq 2a_1 +3$.\end{problem}

In Section \ref{sec:2Porder} we discussed distance-regular graphs $\G$ with multiple $P$-polynomial orderings.

\begin{problem}\begin{enumerate}[(i)]
\item Classify the generalized odd graphs.
\item Determine new putative intersection arrays for generalized odd graphs.
\item Show that if $\G$ is a bipartite antipodal $2$-cover with diameter $2e$ and $e \geq 3$, then $\G$ is a
    $2e$-cube.
\item Classify the non-bipartite antipodal $2$-covers with diameter $D \geq 4$ that have a generalized odd graph as
    folded graph.
\item Show that if $\Delta=\G_{D}$ is also distance-regular with diameter $D$, then $\Delta$ is a generalized odd graph or a Taylor graph.
\end{enumerate}
\noindent
See also Problem \ref{prob:2P+Q}.
\end{problem}

The following problem is due to Fiol \cite[Conj.~3.6]{Fiol2001CPC}.
\begin{problem}
Show that a distance-regular graph with diameter at least $4$ is strongly distance-regular (cf.~Section \ref{sec:spectralexcess}) if and only if it is antipodal.
See Section \ref{sec:distance-D graph}.
\end{problem}

The following problem is due to Pyber \cite[Conj.~1, 3.1]{PyberHam} who showed that all but finitely many strongly regular graphs are Hamiltonian.
Recall also the well-known Lov\'{a}sz conjecture that all but finitely many connected vertex-transitive graphs are Hamiltonian.
\begin{problem}
\begin{enumerate}[(i)]
\item Show that all but finitely many distance-regular graphs are Hamiltonian.
\item In particular, show that all but finitely many distance-regular graphs with a fixed diameter $D$ are Hamiltonian.
\end{enumerate}
\noindent
The case $D=3$ seems to be the way to attack this problem.
\end{problem}

\begin{problem}
Determine which distance-regular graphs are core-complete.
Currently no distance-regular graphs are known that are not core-complete.
See Section \ref{sec:cores}.
\end{problem}

\begin{problem}\begin{enumerate}[(i)]
\item Show or disprove the conjecture of Neumaier, i.e., that all completely regular codes in the Hamming graphs with minimum distance at least $8$ are known.
\item Neumaier \cite{ Neu92} challenged his readers to classify the completely regular codes in
    the Hamming graphs with $nq \leq 48$. But there are many feasible intersection arrays with small covering radius, say $2$ and $3$.
We therefore would like to modify the challenge to classify the completely regular  codes in the Hamming graphs with $nq \leq 48$ with covering radius at least $4$.
\item Give more results on the intersection array of a completely regular code in a distance-regular graph.
\end{enumerate}
\end{problem}

\subsection{Uniqueness and non-existence}

\begin{problem} Decide whether the Livingstone graph is determined by its
intersection array $\{11,10,6,1;1,1,5,11\}$. See \cite[\S 13.5]{bcn}.\end{problem}

\begin{problem}
Construct a distance-regular graph with intersection array $\{7,6,6; 1,1,2\}$, or show that none exists. See
\cite[p.~148]{bcn}.\end{problem}

\begin{problem} Classify the non-bipartite distance-regular graphs with diameter at least four with the same intersection array as a
regular near polygon. Currently, the only known ones that are not regular near polygons are the Doob graphs. See
Section \ref{sec:rnp}.
\end{problem}

\begin{problem} Classify the distance-regular graphs that are locally Hoffman-Singleton, i.e., those with
intersection arrays $\{ 50, 42, 9; 1, 2, 42\}$ and $\{ 50, 42, 1; 1, 2, 50\}$.
See Section \ref{sec:existencequadranglesT}.\end{problem}

The following problem was raised by Bannai [private communication].
\begin{problem}
Determine whether the following is true:
if a distance-regular graph $\G$ with diameter $D\geq 4$ has intersection numbers $c_i = i^2$ and $b_i = (n-d-i)(d-i)$  for $i \leq D-1$ for some positive integers $n$ and $d$, then $\G$ is the folded Johnson graph with diameter $D$.
Similar problems can be formulated for other classical families of distance-regular graphs.
See, e.g., Theorem \ref{thmquotient} for the case of Hamming and Doob graphs.
\end{problem}

\subsection{The Terwilliger algebra}

\begin{problem}
Determine the structure of the Terwilliger algebra for the four families of forms graphs and also for the twisted Grassmann graphs.
\end{problem}

\begin{problem} Develop theory for $1$-thin distance-regular graphs with exactly three irreducible $\TT$-modules with endpoint $1$ up to isomorphism.
\end{problem}

\begin{problem}
The vectors $\mathbf{f}_t$ (cf.~\eqref{MacLean's vector}) can be defined for any (i.e., not necessarily bipartite) distance-regular graph.
Give more results using the positive semidefiniteness of the Gram matrix of some of the $\mathbf{f}_t$.
For example, is it possible to prove the Terwilliger tree bound (cf.~Section \ref{sec:tree_bound}) in this way?
\end{problem}

\begin{problem} An irreducible $\TT$-module $W$ is called {\em sharp} if $\dim E_t^{\ster} W = 1$, where $t$ is its endpoint.
Give sufficient and necessary conditions such that all the irreducible $\TT$-modules of a distance-regular graph are
sharp.\end{problem}

The following problem was raised by Terwilliger [private communication].

\begin{problem}
Find all the
$2$-thin bipartite distance-regular graphs with diameter $D\geq 4$ with at most two irreducible $\TT$-modules with endpoint $2$ up to isomorphism.
The
$Q$-polynomial bipartite distance-regular graphs are included in this class, and so are the
taut graphs; cf.~Section \ref{sec: Hadamard products}. By a recursion obtained by Curtin \cite{Curtin1999GC}, for these distance-regular graphs
the intersection array is determined by at most four parameters (besides $D$); cf.~Section \ref{sec:thinness}.
A related problem is to find a closed form for the intersection numbers.
\end{problem}

The following three problems were also posed by Terwilliger \cite{Talgebra92,Terwilliger1993N}.

\begin{problem}
Suppose $\G$ is a thin distance-regular graph with diameter $D$ and is not $Q$-polynomial.
Show that if $D$ is sufficiently large then one of the following holds.
\begin{enumerate}[{\textup (i)}]
\item $\G$ is bipartite, and the halved graph is thin and $Q$-polynomial.
\item $\G$ is antipodal, and the folded graph is thin and $Q$-polynomial.
\end{enumerate}
\end{problem}

\begin{problem}
Let $\G$ be a thin non-bipartite $Q$-polynomial distance-regular graph.
Take any two distinct vertices $x,y$.
Show that the minimal convex subgraph containing $x$ and $y$ is a thin $Q$-polynomial distance-regular graph with diameter $d(x,y)$.
If this claim turns out to be false, then find a simple additional assumption on $\G$ under which it is true.
\footnote{That $\G$ is non-bipartite was not assumed in \cite{Terwilliger1993N}. Terwilliger [private communication] pointed out that the Hemmeter graphs (and all distance-regular graphs with the same intersection array as (but not isomorphic to) the bipartite dual polar graphs) provide counterexamples.}
\end{problem}

\begin{problem}
Classify the thin $Q$-polynomial distance-regular graphs.
\end{problem}

The following problem was raised by Ito [private communication].
\begin{problem}
Study the structures of the irreducible $\TT$-modules of $Q$-polynomial distance-regular graphs from the point of view of the theory of tridiagonal systems, in particular as `tensor products' of Leonard systems.
See Section \ref{sec: TD systems}.
Is it true that each of the corresponding tridiagonal systems is a `tensor product' of at most two Leonard systems?
We note that this is indeed true for the irreducible $\TT$-modules with endpoint $1$; see Section \ref{sec:thinness-Q}.
\end{problem}
\begin{problem}
Is the isomorphism class of an irreducible $\TT$-module for a $Q$-polynomial distance-regular graph with $c_2 \geq 2$ and $a_1 \neq 0$ determined by its local eigenvalue and endpoint?
\end{problem}

\subsection{Other classification problems}

\begin{problem} Classify the distance-regular Terwilliger graphs. See Section \ref{sec:existencequadranglesT}.\end{problem}

There are infinitely many putative parameter sets $(v,k,\lambda,\mu)$ for strongly regular graphs with $\mu =1$ and
$\lambda =2$.

\begin{problem} Show that there are only finitely many strongly regular graphs with $\mu =1$. By Proposition \ref{mbounded} ($m=2,h=1$), a consequence of this would be that
there are finitely many distance-regular graphs with $c_3 =1$ and $a_1\neq a_2$. It would also contribute to the
classification of Terwilliger graphs with $\mu \geq 2$ by considering its local graphs. \end{problem}

\begin{problem} Generalize results for strongly regular graphs that do not yet have analogues for distance-regular graphs.\end{problem}

\begin{problem}
Fuglister \cite{Fuglister} uses $\mathrm{mod} \ p$ calculations for the multiplicities to show that if a distance-regular graph
has $D = h+1$, where $h$ is the head, then $h \leq 12$. Suzuki \cite[p.~87]{Su99} claims this can be generalized to the
case $D \leq h+3$. Brouwer and Koolen \cite{BK99} use a similar argument for the generation of feasible arrays for the
distance-regular graphs with valency $4$. Find more instances where this method works.
\end{problem}

\begin{problem}
For a coconnected distance-regular graph $\G$, show that the intersection number $c_2$ is bounded above by a function of $\frac{b_1}{\theta_1 +1}$. For strongly regular graphs, this is true as in that case
$\frac{b_1}{\theta_1 +1}$ is equal to $-\theta_2-1$ and hence it follows by Neumaier's \cite{Neu80} $\mu$-bound. It is also true for distance-regular graphs with $\theta_1 = b_1 -1$ by the classification of such graphs (see \cite[Thm.~4.4.11]{bcn}), as they all have $c_2 \leq 10$.
But even for $Q$-polynomial distance-regular graphs, a bound for $c_2$ in terms of $\frac{b_1}{\theta_1 +1}$ is not known to exist.
\end{problem}

\begin{problem} Classify the distance-regular graphs of order $(s,2)$. For $s=1$ these are precisely the distance-regular graphs with valency three.
For $s=2$, they were classified by Hiraki, Nomura, and Suzuki
\cite{HNS}. Yamazaki \cite{Y95} obtained some results for $s \geq
3$.
\end{problem}

\begin{problem} The fact that the multiplicities of the eigenvalues of a distance-regular graph are positive integers seems to be
one of the strongest known conditions for its intersection array. They are however expensive to compute.
Find necessary but easy to compute properties of the intersection numbers of distance-regular graphs that follow from the integrality of the multiplicities, such as the fact that for all prime numbers $p$ the number of
closed walks of length $p$ is divisible by $p$.
See Section \ref{sec:integralmultiplicities}.
\end{problem}

\begin{problem}
Bang \cite{Bang2014} showed that for $g\equiv 3 ~(\text{mod}~ 4)$ and $g=5$, there exists a positive $\epsilon_g$ such that if $\G$ is a triangle-free distance-regular graph with girth $g$ and large enough valency $k$,
then the smallest eigenvalue of $\Gamma$ is at least $(\epsilon_g -1)k$.
Show the same result for distance-regular graphs with odd girth. For non-bipartite distance-regular graphs with even
girth $g$  we can only expect that the smallest eigenvalue is at least $-k + C_g$ for a positive constant $C_g$, as the
folded $(2m+1)$-cube has smallest eigenvalue $-2m+1=-k+2$ and the Odd graph with valency $k$ has smallest eigenvalue $-k+1$.
\end{problem}

\begin{problem}
Show that for large enough $k$, the second largest eigenvalue of a
distance-regular graph with valency $k$ is at most $k-1$, as
conjectured by Koolen (unpublished). This would be best possible as
the Doubled Odd graphs have second largest eigenvalue $k-1$. For
distance-regular graphs with girth $6$, it was shown by Bang,
Koolen, and Park \cite{BKP2014pre}.
\end{problem}

\begin{problem}
\begin{enumerate}[(i)]
\item Determine the vertex-transitive distance-regular graphs.
\item Determine the distance-regular Cayley graphs.
\item Determine the arc-transitive distance-regular graphs.
\end{enumerate}
\noindent
See, e.g., \cite{ADJ15, Ma1994DCC,MP2003EJC,MP2007JCTB,MS2014JCTB}
for some results on distance-regular Cayley graphs.
\end{problem}

\begin{problem} Classify the distance-regular graphs with chromatic number $3$
and $a_1=1$. See Section \ref{sec:chromatic}.\end{problem}

We finish with a problem that is dual to Problem
\ref{problem:primalof} and that is relevant for the dual of Problem
\ref{problem:3}.

\begin{problem}\label{problem:dualof}
Show that there exists a constant $C$ such that for every primitive
cometric association scheme there exists an adjacency matrix, say,
$A_1$, such that $|\{j:p_{1j}^i\ne 0\}|\leq C$ for all $i$.
\end{problem}


\bigskip
\noindent {\bf Acknowledgements.}
The authors thank Eiichi Bannai, Andries Brouwer, Bart De
Bruyn, Miquel \`{A}ngel Fiol, Willem Haemers,
Akira Hiraki,
Tatsuro Ito, Sasha Juri\v{s}i\'{c}, Greg
Markowsky, Bill Martin, Akihiro Munemasa, Jongyook Park, Renata Sotirov,
Hiroshi Suzuki,
and Paul Terwilliger for inspiring
conversations on distance-regular graphs and this survey paper.

The authors also thank Sasha Gavrilyuk, Neil Gillespie, Akihide Hanaki, Arnold Neumaier, Nobuaki Obata, L\'{a}szl\'{o} Pyber, Achill Sch\"{u}rmann, Fr\'{e}d\'{e}ric
Vanhove, and a referee for comments on an earlier version of this paper and making some
valuable suggestions.

A significant part of the work was done while Jack Koolen was working at POSTECH.
Jack Koolen gratefully acknowledges the support of the `100 talents' program of the Chinese Academy of Sciences.
Research of Jack Koolen is partially supported by the National Natural Science Foundation of China (No. 11471009).
The work of Hajime Tanaka was supported by JSPS Grant-in-Aid
for Scientific Research No.~23740002 and No.~25400034.




\end{document}